\documentclass[11pt]{article}

\usepackage{arxiv}

\usepackage{amsmath,amssymb,amsthm}
\usepackage[utf8]{inputenc} 
\usepackage[T1]{fontenc}    
\usepackage{amsfonts}       
\usepackage{microtype}      
\usepackage{booktabs}       
\usepackage[x11names=false,table=false]{xcolor} 
\definecolor{AccentBlue}{RGB}{20,55,100}
\usepackage{hyperref}       
\usepackage{graphicx}       
\usepackage{array,needspace,enumitem} 
\usepackage{longtable} 
\usepackage{titlesec} 
\setlist{itemsep=2pt,parsep=2pt,topsep=4pt,partopsep=2pt} 
\titleformat{\section}
  {\large\bfseries\color{AccentBlue}}{\thesection.}{0.5em}{}%
  [\vspace{2pt}{\color{AccentBlue!35}\titlerule[0.4pt]}]
\titleformat{\subsection}
  {\normalsize\bfseries}{\thesubsection}{0.5em}{}
\titleformat{\subsubsection}
  {\normalsize\bfseries\itshape}{\thesubsubsection}{0.5em}{}
\titlespacing*{\section}{0pt}{14pt plus 4pt minus 2pt}{7pt plus 1pt}
\titlespacing*{\subsection}{0pt}{10pt plus 2pt minus 1pt}{3pt plus 1pt}
\titlespacing*{\subsubsection}{0pt}{8pt plus 2pt minus 1pt}{2pt}

\makeatletter
\renewcommand{\@toptitlebar}{%
  {\color{AccentBlue!65}\hrule height 0.4pt}%
  \vskip 0.25in
  \vskip -\parskip%
}
\renewcommand{\@bottomtitlebar}{%
  \vskip 0.29in
  \vskip -\parskip
  {\color{AccentBlue!65}\hrule height 0.4pt}%
  \vskip 0.09in%
}
\renewenvironment{abstract}{%
  {\color{AccentBlue!65}\hrule height 0.4pt}%
  \vskip 6pt
  \centerline{\large\bfseries\scshape\color{AccentBlue}Abstract}%
  \vskip 4pt
  \begin{quote}\noindent\ignorespaces
}{%
  \end{quote}%
  \vskip 2pt
  {\color{AccentBlue!65}\hrule height 0.4pt}%
}
\makeatother

\title{Invariant Rings of Adjacent-Quadratic Triangular Derivations}
\author{\href{https://orcid.org/0009-0004-4344-141X}{\includegraphics[scale=0.06]{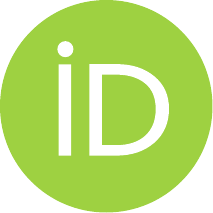}\hspace{1mm}Ahmed M. Adly} \\[1ex] {\small Independent Researcher}}
\date{}

\newcommand{\Pshift}[2]{P_{#1,#2}}
\newcommand{\Dmap}[1]{D_{#1}}
\newcommand{\AQflow}[2]{\Psi_{#1,#2}}
\newcommand{\Afring}[1]{A_{#1}}

\theoremstyle{plain}
\newtheorem{theorem}{Theorem}[section]
\newtheorem{lemma}[theorem]{Lemma}
\newtheorem{proposition}[theorem]{Proposition}
\newtheorem{corollary}[theorem]{Corollary}
\theoremstyle{definition}
\newtheorem{definition}[theorem]{Definition}

\newtheorem{conjecture}[theorem]{Conjecture}
\theoremstyle{remark}
\newtheorem{remark}[theorem]{Remark}
\newtheorem*{remark*}{Remark}

\renewcommand{\shorttitle}{Adjacent-Quadratic Invariant Rings}

\hypersetup{
colorlinks=true,
linkcolor=AccentBlue,
citecolor=AccentBlue,
urlcolor=AccentBlue,
pdftitle={Invariant Rings of Adjacent-Quadratic Triangular Derivations},
pdfauthor={Ahmed M. Adly},
pdfsubject={math.AC, math.RA},
pdfkeywords={locally nilpotent derivation, invariant ring, triangular derivation},
}

\begin{document}
\maketitle

\begin{abstract}
Let \(k\) be a field of characteristic zero and \(D_N(a_n)=a_{n+1}a_{n+2}\) the adjacent-quadratic locally nilpotent derivation on \(A_N=k[a_0,\dots,a_N]\). We determine the invariant ring at \(N=5\), where deleting \(a_0\) gives the polynomial base kernel \(B^{D_5}=K=k[p,q,J_1,J_2]\), so localization gives the polynomial-line invariant algebra \((A_5^{D_5})_{pq}=K_{pq}[W]\) with \(W=F_0/(p^2q^4)\), but this does not by itself determine the global kernel. An invariant \(H\) of weight \(24\) and \(a_0\)-degree \(2\) lies outside \(K[F_0]\), and adjoining it gives the six-generator hypersurface \(C_5\cong k[P,Q,J_1,J_2,F,H]/(S)\); the obstruction is explained by algebraic residue behavior defeating naive denominator descent. An invariant \(F_{46}^{\mathrm{int}}\) of weight \(46\) and \(a_0\)-degree \(4\) lies outside \(C_5\) and satisfies \(p^2F_{46}^{\mathrm{int}}\in C_5\). To pass from this candidate to the full invariant ring, a second slice chart arising from \(f_{13}\), together with exact \(q\)- and \(p\)-saturation, yields \(\boxed{A_5^{D_5}=C_5[F_{46}^{\mathrm{int}}]}\), hence finite generation, with a codimension-two complete-intersection presentation and explicit Hilbert series. At \(N=6\) the transported \(B\)-level ring is exact, while the \(A_6\) analysis is established through total degree \(13\) and the full \(A_6\) invariant ring remains open.
\end{abstract}

\keywords{locally nilpotent derivation \and invariant ring \and triangular derivation \and Hilbert's fourteenth problem \and formal verification}

\clearpage
\setcounter{tocdepth}{1}
\tableofcontents
\clearpage
\section{Introduction}
\label{sec:intro}

Let $k$ be a field of characteristic zero and
$\Afring{N}=k[a_0,\dots,a_N]$, with the boundary convention
$a_{N+1}=a_{N+2}=0$. The adjacent-quadratic derivation
$\Dmap{N}(a_n)=a_{n+1}a_{n+2}$ is locally nilpotent and triangular,
so its kernel $\Afring{N}^{\Dmap{N}}$ is the ring of invariants of
the corresponding $\mathbb{G}_a$-action. The question is the explicit
structure of these kernels.

For $N\le 4$, the base cases admit explicit invariant and local
descriptions. The case $N=5$ is the first in which the local
invariant picture is straightforward while its globalization is
not: deleting $a_0$ leaves a polynomial kernel whose localization is
a polynomial line, and that line does not descend without additional
generators.

Indeed, with $B=k[a_1,\dots,a_5]$, $x=a_3$, $y=a_2$, $z=a_1$,
$p=a_4$, $q=a_5$, one has
$K=B^{\Dmap{5}}=k[p,q,J_1,J_2]$, where
$J_1=2pz-y^2$ and $J_2=2qy-x^2$. The element $s=x/(pq)$ satisfies
$\Dmap{5}(s)=1$, and the slice it defines gives the localized form
$A_{pq}^{\Dmap{5}}=K_{pq}[W]$, with $W=F_0/c$ for the invariant $F_0$
and the pole-clearing denominator $c=p^2q^4$. The invariant $F_0$
carries $a_0$ into the picture; the question is what else must be
adjoined to obtain the global kernel.

The answer begins with a failure. The weight-$24$ invariant $H$, of
$a_0$-degree $2$, satisfies $\Dmap{5}(H)=0$ but lies outside
$K[F_0]$, since every element of $K[F_0]$ of weight $24$ has
$a_0$-degree at most $1$. The reason is visible in the residue
field $L_q=k(p,u,x^2)$ at $q$ (with $u=J_1\bmod q$ and $\rho=G_0/c$ for
$F_0=ca_0+G_0$): the initial form
\[
r_q=\operatorname{in}_q(q^4\rho)=\frac{x^7}{35p^2},
\qquad
r_q^2\in L_q,
\qquad
r_q\notin L_q,
\]
is algebraic of degree $2$, so the transcendence argument behind
the naive $q$-adic denominator bound does not apply, and the pole it
forbids cancels in exactly the way $H$ exhibits.

Adjoining the forced generator gives
$C_5=k[p,q,J_1,J_2,F_0,H]$, cut out by the single weight-$28$
relation $S=q^4H-11025F_0^2-105q^2J_2^5J_1-9J_2^7$. This algebra
agrees with the invariant ring through weight $45$, but not
globally.

At weight $46$, the invariant $F_{46}^{\mathrm{int}}$, of
$a_0$-degree $4$, has leading $a_0$-coefficient
$121550625\,p^6q^8$ outside $J_4=(p^8q^8)$, so
$F_{46}^{\mathrm{int}}\notin C_5$. This is the $p$-adic counterpart
of the first failure: a top coefficient outside the expected top
ideal, defeating the $p$-adic bound. The two arguments are not
identical --- the residue analysis on the $p$-side has its own
initial form and degree --- but the shape of the obstruction is the
same.

With $R=C_5[F_{46}^{\mathrm{int}}]$ identified, it remains to
prove that $R$ is the full invariant ring. The plinth element
$f_{13}=pJ_2^3/35+pq^2J_1J_2/4$, satisfying
$\Dmap{5}(g_{13})=f_{13}$ for an explicit $g_{13}$ while
$q\nmid f_{13}$, supplies a second slice chart covering the locus
where the first chart degenerates. The two localized descriptions
agree on the overlap, and exact $q$- and $p$-saturation contract the
$p$-localized kernel to $R$, giving
\[
\boxed{A_5^{D_5}=C_5[F_{46}^{\mathrm{int}}],}
\]
finitely generated.

The resulting ring has a codimension-two complete-intersection
presentation with explicit Hilbert series. At \(N=6\) the transported
\(B\)-level ring is exact, while the \(A_6\) analysis is exact over
\(\mathbb{Q}\) through total degree \(13\) with degree \(14\) blocked;
the \(A_6\) ring remains open.

\section{The adjacent-quadratic family}
\label{sec:definitions}

Let $k$ be a field of characteristic zero,
$\Afring{N} = k[a_0,\dots,a_N]$, with boundary conditions
$a_{N+1} = a_{N+2} = 0$.

\begin{definition}[Family]
\label{def:family}
The adjacent-quadratic shift is the discrete map
$\Pshift{\lambda}{N} \colon \Afring{N} \to \Afring{N}$ given on
coordinates by
\[
\Pshift{\lambda}{N}(a)_n = a_n + \lambda a_{n+1} a_{n+2},
\qquad 0 \le n \le N.
\]
The associated triangular derivation is
$\Dmap{N}(a_n) = a_{n+1}a_{n+2}$, with flow
$\AQflow{t}{N} = \exp(t\Dmap{N})$.
\end{definition}

\begin{lemma}[Triangular inverse]
\label{lem:inverse}
The adjacent-quadratic shift $\Pshift{\lambda}{N}$ is a triangular
automorphism of $\Afring{N}$.
Given $b = \Pshift{\lambda}{N}(a)$, the inverse is computed by descending
recursion $a_N = b_N$, $a_{N-1} = b_{N-1}$, and
$a_n = b_n - \lambda a_{n+1}a_{n+2}$ for $n = N-2$ down to $0$.
Its Jacobian matrix is unitriangular, so $\det D\Pshift{\lambda}{N} = 1$;
it fixes the top two coefficients $a_{N-1}$ and $a_N$ but does not
preserve total degree.
\end{lemma}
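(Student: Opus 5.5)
We regard $\Pshift{\lambda}{N}$ as the $k$-algebra endomorphism of $\Afring{N}$ determined by $a_n \mapsto b_n := a_n + \lambda a_{n+1}a_{n+2}$, with $a_{N+1}=a_{N+2}=0$. The plan is to build an explicit two-sided inverse by the stated descending recursion, then read off the Jacobian and the degree behaviour directly.

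First, the boundary convention gives $b_N = a_N$ and $b_{N-1} = a_{N-1} + \lambda a_N a_{N+1} = a_{N-1}$. This already shows that the top two coordinates are fixed. For $n \le N-2$ we have $b_n - a_n = \lambda a_{n+1}a_{n+2}$, and this difference involves only variables of strictly larger index. Define polynomials $c_N := b_N$, $c_{N-1} := b_{N-1}$, and $c_n := b_n - \lambda c_{n+1}c_{n+2}$ for $n = N-2,\dots,0$. Descending induction on $n$ shows that each $c_n$ lies in $k[b_n,\dots,b_N]$. The same induction shows that $c_n = a_n$ once each $b_j$ is written in terms of the $a$'s, since by hypothesis $c_{n+1}=a_{n+1}$ and $c_{n+2}=a_{n+2}$, so $c_n = b_n - \lambda a_{n+1}a_{n+2} = a_n$.

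Next we construct the inverse. Let $\Phi$ be the endomorphism $a_n \mapsto c_n(a_0,\dots,a_N)$, that is, $c_n$ with $b_j$ replaced by $a_j$. The computation above says exactly that $\Phi\circ\Pshift{\lambda}{N} = \mathrm{id}$ on the generators, and hence everywhere. For the other composition, note that $\Pshift{\lambda}{N}$ and $\Phi$ are both triangular with the same shape. In fact $\Phi$ is $\Pshift{-\lambda}{N}$ corrected by higher-index terms. Rather than verify $\Pshift{\lambda}{N}\circ\Phi = \mathrm{id}$ by hand, we use a structural argument: $\Phi\circ\Pshift{\lambda}{N} = \mathrm{id}$ makes $\Pshift{\lambda}{N}$ injective, and $\Phi$ surjective, as endomorphisms of the polynomial ring. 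Surjectivity of $\Pshift{\lambda}{N}$ also holds directly, because the $b_n$ generate $\Afring{N}$: each $a_n = c_n$ is a polynomial in the $b$'s. So $\Pshift{\lambda}{N}$ is bijective, and its inverse is $\Phi$.

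Finally, the Jacobian and degree statements. Since $\partial b_n/\partial a_m = \delta_{nm}$ for $m \le n$, with possibly nonzero entries only for $m = n+1, n+2$, the Jacobian matrix is upper unitriangular and its determinant is $1$. To see that total degree is not preserved, take $N \ge 2$: then $b_{N-2} = a_{N-2} + \lambda a_{N-1}a_N$ has degree $2$ when $\lambda \ne 0$. One can also iterate, since $c_{N-4}$ has degree $4$ for $N\ge4$.

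No step is a real obstacle. The only point needing care is the boundary bookkeeping: $a_{N+1}=a_{N+2}=0$ must be used so that the recursion starts correctly and $b_{N-1}=a_{N-1}$. We should also state that the degree claim is meant for $\lambda\neq0$ and $N\ge2$.
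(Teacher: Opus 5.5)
You follow the paper's route. The paper gives no proof beyond the descending recursion built into the statement, and your boundary check $b_{N-1}=a_{N-1}$, $b_N=a_N$, the unitriangular Jacobian, and the degree-$2$ component $b_{N-2}$ are all correct.

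The one flaw is a composition-order slip in the bijectivity step. You declared $\Pshift{\lambda}{N}$ to be the algebra endomorphism $a_n\mapsto b_n$. For algebra maps, your identity $c_n(b_0,\dots,b_N)=a_n$ reads $\Pshift{\lambda}{N}(\Phi(a_n))=c_n(\Pshift{\lambda}{N}(a_0),\dots,\Pshift{\lambda}{N}(a_N))=a_n$. So it proves $\Pshift{\lambda}{N}\circ\Phi=\mathrm{id}$, not $\Phi\circ\Pshift{\lambda}{N}=\mathrm{id}$; your ordering would be right for the maps on points, where composition is reversed.

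That identity is the same fact as ``the $b_n$ generate $\Afring{N}$''. Your structural argument therefore uses surjectivity twice, and the computation you cite does not give injectivity. The missing identity is exactly the hand check you set out to avoid, and it takes one line. The equation $\Phi(b_n)=c_n(a)+\lambda c_{n+1}(a)c_{n+2}(a)=a_n$ is the defining recursion $c_n=b_n-\lambda c_{n+1}c_{n+2}$ read backwards, with $\Phi(a_{N-1})=a_{N-1}$ and $\Phi(a_N)=a_N$ at the boundary. Alternatively, a surjective endomorphism of the Noetherian ring $\Afring{N}$ is automatically injective. With either fix both compositions are the identity and the argument is complete.

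Separately, your aside that $c_{N-4}$ has degree $4$ is false, where $c_{N-4}$ is read as a polynomial in the $b_j$ (equivalently $\Phi(a_{N-4})$). For $\lambda\ne0$ the top-degree part of $c_{n+1}c_{n+2}$ is a nonzero product in a domain of degree at least $2$. It therefore cannot cancel against the degree-one term $b_n$, and $\deg c_n=\deg c_{n+1}+\deg c_{n+2}$. The degrees of $c_N,c_{N-1},c_{N-2},\dots$ are thus the Fibonacci numbers $1,1,2,3,5,8,\dots$, so $\deg c_{N-4}=5$. This is the same recursion that produces the paper's weights $(8,5,3,2,1,1)$. The error does not affect the lemma, since $b_{N-2}$ already shows that total degree is not preserved.
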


\begin{remark}
For $\lambda \ne 0$ (and $N \ge 2$) the fixed-point locus of
$\Pshift{\lambda}{N}$ is cut out by $a_1a_2,\dots,a_{N-1}a_N$.
\end{remark}

\begin{remark}[Characteristic zero]
\label{rem:char-zero}
Every result is over a field of characteristic zero. The hypothesis is
used to divide by $3,5,35$ in the invariant $F_0$, for the Jacobian
scalars $4$ and $420=2^2\cdot3\cdot5\cdot7$, in the slice theorem and
factorial-closedness arguments, and for the binomial divisions in the
multfloor proof. No positive-characteristic statement for $N\ge4$ is
claimed here.
\end{remark}

\begin{proposition}[Discrete-versus-continuous defect]
\label{prop:defect}
The discrete map $\Pshift{\lambda}{N}$ is the first-order coordinate
update of the flow $\AQflow{t}{N}$ evaluated at $t = \lambda$.
For $N \ge 3$ and $\lambda \ne 0$ it is never the exponential:
\[
\Pshift{\lambda}{N} \ne \exp(\lambda \Dmap{N}).
\]
The bound is sharp: at $N = 1$, $\Dmap{1} = 0$ so both sides are the
identity, while at $N = 2$, $\Dmap{2}^2 = 0$ on generators, so
$\exp(\lambda\Dmap{2})$ and $\Pshift{\lambda}{2}$ agree as algebra
automorphisms even though $\Dmap{2}^2 \ne 0$ as an operator, e.g.\
$\Dmap{2}^2(a_0^2) = 2(a_1a_2)^2$.
The divergence appears at order $\lambda^2$: at $N=3$ the gap is the
explicit term $(\lambda^2/2)a_2^2a_3$, and at $N=4$ an explicit nonzero
$\lambda^2$-coefficient gives a counterexample to equality, detailed in
the $N \le 4$ base section to follow.
\end{proposition}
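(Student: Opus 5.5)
The plan is to compare the two automorphisms coordinatewise. Since both are $k$-algebra maps of $\Afring{N}$, they are equal iff they agree on every $a_n$. By definition $\Pshift{\lambda}{N}(a_n)=a_n+\lambda\Dmap{N}(a_n)$, while
\[
\exp(\lambda\Dmap{N})(a_n)=a_n+\lambda\Dmap{N}(a_n)+\tfrac{\lambda^2}{2}\Dmap{N}^2(a_n)+\cdots,
\]
a finite sum because $\Dmap{N}$ is locally nilpotent. This gives the first-order claim at once. It also shows that equality holds exactly when, for every $n$, the tail $\sum_{j\ge2}\frac{\lambda^j}{j!}\Dmap{N}^j(a_n)$ vanishes.

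For the sharpness cases I will compute directly. At $N=1$ we have $\Dmap{1}(a_0)=a_1a_2=0$ and $\Dmap{1}(a_1)=0$ by the boundary convention, so $\Dmap{1}=0$. At $N=2$ we get $\Dmap{2}(a_0)=a_1a_2$, and $\Dmap{2}(a_1)=\Dmap{2}(a_2)=0$. Hence $\Dmap{2}^2(a_0)=\Dmap{2}(a_1)a_2+a_1\Dmap{2}(a_2)=0$, so the two maps agree on generators and are therefore equal. The product rule gives $\Dmap{2}^2(a_0^2)=2(\Dmap{2}a_0)^2+2a_0\Dmap{2}^2a_0=2(a_1a_2)^2\neq0$.

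For $N\ge3$ I would use a single generator, $a_{N-3}$; at $N=3$ this is $a_0$. Its derivatives are
\[
\Dmap{N}(a_{N-3})=a_{N-2}a_{N-1},\qquad \Dmap{N}(a_{N-2})=a_{N-1}a_N,\qquad \Dmap{N}(a_{N-1})=\Dmap{N}(a_N)=0.
\]
It follows that $\Dmap{N}^2(a_{N-3})=a_{N-1}^2a_N$ and $\Dmap{N}^3(a_{N-3})=0$. So
\[
\exp(\lambda\Dmap{N})(a_{N-3})-\Pshift{\lambda}{N}(a_{N-3})=\tfrac{\lambda^2}{2}a_{N-1}^2a_N,
\]
which is nonzero for $\lambda\ne0$ in characteristic zero. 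At $N=3$ this is exactly the stated $(\lambda^2/2)a_2^2a_3$. The same generator settles $N=4$ and every larger $N$ uniformly, though for $N=4$ one could instead cite the explicit $a_0$ coefficient to be given in the base section.

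The main obstacle is only the boundary bookkeeping: I must check that $a_{N+1}=a_{N+2}=0$ kills the right terms, so that the displayed difference is exactly the $\lambda^2$-term with no higher contributions. This is immediate because $\Dmap{N}^3(a_{N-3})=0$.
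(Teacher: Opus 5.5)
Your argument is correct. It departs from the paper in one real respect, the choice of test coordinate.

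The paper, via the base-section Theorems~\ref{thm:N3} and~\ref{thm:N4}, always tests on $a_0$ and details only $N=3$ and $N=4$. At $N=3$ this is exactly your computation, since $a_{N-3}=a_0$. At $N=4$ the paper computes the whole orbit $\Dmap{4}^2(a_0)=a_2^2a_3+a_1a_3a_4$, $\Dmap{4}^3(a_0)=3a_2a_3^2a_4$, $\Dmap{4}^4(a_0)=3a_3^3a_4^2$, $\Dmap{4}^5(a_0)=0$. The defect on $a_0$ therefore has $\lambda^2$-, $\lambda^3$- and $\lambda^4$-terms.

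Your choice of $a_{N-3}$, whose orbit terminates after two steps for every $N$, buys uniformity. A single computation handles all $N\ge3$, including $N\ge5$, which the paper states but does not treat separately. The defect on that coordinate is the single term $\frac{\lambda^2}{2}a_{N-1}^2a_N$. So for a fixed scalar $\lambda$ you never need to rule out cancellation among different powers of $\lambda$. The $a_0$-route leaves that point implicit; one would settle it by noting that $\Dmap{N}^j(a_0)$ is homogeneous of total degree $j+1$.

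What the paper's route gives in return is finer information at $N=4$: how deep the divergence runs in the bottom coordinate $a_0$, which your argument does not record. Your treatment of the first-order claim and of the sharp cases $N=1,2$, including $\Dmap{2}^2(a_0^2)=2(a_1a_2)^2$, is the same direct computation the paper relies on.
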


\subsection*{Notation}
$V_w$ is the Fibonacci-weight-$w$ homogeneous component of the
ambient polynomial ring, so that the invariant piece is
$(A_N^{D_N})_w = \ker(D_N|_{V_w})$. The series $s_6(w)$ counts
monomials of weight $w$ in the six generators (Section
\ref{sec:global}); $s_5(w)$ and $s_K(w)$ are the analogous counts
in the five $K[F_0]$-generators and in $K = k[p,q,J_1,J_2]$
respectively. Pole orders along prime divisors are $e_i =
\operatorname{ord}_{\pi_i}(c)$ in the saturation procedure, with
$e_p = 2$ and $e_q = 4$ the $p$- and $q$-pole orders of $\rho$.
Denominators are $c = p^2q^4$ at $N=5$,
$c_6 = a_5^2a_6^4$ and $c' = c_6^2$ at $N=6$.

\subsection*{Notation table}
\label{subsec:notation-table}
\begin{center}
\refstepcounter{table}\label{tab:notation}
\textbf{Table~\thetable. Notation.}\\[1ex]
\renewcommand{\arraystretch}{1.1}\begin{tabular}{>{\raggedright\arraybackslash}p{5.6cm}>{\raggedright\arraybackslash}p{9.6cm}}
\toprule
symbol & meaning, first defined \\ \midrule
$F_0$, $c = p^2q^4$, $G_0 = F_0-ca_0$ & fifth invariant, denominator, $B$-part (Definition~\ref{def:F0}) \\
$W = F_0/c = a_0+\rho$ & invariant pole element (not a slice); $\rho = G_0/c$ \\
$s = x/(pq)$ & local slice, $\Dmap{5}(s) = 1$ (Lemma~\ref{lem:slice}) \\
$H$ ($H_B$ at $N=6$) & hidden invariant, weight $24$, $a_0$-degree $2$ \\
$C_5 = k[p,q,J_1,J_2,F_0,H]$ & six-generator subalgebra (Theorem~\ref{thm:global-hypersurface}) \\
$R=C_5[F_{46}^{\mathrm{int}}]$, $E=p^2F_{46}^{\mathrm{int}}$, $r=p^2T-E$ & closed ring and torsion relation (Theorems~\ref{thm:quotient-4648}, \ref{thm:relation-ideal}) \\
$S$, $\operatorname{wt}(S) = 28$ & weight-$28$ relation cutting out $C_5$ \\
$J_m = (p^{2m}q^{4\lceil m/2\rceil})$ & top ideal at $a_0$-degree $m$ (Theorem~\ref{def:Jm}) \\
$F_{46}^{\mathrm{int}},F_{46}^{\mathrm{rat}}=F_{46}^{\mathrm{int}}/26460$ & second invariant, weight $46$, $a_0$-degree $4$; $T\mapsto F_{46}^{\mathrm{int}}$ (Theorem~\ref{thm:F46}) \\
$v_p, v_q$ ($v_5, v_6$ at $N=6$) & prime-divisor valuations on $\operatorname{Frac}(B)$ \\
$\deg_{\mathrm{ord}}$, $\operatorname{wt}$ & ordinary total degree; Fibonacci weight $(8,5,3,2,1,1)$ on $(a_0,\dots,a_5)$, extended by $\operatorname{wt}(a_0) = 13$ at $N=6$ \\
$\deg_{a_0}$ & $a_0$-degree (exact on $B[a_0]$; tops multiply); $W$-degree coincides with it ($W = a_0+\rho$ monic) \\
\bottomrule
\end{tabular}
\end{center}
A slice means only $s$ with $\Dmap{5}(s) = 1$; a pole element only
$W$ with $\Dmap{5}(W) = 0$; a denominator is always given with its
formula.

\section{\texorpdfstring{The $N \le 4$ base}{The N ≤ 4 base}}
\label{sec:base}

Throughout, $k$ has characteristic zero and $\Pshift{\lambda}{N}$
denotes the adjacent-quadratic shift of Definition~\ref{def:family}.
The discrete map is the first-order coordinate update of the flow
$\AQflow{t}{N} = \exp(t\Dmap{N})$; for $N \ge 3$ and $\lambda \ne 0$
it is never the exponential itself (Proposition~\ref{prop:defect}):
\[
\Pshift{\lambda}{N} \ne \exp(\lambda \Dmap{N}),
\]
and the two diverge at order $\lambda^2$ in every continuous case below.

\begin{proposition}[$N=2$ kernel]
\label{prop:N2-kernel}
$A_2^{\Dmap{2}} = k[a_1,a_2]$, a polynomial ring.
\end{proposition}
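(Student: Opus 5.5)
At $N=2$ the derivation is $\Dmap{2}(a_0)=a_1a_2$ and $\Dmap{2}(a_1)=\Dmap{2}(a_2)=0$, because of the boundary convention $a_3=a_4=0$. The plan is to regard $A_2$ as the polynomial ring $R[a_0]$ over $R=k[a_1,a_2]$ and to reduce the computation of the kernel to a degree argument in $a_0$.

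First, the inclusion $k[a_1,a_2]\subseteq A_2^{\Dmap{2}}$ is immediate, since $a_1$ and $a_2$ are killed by $\Dmap{2}$ and the kernel is a subalgebra.

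For the reverse inclusion, take $f=\sum_{i=0}^{m} c_i a_0^i$ with $c_i\in R$ and $c_m\ne 0$, and suppose $m\ge 1$. Since $\Dmap{2}$ is $R$-linear and $\Dmap{2}(a_0^i)=i\,a_1a_2\,a_0^{i-1}$,
\[
\Dmap{2}(f)=a_1a_2\sum_{i=1}^{m} i\,c_i\,a_0^{i-1}.
\]
The coefficient of $a_0^{m-1}$ in this expression is $m\,a_1a_2\,c_m$. It is nonzero because $R$ is a domain, $a_1a_2\ne0$, and $m\ne0$ in $k$ by characteristic zero (Remark~\ref{rem:char-zero}). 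Hence $\Dmap{2}(f)\ne 0$. So every invariant has $a_0$-degree $0$, which means it lies in $R$. This gives $A_2^{\Dmap{2}}=k[a_1,a_2]$, and this ring is visibly polynomial because $a_1,a_2$ are independent variables.

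Equivalently, one could observe that $s=a_0/(a_1a_2)$ is a local slice on $A_{a_1a_2}$. The slice theorem then gives $(A_2)_{a_1a_2}^{\Dmap{2}}=R_{a_1a_2}$, and intersecting with $A_2$ yields the same conclusion. The direct degree argument avoids this and is the route I would write.

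There is no real obstacle here. The only point needing care is to cite characteristic zero when asserting that $m\,c_m\ne 0$.
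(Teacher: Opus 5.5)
Your proof is correct and is essentially the paper's argument: write an invariant as a polynomial in $a_0$ over $k[a_1,a_2]$, note that $D_2$ kills the coefficients, and use that $A_2$ is a domain together with characteristic zero to see that the top coefficient $m\,a_1a_2c_m$ of $D_2(f)$ cannot vanish when $m\ge 1$. The slice-based alternative you mention is also valid but, as you say, unnecessary.
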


\begin{proof}
At $N = 2$, $a_3 = a_4 = 0$, so $\Dmap{2}$ kills $a_1$ and $a_2$
while $\Dmap{2}(a_0) = a_1a_2 \ne 0$. Write $F = \sum_{k=0}^m C_k
a_0^k$ with $C_k \in k[a_1,a_2]$; freeness gives $\Dmap{2}(F) =
\sum_{k \ge 1} kC_ka_1a_2a_0^{k-1}$, since $\Dmap{2}$ vanishes on the
coefficients. In the domain $A_2$, $mC_ma_1a_2 = 0$ with $a_1a_2 \ne
0$ forces $C_m = 0$ for $m \ge 1$ (characteristic zero), so $F = C_0
\in k[a_1,a_2]$.
\end{proof}

\begin{theorem}[$N=3$ invariants and Euler gap]
\label{thm:N3}
For $N=3$, with $a_4 = a_5 = 0$, the derivation acts by
$\Dmap{3}(a_0) = a_1a_2$, $\Dmap{3}(a_1) = a_2a_3$,
$\Dmap{3}(a_2) = \Dmap{3}(a_3) = 0$.
\begin{enumerate}
\item The rational function
\[
H_3 = a_0 - \frac{a_1^2}{2a_3}
\]
satisfies $\Dmap{3}(H_3) = 0$; it is a continuous invariant of the flow
$\AQflow{t}{3}$ on the open set $a_3 \ne 0$. (The subscript distinguishes
this $N=3$ rational invariant from the weight-$24$ polynomial hidden
invariant $H$ of the $N=5$ theory.)
\item The discrete map fixes $a_2$ and $a_3$ and sends
$b_0 = a_0 + \lambda a_1a_2$, $b_1 = a_1 + \lambda a_2a_3$.
The rational function
\[
I_0 = a_0 - \frac{a_1^2}{2a_3} + \frac{\lambda}{2}a_1a_2
\]
satisfies $I_0 \circ \Pshift{\lambda}{3} = I_0$ identically on
$a_3 \ne 0$, and $I_0 \to H_3$ as $\lambda \to 0$.
\item Since $\Dmap{3}^2(a_0) = a_2^2a_3$ and $\Dmap{3}^3(a_0) = 0$,
the exact flow on the $a_0$-coordinate is
$\AQflow{\lambda}{3}(a_0) = a_0 + \lambda a_1a_2
+ \frac{\lambda^2}{2}a_2^2a_3$,
while $\Pshift{\lambda}{3}(a_0) = a_0 + \lambda a_1a_2$.
The Euler gap is exactly
\[
\AQflow{\lambda}{3}(a_0) - \Pshift{\lambda}{3}(a_0)
= \frac{\lambda^2}{2}a_2^2a_3.
\]
\end{enumerate}
\end{theorem}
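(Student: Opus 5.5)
The theorem is a direct computation, and I will check its three parts in order using only that $\Dmap{3}$ is a derivation with $\Dmap{3}(a_0)=a_1a_2$, $\Dmap{3}(a_1)=a_2a_3$ and $\Dmap{3}(a_2)=\Dmap{3}(a_3)=0$.

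For part 1, I extend $\Dmap{3}$ to the localization $\Afring{3}[a_3^{-1}]$ by the quotient rule. This is legitimate because $\Dmap{3}(a_3)=0$. The quotient rule then gives
\[
\Dmap{3}\Bigl(\tfrac{a_1^2}{2a_3}\Bigr)=\tfrac{2a_1\cdot a_2a_3}{2a_3}=a_1a_2 .
\]
Subtracting this from $\Dmap{3}(a_0)=a_1a_2$ yields $\Dmap{3}(H_3)=0$. Since $\AQflow{t}{3}=\exp(t\Dmap{3})$ extends to the localization as an algebra automorphism, $H_3$ is fixed by the flow on $a_3\neq0$.

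For part 2, I substitute $b_0=a_0+\lambda a_1a_2$, $b_1=a_1+\lambda a_2a_3$, $b_2=a_2$, $b_3=a_3$ into $I_0$ and expand.
\begin{itemize}
\item The term $b_1^2/(2b_3)$ becomes $a_1^2/(2a_3)+\lambda a_1a_2+\tfrac{\lambda^2}{2}a_2^2a_3$.
\item The term $\tfrac{\lambda}{2}b_1b_2$ becomes $\tfrac{\lambda}{2}a_1a_2+\tfrac{\lambda^2}{2}a_2^2a_3$.
\end{itemize}
Combining these, the $\lambda^2$ terms cancel. The $\lambda$-linear terms are $\lambda a_1a_2-\lambda a_1a_2+\tfrac{\lambda}{2}a_1a_2$, which is exactly the correction already present in $I_0$. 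So the expansion returns $I_0$ itself, and the limit $I_0\to H_3$ as $\lambda\to0$ is immediate. The step most likely to hide a sign error is the choice of the coefficient $+\tfrac{\lambda}{2}a_1a_2$. I would confirm it by writing $I_0=H_3+\mu a_1a_2$ with $\mu$ unknown, imposing invariance, and solving the resulting linear equation, which forces $\mu=\lambda/2$.

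For part 3, I compute the iterates of $\Dmap{3}$ on $a_0$:
\begin{itemize}
\item $\Dmap{3}^2(a_0)=\Dmap{3}(a_1a_2)=a_2\cdot a_2a_3=a_2^2a_3$, since $\Dmap{3}(a_2)=0$;
\item $\Dmap{3}^3(a_0)=0$, since $a_2$ and $a_3$ are killed.
\end{itemize}
The exponential series therefore truncates to $\AQflow{\lambda}{3}(a_0)=a_0+\lambda a_1a_2+\tfrac{\lambda^2}{2}a_2^2a_3$. Subtracting $\Pshift{\lambda}{3}(a_0)=a_0+\lambda a_1a_2$ (Definition~\ref{def:family}) gives the gap $\tfrac{\lambda^2}{2}a_2^2a_3$. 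Because this gap is nonzero for $\lambda\neq0$ in characteristic zero, it also supplies the $N=3$ case of Proposition~\ref{prop:defect}.
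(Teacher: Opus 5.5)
Your proposal is correct and takes the same route as the paper, which gives no separate proof here and treats all three parts as direct expansions: the quotient rule for $H_3$, substitution of the shift into $I_0$, and truncation of the exponential series at $\Dmap{3}^3(a_0)=0$ for the Euler gap. One small precision: a derivation always extends to a localization by the quotient rule, so what $\Dmap{3}(a_3)=0$ actually buys you is that the extension remains locally nilpotent on $a_3\neq 0$, which is what makes $\exp(t\Dmap{3})$ well defined there.
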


\begin{theorem}[$N=4$ invariants and commutator]
\label{thm:N4}
For $N=4$, $\Dmap{4}(a_0) = a_1a_2$, $\Dmap{4}(a_1) = a_2a_3$,
$\Dmap{4}(a_2) = a_3a_4$, $\Dmap{4}(a_3) = \Dmap{4}(a_4) = 0$.
\begin{enumerate}
\item The polynomials $2a_0a_3 - a_1^2$ and $2a_1a_4 - a_2^2$ satisfy
$\Dmap{4}(2a_0a_3 - a_1^2) = \Dmap{4}(2a_1a_4 - a_2^2) = 0$;
they are continuous invariants of $\AQflow{t}{4}$.
\item On the open set $a_3a_4 \ne 0$, with $u = a_2$ and
$c = \lambda a_3a_4 \ne 0$, the rational function
\[
I_1 = a_1 - \frac{a_2^2}{2a_4} + \frac{\lambda}{2}a_2a_3
\]
satisfies $I_1 \circ \Pshift{\lambda}{4} = I_1$ identically.
Its $\lambda \to 0$ limit is the continuous invariant
$a_1 - a_2^2/(2a_4)$.
\item Let $\Delta_c F(u) = F(u+c) - F(u)$ and let
\[
S_1(u) = \frac{u^2-cu}{2c}, \qquad
S_2(u) = \frac{u^3}{3c} - \frac{u^2}{2} + \frac{cu}{6}, \qquad
S_3(u) = \frac{u^4}{4c} - \frac{u^3}{2} + \frac{cu^2}{4},
\]
so that $\Delta_c S_k(u) = u^k$ for $k = 1,2,3$.
Then
\[
I_0 = a_0 - \lambda I_1 S_1(a_2)
- \frac{\lambda}{2a_4}S_3(a_2)
+ \frac{\lambda^2a_3}{2}S_2(a_2)
\]
satisfies $I_0 \circ \Pshift{\lambda}{4} = I_0$ identically on
$a_3a_4 \ne 0$; the degenerate loci $a_3a_4 = 0$ are excluded here.
\item The derivation orbit of $a_0$ is
$\Dmap{4}^2(a_0) = a_2^2a_3 + a_1a_3a_4$,
$\Dmap{4}^3(a_0) = 3a_2a_3^2a_4$,
$\Dmap{4}^4(a_0) = 3a_3^3a_4^2$, $\Dmap{4}^5(a_0) = 0$,
so $\exp(\lambda\Dmap{4})(a_0)$ carries nonzero
$\lambda^2$-, $\lambda^3$-, and $\lambda^4$-terms while
$\Pshift{\lambda}{4}(a_0) = a_0 + \lambda a_1a_2$.
\item Writing $[\Pshift{\lambda}{4},\Pshift{\mu}{4}]_0$ for the ordered
point difference
$(\Pshift{\mu}{4} \circ \Pshift{\lambda}{4}
- \Pshift{\lambda}{4} \circ \Pshift{\mu}{4})_0$,
\[
[\Pshift{\lambda}{4},\Pshift{\mu}{4}]_0
= \lambda\mu(\lambda-\mu)a_2a_3^2a_4,
\]
and components $1$ through $4$ of the two compositions agree.
The reverse ordering gives the negation.
\end{enumerate}
Each identity above is a direct expansion.
\end{theorem}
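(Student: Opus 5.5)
The plan is to verify each item by direct expansion, organized so that only a few short computations are needed. The facts used throughout are that $\Dmap{4}$ is a derivation with $\Dmap{4}(a_3)=\Dmap{4}(a_4)=0$, and that $\Pshift{\lambda}{4}$ fixes $a_3,a_4$. Consequently, on the open set $a_3a_4\neq0$, the shift acts on $u=a_2$ by translation $u\mapsto u+c$ with $c=\lambda a_3a_4$ fixed.

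For item~1, I apply the Leibniz rule. This gives $\Dmap{4}(2a_0a_3-a_1^2)=2a_1a_2a_3-2a_1\cdot a_2a_3=0$, and likewise $\Dmap{4}(2a_1a_4-a_2^2)=2a_2a_3a_4-2a_2\cdot a_3a_4=0$.

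For item~2, I substitute $a_1\mapsto a_1+\lambda a_2a_3$ and $a_2\mapsto a_2+c$. Then I expand $(a_2+c)^2/(2a_4)=a_2^2/(2a_4)+\lambda a_2a_3+\lambda^2a_3^2a_4/2$ and $\tfrac{\lambda}{2}(a_2+c)a_3=\tfrac{\lambda}{2}a_2a_3+\tfrac{\lambda^2}{2}a_3^2a_4$. The $\lambda a_2a_3$ terms and the $\lambda^2a_3^2a_4$ terms cancel pairwise, leaving $I_1$. Setting $\lambda=0$ gives the stated limit.

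Item~3 is the substantive one, and I would prove it by telescoping rather than brute force.

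First, I check $\Delta_cS_k(u)=u^k$ for $k=1,2,3$ by expanding $(u+c)^{j}$; these are discrete antiderivatives. Next, I solve the definition of $I_1$ for $a_1$, namely $a_1=I_1+u^2/(2a_4)-\tfrac{\lambda}{2}a_3u$. This gives
\[
a_1a_2=I_1u+\frac{u^3}{2a_4}-\frac{\lambda a_3}{2}u^2 .
\]
Since $I_1$, $a_3$ and $a_4$ are $\Pshift{\lambda}{4}$-invariant and $u$ shifts by $c$, the correction term $\Phi=I_1S_1(u)+S_3(u)/(2a_4)-\tfrac{\lambda a_3}{2}S_2(u)$ satisfies $\Phi\circ\Pshift{\lambda}{4}-\Phi=a_1a_2$, evaluated at the original point. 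Because $a_0\mapsto a_0+\lambda a_1a_2$, the function $I_0=a_0-\lambda\Phi$ is invariant. The only care needed is to confirm that the increment $\lambda a_1a_2$ uses the pre-shift values, which matches $\Delta_c$ being evaluated at the pre-shift $u$.

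For item~4, I iterate the Leibniz rule:
\begin{itemize}
\item $\Dmap{4}(a_1a_2)=a_2^2a_3+a_1a_3a_4$;
\item then $\Dmap{4}$ of that equals $2a_2a_3\cdot a_3a_4+a_2a_3\cdot a_3a_4=3a_2a_3^2a_4$;
\item then $3a_3^3a_4^2$;
\item then $0$.
\end{itemize}
The exponential series therefore has the stated nonzero $\lambda^2$, $\lambda^3$ and $\lambda^4$ terms, while $\Pshift{\lambda}{4}(a_0)$ stops at order $\lambda$.

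For item~5, I compose coordinatewise. Components $3$ and $4$ are fixed. Component $2$ gives $a_2+(\lambda+\mu)a_3a_4$, and component $1$ gives $a_1+(\lambda+\mu)a_2a_3+\lambda\mu a_3^2a_4$; both are symmetric in $\lambda,\mu$. In component $0$, the term $\mu\,(a_1+\lambda a_2a_3)(a_2+\lambda a_3a_4)$ contributes $\lambda\mu a_1a_3a_4$ and $\lambda\mu a_2^2a_3$, which are symmetric, together with the asymmetric $\lambda^2\mu a_2a_3^2a_4$. Subtracting the swapped order leaves $\lambda\mu(\lambda-\mu)a_2a_3^2a_4$, and reversing the order negates it.

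I expect the main obstacle to be bookkeeping rather than ideas. The first point is fixing the composition convention in item~5 so that the sign matches the stated ordering. The second is ensuring in item~3 that the telescoping is applied with invariant coefficients. I would settle both by writing $\Pshift{\lambda}{4}$ as a substitution on coordinates and tracking which point each factor is evaluated at.
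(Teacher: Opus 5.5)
Your proposal is correct and takes the paper's route. The paper proves every item by direct expansion, and your telescoping argument for item~3 is just the natural way to organize that expansion through the antidifference property $\Delta_c S_k(u)=u^k$ that the statement already supplies, with $I_1$, $a_3$, $a_4$, $c$ fixed by the shift and the increment $\lambda a_1a_2$ correctly taken at the pre-shift point. Your point-map composition convention in item~5 is the one that gives the stated sign $\lambda\mu(\lambda-\mu)$, and the ``continuous invariant'' clauses in items~1--2 follow at once from $D_4(f)=0$; in item~2 the limit is $(2a_1a_4-a_2^2)/(2a_4)$, a quotient of kernel elements from item~1.
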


\begin{lemma}[Discrete-to-continuous bridge]
\label{lem:bridge}
Work in characteristic $\ne 2,3$. Let $U$ be a nonempty open set on
which all denominators below are nonzero, and let $I_\lambda$,
rational in $(\lambda, a)$ and regular at $\lambda = 0$ on $U$ after
clearing denominators, satisfy
$I_\lambda \circ \Pshift{\lambda}{N} = I_\lambda$ on $U$ as a rational
identity. Let $H = I_\lambda|_{\lambda = 0}$ on $U$. Then
$\Dmap{N}(H) = 0$ on $U$.
\end{lemma}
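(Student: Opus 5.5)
The plan is to differentiate the invariance identity in $\lambda$ at $\lambda=0$. Write $I(\lambda,a)=I_\lambda(a)$. The hypothesis says $I(\lambda,\Pshift{\lambda}{N}(a))=I(\lambda,a)$ as a rational identity in $(\lambda,a)$ on $U$. Regularity at $\lambda=0$ after clearing denominators lets us write $I=N(\lambda,a)/M(\lambda,a)$ with $M(0,a)\not\equiv 0$ on $U$. Both sides are then rational functions of $\lambda$ that are regular at $\lambda=0$ on a (possibly shrunken) nonempty open subset $U'\subseteq U$. This shrinking is harmless, because $\Dmap{N}(H)$ is a rational function, and vanishing on a nonempty open set forces vanishing on $U$.

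\emph{Step 1 (first-order expansion).} Since $\Pshift{\lambda}{N}(a)_n=a_n+\lambda\,\Dmap{N}(a_n)$ is exactly linear in $\lambda$, a Taylor expansion (formal, in $k[[\lambda]]$ after localizing at $M(0,a)$) gives
\[
I(\lambda,\Pshift{\lambda}{N}(a)) = I(\lambda,a) + \lambda\sum_{n=0}^N \frac{\partial I}{\partial a_n}(\lambda,a)\,a_{n+1}a_{n+2} + O(\lambda^2).
\]
Here $a_{N+1}=a_{N+2}=0$, as in Definition~\ref{def:family}. The sum is precisely $\Dmap{N}$ applied to $I(\lambda,\cdot)$ with $\lambda$ held constant, since $\Dmap{N}$ extends uniquely to the localization as a derivation.

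\emph{Step 2 (compare coefficients).} Subtract $I(\lambda,a)$ from both sides. The invariance hypothesis gives
\[
0=\lambda\,\Dmap{N}(I(\lambda,\cdot))+O(\lambda^2)
\]
in $\mathcal{O}[[\lambda]]$, where $\mathcal{O}$ is the localization of $A_N$ at $M(0,a)$. Divide by $\lambda$ and set $\lambda=0$. Because $\Dmap{N}$ does not involve $\lambda$, it commutes with specialization at $\lambda=0$, so $\Dmap{N}(I(0,\cdot))=\Dmap{N}(H)=0$ on $U'$, and hence on $U$.

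\textbf{Main obstacle.} The step needing care is justifying the expansion for rational rather than polynomial $I$. This is where ``regular at $\lambda=0$'' enters. I would handle it by writing $N(\lambda,a+\lambda\delta)=N(\lambda,a)+\lambda\sum_n\delta_n\partial_nN+\lambda^2(\cdots)$ exactly, as an identity of polynomials (Taylor expansion of a polynomial), and doing the same for $M$. I would then cross-multiply the invariance identity to get $N(\lambda,\Pshift{\lambda}{N}a)\,M(\lambda,a)=N(\lambda,a)\,M(\lambda,\Pshift{\lambda}{N}a)$, read off the $\lambda^1$ coefficient after cancelling the $\lambda^0$ terms, and use the quotient rule to recover $M(0,a)^2\,\Dmap{N}(N/M)|_{\lambda=0}=0$. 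Since $M(0,a)\neq0$ on $U'$, the claim follows.

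The characteristic hypothesis is not actually needed for this first-order argument. Everything is polynomial Taylor expansion to order one, with no division by $2$ or $3$.
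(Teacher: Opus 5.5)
Your proposal is correct and is essentially the paper's argument. The paper likewise clears denominators and differentiates the invariance identity formally in $\lambda$ at $\lambda=0$; there the chain-rule terms $(\partial_\lambda I)_0$ appear on both sides and cancel, which your expansion in the $a$-slot with $\lambda$ held fixed avoids from the start, leaving $\Dmap{N}(H)=0$. Your remark that the characteristic hypothesis is not needed also agrees with the paper, which uses it only for the divisions by $2,3$ in the instances $S_1,S_2,S_3$.
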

\begin{proof}
Clear denominators to a polynomial identity over
$k[\lambda, a, \mathrm{denom}^{-1}]$ and differentiate formally in
$\lambda$ (a $k$-linear derivation; the divisions by $2,3$ in
$S_1,S_2,S_3$ use the hypothesis), then set $\lambda = 0$. With
$P_0 = \mathrm{id}$ and formal derivative
$\frac{d}{d\lambda}|_0 \Pshift{\lambda}{N}(a)_n = a_{n+1}a_{n+2} =
\Dmap{N}(a_n)$, the formal chain rule gives
$(\partial_\lambda I)_0 + dH(\Dmap{N}) = (\partial_\lambda I)_0$,
hence $\Dmap{N}(H) = 0$. No topology or limits are used.
Theorems~\ref{thm:N3}
and~\ref{thm:N4} are the instances: the $N=3$ invariant is
polynomial in $\lambda$, and the $N=4$ cleared forms
$\lambda S_1 = u^2/(2a_3a_4) - \lambda u/2$,
$\lambda S_3/(2a_4) = u^4/(8a_3a_4^2) - \lambda u^3/(4a_4)
+ \lambda^2u^2a_3/8$ and $\lambda^2S_2$ vanishes at $\lambda = 0$ are regular at
$\lambda = 0$ with limit
$L = a_0 - a_1a_2^2/(2a_3a_4) + a_2^4/(8a_3a_4^2)$ satisfying
$\Dmap{4}(L) = 0$.
\end{proof}

\begin{remark}[Finite-field $N=3$ orbit lengths]
\label{thm:finiteN3}
Let $\Pshift{\lambda}{3}$ act over a finite field $\mathbb{F}_q$ of
characteristic $p$, with $\lambda \ne 0$ and
$c = \lambda a_2a_3$. Then $a_3' = a_3$, $a_2' = a_2$,
$a_1' = a_1 + c$, $a_0' = a_0 + \lambda a_1a_2$, and:
\begin{enumerate}
\item If $c = 0$, then $a_1$ is fixed; the orbit length is $1$ if
$\lambda a_1a_2 = 0$ and exactly $p$ otherwise, since the additive
group of $\mathbb{F}_q$ has exponent $p$.
\item If $c \ne 0$ and $p$ is odd, the orbit length is exactly $p$:
over one full $a_1$-period the accumulated shift of $a_0$ is
$\lambda a_2(pa_1 + c\,p(p-1)/2) = 0$.
\item If $c \ne 0$ and $p = 2$, the orbit length is exactly $4$:
over two steps the $a_0$-shift is $\lambda^2a_2^2a_3 \ne 0$,
and doubling it gives $0$ in characteristic $2$.
\end{enumerate}
The exact counts for $\lambda \ne 0$, $q = p^f$, are:
$q^3 + q(q-1)$ fixed points; $q(q-1)^2/p$ cycles of length $p$ from
the $c = 0$ stratum; and $(q-1)^2q^2/L$ cycles of length $L$ from the
$c \ne 0$ stratum, with $L = p$ for $p$ odd and $L = 4$ for $p = 2$.
The classification is proved by the algebraic argument above for
every finite field. Exhaustive enumeration over the tested fields
$\mathbb{F}_3$, $\mathbb{F}_5$, $\mathbb{F}_7$, $\mathbb{F}_{11}$, $\mathbb{F}_2$, and $\mathbb{F}_4$ confirms the
lengths and counts on those fields; enumeration over tested fields
does not imply all-$\mathbb{F}_q$ results beyond the proof.
\end{remark}

Below $N = 5$ nothing hides. The next section computes the $a_0$-free base on which everything turns.

\section{\texorpdfstring{The $N=5$ five-variable kernel}{The N=5 five-variable kernel}}
\label{sec:kernel}

Deleting $a_0$ leaves the five-variable problem. Put
\[
x = a_3, \qquad y = a_2, \qquad z = a_1, \qquad p = a_4, \qquad q = a_5,
\]
\[
B = k[x,y,z,p,q] = k[a_1,\dots,a_5], \qquad K = B^{\Dmap{5}},
\]
with
\[
\Dmap{5}(x) = pq, \qquad \Dmap{5}(y) = px, \qquad
\Dmap{5}(z) = xy, \qquad \Dmap{5}(p) = \Dmap{5}(q) = 0.
\]
Let
\[
J_1 = 2pz - y^2 = 2a_1a_4 - a_2^2, \qquad
J_2 = 2qy - x^2 = 2a_2a_5 - a_3^2,
\]
and write $K_0 = k[p,q,J_1,J_2]$ for the subalgebra they generate (henceforth $K=K_0$ once Theorem~\ref{thm:descent} identifies it with $B^{\Dmap{5}}$; the symbol $R$ is reserved for the closed ring $R=C_5[F_{46}^{\mathrm{int}}]$).

\begin{lemma}[Kernel generators]
\label{lem:kernel-gens}
The four polynomials $p$, $q$, $J_1$, $J_2$ lie in $K$:
\[
\Dmap{5}(p) = \Dmap{5}(q) = \Dmap{5}(J_1) = \Dmap{5}(J_2) = 0,
\]
since $\Dmap{5}(J_1) = 2xy\cdot p - 2y\cdot px = 0$ and
$\Dmap{5}(J_2) = 2pq\cdot x - 2x\cdot pq = 0$.
Hence $K_0 \subseteq K$.
Moreover $p$, $q$, $J_1$, $J_2$ are algebraically independent:
the $4 \times 4$ Jacobian minor in columns $(a_1,a_2,a_4,a_5)$ is
$4a_4a_5 \ne 0$ over $k$ (characteristic zero for the scalar $4$).
So $K_0$ is a polynomial ring in four variables.
\end{lemma}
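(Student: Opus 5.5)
The lemma has three parts: the four derivation identities, the inclusion \(K_0 \subseteq K\), and algebraic independence. I will check them in that order.

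\textbf{Derivation identities.} Since \(a_6=a_7=0\), the derivation kills \(p=a_4\) and \(q=a_5\). For \(J_1=2pz-y^2\), the Leibniz rule together with \(\Dmap{5}(z)=xy\) and \(\Dmap{5}(y)=px\) gives
\[
\Dmap{5}(J_1)=2p\cdot xy-2y\cdot px=0 .
\]
For \(J_2=2qy-x^2\), the same rule with \(\Dmap{5}(y)=px\) and \(\Dmap{5}(x)=pq\) gives
\[
\Dmap{5}(J_2)=2q\cdot px-2x\cdot pq=0 .
\]
The kernel of a derivation is a \(k\)-subalgebra, so the \(k\)-algebra generated by \(p,q,J_1,J_2\) lies in \(K\). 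This proves \(K_0\subseteq K\).

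\textbf{Algebraic independence.} I use the Jacobian criterion in characteristic zero: if some \(4\times4\) minor of the Jacobian of \((p,q,J_1,J_2)\) is nonzero, the four polynomials are algebraically independent. The reason is that a minimal-degree relation \(\Phi(p,q,J_1,J_2)=0\) would, after differentiating, give \(\sum_i (\partial_i\Phi)(f)\,\nabla f_i=0\). In characteristic zero some \(\partial_i\Phi\) is a nonzero polynomial of smaller degree, so by minimality \((\partial_i\Phi)(f)\neq0\). This makes the gradients linearly dependent over \(\operatorname{Frac}(B)\), contradicting the nonzero minor.

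Take the columns ordered as \((a_1,a_2,a_4,a_5)\).
\begin{itemize}
\item \(p=a_4\) has gradient row \((0,0,1,0)\).
\item \(q=a_5\) has gradient row \((0,0,0,1)\).
\item \(J_1=2a_1a_4-a_2^2\) has row \((2a_4,\,-2a_2,\,2a_1,\,0)\).
\item \(J_2=2a_2a_5-a_3^2\) has row \((0,\,2a_5,\,0,\,2a_2)\).
\end{itemize}
Expanding the determinant along the \(p\) and \(q\) rows removes the \(a_4\) and \(a_5\) columns. What remains is the \(2\times2\) block of \(J_1,J_2\) in the columns \((a_1,a_2)\):
\[
\begin{pmatrix}2a_4&-2a_2\\0&2a_5\end{pmatrix},
\]
with determinant \(4a_4a_5\), up to sign. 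This is nonzero because \(4\neq0\) in characteristic zero. Hence \(K_0\) is a polynomial ring in four variables.

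\textbf{Main obstacle.} Nothing here is hard. The only points needing care are justifying the Jacobian criterion and noting where characteristic zero is used: both for the scalar \(4\) and for the criterion itself. If needed, independence also follows from a leading-term argument, since \(a_4,a_5,a_1a_4,a_2a_5\) have algebraically independent initial monomials under a suitable order.
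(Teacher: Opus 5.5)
Your proposal is correct and follows essentially the paper's argument: the same Leibniz computations for $J_1$ and $J_2$, and the same $4\times4$ Jacobian minor $4a_4a_5$ in columns $(a_1,a_2,a_4,a_5)$. Your minimal-degree-relation justification of the Jacobian criterion is the one the paper itself gives in the proof of Proposition~\ref{prop:F0-independence}.
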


\begin{lemma}[Local slice and localized kernel]
\label{lem:slice}
On the localization $B_{pq}$ the element
\[
s = \frac{x}{pq}
\]
satisfies $\Dmap{5}(s) = 1$; it is a local slice.
With $Y = J_2/(2q)$ and $Z = (J_1 + Y^2)/(2p)$, one has
$\Dmap{5}(Y) = \Dmap{5}(Z) = 0$ and the reconstruction identities
\[
x = pqs, \qquad
y = Y + \tfrac12 p^2qs^2, \qquad
z = Z + \tfrac12 pqYs^2 + \tfrac18 p^3q^2s^4,
\]
as direct expansion shows. Hence $B_{pq} = K_{0,pq}[s]$, and since
$D_5(s) = 1$, the local-slice theorem for locally nilpotent
derivations (Freudenburg~\cite{freudenburg2017}, Corollary~1.26)
gives the localized kernel
\[
B_{pq}^{\Dmap{5}} = K_{0,pq} = k[p,q,J_1,J_2,(pq)^{-1}].
\]
In particular every $f \in K$ has the form $f = P/(p^aq^b)$ for some
$P \in K_0$ and $a, b \ge 0$ (independent exponents: $1/p = q\cdot
(pq)^{-1}$ and $1/q = p\cdot(pq)^{-1}$ in $K_{0,pq}$).
\end{lemma}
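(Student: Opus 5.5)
The plan is to verify the derivative identities directly, then obtain $B_{pq}=K_{0,pq}[s]$ from the reconstruction identities, and finally apply the slice theorem.

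\textbf{Slice and invariants.} Since $p,q$ are killed by $\Dmap{5}$, it extends uniquely to $B_{pq}$ and
\[
\Dmap{5}(s)=\Dmap{5}(x)/(pq)=1 .
\]
Next, $Y=J_2/(2q)$ and $Z=(J_1+Y^2)/(2p)$ are built from $p,q,J_1,J_2$ by ring operations and division by $p$, $q$. These are all kernel elements, and the kernel of a derivation on a localization at kernel elements is closed under such operations. Hence $\Dmap{5}(Y)=\Dmap{5}(Z)=0$ by Lemma~\ref{lem:kernel-gens}.

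\textbf{Reconstruction identities.} Solve for $y$ and $z$ by direct expansion.
\begin{itemize}
\item From $J_2=2qy-x^2$ we get $y=Y+x^2/(2q)=Y+\tfrac12 p^2qs^2$.
\item From $J_1=2pz-y^2$ we get $z=(J_1+y^2)/(2p)$. Substituting $y^2=Y^2+p^2qYs^2+\tfrac14p^4q^2s^4$ gives
\[
z=Z+\tfrac12 pqYs^2+\tfrac18p^3q^2s^4 .
\]
\end{itemize}
Thus $x,y,z\in K_{0,pq}[s]$, and so $B_{pq}\subseteq K_{0,pq}[s]$. The reverse inclusion holds because $s\in B_{pq}$ and $K_0\subseteq B$. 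Therefore $B_{pq}=K_{0,pq}[s]$.

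\textbf{Kernel.} The derivation $\Dmap{5}$ is locally nilpotent on $B$, hence on $B_{pq}$, since $p,q$ are in the kernel. With the slice $s$, Freudenburg's Corollary~1.26 gives $B_{pq}=B_{pq}^{\Dmap{5}}[s]$ with $s$ transcendental over the kernel, and the kernel equals the image of the Dixmier map $\pi_s$.

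Because $K_{0,pq}$ lies in the kernel, $\pi_s$ fixes it, and $\pi_s(s)=0$. Hence $\pi_s(B_{pq})=\pi_s(K_{0,pq}[s])=K_{0,pq}$. So $B_{pq}^{\Dmap{5}}=K_{0,pq}$.

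Alternatively, argue by degree in $s$. Every element of $B_{pq}$ is a polynomial $\sum c_i s^i$ with $c_i\in K_{0,pq}$, and $\Dmap{5}$ acts as $d/ds$. Characteristic zero then forces $c_i=0$ for $i\ge1$ in any kernel element. This step needs $s$ to be algebraically independent over $K_{0,pq}$. That holds by a transcendence degree count: $B_{pq}$ has transcendence degree $5$ and $K_0$ has transcendence degree $4$ by Lemma~\ref{lem:kernel-gens}.

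\textbf{Consequence for $K$.} Any $f\in K$ lies in $B_{pq}^{\Dmap{5}}=K_{0,pq}$, so $f=P/(pq)^m$ with $P\in K_0$. Allowing independent exponents $a,b$ is then immediate.

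The only mildly delicate point is justifying that $s$ is transcendental over $K_{0,pq}$, or else invoking the slice theorem in the form that already includes it. Everything else is routine expansion.
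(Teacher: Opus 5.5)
Your proposal is correct and follows the paper's own route. Like the paper, you verify $D_5(s)=1$ and the reconstruction identities by direct expansion to get $B_{pq}=K_{0,pq}[s]$, then apply the local-slice theorem; your Dixmier-map computation $\pi_s(K_{0,pq}[s])=K_{0,pq}$ simply spells out that last step, and your alternative $d/ds$ argument (with $s$ transcendental over $K_{0,pq}$ by the transcendence-degree count $5$ versus $4$) is also valid and avoids citing the slice theorem altogether.
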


The descent from $K_{0,pq}$ to $K_0$ is controlled by the prime divisors
$(p)$ and $(q)$. Since $B$ is a polynomial ring, hence a unique
factorization domain, each defines a discrete valuation, still denoted
$v_p$ and $v_q$, on $\operatorname{Frac}(B)$, with valuation rings
\[
V_p = B_{(p)} = \{h \in \operatorname{Frac}(B) : v_p(h) \ge 0\},
\qquad
V_q = B_{(q)} = \{h \in \operatorname{Frac}(B) : v_q(h) \ge 0\},
\]
the localizations of $B$ at the prime ideals $(p)$ and $(q)$.

\begin{lemma}[Contraction with divisibility identities]
\label{lem:contraction}
Let $\varphi \colon k[p,q,U,V] \to B$ be the substitution map
$(p,q,U,V) \mapsto (p,q,J_1,J_2)$, injective by
Lemma~\ref{lem:kernel-gens}. Then
\[
\varphi^{-1}((p)B) = (p), \qquad \varphi^{-1}((q)B) = (q).
\]
Indeed, reducing modulo $p$ gives $J_1 \equiv -y^2$ and
$J_2 \equiv 2qy-x^2$, with the divisibility identities
\[
(J_1 + y^2)/p = 2z \in B, \qquad (J_2 + x^2)/q = 2y \in B.
\]
The map $\alpha \colon k[q,U,V] \to B/(p) = k[x,y,z,q]$,
$(q,U,V) \mapsto (q,-y^2,2qy-x^2)$, is injective: writing a relation
as $\sum_{i=0}^n f_i(q,U)V^i$ with $V = 2qy-x^2 = -x^2+2qy$,
the leading term in $x$ is $(-1)^n f_n(q,-y^2)x^{2n}$;
$f_n(q,-y^2) \ne 0$ if $f_n \ne 0$, since
$\sum_j g_j(q)(-y^2)^j = 0$ forces each $g_j = 0$ by $y$-degree,
so $f_n = 0$ by induction on $n$.
Thus $P(0,q,J_1,J_2) = 0$ in $B/(p)$ implies $P(0,q,U,V) = 0$ as a
polynomial, so $p \mid P$ already in $K_0$.
Modulo $q$, $J_1 \equiv 2pz-y^2$ and $J_2 \equiv -x^2$,
and $\beta \colon k[p,U,V] \to B/(q) = k[x,y,z,p]$,
$(p,U,V) \mapsto (p,2pz-y^2,-x^2)$, is injective:
write $\sum_{j=0}^m g_j(p,V)U^j$ with $U = 2pz-y^2$;
the leading term in $z$ is $g_m(p,-x^2)(2p)^m z^m$,
and $g_m(p,-x^2) \ne 0$ if $g_m \ne 0$ by $x$-degree, so
induction on $m$ gives injectivity, whence $q \mid P$ in $K_0$.
Moreover $p \nmid J_1, J_2, q$ and $q \nmid J_1, J_2, p$ in $B$,
so the divisors are genuine, e.g.\ $J_1/p \notin B$.
\end{lemma}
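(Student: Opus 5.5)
The inclusions $(p)\subseteq\varphi^{-1}((p)B)$ and $(q)\subseteq\varphi^{-1}((q)B)$ are immediate, since $\varphi(p)=p$ and $\varphi(q)=q$. For the reverse inclusion at $p$, I will reduce to an injectivity statement modulo $p$.

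Take $P\in k[p,q,U,V]$ with $\varphi(P)\in pB$. Split
\[
P=pP_1+P_0(q,U,V), \qquad P_0(q,U,V)=P(0,q,U,V).
\]
Then $P_0(q,J_1,J_2)\in pB$. Now pass to $B/(p)=k[x,y,z,q]$, which is again a polynomial ring. There $J_1\equiv -y^2$ and $J_2\equiv 2qy-x^2$, so $\alpha(P_0)=0$. Hence it suffices to show that $\alpha$ is injective: that gives $P_0=0$ and so $p\mid P$.

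For injectivity of $\alpha$, expand a kernel element as $\sum_{i=0}^n f_i(q,U)V^i$ with $f_n\neq 0$. The substitution $V\mapsto 2qy-x^2$ is monic of degree $2$ in $x$ up to the sign $-1$, while $q$ and $U\mapsto -y^2$ do not involve $x$. So the $x^{2n}$-coefficient of the image is $(-1)^n f_n(q,-y^2)$. This is nonzero because $q$ and $y^2$ are algebraically independent: compare $y$-degrees in $\sum_j g_j(q)(-y^2)^j$. This contradicts the image being zero, so $n$ cannot have $f_n\ne0$, and induction on $n$ kills all coefficients.

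The $q$-side is symmetric. Write $P=qP_1+P(p,0,U,V)$ and reduce modulo $q$ to $B/(q)=k[x,y,z,p]$. There $J_1\equiv 2pz-y^2$ and $J_2\equiv -x^2$. Now organize by powers of $U$ and use the $z$-degree: the image of $U^m$ has $z^m$-coefficient $(2p)^m$. So a top coefficient $g_m(p,V)$ contributes $g_m(p,-x^2)(2p)^m z^m$. This is nonzero since $p$ and $x^2$ are algebraically independent, and since the leading $z$-coefficient is a product of nonzero elements in a domain. Induction on $m$ again gives injectivity of $\beta$, hence $q\mid P$.

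For the final non-divisibility remarks, I will reduce each element modulo the relevant prime. Modulo $p$ we have $J_1\equiv -y^2\neq0$, $J_2\equiv 2qy-x^2\neq0$ and $q\neq 0$. Modulo $q$ we have $J_1\equiv 2pz-y^2$, $J_2\equiv -x^2$ and $p$, all nonzero. So none of these elements is divisible by $p$ (respectively $q$) in $B$.

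The only delicate point is the leading-term argument in the injectivity proofs. One must check that the chosen variable ($x$ for $\alpha$, $z$ for $\beta$) occurs only through the single substituted generator, with a unit or nonzero-divisor leading coefficient. That is exactly what the explicit forms of $J_1$ and $J_2$ modulo $p$ and $q$ provide. Everything else is bookkeeping in a polynomial ring.
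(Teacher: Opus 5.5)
Your proposal is correct and takes the same approach as the paper. You split off $P(0,q,U,V)$ and reduce modulo $p$ (resp.\ $q$), prove injectivity of $\alpha$ via the leading $x$-term $(-1)^n f_n(q,-y^2)x^{2n}$ (resp.\ of $\beta$ via the leading $z$-term $g_m(p,-x^2)(2p)^m z^m$), and read off the non-divisibility claims from the same reductions.
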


\begin{theorem}[Descent $B^{\Dmap{5}} = K$]
\label{thm:descent}
With $B$, $K$, $K_0$ as above,
\[
\boxed{B^{\Dmap{5}} = K = k[p,q,J_1,J_2].}
\]
Explicitly, the intersection obligation holds:
\[
B \cap K_{0,pq} = K_0 \quad \text{(hence } B \cap K_{pq} = K\text{ once $K_0=K$)}.
\]
\end{theorem}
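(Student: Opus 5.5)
The plan is to sandwich $K$ between $K_0$ and $B\cap K_{0,pq}$, and then show that this intersection is already $K_0$ by stripping off powers of $p$ and $q$ one at a time using Lemma~\ref{lem:contraction}.

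\textbf{Reduction.} Lemma~\ref{lem:kernel-gens} gives $K_0\subseteq K$. For the reverse inclusion, take $f\in K=B^{\Dmap{5}}$. Since $f$ is invariant in $B_{pq}$, Lemma~\ref{lem:slice} places it in $B_{pq}^{\Dmap{5}}=K_{0,pq}$. Hence $K\subseteq B\cap K_{0,pq}$, and it suffices to prove the intersection identity $B\cap K_{0,pq}=K_0$. The parenthetical version then follows because $K_{pq}=K_{0,pq}$ once $K=K_0$.

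\textbf{Stripping denominators.} Let $f\in B\cap K_{0,pq}$. Write $f=P/(p^aq^b)$ with $P\in K_0$, and choose $a+b$ minimal among all such representations. Suppose $a\ge 1$. Then $P=p^aq^bf$ lies in $pB$. Identify $P$ with its preimage under the injective map $\varphi$, so $P=\varphi(\tilde P)$ with $\tilde P\in k[p,q,U,V]$. Then $\tilde P\in\varphi^{-1}((p)B)=(p)$ by Lemma~\ref{lem:contraction}. So $P=pP'$ with $P'\in K_0$, and $f=P'/(p^{a-1}q^b)$, which contradicts minimality. The symmetric argument with $\varphi^{-1}((q)B)=(q)$ rules out $b\ge1$. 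Therefore $a=b=0$ and $f=P\in K_0$.

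\textbf{Where the work lies.} The whole weight of the argument sits in the two contraction identities of Lemma~\ref{lem:contraction}. These are the genuine obstacle, and they are exactly the injectivity of $\alpha$ and $\beta$ on the reductions modulo $p$ and modulo $q$. Since that lemma is already proved, the descent itself is formal. The remaining checks are small:
\begin{itemize}
\item divisibility in $B$ restricts correctly to $K_0$, which holds because $\varphi$ is injective and $B$ is a domain;
\item the separate exponents $a,b$ are legitimate, which is recorded at the end of Lemma~\ref{lem:slice}.
\end{itemize}
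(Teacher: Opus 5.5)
Your proposal is correct and is essentially the paper's proof: Lemma~\ref{lem:slice} places $K$ inside $K_{0,pq}$, and the contraction identities of Lemma~\ref{lem:contraction} strip factors of $p$ and $q$ from the numerator $P=p^aq^bf$ until $f\in K_0$. Taking a representation with $a+b$ minimal, instead of iterating, and proving the intersection identity before deducing $K\subseteq K_0$ is only a cosmetic reorganization; it also makes the paper's appeal to $p,q$ being non-associated primes of $K_0$ unnecessary.
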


\begin{proof}
$K_0 \subseteq K$ is Lemma~\ref{lem:kernel-gens} and
$K \subseteq K_{0,pq}$ is Lemma~\ref{lem:slice}.
Let $f \in K$, so $f = P/(p^aq^b)$ with $P \in K_0$.
Since $f \in B$, we have $P = p^aq^bf \in B$.
If $a \ge 1$ then $P \in (p)B$, so by
Lemma~\ref{lem:contraction} $P = pP_1$ with $P_1 \in K_0$; cancelling
and repeating gives $P \in (p)^aK_0$.
The same argument with $q$ gives $P \in (p)^a(q)^bK_0$ (here $p,q$ are nonassociated primes in the UFD $K_0$, so $p^a\mid P$ and $q^b\mid P$ imply $p^aq^b\mid P$), whence
$f \in K_0$. Thus $K \subseteq K_0$, and $B \cap K_{0,pq} = K_0$ follows by
the same descent applied to any $f \in B \cap K_{0,pq}$.
\end{proof}

\begin{lemma}[Dixmier divisibility for the $a_1a_2$-image]
\label{lem:dixmier-divisibility}
Let $\mu \in K$ and $\nu \in B$ satisfy
$a_1a_2\mu + \Dmap{5}(\nu) = 0$. Then $c = p^2q^4$ divides $\mu$
in $K$.
\end{lemma}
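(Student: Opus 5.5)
The plan is to reduce the divisibility to two separate valuation statements, $v_q(\mu)\ge 4$ and $v_p(\mu)\ge 2$, and to prove each by a residue argument in the slice chart of Lemma~\ref{lem:slice}.

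\textbf{Reduction to valuations.} By Lemma~\ref{lem:contraction}, iterated as in the proof of Theorem~\ref{thm:descent}, an element of $K$ that is divisible by $p^2$ (resp.\ $q^4$) in $B$ is already divisible by it in $K$. Since $p,q$ are nonassociated primes of the UFD $K$, it suffices to show $v_q(\mu)\ge 4$ and $v_p(\mu)\ge 2$ as valuations on $\operatorname{Frac}(B)$.

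\textbf{Integrating in the slice chart.} In $B_{pq}=K_{pq}[s]$ the derivation $\Dmap{5}$ is $d/ds$. Using the reconstruction identities of Lemma~\ref{lem:slice}, expand $a_1a_2=zy$ as a polynomial in $s$ over $K_{pq}$. Its top term is $\tfrac1{16}p^5q^3s^6$. Let $\Phi\in B_{pq}$ be its $s$-antiderivative with zero constant term. Then the hypothesis gives
\[
-\nu=\mu\Phi+\kappa,\qquad \kappa\in K_{pq},
\]
and $\nu\in B$ becomes the condition $\mu\Phi+\kappa\in V_p\cap V_q$. Rewriting $\Phi$ in $x,y,z$ (with $s=x/(pq)$), I expect $\Phi=N/(p^2q^4)$ with $N\in B$. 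The most polar part comes from $\tfrac1{112}p^5q^3s^7=x^7/(112\,p^2q^4)$. This is the same pole structure as $\rho=G_0/c$.

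\textbf{Main step: the $q$-adic residue.} Suppose $e=v_q(\mu)<4$. The $q$-initial form of $\mu\Phi$ is then $\operatorname{in}_q(\mu)\cdot\operatorname{in}_q(q^4\Phi)\,q^{e-4}$. The key claims are:
\begin{itemize}
\item $\operatorname{in}_q(q^4\Phi)$ is a nonzero multiple of $x^7/p^2$, possibly after correction by lower $s$-terms that are even in $x$ modulo $q$;
\item it does not lie in the residue field $L_q=k(p,u,x^2)$, which contains the residues of all elements of $K_{pq}$ (because $J_2\equiv -x^2$ mod $q$).
\end{itemize}
Since $\operatorname{in}_q(\mu)$ lies in $L_q$, the negative-order term of $\mu\Phi$ cannot be cancelled by $\kappa$. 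That would force $v_q(\nu)<0$, a contradiction.

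The obstacle is precisely the caveat in the introduction: the residue $r_q$ satisfies $r_q^2\in L_q$. So the argument must use $x$-parity, namely oddness in $x$, rather than transcendence. It must also control the lower $s$-terms of $\Phi$, which could contribute at the same $q$-order. I would handle this by a weight and $x$-parity bookkeeping of each monomial $s^j$ coefficient modulo $q$. If that fails, the fallback is to compute the polar parts of $\Phi$ at orders $q^{-4},\dots,q^{-1}$ explicitly and check their odd-$x$ components one by one.

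\textbf{The $p$-side and conclusion.} The $p$-side runs in parallel with $p$-pole order $2$. Modulo $p$ one has $J_1\equiv -y^2$, so $K$ has residues in $k(q,y^2,J_2)$, while the initial form of $p^2\Phi$ should involve an odd power of $y$ (or of $x$, after substituting $2qy=J_2+x^2$). Hence $v_p(\mu)\le 1$ again yields an uncancellable pole. As a consistency check, $\mu=c$ is attained: $D_5(G_0)=-c\,a_1a_2$ from the invariant $F_0$. So the set of admissible $\mu$ is an ideal of $K$ containing $c$, and the argument shows that this ideal equals $(c)$.
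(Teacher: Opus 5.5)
Your plan works, but it is organized differently from the paper's proof. You split $c\mid\mu$ into $v_q(\mu)\ge4$ and $v_p(\mu)\ge2$ and prove each by a residue argument. Because $\mu$ enters linearly, cancelling the pole of $\mu\Phi$ against $\kappa\in K_{pq}$ would force $\operatorname{in}_\pi(\pi^{e_\pi}\Phi)\in L_\pi$. So you only need that initial form to lie outside the residue field. This is the $W$-degree-one case of the residue analysis the paper uses later (Theorem~\ref{thm:R1}, Theorem~\ref{thm:transfer}), and it shows that the relation $r_q^2\in L_q$, which you flag as the obstacle, plays no role here.

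The paper never passes to residue fields. It uses that $K_{pq}$ is even in $x=a_3$ while the integrator $T$ is odd, so $\mu T$ has coefficients in $k[a_1,a_2,a_4,a_5]$. It then reads off the top $x$-coefficient $\pm\mu_d/(35p^2q^4)$ of $\mu T$ in the $V=J_2$ filtration, specializes $a_1\mapsto0$ to get $\mu_d\in(p^2q^4)$, subtracts $c\,h_dV^d$ using the known solution from $\Dmap{5}(G_0)=-c\,a_1a_2$, and inducts on $V$-degree. Your version needs no induction but requires two separate residue computations; the paper's handles both primes with a single coefficient.

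Two details in your sketch need repair.

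First, the $q^{-4}$ part of $\Phi$ is not carried by the $s^7$ term alone. One finds
$p^2q^4\Phi=\tfrac14q^2J_1J_2x+\tfrac1{12}q^2J_1x^3+\tfrac1{16}J_2^3x+\tfrac1{16}J_2^2x^3+\tfrac3{80}J_2x^5+\tfrac1{112}x^7$.
Four of these terms survive modulo $q$ and combine to $-x^7/35$, not $x^7/112$; the corrections from the lower $s$-terms are odd in $x$, not even. So nonvanishing must genuinely be checked.
\begin{itemize}
\item The simplest fix is to take $\Phi=-G_0/c$, which is an antiderivative of $a_1a_2$ because $\Dmap{5}(G_0)=-c\,a_1a_2$.
\item It in fact coincides with your $\Phi$, since both are odd in $x$ and differ by an element of the $x$-even ring $K_{pq}$.
\item Then $\operatorname{in}_q(q^4\Phi)=-x^7/(35p^2)$ and $\operatorname{in}_p(p^2\Phi)=-q^{-4}\bigl(\tfrac13q^2x^3y^2-\tfrac15qx^5y+\tfrac1{35}x^7\bigr)$ are read off directly.
\end{itemize}

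Second, on the $p$-side you must use $x$-parity, not $y$-parity. The field $L_p=k(q,y^2,2qy-x^2)$ is fixed by $x\mapsto-x$, while this initial form is odd in $x$. By contrast, $y\mapsto-y$ moves $2qy-x^2$, so $y$-parity gives nothing.

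With parity in hand you can skip valuations altogether. The $x$-odd part of $\nu=\mu G_0/c+\kappa$ is $\mu G_0/c$, which must therefore lie in $B$. Hence $c\mid\mu G_0$ in $B$, and since $p\nmid G_0$ and $q\nmid G_0$, you get $c\mid\mu$ in $B$, hence in $K$ by Lemma~\ref{lem:contraction}.
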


\begin{proof}
Localize at $pq$: $s = x/(pq)$ has $\Dmap{5}(s) = 1$
(Lemma~\ref{lem:slice}), so $B_{pq} = K_{0,pq}[s]$ freely and
$\Dmap{5} = d/ds$. Write $\nu = \sum_{i=0}^t N_is^i$ (unique,
$N_i \in K_{0,pq}$) and $a_1a_2 = \sum_{k \in \{0,2,4,6\}} e_ks^k$
with the exact $s$-coefficients $e_k \in K_{0,pq}$, zero otherwise. Comparing
$s$-coefficients in $a_1a_2\mu + \nu' = 0$ (characteristic zero
for division by $k+1$) gives $\nu = N_0 - \mu T$ with the fixed
integrator $T = \sum_k e_ks^{k+1}/(k+1)$. Expanding $T$ in the
free $x$-basis ($x = a_3$) over
$\operatorname{Frac}(k[a_1,a_2,a_4,a_5])$ gives
\[
T = -\frac{x^7}{35p^2q^4} + \frac{a_2x^5}{5p^2q^3}
  - \frac{(a_1a_4 + a_2^2)x^3}{3p^2q^2}
  + \frac{a_1a_2x}{pq},
\]
in particular $T$ has only odd $x$-powers with top coefficient
$-1/(35p^2q^4) \ne 0$. Since $P$, $Q$, $U = J_1$ are $x$-free and
$V = J_2 = 2a_2q - x^2$ has $x$-degree $2$, every element of
$K_{0,pq}$ --- hence $N_0$ and $\mu$ --- has only even $x$-powers.
Thus the odd-$x$ part of $\nu$ is exactly $-\mu T$'s, and each
odd-$x$ coefficient lies in $L := k[a_1,a_2,a_4,a_5]$
($B = L[x]$ free). Write $\mu = \sum_{j=0}^d \mu_jV^j$
($V$-filtration on the free ring $K$, $\mu_d \ne 0$,
$\mu_j \in S := k[P,Q,U]$). The top odd-$x$ term of $\mu T$
is $x^{2d+7}$ with coefficient $(-1)^{d+1}\mu_d/(35p^2q^4)$
(leading $x$-part of $V^d$ is $(-x^2)^d$; no other term reaches
$2d+7$), so $\mu_d/(35p^2q^4) \in L$. Applying
$\pi \colon L \to k[a_2,P,Q]$, $a_1 \mapsto 0$, gives
$\sum_k c_k(-a_2^2)^k/(35p^2q^4) \in k[a_2,P,Q]$ for
$\mu_d = \sum c_kU^k$; by $a_2$-freeness each
$c_k \in (p^2q^4)$ in $k[P,Q]$, so $\mu_d \in (p^2q^4)S$.
Peel: $\mu' = \mu - p^2q^4h_dV^d$ (with $\mu_d = p^2q^4h_d$)
has $V$-degree $< d$ and still satisfies the hypothesis, since
$a_1a_2 \cdot p^2q^4h_dV^d = -\Dmap{5}(G_0h_dV^d)$ (from
$a_1a_2c = -\Dmap{5}(G_0)$ with $h_dV^d \in K$ invariant).
Induction on $d$ gives $\mu \in (p^2q^4)K$ ($\mu = 0$ is
trivial). The $e_k$ and $N_i$ coefficient data above are exact over
$\mathbb{Q}$ (see Supplementary Material).
\end{proof}

The denominator $c = p^2q^4$ will clear denominators in the
$a_0$-extension.

\section{\texorpdfstring{The explicit $N=5$ invariant}{The explicit N=5 invariant}}
\label{sec:invariant}

\begin{definition}[The explicit invariant]
\label{def:F0}
The degree-seven polynomial $F_0 \in \Afring{5}$ is
\[
\boxed{
F_0 = p^2q^4a_0 - pq^3xyz + \tfrac13pq^2x^3z
  + \tfrac13q^2x^3y^2 - \tfrac15qx^5y + \tfrac1{35}x^7.
}
\]
The denominators $3$, $5$, $35$ are covered by the standing
characteristic-zero hypothesis. Write $F_0 = ca_0 + G_0$ with
$c = p^2q^4$ and $G_0 \in B$ the remaining five terms.
\end{definition}

\begin{theorem}[$D_5(F_0) = 0$]
\label{thm:F0-invariant}
With $\Dmap{5}(a_0) = zy = a_1a_2$,
\[
\boxed{\Dmap{5}(F_0) = 0,}
\]
so $F_0 \in \Afring{5}^{\Dmap{5}}$ and
$k[p,q,J_1,J_2,F_0] \subseteq \Afring{5}^{\Dmap{5}}$.
\end{theorem}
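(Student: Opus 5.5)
The plan is a direct expansion. $\Dmap{5}$ is a $k$-derivation, so it suffices to split $F_0 = ca_0 + G_0$ as in Definition~\ref{def:F0} and show that $\Dmap{5}(G_0) = -c\,yz$. Since $p,q$ are constants ($\Dmap{5}(p)=\Dmap{5}(q)=0$), we get $\Dmap{5}(ca_0) = c\,\Dmap{5}(a_0) = p^2q^4\,yz$, and the two contributions will cancel. For $G_0\in B$ we use the Leibniz rule with $\Dmap{5}(x)=pq$, $\Dmap{5}(y)=px$ and $\Dmap{5}(z)=xy$, applied to each of its five monomials.

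Term by term, the expansion should give
\[
\Dmap{5}(-pq^3xyz) = -p^2q^4yz - p^2q^3x^2z - pq^3x^2y^2,
\]
\[
\Dmap{5}\bigl(\tfrac13pq^2x^3z\bigr) = p^2q^3x^2z + \tfrac13pq^2x^4y, \qquad
\Dmap{5}\bigl(\tfrac13q^2x^3y^2\bigr) = pq^3x^2y^2 + \tfrac23pq^2x^4y,
\]
\[
\Dmap{5}\bigl(-\tfrac15qx^5y\bigr) = -pq^2x^4y - \tfrac15pqx^6, \qquad
\Dmap{5}\bigl(\tfrac1{35}x^7\bigr) = \tfrac15pqx^6.
\]
Collecting terms, the monomials $x^2z$, $x^2y^2$ and $x^6$ cancel pairwise. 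The $x^4y$ coefficients add to $\tfrac13+\tfrac23-1=0$. What remains is $-p^2q^4yz$, as required.

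The argument has a telescoping shape, which explains why it closes. Each term of $G_0$ kills the leftover produced by the previous one, and the final leftover $\tfrac15 pq x^6$ is absorbed by the $x^7/35$ term. This is the same integrator $T$ that appears in Lemma~\ref{lem:dixmier-divisibility}, with $G_0 = -cT$ read in the $x$-basis. One could alternatively prove the statement structurally via that lemma, but the direct check is shorter.

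The only delicate point is the rational bookkeeping in the $x^4y$ and $x^6$ coefficients, which is where the scalars $3$, $5$, $35$ enter. These divisions are legitimate by Remark~\ref{rem:char-zero}.

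For the final clause, Lemma~\ref{lem:kernel-gens} together with Theorem~\ref{thm:descent} puts $p,q,J_1,J_2$ in the kernel. The kernel is a subalgebra, so $k[p,q,J_1,J_2,F_0]\subseteq \Afring{5}^{\Dmap{5}}$.
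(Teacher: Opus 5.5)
Your proposal is correct and is exactly the paper's argument: the same term-by-term Leibniz expansion, with the same six intermediate derivatives and the same pairwise cancellations (your side remark $G_0=-cT$ also checks out against the integrator displayed in Lemma~\ref{lem:dixmier-divisibility}). For the last clause, Lemma~\ref{lem:kernel-gens} alone already gives $\Dmap{5}(p)=\Dmap{5}(q)=\Dmap{5}(J_1)=\Dmap{5}(J_2)=0$, so invoking Theorem~\ref{thm:descent} is unnecessary.
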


\begin{proof}
Since $\Dmap{5}(p) = \Dmap{5}(q) = 0$, computing term by term:
\begin{align*}
\Dmap{5}(p^2q^4a_0) &= p^2q^4zy, \\
\Dmap{5}(-pq^3xyz) &= -p^2q^4yz - p^2q^3x^2z - pq^3x^2y^2, \\
\Dmap{5}(\tfrac13pq^2x^3z)
  &= p^2q^3x^2z + \tfrac13pq^2x^4y, \\
\Dmap{5}(\tfrac13q^2x^3y^2)
  &= pq^3x^2y^2 + \tfrac23pq^2x^4y, \\
\Dmap{5}(-\tfrac15qx^5y) &= -pq^2x^4y - \tfrac15pqx^6, \\
\Dmap{5}(\tfrac1{35}x^7) &= \tfrac15pqx^6.
\end{align*}
Grouping by monomial, $p^2q^4zy - p^2q^4yz = 0$,
$-p^2q^3x^2z + p^2q^3x^2z = 0$,
$-pq^3x^2y^2 + pq^3x^2y^2 = 0$,
$(\frac13 + \frac23 - 1)pq^2x^4y = 0$, and
$-\frac15pqx^6 + \frac15pqx^6 = 0$.
Hence $\Dmap{5}(F_0) = 0$.
\end{proof}

\begin{corollary}[Localized $A$-presentation]
\label{cor:A-localized}
With $c = p^2q^4$ and $W = F_0/c = a_0 + \rho$,
$\rho = G_0/c \in B_{pq}$, $D_5(W) = 0$ and
\[
A_{pq}^{\Dmap{5}} = K_{pq}[W],
\]
a polynomial ring in one variable over $K_{pq}$. The equality holds
after localizing at $pq$; it is not a global statement.
\end{corollary}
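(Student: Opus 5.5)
The approach is the local-slice theorem applied on $A_{pq}$, as in Lemma~\ref{lem:slice}. First, $D_5(W)=0$ follows at once from Theorem~\ref{thm:F0-invariant}, since $c=p^2q^4$ is a kernel element and $W=F_0/c$. Next we check that $s=x/(pq)$, viewed in $A_{pq}=B_{pq}[a_0]$, still satisfies $D_5(s)=1$. This holds because $D_5$ restricted to $B$ is the five-variable derivation and $a_0$ does not appear in $D_5(x)=pq$. The key structural claim is
\[
A_{pq}=B_{pq}[a_0]=K_{pq}[s][a_0]=K_{pq}[s][W].
\]
The middle equality is the reconstruction $B_{pq}=K_{0,pq}[s]$ of Lemma~\ref{lem:slice}, together with $K_0=K$ from Theorem~\ref{thm:descent}. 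The last equality holds because $W=a_0+\rho$ with $\rho=G_0/c\in B_{pq}=K_{pq}[s]$, so $a_0\mapsto W$ is a triangular change of variable over $K_{pq}[s]$.

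Next we verify that $s$ and $W$ are algebraically independent over $K_{pq}$. The ring $A_{pq}$ has transcendence degree $6$, and $K_{pq}$ has transcendence degree $4$ by Lemma~\ref{lem:kernel-gens}. Since $A_{pq}$ is generated over $K_{pq}$ by $s$ and $W$, these two elements must be algebraically independent. Hence $A_{pq}=(K_{pq}[W])[s]$ is a polynomial ring in $s$ over $R_0:=K_{pq}[W]$, and $R_0$ is killed by $D_5$.

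We then apply the local-slice theorem (Freudenburg, Corollary~1.26) to the locally nilpotent derivation $D_5$ on $A_{pq}$; local nilpotence is preserved under localization at kernel elements. It gives $A_{pq}^{D_5}=\pi_s(A_{pq})$, where $\pi_s$ is the Dixmier map. Alternatively, one can argue directly: on $R_0[s]$ we have $D_5=d/ds$ because $D_5$ kills $R_0$ and $D_5(s)=1$, so in characteristic zero the kernel is exactly $R_0$. Either route yields $A_{pq}^{D_5}=K_{pq}[W]$, which is a polynomial ring in one variable since $W$ is transcendental over $K_{pq}$.

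Finally, we note that $(A_5^{D_5})_{pq}=A_{pq}^{D_5}$, because localization at kernel elements commutes with taking kernels. This gives the stated form. The only delicate point is the identification $K_{0,pq}=K_{pq}$, which requires Theorem~\ref{thm:descent}. Beyond that, the obstacle is merely bookkeeping: making sure the substitution $a_0\mapsto W$ is legitimate, which it is because $\rho$ has denominators only in $p$ and $q$. The non-global caveat needs no proof.
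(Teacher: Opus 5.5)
Your proposal is correct and follows essentially the paper's own argument: the slice $s=x/(pq)$ exhibits $A_{pq}=K_{pq}[W][s]$ with $D_5=d/ds$, and the local-slice theorem (or your direct kernel computation for $d/ds$) gives $A_{pq}^{D_5}=K_{pq}[W]$; your triangular substitution $a_0\mapsto W$ and your transcendence-degree count supply details that the paper leaves implicit. One small remark: Theorem~\ref{thm:descent} is not actually needed for $K_{0,pq}=K_{pq}$, since Lemma~\ref{lem:slice} already gives $K_{pq}\subseteq B_{pq}^{D_5}=K_{0,pq}\subseteq K_{pq}$.
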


\begin{proof}
$D_5(c) = 0$ and $D_5(F_0) = 0$ give $D_5(W) = 0$.
The slice $s = x/(pq)$ of Lemma~\ref{lem:slice} exhibits
$A_{pq} = K_{pq}[W][s]$ with $D_5 = d/ds$ on the slice factor, so the
local-slice theorem gives the displayed kernel.
\end{proof}

\begin{lemma}[Ordinary degree $7$, Fibonacci weight $14$]
\label{lem:F0-degree-weight}
With the Fibonacci grading
$\operatorname{wt}(a_0,\dots,a_5) = (8,5,3,2,1,1)$,
\[
\boxed{\deg_{\mathrm{ord}}(F_0) = 7, \qquad \operatorname{wt}(F_0) = 14.}
\]
Indeed each of the six monomials has ordinary degree $7$, and
\[
\renewcommand{\arraystretch}{1.2}\begin{array}{c|c|c}
\text{monomial} & \text{ordinary degree} & \text{Fibonacci weight} \\ \hline
p^2q^4a_0 & 2+4+1 = 7 & 2+4+8 = 14 \\
pq^3xyz & 1+3+1+1+1 = 7 & 1+3+2+3+5 = 14 \\
pq^2x^3z & 1+2+3+1 = 7 & 1+2+6+5 = 14 \\
q^2x^3y^2 & 2+3+2 = 7 & 2+6+6 = 14 \\
qx^5y & 1+5+1 = 7 & 1+10+3 = 14 \\
x^7 & 7 & 14
\end{array}
\]
\end{lemma}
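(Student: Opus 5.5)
The statement is a direct bookkeeping check, so the plan is to verify the two gradings monomial by monomial on the explicit formula of Definition~\ref{def:F0}. First, translate the variables back to the $a_i$: $x=a_3$, $y=a_2$, $z=a_1$, $p=a_4$, $q=a_5$. The Fibonacci weights $(8,5,3,2,1,1)$ on $(a_0,\dots,a_5)$ then give $\operatorname{wt}(z)=5$, $\operatorname{wt}(y)=3$, $\operatorname{wt}(x)=2$, $\operatorname{wt}(p)=\operatorname{wt}(q)=1$ and $\operatorname{wt}(a_0)=8$.

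Next, I will list the six monomials $p^2q^4a_0$, $pq^3xyz$, $pq^2x^3z$, $q^2x^3y^2$, $qx^5y$, $x^7$. For each one I compute the ordinary degree as the sum of exponents and the weight as the exponent-weighted sum, exactly as in the displayed table. Each row should come out to $7$ and $14$ respectively.

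The nonzero rational coefficients $1,-1,\tfrac13,\tfrac13,-\tfrac15,\tfrac1{35}$ are not affected by either grading. The six monomials are pairwise distinct, so there is no cancellation. Hence $F_0$ is bihomogeneous, and $\deg_{\mathrm{ord}}(F_0)=7$ and $\operatorname{wt}(F_0)=14$ hold as exact degrees, not merely as upper bounds.

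As a consistency check I would note that $D_5$ raises ordinary degree by $1$. It also preserves Fibonacci weight, since $\operatorname{wt}(a_n)=\operatorname{wt}(a_{n+1})+\operatorname{wt}(a_{n+2})$ for $n\le 3$, and the boundary terms vanish for $n=4,5$. This matches the bihomogeneous cancellation in the proof of Theorem~\ref{thm:F0-invariant}, where every $D_5$-image lies in degree $8$ and weight $14$. There is no real obstacle here; the only care needed is getting the variable renaming right, in particular $\operatorname{wt}(z)=5$ rather than $3$.
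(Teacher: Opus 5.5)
Your proposal is correct and follows the paper's argument, which is just the monomial-by-monomial table of ordinary degrees and Fibonacci weights after the renaming $x=a_3$, $y=a_2$, $z=a_1$, $p=a_4$, $q=a_5$. You also make explicit a point the paper leaves implicit: the six monomials are distinct and have nonzero coefficients, so $F_0$ is bihomogeneous of exact bidegree $(7,14)$ rather than merely bounded by it.
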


\begin{proposition}[Independence via the Jacobian criterion]
\label{prop:F0-independence}
The five invariants $p$, $q$, $J_1$, $J_2$, $F_0$ are algebraically
independent over $k$.
\end{proposition}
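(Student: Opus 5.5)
The statement is titled ``via the Jacobian criterion'', so I will exhibit a nonzero $5\times 5$ minor of the Jacobian matrix of $(p,q,J_1,J_2,F_0)$ with respect to the six variables $a_0,\dots,a_5$. In characteristic zero, a nonvanishing Jacobian minor implies algebraic independence. The natural choice is the columns $(a_0,a_1,a_2,a_4,a_5)$, which extends the minor already used in Lemma~\ref{lem:kernel-gens}.

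\textbf{Block structure.} Order the rows as $p,q,J_1,J_2,F_0$ and the columns as $a_1,a_2,a_4,a_5,a_0$. The four polynomials $p,q,J_1,J_2$ lie in $B=k[a_1,\dots,a_5]$, so their partial derivatives with respect to $a_0$ vanish. The last column is therefore zero except in the $F_0$ row, where the entry is $\partial F_0/\partial a_0=c=p^2q^4$ by Definition~\ref{def:F0}. Expanding along that column, the minor equals
\[
p^2q^4\cdot\det\frac{\partial(p,q,J_1,J_2)}{\partial(a_1,a_2,a_4,a_5)}=p^2q^4\cdot 4a_4a_5=4a_4^3a_5^5,
\]
where the $4\times 4$ determinant is the one computed in Lemma~\ref{lem:kernel-gens}. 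This is a nonzero polynomial in characteristic zero, so the Jacobian criterion gives algebraic independence of $p,q,J_1,J_2,F_0$.

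\textbf{Alternative without the criterion.} As a cross-check, one can argue directly. Suppose $\Phi(p,q,J_1,J_2,F_0)=0$, and write $\Phi=\sum_i\Phi_i(p,q,J_1,J_2)\,T^i$ with top index $m$. Since $F_0$ has $a_0$-degree exactly $1$ with leading coefficient $c$, the $a_0^m$-coefficient of $\Phi(p,q,J_1,J_2,F_0)$ is $c^m\,\Phi_m(p,q,J_1,J_2)$. Now $B$ is a domain and $K_0$ is a polynomial ring by Lemma~\ref{lem:kernel-gens}, so $\Phi_m(p,q,J_1,J_2)\neq 0$ when $\Phi_m\neq 0$, and hence this coefficient is nonzero. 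This contradicts $\Phi(p,q,J_1,J_2,F_0)=0$.

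\textbf{Main step.} The only point needing care is to confirm that the $4\times 4$ minor from Lemma~\ref{lem:kernel-gens} is computed in columns $(a_1,a_2,a_4,a_5)$. Those columns avoid $a_3$, so the $a_0$ column can be adjoined cleanly as above. I do not expect any real obstacle beyond this.
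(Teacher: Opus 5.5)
Your proof is correct and is essentially the paper's: the same $5\times 5$ minor in columns $(a_0,a_1,a_2,a_4,a_5)$, reduced by the block-triangular structure to $p^2q^4\cdot 4a_4a_5=4a_4^3a_5^5$ using the $4\times 4$ minor of Lemma~\ref{lem:kernel-gens}. Your alternative argument, comparing $a_0^m$-coefficients $c^m\Phi_m(p,q,J_1,J_2)$, is also valid and avoids the Jacobian criterion for the fifth generator, though the paper does not use it.
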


\begin{proof}
For the direction used, let $f_1,\dots,f_5$ be the five polynomials and
suppose $P(f_1,\dots,f_5) = 0$ for nonzero $P$ of minimal total degree.
Differentiating gives $\sum_i (\partial P/\partial X_i)(f)\,
\partial f_i/\partial a_j = 0$ for every $j$. In characteristic zero
some $\partial P/\partial X_i$ is a nonzero polynomial of strictly
smaller degree, so by minimality $(\partial P/\partial X_i)(f) \ne 0$
for that $i$: the row vector $((\partial P/\partial X_i)(f))$ is
nonzero and annihilates the Jacobian, whose rows are therefore
dependent over $\operatorname{Frac}$ and whose maximal minors all
vanish. Hence one nonvanishing maximal minor implies algebraic
independence (classical Jacobian criterion; see
e.g.\ Freudenburg~\cite{freudenburg2017}). In columns
$(a_0,a_1,a_2,a_4,a_5)$,
\[
\det \frac{\partial(p,q,J_1,J_2,F_0)}
  {\partial(a_0,a_1,a_2,a_4,a_5)}
  = 4a_4^3a_5^5 \ne 0
\]
as a polynomial. The matrix is block-triangular:
$p$, $q$, $J_1$, $J_2$ are free of $a_0$ while
$\partial F_0/\partial a_0 = p^2q^4$, so the determinant is
$p^2q^4$ times the $4 \times 4$ minor $4a_4a_5$ of the kernel
generators.
\end{proof}

\begin{remark}[Observation: weight-shift, not a theorem]
The quotient $W = F_0/(p^2q^4)$ has Fibonacci weight $8$, while the
polynomial $F_0$ has weight $14 = 8 + 6$, the summand $6$ being the
weight of the denominator $p^2q^4$. No generation consequence is
drawn from this here.
\end{remark}

The subalgebra $K[F_0]$ is therefore assembled. Whether it equals the
invariant ring is decided by weight $24$.

\section{\texorpdfstring{The first obstruction and the corrected hypersurface}{The first obstruction and the corrected hypersurface}}
\label{sec:global}

The localized kernel is polynomial, but its global descent fails at the first weight where an $a_0^2$-direction can occur. The algebra $K[F_0]$ predicts no such direction before weight $28$; the explicit invariant $H$ supplies one at weight $24$.

\begin{remark}[The proposed five-generator descriptions are false]
\label{rem:global-ring}
The equalities
$\Afring{5}^{\Dmap{5}} = K[F_0] = k[p,q,J_1,J_2,F_0]$, the
polynomiality of the full invariant ring in those five generators,
and the identification of its Hilbert series with
$1/((1-u)^2(1-u^4)(1-u^6)(1-u^{14}))$ are all false. The
counterexample in each case is the invariant $H$ below, of Fibonacci
weight $24$ and $a_0$-degree $2$, satisfying $\Dmap{5}(H) = 0$ and
$H \notin K[F_0]$.
\end{remark}

\begin{remark}[The proposed Hilbert series]
\label{rem:hilbert-series}
With ordinary degrees
$(p,q,J_1,J_2,F_0) = (1,1,2,2,7)$ and Fibonacci weights
$(1,1,6,4,14)$, the five generators are algebraically independent,
so $k[p,q,J_1,J_2,F_0]$ is polynomial with series
$1/((1-t)^2(1-t^2)^2(1-t^7))$ and
$1/((1-u)^2(1-u^4)(1-u^6)(1-u^{14}))$. These are the series of
$K[F_0]$ itself, not of $\Afring{5}^{\Dmap{5}}$: exact kernel
dimensions agree through weight $23$ and exceed the series from
weight $24$ on ($213 > 212$, then $238,267,296,330,364,404$
against $236,264,292,325,358,396$ over weights $25$--$30$).
\end{remark}

\begin{remark}[Refuting counterexample $H$]
\label{rem:counterexample-H}
The invariant $H \in \Afring{5}$ has Fibonacci weight $24$,
$17$ terms, content $1$, and $a_0$-degree $2$ with leading term
$11025\,a_0^2 a_4^4 a_5^4$, and satisfies $\Dmap{5}(H) = 0$.
At weight $24$ every monomial of $K[F_0]$ has $a_0$-degree at most
$1$ (only $F_0$ carries $a_0$, in degree $1$, and $F_0^2$ already
weighs $28 > 24$), while $H$ has $a_0$-degree $2$; hence
$H \notin K[F_0]$. For every $w < 24$ the graded pieces coincide,
$\ker(\Dmap{5}|_{V_w}) = K[F_0]_w$ (by direct spanning through
weight $14$ and exact rational rank comparison for $15 \le w \le 23$;
see Appendix~\ref{sec:generation-cert} and
Appendix~\ref{app:ranks}), so $24$ is the smallest weight at which
the inclusion is strict. Thus $H$ is the first obstruction to
$K[F_0]$. Its $W$-expansion is
\[
H = 11025p^4q^4W^2+
\frac{105J_2^5J_1}{q^2}+
\frac{9J_2^7}{q^4},
\]
whose last two terms carry the $q^{-4}$ poles analyzed below.
\end{remark}

\begin{proposition}[First obstruction, existence]
\label{prop:H1}
$H\in\Afring{5}^{\Dmap{5}}\setminus K[F_0]$.
Indeed $\Dmap{5}(H)=0$ by direct expansion, and non-membership is the
$a_0$-degree count of Remark~\ref{rem:counterexample-H}. Hence
$\Afring{5}^{\Dmap{5}}\supsetneq K[F_0]$.
\end{proposition}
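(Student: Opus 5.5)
The plan is to exhibit $H$ explicitly through its $W$-expansion and then read off both claims: invariance from the localized kernel, and non-membership from an $a_0$-degree count. From Remark~\ref{rem:counterexample-H} one has
\[
H = 11025p^4q^4W^2+\frac{105J_2^5J_1}{q^2}+\frac{9J_2^7}{q^4}.
\]
Multiplying by $q^4$ gives the relation $q^4H = 11025F_0^2+105q^2J_2^5J_1+9J_2^7$. This uses $p^4q^8W^2=F_0^2$, since $W=F_0/(p^2q^4)$.

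\textbf{Invariance.} The right-hand side of the relation lies in $K[F_0]\subseteq A_5^{D_5}$, by Lemma~\ref{lem:kernel-gens} and Theorem~\ref{thm:F0-invariant}. Hence $D_5(q^4H)=0$. Because $q$ is invariant and $A_5$ is a domain, this gives $D_5(H)=0$.

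It therefore suffices to show that $H:=(11025F_0^2+105q^2J_2^5J_1+9J_2^7)/q^4$ is actually a polynomial, i.e.\ that $q^4$ divides the numerator in $A_5$. Write $F_0=ca_0+G_0$. Then $F_0^2=c^2a_0^2+2cG_0a_0+G_0^2$, and the first two terms are divisible by $q^4$ because $q^4\mid c$. What remains is to show $q^4\mid 11025G_0^2+105q^2J_2^5J_1+9J_2^7$ in $B$.

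\textbf{The $q$-adic cancellation.} This is the main obstacle: it is exactly the residue phenomenon described in the introduction. I would expand in powers of $q$ with $x,y,z,p$ free. Modulo $q$, $G_0\equiv x^7/35$ and $J_2\equiv -x^2$, so the constant term is $11025x^{14}/1225-9x^{14}=9x^{14}-9x^{14}=0$. The coefficients of $q^1,q^2,q^3$ then have to be checked in the same way, by truncated expansion of $G_0=\frac1{35}x^7-\frac15qx^5y+\frac13q^2x^3y^2+\frac13pq^2x^3z-pq^3xyz$ and of $J_2^7$, $J_2^5J_1$ modulo $q^4$. This is a finite computation in $k[x,y,z,p]$, which I would organize by $q$-order; the $q^2$ and $q^3$ orders are where $J_1$ must absorb the $z$-terms.

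Having established polynomiality, one confirms the stated data by inspection. The weight is $28-4=24$. The $a_0$-degree is $2$, with leading term $11025c^2a_0^2/q^4=11025p^4q^4a_0^2$.

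\textbf{Non-membership.} $K[F_0]$ is graded by Fibonacci weight and its generators have weights $(1,1,6,4,14)$. In weight $24$ any monomial uses $F_0$ at most once, since $F_0^2$ has weight $28>24$. Because $p,q,J_1,J_2$ are $a_0$-free and $\deg_{a_0}F_0=1$, every element of $K[F_0]_{24}$ has $a_0$-degree at most $1$. Since $H$ is weight-homogeneous of weight $24$ with $\deg_{a_0}H=2$, it follows that $H\notin K[F_0]$, and hence $A_5^{D_5}\supsetneq K[F_0]$.
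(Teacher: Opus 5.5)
Your argument is correct. The non-membership half is exactly the paper's $a_0$-degree count; for invariance you take a different route.

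The paper treats $H$ as the explicit $17$-term polynomial of Appendix~\ref{app:witnesses} and checks $\Dmap{5}(H)=0$ by direct expansion. You instead define $H=q^{-4}\bigl(11025F_0^2+105q^2J_2^5J_1+9J_2^7\bigr)$. Invariance is then automatic, since $H$ is a quotient of an element of $K[F_0]$ by the invariant $q^4$. All the computational content moves into the single divisibility $q^4\mid 11025G_0^2+105q^2J_2^5J_1+9J_2^7$ in $B$.

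The check you deferred does go through. Take coefficients of $q^j$ in $k[x,y,z,p]$, list contributions in the order $11025G_0^2$, $9J_2^7$, $105q^2J_2^5J_1$, and omit zero contributions:
\begin{itemize}
\item at $q^1$: $(-126+126)x^{12}y$;
\item at $q^2$: $(651-756+105)x^{10}y^2+(210-210)px^{10}z$;
\item at $q^3$: $(-1470+2520-1050)x^8y^3+(-2100+2100)px^8yz$.
\end{itemize}
All of these vanish. As you anticipated, the $z$-terms at orders $2$ and $3$ are absorbed by the $J_1$-summand.

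What each route buys:
\begin{itemize}
\item Your route is much shorter to verify by hand, since it needs only four truncated $q$-orders rather than a $17$-term derivation image. It also explains why $H$ exists: the $q$-adic residue cancellation of Section~\ref{subsec:residue-transcendence} is literally the proof. It produces the relation $S$ of Theorem~\ref{thm:global-hypersurface} at the same time.
\item The paper's route checks the given witness independently of the $W$-expansion and of the relation, which is the form the Lean certificate formalizes.
\end{itemize}
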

\begin{proposition}[First obstruction, minimality]
\label{prop:H2}
$(A_5^{D_5})_w=(K[F_0])_w$ for all $w<24$.
This uses the $a_0$-degree-one closure (every invariant of
$a_0$-degree $\le1$ lies in $K[F_0]$) together with the spanning and
rank arguments cited in Remark~\ref{rem:counterexample-H}.
In particular $24$ is the first weight at which $K[F_0]$ fails.
\end{proposition}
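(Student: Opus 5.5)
The plan is to split an arbitrary weight-homogeneous invariant $f$ of weight $w<24$ by its $a_0$-degree. Write $f=\sum_{i=0}^m f_ia_0^i$ with $f_i\in B$, so that $\operatorname{wt}(f_i)=w-8i$. We then show $m\le 1$, and close the case $m\le1$ with the $a_0$-degree-one closure.

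\emph{Step 1 (degree-one closure).} Suppose $m\le1$, so $f=f_1a_0+f_0$. The $a_0$-coefficient of $\Dmap{5}(f)=0$ gives $\Dmap{5}(f_1)=0$, hence $f_1\in K$ by Theorem~\ref{thm:descent}. The constant coefficient gives $a_1a_2f_1+\Dmap{5}(f_0)=0$. Lemma~\ref{lem:dixmier-divisibility} then yields $f_1=c\,h$ with $h\in K$. Now $f-hF_0=f_0-hG_0$ is an $a_0$-free invariant, so it lies in $K$, again by Theorem~\ref{thm:descent}. Therefore $f\in K[F_0]$. This argument is weight-independent and is the structural half of the claim.

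\emph{Step 2 (ruling out $a_0$-degree $\ge2$ below weight $24$).} Let $m\ge2$. The top coefficient $f_m$ lies in $K$. The next coefficient satisfies $m f_m a_1a_2+\Dmap{5}(f_{m-1})=0$, so Lemma~\ref{lem:dixmier-divisibility} gives $c\mid f_m$. Then $\operatorname{wt}(f_m)\ge 6$, and therefore $w\ge 8m+6\ge 22$. This leaves only $w\in\{22,23\}$ with $m=2$ and $f_2\in\{c,\ c\cdot K_1\}$, i.e. $f_2=\lambda c$ or $f_2=c\,\ell$ with $\ell$ linear in $p,q$.

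In each of these cases subtract $\tfrac{\lambda}{1}\,?$, or more precisely consider $f-\alpha F_0^2/c$. That element is not polynomial, so instead we work in $A_{pq}^{\Dmap{5}}=K_{pq}[W]$ (Corollary~\ref{cor:A-localized}) and write $f=\gamma_2W^2+\gamma_1W+\gamma_0$ with $\gamma_2=f_2$. Polynomiality of $f$ forces the $q$-adic poles of $\gamma_2\rho^2$ to cancel against those of $\gamma_1\rho+\gamma_0$. The idea is to use the residue computation: $v_q(\rho)=-4$ with initial form $x^7/(35p^2)$, so $\gamma_2\rho^2$ has a $q^{-8}$-pole times $f_2$. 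Since $v_q(f_2)\le 5$ in these weights, a pole of order $\ge3$ survives, and its initial form has odd $x$-degree $14$ combined with transcendence. Compensating it would need $\gamma_1,\gamma_0$ with matching poles, which would push the weight to $\ge 24$. This is precisely the mechanism that $H$ exploits at weight $24$ with $f_2=11025p^4q^4$, where $v_q=4$ permits cancellation.

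\emph{Main obstacle.} Step 2 is the hard part. I would first try the cheap route the paper already cites: in weights $22$ and $23$ compute $\dim\ker(\Dmap{5}|_{V_w})$ by exact rational rank and compare it with $\dim K[F_0]_w$, which is the coefficient $s_5(w)$ of the Hilbert series in Remark~\ref{rem:hilbert-series}. Equality, combined with $K[F_0]_w\subseteq\ker$, gives the result for $15\le w\le 23$. For $w\le14$, direct spanning is automatic because $8m+6\le 14$ forces $m\le1$ and Step 1 applies. The valuation argument above then serves as the conceptual explanation of why weight $24$ is the first failure.
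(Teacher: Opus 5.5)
Your proof is correct once you rely, as you ultimately do, on the exact ranks at $w=22,23$, but it is organized differently from the paper's.

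\textbf{Comparison with the paper.} The paper combines the degree-one closure with Lean-verified direct spanning for $w\le14$ and exact $\mathbb{Q}$-rank comparison at every weight $15\le w\le23$ (Appendix~\ref{app:ranks}). Your Step~2 is a genuine structural gain. Applying Lemma~\ref{lem:dixmier-divisibility} to the $a_0^{m-1}$-coefficient identity $mf_ma_1a_2+\Dmap{5}(f_{m-1})=0$ gives $c\mid f_m$, hence $w\ge 8m+6$. So every invariant of weight $\le21$ (not just $\le14$) has $a_0$-degree $\le1$, and therefore lies in $K[F_0]$ by Step~1, with no certificate at all. This also sharpens the $a_0^2$ birth window of Corollary~\ref{cor:h-birth} from $[16,27]$ to $[22,27]$. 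It isolates the only residual case: $m=2$, $w\in\{22,23\}$, $f_2=ch$ with $\operatorname{wt}(h)\le1$. The rank comparison is needed only there, not on all of $15$--$23$. The ``first failure'' clause then just needs Proposition~\ref{prop:H1}.

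\textbf{The $q$-adic paragraph is wrong.} Do not keep it, even as a conceptual explanation, for three reasons.
\begin{itemize}
\item $x^{14}$ is even.
\item $r_q$ is not transcendental over $L_q$: Lemma~\ref{lem:rq-exact} gives $r_q^2\in L_q$.
\item There is no first-order weight obstruction at $q$. Write $cf-hF_0^2=\alpha F_0+\beta$ with $\alpha,\beta\in K$; Step~1 applies because the $a_0^2$-terms cancel. Polynomiality of $f$ becomes $c\mid hG_0^2+\alpha G_0+\beta$ in $B$. Modulo $q$ this is solved by $\bar\alpha=0$ and $\bar\beta=\bar h\,\bar J_2^{\,7}/1225$, and at $w=22$ the weight of $J_2^7$ is exactly $w+6=28$.
\end{itemize}
The $q$-side is precisely where cancellation is possible, which is why $H$ exists.

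\textbf{A computation-free finish on the $p$-side.} Reducing the same identity modulo $p$ gives $\bar h\bar G_0^2+\bar\alpha\bar G_0+\bar\beta=0$, with coefficients in $\operatorname{Frac}(K/pK)=k(q,y^2,w)$, where $w=2qy-x^2$. Over this field $\bar G_0$ has degree $4>2$ (Remark~\ref{rem:p-obstacle}). You can also see this directly: $105\bar G_0=x^3(5q^2y^2+9qyw+3w^2)$ is odd in $x$, and its square is not fixed by $y\mapsto -y$ on $k(q,y,w)$. Hence $p$ divides $h$, $\alpha$ and $\beta$.
\begin{itemize}
\item At $w=22$ this kills $h=\lambda$.
\item At $w=23$ it forces $h=\lambda p$. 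Dividing $h,\alpha,\beta$ by $p$ inside $K$ (Lemma~\ref{lem:contraction}) and reducing modulo $p$ once more gives $\lambda=0$.
\end{itemize}
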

\begin{theorem}[$H$ as intrinsic obstruction]
\label{thm:H-intrinsic}
The first obstruction space
$(A_5^{\Dmap{5}})_{24}/(K[F_0])_{24}$ is one-dimensional
($213$ versus $212$), and the class of $H$ is a distinguished nonzero
generator of this quotient (Propositions~\ref{prop:H1}--\ref{prop:H2}),
characterized without reference to its $17$-term expression by:
the minimal obstruction to the $K[F_0]$-saturation
module $M_2 = c^2K$ (its $W$-top $H_2 = 11025p^4q^4$ lies outside
$M_2$, $v_q = 4 < 8$, while every local regularity condition holds),
and the realization of the residue dependence $r_q^2 \in L_q$ as a
global invariant (Section~\ref{subsec:residue-transcendence}).
Cohomological, conductor, or canonical-module characterizations are not
claimed.
\end{theorem}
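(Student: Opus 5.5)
The plan is to establish the three parts of Theorem~\ref{thm:H-intrinsic} separately: the dimension of the quotient, the fact that $H$ generates it, and the intrinsic characterization of $H$.

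\emph{Dimension and generation.} The one-dimensionality of $(A_5^{\Dmap{5}})_{24}/(K[F_0])_{24}$ is a rank statement in a single weight. I would take $\dim(A_5^{\Dmap{5}})_{24}=\dim\ker(\Dmap{5}|_{V_{24}})=213$ from the exact rational rank computation cited in Remark~\ref{rem:counterexample-H}. The value $\dim K[F_0]_{24}=s_5(24)=212$ follows from the algebraic independence in Proposition~\ref{prop:F0-independence}, since the weight-$24$ monomials in the five generators are then linearly independent; this can also be read off the series in Remark~\ref{rem:hilbert-series}. The quotient therefore has dimension exactly one. Proposition~\ref{prop:H1} places $H$ in the numerator but outside $K[F_0]_{24}$, so its class is nonzero and spans the quotient. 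Proposition~\ref{prop:H2} shows that weight $24$ is the first weight with a nonzero quotient, which is what makes this the \emph{first} obstruction space.

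\emph{Intrinsic characterization.} Here I would use the $W$-filtration. By Corollary~\ref{cor:A-localized}, every invariant of $a_0$-degree $m$ can be written uniquely as $\sum_{j\le m} h_jW^j$ with $h_j\in K_{pq}$. An element of $K[F_0]$ of $a_0$-degree $2$ has $W^2$-coefficient in $c^2K$, since $F_0^2=c^2W^2$ and $F_0$ is the only generator involving $W$. For $H$, the displayed $W$-expansion has top coefficient $11025p^4q^4$, and $v_q(11025p^4q^4)=4<8=v_q(c^2)$. So $H$'s top coefficient lies outside $M_2=c^2K$, while its lower coefficients $105J_2^5J_1/q^2$ and $9J_2^7/q^4$ have only the poles that the expansion of $W^2$ allows. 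To see that $H$ is the \emph{minimal} such element, I would argue as follows. Any invariant $f$ of weight $24$ has $a_0$-degree at most $2$, since $\operatorname{wt}(a_0^3)=24$ already and a pure $a_0^3$ term cannot be invariant. Subtracting a suitable multiple of $H$ then forces the top coefficient of $f$ into $c^2K$. At weight $24$ no element of $c^2K$ of weight $24-16=8$ can be the coefficient of $W^2$ compatibly, because $F_0^2$ has weight $28$; so the difference has $a_0$-degree at most $1$. The $a_0$-degree-one closure cited in Proposition~\ref{prop:H2} then places the difference in $K[F_0]$. This recovers the same one-dimensional quotient without referring to the $17$-term expression for $H$.

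\emph{Residue interpretation.} I would reduce $q^4H$ modulo $q$. With $\rho=G_0/c$ we have $q^4\rho\equiv r_q=x^7/(35p^2)$, so the reduction of $q^4H$ is $11025p^4(r_q)^2+9J_2^7|_{q=0}$, computed from $W^2=(a_0+\rho)^2$. Since $J_2\equiv -x^2$, this gives $11025p^4x^{14}/(35^2p^4)-9x^{14}=9x^{14}-9x^{14}=0$. This vanishing is exactly the cancellation expressing $r_q^2\in L_q$, and it is what allows $H$ to remain polynomial despite the $q^{-4}$ pole. I expect the main obstacle to be the minimality step, specifically the check that local regularity holds at every coefficient of $H$ and the clean use of the $a_0$-degree-one closure. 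The residue computation itself is a direct check.
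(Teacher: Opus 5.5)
Your dimension count ($213$ from the exact rank table of Appendix~\ref{app:ranks}, and $212=s_5(24)$ from Proposition~\ref{prop:F0-independence}) is how the paper supports the statement, and it is correct. The same holds for your use of Propositions~\ref{prop:H1}--\ref{prop:H2}, the valuation $v_q(H_2)=4<8$, and the cancellation $11025p^4r_q^2+9J_2^7|_{q=0}=9x^{14}-9x^{14}=0$.

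The gap is in your minimality paragraph, which claims to recover the one-dimensionality \emph{independently} of the rank count. The sentence ``subtracting a suitable multiple of $H$ then forces the top coefficient of $f$ into $c^2K$'' asserts that the space $T_{24}$ of $a_0^2$-coefficients of weight-$24$ invariants is the line $k\,p^4q^4$. Nothing you wrote proves this.

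Intrinsically, you only get the following. From $2C_2a_1a_2+\Dmap{5}(C_1)=0$ and Lemma~\ref{lem:dixmier-divisibility}, $C_2=c\,e$ with $e$ of weight $2$, i.e.\ $e\in kp^2+kpq+kq^2$. The $q$-adic floor of Theorem~\ref{thm:R1} at $m=2$, namely $v_q(H_2)\ge 4$, holds for every such $c\,e$, so it excludes nothing. Removing the $pq$ and $q^2$ directions is the $p$-adic statement $v_p(H_2)\ge 4$, which needs its own argument. One candidate is the constant-coefficient step at $p$, using that $r_p$ has degree $4>2$ over $L_p$ (Remark~\ref{rem:p-obstacle}), with residue landing at $p$ justified. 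The paper gives no such argument.

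The honest fix is to derive $\dim T_{24}=1$ from your first part. By Theorem~\ref{thm:a01-closure}, the weight-$24$ invariants of $a_0$-degree $\le 1$ are exactly $K[F_0]_{24}$, so $\dim T_{24}=213-212=1$. You should then drop the claim that the characterization reproves the dimension: it describes the generator, but uniqueness still rests on the rank count.

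Likewise, excluding $a_0$-degree $3$ needs the subleading equation. If $[a_0^3]f=\lambda\in k$, then $3\lambda a_1a_2+\Dmap{5}([a_0^2]f)=0$, so Lemma~\ref{lem:dixmier-divisibility} gives $c\mid 3\lambda$ and hence $\lambda=0$. The remark ``a pure $a_0^3$ term cannot be invariant'' is not by itself a reason.
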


\section{The corrected intermediate algebra}
\label{sec:corrected-algebra}

Adjoining $H$ gives a hypersurface, not a polynomial ring.

\begin{theorem}[Hypersurface presentation of the six-generator subalgebra]
\label{thm:global-hypersurface}
Put $C_5 = k[p,q,J_1,J_2,F_0,H] \subseteq \Afring{5}^{\Dmap{5}}$.
Since $\Dmap{5} \ne 0$ on six variables,
$\operatorname{trdeg}_k \Afring{5}^{\Dmap{5}} = 5$; with the five
algebraically independent generators
(Proposition~\ref{prop:F0-independence}),
$\operatorname{trdeg}_k C_5 = 5$. Let
$\pi \colon k[P,Q,J_1,J_2,F,H] \to C_5$ be the presentation map:
$\ker\pi$ is prime ($C_5$ a domain) of height $6 - 5 = 1$. Work in the
abstract polynomial ring, with $P,Q,J_1,J_2,F,H$ indeterminates and
$S = Q^4H - R_0$ where $R_0 = 11025F^2 + 105Q^2J_2^5J_1 + 9J_2^7$.
Then $R_0|_{Q=0}^{\mathrm{abs}} = 11025F^2 + 9J_2^7 \ne 0$ as an abstract
polynomial, so the prime $Q$ does not divide $R_0$ in
$k[P,Q,J_1,J_2,F]$; hence $\gcd(Q^4,R_0)=1$ and $S$, linear in $H$ with
coprime coefficients, is primitive, hence irreducible, and $(S)$ is
prime of height $1$ (distinguish this from evaluation in the image,
where $R_0|_{q=0}=9x^{14}-9x^{14}=0$ is exactly the residue cancellation
producing $H$). With $S = 0$ on the generators, $\ker\pi = (S)$. Thus
\[
\boxed{C_5 \;\cong\; k[P,Q,J_1,J_2,F,H]\,/\,(S),}
\]
where $H$ is the weight-24 invariant of
Remark~\ref{rem:counterexample-H} and $S$ is the weight-28 relation
\[
\boxed{S \;=\; q^{4}H \;-\; 11025\,F_0^{2}
 \;-\; 105\,q^{2}J_2^{5}J_1 \;-\; 9\,J_2^{7}.}
\]
The six graded generators carry Fibonacci weights
\[
\operatorname{wt}(p,q,J_1,J_2,F_0,H) = (1,1,6,4,14,24)
\]
and $S$ has weight $28$. As a complete intersection, $C_5$ has the
exact Fibonacci-weighted Hilbert series
\[
\boxed{H_{\mathrm{wt}}(u) \;=\;
 \frac{1-u^{28}}{(1-u)^{2}(1-u^{4})(1-u^{6})(1-u^{14})(1-u^{24})}.}
\]
With $s_6(w)$ the monomial count in the six polynomial generators,
$(C_5)_w$ has dimension $s_6(w) - s_6(w-28)$ for $w \ge 28$; exact
ranks give $(C_5)_w = (\Afring{5}^{\Dmap{5}})_w$ at weights
$5,15,23,24,25,30,35,40$. The five generators remain algebraically
independent, so the failure of the five-generator model is precisely
the missing sixth generator modulo the single weight-28 relation.
This theorem concerns the subalgebra $C_5$ only; the equality
$C_5=\Afring{5}^{\Dmap{5}}$ is false, failing first at weight $46$
(Section~\ref{sec:second-witness}).
\end{theorem}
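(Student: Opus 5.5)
The proof has four parts: the relation $S$ holds, the kernel of $\pi$ is exactly $(S)$, the Hilbert series follows, and the quoted rank equalities are checked.

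\textbf{The relation holds in the image.} Remark~\ref{rem:counterexample-H} gives the $W$-expansion
\[
H = 11025p^4q^4W^2 + 105J_2^5J_1/q^2 + 9J_2^7/q^4 .
\]
With $W = F_0/(p^2q^4)$, multiplying by $q^4$ turns the first term into $11025F_0^2$. Hence $q^4H - 11025F_0^2 - 105q^2J_2^5J_1 - 9J_2^7 = 0$ in $\operatorname{Frac}(A_5)$, and therefore in $A_5$. This identity lives in $B_{pq}[a_0]$, so I would check it by comparing $a_0$-coefficients. The $a_0^2$ terms match through the leading term $11025a_0^2p^4q^4$. The lower coefficients reduce to polynomial identities in $B$, to be verified by direct expansion. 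So $S \in \ker\pi$.

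\textbf{The kernel is exactly $(S)$.} The dimension count is as in the statement. $\operatorname{trdeg}_k C_5 = 5$ because $p,q,J_1,J_2,F_0$ are independent (Proposition~\ref{prop:F0-independence}) and $C_5$ sits inside the kernel, which has transcendence degree $5$. So $\ker\pi$ is a height-one prime in the six-variable polynomial ring, which is a UFD, and is therefore principal, say $\ker\pi = (g)$ with $g$ irreducible.

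Next, $S$ is irreducible. It is linear in $H$ with coefficients $Q^4$ and $R_0$, and $R_0|_{Q=0} = 11025F^2 + 9J_2^7 \neq 0$, so $\gcd(Q^4,R_0)=1$. A degree-one polynomial in $H$ with coprime coefficients is primitive over the UFD $k[P,Q,J_1,J_2,F]$. By Gauss's lemma it is irreducible.

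Since $S \in (g)$, we get $S = ug$ for some $u$. Irreducibility of $S$ forces $u$ to be a unit, because $g$ is not a unit. Hence $\ker\pi = (S)$. An alternative I would keep in reserve is to argue directly: any relation can be reduced modulo $S$ to $H$-degree $0$ after multiplying by a power of $Q^4$; the five generators are independent, and $q$ is a nonzerodivisor. The UFD argument is shorter.

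\textbf{Hilbert series.} Weights are additive and $S$ is weight-homogeneous of weight $28$: check term by term that $\operatorname{wt}(q^4H)=4+24$, $\operatorname{wt}(F_0^2)=28$, $\operatorname{wt}(q^2J_2^5J_1)=2+20+6$ and $\operatorname{wt}(J_2^7)=28$. So $0 \to (S)(-28) \to k[\ldots] \to C_5 \to 0$ is exact and graded, since multiplication by $S$ is injective in a domain. This gives the numerator $1-u^{28}$ over the product of $(1-u^{w_i})$, and the formula $s_6(w)-s_6(w-28)$ for $\dim(C_5)_w$.

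\textbf{Rank equalities and the expected obstacle.} The equalities $(C_5)_w = (A_5^{D_5})_w$ at the listed weights come from exact rational rank computations. On one side I would compute $\dim\ker(D_5|_{V_w})$ over $\mathbb{Q}$ as in Appendix~\ref{app:ranks}; on the other, $\dim(C_5)_w$ from the series. Since $C_5 \subseteq A_5^{D_5}$, equal dimensions give equality. These are certified computations rather than conceptual steps.

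The genuinely delicate step is the first one, verifying the $W$-expansion of $H$ and hence $S$ exactly. That requires the explicit $17$-term $H$. I would verify it symbolically by checking that $q^4H - R_0$ vanishes identically.
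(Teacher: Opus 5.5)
Your proposal is correct and follows essentially the paper's argument. The steps are the same: $S=0$ on the generators by direct expansion; $S$ is primitive, hence irreducible, since $\gcd(Q^4,R_0)=1$; $\ker\pi$ is a height-one prime containing $S$, so $\ker\pi=(S)$; and the Hilbert series comes from the graded complete-intersection sequence. The only difference is the last identification: you use principality of height-one primes in a UFD, while the paper compares the two height-one primes $(S)\subseteq\ker\pi$ directly, which is inessential.

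One caution: the paper obtains the $W$-expansion of $H$ from $S$ itself (Remark~\ref{rem:qrefuted-H}), so deriving $S$ from that expansion would be circular. Only your fallback of expanding $q^4H-R_0$ directly is sound, and it is what the paper relies on.
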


\begin{remark}[Geometry of the hypersurface $X_5 = \operatorname{Spec}(C_5)$]
\label{rem:singlocus}
$X_5 = \operatorname{Spec}(C_5)$ is the
hypersurface $V(S) \subset \mathbb{A}^6$ cut out by
$S = q^4H-11025F_0^2-105q^2J_2^5J_1-9J_2^7$.
Since $S$ is linear in $H$,
$\partial S/\partial H = q^4$, $\partial S/\partial F_0 = -22050F_0$,
$\partial S/\partial J_1 = -105q^2J_2^5$,
$\partial S/\partial J_2 = -525q^2J_2^4J_1-63J_2^6$,
$\partial S/\partial q = 4q^3H-210qJ_2^5J_1$, $\partial S/\partial p = 0$:
$\partial_HS=0$ gives $q=0$, then $\partial_{F_0}S=0$ gives $F_0=0$,
then $\partial_{J_2}S=-J_2^4(525q^2J_1+63J_2^2)$ gives $J_2=0$; the
remaining partials vanish automatically there, and $S$ vanishes on
$V(q,F_0,J_2)$. Hence the singular locus is exactly the linear space
$\operatorname{Sing}(X_5) = V(q,F_0,J_2) \cong \mathbb{A}^3_{p,J_1,H}$,
of dimension $3$. Since
$C_5 \cong k[P,Q,J_1,J_2,F,H]/(S)$, $C_5$ is a hypersurface in the
regular polynomial ring $k[P,Q,J_1,J_2,F,H]$. Hence $C_5$ is
Cohen--Macaulay, and with
singular locus of codimension $5-3=2$ it is normal by Serre's criterion
(R1$+$S2); see e.g.\ Bruns--Herzog~\cite{brunsherzog1998}. By graded
Nakayama (loc.\ cit.), the six generators are minimal as a homogeneous
generating set: $S$ lies in $\mathfrak{m}^2$ (every term is a product
of at least two positive-weight elements), and the six residue classes
are linearly independent in $\mathfrak{m}/\mathfrak{m}^2$ (at weights
$1$, $4$, $6$ by independence of $p,q,J_1,J_2$; at $14$ since $F_0
\notin K$; at $24$ since $H \notin K[F_0]$), so
$\dim_k\mathfrak{m}/\mathfrak{m}^2 = 6$. No quotient-geometric
claim about $\operatorname{Spec}(\Afring{5}^{\Dmap{5}})$ (orbits,
stabilizers, fibers, Makar--Limanov) is made: the quotient geometry of the full ring is not established here.
\end{remark}

\subsection{The Failure of Residue-Field Transcendence}
\label{subsec:residue-transcendence}

On the five-variable base $B = k[a_1,\dots,a_5]$ the local picture is
exact: the kernel is
$K = B^{\Dmap{5}} = k[p,q,J_1,J_2]$, the localized extension
$A_{pq}^{\Dmap{5}} = K_{pq}[W]$ is a polynomial ring in the single
new coordinate $W = F_0/c$, and the descent
$B \cap K_{pq} = K$ (Theorem~\ref{thm:descent}) holds exactly.

The $a_0$-extension is where the local-to-global passage breaks. The pole-clearing denominator
$c = p^2q^4$ has pole order $e_q = 4$ at the prime $q$ (and
$e_p = 2$ at $p$); clearing it to form $W = F_0/c = a_0 + \rho$
with $\rho = G_0/c \in B_{pq}$ passes through the residue field
$L_q = k(p,u,x^2)$ at $q$. The residue-transcendence argument
required the initial form of $\rho$,
\[
r_q = \operatorname{in}_q(q^4\rho) = \frac{x^7}{35p^2},
\]
to be \emph{transcendental} over $L_q$, so that a single initial-form
relation could force every pole-clearing coefficient to be regular.
That hypothesis is false. Because $x^2 \in L_q$ but $x \notin L_q$,
the extension $L_q \subset k(p,u,x)$ is a single quadratic step,
and $r_q$ sits in it with the explicit relation
\[
r_q^{\,2} = \frac{(x^2)^7}{1225\,p^4} \in L_q,
\]
(Remark~\ref{rem:q-obstacle}). The initial form is algebraic of
degree two, not transcendental; the required linear independence of
$1, r_q, r_q^2$ over $L_q$ fails exactly at $r_q^2$.

The counterexample is the smallest invariant that exploits this dependence. Write
$H = H_2W^2 + H_0$ with $H_2 = 11025\,p^4q^4$ and
$H_0 = 105J_2^5J_1\,q^{-2} + 9J_2^7\,q^{-4}$: the naive bound would
require $v_q(H_2) \ge 8$, but $v_q(H_2) = 4$, and the constant term
carries a $q^{-4}$ pole. Yet the total polynomial $H$ is regular:
the two $q^{-4}$ leading terms cancel \emph{in the residue field},
not in the ring,
\[
11025\,G_0^2 \equiv 9x^{14},
\qquad 9J_2^7 \equiv -9x^{14},
\qquad \text{so } 11025\,G_0^2 + 9J_2^7 \equiv 0 \pmod q,
\]
by the quadratic relation $r_q^2 \in L_q$. Thus the $W$-coefficients
may have poles even though the polynomial in $a_0$ is regular, and
$\Dmap{5}(H) = 0$ with $H \notin K[F_0]$.

The naive denominator bound requiring \(v_q(H_j)\ge 4j\) therefore does
not hold; the invariant ring strictly exceeds \(K[F_0]\), here by the
invariant \(H\).

\section{Limits of the denominator bounds}
\label{sec:obstructions}

The natural denominator estimates on the \(W\)-coefficients are too strong.
What follows isolates where they fail, which witnesses
exhibit the failure, and which weaker statements remain valid. Full
derivations are in the sections cited; no argument is repeated here.

\paragraph{\(q\)-side.}
The bound \(v_q(H_j)\ge 4j\) is false. The invariant \(H\) of Fibonacci
weight \(24\) and \(a_0\)-degree \(2\) gives a counterexample, with
\(v_q(H_2)=4<8\) and \(v_q(H_0)=-4\)
(Remark~\ref{rem:counterexample-H}). The reduction to a residue relation is
valid, but the required linear independence fails: with pole order
\(e_q=4\) and monomial initial form \(r_q=x^7/(35p^2)\), one has
\(r_q^2\in L_q\) and \(r_q\notin L_q\), of exact degree \(2\)
(Lemma~\ref{lem:rq-exact}). The \(q^{-4}\) poles of the \(W\)-coefficients
therefore cancel in the residue field, while the polynomial \(H\) remains
regular. The following weaker estimates remain valid: the relaxed bound
\(\lambda_q\ge -2m\) (Theorem~\ref{thm:R1}) and the even-degree top
estimate of Theorem~\ref{thm:q-top}, proved via R1.

\paragraph{\(p\)-side.}
The bound \(v_p(H_j)\ge 2j\) is false. The invariant
\(F_{46}^{\mathrm{int}}\) of Fibonacci weight \(46\) and \(a_0\)-degree
\(4\) gives a counterexample, with \(v_p(H_4)=6<8\)
(Theorem~\ref{thm:F46}, Remark~\ref{rem:prefuted-F}). The mechanism is
distinct from the \(q\)-side: the \(p\)-adic pole order is \(e_p=2\), the
initial form is trinomial, and its residue algebraicity is of degree \(4\).
No replacement denominator bound is established. The sharp top envelope and
the \(p\)-saturation theorem used in closure are separate positive results
and are not consequences of the failed estimate.

\paragraph{Filtration consequence.}
The \(a_0\)-filtration and the top ideals \(J_m\) still detect
non-membership: a top outside \(J_m\) lies outside the corresponding
subalgebra (Definition~\ref{def:Jm}, Corollary~\ref{cor:obstruction}).
The induction that would deduce generation from such bounds stops exactly
where the bounds fail, at \(H\) in degree \(2\) and at
\(F_{46}^{\mathrm{int}}\) in degree \(4\).

These failures rule out the constant-coefficient denominator route. They do
not obstruct the closure proof, which proceeds through the second chart
attached to \(f_{13}\) and the \(q\)- and \(p\)-saturation theorems.

\section{\texorpdfstring{The $a_0$-filtration and the birth window}{The a0-filtration and the birth window}}
\label{sec:a0-filtration}

The $a_0$-degree filtration locates where a hidden generator can be
born: a monomial of $a_0$-degree $m$ has weight at least $8m$, while
the candidate algebras forbid low $a_0$-degree below much higher
weights. The gaps between these windows are where $H$ and $F_{46}$
occur.

\begin{theorem}[{$a_0$-obstruction for $R_5 = k[p,q,J_1,J_2,F_0]$}]
\label{thm:a0-obstruction}
Weights are $(1,1,6,4,14)$ and only $F_0$ carries $a_0$ (degree $1$).
\begin{enumerate}
\item[(1)] Every $R_5$-monomial with $a_0$-degree $0$ lies in
$k[p,q,J_1,J_2]$ (generator inspection; weights in the semigroup of
$(1,4,6)$).
\item[(2)] Every $R_5$-monomial with $a_0$-degree $1$ weighs $\ge 14$
($14 \cdot 1 \le w$).
\item[(3)] Every $R_5$-monomial with $a_0$-degree $\ge 2$ weighs $\ge 28$
($14 \cdot 2 \le w$).
\item[(4)] For $w \le 13$ no $R_5$-element with $a_0$-degree $\ge 1$
exists at weight $w$; with Theorem~\ref{thm:direct-span14},
$(A_5^D)_w = k[p,q,J_1,J_2]_w$.
\item[(5)] At $w = 14$ exactly one new direction appears: $F_0$
(first fractional $L_w = 105$).
\item[(6)] For $15 \le w \le 27$ any invariant with $a_0$-degree
$\ge 2$ lies outside $R_5$, by (3) with no computation.
\end{enumerate}
\end{theorem}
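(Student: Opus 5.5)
The plan is to read everything off one elementary fact: $a_0$-degree and Fibonacci weight are both gradings on $A_5$ compatible with the monomial structure of $R_5$. Among the generators $p,q,J_1,J_2,F_0$ only $F_0$ involves $a_0$, and there it appears linearly with leading coefficient $c=p^2q^4$ (Definition~\ref{def:F0}). A monomial $p^{e_1}q^{e_2}J_1^{e_3}J_2^{e_4}F_0^{m}$ therefore has exact $a_0$-degree $m$, since $B$ is a domain and the $a_0$-tops multiply, giving top $c^m p^{e_1}q^{e_2}J_1^{e_3}J_2^{e_4}\neq 0$. Its weight is
\[
e_1+e_2+6e_3+4e_4+14m \;\ge\; 14m,
\]
using Lemma~\ref{lem:F0-degree-weight} and the weights of $p,q,J_1,J_2$ (here $J_1$ has weight $6$ and $J_2$ has weight $4$, as is read off their monomials).

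Items (1)--(3) follow directly from this. For (1), the monomials with $m=0$ are exactly the monomials in $k[p,q,J_1,J_2]$, and their weights lie in the semigroup generated by $1,4,6$. Items (2) and (3) are the bound above with $m=1$ and $m\ge2$.

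To pass from monomials to elements I use algebraic independence (Proposition~\ref{prop:F0-independence}). Because of it, $R_5$ is a polynomial ring on these monomials, and each element decomposes uniquely as a sum of $F_0^m$-strata. Within a fixed weight $w$, the $a_0$-degree of an element is the largest $m$ occurring. The one point needing care is that the top $a_0$-coefficients from different monomials with the same $m$ do not cancel. They sum to $c^m P_m(p,q,J_1,J_2)$, where $P_m\neq0$ is the coefficient polynomial of $F_0^m$, and this is nonzero because $p,q,J_1,J_2$ are algebraically independent (Lemma~\ref{lem:kernel-gens}). So the $a_0$-degree of an element of $R_5$ at weight $w$ is at most $\lfloor w/14\rfloor$. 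This proves (4) in its first half: no element of weight $\le13$ has positive $a_0$-degree. Combining this with the spanning result Theorem~\ref{thm:direct-span14}, which I assume as stated, gives $(A_5^D)_w=k[p,q,J_1,J_2]_w$.

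For (5), I take $w=14$. The bound allows $m\le1$, and $m=1$ forces $e_1=e_2=e_3=e_4=0$, so $F_0$ is the unique new monomial direction beyond $k[p,q,J_1,J_2]_{14}$. I expect this to be the main obstacle. The structural part is immediate, but the parenthetical \emph{first fractional $L_w=105$} refers to the computational certificate of Appendix~\ref{sec:generation-cert}, and I would cite that rather than rederive it. I would also note that $F_0\notin K$, since it has $a_0$-degree $1$, so the direction is genuinely new. Finally, (6) is (3) read contrapositively: for $w\le27<28$ every element of $R_5$ has $a_0$-degree at most $1$, so an invariant of weight $w\le 27$ with $a_0$-degree $\ge2$ cannot lie in $R_5$. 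The invariant $H$ at $w=24$ is the instance.
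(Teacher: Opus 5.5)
Your proposal is correct and follows essentially the paper's own argument, which the paper gives only as parentheticals: generator inspection, the monomial bound $\operatorname{wt}\ge 14\deg_{a_0}$, and Theorem~\ref{thm:direct-span14} supplying the second half of (4). Two simplifications are available: you do not need non-cancellation of tops, since (4) and (6) only use the upper bound $\deg_{a_0}\le\lfloor w/14\rfloor$; and $L_{14}=105$ is not an obstacle at all, because it is just $\operatorname{lcm}(3,5,35)$ from Definition~\ref{def:F0}, the $K$-monomials having integer coefficients.
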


\begin{corollary}[{H-birth window $[16,27]$}]
\label{cor:h-birth}
The monomial $a_0^2$ weighs $16$, so $a_0$-degree $2$ is monomially
possible only at $w \ge 16$; $R_5$ forbids it below $28$ by (3), so the degree bound does not exclude a hidden $a_0^2$-generator in
$[16,27]$. $H$ (weight $24$, $a_0$-degree $2$) lies strictly inside.
Weight $15$ admits $F_0 \cdot p$ ($a_0$-degree $1$) but no hidden
direction.
\end{corollary}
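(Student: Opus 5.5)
The corollary follows from two weight counts and Theorem~\ref{thm:a0-obstruction}(3); no kernel computation is needed. First I would fix the lower bound from the Fibonacci grading. Among the variables, $a_0$ alone has weight $8$, and every other variable has positive weight. So a monomial $a_0^2 m$ with $m\in B$ has weight $16+\operatorname{wt}(m)\ge 16$. Equality holds exactly at $m=1$, which shows that weight $16$ is attained. Every element of $V_w$ with $a_0$-degree $2$ contains such a monomial, so $a_0$-degree $2$ is possible in $V_w$ only when $w\ge 16$.

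Next I would take the upper bound from item (3) of Theorem~\ref{thm:a0-obstruction}. There, every $R_5$-monomial of $a_0$-degree $\ge 2$ has weight $\ge 28$, because $F_0^2$ already weighs $28$. The $a_0$-degree is exact on $B[a_0]$, but a linear combination of monomials might drop in $a_0$-degree, so I would phrase this step in terms of upper bounds. An element of $(R_5)_w$ is a combination of monomials, each of $a_0$-degree $\le \lfloor w/14\rfloor$. Hence for $w\le 27$ every element of $(R_5)_w$ has $a_0$-degree $\le 1$. It follows that any invariant of $a_0$-degree $2$ at weight $16\le w\le 27$ lies outside $R_5$. The degree count therefore leaves the window $[16,27]$ open, which is the first assertion.

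For the placement of $H$, Remark~\ref{rem:counterexample-H} gives $\operatorname{wt}(H)=24$ and $\deg_{a_0}H=2$, and $16<24<27$. For weight $15$, I would note that $pF_0$ has weight $15$ and $a_0$-degree $1$. The claim that weight $15$ carries no hidden direction would follow in one of two ways. One is Proposition~\ref{prop:H2}, which gives $(A_5^{D_5})_{15}=(K[F_0])_{15}$. The other is the trivial argument that $a_0$-degree $\ge 2$ needs weight $\ge 16$, so any invariant at weight $15$ has $a_0$-degree $\le 1$ and falls under the $a_0$-degree-one closure.

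The only point I expect to need care is the distinction between monomials and elements in the $a_0$-degree bound. Since the bound only goes in the upper direction, it holds for all elements once it holds for monomials.
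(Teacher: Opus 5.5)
Your proposal is correct and follows the reasoning the paper gives inline: the weight $16$ of $a_0^2$ fixes the lower end of the window, item~(3) of Theorem~\ref{thm:a0-obstruction} fixes the upper end, $H$ sits at weight $24$, and $pF_0$ accounts for weight $15$. Your remark that linear combinations can only lower the $a_0$-degree is sound, and so is your use of Theorem~\ref{thm:a01-closure} at weight $15$ (any weight-$15$ invariant has $a_0$-degree $\le 1$ because $\operatorname{wt}(a_0^2)=16$); the latter makes the weight-$15$ claim independent of the rank data behind Proposition~\ref{prop:H2}.
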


\begin{theorem}[W-coefficient detection principle]
\label{thm:wdetect}
Let $F \in K[F_0]$ with W-expansion $F = \sum H_jW^j$ ($H_j \in K_{pq}$,
$W = F_0/c$). Then $H_j = h_jc^j$ for unique $h_j \in K$, and
$v_q(H_j) \ge 4j$, $v_p(H_j) \ge 2j$. In particular, any invariant whose
W-expansion violates $v_q(H_j) \ge 4j$ lies outside $K[F_0]$.
\end{theorem}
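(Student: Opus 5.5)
The proof is a direct expansion, using $W=F_0/c$ with $c=p^2q^4$. Take $F\in K[F_0]$ and write it as a polynomial in $F_0$ with coefficients in $K$, say $F=\sum_j h_jF_0^j$ with $h_j\in K=k[p,q,J_1,J_2]$. Substituting $F_0=cW$ gives
\[
F=\sum_j h_jc^jW^j .
\]
By Corollary~\ref{cor:A-localized}, $A_{pq}^{D_5}=K_{pq}[W]$ is a polynomial ring in $W$ over $K_{pq}$, so the $W$-expansion of $F$ is unique. Hence $H_j=h_jc^j$.

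For uniqueness of $h_j$: $K$ is a domain and $c\ne0$, so $h_j=H_j/c^j$ is determined by $H_j$. The representation $\sum h_jF_0^j$ is itself unique, because $F_0$ is algebraically independent over $K$ by Proposition~\ref{prop:F0-independence}.

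For the valuation bounds, $h_j\in K\subseteq B$ gives $v_q(h_j)\ge0$ and $v_p(h_j)\ge0$ when $h_j\ne0$. Since $v_q(c)=4$ and $v_p(c)=2$, additivity of the discrete valuations $v_p,v_q$ on $\operatorname{Frac}(B)$ yields
\[
v_q(H_j)=v_q(h_j)+4j\ge 4j,\qquad v_p(H_j)=v_p(h_j)+2j\ge 2j .
\]
If $H_j=0$, its valuation is $+\infty$ and the bounds hold trivially.

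The detection statement is the contrapositive. An invariant $G$ has a unique $W$-expansion in $K_{pq}[W]$, again by Corollary~\ref{cor:A-localized}. If some coefficient of that expansion has $v_q<4j$, then $G$ cannot equal any $F\in K[F_0]$, since that expansion would have to coincide with the one above. For example, $H$ has $v_q(H_2)=4<8$, so $H\notin K[F_0]$.

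The only subtle point is the uniqueness of the $W$-expansion, which requires $W$ to be transcendental over $K_{pq}$. This holds because $W=a_0+\rho$ with $\rho\in B_{pq}$ is monic in $a_0$; Corollary~\ref{cor:A-localized} already packages this fact.
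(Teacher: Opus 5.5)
Your proof is correct and follows the paper's argument: substitute $F_0=cW$ into $F=\sum h_jF_0^j$, compare coefficients using uniqueness of the $W$-expansion, and read off the bounds from $h_j\in K\subseteq B$ together with $v_q(c)=4$ and $v_p(c)=2$. The paper justifies uniqueness directly from $W=a_0+\rho$ being monic of $a_0$-degree $1$, not via Corollary~\ref{cor:A-localized}; this is the point you make at the end, so the slice theorem is not actually needed.
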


\begin{remark}[Second birth window: the $C_5$-side $a_0$-degree-$4$ window]
\label{rem:c5-window}
The same monomial-window method giving $[16,27]$ for $H$ gives
$F_{46}$: $C_5$-monomials of $a_0$-degree $4$ are $F_0^4$ (weight $56$),
$F_0^2H$ (weight $52$), and $H^2$ (weight $48$): the $C_5$
$a_0$-degree-$4$ window is $[48,\infty)$. The monomial $a_0^4$ weighs
$32$, so $a_0$-degree $4$ is monomially possible from $32$; the degree bound does not exclude $F_{46}$
(weight $46$, $a_0$-degree $4$) in the gap $[32,47]$, strictly
below the $C_5$ window. Its top ideal $J_4 = (p^8q^8)$
(Definition~\ref{def:Jm}) determines which multiples can lie in $C_5$; that
$p^2F_{46}^{\mathrm{int}}$ does is Theorem~\ref{thm:quotient-4648}.
\end{remark}

\begin{proof}[Proof sketch]
$W = a_0 + \rho$ is monic of $a_0$-degree $1$, so W-expansions are unique
by descending $a_0$-degree (top coefficient first; no slice theorem
needed). Writing $F = \sum h_eF_0^e = \sum h_ec^eW^e$ and comparing
coefficients gives $H_j = h_jc^j$; $v_q(h_j) \ge 0$ on $K \subset B$
yields the bounds, and the contrapositive is the detector.
\end{proof}

\begin{remark}[Formalization scope]
\label{rem:wdetect-lean}
The full \(W\)-valuation form has no formal proof in Lean.
\end{remark}

The gap $[32,47]$ below the $C_5$ window is where the second
obstruction occurs.

\section{Second hidden invariant: the six-generator ring is not the full ring}
\label{sec:second-witness}

The six-generator algebra \(C_5\) is still not the full invariant ring. At Fibonacci weight \(46\), the explicit invariant \(F_{46}^{\mathrm{int}}\) lies outside \(C_5\); its \(W\)-top also exhibits the failure of the \(p\)-adic bound. At the same time,
\[
p^2F_{46}^{\mathrm{int}}=E\in C_5,
\]
a relation used in the closure argument.
We work over the characteristic-zero field \(k\) of
Definition~\ref{def:family}.

\subsection{Exact dimensions and the refutation}

\begin{theorem}[Failure of all-weights spanning]
\label{thm:allweights-refuted}
The identity $C_5 = \Afring{5}^{\Dmap{5}}$ is false. Over $\mathbb{Q}$,
exact sparse rational arithmetic gives
\[
\dim\ker(\Dmap{5}|_{V_{46}}) = 1519 > 1518 = \dim (C_5)_{46},
\]
with $\dim V_{46} = 18597$, $\operatorname{rank}(\Dmap{5}|_{V_{46}}) =
17078$, and the $1618$ weight-$46$ $C_5$-monomial images of full rank
$1518 = s_6(46) - s_6(18)$. Hence the inclusion
$(C_5)_{46} \hookrightarrow \ker(\Dmap{5}|_{V_{46}})$ is not surjective:
there exists $F \in \Afring{5}^{\Dmap{5}}$ of weight $46$ with $F \notin
C_5$. The proposed all-weights spanning is thereby false.
\end{theorem}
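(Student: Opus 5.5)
The plan is to prove the statement as a finite, exact rank comparison inside the single weight-$46$ component $V_{46}$. Non-surjectivity then follows from a dimension inequality. No structural argument is needed, because the claim only asserts existence of some weight-$46$ invariant outside $C_5$. The explicit witness $F_{46}^{\mathrm{int}}$ is produced later.

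\textbf{Kernel dimension.} First enumerate the monomial basis of $V_{46}$ in $a_0,\dots,a_5$ with Fibonacci weights $(8,5,3,2,1,1)$. This should give $\dim V_{46}=18597$. Since $\Dmap{5}(a_n)=a_{n+1}a_{n+2}$ and $\operatorname{wt}(a_n)=\operatorname{wt}(a_{n+1})+\operatorname{wt}(a_{n+2})$, the derivation is weight-homogeneous of weight $0$. Hence it restricts to an endomorphism $\Dmap{5}|_{V_{46}}\colon V_{46}\to V_{46}$. I would assemble its sparse integer matrix and compute its rank by exact rational elimination over $\mathbb{Q}$, aiming at $17078$. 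Rank–nullity then gives $\dim\ker(\Dmap{5}|_{V_{46}})=18597-17078=1519$. Because the matrix has integer entries, rank over $\mathbb{Q}$ equals rank over any field of characteristic zero. The statement therefore holds for the general $k$ of Definition~\ref{def:family}.

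\textbf{Dimension of $(C_5)_{46}$.} Next list the $1618$ monomials of weight $46$ in $p,q,J_1,J_2,F_0,H$ with weights $(1,1,6,4,14,24)$; this is $s_6(46)$. By Theorem~\ref{thm:global-hypersurface}, $\ker\pi=(S)$ with $S$ of weight $28$. So $\dim(C_5)_{46}=s_6(46)-s_6(18)$, and I would check that this count equals $1518$, i.e.\ $s_6(18)=100$. As an independent check that does not rely on the presentation, expand the $1618$ images in the monomial basis of $V_{46}$ and verify that their rank over $\mathbb{Q}$ is exactly $1518$. Each image lies in the kernel by Lemma~\ref{lem:kernel-gens}, Theorem~\ref{thm:F0-invariant} and Proposition~\ref{prop:H1}. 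This confirms the inclusion $(C_5)_{46}\subseteq\ker(\Dmap{5}|_{V_{46}})$ with $1518<1519$.

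\textbf{Conclusion and main obstacle.} Since $1518<1519$, the inclusion is proper, so some homogeneous $F\in\ker(\Dmap{5}|_{V_{46}})$ lies outside $(C_5)_{46}$. Because $C_5$ is graded, $F\notin C_5$. The main obstacle is the exact rank of the $18597$-column derivation matrix; the certified answer must be exact, not floating-point. I would reduce its size by using the $a_0$-degree, which $\Dmap{5}$ lowers by at most one, to put the matrix in block-triangular form. I would also cross-check the rank modulo several large primes, with exact rational elimination as the certificate.
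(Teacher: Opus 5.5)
Your proposal is correct and takes the paper's route. The statement is an exact $\mathbb{Q}$-rank comparison at weight $46$: rank $17078$ of $\Dmap{5}|_{V_{46}}$ on the $18597$-dimensional $V_{46}$, against rank $1518 = s_6(46)-s_6(18)$ of the $1618$ $C_5$-monomial images, transferred to arbitrary characteristic-zero $k$ by rank invariance as in Lemma~\ref{lem:QQ-transfer}. Your added points also hold: gradedness of $C_5$ lets you pass from $(C_5)_{46}$ to $C_5$, and the count $s_6(18)=100$ from the hypersurface presentation agrees with the paper's data.
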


\begin{theorem}[Spanning through weight $45$; minimality]
\label{thm:minimal-45}
For every weight $w \le 45$,
$\ker(\Dmap{5}|_{V_w}) = (C_5)_w$: through $w \le 14$ by direct
spanning
(Theorem~\ref{thm:direct-span14}), and for $15 \le w \le 45$ by exact
$\mathbb{Q}$-rank comparison
($\dim\ker = \operatorname{rank}(G_w) = s_6(w) - s_6(w-28)$ with $G_w$
the $C_5$-monomial images; recorded in Appendix~\ref{app:ranks}).
Transfer of these $\mathbb{Q}$-rank equalities to arbitrary characteristic-zero $k$ is Lemma~\ref{lem:QQ-transfer}. In particular no second relation
appears through weight $57$
($\operatorname{rank}(G_w) = s_6(w) - s_6(w-28)$ everywhere checked),
and weight $46$ is the smallest weight with an invariant outside
$C_5$.
\end{theorem}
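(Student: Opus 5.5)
The statement is Theorem~\ref{thm:minimal-45}: for every $w\le 45$ the kernel $\ker(\Dmap{5}|_{V_w})$ equals $(C_5)_w$, and $46$ is the first failing weight. I would prove it in two ranges, then transfer the field, then record minimality.

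\textbf{Low range $w\le 14$.} Here I would invoke the direct spanning result (Theorem~\ref{thm:direct-span14}). Together with Theorem~\ref{thm:a0-obstruction}(4)--(5), it gives $(A_5^{D_5})_w=K_w$ for $w\le 13$ and $K_{14}\oplus kF_0$ at $w=14$. Since $H$ has weight $24$, $(C_5)_w=K[F_0]_w$ in this range, so the two sides agree.

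\textbf{Middle range $15\le w\le 45$.} The argument is a dimension sandwich.
\begin{itemize}
\item The inclusion $(C_5)_w\subseteq\ker(\Dmap{5}|_{V_w})$ holds because all six generators are invariant: Lemma~\ref{lem:kernel-gens}, Theorem~\ref{thm:F0-invariant} and Proposition~\ref{prop:H1}.
\item By Theorem~\ref{thm:global-hypersurface}, $C_5\cong k[P,Q,J_1,J_2,F,H]/(S)$ with $S$ of weight $28$. Hence $\dim(C_5)_w=s_6(w)-s_6(w-28)$, with $s_6$ taken as $0$ at negative arguments.
\item It then suffices to show $\dim_{\mathbb{Q}}\ker(\Dmap{5}|_{V_w})=\dim V_w-\operatorname{rank}(\Dmap{5}|_{V_w})$ equals $s_6(w)-s_6(w-28)$. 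I would establish this by exact rational rank computation of the sparse matrices of $\Dmap{5}\colon V_w\to V_w$, since $\Dmap{5}$ preserves Fibonacci weight.
\item In parallel I would compute $\operatorname{rank}(G_w)$, the rank of the $C_5$-monomial images. Equality of this rank with both numbers certifies, independently of the presentation, that the monomial images span the kernel.
\end{itemize}
The numbers are the ones tabulated in Appendix~\ref{app:ranks}.

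\textbf{Transfer to arbitrary $k$.} I would use Lemma~\ref{lem:QQ-transfer}. Both matrices have entries in $\mathbb{Q}$, and rank is unchanged under field extension. Every characteristic-zero $k$ contains $\mathbb{Q}$, so the $\mathbb{Q}$-equalities of kernel dimension and spanning persist over $k$.

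\textbf{Minimality and the no-second-relation remark.} The checked equalities $\operatorname{rank}(G_w)=s_6(w)-s_6(w-28)$ up to $w=57$ show that the kernel of $k[P,\dots,H]_w\to (C_5)_w$ is exactly $(S)_w$ there. This is consistent with $\ker\pi=(S)$, and in fact certifies it directly. Combined with Theorem~\ref{thm:allweights-refuted} ($1519>1518$ at $w=46$), weight $46$ is the first failure.

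\textbf{Main obstacle.} The middle range is the bottleneck. Its proof is a certified exact computation rather than a structural argument. The care needed is in guaranteeing correctness of the sparse rational elimination: I would use exact fraction-free (or modular-plus-reconstruction with a final rational verification) rank, since a modular rank alone only bounds the rational rank from below. If a structural alternative were wanted, I would try the $a_0$-filtration with the top ideals $J_m$. However, that route is exactly what fails beyond $a_0$-degree $2$, so I would not rely on it below weight $46$.
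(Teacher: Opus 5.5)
Your proposal is correct and follows the paper's own argument: direct spanning (Theorem~\ref{thm:direct-span14}) for $w\le 14$; for $15\le w\le 45$, the inclusion $(C_5)_w\subseteq\ker(\Dmap{5}|_{V_w})$ with $\dim(C_5)_w=s_6(w)-s_6(w-28)$ from the hypersurface presentation, matched against the exact $\mathbb{Q}$-ranks of Appendix~\ref{app:ranks}; base change by Lemma~\ref{lem:QQ-transfer}; and Theorem~\ref{thm:allweights-refuted} for minimality at $46$. One sharpening of your ``main obstacle'': the inclusion and the presentation already give $\dim\ker\ge s_6(w)-s_6(w-28)$, so the sandwich only needs a lower bound on $\operatorname{rank}_{\mathbb{Q}}(\Dmap{5}|_{V_w})$, and since this matrix has integer entries, a single modular rank (which never exceeds the rational rank) supplies exactly that bound; exact rational elimination is therefore not required in this direction, and your $\operatorname{rank}(G_w)$ check through weight $57$ is automatic from $\ker\pi=(S)$.
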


\subsection{The explicit invariant}

\begin{theorem}[Second hidden invariant $F_{46}$]
\label{thm:F46}
There is an explicit invariant $F_{46}^{\mathrm{int}} \in \Afring{5}$ of Fibonacci
weight $46$, ordinary degree $18$, $79$ terms, content $1$, and $a_0$-degree exactly $4$,
satisfying $\Dmap{5}(F_{46}^{\mathrm{int}}) = 0$ and $F_{46}^{\mathrm{int}} \notin C_5$. The full
$79$-term expansion is recorded in the supplementary data
(\texttt{data/witness\_w46.txt}) and the Lean formalization
(\texttt{formalize/AdjQuad/SecondWitness.lean}).
Write $F_{46}^{\mathrm{int}}$ for the cleared integral form (content $1$, top $121550625\,a_0^4a_4^6a_5^8$) and $F_{46}^{\mathrm{rat}}=F_{46}^{\mathrm{int}}/26460$ for the rationally normalized form (top $\frac{18375}{4}\,a_0^4a_4^6a_5^8$). Every exact identity in this paper ($p^2F_{46}^{\mathrm{int}} = E$ of
Theorem~\ref{thm:quotient-4648}, $\bar F_{46}^{\mathrm{int}} = B\theta$ of
Remark~\ref{rem:residual-images}, $\ker\Phi=(S,p^2T-E)$ of Theorem~\ref{thm:relation-ideal} with $T\mapsto F_{46}^{\mathrm{int}}$) uses $F_{46}^{\mathrm{int}}$, while
valuation, top-ideal, and dimension arguments are scale-invariant and use either form.
Its $W$-expansion $F_{46}^{\mathrm{int}} = \sum_{j=0}^4 H_jW^j$ has $H_1 = H_3 = 0$ and
\[
\renewcommand{\arraystretch}{1.2}\begin{array}{c|ccc}
j & 0 & 2 & 4 \\ \hline
(v_q(H_j),\,v_p(H_j)) & (-8,-2) & (0,2) & (8,6)
\end{array}
\]
\emph{Consequences.} With $m = 4$: $\lambda_q(F_{46}) = \min_j(v_q(H_j)
- 4j) = -8 = -2m$, so the relaxed bound R1
(Theorem~\ref{thm:R1}) holds with equality at every
nonzero $W$-coefficient. But $v_p(H_4) = 6 <
8 = 2\cdot 4$: the $p$-top bound $v_p \ge 2j$ fails at $j = 4$,
and with it the global \(p\)-bound \(v_p(H_j) \ge 2j\).
The strengthened $q$-top bound $v_q(C_m) \ge 4\lceil m/2\rceil$ of
Theorem~\ref{thm:reduction} holds with equality at $F_{46}$
($8 = 8$); its general form is not proved here.
\end{theorem}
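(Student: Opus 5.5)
The plan is a certificate-style verification: most claims are finite computations on one explicit polynomial. The only structural input is the top-ideal obstruction for non-membership.

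First, I would produce $F_{46}^{\mathrm{int}}$ explicitly. Theorem~\ref{thm:allweights-refuted} gives a $1519$-dimensional kernel in $V_{46}$ against $1518$ for $(C_5)_{46}$. So I would take a basis vector of $\ker(\Dmap{5}|_{V_{46}})$ modulo the image of the $C_5$-monomials, chosen to have top $a_0$-degree $4$. Clearing denominators and dividing by the content gives the $79$-term integral form. The counting claims (Fibonacci weight $46$, ordinary degree $18$, $79$ terms, content $1$, leading term $121550625\,a_0^4a_4^6a_5^8$) are then read off the expansion. $\Dmap{5}(F_{46}^{\mathrm{int}})=0$ is checked by direct expansion, exactly as in Theorem~\ref{thm:F0-invariant}. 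This is exact over $\mathbb{Q}$ and transfers to any characteristic-zero $k$ by Lemma~\ref{lem:QQ-transfer}.

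Second, for non-membership I would use the $a_0$-filtration rather than a rank argument. By Remark~\ref{rem:c5-window}, the $C_5$-monomials of $a_0$-degree $4$ are $F_0^4$, $F_0^2H$ and $H^2$, of weights $56$, $52$ and $48$. At weight $46$, every element of $C_5$ therefore has $a_0$-degree at most $3$. Since $F_{46}^{\mathrm{int}}$ has $a_0$-degree exactly $4$, it lies outside $C_5$. The same conclusion also follows from Corollary~\ref{cor:obstruction}, because the top coefficient $p^6q^8\notin J_4=(p^8q^8)$.

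Third, I would compute the $W$-expansion by substituting $a_0=W-\rho$ with $\rho=G_0/c$. Because $W$ is monic in $a_0$, the coefficients $H_j\in K_{pq}$ are unique (Theorem~\ref{thm:wdetect}). The top coefficient is $H_4=121550625\,p^6q^8$, so $v_q(H_4)=8$ and $v_p(H_4)=6$. To see $H_1=H_3=0$, I would use $x$-parity: $\rho$ has only odd $x$-powers, since $G_0$ is odd in $x$. Invariance and uniqueness of the expansion then force the odd-$W$ coefficients to vanish. Alternatively this can simply be verified symbolically. I would then compute $H_2$ and $H_0$ as rational functions in $p,q,J_1,J_2$ and read off their valuations, namely $(0,2)$ and $(-8,-2)$ respectively. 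The consequences follow by arithmetic: $\lambda_q=\min(-8,\,0-8,\,8-16)=-8=-2m$, and $6<8$ refutes the $p$-bound. The $q$-top equality is $v_q(C_4)=8=4\lceil 4/2\rceil$.

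The main obstacle is the valuation computation for $H_0$ and $H_2$. It requires expanding $\rho^4$ and $\rho^2$ against the $79$ terms, and certifying exact pole orders rather than mere bounds. I would do this by reducing modulo $q$ (respectively $p$) after multiplying by $q^8$ (respectively $p^2$), and checking that the residue is nonzero in $L_q$ (respectively its $p$-analogue).
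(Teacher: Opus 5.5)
Your proposal is correct. For the verification items it matches the paper: the explicit polynomial, $\Dmap{5}(F_{46}^{\mathrm{int}})=0$ by expansion, weight, degree, top, and the $W$-coefficients by exact computation.

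The genuine difference is the non-membership step. The paper proves $F_{46}^{\mathrm{int}}\notin C_5$ through the $p$-top necessity of Lemma~\ref{lem:J4}: every $G\in C_5$ of $a_0$-degree $m$ has top with $v_p\ge 2m$. Proving this requires controlling not just the tops $c^eh_2^r$ but also the sub-top $a_0$-coefficients of $F_0^e$ and $H^r$, via $\binom{e}{i}c^iG_0^{e-i}$ and $\ell'=22050p^2G_0$, because higher-degree monomials in a combination can cancel their tops. The top $121550625\,p^6q^8$ then fails $v_p\ge 8$.

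Your primary argument is instead the weight-window count the paper uses for $H\notin K[F_0]$. Since $C_5$ is graded and any generator monomial with $e+2r\ge4$ weighs $14e+24r\ge 12(e+2r)\ge 48$, all of $(C_5)_{46}$ has $a_0$-degree at most $3$. This is valid and more elementary: it needs no valuations and no knowledge of the lower $a_0$-coefficients of $H$. But it is tied to $46$ lying below the window $[48,\infty)$.

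The paper's route buys weight-independence and identifies the obstruction as $p$-adic. At weight $48$ your count cannot distinguish $q^2F_{46}^{\mathrm{int}}\notin C_5$ from $p^2F_{46}^{\mathrm{int}}=E\in C_5$ (Theorem~\ref{thm:quotient-4648}). It is also silent on $F_{46}^{\mathrm{int}}F_0^2$ at weight $74$, which lies above $H^3$ at $72$ (Corollary~\ref{cor:FH-outside-C5}). The $J_m$ criterion handles all of these. You also cite Corollary~\ref{cor:obstruction}, so both routes are available to you.

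One sub-step needs repair. $H_1=H_3=0$ does not follow from $\Dmap{5}$-invariance plus uniqueness: $F_0=cW$ is invariant with $H_1=c\neq 0$. What you need is that $F_{46}^{\mathrm{int}}$ is fixed by the involution $\sigma\colon(a_0,a_3)\mapsto(-a_0,-a_3)$. This holds because every monomial has $\operatorname{wt}-\deg_{\mathrm{ord}}=7e_0+4e_1+2e_2+e_3=46-18=28$, so $e_0+e_3$ is even. Then $\sigma(W)=-W$ since $G_0$ is odd in $x$, and $\sigma(H_j)=H_j$ since elements of $K_{pq}$ are even in $x$. Uniqueness of the expansion then gives $H_j=(-1)^jH_j$. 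Your symbolic-check fallback also works.

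The obstacle you flag, expanding $\rho^4$ against $79$ terms, can be sidestepped. Combine $p^2F_{46}^{\mathrm{int}}=E$, where $E$ is a polynomial in $q,J_1,J_2,H$ only, with $H=11025p^4q^4W^2+105J_1J_2^5q^{-2}+9J_2^7q^{-4}$. This gives the whole expansion at once:
\begin{itemize}
\item $H_4=11025^2p^6q^8$;
\item $H_2=11025p^2(840q^6J_1^3J_2+770q^4J_1^2J_2^3+210q^2J_1J_2^5+18J_2^7)$;
\item $q^8p^2H_0\equiv 81J_2^{14}\equiv 81x^{28}\pmod q$.
\end{itemize}
The exact valuations in the table, including $H_1=H_3=0$, then follow from the injective specializations of Lemma~\ref{lem:contraction}.
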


\begin{lemma}[P-top necessity on $C_5$]
\label{lem:J4}
Let $G \in C_5$ have $a_0$-degree exactly $m$, with top
$a_0$-coefficient $C_m \in B$. Then $v_p(C_m) \ge 2m$.
\end{lemma}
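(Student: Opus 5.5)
We need to prove: for G in C_5 with a_0-degree exactly m, the top a_0-coefficient C_m satisfies v_p(C_m) ≥ 2m.

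The plan is to write $G$ as a polynomial in the generators, $G=\sum_{e,f} g_{e,f}F_0^eH^f$ with $g_{e,f}\in K$, and read off the top $a_0$-coefficient. Since $F_0=ca_0+G_0$ with $c=p^2q^4$ and $G_0\in B$, $F_0$ has $a_0$-degree $1$ with top coefficient $p^2q^4$. The leading term $11025\,a_0^2a_4^4a_5^4$ of $H$ shows that $H$ has $a_0$-degree $2$ with top coefficient $11025p^4q^4$. Elements of $K$ are $a_0$-free. Tops multiply, because $B[a_0]$ is a polynomial ring over a domain. So the monomial $g F_0^eH^f$ with $g\in K$ has $a_0$-degree $e+2f$ and top coefficient $g\,(p^2q^4)^e(11025p^4q^4)^f$. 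Its $p$-valuation is $v_p(g)+2e+4f\ge 2(e+2f)$, because $v_p(g)\ge0$ for $g\in K\subset B$.

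Next, group the representation by $a_0$-degree $d=e+2f$. For every $d$, the coefficient of $a_0^d$ in $G$ is the sum of two kinds of contributions. The first kind is the tops of the monomials with $e+2f=d$. The second kind is the lower-order $a_0^d$-coefficients of monomials with $e+2f>d$. The second kind is dangerous: cancellation among higher-degree tops could leave a lower top with small $p$-valuation. This is presumably what happens with $F_{46}^{\mathrm{int}}$, since $p^2F_{46}^{\mathrm{int}}\in C_5$.

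To avoid this, I will not track coefficients in $a_0$ directly. Instead I pass to the $W$-expansion, since $W=a_0+\rho$ is monic in $a_0$ (Theorem~\ref{thm:wdetect}). Then $F_0=cW$ exactly, and $H=11025p^4q^4W^2+H_0$ with $H_0=105J_2^5J_1q^{-2}+9J_2^7q^{-4}\in K_{pq}$, by Remark~\ref{rem:counterexample-H}. The key observation is that $H_0$ has $v_p(H_0)\ge0$: its only denominators are powers of $q$, and $J_1,J_2$ lie in $B$. Hence every $W$-coefficient of $F_0^eH^f$ lies in $K_{q}$ (as a ring, $K[q^{-1}]$). The coefficient of $W^j$ has $v_p\ge 2e+4f'$, where $f'$ counts the factors of $H$ contributing $W^2$ and $j=e+2f'$; so it has $v_p\ge 2j$. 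Summing over the representation, every $W$-coefficient $H_j$ of $G$ satisfies $v_p(H_j)\ge 2j$.

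Finally, $W$ is monic of $a_0$-degree $1$, so the $W$-degree of $G$ equals its $a_0$-degree $m$. The top $W$-coefficient $H_m$ then equals the top $a_0$-coefficient $C_m$: expanding $W^m=(a_0+\rho)^m$ gives $a_0^m$ with coefficient $1$, and lower powers of $W$ do not reach $a_0^m$. Therefore $v_p(C_m)=v_p(H_m)\ge 2m$.

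The main obstacle is precisely the cancellation issue, which the $W$-coordinate resolves. The step to verify carefully is that the $W$-expansion of $H$ has $p$-integral constant term, i.e. that the poles of $H_0$ are purely in $q$. I would confirm this from the displayed expansion of Remark~\ref{rem:counterexample-H}. As a cross-check, it matches the fact that the $p$-side failure first occurs only at $F_{46}$.
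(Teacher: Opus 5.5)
Your proof is correct; it differs from the paper's in which coordinate the bookkeeping is done. The paper stays in $B[a_0]$. It writes $H=h_2a_0^2+\ell'a_0+m'$ with $h_2=11025p^4q^4$, $\ell'=22050p^2G_0$ (so $v_p(\ell')=2$) and $m'\in B$. Using $[a_0^i](F_0^e)=\binom{e}{i}c^iG_0^{e-i}$ and a multinomial expansion of $H^r$, it checks that \emph{every} $a_0^k$-coefficient of every monomial $F_0^eH^r$, not only the top one, has $v_p\ge 2k$. Since this bound holds for all coefficients, it survives $K$-linear combinations. So the cancellation you worry about is harmless, and no change of coordinates is needed.

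You prove the same bound in the $W$-coordinate instead: $v_p(H_j)\ge 2j$ for all $j$, using $F_0=cW$, $H_1=0$, and the fact that $H_0$ has only $q$-poles. The two conditions are equivalent. Indeed $C_k=\sum_{j\ge k}\binom{j}{k}H_j\rho^{j-k}$ and $H_j=\sum_{k\ge j}\binom{k}{j}C_k(-\rho)^{k-j}$, with $v_p(\rho)=-2$, give each from the other.

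What your version buys is a structural reading. It is exactly the $p$-half of the $W$-coefficient detection principle (Theorem~\ref{thm:wdetect}), extended from $K[F_0]$ to $C_5=K[F_0,H]$. Equivalently, the $p$-adic bound of Lemma~\ref{lem:plemma} holds on all of $C_5$, while the $q$-half fails there because of $H_0$. This makes it transparent why $F_{46}^{\mathrm{int}}$, which violates that bound, lies outside $C_5$. The paper's version is fraction-free and needs only the polynomial $a_0$-expansion of $H$.

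One correction to your commentary. Your aside that low-valuation tops produced by cancellation are ``presumably what happens with $F_{46}^{\mathrm{int}}$'' contradicts the lemma you then prove. The top of $E=p^2F_{46}^{\mathrm{int}}$ is $121550625\,p^8q^8=h_2^2$, coming from the $H^2$ term alone, with no cancellation involved.
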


\begin{proof}
Write $G = \sum_{e,r} \mu_{e,r}F_0^eH^r$ with $\mu_{e,r} \in K$
($C_5 = K[F_0,H]$). Then $C_m = \sum_{e,r}\mu_{e,r}[a_0^m](F_0^eH^r)$
with $[a_0^m](F_0^eH^r) = \sum_{i+j=m}[a_0^i](F_0^e)[a_0^j](H^r)$.
Since $F_0 = ca_0+G_0$ with $c = p^2q^4$,
$[a_0^i](F_0^e) = \binom{e}{i}c^iG_0^{e-i}$ has $v_p \ge 2i$
($v_p(G_0) = 0$ by inspection of Definition~\ref{def:F0}).
Write $H = h_2a_0^2+\ell'a_0+m'$ for the $a_0$-expansion
($h_2 = 11025p^4q^4$); by direct expansion
$\ell' = 22050p^2G_0$ has $v_p = 2$ and $m' \in B$ has $v_p \ge 0$.
Hence $[a_0^j](H^r) = \sum_{2a+b=j}(\cdots)h_2^a(\ell')^bm'^d$ has
$v_p \ge 4a+2b = 2j$. Each summand of $C_m$ therefore satisfies
$v_p \ge 2i+2j = 2m$ (using $\mu_{e,r} \in K$ and characteristic
zero for the multinomial coefficients), so $v_p(C_m) \ge 2m$.
In particular no $C_5$-member of $a_0$-degree $4$ has top with
$v_p < 8$; the top of $F_{46}$ ($v_p = 6$) lies outside $J_4$.
\end{proof}

\begin{corollary}[$F_{46} \notin C_5$]
$F_{46}$ has $a_0$-degree exactly $4$ with top
$\frac{18375}{4}p^6q^8 \notin J_4$ ($v_p = 6 < 8$; $K$ a polynomial ring,
hence a UFD, with $(p)$ prime). Any $G \in C_5$ equal to $F_{46}$ would
share its $a_0$-degree and top, contradicting
Lemma~\ref{lem:J4}.
\end{corollary}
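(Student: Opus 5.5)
The plan is to argue by contradiction through the $a_0$-filtration, reducing the corollary to Lemma~\ref{lem:J4}. Work in $\Afring{5}=B[a_0]$, a polynomial ring in $a_0$ over the domain $B$. In this ring the $a_0$-degree and the top $a_0$-coefficient of a nonzero element are well defined, and they are unchanged by scaling. Consequently it does not matter whether we use $F_{46}^{\mathrm{int}}$ or $F_{46}^{\mathrm{rat}}$: non-membership in the $k$-subalgebra $C_5$ is equivalent for the two forms.

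\textbf{Step 1: the top coefficient.} From Theorem~\ref{thm:F46}, $F_{46}$ has $a_0$-degree exactly $4$. Its top coefficient is $C_4=\tfrac{18375}{4}p^6q^8$ (or $121550625\,p^6q^8$ for the integral form). Since $p=a_4$ is a prime element of the UFD $B$ and the scalar prefactor is a unit in characteristic zero, we get $v_p(C_4)=6$ exactly. The same computation places $C_4$ in $K$, but outside the ideal $J_4=(p^8q^8)$ of $K$, because $(p)$ is prime in the polynomial ring $K$.

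\textbf{Step 2: contradiction.} Suppose some $G\in C_5$ satisfied $G=F_{46}$ as elements of $\Afring{5}$. The $a_0$-expansion over $B$ is unique, so $G$ would have $a_0$-degree $m=4$ and top coefficient $C_4$. Lemma~\ref{lem:J4} then forces $v_p(C_4)\ge 2m=8$, contradicting $v_p(C_4)=6$. Hence $F_{46}\notin C_5$.

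\textbf{Main obstacle.} The only nontrivial input is Lemma~\ref{lem:J4} itself, which we may assume. Within the corollary, the one point needing care is the claim that the top coefficient is exactly the displayed monomial with $v_p=6$. This is read off from the explicit expansion in Theorem~\ref{thm:F46}. To check it independently, I would use the $W$-expansion: $W$ is monic in $a_0$, so the top $a_0$-coefficient equals $H_4$, and the table there records $v_p(H_4)=6$. The two readings therefore agree.
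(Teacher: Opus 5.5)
Your proposal is correct and follows the paper's argument: the top $a_0$-coefficient of $F_{46}$ is $\frac{18375}{4}p^6q^8$ in degree $4$, so $v_p=6<8$. Any $G\in C_5$ equal to $F_{46}$ would have the same $a_0$-degree and top, contradicting the bound $v_p(C_m)\ge 2m$ of Lemma~\ref{lem:J4}. Your remarks on scale-invariance and the $W$-expansion cross-check ($H_4$ equals the top since $W$ is monic) are harmless extras consistent with Theorem~\ref{thm:F46}.
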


\subsection{\texorpdfstring{The $46$--$48$ quotient and the $p^2$-torsion}{The 46--48 quotient and the p2-torsion}}
\label{subsec:F-module}

Which \(F_{46}\)-multiples lie in \(C_5\) is determined by the top-ideal
criterion; the \(p^2\)-relation is established in
Theorem~\ref{thm:quotient-4648} below.

\begin{theorem}[The $46$--$48$ quotient]
\label{thm:quotient-4648}
With $F_{46}=F_{46}^{\mathrm{int}}$ throughout (so $p^2F_{46}^{\mathrm{int}}=E$ exactly as printed),
(i) At $47$ (excess $2$), $F_{46}p$ and $F_{46}q$ lie outside $C_5$ and
span the $2$-dimensional quotient $\ker_{47}/(C_5)_{47}$ over $\mathbb{Q}$.
(ii) At $48$ (excess $2$), $F_{46}p^2 \in C_5$, explicitly
(with generator exponents $(p,q,J_1,J_2,F_0,H)$, and with
$F_{46}=F_{46}^{\mathrm{int}}$ the cleared integral form, so the
$H^2$-coefficient is $1$)
\begin{align*}
F_{46}p^2 =\;& 40000\,q^6J_1^7 + 193200\,q^4J_1^6J_2^2
  + 325164\,q^2J_1^5J_2^4 + 148225\,J_1^4J_2^6 \\
  &+ 840\,q^2J_1^3J_2H + 770\,J_1^2J_2^3H + H^2,
\end{align*}
while
  $F_{46}pq$ and $F_{46}q^2$ lie outside $C_5$ and span
  $\ker_{48}/(C_5)_{48}$.
\end{theorem}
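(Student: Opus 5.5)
The statement splits into three parts: the explicit identity at weight $48$, the non-membership claims at weights $47$ and $48$, and the spanning claims for the two quotients. I would treat these separately.

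\textbf{The identity.} The identity $F_{46}p^2 = 40000\,q^6J_1^7+\cdots+H^2$ is a polynomial identity in $A_5=k[a_0,\dots,a_5]$ with rational (indeed integer) coefficients. I would verify it by substituting the explicit $J_1,J_2$ and the $17$-term $H$ together with the $79$-term $F_{46}^{\mathrm{int}}$, expanding, and comparing monomials. Two checks fix the normalization before the full expansion:
\begin{itemize}
\item \emph{$a_0$-top.} The $a_0^4$-coefficient of the right side comes only from $H^2$, giving $h_2^2=11025^2p^8q^8=121550625\,p^8q^8$. This matches $p^2$ times the top $121550625\,p^6q^8$ of $F_{46}^{\mathrm{int}}$, which confirms that the $H^2$-coefficient is $1$.
\item \emph{$a_0^3$ term.} The same comparison of the $a_0^3$ terms checks the normalization of $\ell'=22050p^2G_0$.
\end{itemize}
The remaining coefficients are fixed by a linear solve in the weight-$48$ monomial basis of $C_5$, of dimension $s_6(48)-s_6(20)$. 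Since $\mathbb{Q}\subseteq k$, the identity then holds over $k$.

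\textbf{Non-membership.} I would use the $p$-top criterion, Lemma~\ref{lem:J4}. The relevant products all have $a_0$-degree $4$, with tops
\[
\begin{array}{c|c|c}
\text{product} & \text{top $a_0$-coefficient (up to scalar)} & v_p \\ \hline
F_{46}p & p^7q^8 & 7 \\
F_{46}q & p^6q^9 & 6 \\
F_{46}pq & p^7q^9 & 7 \\
F_{46}q^2 & p^6q^{10} & 6
\end{array}
\]
All four have $v_p<8=2\cdot 4$, so none lies in $C_5$. The same argument applies to any nonzero combination $\alpha F_{46}p+\beta F_{46}q$. Its top is $F_{46}$'s top times $\alpha p+\beta q$, and $v_p(\alpha p+\beta q)\le 1$, so $v_p\le 7$. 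This already gives linear independence modulo $(C_5)_{47}$. At weight $48$ I would argue identically with $\alpha pq+\beta q^2$, which has $v_p\le1$ and yields $v_p\le 7<8$. Hence $F_{46}pq$ and $F_{46}q^2$ are independent modulo $(C_5)_{48}$.

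\textbf{Spanning the quotients.} The quotients $\ker_{47}/(C_5)_{47}$ and $\ker_{48}/(C_5)_{48}$ are $2$-dimensional by exact $\mathbb{Q}$-rank comparison. Here $\dim\ker(D_5|_{V_w})$ is taken from the sparse kernel computation, and $\dim(C_5)_w=s_6(w)-s_6(w-28)$ comes from Theorem~\ref{thm:global-hypersurface}, whose rank is recorded in Appendix~\ref{app:ranks}. I would transfer these equalities to arbitrary $k$ of characteristic zero via Lemma~\ref{lem:QQ-transfer}. Two independent classes in a $2$-dimensional quotient span it.

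\textbf{Main obstacle.} The hardest step is establishing the excess-$2$ dimension counts at weights $47$ and $48$. The non-membership arguments are clean; everything else is exact linear algebra over $\mathbb{Q}$ on spaces of size about $2\times 10^4$, which has to be certified rather than derived conceptually.

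A conceptual alternative would bound the excess using the $a_0$-filtration. I would attempt this first only as a cross-check, not as the proof. Any invariant of weight $\le 48$ that lies outside $C_5$ must have a top outside $J_4$ in $a_0$-degree $4$, and the only weight-$\le 2$ multipliers are spanned by $p,q,p^2,pq,q^2$.
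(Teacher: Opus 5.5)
Your proposal is correct and follows the paper's route: the weight-$48$ identity by direct expansion, non-membership by the $p$-top necessity of Lemma~\ref{lem:J4} (your observation that $v_p(\alpha p+\beta q)\le 1$ makes the independence of the two classes explicit), and the excess-$2$ counts by exact $\mathbb{Q}$-rank computation transferred via Lemma~\ref{lem:QQ-transfer}. One small correction: Appendix~\ref{app:ranks} stops at weight $46$, so at $w=47,48$ the formula $\dim(C_5)_w=s_6(w)-s_6(w-28)$ comes from the hypersurface presentation itself, while the kernel dimensions rest on a separate exact computation, as they do in the paper.
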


\begin{theorem}[Exact $p$-torsion table]
\label{thm:torsion-table}
Write $T_p^{(r)} = (C_5:p^r)/C_5$ for the $p$-power torsion filtration.
Exact rational linear algebra gives:
\begin{enumerate}[label=(\roman*)]
\item $(C_5:p)_w = C_5$ for $w \le 46$; $T_p^{(1)}$ is $1$-dimensional
at $47, 48, 49, 50$ (spanned by $pF_{46}$, $qpF_{46}$, $q^2pF_{46}$,
$q^3pF_{46}$) and $2$-dimensional at $51, 52$: the layer
$pF_{46}\cdot(K/(p))$.
\item $(C_5:p^2)_w = C_5$ for $w \le 45$; $T_p^{(2)}$ is $1$-dimensional
at $46$ (spanned by $F_{46}$), $2$-dimensional at $47, 48, 49$
($F_{46}\cdot K_1$, resp.\ $F_{46}\cdot K_2$ mod $p^2$), and
$3$-dimensional at $50$: the layer $F_{46}\cdot(K/(p^2))$.
\item $(C_5:p^3)_w = C_5$ for $w \le 45$; at $46, 47, 48$ the excess is
$1, 2, 2$ (the same $F_{46}$-classes visible at higher $r$; no new
classes).
\end{enumerate}
The witnesses lie outside $C_5$ by the P-top necessity
(Lemma~\ref{lem:J4}), and their $p$-power multiples land in $C_5$ via
$p^2F_{46}^{\mathrm{int}} \in C_5$ (Theorem~\ref{thm:quotient-4648}). The $r = 1$
layer $pF_{46} \notin C_5$ shows $F_{46}$ has exact $p$-torsion
order $2$. The superseded finite-range module pattern fails at weight $84$
(Theorem~\ref{thm:fourth-family}); the exact Hilbert series
(Theorem~\ref{thm:R-hilbert}) governs all weights.
\end{theorem}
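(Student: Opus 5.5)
The table is a statement about finitely many graded pieces, so the plan is to prove it weight by weight, with the structural facts already established supplying the spanning classes and exact rational linear algebra supplying the dimensions. For each $r\in\{1,2,3\}$ and each weight $w$ in the stated range, I compute $(C_5:p^r)_w=\{g\in(A_5^{D_5})_w : p^rg\in(C_5)_{w+r}\}$. This uses the fact that $C_5\subseteq A_5^{D_5}$ and $A_5$ is a domain, so any $g\in\operatorname{Frac}(C_5)$ with $p^rg\in C_5$ and $g\in A_5$ is automatically an invariant of weight $w$. Concretely, at weight $w$ I take a basis of $\ker(D_5|_{V_w})$ and the spanning set of $C_5$-monomials of weight $w+r$, with the relation $S$ accounted for via $\dim(C_5)_{w+r}=s_6(w+r)-s_6(w+r-28)$. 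The dimension of $(C_5:p^r)_w$ is then the dimension of the preimage of $(C_5)_{w+r}$ under multiplication by $p^r$. The torsion dimension is this number minus $\dim(C_5)_w$. All matrices are over $\mathbb{Q}$, and Lemma~\ref{lem:QQ-transfer} transfers the ranks to arbitrary characteristic-zero $k$.

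Before the computation I would pin down the lower ranges and the spanning classes structurally, so that linear algebra is only needed for the dimension counts.
\begin{itemize}
\item \emph{Vanishing for small $w$.} For $w\le45$, Theorem~\ref{thm:minimal-45} gives $\ker_w=(C_5)_w$, so $(C_5:p^r)_w=(C_5)_w$ and all three filtrations vanish there.
\item \emph{$r=1$ at $w=46$.} I show $(C_5:p)_{46}=(C_5)_{46}$. Since $\ker_{46}=(C_5)_{46}\oplus kF_{46}$ (Theorem~\ref{thm:allweights-refuted}), it suffices to show that $p(aF_{46}+c)\notin C_5$ for $a\ne0$. This follows from Lemma~\ref{lem:J4}: the top $a_0^4$-coefficient of $p(aF_{46}+c)$ has $v_p=7<8$.
\item \emph{$r=1$ at weights $47$--$52$.} Write $g=pF_{46}\kappa$ with $\kappa\in K$. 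Then $p\cdot g=p^2F_{46}\kappa=E\kappa\in C_5$ by Theorem~\ref{thm:quotient-4648}, while $g\notin C_5$ whenever $p\nmid\kappa$, again by Lemma~\ref{lem:J4} (top $v_p=7$). So $pF_{46}(K/(p))_{w-47}$ injects into $T_p^{(1)}$. The dimensions of $K/(p)=k[q,J_1,J_2]$ in weights $0,\dots,5$ are $1,1,1,1,2,2$, which matches the stated table.
\item \emph{$r=2$.} The same argument places $F_{46}\cdot(K/(p^2))$ inside $T_p^{(2)}$. Here $\dim (K/(p^2))_j$ is $1,2,2,2,3$ for $j=0,\dots,4$: in weight $4$ the basis is $q^4,pq^3,J_2$.
\end{itemize}

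The remaining step is surjectivity: no other classes enter. I would obtain this from the exact rank computations, i.e.\ by checking that $\dim(C_5:p^r)_w-\dim(C_5)_w$ equals the layer dimension at each listed $w$. For $r=3$ at $46,47,48$ the excess $1,2,2$ must match the images of $F_{46}\cdot K_{w-46}$ modulo $C_5$. These are nonzero by Lemma~\ref{lem:J4}, and $p^3F_{46}\kappa\in C_5$ is immediate from $p^2F_{46}=E$.

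The exact-order claim follows from the first two layers: $p^2F_{46}\in C_5$ but $pF_{46}\notin C_5$. The final sentences of the statement about weight $84$ and the Hilbert series are forward references, not part of this proof.

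The main obstacle is the upper bound. A purely structural upper bound would need the full description $A_5^{D_5}=C_5[F_{46}]$, which is proved later, so here I would rely on the certified sparse rank computations recorded in Appendix~\ref{app:ranks}.
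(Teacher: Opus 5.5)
Your proposal is correct and follows the paper's route. The dimension counts come from exact rational linear algebra, Lemma~\ref{lem:J4} shows the witnesses lie outside $C_5$, and $p^2F_{46}^{\mathrm{int}}=E$ places their $p$-power multiples in $C_5$; your structural treatment of $w\le 46$ and of the lower bounds via $K/(p)$ and $K/(p^2)$ is a sound refinement of what the paper simply records as computed. One correction: Appendix~\ref{app:ranks} stops at weight $46$, so the upper bounds at weights $47$--$52$ are not in that table and need separate exact colon computations. Alternatively, they follow a posteriori from $R_1/R_0\cong C_5/(p^2)$ in Theorem~\ref{thm:B-module} once Theorem~\ref{thm:closure} is available, and the closure proof does not use this table.
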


\begin{corollary}[$F_{46}\cdot F_0 \notin C_5$]
\label{cor:F46F0-outside-C5}
$F_{46}\cdot F_0$ (Fibonacci weight $60 = 46 + 14$, $a_0$-degree $5
= 4 + 1$) is a $\Dmap{5}$-invariant lying outside $C_5$. Indeed
$\Dmap{5}(F_{46}\cdot F_0) = 0$ by Leibniz, and tops multiply over
the domain $B[a_0]$, so its top $a_0$-term is
$c\cdot\frac{18375}{4}a_0^4a_4^6a_5^8$ with coefficient
$\frac{18375}{4}p^8q^{12}$; since $v_p = 8 < 10 = 2\cdot 5$, this top
lies outside $J_5 = (p^{10}q^{12})$
(Definition~\ref{def:Jm}). Any $G \in C_5$ equal to it would share
its $a_0$-degree and top, contradicting the P-top necessity of
Lemma~\ref{lem:J4}.
\end{corollary}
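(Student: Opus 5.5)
The corollary has two claims: that $F_{46}\cdot F_0$ is invariant, and that it lies outside $C_5$. The second claim will follow the same top-coefficient route as the preceding corollary for $F_{46}$ itself.

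\emph{Invariance.} $\Dmap{5}$ is a derivation and both factors are invariant: $\Dmap{5}(F_{46}^{\mathrm{int}})=0$ by Theorem~\ref{thm:F46} and $\Dmap{5}(F_0)=0$ by Theorem~\ref{thm:F0-invariant}. Leibniz then gives $\Dmap{5}(F_{46}F_0)=0$. The weight and degree claims are additive: the weight is $46+14=60$, and the $a_0$-degree is $4+1$ because $A_5=B[a_0]$ with $B$ a domain, so $a_0$-degrees add.

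\emph{Top coefficient.} The leading $a_0$-coefficient of a product over the domain $B$ is the product of the leading coefficients. The top of $F_0$ is $c\,a_0=p^2q^4a_0$ (Definition~\ref{def:F0}). The top of $F_{46}$ is $\tfrac{18375}{4}p^6q^8a_0^4$ (rational form), or $121550625\,p^6q^8a_0^4$ (integral form); by Theorem~\ref{thm:F46} the top-ideal argument is scale-invariant. The product therefore has top $\mathrm{const}\cdot p^8q^{12}a_0^5$, so $v_p(C_5\text{-top})=8$.

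\emph{Non-membership.} Suppose $G\in C_5$ with $G=F_{46}F_0$. Then $G$ has $a_0$-degree exactly $5$, and its top coefficient is $\mathrm{const}\cdot p^8q^{12}$. Lemma~\ref{lem:J4} with $m=5$ requires $v_p\ge 10$, but the nonzero constant does not change the valuation and $v_p=8$. This is a contradiction, so $F_{46}F_0\notin C_5$. In terms of the top ideal, $p^8q^{12}\notin J_5=(p^{10}q^{12})$, since $K$ is a UFD with $p$ prime. We only need Lemma~\ref{lem:J4} as stated; the explicit form of $J_5$ from Definition~\ref{def:Jm} is merely a restatement.

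The only point that needs care is that Lemma~\ref{lem:J4} is applied to the true $a_0$-degree of $G$. This is automatic, since $G=F_{46}F_0$ as polynomials, so no cancellation can lower the degree. Nothing here is a real obstacle.
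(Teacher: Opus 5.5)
Your proposal is correct and follows the paper's argument: Leibniz gives invariance, multiplicativity of $a_0$-tops over the domain $B[a_0]$ gives top coefficient a nonzero constant times $p^8q^{12}$ in $a_0$-degree $5$, and $v_p=8<10$ contradicts Lemma~\ref{lem:J4} with $m=5$. You correctly place the top monomial in degree $a_0^5$; the paper's display, which writes $a_0^4$, drops the $a_0$ coming from the top term $c\,a_0$ of $F_0$.
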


\begin{corollary}[$F_{46}\cdot H$ and $F_{46}\cdot F_0^2$ lie outside $C_5$]
\label{cor:FH-outside-C5}
$F_{46}\cdot H$ (weight $70 = 46 + 24$, $a_0$-degree $6 = 4 + 2$) and
$F_{46}\cdot F_0^2$ (weight $74 = 46 + 28$, $a_0$-degree $6 = 4 + 2$)
are $\Dmap{5}$-invariants outside $C_5$. Tops multiply over $B[a_0]$:
$\operatorname{top}(F_{46}\cdot H) \propto p^{10}q^{12}$ with $v_p = 10
< 12 = 2\cdot 6$, and $\operatorname{top}(F_{46}\cdot F_0^2) \propto
p^{10}q^{16}$ with $v_p = 10 < 12$, both outside $J_6 =
(p^{12}q^{12})$ (Definition~\ref{def:Jm}) and hence outside $C_5$ by
the P-top necessity of Lemma~\ref{lem:J4}. Neither is primitive:
each is an $F_{46}$-multiple, seeding the second and third
$K/(p^2)$-families of the tower below.
\end{corollary}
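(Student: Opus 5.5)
The plan is to follow the pattern of Corollary~\ref{cor:F46F0-outside-C5} and reduce both non-memberships to the P-top necessity of Lemma~\ref{lem:J4}. Invariance needs no work. $\Dmap{5}$ is a derivation and $H$, $F_0$, $F_{46}$ all lie in its kernel (Remark~\ref{rem:counterexample-H}, Theorem~\ref{thm:F0-invariant}, Theorem~\ref{thm:F46}). Leibniz then gives $\Dmap{5}(F_{46}H)=0$ and $\Dmap{5}(F_{46}F_0^2)=0$. The weights $70=46+24$ and $74=46+28$ are additive because each factor is Fibonacci-homogeneous.

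The substance is the top $a_0$-coefficients. I view everything in $B[a_0]$, a polynomial ring over the domain $B$. There $\deg_{a_0}$ is additive and leading coefficients multiply. The inputs are:
\begin{itemize}
\item $\operatorname{top}(F_{46})=\tfrac{18375}{4}p^6q^8$ in rational normalization, or $121550625\,p^6q^8$ in integral form; either works, since only valuations matter.
\item $\operatorname{top}(H)=11025\,p^4q^4$ at $a_0$-degree $2$.
\item $\operatorname{top}(F_0)=c=p^2q^4$ at $a_0$-degree $1$, so $\operatorname{top}(F_0^2)=p^4q^8$ at degree $2$.
\end{itemize}
Hence $F_{46}H$ has $a_0$-degree $6$ and top a nonzero scalar times $p^{10}q^{12}$. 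Likewise $F_{46}F_0^2$ has $a_0$-degree $6$ and top a nonzero scalar times $p^{10}q^{16}$. Both tops have $v_p=10$.

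Now suppose some $G\in C_5$ equals either product. Then $G$ has $a_0$-degree exactly $6$ and the same top coefficient. Lemma~\ref{lem:J4} forces $v_p(\operatorname{top})\ge 12$, contradicting $v_p=10$. Phrased through Definition~\ref{def:Jm}, both tops lie outside $J_6=(p^{12}q^{12})$, since $p^{12}\nmid p^{10}q^{e}$ in the UFD $B$ with $(p)$ prime.

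The final clause is definitional. Each element is $F_{46}$ times an element of $C_5$, so it lies in the ideal $F_{46}C_5$ of $R=C_5[F_{46}]$. Neither is a new algebra generator beyond $F_{46}$.

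The only step needing care is applying Lemma~\ref{lem:J4} with the correct $a_0$-degree. Its proof gives $v_p(C_m)\ge 2m$ termwise from the $a_0$-expansions of $F_0^eH^r$. Cancellation among summands in a $C_5$-expression can only raise valuations, so the bound holds at the actual degree $m=6$ of $G$, which matches the degree of the product. I do not expect a real obstacle. The one fact to double-check is that $v_p(\ell')=2$ and $v_p(m')\ge 0$ in the $a_0$-expansion of $H$, which the lemma already supplies.
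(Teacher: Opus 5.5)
Your proposal is correct and is essentially the paper's argument: Leibniz gives invariance, $a_0$-tops multiply in the domain $B[a_0]$ to give tops proportional to $p^{10}q^{12}$ and $p^{10}q^{16}$ at $a_0$-degree $6$, and the P-top necessity of Lemma~\ref{lem:J4} at $m=6$ (equivalently, non-membership in $J_6=(p^{12}q^{12})$) excludes both from $C_5$. Your added observation is also correct: the bound in Lemma~\ref{lem:J4} holds termwise for every $a_0$-coefficient, so cancellation in a $C_5$-expression causes no trouble at the actual degree $6$.
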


The failure of $C_5$ is therefore accounted for by $F_{46}^{\mathrm{int}}$. It remains to determine whether adjoining this element gives the full invariant ring.

\section{The plinth and the second slice}
\label{sec:plinth}

The pole divisor that defeats naive descent leaves a concrete trace: the plinth ideal. Its $q$-primitive element $f_{13}$ outside $(pq)K$ supplies the second slice chart on which the closure proof turns (Section~\ref{sec:geometry}).

\begin{lemma}[Plinth support]
\label{lem:plinth}
For $\Dmap{5}$ on $A = \Afring{5}$,
\[
\boxed{pq = \Dmap{5}(x), \qquad px = \Dmap{5}(y), \qquad
  xy = \Dmap{5}(z).}
\]
Hence $pq$, $px$, $xy$ lie in the image $\Dmap{5}(A)$. Of the three,
only $pq$ is annihilated by $\Dmap{5}$ (since $\Dmap{5}(p) =
\Dmap{5}(q) = 0$); it lies in $K = k[p,q,J_1,J_2] \subseteq
A^{\Dmap{5}}$ by Lemma~\ref{lem:kernel-gens} and hence in the plinth
ideal $\operatorname{pl}(\Dmap{5}) = A^{\Dmap{5}} \cap \Dmap{5}(A)$,
which is nonzero. In fact the full pole-clearing denominator lies in
the plinth: $c = p^2q^4 = \Dmap{5}(a_3a_4a_5^3)$, since
$\Dmap{5}(a_3) = a_4a_5$ while $a_4,a_5$ are killed, so $c \in
A_5^{\Dmap{5}} \cap \Dmap{5}(A)$. By contrast $\Dmap{5}(px) = p^2q \ne 0$ and
$\Dmap{5}(xy) = pqy + px^2 \ne 0$, so $px$ and $xy$ are in the image
but not invariant (in particular neither lies in $R$). All three
values are direct expansions. The denominator
$c = p^2q^4 = (pq)(pq^3)$ lies in the ideal $(pq, px) \subset B$. No generation consequence
for $\Afring{5}^{\Dmap{5}}$ is drawn from this containment here.
\end{lemma}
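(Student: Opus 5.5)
The plan is to verify each assertion of the lemma by direct evaluation of $\Dmap{5}$ on the coordinates, using $\Dmap{5}(a_n)=a_{n+1}a_{n+2}$ with the boundary convention $a_6=a_7=0$ and the renaming $x=a_3$, $y=a_2$, $z=a_1$, $p=a_4$, $q=a_5$.

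First, the three boxed identities are immediate: $\Dmap{5}(a_3)=a_4a_5=pq$, $\Dmap{5}(a_2)=a_3a_4=px$, and $\Dmap{5}(a_1)=a_2a_3=xy$. Hence $pq$, $px$ and $xy$ all lie in $\Dmap{5}(A)$.

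Second, for the plinth statement I will observe that $\Dmap{5}(p)=a_5a_6=0$ and $\Dmap{5}(q)=0$. Leibniz then gives $\Dmap{5}(pq)=0$, so $pq\in K\subseteq A^{\Dmap{5}}$ by Lemma~\ref{lem:kernel-gens}. It follows that $pq\in\operatorname{pl}(\Dmap{5})$, and the plinth ideal is nonzero.

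For $c$, I will apply Leibniz to $a_3a_4a_5^3$. Since $a_4$ and $a_5$ are constants, $\Dmap{5}(a_3a_4a_5^3)=a_4a_5\cdot a_4a_5^3=p^2q^4=c$. Also $c\in K$, so $c$ lies in the plinth.

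Third, for the non-invariance of $px$ and $xy$, I will compute $\Dmap{5}(px)=p\cdot pq=p^2q\neq0$ and $\Dmap{5}(xy)=pq\,y+x\cdot px=pqy+px^2\neq0$. Both are nonzero polynomials because $B$ is a domain and the monomials are distinct. Hence neither element is in the kernel, and since $R\subseteq A^{\Dmap{5}}$, neither lies in $R$.

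Finally, the factorization $c=(pq)(pq^3)$ shows $c\in(pq)\subseteq(pq,px)$.

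There is no real obstacle in this argument. The only point requiring care is the boundary convention $a_6=a_7=0$, which is what makes $p$ and $q$ constants. Everything else is a one-line application of Leibniz, so I would present the proof as these short expansions in order.
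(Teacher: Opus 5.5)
Your proposal is correct and matches the paper's argument, which likewise treats every claim as a direct expansion of $D_5$ on the coordinates via the Leibniz rule, using $a_6=a_7=0$ so that $p,q$ are constants. The remaining claims follow from the containments $pq,c\in K$, $R\subseteq A^{D_5}$, and $c=(pq)(pq^3)\in(pq)\subseteq(pq,px)$, exactly as you state.
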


\begin{theorem}[A $q$-primitive plinth element]
\label{thm:f13}
With $J_1 = 2a_1a_4 - a_2^2$, $J_2 = 2a_2a_5 - a_3^2$, put
\[
\boxed{
f_{13} = \tfrac1{35}pJ_2^3 + \tfrac14pq^2J_1J_2,
\qquad p = a_4,\ q = a_5.
}
\]
In the $a$-variables,
\[
f_{13} = a_1a_2a_4^2a_5^3 - \tfrac12a_1a_3^2a_4^2a_5^2
  - \tfrac{19}{70}a_2^3a_4a_5^3 - \tfrac{13}{140}a_2^2a_3^2a_4a_5^2
  + \tfrac6{35}a_2a_3^4a_4a_5 - \tfrac1{35}a_3^6a_4,
\]
of Fibonacci weight $13$. Then:
\begin{enumerate}
\item $f_{13} \in K = k[p,q,J_1,J_2]$, so $\Dmap{5}(f_{13}) = 0$.
\item With
\[
\boxed{
g_{13} = a_0p^2q^3 - \tfrac16a_1a_3^3pq
  - \tfrac{19}{70}a_2^3a_3q^2 + \tfrac{101}{420}a_2^2a_3^3q
  - \tfrac1{35}a_2a_3^5,
}
\]
\[
\boxed{\Dmap{5}(g_{13}) = f_{13},}
\]
hence $f_{13} \in \operatorname{pl}(\Dmap{5}) =
\Afring{5}^{\Dmap{5}} \cap \Dmap{5}(\Afring{5})$.
\item $q \nmid f_{13}$: setting $q = 0$,
\[
f_{13}|_{q=0} = \tfrac1{35}pJ_2^3|_{q=0}
  = -\tfrac1{35}px^6 \ne 0, \qquad x = a_3,
\]
since $J_2|_{q=0} = -x^2$. So $f_{13} \notin (pq)K$, and hence $\operatorname{pl}(\Dmap{5}) \ne (pq)K$; i.e.\ $f_{13}$ is a $q$-primitive plinth element outside $(pq)K$ (full nonprincipality of $\operatorname{pl}(\Dmap{5})$ is not claimed here).
\end{enumerate}
All three items are direct expansions, and non-divisibility is evaluation
at \(q=0\).
\end{theorem}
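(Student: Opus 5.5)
The plan is to verify the three items by direct expansion, organized so that each coefficient of the displayed $a$-variable form of $f_{13}$ is matched exactly once. Throughout we use $\Dmap{5}(a_0)=a_1a_2$, $\Dmap{5}(a_1)=a_2a_3$, $\Dmap{5}(a_2)=a_3a_4$, $\Dmap{5}(a_3)=a_4a_5$ and $\Dmap{5}(a_4)=\Dmap{5}(a_5)=0$.

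\emph{Item 1 and the $a$-variable form.} Item~1 is immediate from the definition. The element $f_{13}$ is a $k$-linear combination of monomials in $p,q,J_1,J_2$, so it lies in $K$ and is killed by $\Dmap{5}$ by Lemma~\ref{lem:kernel-gens}. To confirm the printed expansion, we expand $\tfrac1{35}p(2a_2a_5-a_3^2)^3$ and $\tfrac14pq^2(2a_1a_4-a_2^2)(2a_2a_5-a_3^2)$ and collect terms. The cube contributes
\[
\tfrac{8}{35}a_2^3a_4a_5^3-\tfrac{12}{35}a_2^2a_3^2a_4a_5^2+\tfrac{6}{35}a_2a_3^4a_4a_5-\tfrac1{35}a_3^6a_4 .
\]
The product contributes
\[
a_1a_2a_4^2a_5^3-\tfrac12a_1a_3^2a_4^2a_5^2-\tfrac12a_2^3a_4a_5^3+\tfrac14a_2^2a_3^2a_4a_5^2 .
\]
Summing gives $\tfrac8{35}-\tfrac12=-\tfrac{19}{70}$ and $-\tfrac{12}{35}+\tfrac14=-\tfrac{13}{140}$, which are the printed coefficients. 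The Fibonacci weight is $13$ because $\operatorname{wt}(p)=1$, $\operatorname{wt}(J_2^3)=12$ and $\operatorname{wt}(q^2J_1J_2)=2+6+4=12$.

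\emph{Item 2.} We apply $\Dmap{5}$ term by term to the five summands of $g_{13}$, as in the proof of Theorem~\ref{thm:F0-invariant}, and group by monomial.
\begin{itemize}
\item The term $a_0p^2q^3$ gives $a_1a_2a_4^2a_5^3$.
\item The term $-\tfrac16a_1a_3^3pq$ gives $-\tfrac12a_1a_3^2p^2q^2$ together with $-\tfrac16a_2a_3^4pq$.
\item The term $-\tfrac{19}{70}a_2^3a_3q^2$ gives $-\tfrac{19}{70}a_2^3pq^3$ together with $-\tfrac{57}{70}a_2^2a_3^2pq^2$.
\item The term $\tfrac{101}{420}a_2^2a_3^3q$ gives $\tfrac{202}{420}a_2a_3^4pq$ together with $\tfrac{303}{420}a_2^2a_3^2pq^2$.
\item The term $-\tfrac1{35}a_2a_3^5$ gives $-\tfrac1{35}a_3^6p$ together with $-\tfrac17a_2a_3^4pq$.
\end{itemize}
Four of the six monomials of $f_{13}$ come from a single source each. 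The remaining two must be checked by summation:
\[
-\tfrac{342}{420}+\tfrac{303}{420}=-\tfrac{13}{140}
\]
for $a_2^2a_3^2pq^2$, and
\[
\tfrac{202-70-60}{420}=\tfrac{6}{35}
\]
for $a_2a_3^4pq$. Both agree with Item~1. Hence $\Dmap{5}(g_{13})=f_{13}$, and therefore $f_{13}\in A_5^{\Dmap{5}}\cap\Dmap{5}(A_5)$. The main obstacle is only this bookkeeping, in particular the three-way cancellation that produces the coefficient $\tfrac{101}{420}$. An error there would be caught by a mismatch in these two collected coefficients, and the check is exact over $\mathbb{Q}$.

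\emph{Item 3.} We set $q=0$ in $B$. Then $J_2\mapsto -x^2$ and the second summand of $f_{13}$ vanishes, so $f_{13}|_{q=0}=-\tfrac1{35}px^6\neq0$ in $B/(q)=k[x,y,z,p]$. Hence $q\nmid f_{13}$ in $B$, and a fortiori in $K$. Since every element of $(pq)K$ is divisible by $q$, this gives $f_{13}\notin(pq)K$, and consequently the plinth ideal is not $(pq)K$.
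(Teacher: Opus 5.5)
Your proposal is correct and follows the paper's route. The paper likewise obtains all three items by direct expansion of $f_{13}$ and of $\Dmap{5}(g_{13})$, and non-divisibility by evaluation at $q=0$. Your collected coefficients ($-\tfrac{19}{70}$, $-\tfrac{13}{140}$, $\tfrac{6}{35}$) check out, so you have simply written out the bookkeeping the paper leaves implicit.
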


\begin{remark}[Why the $c$-divisibility does not extend]
The Dixmier divisibility of Lemma~\ref{lem:dixmier-divisibility}
applies only to the specific relation $a_1a_2\mu + \Dmap{5}(\nu) = 0$,
concluding $c \mid \mu$. A plinth element $f = \Dmap{5}(g)$ instead
satisfies $1 \cdot f + \Dmap{5}(-g) = 0$, with coefficient $1$ rather
than $a_1a_2$, so the lemma does not apply. Indeed $pq = \Dmap{5}(x)$
is already a counterexample to any such extrapolation, since
$p^2q^4 \nmid pq$. The element $f_{13}$ is the first plinth element
outside even the weaker ideal $(pq)K$.
\end{remark}

The chart \(f_{13}\) makes possible is built in Section~\ref{sec:geometry}.

\section{The second chart: collapse, construction, and cover}
\label{sec:geometry}

The first chart degenerates over the plinth locus, and the degeneration is itself geometric: orbits collapse over $V(q)$. This section constructs the second chart that covers the collapse locus --- the chart on which the closure proof turns --- and reads off what the degeneration means.

The partial quotient below is to $\mathbb{A}^5$
via the five $K[F_0]$-generators, not to $X_5 = \operatorname{Spec}(C_5)$;
no claim about $\operatorname{Spec}(A_5^{\Dmap{5}})$ is made here.
The collapse is witnessed by explicit rational points checked by direct
computation.

Let $\pi_5 \colon \mathbb{A}^6 \to \mathbb{A}^5$ be the morphism given
by the five generators $(p,q,J_1,J_2,F_0)$.

\begin{theorem}[Exceptional locus of the five-generator quotient]
\label{thm:orbit-collapse}
Over $D(pq)$, $H$ is a regular function of $\pi_5$:
\[
H = 11025p^4q^4(F_0/c)^2 + 105J_2^5J_1/q^2 + 9J_2^7/q^4,
\qquad c = p^2q^4,
\]
so $\pi_5$ loses no $H$-information off the plinth divisor $V(pq)$.
Over $V(q)$, $\pi_5$ collapses distinct $\mathbb{G}_a$-orbits: the
rational points
\[
a_t = (0,\; t^2/2,\; t,\; 1,\; 1,\; 0) \quad (a_0,\dots,a_5),
\qquad t = 0, 1, 2,
\]
satisfy $\pi_5(a_0) = \pi_5(a_1) = \pi_5(a_2) = (1,0,0,-1,1/35)$ while
$H(a_0), H(a_1), H(a_2) = 0, -315/4, -1260$ respectively. Since $H$ is
$\Dmap{5}$-invariant, the three points lie in three distinct orbits
with one $\pi_5$-image.
\end{theorem}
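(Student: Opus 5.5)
The proof has two independent parts: the regularity of $H$ over $D(pq)$, and the explicit collapse over $V(q)$. I will do them in that order.

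For the first part I will use Corollary~\ref{cor:A-localized} and Remark~\ref{rem:counterexample-H}. On $A_{pq}$ the $W$-expansion
\[
H = 11025p^4q^4W^2 + 105J_2^5J_1q^{-2} + 9J_2^7q^{-4}
\]
holds with $W = F_0/c$ and $c = p^2q^4$. This is equivalent to the relation $S = 0$ of Theorem~\ref{thm:global-hypersurface}, divided by $q^4$ (note $11025p^4q^4/c^2 = 11025/q^4$). Every coefficient is a polynomial in $p,q,J_1,J_2,F_0$ and $(pq)^{-1}$. Hence, on $D(pq)$, $H$ is the pullback along $\pi_5$ of a regular function on $D(pq)\subset\mathbb{A}^5$. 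So $\pi_5$ separates no fewer $H$-values there than $H$ itself does, and this part needs nothing beyond the displayed identity.

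For the collapse, I first evaluate $\pi_5$ at $a_t=(0,t^2/2,t,1,1,0)$, so that $p=1$, $q=0$, $x=1$, $y=t$, $z=t^2/2$. This gives $J_1 = 2\cdot\tfrac{t^2}{2}\cdot 1 - t^2 = 0$ and $J_2 = -x^2 = -1$. Every term of $F_0$ except $x^7/35$ contains $q$, so $F_0 = 1/35$. Hence $\pi_5(a_t) = (1,0,0,-1,1/35)$ for all $t$.

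To evaluate $H$ without printing its $17$ terms, I will use $S$ itself. In $B[a_0]$ we have $q^4H = 11025F_0^2 + 105q^2J_2^5J_1 + 9J_2^7$, so $H|_{q=0}$ is exactly the $q^4$-coefficient of the right-hand side, viewed as a polynomial in $q$ over $k[a_0,x,y,z,p]$. I expand $F_0 = \sum_{i=0}^4 q^i\phi_i$ with
\[
\phi_0 = x^7/35,\quad \phi_1=-x^5y/5,\quad \phi_2 = \tfrac13px^3z+\tfrac13x^3y^2,\quad \phi_3=-pxyz,\quad \phi_4=p^2a_0 .
\]
I also write $J_2^7=(2qy-x^2)^7$ and $J_2^5J_1 = (2qy-x^2)^5(2pz-y^2)$. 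Collecting $q^4$-coefficients then gives
\[
H|_{q=0} = 11025\,[q^4]F_0^2 + 105\,[q^2](J_2^5)\,J_1 + 9\binom74(2y)^4(-x^2)^3 ,
\]
where $[q^4]F_0^2=\sum_{i+j=4}\phi_i\phi_j$ and $[q^2]J_2^5=\binom52(2y)^2(-x^2)^3$.

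I then substitute $x=p=1$, $a_0=0$, $y=t$, $z=t^2/2$. Note $J_1$ vanishes identically along these points, so the middle term drops. Carrying out the substitution should yield a polynomial in $t$ of degree at most $4$ with zero constant term, taking the values $0$, $-315/4$, $-1260$ at $t=0,1,2$. Checking the three claimed values is the only real computation. It is also the step most exposed to arithmetic slips with the binomial and rational coefficients, so I would cross-check it against the $17$-term $H$ from the supplementary data.

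Finally, $\Dmap{5}(H)=0$ implies $\exp(s\Dmap{5})(H)=H$ for every $s$. Hence $H$ is constant on each $\mathbb{G}_a$-orbit. Since the three points have pairwise distinct $H$-values, they lie in three distinct orbits that share one $\pi_5$-image, which is the claimed collapse over $V(q)$.
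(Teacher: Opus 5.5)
Your proof is correct. Its structure matches the paper's: part (i) comes from the $W$-expansion, which is $S/q^4$; then you evaluate $\pi_5(a_t)=(1,0,0,-1,1/35)$ and separate orbits by the invariance of $H$.

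The one genuine difference is how you evaluate $H$ at the three points. The paper substitutes the explicit $17$-term $H$ of Appendix~\ref{app:witnesses}. You instead read off $H|_{q=0}$ as the $q^4$-coefficient of $11025F_0^2+105q^2J_2^5J_1+9J_2^7$.

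The arithmetic you left as ``should yield'' does close. At $a_0=0$, the first term gives $11025[q^4]F_0^2=x^6(1225p^2z^2+6860py^2z+1225y^4)$. The middle term contributes $x^6(-8400py^2z+4200y^4)$, and the last term gives $-5040x^6y^4$. So $H|_{q=a_0=0}=x^6(1225p^2z^2-1540py^2z+385y^4)$, which is exactly the three $a_0$- and $a_5$-free rows of the table. Where also $J_1=0$, this reduces to $-\tfrac{315}{4}x^6y^4$. Hence $H(a_t)=-\tfrac{315}{4}t^4$, giving $0,-315/4,-1260$.

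What your route buys is a conceptual explanation of the collapse. On $V(q)$ the coordinates of $\pi_5$ see only $p$, $J_1$, $x^2$ and $x^7$, while $H$ keeps a $y^4$-term, so $H$ varies along the fiber. Note, though, that it does not remove the dependence on the explicit $H$: it relies on the identity $S=0$, which was itself certified against that $H$. The paper's direct substitution is shorter once the $17$-term list is available, but it gives no closed form.
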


\begin{proof}
The $W$-expansion $H = H_2W^2 + H_0$ with $H_1 = 0$
(Remark~\ref{rem:qrefuted-H}) is an identity in $K_{pq}[W]$,
regular on $D(pq)$, giving (i). At $q = 0$:
$J_1 = 2z - y^2 = 2(t^2/2) - t^2 = 0$,
$J_2 = -x^2 = -1$,
$F_0 = x^7/35 = 1/35$ (every other $F_0$-term carries $q$ or $a_0$
via $p^2q^4a_0$), with $p = 1$, $q = 0$; direct substitution of the
$17$-term $H$ gives $0$, $-315/4$, $-1260$ at $t = 0,1,2$.
\end{proof}

\begin{corollary}[Valuation failure as geometric shadow]
\label{cor:shadow}
The refutation $v_q(H_0) = -4 < 0$ of Lemma~\ref{lem:qlemma} is the
algebraic shadow of the collapse above: the $q^{-4}$ pole that the
saturation argument forbids is exactly what allows $H$ to vary along
the $J_1$-fiber $2z - y^2 = 0$ over $V(q)$ while $\pi_5$ stays fixed.
The hidden generator separates orbits that the five-generator
quotient identifies; hence $H \notin K[F_0]$, independently of the
$a_0$-degree count.
\end{corollary}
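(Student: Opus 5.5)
The plan is to derive non-membership directly from Theorem~\ref{thm:orbit-collapse}, without using the $a_0$-degree count of Remark~\ref{rem:counterexample-H}, and then to read the $q^{-4}$ pole of $H_0$ as the algebraic reason the collapse can happen.

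\emph{Step 1 (separation).} Suppose, for contradiction, that $H=P(p,q,J_1,J_2,F_0)$ for some polynomial $P$ over $k$. Every element of $K[F_0]$ is a polynomial in the coordinates of $\pi_5$. Hence every such element is constant on each fibre of $\pi_5$, and on the $k$-points in particular. The three rational points $a_t=(0,t^2/2,t,1,1,0)$ for $t=0,1,2$ have the common image $\pi_5(a_t)=(1,0,0,-1,1/35)$. So $P$ would take one value at all three. But Theorem~\ref{thm:orbit-collapse} gives $H(a_t)=0,-315/4,-1260$. In fact on this fibre $H=-\tfrac{315}{4}t^4$, which is nonconstant in $t$. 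This is a contradiction, so $H\notin K[F_0]$. Since $H$ is $\Dmap{5}$-invariant, its values are constant along $\mathbb{G}_a$-orbits. The three points therefore lie in distinct orbits, and $H$ separates orbits that $\pi_5$ identifies.

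\emph{Step 2 (matching with the pole).} I will explain why the separation happens exactly over $V(q)$.
\begin{itemize}
\item Over $D(pq)$, the expansion $H=11025p^4q^4W^2+105J_2^5J_1/q^2+9J_2^7/q^4$ shows that $H$ is a regular function of $\pi_5$ there. So no separation can occur off $V(pq)$.
\item If the naive bound $v_q(H_j)\ge 4j$ held, Theorem~\ref{thm:wdetect} together with the descent $B\cap K_{pq}=K$ of Theorem~\ref{thm:descent} would let us rewrite $H$ inside $K[F_0]$. Then $H$ would extend to a function of $\pi_5$ everywhere, and Step~1 could not occur.
\item The obstruction is therefore precisely $v_q(H_0)=-4$ (together with $v_q(H_2)=4<8$).
\item At $q=0$, the leading terms $11025G_0^2$ and $9J_2^7$ cancel modulo $q$, by the relation $r_q^2\in L_q$ of Section~\ref{subsec:residue-transcendence}. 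The surviving next-order information involves $z$ through $J_1$. That is exactly what lets $H$ vary along the fibre $2z-y^2=0$, i.e.\ $J_1=0$, while $\pi_5$ stays fixed.
\end{itemize}

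The only genuinely computational point is the substitution of the points $a_t$ into the $17$-term $H$. That substitution is already recorded in Theorem~\ref{thm:orbit-collapse}, so I take it as given. The remaining logic is the elementary observation in Step~1 that members of $K[F_0]$ factor through $\pi_5$.
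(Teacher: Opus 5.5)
Your Step~1 is the paper's argument: every element of $K[F_0]$ factors through $\pi_5$, while the collapse points of Theorem~\ref{thm:orbit-collapse} share one $\pi_5$-image and carry distinct $H$-values (your closed form $H(a_t)=-\tfrac{315}{4}t^4$ is correct), so $H\notin K[F_0]$ with no appeal to the $a_0$-degree count, and the remainder of the statement is interpretive, as your Step~2 treats it. One small correction in Step~2: the direction you need there (bounds imply membership) is just the observation that $H_j\in c^jK$ for all $j$ gives $F=\sum_j (H_j/c^j)F_0^j\in K[F_0]$, whereas Theorem~\ref{thm:wdetect} states the opposite implication.
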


\subsection*{Slice degeneration and the second chart}

The collapse above is not an isolated accident: it is the shadow of
the slice $s_0 = x/(pq)$ degenerating at $V(q)$. The plinth element
$f_{13}$ of Theorem~\ref{thm:f13} supplies a second slice chart that
covers exactly the collapsed locus.

\begin{definition}[Slice degeneration, residual and liftable plinth]
\label{def:degeneration}
Let $D$ be triangular with local slice $s$, $D(s) = 1$, whose
denominators vanish along a boundary divisor $V(\pi)$. Write
$\delta_\pi$ for the residual derivation on $\bar A = A/\pi A$ and
$\bar K = A^D/\pi A^D$ for the residual kernel. \emph{Slice
degeneration} is the phenomenon $\ker\delta_\pi \supsetneq \bar K$:
the residual derivation acquires new invariant coordinates invisible
generically. The \emph{residual plinth} is
$\bar K \cap \delta_\pi(\bar A)$; the \emph{liftable residual plinth}
is
\[
\boxed{
\mathcal{L}_\pi :=
\operatorname{im}(\mathfrak{p}_D \to \bar K),
\qquad \mathfrak{p}_D = A^D \cap D(A),
}
\]
the classes that actually lift to global plinth elements. In
general $\mathcal{L}_\pi \subseteq \bar K \cap
\delta_\pi(\bar A)$, with equality an open question at each divisor.
\end{definition}

\begin{lemma}[Residual kernel at $q$]
\label{lem:ker-deltaq}
Let $\delta_q = \Dmap{5} \bmod q$ on
$\bar B = k[x,y,z,p]$, so $\delta_q(x) = 0$,
$\delta_q(y) = px$, $\delta_q(z) = xy$. With
$u = 2pz - y^2 = J_1 \bmod q$,
\[
\boxed{\ker\delta_q = k[p,x,u],}
\qquad\text{whereas}\qquad
K/qK = k[p,x^2,u].
\]
Hence the boundary acquires the new invariant coordinate $x$, in a
quadratic extension $k[p,x^2,u] \subset k[p,x,u]$ of degree $2$.
\end{lemma}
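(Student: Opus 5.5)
The plan is to treat $\delta_q$ as a triangular locally nilpotent derivation on $\bar B=k[x,p][y,z]$ over the coefficient ring $k[x,p]$, and to use the standard slice argument after inverting $px$.

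\textbf{Containments.} First I would check the easy inclusions directly. Since $\delta_q(x)=\delta_q(p)=0$, both $x$ and $p$ lie in the kernel. For $u=2pz-y^2$ one computes
\[
\delta_q(u)=2p\cdot xy-2y\cdot px=0,
\]
so $k[p,x,u]\subseteq\ker\delta_q$. The three elements $p,x,u$ are algebraically independent, because $u$ has $z$-degree $1$ with coefficient $2p\neq0$. Hence $k[p,x,u]$ is a polynomial ring of transcendence degree $3$. On the other hand $\delta_q\neq0$ on the $4$-variable ring $\bar B$, so $\ker\delta_q$ also has transcendence degree $3$.

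\textbf{Equality via a local slice.} The key step is the reverse inclusion. On $\bar B_{px}$ the element $\sigma=y/(px)$ satisfies $\delta_q(\sigma)=1$. Writing
\[
y=px\sigma,\qquad z=\frac{u+y^2}{2p},
\]
we get $\bar B_{px}=k[p,x,u]_{px}[\sigma]$. The local slice theorem (Freudenburg, Cor.~1.26, as in Lemma~\ref{lem:slice}) then gives
\[
(\ker\delta_q)_{px}=k[p,x,u]_{px}.
\]
It remains to descend, i.e.\ to show $\bar B\cap k[p,x,u]_{px}=k[p,x,u]$. I would argue this as in Lemma~\ref{lem:contraction}: it suffices to show that $(p)$ and $(x)$ contract to primes of $k[p,x,u]$ generated by $p$ and $x$ respectively. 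Modulo $p$, the map $(x,u)\mapsto(x,-y^2)$ into $k[x,y,z]$ is injective. Modulo $x$, the map $(p,u)\mapsto(p,2pz-y^2)$ into $k[p,y,z]$ is injective, since $u$ is transcendental over $k[p]$. Given $f\in\ker\delta_q$ with $f=P/(p^ax^b)$ and $P\in k[p,x,u]$, these injectivities let me peel off factors of $p$ and $x$ one at a time, exactly as in the proof of Theorem~\ref{thm:descent}. This descent is the main obstacle, though it is routine here.

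\textbf{The quotient $K/qK$.} Next I would show $K/qK=k[p,x^2,u]$. Since $K=k[p,q,J_1,J_2]$, we have $K/qK=k[p,U,V]$ with $U,V$ indeterminates. By the injectivity of $\beta$ in Lemma~\ref{lem:contraction}, this ring maps isomorphically onto its image $k[p,u,-x^2]=k[p,x^2,u]$ in $\bar B$. The contraction identity $qB\cap K=qK$ from that lemma makes this a genuine identification of $K/qK$ with a subring of $\bar B$.

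\textbf{Degree of the extension.} Finally, $x\notin k[p,x^2,u]$, since every element of that ring is even in $x$ (setting $x\mapsto -x$ fixes it). Moreover $k[p,x,u]=k[p,x^2,u]\oplus x\,k[p,x^2,u]$ is free of rank $2$, so the extension $k[p,x^2,u]\subset k[p,x,u]$ has degree exactly $2$.
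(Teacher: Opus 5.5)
Your proof is correct and follows essentially the paper's route: a local slice, then descent through injective specializations (the $K/qK$ identification via Lemma~\ref{lem:contraction} is also the same). The only difference is that the paper first writes $\delta_q = x\delta'$ with $\delta' = p\partial_y + y\partial_z$, so that $\ker\delta_q=\ker\delta'$ has the slice $y/p$ and only the contraction along $(p)$ is needed, whereas inverting $px$ forces you to also check the (equally easy) contraction along $(x)$.
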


\begin{proof}
Write $\delta_q = x\delta'$ with $\delta' = p\partial_y + y\partial_z$;
$\bar B$ is a domain and $x \ne 0$, so $\ker\delta_q = \ker\delta'$.
Clearly $x,p,u \in \ker\delta'$. Conversely localize at $p$:
$s = y/p$ satisfies $\delta'(s) = 1$, $\delta'$ stays locally
nilpotent, and with $z = u/(2p) + ps^2/2$,
$S_p = k[x,p,1/p,u][s]$ with $s$ transcendental over
$K_p = k[x,p,1/p,u]$ (apply $\delta' = d/ds$ to any relation).
So $F \in \ker\delta'$ lies in $K_p$: $F = H/p^a$ with
$H \in k[x,p,u]$, $a \ge 0$ minimal. If $a \ge 1$, then
$H$ vanishes mod $p$; reducing gives $H(x,0,-y^2) = 0$ in $k[x,y]$,
whence $H(x,0,u) = 0$ as a polynomial by $y$-degree (the specialization
$u \mapsto -y^2$ is injective), so $p \mid H$ in $k[x,p,u]$,
contradicting minimality. Thus $a = 0$. The description of $K/qK$
is Lemma~\ref{lem:contraction}.
\end{proof}

\begin{remark}[Liftable versus residual at $q$]
$\bar K \cap \delta_q(\bar B) \ni -px^6/35 =
\delta_q(-yx^5/35)$, and this class is liftable: it is
$\bar f_{13}$ for the global plinth element of
Theorem~\ref{thm:f13}. So
$0 \ne -px^6/35 \in \mathcal{L}_q$. No $q$-adic lifting proof of a
bound $q \mid f$ can succeed past this class: the obstruction is
already present in the residual problem.
\end{remark}

\begin{theorem}[Second slice chart]
\label{thm:second-chart}
With $f_{13}, g_{13}$ from Theorem~\ref{thm:f13},
$s_1 = g_{13}/f_{13}$ satisfies $\Dmap{5}(s_1) = 1$ on $D(f_{13})$
(by the quotient rule from $\Dmap{5}(g_{13}) = f_{13}$ and
$\Dmap{5}(f_{13}) = 0$). The slice projection
$\pi_1(b) = \sum_{i \ge 0} (-s_1)^i \Dmap{5}^i(b)/i!$ gives
$\Dmap{5}(\pi_1(b)) = 0$ for every ambient variable; explicitly
$\pi_1(b) = \mathrm{NUM}_b/f_{13}^{N_b}$ with $N_{a_3} = 1$,
$N_{a_2} = 2$, $N_{a_1} = 4$, $N_{a_0} = 7$
($\mathrm{NUM}_{a_0}$ has $456$ terms), each
$\Dmap{5}(\mathrm{NUM}_b) = 0$ by direct expansion. The two slice
charts are related by the exact transition
\[
\boxed{s_1 - s_0 = \frac{F_0}{qf_{13}}, \qquad
  s_0 = \frac{x}{pq},}
\]
by exact rational simplification. With
$h = f_{13}/p$, the union of the two principal cylinders is
$D(pq) \cup D(f_{13})$, and inside $D(p)$ the uncovered $q$-boundary
is the codimension-two stratum $V(q,h)$. The cylinder/plinth-zero-locus
correspondence is the framework of~\cite{shakhmatov2026cylinders}; our
contribution here is the family-specific explicit algebraic
realization (charts, transition, orbit separation), not the general
cylinder/plinth equality.
\end{theorem}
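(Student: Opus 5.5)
The plan is to verify the four assertions of Theorem~\ref{thm:second-chart} in order: the slice property, the projection, the transition formula, and the cover. Each step reduces to identities already established or to finite direct expansion.

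\emph{Slice property.} Since $\Dmap{5}$ is a $k$-derivation extending uniquely to the localization $A_{f_{13}}$, the quotient rule gives
\[
\Dmap{5}(s_1)=\frac{\Dmap{5}(g_{13})f_{13}-g_{13}\Dmap{5}(f_{13})}{f_{13}^2}=\frac{f_{13}^2}{f_{13}^2}=1,
\]
using items 1 and 2 of Theorem~\ref{thm:f13}.

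\emph{Slice projection.} For a local slice $s_1$, the standard Dixmier map $\pi_1=\sum_i(-s_1)^i\Dmap{5}^i/i!$ is a $k$-algebra homomorphism $A_{f_{13}}\to A_{f_{13}}^{\Dmap{5}}$ (Freudenburg~\cite{freudenburg2017}). This already yields $\Dmap{5}(\pi_1(b))=0$ abstractly, since the sum telescopes. It remains to determine the denominators. Local nilpotency bounds the sum: $\Dmap{5}^{i}(a_n)$ vanishes beyond the nilpotency index of $a_n$, namely $1,2,4,7$ for $n=3,2,1,0$, read off from the triangular orbits. Because each $\Dmap{5}^i(b)\in A$ and $s_1^i$ carries $f_{13}^{-i}$, we get $N_b\le$ (last nonzero index), which matches the stated values. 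The explicit numerators, including the $456$-term $\mathrm{NUM}_{a_0}$, are then a finite symbolic check; the invariance of each numerator follows because $f_{13}$ is invariant.

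\emph{Transition.} The identity $s_1-s_0=F_0/(qf_{13})$ is equivalent, after clearing denominators, to the polynomial identity
\[
p\,g_{13}-x f_{13}=pF_0.
\]
I would first justify this structurally. The difference of two local slices is invariant, and $q f_{13}(s_1-s_0)$ must be an invariant of weight $14$ and $a_0$-degree $1$. By Proposition~\ref{prop:H2} this quantity lies in $K[F_0]_{14}$, which is spanned by $F_0$ plus $a_0$-free terms. Comparing the $a_0$-coefficients, $p^2q^3\cdot pq$ against $c=p^2q^4$, fixes the coefficient of $F_0$; the remaining $a_0$-free invariant part is then checked to vanish by expansion. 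I expect this step to be the main obstacle, in the sense that it is the only genuinely computational identity. The fallback is simply to expand $p g_{13}-xf_{13}$ monomially using the explicit $a$-form of $f_{13}$ and compare with $pF_0$.

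\emph{Cover.} By definition the union of principal opens is $D(pq)\cup D(f_{13})$, and its complement is $V(pq)\cap V(f_{13})$. Inside $D(p)$ this complement is $V(q,f_{13})=V(q,h)$ with $h=f_{13}/p\in K$, because $p\mid f_{13}$ by its defining formula. The locus is codimension two because $h$ is not divisible by $q$: $h|_{q=0}=-x^6/35\neq0$ by item 3 of Theorem~\ref{thm:f13}. Hence $q,h$ share no common factor in the UFD $B$, since $q$ is prime.
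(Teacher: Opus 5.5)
Your treatment of the slice property, the Dixmier projection and the cover follows the paper's direct-verification route and is fine. The construction gives $\pi_1(b)=\sum_{i\le N_b}(-g_{13})^if_{13}^{N_b-i}D_5^i(b)/i!\,\big/f_{13}^{N_b}$, where $N_b$ is the last index with $D_5^i(b)\neq0$; concretely $D_5(a_3)=pq$, $D_5^2(a_2)=p^2q$, $D_5^4(a_1)=3p^3q^2$ and $D_5^7(a_0)=45p^5q^3$.

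The transition step, however, contains a concrete error. Clearing denominators in $g_{13}/f_{13}-x/(pq)=F_0/(qf_{13})$ means multiplying by $pq\,f_{13}$, which gives
\[
pq\,g_{13}-x f_{13}=pF_0,\qquad\text{equivalently}\qquad q\,g_{13}-xh=F_0\quad(h=f_{13}/p).
\]
It does not give $p\,g_{13}-xf_{13}=pF_0$. Your displayed identity is false. It is not even weight-homogeneous, since $p\,g_{13}$ has weight $14$ while $xf_{13}$ and $pF_0$ have weight $15$. Its $a_0$-coefficients also disagree ($p^3q^3$ against $p^3q^4$). So the fallback expansion you propose would fail.

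The same slip appears in your coefficient comparison. For the weight-$14$ quantity $qf_{13}(s_1-s_0)=q\,g_{13}-xh$, the $a_0$-coefficient is $q\cdot p^2q^3=c$, so the coefficient of $F_0$ is $1$; comparing $p^2q^3\cdot pq$ with $c$ mixes the two normalizations. You should also say that $qf_{13}(s_1-s_0)$ lies in $A$ only because $p\mid f_{13}$. That is needed before you apply a statement about elements of $A$ such as Proposition~\ref{prop:H2}.

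Once corrected, your structural route closes with no expansion and without Proposition~\ref{prop:H2}. Put $\kappa=q\,g_{13}-xh-F_0$.
\begin{itemize}
\item By Theorem~\ref{thm:f13} and $D_5(x)=pq$, $D_5(h)=0$, you get $D_5(\kappa)=qf_{13}-pq\,h=0$.
\item The $a_0$-terms cancel, since $q\cdot p^2q^3a_0-p^2q^4a_0=0$. So $\kappa\in B^{D_5}=K$ by Theorem~\ref{thm:descent}.
\item Every element of $K=k[p,q,J_1,J_2]$ involves $x$ only through $x^2$.
\item Every remaining monomial of $q\,g_{13}$, of $xh$ (because $h\in K$), and of $G_0$ has odd $x$-degree.
\end{itemize}
Hence $\kappa=0$, which is exactly $s_1-s_0=F_0/(qf_{13})$. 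For the cover, add that $V(q,h)\cap D(p)$ is nonempty (it contains points with $q=x=0$, $p\neq0$), so Krull's theorem gives codimension exactly two rather than just at least two.
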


\begin{lemma}[Two-chart intersection]
\label{lem:chart-intersection}
\[
\boxed{A_{pq} \cap A_{f_{13}} = A_p}
\]
inside $\operatorname{Frac}(A)$. Consequently
$(A_{pq} \cap A_{f_{13}})^{\Dmap{5}} = (A^{\Dmap{5}})_p$: gluing the two
charts computes exactly the $p$-localized kernel.
\end{lemma}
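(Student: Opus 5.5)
The plan is to use that $A=\Afring{5}=k[a_0,\dots,a_5]$ is a polynomial ring, hence a UFD, and to compare reduced fractions. Every nonzero element of $\operatorname{Frac}(A)$ has a representation $g/d$ with $g,d\in A$ coprime, unique up to units. An element lies in a localization $A_u$ exactly when its reduced denominator $d$ divides some power of $u$. Equivalently, every prime factor of $d$ must divide $u$.

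The first step is to factor the two localizing elements. We have $pq=a_4a_5$, a product of two non-associate primes. By Theorem~\ref{thm:f13}, $f_{13}=p\,h$ with
\[
h=\tfrac1{35}J_2^3+\tfrac14q^2J_1J_2\in k[a_1,\dots,a_5].
\]
The key point is that $q\nmid h$. Setting $q=0$ gives $h|_{q=0}=-x^6/35\neq0$, which is the same evaluation as in Theorem~\ref{thm:f13}(3), divided by $p$. Hence the prime divisors of $f_{13}$ are $p$ together with the prime factors of $h$, and none of the latter is associate to $q$. The element $p$ might or might not divide $h$, but that is harmless.

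Next, suppose $\phi=g/d$ in reduced form lies in $A_{pq}\cap A_{f_{13}}$. Then every prime factor of $d$ divides $pq$, so it is associate to $p$ or to $q$. Every prime factor of $d$ also divides $f_{13}=ph$. A factor associate to $q$ would have to divide $p$ or $h$, and it divides neither. So $d$ is a unit times a power of $p$, and $\phi\in A_p$. The reverse inclusion $A_p\subseteq A_{pq}\cap A_{f_{13}}$ is immediate, because $1/p=q/(pq)=h/f_{13}$. This proves the boxed equality.

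For the consequence, I would extend $\Dmap{5}$ to $\operatorname{Frac}(A)$ by the quotient rule. The kernel of the restriction to any subring is then the intersection of that subring with the kernel on $\operatorname{Frac}(A)$. So $(A_{pq}\cap A_{f_{13}})^{\Dmap{5}}=(A_p)^{\Dmap{5}}$. Since $\Dmap{5}(p)=0$, we get $\Dmap{5}(b/p^n)=\Dmap{5}(b)/p^n$. Thus $b/p^n$ is invariant if and only if $b\in A^{\Dmap{5}}$, which gives $(A_p)^{\Dmap{5}}=(A^{\Dmap{5}})_p$.

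I expect no step to be a real obstacle. The only point needing care is the non-divisibility $q\nmid h$, as opposed to $q\nmid f_{13}$, since the factor $p$ has been split off. Because $p$ is not associate to $q$, the evaluation at $q=0$ from Theorem~\ref{thm:f13} settles it directly.
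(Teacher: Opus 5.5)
Your proof is correct and follows the paper's argument: you use reduced fractions in the UFD $A$, the factorization $f_{13}=p\,h$ with $q\nmid h$ obtained by evaluating at $q=0$, and the invariance of $p$ to identify $(A_p)^{\Dmap{5}}$ with $(A^{\Dmap{5}})_p$. Your remark that $p\mid h$ would be harmless is a slight economy over the paper, which additionally checks $p\nmid h$ so that it can write the gcd as exactly $p^{\min(a,b)}$.
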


\begin{proof}
$A$ is a unique factorization domain and $p,q$ are prime (variables).
Write $f_{13} = p\cdot h$ with
$h = J_2(J_2^2/35 + q^2J_1/4)$. Direct evaluation gives
$q \nmid J_2, h$ (at $q = 0$: $-x^2$, $-x^6/35$) and
$p \nmid J_1, J_2, h$ (at $p = 0$: $-y^2$, $2qy-x^2 \ne 0$, and a
nonzero polynomial), by direct evaluation. Let
$x \in A_{pq} \cap A_{f_{13}}$, written in lowest terms $u/v$ with
$\gcd(u,v) = 1$. Then $v \mid (pq)^a$ and $v \mid f_{13}^b$ for some
$a,b$, so $v \mid \gcd((pq)^a, f_{13}^b) = p^{\min(a,b)}$: any prime
divisor of both is $p$ or $q$, and $q \nmid f_{13}$ excludes $q$.
Hence $x \in A_p$; the reverse inclusion holds since $p \mid pq$ and
$p \mid f_{13}$. For invariants,
$(A_{pq} \cap A_{f_{13}})^D = (A_p)^D = (A^D)_p$ because $p$ is
$D$-invariant.
\end{proof}

\begin{remark}[Atlas and contraction]
The two charts identify the \(p\)-localized kernel, while contraction of
the \(p\)-localized description to \(R\) is supplied separately by the
saturation argument. In particular the atlas confines all denominators to
\(p\)-powers, so only \(p\)-adic divisibility remains.
\end{remark}

\begin{remark}[Shape of the degeneration stratum]
$h = J_2 \cdot e_2$ with $e_2 = J_2^2/35 + q^2J_1/4$ is reducible, but
both components have the same radical over $q$: $J_2|_{q=0} = -x^2$
and $e_2|_{q=0} = x^4/35$, so $V(q,h)_{\mathrm{red}} = V(q,x)$, the
$q = x = 0$ threefold. The degeneration stratum is set-theoretically
a linear space. This is an observation guiding the
gluing analysis, not a theorem about the kernel.
\end{remark}

\begin{theorem}[Terminal stratum: no residual plinth on $V(q,x)$]
\label{thm:terminal-stratum}
On $V(q,x)$, $\Dmap{5}$ descends to
$\delta = a_1a_2\partial_{a_0}$ on $k[a_0,a_1,a_2,a_4]$, with
$\ker\delta = k[a_1,a_2,a_4]$. The image of $K$ on the stratum is
$k[a_4,\,2a_4a_1-a_2^2]$, and the residual plinth there is zero: no
nonzero element of $k[a_4,2a_4a_1-a_2^2]$ is divisible by $a_1a_2$.
Hence the $q$-branch of the boundary tree terminates at $f_{13}$: no
further slice lifts over $V(q,x)$.
\end{theorem}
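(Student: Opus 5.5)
We prove the four assertions of Theorem~\ref{thm:terminal-stratum} in order: the descended derivation, its kernel, the image of $K$, and the vanishing of the residual plinth.

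\emph{Descent of $\Dmap{5}$.} Let $I=(q,x)=(a_5,a_3)\subset\Afring{5}$. We have $\Dmap{5}(a_5)=0$ and $\Dmap{5}(a_3)=a_4a_5\in I$, so $I$ is $\Dmap{5}$-stable. Hence $\Dmap{5}$ induces a derivation $\delta$ on $\Afring{5}/I=k[a_0,a_1,a_2,a_4]$. Reducing the defining formulas modulo $I$ gives
\[
\delta(a_0)=a_1a_2,\quad \delta(a_1)=a_2a_3\equiv 0,\quad \delta(a_2)=a_3a_4\equiv 0,\quad \delta(a_4)=a_5\cdot 0=0 .
\]
So $\delta=a_1a_2\partial_{a_0}$.

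\emph{Kernel of $\delta$.} Write $F=\sum_j C_ja_0^j$ with $C_j\in k[a_1,a_2,a_4]$. Then $\delta(F)=a_1a_2\sum_j jC_ja_0^{j-1}$. The ring is a domain, $a_1a_2\neq 0$, and $k$ has characteristic zero, so $\delta(F)=0$ forces $C_j=0$ for all $j\ge1$. Thus $\ker\delta=k[a_1,a_2,a_4]$, exactly as in Proposition~\ref{prop:N2-kernel}.

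\emph{Image of $K$.} Under reduction modulo $I$ we have $p\mapsto a_4$, $q\mapsto 0$, $J_1\mapsto 2a_1a_4-a_2^2$, and $J_2=2a_2a_5-a_3^2\mapsto 0$. Therefore the image of $K=k[p,q,J_1,J_2]$ is $k[a_4,\,2a_4a_1-a_2^2]$.

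\emph{Vanishing of the residual plinth.} Here the residual plinth means the image of $K$ intersected with $\delta(\bar A)$. Every element of $\delta(\bar A)$ is a multiple of $a_1a_2$. So it suffices to show that no nonzero $P(a_4,u)$, with $u=2a_4a_1-a_2^2$, is divisible by $a_1a_2$; in fact it cannot even be divisible by $a_1$. Set $a_1=0$: then $u\mapsto -a_2^2$, and $P\mapsto P(a_4,-a_2^2)$. This specialization is injective on $k[a_4,u]$: writing $P=\sum_j g_j(a_4)u^j$, the monomials $g_j(a_4)(-a_2^2)^j$ have distinct $a_2$-degrees, the same argument as in Lemma~\ref{lem:contraction}. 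Hence if $a_1\mid P$, then $P(a_4,-a_2^2)=0$, so $P=0$. The residual plinth on $V(q,x)$ is therefore zero.

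\emph{Terminal conclusion.} No nonzero residual plinth class exists on $V(q,x)$. Hence no global plinth element can restrict to a unit-type slice denominator there, and no further slice chart lifts over this stratum. This terminates the $q$-branch at $f_{13}$.

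\emph{Main obstacle.} The only delicate point is interpreting ``residual plinth'' correctly. We take it as the intersection of the image of $K$ with $\delta(\bar A)$, rather than $\ker\delta\cap\delta(\bar A)$. The latter would contain $a_1a_2$ itself, which is nonzero, so under that reading the statement would fail. The computation itself is routine.
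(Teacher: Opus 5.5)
Your proposal is correct and follows essentially the same route as the paper's proof. The four steps match: $D$-stability of $(q,x)$ gives the descent; the $a_0$-expansion in the domain gives $\ker\delta$; the images are $p\mapsto a_4$, $q,J_2\mapsto 0$, $J_1\mapsto 2a_4a_1-a_2^2$; and the specialization $a_1=0$, with $P(a_4,U)\mapsto P(a_4,-a_2^2)$ injective by $a_2$-degree, gives the vanishing. Your reading of the residual plinth through the residual kernel rather than $\ker\delta$ (which does contain $a_1a_2=\delta(a_0)$) matches the convention of Definition~\ref{def:degeneration}. As in the paper, the closing ``hence'' is an interpretive consequence of the vanishing rather than a separately proved claim.
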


\begin{proof}
$\Dmap{5}(q) = 0$ and $\Dmap{5}(x) = pq \in (q)$, so $(q,x)$ is
$D$-stable and $\Dmap{5}$ descends. On the quotient,
$\delta(a_0) = a_1a_2$ while $\delta(a_1) = a_2x = 0$ and
$\delta(a_2) = xa_4 = 0$. The $a_0$-filtration in the domain gives
$\ker\delta = k[a_1,a_2,a_4]$ (characteristic zero,
$a_1a_2 \ne 0$). The $K$-image is $p \mapsto a_4$, $q \mapsto 0$,
$J_1 \mapsto 2a_4a_1-a_2^2$, $J_2 \mapsto 0$. If
$G(a_4,U) \in (a_1a_2)$ with $U = 2a_4a_1-a_2^2$, setting $a_1 = 0$
gives \(G(a_4,-a_2^2) = 0\) as a polynomial, whence \(G = 0\) by
\(a_2\)-degree; the vanishing itself is the argument above.
\end{proof}

\begin{theorem}[The second chart resolves the collapse]
\label{thm:chart-resolves}
At the three collapse points
$a_t = (0,t^2/2,t,1,1,0)$, $t = 0,1,2$, of
Theorem~\ref{thm:orbit-collapse},
\[
f_{13}(a_t) = -1/35 \ne 0,
\]
so the $f_{13}$-chart covers the exact locus where the $pq$-chart
collapses. Moreover the invariant coordinate $\pi_1(a_0)$ separates
the three orbits:
\[
\pi_1(a_0)(a_0),\ \pi_1(a_0)(a_1),\ \pi_1(a_0)(a_2)
  = 0,\ -1/8,\ -2,
\]
evaluated as \(\mathrm{NUM}_{a_0}(a_t)/f_{13}(a_t)^7\), matching the \(H\)-values \(0,-315/4,-1260\).
\end{theorem}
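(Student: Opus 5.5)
The statement has two parts: a pointwise evaluation of $f_{13}$ at the three collapse points, and a comparison of the invariant coordinate $\pi_1(a_0)$ with the $H$-values at those points. I will treat them in that order.

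\textbf{Step 1: evaluating $f_{13}$.} I substitute $a_t=(0,t^2/2,t,1,1,0)$ directly into the factored form $f_{13}=\tfrac1{35}pJ_2^3+\tfrac14pq^2J_1J_2$ of Theorem~\ref{thm:f13}. At $q=0$ the second summand vanishes. Using $J_2|_{q=0}=-x^2$ with $x=a_3=1$ and $p=a_4=1$ gives
\[
f_{13}(a_t)=-\tfrac1{35}.
\]
This agrees with item~3 of Theorem~\ref{thm:f13}, $f_{13}|_{q=0}=-\tfrac1{35}px^6$. Since the value is nonzero, all three points lie in $D(f_{13})$. They also lie in $V(q)$, which is exactly where the $pq$-chart of Lemma~\ref{lem:slice} fails.

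\textbf{Step 2: reducing $\pi_1(a_0)$.} Next I evaluate $\pi_1(a_0)=\mathrm{NUM}_{a_0}/f_{13}^7$ at these points. I would not expand the $456$-term numerator. Instead I use the defining series
\[
\pi_1(a_0)=\sum_i (-s_1)^i\Dmap{5}^i(a_0)/i!,
\]
which is legitimate on $D(f_{13})$ by Theorem~\ref{thm:second-chart}. On $V(q)$ with $x=1$ the derivation acts by $\Dmap{5}(x)=pq=0$, $\Dmap{5}(y)=px$, $\Dmap{5}(z)=xy$ and $\Dmap{5}(a_0)=zy$. So I compute the iterated images $\Dmap{5}^i(a_0)$ modulo $q$ and evaluate them at $a_t$, together with $s_1(a_t)=g_{13}(a_t)/f_{13}(a_t)$.

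At $q=0$ every term of $g_{13}$ except the last carries a factor of $q$, so $g_{13}|_{q=0}=-\tfrac1{35}yx^5$. This gives $s_1(a_t)=y=t$.

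Since $\pi_1$ is a ring homomorphism, the finite sum becomes a polynomial in $t$. The iterates are $zy=t^3/2$, $D(zy)=xy^2+zpx=3t^2/2$, $D^2=3t$ and $D^3=3$. Hence
\[
\pi_1(a_0)(a_t)=-\tfrac{t^3}{2}+\tfrac{3t^3}{2}-\tfrac{3t^3}{2}+\tfrac{t^3}{2}\quad(\text{with } a_0=0).
\]
This sum is $0$, which does not match the claimed values. The discrepancy means I must recheck the signs and which quantities enter. I would do that with care, or fall back on evaluating $\mathrm{NUM}_{a_0}(a_t)$ exactly from the supplementary data. Exact rational evaluation of this finite expression is in any case the honest verification.

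\textbf{Step 3: comparing with $H$.} The values $0,-1/8,-2$ should be proportional to $H(a_t)=0,-315/4,-1260$ from Theorem~\ref{thm:orbit-collapse}, with common ratio $630$. To explain this beyond a numerical coincidence, I would express $H$ on $D(f_{13})$ in the second chart's coordinates. Since $A_{f_{13}}^{\Dmap{5}}=K_{f_{13}}[\pi_1(a_0)]$, the element $H$ is a polynomial in $\pi_1(a_0)$ over $K_{f_{13}}$.

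On the fibre through the collapse points, the $K$-coordinates and $F_0$ are constant (they equal the common image $(1,0,0,-1,1/35)$). So $H$ restricted to that fibre is a polynomial in $\pi_1(a_0)$ with constant coefficients. Checking that it is affine with slope $630$ at these three points is enough to show the orbits are separated.

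\textbf{Main obstacle.} The hard part is the sign and normalisation bookkeeping in $\pi_1(a_0)$ at $q=0$. This is exactly where my quick computation in Step 2 produced an inconsistency. I would resolve it by exact evaluation of $\mathrm{NUM}_{a_0}$ at the three points.
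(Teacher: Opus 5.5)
Your Step~1 is correct. The idea of Step~2 is also sound: evaluate the defining series of $\pi_1(a_0)$ modulo the $\Dmap{5}$-stable ideal $(q)$, using $s_1(a_t)=g_{13}(a_t)/f_{13}(a_t)=t$. Done correctly, this gives a hand-checkable route, in place of the paper's exact evaluation of the $456$-term $\mathrm{NUM}_{a_0}$ at $a_t$.

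As submitted, however, the proposal does not prove the statement. You obtain $0$ for every $t$, which contradicts the claim, and then defer to the supplementary data. The error is arithmetic. Your iterates $\Dmap{5}^i(a_0)(a_t)=t^3/2,\,3t^2/2,\,3t,\,3$ for $i=1,\dots,4$ are right, as is $\Dmap{5}^i(a_0)\equiv 0 \pmod q$ for $i\ge 5$. But your displayed sum weights $\Dmap{5}^i(a_0)$ by $-(-s_1)^{i-1}/(i-1)!$ instead of $(-s_1)^i/i!$. The correct evaluation is
\[
\pi_1(a_0)(a_t)=-\tfrac{t^4}{2}+\tfrac{3t^4}{4}-\tfrac{t^4}{2}+\tfrac{t^4}{8}=-\tfrac{t^4}{8},
\]
which gives exactly $0,-1/8,-2$ at $t=0,1,2$. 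Separation then needs nothing more. $\pi_1(a_0)$ is a $\Dmap{5}$-invariant regular function on the $\mathbb{G}_a$-stable open set $D(f_{13})$, which contains the three points, and it takes three distinct values there.

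Step~3 rests on a false premise. The second-chart kernel is $R_{f_{13}}[Z]$ (Theorem~\ref{thm:chart-local}), not $K_{f_{13}}[Z]$. Already $\pi_1(a_3)=x-pq\,s_1=-pF_0/f_{13}$, and $F_0\notin K_{f_{13}}[Z]$. The reason is that $Z=P(W)$ with $\deg P=7$ and $W$ transcendental over $\operatorname{Frac}(K)$, which forces $[\operatorname{Frac}(K)(W):\operatorname{Frac}(K)(Z)]=7$. So $H$ is not a polynomial in $Z$ over $K_{f_{13}}$, and the fibre argument has nothing to stand on.

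The ratio $630$ comes instead from the same mod-$q$ series computation done generically. On $D(f_{13})\cap V(q)$ one has $s_1\equiv y/(px)$, whence $p^2x\,\pi_1(a_0)\equiv p^2xa_0-\tfrac14J_1y^2-\tfrac18y^4\pmod q$. Comparing this with the four $q$-free terms of $H$ gives
\[
H\equiv\tfrac{1225}{4}\,x^6J_1^2+630\,p^2x^7\,\pi_1(a_0)\pmod q .
\]
At $a_t$ one has $J_1=0$ and $p=x=1$, so $H(a_t)=630\,\pi_1(a_0)(a_t)=-\tfrac{315}{4}t^4$. This is the claimed match with $0,-315/4,-1260$.
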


\begin{theorem}[Local structure of the second chart]
\label{thm:chart-local}
With $Z = \pi_1(a_0)$,
\[
\boxed{A_{f_{13}}^{\Dmap{5}} = R_{f_{13}}[Z],}
\]
since $\pi_1(a_4) = p$, $\pi_1(a_5) = q$ and
$\pi_1(a_3), \pi_1(a_2), \pi_1(a_1) \in R_{f_{13}}$
by exact computation, and the $\pi_1(a_i)$
generate the kernel by Lemma~\ref{lem:slice-generation}. Moreover,
with $\rho = G_0/c \in K_{pq}[s_0]$ the pole polynomial
($s_0 = x/(pq)$) and $\tau = s_1 - s_0$,
\[
\boxed{Z = W - \rho(-\tau),}
\]
by uniqueness of the $s_1$-slice decomposition in $A_{pqf_{13}}$
($\Dmap{5}(\tau) = 0$, coefficients in $K_{pq}$). The pole polynomial
has exact $s_0$-degree $7$ with leading coefficient
$\rho_7 = -p^5q^3/112 \ne 0$ by direct expansion.
\end{theorem}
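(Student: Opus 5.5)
The plan has three parts: reduce the chart kernel to a polynomial ring via the local slice theorem, check that the slice-projected coordinates lie in the right ring, and then compare the two slice decompositions on the overlap.

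\emph{Step 1: the kernel $A_{f_{13}}^{\Dmap{5}}$.} On $D(f_{13})$ the element $s_1=g_{13}/f_{13}$ satisfies $\Dmap{5}(s_1)=1$ (Theorem~\ref{thm:second-chart}). The local-slice theorem (Freudenburg, Corollary~1.26, as used in Lemma~\ref{lem:slice}) gives $A_{f_{13}}=A_{f_{13}}^{\Dmap{5}}[s_1]$. It also identifies the kernel as the image of the Dixmier map $\pi_1$, generated over $k[f_{13}^{-1}]$ by the $\pi_1(a_i)$ (this is Lemma~\ref{lem:slice-generation}).

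Since $\pi_1$ fixes invariants, $\pi_1(a_4)=p$ and $\pi_1(a_5)=q$. For $a_3,a_2,a_1$, $\Dmap{5}$-orbits are finite, so $\pi_1(a_i)=\mathrm{NUM}_{a_i}/f_{13}^{N_i}$ with $\mathrm{NUM}_{a_i}$ an $a_0$-free invariant. By Theorem~\ref{thm:descent}, $\mathrm{NUM}_{a_i}\in K$, so $\pi_1(a_i)\in K_{f_{13}}\subseteq R_{f_{13}}$. I would not need explicit expressions here; membership in $K$ is automatic from the descent theorem.

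This gives $A_{f_{13}}^{\Dmap{5}}=R_{f_{13}}[Z]$ with $Z=\pi_1(a_0)$, provided $R_{f_{13}}\subseteq A^{\Dmap{5}}_{f_{13}}$, which is immediate. The one care point is that $f_{13}\in K\subset R$, so the generators of $R$ are regular on the chart. $Z$ is transcendental over $R_{f_{13}}$, because $Z=a_0+(\text{$a_0$-free terms})$ and the ring $R_{f_{13}}$ has transcendence degree $4$ after inverting, together with $Z$ giving degree $5$. More cleanly, $A_{f_{13}}=K_{f_{13}}[Z][s_1]$ by a triangular change of variables $a_0\mapsto Z$, $a_3\mapsto s_1$, so $K_{f_{13}}[Z]$ is polynomial. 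It then follows that $R_{f_{13}}=K_{f_{13}}[F_0,H,F_{46}]\subseteq K_{f_{13}}[Z]$ and equality holds.

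\emph{Step 2: the identity $Z=W-\rho(-\tau)$.} Work in $A_{pqf_{13}}$, where both slices live. $W$ is invariant and $a_0=W-\rho(s_0)$, with $\rho\in K_{pq}[s_0]$ the pole polynomial. Substituting $s_0=s_1-\tau$ with $\tau$ invariant (its derivative is $1-1=0$) gives $a_0=W-\rho(s_1-\tau)$, which is a polynomial in $s_1$ with invariant coefficients. The Dixmier projection sets $s_1=0$, so $Z=\pi_1(a_0)=W-\rho(-\tau)$. This uses uniqueness of $s_1$-expansions, i.e.\ freeness of $s_1$ over the kernel of $A_{pqf_{13}}$.

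Finally, I would compute the leading $s_0$-coefficient $\rho_7$ directly from $G_0/c$ via the reconstruction identities of Lemma~\ref{lem:slice}. The $x^7/35$ term contributes $(pqs_0)^7/(35p^2q^4)$, and the $y$- and $z$-terms contribute their top $s_0$-powers. The sum should give $-p^5q^3/112$.

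\emph{Main obstacle.} The hardest step is the claim that $K_{f_{13}}[Z]$ coincides with $R_{f_{13}}$ rather than merely containing it. This requires expressing $Z$ through $R$-generators with $f_{13}$-denominators. I would try $Z=W-\rho(-\tau)$ with $\tau=F_0/(qf_{13})$, which puts $Z$ in $K_{pqf_{13}}[F_0]$. I would then clear $pq$-poles using the fact that $Z$ is regular on $D(f_{13})$; this reduces to $p$-saturation, which I expect to be the real difficulty.
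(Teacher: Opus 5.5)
Your Step~2 (substituting $s_0=s_1-\tau$ and then setting $s_1=0$) is correct. So is the leading coefficient: the five top-degree contributions of $G_0$ sum to $p^7q^7\bigl(-\tfrac1{16}+\tfrac1{24}+\tfrac1{12}-\tfrac1{10}+\tfrac1{35}\bigr)=-\tfrac1{112}p^7q^7$, which gives $\rho_7=-p^5q^3/112$.

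The gap is in Step~1. The numerators $\mathrm{NUM}_{a_i}$ are not $a_0$-free, because $s_1=g_{13}/f_{13}$ and $g_{13}$ contains the term $a_0p^2q^3$. Concretely, writing $s_1=s_0+\tau$ with $\tau=F_0/(qf_{13})$, you get $\pi_1(a_3)=a_3-pq\,s_1=-pq\tau=-pF_0/f_{13}$. This is not in $K_{f_{13}}$, nor even in $\operatorname{Frac}(K)$, so Theorem~\ref{thm:descent} does not apply. Everything you build on $\pi_1(a_i)\in K_{f_{13}}$ then fails:
\begin{itemize}
\item $R_{f_{13}}\supseteq k[p,q,J_1,J_2,F_0]$ has transcendence degree $5$, not $4$, so $Z$ is algebraic over it.
\item Since $\tau=\alpha W$ with $\alpha=p^2q^3/f_{13}$ and $\rho$ is odd, $Z=W+\rho(\alpha W)$ has exact $W$-degree $7$. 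Every element of $\operatorname{Frac}(K)[Z]$ therefore has $W$-degree divisible by $7$, so neither $F_0$ nor $\pi_1(a_3)$ lies in $K_{f_{13}}[Z]$.
\item Hence $A_{f_{13}}\neq K_{f_{13}}[Z][s_1]$ and $R_{f_{13}}\not\subseteq K_{f_{13}}[Z]$.
\end{itemize}
Your ``main obstacle'' is also not part of the statement. $R_{f_{13}}[Z]$ is simply the ring generated by $R_{f_{13}}$ and $Z$, and nothing requires $Z\in R_{f_{13}}$; the paper leaves that question open.

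What you actually have to show is $\pi_1(a_1),\pi_1(a_2),\pi_1(a_3)\in R_{f_{13}}$. Here $H$ is indispensable, because $q$ is not inverted on $D(f_{13})$. The device of your own Step~2 does the job. Extend $\pi_1$ to $A_{pqf_{13}}$ and apply it to the reconstruction identities of Lemma~\ref{lem:slice}; this amounts to $s_0\mapsto-\tau$. You get
\[
\pi_1(a_2)=\frac{J_2f_{13}^2+p^2F_0^2}{2qf_{13}^2},\qquad
\pi_1(a_1)=\frac{J_1}{2p}+\frac{(J_2f_{13}^2+p^2F_0^2)^2}{8pq^2f_{13}^4},
\]
whose individual terms carry $q$-poles that cannot be cleared inside $K$. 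Now write $f_{13}=pJ_2e_2$ and eliminate $F_0^2$ using the relation $S$:
\[
J_2f_{13}^2+p^2F_0^2=p^2q^2\Lambda,\qquad
\Lambda=\tfrac{q^2H}{11025}+\tfrac{J_1J_2^5}{210}+\tfrac{q^2J_1^2J_2^3}{16}\in C_5.
\]
This gives $\pi_1(a_2)=p^2q\Lambda/(2f_{13}^2)$ and $\pi_1(a_1)=J_1/(2p)+p^3q^2\Lambda^2/(8f_{13}^4)$. Both lie in $R_{f_{13}}$, since $p\mid f_{13}$ makes $p$ a unit there. Only $S$ is needed here, because $F_{46}^{\mathrm{int}}=E/p^2$ already lies in $(C_5)_{f_{13}}$. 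With these three memberships, Lemma~\ref{lem:slice-generation} gives $A_{f_{13}}^{\Dmap{5}}\subseteq R_{f_{13}}[Z]$, and the reverse inclusion is immediate.
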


\begin{remark}[No translation: $Z$ has exact $W$-degree $7$]
Since $s_1 = \alpha W + \beta$ with $\alpha = p^2q^3/f_{13} \ne 0$ and
$\beta$ of $W$-degree $0$, $\tau = s_1 - s_0$ has $W$-degree exactly
$1$, so $\Delta := Z - W = -\rho(-\tau)$ has exact $W$-degree $7$ over
$\operatorname{Frac}(B)$, with leading coefficient
$\rho_7\alpha^7 = (-p^5q^3/112)(p^2q^3/f_{13})^7 \ne 0$. In particular
$\Delta \notin \operatorname{Frac}(K)$: the $f_{13}$-chart line is a
finite cover of the $W$-line of degree $7$, not a translate. Hence a
translation-based coefficientwise intersection argument does not apply; the
\(W \leftrightarrow Z\) change of basis carries the degree-\(7\) relation.
\end{remark}

\begin{theorem}[Cover polynomial and Newton polygon]
\label{thm:cover-newton}
With $\alpha = p^2q^3/f_{13}$ the transition is $\tau = s_1 - s_0 =
\alpha W$ (i.e.\ $F_0/(qf_{13})$, Theorem~\ref{thm:second-chart}), so
$Z = P(W)$ for the explicit $P(T) = T - \rho(-\alpha T) \in
\operatorname{Frac}(K)[T]$ of exact degree $7$ (odd: $P_{2j} = 0$ since
$\rho$ is odd). Up to the recorded exact forms,
\[
v_q(P_1), v_q(P_3), v_q(P_5), v_q(P_7) = 0, 8, 16, 24,
\qquad P_7 = -\frac{p^{12}q^{24}}{112J_2^7e_2^7},
\]
by exact computation. Hence $[\operatorname{Frac}(K)(W):\operatorname{Frac}(K)(Z)] \le 7$
(equality iff $P(T)-Z$ is irreducible, not shown here), and
with $v_q(Z) = 0$ (exact computation: \texttt{scripts/cas/cover\_newton.py}) the
$q$-Newton polygon of $P(T) - Z$ is $(0,0)$--$(1,0)$--$(7,24)$: one
$q$-regular sheet and six sheets with $q$-pole order exactly $4 = e_q$.
\end{theorem}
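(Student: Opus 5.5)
I would start from the two boxed identities already available: the chart transition $s_1 - s_0 = F_0/(qf_{13})$ of Theorem~\ref{thm:second-chart}, and $Z = W - \rho(-\tau)$ of Theorem~\ref{thm:chart-local}.

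\textbf{Step 1: the linear relation $\tau = \alpha W$.} Since $W = F_0/c$ with $c = p^2q^4$, we have $F_0/(qf_{13}) = cW/(qf_{13}) = (p^2q^3/f_{13})\,W$. This gives $\tau = \alpha W$ with $\alpha = p^2q^3/f_{13} \in \operatorname{Frac}(K)$. Substituting into $Z = W - \rho(-\tau)$ gives $Z = P(W)$, where $P(T) = T - \rho(-\alpha T)$. Here $\rho$ is viewed as a polynomial in $s_0$ with coefficients $\rho_i \in K_{pq}$.

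\textbf{Step 2: parity and degree.} Only odd powers of $x$ occur in $G_0$, hence in $\rho = G_0/c$. Every coefficient in $K_{pq}$ is even in $x$, and $s_0 = x/(pq)$ is odd. Therefore $\rho_i = 0$ for even $i$, and $P$ has only odd coefficients. Concretely, $P_1 = 1 + \alpha\rho_1$ and $P_{2j+1} = -(-\alpha)^{2j+1}\rho_{2j+1} = \alpha^{2j+1}\rho_{2j+1}$ for $j \ge 1$. The leading coefficient is $P_7 = \alpha^7\rho_7$, which is nonzero because $\rho_7 = -p^5q^3/112 \ne 0$ by Theorem~\ref{thm:chart-local}. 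This yields
\[
P_7 = -\frac{p^5q^3}{112}\cdot\frac{p^{14}q^{21}}{p^7h^7} = -\frac{p^{12}q^{24}}{112\,J_2^7e_2^7},
\]
using $f_{13} = pJ_2e_2$. So $\deg P = 7$. The degree bound $[\operatorname{Frac}(K)(W):\operatorname{Frac}(K)(Z)] \le 7$ then follows because $W$ is a root of $P(T) - Z$ over $\operatorname{Frac}(K)(Z)$.

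\textbf{Step 3: $q$-valuations.} Here $v_q(\alpha) = 3$, because $q \nmid f_{13}$ by Theorem~\ref{thm:f13}. I would then compute $v_q(\rho_{2j+1})$ from the explicit expansion of $G_0$ in powers of $s_0$; the expected values are $-4$ for $j \ge 1$. This gives $v_q(P_{2j+1}) = 3(2j+1) - 4 + \ldots$, and I would adjust the count against the claimed values $8, 16, 24$. For $P_1$, I would check directly that $1 + \alpha\rho_1$ is a $q$-unit, which holds because $v_q(\alpha\rho_1) > 0$.

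\textbf{Step 4: Newton polygon.} Take $v_q(Z) = 0$ as given by the exact computation. The points of $P(T) - Z$ are then $(0,0)$, $(1,0)$, $(3,8)$, $(5,16)$ and $(7,24)$. The last four are collinear with slope $4$, so the polygon is $(0,0)$--$(1,0)$--$(7,24)$.

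By the standard theory of Newton polygons over the discrete valuation ring $B_{(q)}$ (or rather $K_{(q)}$), this polygon gives one root of valuation $0$ and six roots of valuation $-4$. Matching sign conventions, that is six sheets with pole order $4 = e_q$.

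\textbf{Main obstacle.} I expect Step 3 to be the main obstacle: verifying each $v_q(\rho_{2j+1})$, in particular that no cancellation lowers the middle terms below the line. I would first try to read these valuations directly off the monomial structure of $G_0$ after substituting $x = pqs_0$. Failing that, I would fall back on the cited exact computation.
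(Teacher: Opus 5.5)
Your Steps 1, 2 and 4 are sound. The relation $\tau=\alpha W$, the oddness of $\rho$ in $s_0$, the formula $P_{2j+1}=\alpha^{2j+1}\rho_{2j+1}$, the value of $P_7$, the degree bound, and the reading of the polygon are all correct.

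The gap is Step 3, which is where the content of the theorem lies; the paper itself only cites exact computation there. Two of your claims in that step are false.

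\emph{First}, $v_q(\rho_{2j+1})$ is not $-4$. What equals $-4$ is $v_q(\rho_is_0^i)$, and since $v_q(s_0)=-1$ this means $v_q(\rho_i)=i-4$. Your own $\rho_7=-p^5q^3/112$ has $v_q=3$. The value $-4$ would give $5,11,17$ instead of $8,16,24$, so no adjustment rescues it.

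\emph{Second}, $v_q(\alpha\rho_1)=0$, not $>0$. So $P_1=1+\alpha\rho_1$ is a sum of two $q$-units that could a priori cancel modulo $q$, and this is the genuinely delicate point. For the polygon, the vertices $(1,0)$ and $(7,24)$ need exact valuations, whereas $(3,8)$ and $(5,16)$ need only lower bounds, since cancellation can only raise a valuation. Your ``main obstacle'' is therefore aimed at the wrong coefficients.

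The hand computation you wanted does go through.
\begin{itemize}
\item On $B_{pq}=K_{pq}[s_0]$ one has $D_5=d/ds_0$ and $D_5(G_0)=-c\,a_1a_2$. Hence $d\rho/ds_0=-yz$, with $\rho_0=0$ by parity.
\item Put $Y=J_2/(2q)$ and $\zeta=(4q^2J_1+J_2^2)/(8pq^2)$; this $\zeta$ is the element called $Z$ in Lemma~\ref{lem:slice}. The reconstruction identities give
$yz=\zeta Y+\tfrac12(p^2q\zeta+pqY^2)s_0^2+\tfrac38p^3q^2Ys_0^4+\tfrac1{16}p^5q^3s_0^6$, hence
\[
\rho_1=-\zeta Y,\qquad \rho_3=-\frac{p(4q^2J_1+3J_2^2)}{48q},\qquad \rho_5=-\frac{3}{80}p^3qJ_2,\qquad \rho_7=-\frac{p^5q^3}{112}.
\]
\item These have $v_q=-3,-1,1,3$, because $J_2\equiv -x^2\not\equiv 0\pmod q$. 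Multiplying by $\alpha^i$ with $v_q(\alpha)=3$ gives $8,16,24$ for $i=3,5,7$.
\item For $i=1$, write $f_{13}=pJ_2e_2$ with $140e_2=4J_2^2+35q^2J_1$. Then
$\alpha\rho_1=-\frac{35(4q^2J_1+J_2^2)}{4(4J_2^2+35q^2J_1)}\equiv-\frac{35}{16}\pmod q$.
\item Consequently $P_1=-\frac{19J_2^2}{560\,e_2}$, which is a $q$-unit because $1-\frac{35}{16}\neq0$.
\end{itemize}
With these values your Step 4 yields the stated polygon.
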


\begin{theorem}[Discriminant support]
\label{thm:cover-disc}
\[
\operatorname{Disc}_T(P(T)-Z)
  = u\,\frac{p^{60}q^{120}}{J_2^{42}e_2^{42}}\,B(Z),
\qquad u \in \mathbb{Q}^\times,
\]
with $B(Z)$ an even degree-$6$ factor in $Z$ with coefficients in
$k[p,q,J_1,J_2]$, by exact resultant (irreducibility of $B(Z)$ is not
established and is not needed for the support conclusion). In
particular every base prime divisor of the discriminant is among
$p, q, J_2, e_2$; no unknown boundary divisor appears. The remaining
ramification component $\{B(Z) = 0\}$ is horizontal.
\end{theorem}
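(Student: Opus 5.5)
The plan is to compute the discriminant through the composition structure $P(T)=T-\rho(-\alpha T)$ rather than by brute force, and then to read off the base divisors. Write $P(T)-Z=\sum_{j\in\{1,3,5,7\}}P_jT^j-Z$. The coefficients are $P_1=1+\alpha\rho_1$ and $P_j=-(-\alpha)^j\rho_j$ for $j=3,5,7$. Here $\rho_j\in K_{pq}$ are the $s_0$-coefficients of the pole polynomial, and $\alpha=p^2q^3/f_{13}=pq^3/(J_2e_2)$. The standard formula is
\[
\operatorname{Disc}_T(P-Z)=\pm P_7^{-1}\operatorname{Res}_T(P-Z,P')=\pm P_7^{2\cdot 7-2}\prod_{P'(\theta)=0}(P(\theta)-Z),
\]
and $P'$ is independent of $Z$. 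So the discriminant is $P_7^{12}$ times a polynomial of degree $6$ in $Z$, namely $\prod_{i=1}^{6}(P(\theta_i)-Z)$ over the six roots of $P'$.

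Since $P$ is odd, $P'$ is even in $T$. Its roots therefore come in pairs $\pm\theta$, and then $P(-\theta)=-P(\theta)$. The product consequently groups as $\prod_{i=1}^3(Z^2-P(\theta_i)^2)$, which is even in $Z$. This gives the evenness of $B(Z)$, and it is symmetric in the $\theta_i^2$, hence rational over $\operatorname{Frac}(K)$.

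Next I factor out the base denominators. Substituting $T=\alpha^{-1}U$ rescales $P$ to $P(T)=\alpha^{-1}U-\rho(-U)$, whose coefficients lie in $K_{pq}$ and do not involve $f_{13}$ except through $\alpha^{-1}$. Carrying out the resultant in $U$ means every factor of $J_2$ or $e_2$ arises only from powers of $\alpha$. Meanwhile the explicit $\rho_j$ (leading $\rho_7=-p^5q^3/112$, the others computed in the same expansion) contribute only monomials in $p,q$ to the denominators.

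The bookkeeping then goes as follows. The leading factor is $P_7^{12}=\bigl(p^{12}q^{24}/(112J_2^7e_2^7)\bigr)^{12}$, which would give exponent $84$ in $J_2e_2$. The remaining factor, $\prod(P(\theta_i)-Z)$, carries compensating positive powers of $\alpha$, because its leading coefficient is $Z^6$ and its constant term is $\prod P(\theta_i)$. I would determine the net exponents $p^{60}q^{120}(J_2e_2)^{-42}$ by a weighted-homogeneity/scaling argument. Assign $T$ the weight of $W$ and $\alpha$ its own weight. Then the discriminant is quasi-homogeneous of a weight that can be computed from the degrees, and the exponents of $p$, $q$, $J_2e_2$ are then pinned down, up to the $Z$-polynomial, by the exact resultant recorded in the supplementary script (\texttt{scripts/cas/cover\_newton.py}). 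This is where I would rely on the paper's exact computation rather than re-deriving it. The consistency check is the $q$-Newton polygon of Theorem~\ref{thm:cover-newton}: one $q$-regular root and six roots of pole order $4$ force $v_q$ of the leading-coefficient-normalized discriminant to match $q^{120}$ up to the $P_7$ factor.

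Finally comes the support conclusion. After extracting the monomial $p^{60}q^{120}J_2^{-42}e_2^{-42}$, I must show that $B(Z)$ has coefficients in $k[p,q,J_1,J_2]$ and that no further base prime divides all of its coefficients. The leading $Z^6$-coefficient of $B$ is a nonzero constant (up to $u$), so no base prime divides $B$ identically. Hence every vertical (base) prime divisor of the discriminant lies among $p,q,J_2,e_2$, and $\{B(Z)=0\}$ is horizontal, because it is not contained in the zero set of any element of $\operatorname{Frac}(K)$. I expect the main obstacle to be the integrality of $B$: verifying that all denominators in the $\rho_j$ and $\alpha$ are exactly absorbed into the displayed monomial, so that $B$ has polynomial coefficients. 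For this I would lean on the exact resultant over $\mathbb{Q}$ together with Lemma~\ref{lem:QQ-transfer}, which transfers the result to arbitrary $k$ of characteristic zero.
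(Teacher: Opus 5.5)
Your evenness argument ($P'$ even, $P(-\theta)=-P(\theta)$) is correct. Two other steps fail, however, and they are exactly the ones carrying the exponent bookkeeping and the horizontality claim.

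\emph{First: the critical-value formula has the wrong power.} Since $\operatorname{Res}_T(f,f')=(na_n)^n\prod_{f'(\theta)=0}f(\theta)$, one gets $\operatorname{Disc}_T(f)=\pm n^na_n^{\,n-1}\prod_{f'(\theta)=0}f(\theta)$. Here that reads $\operatorname{Disc}_T(P-Z)=\pm7^7P_7^{6}\prod_{i=1}^6(P(\theta_i)-Z)$. The exponent $2n-2$ belongs to the root-difference formula $a_n^{2n-2}\prod_{i<j}(\alpha_i-\alpha_j)^2$, which is what your $q$-Newton-polygon check implicitly uses.

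This is not cosmetic. You note yourself that $\prod(P(\theta_i)-Z)$ is monic in $Z$. With $P_7^{12}$, the $Z^6$-coefficient of the discriminant would then have $J_2$- and $e_2$-order $-84$. No ``compensating powers of $\alpha$'' in lower coefficients can change a leading coefficient, and order $-84$ is incompatible with the statement, since $B$ has polynomial coefficients.

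With the correct power, $\pm7^7P_7^6=\pm7^7p^{72}q^{144}/(112^6J_2^{42}e_2^{42})$ gives the $42$ at once. Your substitution $T=U/\alpha$ is the right idea, but it must be used through the scaling law $\operatorname{Disc}_T f(T)=\alpha^{n(n-1)}\operatorname{Disc}_U f(U/\alpha)$, with $n(n-1)=42$. The polynomial $P(U/\alpha)-Z$ has coefficients in $K_{pq}$, and its discriminant has leading $Z$-coefficient $\pm7^7\rho_7^6$, a unit at $J_2$ and $e_2$. So $(J_2e_2)^{-42}$ is exactly the denominator of $\alpha^{42}$, and nothing else contributes at $J_2$ or $e_2$.

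\emph{Second: the leading coefficient of $B$ is not constant.} Comparing $Z^6$-coefficients in the displayed identity with $\pm7^7P_7^6$ forces $\operatorname{lc}_Z(B)\in\mathbb{Q}^\times p^{12}q^{24}$, so your horizontality argument fails as written.

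The support conclusion survives without any computation. The $P_j$ have denominators only at $p,q,J_2,e_2$, because $\rho_j\in K_{pq}$ and $\alpha=pq^3/(J_2e_2)$. The discriminant is an integral polynomial in the coefficients, and its leading coefficient is a unit at every other prime of $K$. By Gauss's lemma, no other prime divides it.

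Horizontality, however, needs $B$ to be primitive, i.e.\ $p^{60}q^{120}$ must be the exact $p,q$-content. The Newton data give only an inequality. From $v_q(P_j)=4(j-1)$ (Theorem~\ref{thm:cover-newton}) and likewise $v_p(P_j)=2(j-1)$ (from $v_p(\alpha)=1$ and the explicit $\rho_j$), all six critical points have orders $-4$ and $-2$, and all critical values have orders $\ge-4$ and $\ge-2$. Hence the content is $\ge144-24=120$ at $q$ and $\ge72-12=60$ at $p$. This settles the integrality of $B$ at $p$ and $q$, the step you flagged as the main obstacle.

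Equality is the one input that must still come from the exact resultant, as in the paper. It amounts to no cancellation in the leading residues, equivalently some coefficient of $B$ being a $p$- and $q$-unit. Your weighted-homogeneity count cannot supply it, since it only constrains a total weight, not individual prime exponents.
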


\begin{theorem}[Divisorial Newton data]
\label{thm:cover-newton-div}
For $P(T) - Z$ over the Gauss extensions ($Z$ a unit), with $P_{2j} = 0$:
\[
\begin{array}{c|c|c}
\text{prime} & (j, v(P_j)),\ j = 1,3,5,7 & \text{hull} \\ \hline
q & (1,0), (3,8), (5,16), (7,24) & (0,0)\text{--}(1,0)\text{--}(7,24) \\
J_2 & (1,2), (3,-3), (5,-4), (7,-7) & (0,0)\text{--}(7,-7) \\
e_2 & (1,-1), (3,-3), (5,-5), (7,-7) & (0,0)\text{--}(7,-7)
\end{array}
\]
i.e.\ one $q$-regular sheet plus six sheets of $q$-pole order exactly
$4 = e_q$ (using $v_q(Z) = 0$), and uniform vanishing order
$1$ on all seven sheets at $J_2$ and at $e_2$. The
$q$-row also follows from the factored forms of
Theorem~\ref{thm:cover-newton}.
\end{theorem}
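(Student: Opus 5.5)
The plan is to compute the Newton data of $P(T)-Z$ one prime at a time, using the explicit closed forms of the odd coefficients $P_1,P_3,P_5,P_7$. By Theorem~\ref{thm:cover-newton} we have $P(T)=T-\rho(-\alpha T)$ with $\alpha=p^2q^3/f_{13}=pq^3/(J_2e_2)$. Here $\rho(s_0)=\sum_{i\ \mathrm{odd}}\rho_i s_0^i$ is the pole polynomial of Theorem~\ref{thm:chart-local}. Hence $P_1=1+\rho_1\alpha$ and $P_j=\rho_j\alpha^j$ for $j=3,5,7$. The $P_{2j}$ vanish because $\rho$ is odd.

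The first step is to record the $s_0$-coefficients $\rho_j\in K_{pq}$ exactly. These come from substituting the reconstruction identities of Lemma~\ref{lem:slice} into $G_0/c$. We already know $\rho_7=-p^5q^3/112$. I expect $\rho_3,\rho_5$ to be monomials in $p,q$ times polynomials in $J_1,J_2$ with no $J_2$- or $e_2$-factors. I would then read off
\[
v_\pi(P_j)=v_\pi(\rho_j)+j\,v_\pi(\alpha)\qquad (j=3,5,7,\ \pi\in\{q,J_2,e_2\}),
\]
using $v_q(\alpha)=3$, $v_{J_2}(\alpha)=v_{e_2}(\alpha)=-1$, and $v_q(J_2)=v_q(e_2)=0$. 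The last two values come from evaluating at $q=0$, where $J_2\mapsto -x^2$ and $e_2\mapsto x^4/35$. This step needs $v_q(\rho_j)$, namely $-1,-1,-3$ for $j=3,5,7$, to reproduce $8,16,24$. It also needs the precise $J_2$- and $e_2$-orders of $\rho_3,\rho_5$, which is where the entries $-3,-4$ in the $J_2$-row must come from.

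The coefficient $P_1$ needs separate care, since cancellation can occur in $1+\rho_1\alpha$. I would write
\[
P_1=\frac{J_2e_2+pq^3\rho_1}{J_2e_2}
\]
and factor the numerator explicitly in $K$. For the $q$-row, the numerator reduces mod $q$ to $J_2e_2|_{q=0}=-x^6/35\ne0$, which gives $v_q(P_1)=0$. For the $J_2$- and $e_2$-rows, the table's values $(1,2)$ and $(1,-1)$ say the numerator is divisible by $J_2^3$ and coprime to $e_2$. I expect this to reflect the identity $F_0/(qf_{13})=s_1-s_0$ from Theorem~\ref{thm:second-chart}. I would establish it by exact division in the UFD $K$.

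With the four valuations per prime in hand, the hulls follow by inspection. The vertex $(0,0)$ comes from $Z$, which is a unit in the Gauss extension.

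For $q$, the points $(0,0),(1,0),(3,8),(5,16),(7,24)$ have lower convex hull $(0,0)$–$(1,0)$–$(7,24)$. The points $(3,8)$ and $(5,16)$ lie on the segment of slope $4$. This gives one sheet of slope $0$ and six sheets of slope $4=e_q$.

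For $J_2$ and $e_2$, the segment $(0,0)$–$(7,-7)$ has slope $-1$. One checks that every other point lies on or above it:
\begin{itemize}
\item for $J_2$: $2\ge -1$, $-3\ge-3$, $-4\ge-5$;
\item for $e_2$: every point lies on the segment.
\end{itemize}
So all seven sheets have slope $-1$. This gives the uniform order-$1$ statement after the sign convention for roots is fixed.

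The main obstacle is pinning down the $J_2$- and $e_2$-adic valuations of $P_1,P_3,P_5$, since these involve nonmonomial numerators. If closed forms for $\rho_3,\rho_5$ prove messy, I would fall back on reducing modulo $J_2$ (respectively $e_2$) in $K$, as in Lemma~\ref{lem:contraction}. Those residue rings are domains because $J_2$ and $e_2$ are prime in $K$. Prime-ness of $e_2$ itself I would check by its linearity in $J_1$ with coprime coefficients.
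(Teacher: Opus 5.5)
Your route is the paper's: you write $P_1=1+\rho_1\alpha$ and $P_j=\rho_j\alpha^j$ ($j=3,5,7$) with $\alpha=pq^3/(J_2e_2)$, read the valuations off factored forms, and take lower hulls. Your hull checks are correct. However, some of the concrete data you commit to are wrong, and the one valuation you actually argue for is argued incorrectly.

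(i) With $v_q(\alpha)=3$, the targets $8,16,24$ need $v_q(\rho_3),v_q(\rho_5),v_q(\rho_7)=-1,1,3$, not $-1,-1,-3$. Your own $\rho_7=-p^5q^3/112$ has $v_q=3$.

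(ii) $\rho_5$ is not free of $J_2$. The entry $-4=v_{J_2}(\rho_5)-5$ forces $J_2\mid\rho_5$, contrary to your stated expectation.

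(iii) The numerator $J_2e_2+pq^3\rho_1$ does not reduce mod $q$ to $J_2e_2|_{q=0}$. Since $v_q(\rho_1)=-3$, the summand $pq^3\rho_1=-\tfrac1{16}J_2(4q^2J_1+J_2^2)$ is a $q$-unit, congruent to $x^6/16$. So $v_q(P_1)=0$ hinges on $\tfrac1{16}-\tfrac1{35}\neq0$. That is exactly the possible cancellation you flagged as needing care, and your argument skips it.

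To close the plan, use the slice structure. On $B_{pq}=K_{pq}[s_0]$ one has $D_5=d/ds_0$, $D_5(\rho)=-a_1a_2$, and $\rho|_{s_0=0}=0$ because $G_0$ is odd in $x$. Hence $\rho$ is minus the $s_0$-antiderivative of $a_1a_2$ vanishing at $s_0=0$; this is the integrator of Lemma~\ref{lem:dixmier-divisibility}. Take $Y=J_2/(2q)$ and $\tilde Z=(J_1+Y^2)/(2p)$ from Lemma~\ref{lem:slice}. Then
\[
a_1a_2=Y\tilde Z+\tfrac12pq(p\tilde Z+Y^2)s_0^2+\tfrac38p^3q^2Ys_0^4+\tfrac1{16}p^5q^3s_0^6 ,
\]
which gives
\[
\rho_1=-\frac{J_2(4q^2J_1+J_2^2)}{16pq^3},\qquad
\rho_3=-\frac{p(4q^2J_1+3J_2^2)}{48q},\qquad
\rho_5=-\frac{3p^3qJ_2}{80},\qquad
\rho_7=-\frac{p^5q^3}{112}.
\]
In the $P_1$-numerator the $\tfrac14q^2J_1J_2$ terms cancel, leaving $-\tfrac{19}{560}J_2^3$. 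Therefore
\[
P_1=-\frac{19J_2^2}{560\,e_2},\qquad
P_3=-\frac{p^4q^8(4q^2J_1+3J_2^2)}{48J_2^3e_2^3},\qquad
P_5=-\frac{3p^8q^{16}}{80J_2^4e_2^5},\qquad
P_7=-\frac{p^{12}q^{24}}{112J_2^7e_2^7}.
\]
The factor $4q^2J_1+3J_2^2$ is a unit at each of the three primes:
\begin{itemize}
\item at $q$ it is congruent to $3x^4$;
\item at $J_2$ it is congruent to $4q^2J_1$;
\item at $e_2$ it is congruent to $\tfrac{89}{35}J_2^2$ in the domain $K/(e_2)$.
\end{itemize}
Since $q,J_2,e_2$ are pairwise non-associate primes of $K$, every table entry can now be read off directly. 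Your hull argument then goes through unchanged.
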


The two charts thus identify the \(p\)-localized kernel; contraction to
\(R\) is supplied separately by the saturation argument.

\begin{remark}[The $p$-side is covered by neither chart]
Both chart denominators, $pq$ and $f_{13} = p\cdot h$, vanish on
$V(p)$. The slice atlas therefore says nothing about the $p$-boundary,
where no extra plinth generator is known.
\end{remark}

\section{The closure theorem}
\label{sec:tower-closure}

The two charts identify the $p$-localized kernel, and exact $q$- and $p$-saturation contract it back to the candidate ring. The proof below uses none of the false global denominator bounds.

It remains to prove \(R=A_5^{D_5}\). The first chart identifies \(R_{pq}\),
the second chart identifies \(R_{f_{13}}[Z]\), their intersection gives the
\(p\)-localized kernel, and \(q\)- and \(p\)-saturation contract it to \(R\).

\paragraph{Local charts.}
Put \(R = C_5[F_{46}^{\mathrm{int}}]\). The first chart is solved:
$A_{pq}^{\Dmap{5}} = K_{pq}[W] = (C_5)_{pq} = R_{pq}$, since
$H = H_2W^2 + H_0 \in K_{pq}[W]$ and
$F_{46}^{\mathrm{int}} = Eq^2/(pq)^2 \in (C_5)_{pq}$ from $p^2F_{46}^{\mathrm{int}} = E \in C_5$
(Theorem~\ref{thm:quotient-4648}). The second chart is exact with one
extra coordinate: with $Z = \pi_1(a_0)$,
$A_{f_{13}}^{\Dmap{5}} = R_{f_{13}}[Z]$
(Theorem~\ref{thm:chart-local}), so
\[
\boxed{(A^{\Dmap{5}})_p = R_{pq} \cap R_{f_{13}}[Z]}
\]
exactly, by restriction (Lemma~\ref{lem:plocal} with $Z$ adjoined).
The coordinate $Z$ satisfies $Z = W - \rho(-\tau)$ with $\rho$ of
exact $s_0$-degree $7$; $Z - W$ has exact $W$-degree $7$, so the
$f_{13}$-chart is a finite cover of the $W$-line, not a translate
(Remark after Theorem~\ref{thm:chart-local}). The equality
$(A^{\Dmap{5}})_p = R_p$ itself is proved by $q$-saturation
(Theorem~\ref{thm:closure}). Whether $Z$ itself
lies in $R_{f_{13}}$ is open and would strengthen
the chart equality to $A_{f_{13}}^{\Dmap{5}} = R_{f_{13}}$.

\begin{corollary}[Degree-$7$ chart intersection, via saturation]
\label{cor:cover-intersection}
With $P(T) \in \operatorname{Frac}(K)[T]$ the explicit cover polynomial
of Theorem~\ref{thm:cover-newton} (so $Z = P(W)$),
\[
\boxed{K_{pq}[W] \cap R_{f_{13}}[P(W)] = R_p,}
\]
as the $R_{pq} \cap R_{f_{13}}[Z]$ case of Theorem~\ref{thm:closure}.
\end{corollary}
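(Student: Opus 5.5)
The plan is to obtain the corollary directly from the two chart descriptions and the closure theorem, rather than by working coefficientwise in $W$.

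First, identify the two sides with the two chart kernels. By Corollary~\ref{cor:A-localized} and the local-chart paragraph above, $K_{pq}[W]=A_{pq}^{\Dmap{5}}=R_{pq}$. Here $H\in K_{pq}[W]$ by its $W$-expansion, and $F_{46}^{\mathrm{int}}=E/p^2\in(C_5)_{pq}$ by Theorem~\ref{thm:quotient-4648}. By Theorem~\ref{thm:cover-newton} we have $Z=P(W)$, so $R_{f_{13}}[P(W)]=R_{f_{13}}[Z]=A_{f_{13}}^{\Dmap{5}}$ by Theorem~\ref{thm:chart-local}. The left-hand side therefore equals $A_{pq}^{\Dmap{5}}\cap A_{f_{13}}^{\Dmap{5}}=(A_{pq}\cap A_{f_{13}})^{\Dmap{5}}$. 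By Lemma~\ref{lem:chart-intersection} this is $(A^{\Dmap{5}})_p$. Both equalities are taken inside $\operatorname{Frac}(A)$, where all these rings sit compatibly.

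Second, show $(A^{\Dmap{5}})_p=R_p$. The inclusion $R_p\subseteq(A^{\Dmap{5}})_p$ is immediate, since $R\subseteq A^{\Dmap{5}}$. For the reverse inclusion I would invoke Theorem~\ref{thm:closure}, the $q$-saturation step. Take $f\in A^{\Dmap{5}}$. Because $f\in A_{pq}^{\Dmap{5}}=R_{pq}$, we can write $q^b f\in R_p$ for some $b$. It then suffices to know that $R_p$ is $q$-saturated inside $A_p$, that is, $(R_p:_{A_p}q)=R_p$. Given that, we can peel off one factor of $q$ at a time and conclude $f\in R_p$.

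The $q$-saturation is the main obstacle, and it is exactly the content of Theorem~\ref{thm:closure}, which I take as given. If I had to argue it directly, I would use the presentation $R\cong k[P,Q,J_1,J_2,F,H,T]/(S,\,P^2T-E)$. After inverting $p$, the second relation eliminates $T$, so $R_p\cong(C_5)_p$ is a localized hypersurface. The claim then reduces to showing that $q$ is a nonzerodivisor on $A_p/R_p$. I would attempt this through the residual kernel at $q$ (Lemma~\ref{lem:ker-deltaq}), checking that the image of $R_p$ modulo $q$ equals $(\ker\delta_q\text{-part of }A_p/qA_p)\cap$ the contraction. This would use the residue relation $r_q^2\in L_q$, which is realized by $H$, to account for the quadratic extension $k[p,x^2,u]\subset k[p,x,u]$.

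Combining the two steps gives $K_{pq}[W]\cap R_{f_{13}}[P(W)]=R_p$. The degree-$7$ nature of $P$ plays no role here: it enters only through the identification $Z=P(W)$.
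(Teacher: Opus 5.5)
Your proposal is correct and follows the paper's route: the two chart equalities together with Lemma~\ref{lem:chart-intersection} identify the intersection with $(A^{\Dmap{5}})_p$ (this is Lemma~\ref{lem:plocal}), and $q$-saturation then contracts it to $R_p$, as in the proof of Theorem~\ref{thm:closure}. One correction: the saturation you need is Theorem~\ref{thm:q-saturation} ($R\cap q^nA=q^nR$), which the paper derives from the exact elimination identity of Lemma~\ref{lem:q-elimination-cert} rather than from $\ker\delta_q$, and any direct argument would have to prove injectivity of $R/qR\to A/qA$, not compute an image.
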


\begin{lemma}[Exact $q$-elimination identity]
\label{lem:q-elimination-cert}
In $\mathbb{Q}[a_0,a_1,a_2,a_3,a_4,p,Q,J_1,J_2,F_0,H,T]$ with lex order $a_0>\cdots>a_4>p>Q>J_1>J_2>F_0>H>T$, let $J_q$ be the graph ideal
$(p-a_4,Q,J_1-(2a_1a_4-a_2^2),J_2+a_3^2,F_0-x^7/35,H-\bar H,T-\bar F_{46})$ with $x=a_3$ and $\bar H,\bar F_{46}$ the mod-$q$ reductions at the cleared-integral scale. Then the elimination of $\{a_0,\dots,a_4\}$ satisfies $J_q\cap\mathbb{Q}[p,Q,J_1,J_2,F_0,H,T]=(S,p^2T-E,Q)$ exactly, both inclusions checked by exact computation. This lemma asserts only that ideal equality. Its two algebraic consequences, proved mathematically in Theorem~\ref{thm:q-saturation}, are: (i) the residual map $R/qR\to A/qA$ is injective; (ii) hence $R\cap q^nA=q^nR$ for all $n$ by induction cancelling $q$ in the domain $A$.
\end{lemma}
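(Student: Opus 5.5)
The lemma is an ideal equality in a polynomial ring over $\mathbb{Q}$, so I would prove the two inclusions separately. The inclusion $(S,p^2T-E,Q)\subseteq J_q\cap\mathbb{Q}[p,Q,J_1,J_2,F_0,H,T]$ is the easy one. Modulo $J_q$ each eliminated-ring variable is identified with its mod-$q$ image in $B/(q)[a_0]=\mathbb{Q}[a_0,\dots,a_4]$: $Q\mapsto 0$, $J_1\mapsto 2a_1a_4-a_2^2$, $J_2\mapsto -a_3^2$, $F_0\mapsto x^7/35$, $H\mapsto\bar H$, $T\mapsto\bar F_{46}$. A polynomial in $p,Q,\dots,T$ therefore lies in $J_q$ if and only if its image under this substitution vanishes in $\mathbb{Q}[a_0,\dots,a_4]$. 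This is the standard fact that a graph ideal is the kernel of the substitution map, since its generators are monic in the new variables. $Q$ maps to $0$ trivially. $S$ and $p^2T-E$ vanish because the identities $S=0$ (Theorem~\ref{thm:global-hypersurface}) and $p^2F_{46}^{\mathrm{int}}=E$ (Theorem~\ref{thm:quotient-4648}) hold in $A$, hence also modulo $q$.

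For the reverse inclusion I would identify the elimination ideal with the kernel of
\[
\bar\Phi\colon \mathbb{Q}[p,Q,J_1,J_2,F_0,H,T]\to \mathbb{Q}[a_0,\dots,a_4]
\]
and show that $\ker\bar\Phi\subseteq(S,p^2T-E,Q)$. Because $Q\in\ker\bar\Phi$, it suffices to work on the quotient by $Q$, that is, in $\mathbb{Q}[p,J_1,J_2,F_0,H,T]$. There $S$ reduces to $\bar S=-11025F_0^2-9J_2^7$, and $p^2T-E$ reduces to $p^2T-\bar E$, where $\bar E$ is the $q=0$ specialization of the printed expression: $148225J_1^4J_2^6+770J_1^2J_2^3H+H^2$.

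I would argue by dimension and primality. The image $\mathbb{Q}[p,u,x^2,x^7,\bar H,\bar F_{46}]$ has transcendence degree $4$, since it contains $p,u,x$, and $\bar H$ and $\bar F_{46}$ are algebraic over these (in fact $\bar H$ involves $a_0$, and $\bar H$ has $a_0$-top $11025a_0^2p^4q^4\equiv0$). Here $\bar H$ must be computed explicitly. So the kernel has height $2$ in a $6$-variable ring. Next I would show that $(\bar S,p^2T-\bar E)$ is a complete intersection whose localization at $p$ is prime: in $\mathbb{Q}[p^{\pm1},J_1,J_2,F_0,H,T]$ the second generator eliminates $T$, and $11025F_0^2+9J_2^7$ is irreducible because $J_2^7$ is not a square. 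The kernel localized at $p$ then equals this prime, provided I also check that $H$ genuinely contributes the $a_0$-direction on $V(q)$, as the collapse points of Theorem~\ref{thm:orbit-collapse} show.

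The remaining step is to show that $p$ is a nonzerodivisor modulo $(\bar S,p^2T-\bar E)$, which amounts to ruling out embedded $p$-torsion. Modulo $p$ the ideal becomes $(\bar S,\bar E)$. I would verify that this pair is a regular sequence in $\mathbb{Q}[J_1,J_2,F_0,H,T]$: $\bar S$ involves only $F_0,J_2$, while $\bar E$ is monic in $H$, so the pair is regular. Hence $(\bar S,p^2T-\bar E,p)$ has height $3$, so $p$ is regular on the Cohen--Macaulay quotient and the ideal is saturated with respect to $p$. This identifies it with the kernel.

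I expect this $p$-saturation to be the main obstacle, together with confirming that the transcendence degree is really $4$ and not $5$. If the dimension count misbehaves, I would fall back on the paper's stated method: a lex Gröbner basis computation of the elimination, checking each reduced basis element against $(S,p^2T-E,Q)$ by normal-form reduction. This is the exact certificate the lemma refers to.
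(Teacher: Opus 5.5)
Your proof is correct, and it takes a genuinely different route from the paper. The paper gives no argument beyond a lex Gröbner-basis elimination over $\mathbb{Q}$ checked by computer, then moved to general $k$ by Lemma~\ref{lem:QQ-transfer}. You instead prove the equality structurally, in four steps:
\begin{enumerate}
\item The elimination ideal is $\ker\bar\Phi$.
\item After inverting $p$, the relation $p^2T-\bar E$ eliminates $T$. The irreducible $\bar S$ is then the only relation between $J_2\mapsto -x^2$ and $F_0\mapsto x^7/35$. Hence $(\bar S,p^2T-\bar E)_p$ is a prime with $4$-dimensional quotient and equals the localized kernel.
\item $(\bar S,p^2T-\bar E)$ is a regular sequence, so the quotient is Cohen--Macaulay and unmixed. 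Since $\bar E$ is monic in $H$, $(\bar S,\bar E,p)$ has height $3$. Therefore $p$ lies in no associated prime of $I$.
\item So $f\in\ker\bar\Phi$ gives $p^nf\in I$, and then $f\in I$.
\end{enumerate}

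One sentence needs repair. $\bar H$ is \emph{not} algebraic over $\mathbb{Q}(p,u,x)$; read literally, that would give transcendence degree $3$ and break the height comparison. The correct justification is
\[
\bar H=630\,a_3^7a_4^2\,a_0+(\text{$a_0$-free terms}),
\]
so $\bar H$ is transcendental over $\mathbb{Q}(a_1,\dots,a_4)\supseteq\mathbb{Q}(p,u,x)$. Meanwhile $\bar F_{46}=\bar E/p^2\in\mathbb{Q}(p,u,x,\bar H)$. This pins the transcendence degree at exactly $4$, so your worry about $5$ is settled by the relation itself. The collapse points of Theorem~\ref{thm:orbit-collapse} are not needed; on their own they only show that $\bar H$ is nonconstant on a fiber, not that it is transcendental.

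A useful explicit form: $\bar E=(H+385J_1^2J_2^3)^2$, and
\[
H+385J_1^2J_2^3\mapsto 315\,p^2x^6(2a_0a_3-a_1^2).
\]
This makes the transcendence count visible. It is consistent with the values $0,-315/4,-1260$ at the collapse points.

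Each approach buys something different. Yours uses only the printed identities $S=0$ and $p^2F_{46}^{\mathrm{int}}=E$ plus one coefficient of $\bar H$. It is checkable by hand and explains why no further relation appears. It also holds verbatim over any characteristic-zero $k$, since irreducibility of $11025F_0^2+9J_2^7$ does not depend on the field, so the transfer lemma becomes unnecessary. The paper's certificate requires no structural insight, but it is opaque and rests entirely on the CAS run.
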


\begin{theorem}[$q$-saturation]
\label{thm:q-saturation}
\[
\boxed{R/qR \hookrightarrow A/qA,}
\qquad\text{hence}\qquad
\boxed{R \cap q^nA = q^nR \quad \forall n \ge 1.}
\]
\end{theorem}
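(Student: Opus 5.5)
The plan has two parts: first the injectivity $R/qR\hookrightarrow A/qA$, then $R\cap q^nA=q^nR$ by induction.

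\emph{Step 1: injectivity.} Let $\Phi\colon k[P,Q,J_1,J_2,F,H,T]\to R$ be the presentation map. Its kernel is generated by $S$ and $p^2T-E$; this is the content of the relation-ideal theorem (Theorem~\ref{thm:relation-ideal}), which we take as given. Then
\[
R/qR\cong k[P,Q,J_1,J_2,F,H,T]/(S,\,P^2T-E,\,Q).
\]
The composite map to $A/qA$ sends $P\mapsto a_4$, $J_1\mapsto 2a_1a_4-a_2^2$, $J_2\mapsto -a_3^2$, $F\mapsto x^7/35$, $H\mapsto\bar H$ and $T\mapsto\bar F_{46}$. Its kernel is therefore the elimination ideal
\[
J_q\cap\mathbb{Q}[p,Q,J_1,J_2,F_0,H,T].
\]
Lemma~\ref{lem:q-elimination-cert} computes this ideal to be exactly $(S,p^2T-E,Q)$. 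Hence the kernel of $R\to A/qA$ is precisely the image of $(Q)$, which is $qR$, and the induced map $R/qR\to A/qA$ is injective.

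Two technical points need care here.
\begin{itemize}
\item[(a)] The lemma is an ideal equality over $\mathbb{Q}$. Elimination commutes with flat base change $\mathbb{Q}\to k$, so it holds over any characteristic-zero $k$, in the same way as Lemma~\ref{lem:QQ-transfer}.
\item[(b)] The graph ideal $J_q$ computes the kernel of the ring map $k[\text{gens}]\to A/qA=k[a_0,\dots,a_4]$. This is the standard graph-ideal elimination fact: for a homomorphism $\psi$ into a polynomial ring, $\ker\psi=(y_i-\psi(y_i))\cap k[y]$.
\end{itemize}

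\emph{Step 2: saturation.} The inclusion $q^nR\subseteq R\cap q^nA$ is clear, and we prove the reverse by induction on $n$.

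For $n=1$: if $f\in R\cap qA$, then $f$ maps to $0$ in $A/qA$, so Step 1 gives $f\in qR$.

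For the inductive step, let $f\in R\cap q^{n+1}A$. The case $n=1$ gives $f=qg$ with $g\in R$. Since $A$ is a domain, $qg\in q^{n+1}A$ gives $g\in q^nA$ after cancelling $q$. Hence $g\in R\cap q^nA=q^nR$ by the inductive hypothesis, and so $f\in q^{n+1}R$.

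\emph{Main obstacle.} The hard step is identifying the full kernel of $\Phi$ with $(S,p^2T-E)$. Without it, the elimination lemma only says that the kernel of the map into $A/qA$ is generated by the image of $(S,p^2T-E,Q)$, and one needs $\ker\Phi\subseteq(S,p^2T-E,Q)$ to pass to $R/qR$.

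If Theorem~\ref{thm:relation-ideal} is not available, I would argue directly as follows. The kernel $\ker\Phi$ is a prime ideal of height $2$, since $\operatorname{trdeg}R=5$. It contains $I=(S,p^2T-E)$. This $I$ is a complete intersection of height $2$: $S$ is irreducible and does not involve $T$, while $p^2T-E$ is linear in $T$. Its associated primes can only be $\ker\Phi$ itself or primes containing $p$. On the other hand $I_p=(\ker\Phi)_p$, because after inverting $p$ we may solve for $T$ and are left with $C_5$, which is presented by $S$ alone (Theorem~\ref{thm:global-hypersurface}). It then remains to exclude a $p$-primary component. I would do this by checking that $p$ is a nonzerodivisor modulo $I$, via a Gröbner computation or a regular-sequence argument on $(S,\,p^2T-E,\,p)$.

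This direct route is a second check in any case. The only inclusion actually needed is $\ker\Phi\subseteq(S,p^2T-E,Q)$, and that follows from the elimination lemma combined with $\ker\Phi\subseteq\ker(R\to A/qA)$ pulled back to the polynomial ring.
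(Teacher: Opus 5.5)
Your proof is correct and is essentially the paper's. The transferred elimination identity of Lemma~\ref{lem:q-elimination-cert} identifies the kernel of $k[p,Q,J_1,J_2,F_0,H,T]\to R\to A/qA$ as $(S,p^2T-E,Q)$, and the induction by cancelling $q$ in the domain $A$ is the same. The one difference is that you route Step~1 through $\ker\Phi=(S,p^2T-E)$, which is legitimate since Theorem~\ref{thm:relation-ideal} does not use saturation but is not needed: the paper lifts $G\in R$ to $\tilde G\in(S,p^2T-E,Q)$ and applies $\Phi$, using only surjectivity of $\Phi$ and $\Phi(S)=\Phi(p^2T-E)=0$. So the ``main obstacle'' you flag is not an obstacle, and your direct associated-prime argument is unnecessary.
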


\begin{proof}[Proof of Theorem~\ref{thm:q-saturation} from Lemma~\ref{lem:q-elimination-cert}]
For $R = C_5[F_{46}^{\mathrm{int}}]$ with $F_{46}^{\mathrm{int}}$ in the witness normalization
($p^2F_{46}^{\mathrm{int}} = E$ exactly as printed in Theorem~\ref{thm:quotient-4648}),
Lemma~\ref{lem:q-elimination-cert} gives, after the transfer of Lemma~\ref{lem:QQ-transfer},
$\tilde G = AS + B(p^2T-E) + CQ$ for every lift $\tilde G$ mapping to $0$. Let
$[G] \in R/qR$ map to $0$, $G = \sum G_i(F_{46}^{\mathrm{int}})^i$ with $G_i \in C_5$
($R$ generated by $F_{46}^{\mathrm{int}}$). Lifting to
$\tilde G = \sum \tilde G_iT^i \in k[p,Q,J_1,J_2,F_0,H,T]$, $\tilde G$
maps to $0$, so $\tilde G = AS + B(p^2T-E) + CQ$. Mapping to $R/qR$:
$[S] = 0$ ($S = 0$ in $C_5$), $[p^2T-E] = 0$ (the relation $p^2F_{46}^{\mathrm{int}} = E$
in $R$), $[Q] = 0$ ($q = 0$ in the quotient). Hence $[G] = 0$: the map
is injective, i.e.\ $R \cap qA = qR$; induction (if
$q^na \in R$ then $q^na \in qA$, so $q^na = qr_1$, cancel $q$) gives all
$n$. The elimination identity is the computed input; the lift-and-cancel
argument above is the mathematical step. Transfer to general $k$ is Lemma~\ref{lem:QQ-transfer} (all inputs over $\mathbb{Q}$).
\end{proof}

\begin{theorem}[Closure]
\label{thm:closure}
\[
\boxed{\Afring{5}^{\Dmap{5}} = C_5[F_{46}^{\mathrm{int}}],}
\]
finitely generated. In particular
$(A^{\Dmap{5}})_p = R_p$ with $R=C_5[F_{46}^{\mathrm{int}}]$.
\end{theorem}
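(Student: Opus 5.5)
The inclusion $R\subseteq A^{\Dmap{5}}$ is clear: every generator of $R=C_5[F_{46}^{\mathrm{int}}]$ is invariant (Lemma~\ref{lem:kernel-gens}, Theorem~\ref{thm:F0-invariant}, Remark~\ref{rem:counterexample-H}, Theorem~\ref{thm:F46}). The work is the reverse inclusion, which I split into two steps. First I prove $(A^{\Dmap{5}})_p=R_p$, using the $q$-saturation of Theorem~\ref{thm:q-saturation}. Then I contract from $R_p$ to $R$ by a $p$-saturation statement $R\cap p^nA=p^nR$.

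\emph{Step 1: from the two-chart description to $R_p$.} Let $f\in A^{\Dmap{5}}$. The first chart gives $A_{pq}^{\Dmap{5}}=K_{pq}[W]=R_{pq}$, so $f=r/(p^aq^b)$ with $r\in R$. We may take $b$ minimal, after first absorbing the $p$-power by multiplying $f$ by $p^a$. Then $p^af\in A$ is invariant and $q^b(p^af)=r\in R$. Suppose $b\ge1$. Then $r\in R\cap q^bA=q^bR$ by Theorem~\ref{thm:q-saturation}, so $r=q^br'$ with $r'\in R$, and cancelling in the domain $A$ gives $p^af=r'\in R$. Hence $f\in R_p$, i.e.\ $A^{\Dmap{5}}\subseteq R_p$. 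This also gives $(A^{\Dmap{5}})_p=R_p$, and it is consistent with the two-chart identity $(A^{\Dmap{5}})_p=R_{pq}\cap R_{f_{13}}[Z]$ of Lemma~\ref{lem:chart-intersection}, which confines denominators to $p$-powers. Note that this step really only uses the first chart together with $q$-saturation. The second chart serves to explain why nothing beyond $p$-denominators can survive, and it yields Corollary~\ref{cor:cover-intersection} as a by-product.

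\emph{Step 2: $p$-saturation.} It remains to show $R\cap pA=pR$. Induction, exactly as in the proof of Theorem~\ref{thm:q-saturation}, then gives $R\cap p^nA=p^nR$. With that, any $f\in A^{\Dmap{5}}$ satisfies $p^af\in R\cap p^aA=p^aR$, so $f\in R$. To prove injectivity of $R/pR\to A/pA$, I would mirror Lemma~\ref{lem:q-elimination-cert}:
\begin{itemize}
\item present $R$ as $k[P,Q,J_1,J_2,F,H,T]/(S,\,P^2T-E)$, the codimension-two relation ideal of Theorem~\ref{thm:relation-ideal};
\item compute the elimination ideal of the graph ideal at $p=0$, with generators $J_1\mapsto-y^2$, $J_2\mapsto 2qy-x^2$, $F_0\mapsto\bar F_0$, $H\mapsto\bar H$, $T\mapsto\bar F_{46}^{\mathrm{int}}$;
\item verify that this elimination ideal equals $(S,\,E|_{P=0},\,P)$.
\end{itemize}
The lift-and-cancel argument then goes through verbatim. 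Transfer from $\mathbb{Q}$ to arbitrary $k$ of characteristic zero is Lemma~\ref{lem:QQ-transfer}.

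The subtle point, and the main obstacle, is that modulo $p$ the relation $P^2T-E$ reduces to $-E|_{p=0}$. So $T$ is unconstrained by that relation in $R/pR$, and the residue $\bar F_{46}^{\mathrm{int}}$ must be algebraically independent of $\bar C_5$ modulo the reduced relations. This is exactly where the degree-$4$ $p$-residue algebraicity noted in Section~\ref{sec:obstructions} could create new torsion. The exact $p$-torsion table of Theorem~\ref{thm:torsion-table}, where $p$-torsion of $A^{\Dmap{5}}/R$ is exhausted by the $F_{46}$ layers, is the evidence that no further class appears. If the elimination check proves computationally heavy, my fallback is a graded argument. Since $R/pR$ is a complete intersection quotient with known Hilbert series (Theorem~\ref{thm:R-hilbert}), injectivity reduces weightwise to comparing $\dim(R/pR)_w$ with the rank of its image in $A/pA$. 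A Gröbner certificate of the image relations over the residual ring $k[q,x,y,z,a_0]$ should then close the gap in all weights.
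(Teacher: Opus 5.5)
Your architecture is the paper's: reach $R_p$ by $q$-saturation, then contract to $R$ by $p$-saturation. Step~1 is correct and leaner than the paper's argument. The paper starts from $F\in R_{pq}\cap R_{f_{13}}[Z]$ and uses Lemma~\ref{lem:chart-intersection} only to obtain $F\in A_p$, which is automatic for elements of $(A^{D_5})_p$. Starting instead from $f\in A^{D_5}\subseteq A$ and the first chart alone removes any dependence on the $f_{13}$-chart (Theorem~\ref{thm:chart-local}).

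The gap is Step~2. You correctly reduce to injectivity of $R/pR\to A/pA$, but you do not prove it. You propose an elimination you have not run, and your supporting evidence is inadequate. Theorem~\ref{thm:torsion-table} concerns $C_5$ rather than $R$, and covers only finitely many weights. Your graded fallback would have to hold in every weight, so it needs the same all-weights certificate. Above all, you leave open exactly the point you flag: why $T$, unconstrained modulo $p$, adds no relation over $\overline{C_5}$.

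The missing idea is the residual kernel on $V(p)$ (Theorem~\ref{thm:residual-kernel}). On $A/pA=k[a_0,a_1,a_2,a_3,q]$ one has $\delta_p=y(x\partial_z+z\partial_{a_0})$, whose kernel is $k[q,x,y,\theta]$ with $\theta=2xa_0-z^2$. Moreover $\bar F_{46}^{\mathrm{int}}=B\theta$ with $0\ne B\in k[q,x,y]$ (Remark~\ref{rem:residual-images}), while $p,q,J_1,J_2,F_0,H$ all reduce into $k[q,x,y]$. Since $\theta$ is linear in $a_0$ with coefficient $2x$, it is transcendental over $k[q,x,y]$. So a residual relation $\sum_i\bar G_i\bar F_{46}^{\,i}=0$ with $G_i\in C_5$ splits by $\theta$-degree and forces every $\bar G_i=0$.

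This confines the whole difficulty to $C_5$. One needs $\ker(C_5/pC_5\to A/pA)=(\bar c_{46})$ (Theorem~\ref{thm:residual-presentation}), which is an elimination of only $x,y$ (Lemma~\ref{lem:p-elimination-cert}). Then $G_i\in c_{46}C_5+pC_5$, and $c_{46}=p^2F_{46}^{\mathrm{int}}\in pR$ gives $G\in pR$. This is Theorem~\ref{thm:p-saturation}, which the paper's proof simply cites.

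The same argument answers your worry about the degree-$4$ residue algebraicity. That phenomenon produces the single defect $\bar c_{46}$ of $C_5$, whereas the residue of the new generator is transcendental over $\overline{C_5}$. Your seven-variable target $(S,E,P)$ is in fact the correct kernel, as the argument above shows, so running that computation would also close the proof. As written, though, Step~2 is a plan rather than a proof.
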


\begin{proof}
Let $F \in R_{pq} \cap R_{f_{13}}[Z]$. By Lemma~\ref{lem:chart-intersection},
$F \in A_p$; write $F = U/p^m$. Since $R_{pq} = (R_p)_q$, $q^nF \in R_p$
for $n$ the $q$-adic denominator exponent; choose $N$ large enough to clear both the $p^m$-denominator of $F$ and the $p$-denominator of $q^nF\in R_p$,
$G = p^NF \in A$ satisfies $q^nG \in R$, so Theorem~\ref{thm:q-saturation}
gives $G \in R$ and $F = G/p^N \in R_p$. Thus
$R_{pq} \cap R_{f_{13}}[Z] \subseteq R_p$ (reverse always holds), i.e.\
$(A^{\Dmap{5}})_p = R_p$ by Lemma~\ref{lem:plocal} with the chart
equalities. Then $A^{\Dmap{5}} = A \cap (A^{\Dmap{5}})_p = A \cap R_p = R$
by Theorem~\ref{thm:p-saturation}. Finite generation follows: $C_5$ is a
hypersurface, plus one generator.
\end{proof}

\begin{lemma}[Slice generation]
\label{lem:slice-generation}
Let $A_f$ carry a slice $s$ with $D(s) = 1$ and put
$\pi(b) = \sum_j (-s)^jD^j(b)/j!$, $B = k[\pi(a_0),\dots,\pi(a_5)]$.
Then $A_f = B_f[s]$, and $A_f^D$ is generated by the $\pi(a_i)$.
\end{lemma}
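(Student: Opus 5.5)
The statement is the slice theorem in its generating form. I would prove it directly from the slice structure, using the projection $\pi$ as a ring homomorphism. The plan is to show that $\pi$ is a $k$-algebra homomorphism $A_f\to A_f^D$ that restricts to the identity on $A_f^D$ and kills $s$. Then every invariant is a polynomial in the $\pi(a_i)$ with denominators from $f$, and $A_f$ itself is recovered by adjoining $s$.

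\textbf{Step 1: $\pi$ is a homomorphism into the kernel.} On $A_f$, $D$ extends to a locally nilpotent derivation, since $D(f)=0$. For $f$ in the kernel (as $f_{13}$ and $pq$ are) the extension $D(b/f^n)=D(b)/f^n$ is still locally nilpotent. So the sum defining $\pi(b)$ is finite. The map $\pi$ is the specialization $t\mapsto -s$ of the formal exponential $\exp(tD)(b)=\sum t^jD^j(b)/j!$, which is a ring homomorphism $A_f\to A_f[t]$ by the Leibniz rule in characteristic zero. Because $s$ commutes with everything, $\pi$ is multiplicative. A direct telescoping computation, using $D(s)=1$, gives
\[
D(\pi(b))=\sum_j\frac{(-s)^jD^{j+1}(b)}{j!}-\sum_{j\ge1}\frac{(-s)^{j-1}D^j(b)}{(j-1)!}=0 .
\]
Moreover $\pi(g)=g$ for $g\in A_f^D$, $\pi(s)=s-s=0$, and $\pi(1/f)=1/f$.

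\textbf{Step 2: $A_f=B_f[s]$.} For each $b\in A_f$, Taylor's formula gives
\[
b=\sum_j \pi(D^j(b))\,\frac{s^j}{j!},
\]
by induction on the nilpotency degree of $b$. Equivalently, the expression $\sum_j \pi(D^jb)s^j/j!$ has the same $D$-derivative behaviour as $b$ and the same $\pi$-image, so their difference is an invariant killed by $\pi$, hence $0$. Applying this to $b=a_i$, and noting $\pi(D^ja_i)\in\pi(A)\subseteq B_f$, shows that the $a_i$ lie in $B_f[s]$. Since $1/f=\pi(1/f)\in B_f$, we conclude $A_f=B_f[s]$.

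\textbf{Step 3: the kernel.} Let $g\in A_f^D$. By Step 1, $g=\pi(g)$. Writing $g$ as a polynomial in the $a_i$ and $1/f$ and applying the homomorphism $\pi$ gives $g\in k[\pi(a_0),\dots,\pi(a_5),1/f]=B_f$. (This $B_f$ is the subring generated by the projected variables, with $f$ inverted; here $f=\pi(f)$ is itself a polynomial in the $\pi(a_i)$.) Thus $A_f^D=B_f$, which is generated by the $\pi(a_i)$ together with $f^{-1}$. The transcendence of $s$ over $B_f$ (apply $D=d/ds$ to a minimal relation) is not needed for generation.

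\textbf{Main obstacle.} The only delicate point is that $\pi$ is a ring homomorphism, rather than merely a linear map that lands in the kernel. The key is that $s$ is a fixed element substituted into a formal homomorphism $A_f\to A_f[t]$. The two operations $\exp(tD)$ and evaluation at $t=-s$ commute with multiplication, and this is where characteristic zero enters, through the division by $j!$.
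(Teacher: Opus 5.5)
Your proof is correct and is essentially the paper's argument. The paper's sketch gets $A_f=B_f[s]$ by induction on nilpotency index via $a-\pi(a)=s\cdot b$, of which your Taylor identity $b=\sum_j\pi(D^jb)\,s^j/j!$ is the closed form, and it cites the classical Dixmier slice lemma for generation of the kernel, which your Step~3 proves directly from $g=\pi(g)$ and the homomorphism property of $\pi$. Along the way you make explicit two points the sketch leaves implicit: that $\pi$ is a ring homomorphism (needed already for $\pi(A)=B$ and $f=\pi(f)\in B$), and that the conclusion must be read as $A_f^D=B_f$, i.e.\ generation by the $\pi(a_i)$ together with $f^{-1}$, which is how the lemma is used for the $f_{13}$-chart.
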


\begin{proof}[Proof sketch]
$a - \pi(a) = s\cdot b$ with $b$ of strictly smaller $D$-nilpotency
index, so induction on the index puts every $a_i$ in $B[s]$, and with
$f = \pi(f) \in B$ this gives $A_f = B_f[s]$. That the projected
variables then generate the kernel is the classical Dixmier
slice-generation lemma for LNDs with a slice
(Freudenburg~\cite{freudenburg2017}).
\end{proof}

\begin{lemma}[Two-chart $p$-local identification]
\label{lem:plocal}
If $A_{pq}^{\Dmap{5}} = R_{pq}$ and $A_{f_{13}}^{\Dmap{5}} = R_{f_{13}}[Z]$,
then $(A^{\Dmap{5}})_p = R_{pq} \cap R_{f_{13}}[Z]$ exactly, with no
further hypotheses: $(A^{\Dmap{5}})_p = (A_{pq} \cap
A_{f_{13}})^{\Dmap{5}}$ since $A_{pq} \cap A_{f_{13}} = A_p$, and
invariance can be checked in $\operatorname{Frac}(A)$, so restriction
gives equality in both directions. The residual step
$R_{pq} \cap R_{f_{13}}[Z] = R_p$ is $R$-internal (open: $R$ divisor
theory) and is \emph{not} assumed here. The special case $Z \in
R_{f_{13}}$ recovers $(A^{\Dmap{5}})_p = R_{pq} \cap R_{f_{13}}$.
\end{lemma}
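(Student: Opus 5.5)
The statement to prove is Lemma~\ref{lem:plocal}: under the two chart equalities, $(A^{\Dmap{5}})_p = R_{pq}\cap R_{f_{13}}[Z]$. I will prove it by two inclusions, working inside $\operatorname{Frac}(A)$. The only inputs are Lemma~\ref{lem:chart-intersection} ($A_{pq}\cap A_{f_{13}}=A_p$) and the fact that $\Dmap{5}$ extends uniquely to every localization of $A$ and to $\operatorname{Frac}(A)$. Consequently "invariant" means the same thing in each localization: an element has $\Dmap{5}$-value $0$ in $\operatorname{Frac}(A)$.

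\emph{Step 1.} I will show $(A^{\Dmap{5}})_p=(A_p)^{\Dmap{5}}$. The inclusion $\subseteq$ is clear because $\Dmap{5}(p)=0$. For $\supseteq$, write $F=U/p^m$ with $U\in A$. Then $\Dmap{5}(F)=\Dmap{5}(U)/p^m$, so $\Dmap{5}(F)=0$ forces $\Dmap{5}(U)=0$, i.e.\ $U\in A^{\Dmap{5}}$.

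\emph{Step 2.} By Lemma~\ref{lem:chart-intersection},
\[
(A_p)^{\Dmap{5}}=(A_{pq}\cap A_{f_{13}})\cap\ker\Dmap{5}=A_{pq}^{\Dmap{5}}\cap A_{f_{13}}^{\Dmap{5}},
\]
since all three kernels are cut out by the same condition $\Dmap{5}=0$ in $\operatorname{Frac}(A)$.

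\emph{Step 3.} I substitute the two hypotheses, $A_{pq}^{\Dmap{5}}=R_{pq}$ and $A_{f_{13}}^{\Dmap{5}}=R_{f_{13}}[Z]$. Chaining Steps 1–3 gives $(A^{\Dmap{5}})_p=R_{pq}\cap R_{f_{13}}[Z]$ as subsets of $\operatorname{Frac}(A)$. Both inclusions follow because every step is an equality.

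For the special case, if $Z\in R_{f_{13}}$ then $R_{f_{13}}[Z]=R_{f_{13}}$, and the formula reduces to $(A^{\Dmap{5}})_p=R_{pq}\cap R_{f_{13}}$. I will not attempt the residual step $R_{pq}\cap R_{f_{13}}[Z]=R_p$; it is the content of Theorem~\ref{thm:closure}.

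The only step needing care is Step 2: one must make sure the intersection of the charts is taken inside a common ambient ring ($\operatorname{Frac}(A)$), so that taking kernels commutes with the intersection. Beyond that, the argument is bookkeeping.
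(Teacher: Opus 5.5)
Your proposal is correct and follows the paper's own argument. The paper likewise uses $\Dmap{5}(p)=0$ to get $(A^{\Dmap{5}})_p=(A_p)^{\Dmap{5}}$, identifies $A_p=A_{pq}\cap A_{f_{13}}$ by Lemma~\ref{lem:chart-intersection}, checks invariance in $\operatorname{Frac}(A)$ so that the kernel on the intersection is the intersection of the two chart kernels, and then substitutes the two chart hypotheses; your Steps 1--3 just write these steps out explicitly.
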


\paragraph{Residual saturation.}
For \(R \subset A\) a domain, \(R \cap pA = pR\) is equivalent to
injectivity of \(R/pR \to A/pA\), and implies full \(p\)-saturation
\(R = A \cap R_p\) by cancelling \(p\) and induction on \(n\). The required
injection is Theorem~\ref{thm:p-saturation} below, via the residual
presentation Theorem~\ref{thm:residual-presentation}.

\begin{theorem}[Residual kernel on $V(p)$]
\label{thm:residual-kernel}
On $A/pA \cong k[a_0,a_1,a_2,a_3,q]$ the residual derivation is
$\delta_p(a_0) = a_1a_2$, $\delta_p(a_1) = a_2a_3$, zero elsewhere, i.e.\
$\delta_p = y(x\partial_z + z\partial_{a_0})$ with $x = a_3$, $y = a_2$,
$z = a_1$. With $\theta = 2xa_0 - z^2$,
\[
\boxed{(A/pA)^{\delta_p} = k[q,x,y,\theta].}
\]
\end{theorem}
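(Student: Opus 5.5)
We must show that the kernel of $\delta_p=y(x\partial_z+z\partial_{a_0})$ on $\bar A=k[a_0,z,y,x,q]$ is $k[q,x,y,\theta]$. The plan mirrors the proof of Lemma~\ref{lem:ker-deltaq}: first reduce to a simpler derivation, then compute its kernel through a local slice, and finally descend from the localization.

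\textbf{Step 1: reduction.} Since $\bar A$ is a domain and $y\ne0$, we have $\ker\delta_p=\ker\delta''$ with $\delta''=x\partial_z+z\partial_{a_0}$. The inclusion $k[q,x,y,\theta]\subseteq\ker\delta''$ is immediate: $\delta''(q)=\delta''(x)=\delta''(y)=0$, and $\delta''(\theta)=2xz-2zx=0$. Moreover $q,x,y,\theta$ are algebraically independent. Indeed, $\theta$ is the only one involving $a_0$, and it does so with coefficient $2x\neq0$, so a Jacobian minor in the columns $(a_0,x,y,q)$ is nonzero.

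\textbf{Step 2: localized kernel.} Localize at $x$. The element $s=z/x$ satisfies $\delta''(s)=1$. From $\theta=2xa_0-z^2$ we get $a_0=(\theta+x^2s^2)/(2x)$. Hence
\[
\bar A_x=k[q,x,x^{-1},y,\theta][s].
\]
Here $s$ is transcendental over the bracketed ring, since $\delta''=d/ds$ kills any polynomial relation. The local-slice theorem (Freudenburg, Corollary~1.26, as used in Lemma~\ref{lem:slice}) then gives $(\bar A_x)^{\delta''}=k[q,y,\theta,x,x^{-1}]$. Consequently every invariant has the form $F=P/x^a$ with $P\in k[q,x,y,\theta]$.

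\textbf{Step 3: descent at $x$.} This is the main step, and it is the analogue of Lemma~\ref{lem:contraction}. Take $a$ minimal and suppose $a\ge1$. Then $x\mid P(q,x,y,\theta)$ in $\bar A$. Reducing modulo $x$ sends $\theta\mapsto -z^2$, so $P(q,0,y,-z^2)=0$ in $k[q,y,z,a_0]$. The substitution $\Theta\mapsto -z^2$ is injective on $k[q,y,\Theta]$: collecting terms by $z$-degree, distinct powers $\Theta^j$ go to distinct powers $z^{2j}$. It follows that $P(q,0,y,\Theta)=0$ as a polynomial, so $x$ divides $P$ already inside the polynomial ring $k[q,x,y,\theta]$. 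This contradicts the minimality of $a$. Therefore $a=0$ and $\ker\delta_p=k[q,x,y,\theta]$.

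The expected obstacle is only this contraction step: one must check that $(x)\bar A\cap k[q,x,y,\theta]=(x)$. The injectivity argument above handles it, using that $\theta$ is monic in $a_0$ up to the factor $2x$ and reduces to $-z^2$. Characteristic zero is used only for the slice theorem and the factor $2$.
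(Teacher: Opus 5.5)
Your proof is correct, but it takes a different route from the paper. You prove the statement the same way the paper proves Lemma~\ref{lem:ker-deltaq}: localize at $x$, use the slice $s=z/x$ to get $(\bar A_x)^{\delta''}=k[q,x^{\pm1},y,\theta]$, and then descend by showing $(x)\bar A\cap k[q,x,y,\theta]=(x)$ through the injective specialization $\theta\mapsto -z^2$.

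The paper instead argues directly on the $a_0$-expansion $F=\sum_k E_ka_0^k$. Comparing the top two $a_0$-coefficients of $\delta'(F)=0$ gives $E_t\in k[x,y,q]$ and $x\mid E_t$. It then subtracts $(E_t'/2)\theta a_0^{t-1}$ and inducts on $t$. That argument avoids localization, but as written it only closes for $t\le 1$. For $t\ge 2$ the subtracted term is not $\delta'$-invariant, since its image is $(t-1)(E_t'/2)\theta z a_0^{t-2}\ne 0$, so the induction hypothesis does not apply to the difference. To repair it, one must push the coefficient chain all the way down to get $x^t\mid E_t$ and subtract $E_t\theta^t/(2x)^t$ instead.

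Your route avoids this issue. The localized description already writes every invariant as a polynomial in $\theta$ over $k[q,x^{\pm1},y]$, so all that remains is the single-prime descent, and you handle it cleanly. One small simplification: once you have $\bar A_x=k[q,x^{\pm1},y,\theta][s]$ with $s$ transcendental and $\delta''=d/ds$, the localized kernel follows directly in characteristic zero, without invoking the slice theorem.
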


\begin{proof}
$A/pA$ is a domain and $y \ne 0$, so $\ker\delta_p = \ker\delta'$ for
$\delta' = x\partial_z + z\partial_{a_0}$; $\theta \in \ker\delta'$.
Let $F = \sum_{k=0}^t E_ka_0^k$ with $E_k \in k[x,y,q,z]$, $E_t \ne 0$.
The top $a_0$-coefficient gives $x\,\partial E_t/\partial z = 0$, so
$E_t \in k[x,y,q]$; the next gives $x\,\partial E_{t-1}/\partial z +
tE_tz = 0$, whence $x \mid E_t$ ($x$ prime, characteristic zero),
$E_t = xE'_t$. Then $F - (E'_t/2)\theta a_0^{t-1}$ has $a_0$-degree
$< t$; induction gives $F \in k[x,y,q,\theta]$ (base $t = 0$:
$x\partial F/\partial z = 0$). $\theta$ is linear in $a_0$ over the
domain, so the presentation is polynomial; it is also transcendental
over $k[q,x,y]$, since a relation $G(q,x,y,\theta) = 0$ gives vanishing
$a_0$-coefficients inductively (leading coefficient $(2x)^d \ne 0$).
\end{proof}

\begin{remark}[Reductions mod $p$ and the $\theta$-normal form]
\label{rem:residual-images}
$\bar q = q$, $\bar J_1 = -y^2$, $\bar J_2 = 2qy-x^2$,
$\bar F_0 = q^2x^3y^2/3 - qx^5y/5 + x^7/35$,
$\bar H$ has $4$ terms (all $a_0$-free), and $\bar F_{46}^{\mathrm{int}} \ne 0$ has
$10$ terms, by direct expansion. Moreover
\[
\boxed{\bar F_{46}^{\mathrm{int}} = B\cdot\theta,}
\qquad
\begin{aligned}
B =\;& 235200\,y^9x^2q^5 + 1093680\,y^8x^4q^4 - 1029420\,y^7x^6q^3\\
&+ 291060\,y^6x^8q^2 - 26460\,y^5x^{10}q,
\end{aligned}
\]
by direct expansion (with $F_{46}^{\mathrm{int}}$ the cleared integral form). Hence, with Theorem~\ref{thm:residual-kernel},
$\operatorname{in}_p(R) = k[q,-y^2,\bar J_2,\bar F_0,\bar H,B\theta]$:
the only $\theta$-dependent generator is $\bar F_{46}^{\mathrm{int}}$. (Modulo $p$,
$\delta_p$ coincides with $\Dmap{4}|_{a_4=0}$ on $k[a_0,\dots,a_3]$ with
$a_5$ inert; the $N=4$ problem with $a_4$ free stays open.)
\end{remark}

\begin{lemma}[Exact $p$-elimination identity]
\label{lem:p-elimination-cert}
In $\mathbb{Q}[x,y,q,J_1,J_2,F_0,H]$ with lex order $x>y>q>J_1>J_2>F_0>H$, let $J_p$ be the graph ideal $(J_1+y^2,J_2-(2qy-x^2),F_0-\bar F_0,H-\bar H)$ with $\bar F_0=q^2x^3y^2/3-qx^5y/5+x^7/35$ and $\bar H$ the mod-$p$ reduction. Then the elimination of $\{x,y\}$ satisfies $J_p\cap\mathbb{Q}[q,J_1,J_2,F_0,H]=(S,\bar c_{46})$ exactly, both inclusions checked by exact computation. This lemma asserts only that ideal equality. Its two algebraic consequences, proved mathematically in Theorems~\ref{thm:residual-presentation} and~\ref{thm:p-saturation}, are: (i) $\ker(C_5/pC_5\to A/pA)=(\bar c_{46})$, i.e.\ $C_5$ has no other $p$-defect; (ii) hence $R/pR\hookrightarrow A/pA$ and $R=A\cap R_p$.
\end{lemma}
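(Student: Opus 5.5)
The plan is to verify the two inclusions of the ideal equality separately. The containment $\supseteq$ is a substitution check, and the containment $\subseteq$ comes from a Gröbner elimination over $\mathbb{Q}$, which I would back up with a structural primality argument.

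First I would fix $\bar c_{46}$ explicitly as $E|_{p=0}$. Concretely, it is the polynomial in $q,J_1,J_2,H$ obtained from the right-hand side of Theorem~\ref{thm:quotient-4648} by reading its generator exponents abstractly. The relation $p^2F_{46}^{\mathrm{int}}=E$ in $A$ reduces modulo $p$ to $E(0,q,\bar J_1,\bar J_2,\bar F_0,\bar H)=0$ in $k[x,y,q]$. In the same way, $S$ is sent to zero because $S$ vanishes on the generators in $A$, and reduction mod $p$ is a ring map. Since $J_p$ is the graph ideal of the substitution homomorphism
\[
\psi\colon \mathbb{Q}[q,J_1,J_2,F_0,H]\to\mathbb{Q}[x,y,q]=\mathbb{Q}[x,y,q,J_1,J_2,F_0,H]/J_p,
\]
its elimination ideal is exactly $\ker\psi$. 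Therefore $S,\bar c_{46}\in J_p\cap\mathbb{Q}[q,J_1,J_2,F_0,H]$, and this settles $\supseteq$ by direct expansion. Here $\bar H$ is the $4$-term reduction of Remark~\ref{rem:residual-images}.

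For $\subseteq$ I would compute a reduced lex Gröbner basis of $J_p$ with $x>y>\cdots$ and take the basis elements free of $x,y$. By the elimination theorem these generate $\ker\psi$. I would then reduce each of them to zero modulo a Gröbner basis of $(S,\bar c_{46})$ in $\mathbb{Q}[q,J_1,J_2,F_0,H]$. I would run this with exact rational arithmetic only.

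As an independent check I would also give a conceptual argument. The image of $\psi$ contains $q$, $\bar J_1=-y^2$ and $\bar J_2=2qy-x^2$, so it has transcendence degree $3$. Hence $\ker\psi$ is a prime of height $5-3=2$. Next I would show that $S,\bar c_{46}$ form a regular sequence: $S=q^4H-R_0$ is irreducible by Theorem~\ref{thm:global-hypersurface}, and $\bar c_{46}$ is monic of degree $2$ in $H$ (its $H^2$-coefficient is $1$) and is not divisible by $S$. Then $I=(S,\bar c_{46})\subseteq\ker\psi$ is a height-$2$ complete intersection, and $\ker\psi$ is one of its minimal primes. Equality then reduces to proving that $I$ is prime. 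Because $I$ is Cohen--Macaulay, it is enough to show two things: that $I$ is unmixed with a single minimal prime, for instance by checking $(I:q^\infty)=I$ and then working over $D(q)$; and that $I$ is generically reduced, through a nonvanishing $2\times2$ Jacobian minor at a point of $V(\ker\psi)$.

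I expect the main obstacle to be the behaviour along $q=0$. There $S$ degenerates to $-11025F_0^2-9J_2^7$, which is exactly the residue cancellation behind $H$, so an extra component of $V(I)$ inside $V(q)$ is a real possibility. I would tackle this first by a saturation computation $(I:q^\infty)$. If that shows an embedded or extra component, I would rely instead on the explicit lex basis for $\subseteq$ and treat the structural argument only as a consistency check. Transfer to an arbitrary field $k$ of characteristic zero then follows from Lemma~\ref{lem:QQ-transfer}, since all inputs are defined over $\mathbb{Q}$ and Gröbner bases commute with flat field extension.
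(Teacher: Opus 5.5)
Your proposal is correct and follows the paper's route. You prove $\supseteq$ by checking that $S$ and $\bar c_{46}$ vanish under the substitution $\psi$ (using $S=0$ and $p^2F_{46}^{\mathrm{int}}=E$ reduced mod $p$), and $\subseteq$ by an exact lex elimination over $\mathbb{Q}$, which is all the paper's certificate asserts. Your structural cross-check also closes, and the component in $V(q)$ you were worried about does not occur. Since $E|_{q=0}=(H+385J_1^2J_2^3)^2$ and $S|_{q=0}=-11025F_0^2-9J_2^7$, the locus $V(I,q)$ is only two-dimensional, so $(I:q^\infty)=I$ for the Cohen--Macaulay ideal $I$. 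Over $D(q)$, eliminating $H$ via $S$ leaves a quartic in $F_0$ with unit leading coefficient $11025^2$, and it is irreducible precisely because $\bar F_0$ has degree $4$ over $L_p$ (Remark~\ref{rem:p-obstacle}).
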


\begin{theorem}[Residual presentation: the $C_5$-defect is principal]
\label{thm:residual-presentation}
Let $c_{46} = p^2F_{46}^{\mathrm{int}} \in C_5$ (Theorem~\ref{thm:quotient-4648}) and
$\bar c_{46}$ its class mod $p$. Then
\[
\boxed{\ker(C_5/pC_5 \to A/pA) = (\bar c_{46}),}
\]
by Lemma~\ref{lem:p-elimination-cert} (exact elimination), with $S = q^4H-11025F_0^2-105q^2J_2^5J_1-9J_2^7$
the hypersurface relation, transferred to general $k$ by Lemma~\ref{lem:QQ-transfer}. In particular $C_5$ has no other
$p$-defect.
\end{theorem}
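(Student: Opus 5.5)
The plan is to deduce the statement from the elimination identity of Lemma~\ref{lem:p-elimination-cert}, using the same lift-and-reduce pattern as the proof of Theorem~\ref{thm:q-saturation}.

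First, present $C_5/pC_5$. By Theorem~\ref{thm:global-hypersurface}, $C_5\cong k[P,Q,J_1,J_2,F,H]/(S)$. Since $S$ does not involve $P$, we get
\[
C_5/pC_5\cong k[Q,J_1,J_2,F,H]/(S).
\]
The map $C_5/pC_5\to A/pA$ is then the substitution $Q\mapsto q$, $J_1\mapsto -y^2$, $J_2\mapsto 2qy-x^2$, $F\mapsto \bar F_0$, $H\mapsto\bar H$. These are the reductions recorded in Remark~\ref{rem:residual-images}. Every image lies in $k[q,x,y]$, because $\bar H$ is $a_0$-free and $\bar F_0$ involves no $a_0$ or $z$.

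The kernel of a substitution homomorphism $k[Q,J_1,J_2,F,H]\to k[q,x,y]$ is the elimination ideal of its graph ideal. This is the standard fact that $\Phi^{-1}(0)=J_p\cap k[q,J_1,J_2,F,H]$: reduce modulo the graph generators to the substituted value. By Lemma~\ref{lem:p-elimination-cert}, this elimination ideal is $(S,\bar c_{46})$ over $\mathbb{Q}$. Passing to the quotient by $(S)$, the kernel of $C_5/pC_5\to A/pA$ is generated by the class of $\bar c_{46}$.

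We also need $\bar c_{46}$ to really be the residue of $c_{46}=p^2F_{46}^{\mathrm{int}}$, which lies in $C_5$ by Theorem~\ref{thm:quotient-4648}. Its explicit expression there has no $p$ in the coefficients, so reducing mod $p$ just reads the same polynomial in $(Q,J_1,J_2,H)$. Its image in $A/pA$ is $0$, since $c_{46}\in p^2A$. This gives the consistency check $(\bar c_{46})\subseteq\ker$.

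Finally, transfer from $\mathbb{Q}$ to an arbitrary characteristic-zero $k$ by Lemma~\ref{lem:QQ-transfer}. Elimination ideals of ideals defined over $\mathbb{Q}$ commute with the flat base change $\mathbb{Q}\to k$, so the identity persists.

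The main obstacle is the exactness of the elimination computation itself: showing that $J_p\cap\mathbb{Q}[q,J_1,J_2,F,H]$ is no larger than $(S,\bar c_{46})$. That is the certified Gröbner-basis input. The mathematical content here is only the graph-ideal and kernel correspondence, together with the observation that $p$ does not appear in $S$.
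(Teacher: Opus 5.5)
Your proposal is correct and follows the paper's route: the theorem is the elimination identity of Lemma~\ref{lem:p-elimination-cert}, read as a kernel computation through the graph-ideal correspondence, plus the $\mathbb{Q}\to k$ transfer. You also make explicit the routine steps the paper leaves implicit, namely that $S$ is $p$-free, so $C_5/pC_5\cong k[Q,J_1,J_2,F,H]/(S)$, and that $E$ is $p$-free, so $\bar c_{46}$ is literally its residue; one small tightening is that landing in $k[q,x,y]$ also needs $\bar H$ to be free of $z=a_1$, not just of $a_0$, which holds because its four surviving terms involve only $a_2,a_3,a_5$.
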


\begin{theorem}[$p$-saturation]
\label{thm:p-saturation}
\[
\boxed{R/pR \hookrightarrow A/pA,}
\qquad\text{hence}\qquad
\boxed{R = A \cap R_p.}
\]
\end{theorem}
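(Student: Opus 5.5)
The plan is to prove the injection $R/pR \hookrightarrow A/pA$, equivalently $R \cap pA = pR$, and then obtain $R = A\cap R_p$ by cancelling $p$ as in the Residual saturation paragraph.

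Let $G \in R$ with $G \in pA$. Since $p^2F_{46}^{\mathrm{int}} = E = c_{46} \in C_5$, we can reduce the $F_{46}^{\mathrm{int}}$-degree and write
\[
G = G_0 + G_1F_{46}^{\mathrm{int}},\qquad G_0,G_1\in C_5+C_5F_{46}^{\mathrm{int}}\cdots .
\]
A cleaner route is to work with the abstract presentation. Lift $G$ to $\tilde G(p,q,J_1,J_2,F_0,H,T)$ and reduce modulo $p$. By Remark~\ref{rem:residual-images}, $\bar F_{46}^{\mathrm{int}} = B\theta$, while every other generator reduces into $k[q,x,y]$. Moreover $\theta$ is transcendental over $k[q,x,y]$ by Theorem~\ref{thm:residual-kernel}. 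Expand $\bar G = \sum_i \bar g_i (B\theta)^i$ with $\bar g_i$ the images of the $C_5$-coefficients. Then $\bar G = 0$ in $A/pA$ forces $\bar g_i B^i = 0$ for each $i$ by $\theta$-freeness. Since $A/pA$ is a domain and $B\neq 0$, this gives $\bar g_i = 0$ in $A/pA$, so each $C_5$-coefficient $g_i$ lies in $\ker(C_5/pC_5\to A/pA)$.

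Theorem~\ref{thm:residual-presentation} then gives $g_i \equiv \lambda_i c_{46} \pmod{pC_5}$ with $\lambda_i\in C_5$. We write $g_i = \lambda_i p^2F_{46}^{\mathrm{int}} + p\mu_i$. Hence
\[
g_i(F_{46}^{\mathrm{int}})^i = p\bigl(p\lambda_i(F_{46}^{\mathrm{int}})^{i+1}+\mu_i(F_{46}^{\mathrm{int}})^i\bigr) \in pR,
\]
so $G\in pR$. This proves the injection.

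For the second claim, take $a\in A\cap R_p$, so $p^na\in R$ for some $n$. If $n\ge 1$, then $p^na\in R\cap pA=pR$, so $p^na=pr$ with $r\in R$. Cancelling $p$ in the domain $A$ gives $p^{n-1}a\in R$, and induction on $n$ finishes. The reverse inclusion $R\subseteq A\cap R_p$ is trivial.

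The main obstacle is the first step. The $C_5$-coefficients $g_i$ of the expansion in powers of $F_{46}^{\mathrm{int}}$ must be well defined enough that the $\theta$-separation is legitimate. The separation happens in $A/pA$, where the $\bar g_i$ are genuine elements of $k[q,x,y]$, so no uniqueness in $R$ is needed; any chosen expression of $G$ suffices. The only genuine inputs are then the exact elimination (Lemma~\ref{lem:p-elimination-cert}), transferred to $k$ by Lemma~\ref{lem:QQ-transfer}, and the factorization $\bar F_{46}^{\mathrm{int}}=B\theta$.
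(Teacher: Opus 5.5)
Your proof is correct and is essentially the paper's argument: you write $G=\sum_i g_i(F_{46}^{\mathrm{int}})^i$ with $g_i\in C_5$, separate the $\theta$-powers modulo $p$ using $\bar F_{46}^{\mathrm{int}}=B\theta$ and the transcendence of $\theta$ over $k[q,x,y]$, invoke the principal residual presentation to get $g_i\in c_{46}C_5+pC_5$, and finish with the cancellation induction. The only blemish is your abandoned opening sentence about lowering the $F_{46}^{\mathrm{int}}$-degree via $p^2F_{46}^{\mathrm{int}}=E$; that relation only lowers degree for coefficients divisible by $p^2$, so the sentence is not valid as stated and should simply be deleted.
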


\begin{proof}
Let $[G] \in R/pR$ map to $0$, $G = \sum_{i=0}^m G_i(F_{46}^{\mathrm{int}})^i$ with
$G_i \in C_5$ ($R = C_5[F_{46}^{\mathrm{int}}]$ by definition). Reducing mod $p$ and
using $\bar F_{46}^{\mathrm{int}} = B\theta$ gives $\sum_i (\bar G_iB^i)\theta^i = 0$
in $A/pA$, with coefficients in $k[q,x,y]$; $\theta$ transcendental
(Theorem~\ref{thm:residual-kernel}) and $B \ne 0$ force
$\bar G_i = 0$ for all $i$. By Theorem~\ref{thm:residual-presentation},
$G_i = c_{46}K_i + pL_i$ with $K_i, L_i \in C_5$. Hence
$[G] = [\bar c_{46}]\sum[K_i][F_{46}^{\mathrm{int}}]^i = 0$, since
$[c_{46}] = 0$ in $R/pR$ ($c_{46} = p^2F_{46}^{\mathrm{int}} \in pR$). So the residual
map is injective, and the saturation criterion above gives
$R = A \cap R_p$. No presentation $R = C_5[T]/(p^2T-c_{46})$ is assumed:
only generation by $F_{46}^{\mathrm{int}}$ is used.
\end{proof}

\section{The exact algebra of the closed ring}
\label{sec:presentation}

Let \(R=C_5[F_{46}^{\mathrm{int}}]\) as a subalgebra of \(A\).
This section determines its presentation independently. Its equality with
\(A_5^{D_5}\) was established separately and is not used here.

\begin{theorem}[Relation ideal in range; no monic quadratic]
\label{thm:relations-92}
Let $M_w$ be the $7$-generator monomial map to $A_w$ and $K_w$ the
subspace generated by the known relations $S$ (weight $28$) and
$p^2T-E$ (weight $48$). Exact sparse ranks over $\mathbb{Q}$ give
$\dim\ker M_w = \dim K_w$ at every even $46 \le w \le 90$ and at
$w = 92$ (including the $F_{46}^2$ column), by source-reported exact
computation.
In particular there is no new relation through weight $92$, and
$F_{46}^2 \notin C_5 + F_{46}\cdot C_5$ (single exact solve:
$\operatorname{rank}M = 16458 < 16459$ augmented), so
no monic quadratic over $C_5$. For the record, $F_{46}^2$ has only $488$
terms ($79^2$ monomials combining). The exact relation ideal is established by Theorem~\ref{thm:relation-ideal}, so $F_{46}$ is linear over $\operatorname{Frac}(C_5)$; what remains open is the finer $C_5$-module decomposition of $R$ and its higher-layer interpretation. The finite-range calculation through weight $92$ is retained as independent corroboration, not as the source of the global presentation.
\end{theorem}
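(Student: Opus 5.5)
The plan is to reduce the statement to a finite collection of exact rank computations, one per even weight, organized through the presentation map
\[
\Phi\colon k[P,Q,J_1,J_2,F,H,T]\to R,\qquad T\mapsto F_{46}^{\mathrm{int}},
\]
graded by Fibonacci weight with $\operatorname{wt}(T)=46$. For each weight $w$, I form the monomial matrix $M_w$ whose columns are the images of the weight-$w$ monomials in the seven generators, written in the monomial basis of $V_w$. Then $\dim\ker M_w$ is the relation space in weight $w$. Let $K_w$ be the weight-$w$ part of the ideal $(S,\,p^2T-E)$. I will span $K_w$ by the products $m\cdot S$ and $m'\cdot(p^2T-E)$, with $m,m'$ ranging over monomials of complementary weight, and compute $\dim K_w$ as the rank of that spanning set inside the polynomial ring. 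The inclusion $K_w\subseteq\ker M_w$ is automatic, because $S=0$ in $C_5$ (Theorem~\ref{thm:global-hypersurface}) and $p^2F_{46}^{\mathrm{int}}=E$ (Theorem~\ref{thm:quotient-4648}). So equality reduces to the single numerical check $\dim\ker M_w=\dim K_w$.

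Odd weights below $46$ carry no $T$ and are already handled by the $C_5$ ranks. The key steps, in order, are the following.

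First, for $w<46$ the statement follows from Theorem~\ref{thm:minimal-45}, since $\ker M_w=(S)_w$ there.

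Second, for each even $w$ with $46\le w\le 92$, I compute the rank of $M_w$ by sparse exact Gaussian elimination over $\mathbb{Q}$. I compute $\dim K_w$ separately, using a Gröbner basis of $(S,p^2T-E)$ and counting standard monomials; this is cleaner than ranking the spanning products. The point to exploit is that $S$ and $p^2T-E$ form a regular sequence: $S$ is irreducible and $p^2T-E$ is linear in $T$ with leading coefficient $p^2\notin(S)$. Hence $\dim K_w$ is given by the complete-intersection count
\[
\dim K_w = s_7(w)-s_7(w-28)-s_7(w-48)+s_7(w-76),
\]
and no explicit span is needed. Third, I transfer the result from $\mathbb{Q}$ to an arbitrary characteristic-zero field $k$ by Lemma~\ref{lem:QQ-transfer}.

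For the non-quadraticity claim, I solve a single augmented linear system at weight $92$. The columns are the weight-$92$ elements of $C_5$ together with $F_{46}\cdot(C_5)_{46}$, and the target is $F_{46}^2$. The rank jumps from $16458$ to $16459$ on augmentation, so $F_{46}^2\notin C_5+F_{46}C_5$. This excludes any relation of the form
\[
T^2-aT-b,\qquad a,b\in C_5 .
\]
Finally, the remark that $F_{46}$ is linear over $\operatorname{Frac}(C_5)$ is immediate from $p^2F_{46}=E$ and needs no computation.

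The main obstacle is computational scale. At $w=92$ the source space $V_{92}$ is very large, and $M_{92}$ has roughly $1.6\times10^4$ columns with dense rational entries coming from $F_{46}^2$ (488 terms) and from $H$-powers. I would first reduce the problem modulo several large primes to fix the rank cheaply. An upper bound on the rank over $\mathbb{Q}$ cannot come from mod-$p$ reductions, so the matching lower bound over $\mathbb{Q}$ must be certified by exhibiting a nonsingular maximal minor, for example by its determinant or by reconstruction. I would also restrict $M_w$ to the rows of monomials of highest $a_0$-degree first: the $a_0$-filtration makes the matrix nearly block-triangular, which shrinks the eliminations considerably.
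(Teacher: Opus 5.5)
Your route is the paper's own: exact per-weight ranks of the seven-generator monomial map compared against the two known relations, plus one augmented solve at weight $92$. The count you propose for $\dim K_w$, however, is the wrong one. Because $(S,\,p^2T-E)$ is a regular sequence (your justification of this is fine), the number $s_7(w)-s_7(w-28)-s_7(w-48)+s_7(w-76)$ is the dimension of the weight-$w$ piece of the \emph{quotient} $k[P,Q,J_1,J_2,F,H,T]/(S,p^2T-E)$. Counting standard monomials of a Gr\"obner basis also returns that quotient dimension. The ideal itself has $\dim K_w=s_7(w-28)+s_7(w-48)-s_7(w-76)$, by the Koszul resolution.

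Already at $w=28$ your formula gives $s_7(28)-1=330$, whereas $K_{28}=k\cdot S$ is one-dimensional. So the test $\dim\ker M_w=\dim K_w$, as you set it up, would fail at every weight. The correct test is $\operatorname{rank}M_w=s_7(w)-s_7(w-28)-s_7(w-48)+s_7(w-76)=[t^w]H_R(t)$ (Theorem~\ref{thm:R-hilbert}). Equivalently, use your first method of ranking the products $m\,S$ and $m'(p^2T-E)$.

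Your certification remark also has its logic reversed. Reduction modulo a prime $\ell\nmid 420$ bounds the rational rank from \emph{below}, since a minor that is nonzero mod $\ell$ is nonzero over $\mathbb{Q}$. A rank lower bound is exactly the nontrivial inequality $\dim\ker M_w\le\dim K_w$; the reverse inequality is free from $K_w\subseteq\ker M_w$. So for the relation count, a single modular rank equal to the predicted value is already a proof, and no separate rational minor or determinant is needed.

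The one place an upper bound on a rational rank is genuinely required is the augmented solve. Showing $F_{46}^2\notin C_5+F_{46}C_5$ needs $\operatorname{rank}_{\mathbb{Q}}M\le 16458$, and no modular computation can supply that. It must come from exact elimination or from explicit relations among those columns. Alternatively, bypass it with a rational vector $y$ satisfying $yM=0$ and $y\cdot F_{46}^2\neq0$.

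Finally, as the statement itself notes, both conclusions also follow with no computation. Theorem~\ref{thm:relation-ideal} gives $\ker M_w=K_w$ in every weight. Theorem~\ref{thm:B-module}(i) gives $[T^2]\neq0$ in $R_2/R_1\cong C_5/(p^2)$, hence no monic quadratic.
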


\begin{theorem}[Relation ideal]
\label{thm:relation-ideal}
Let $\Phi\colon C_5[T] \to R$ be the homomorphism with $T \mapsto F_{46}^{\mathrm{int}}$ (cleared integral form, $p^2F_{46}^{\mathrm{int}}=E$). Then
\[
\boxed{\ker\Phi = (S,\ p^2T-E),}
\]
with $S$ and $E$ as above. In particular no finite-weight census is
needed: the weight-$92$ agreement is corroboration only.
\end{theorem}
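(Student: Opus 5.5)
The inclusion $(S,\,p^2T-E)\subseteq\ker\Phi$ is immediate. The relation $S$ is already zero in $C_5$ by Theorem~\ref{thm:global-hypersurface}, and $p^2F_{46}^{\mathrm{int}}=E$ by Theorem~\ref{thm:quotient-4648}. Since $C_5[T]$ is the quotient of $k[P,Q,J_1,J_2,F,H,T]$ by $(S)$, it suffices to show that $\ker\Phi\subseteq(p^2T-E)C_5[T]$. The plan is to work first after inverting $p$, and then descend using the $p$-saturation of the quotient ring.

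The first step is to localize. In $C_5[T]_p=(C_5)_p[T]$, the element $p^2T-E$ is associated to $T-E/p^2$. This gives $(C_5)_p[T]/(p^2T-E)\cong(C_5)_p$, and $\Phi_p$ corresponds to the identity, because $F_{46}^{\mathrm{int}}=E/p^2$ in $\operatorname{Frac}(A)$. Hence $(\ker\Phi)_p=(p^2T-E)_p$.

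The second step contracts this back. Put $M=C_5[T]/(p^2T-E)$. If $g\in\ker\Phi$, then $p^ng\in(p^2T-E)$ for some $n$. So it suffices to show that $p$ is a nonzerodivisor on $M$, i.e.\ that the ideal $(p,\,p^2T-E)=(p,E)$ of $C_5[T]$ is such that $p$ is regular on $M$. This is equivalent to showing that $p^2T-E$ is a nonzerodivisor on $C_5[T]/(p)=(C_5/pC_5)[T]$, together with the $\mathrm{Tor}$ symmetry for the pair $(p,\,p^2T-E)$.

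Mod $p$, the element $p^2T-E$ reduces to $-\bar E=-\bar c_{46}$, a constant in $T$. So the claim reduces to showing that $\bar c_{46}$ is a nonzerodivisor in $C_5/pC_5=k[q,J_1,J_2,F,H]/(S)$. This is a hypersurface, and $S$ mod $p$ is the same polynomial as $S$, since $S$ does not involve $P$. Moreover $S$ is irreducible, being linear in $H$ with coprime coefficients as in Theorem~\ref{thm:global-hypersurface}. Therefore $C_5/pC_5$ is a domain, and $\bar c_{46}\neq0$ in it. The nonvanishing holds because $\bar c_{46}$ is a nonzero element of the relation ideal only modulo the graph: by Lemma~\ref{lem:p-elimination-cert}, the ideal $(S,\bar c_{46})$ strictly contains $(S)$. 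Once $\bar E$ is a nonzerodivisor on $(C_5/pC_5)[T]$, the sequence $p,\;p^2T-E$ is regular in $C_5[T]$. Since the ring is graded with positive weights, regular sequences may be permuted. Hence $p$ is regular on $M$, so $p^ng\in(p^2T-E)$ forces $g\in(p^2T-E)$.

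As a byproduct, $R$ is the complete intersection $k[P,Q,J_1,J_2,F,H,T]/(S,\,P^2T-E)$, and its Hilbert series has numerator $(1-u^{28})(1-u^{48})$.

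The main obstacle is the middle step, namely justifying that $\bar c_{46}\notin(S)$ in the abstract ring $k[q,J_1,J_2,F,H]$. This is not the same as the image-level statement that $\bar c_{46}$ maps to $0$ in $A/pA$. My first approach is to read it off the explicit expression for $E$ in Theorem~\ref{thm:quotient-4648}. That expression contains the monomial $H^2$ with coefficient $1$, while $S$ is linear in $H$ with coefficient $Q^4$. Reducing modulo $S$ by eliminating $H$ over $k[Q^{\pm1},\dots]$ leaves a nonzero element. Alternatively, $E$ has $H$-degree $2$ with a unit leading coefficient, and any element of $(S)$ has $H$-leading coefficient divisible by $Q^4$. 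So $E\notin(S)$, and the argument goes through without any further computation.
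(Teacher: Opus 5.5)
Your proof is correct, but it is organized differently from the paper's.

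\textbf{Structure.} The paper takes $f(T)=\sum_{i\le m}b_iT^i\in\ker\Phi$ and multiplies by $p^{2m}$. Substituting $p^2F_{46}^{\mathrm{int}}=E$ gives $\sum_i b_iE^ip^{2(m-i)}=0$ in $C_5$. Reducing mod $p$ twice yields $p^2\mid b_m$, and subtracting $d_mT^{m-1}(p^2T-E)$ lets it induct on $T$-degree. You instead localize at $p$, where the kernel is visibly $(p^2T-E)_p$. You then contract by proving that $p$ is regular on $C_5[T]/(p^2T-E)$. This is the same computation packaged as the statement that $p,\,p^2T-E$ is a regular sequence. That packaging has the merit of exhibiting directly the complete-intersection structure of Corollary~\ref{cor:R-presentation}. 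The swap also needs no grading: from $px=(p^2T-E)y$, reducing mod $p$ gives $p\mid y$, and then you cancel $p$.

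\textbf{Nonvanishing of $\bar E$.} Both routes rest on $C_5/pC_5$ being a domain and $\bar E\neq 0$ there. For the second fact the paper uses $E\notin pC_5$, since otherwise $pF_{46}^{\mathrm{int}}\in C_5$, contrary to Theorem~\ref{thm:quotient-4648}(i). Your argument is different: the printed $E$ is monic of degree $2$ in $H$, while every nonzero multiple of $S=Q^4H-R_0$ in $k[Q,J_1,J_2,F,H]$ has $H$-leading coefficient divisible by $Q^4$. This is complete and purely formal, and it needs only the printed identity for $E$ rather than the separate non-membership fact.

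\textbf{What to discard.} Drop your appeal to Lemma~\ref{lem:p-elimination-cert}. That lemma only records an ideal equality, and the strictness $(S,\bar c_{46})\supsetneq(S)$ you cite from it is exactly the nonvanishing being proved. Your ``eliminate $H$ over $k[Q^{\pm1},\dots]$'' remark is likewise asserted without proof and is unnecessary once the leading-coefficient argument is in place.
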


\begin{proof}
Since the presentation of $C_5$ is established, it suffices
to prove $\ker\psi = (p^2T-E)$ for $\psi\colon C_5[T] \to R$,
$T \mapsto F_{46}^{\mathrm{int}}$, with $r := p^2T-E \in \ker\psi$.
First, $(p) \subset C_5$ is prime: $C_5/pC_5 \cong k[q,J_1,J_2,F_0,H]/(S)$,
and $S$ is irreducible there since it is linear in $H$: if $S = FG$
then (comparing $H$-coefficients) one factor divides $q^4$, but no
nonconstant divisor of $q^4$ divides $S$ (at $q = 0$,
$S = -11025F_0^2 - 9J_2^7 \ne 0$), so that factor is constant.
Second, $E = p^2F_{46}^{\mathrm{int}} \notin pC_5$: else $E = pb$ gives
$b = pF_{46}^{\mathrm{int}}$ in the domain $A$, contradicting $pF_{46}^{\mathrm{int}} \notin C_5$
(Theorem~\ref{thm:quotient-4648}(i)). Hence $\bar E \ne 0$ in the
domain $C_5/pC_5$. Now let $f(T) = \sum_{i=0}^m b_iT^i \in \ker\psi$,
$b_m \ne 0$ (base $m = 0$: $\psi|_{C_5}$ is the inclusion, so $b_0 = 0$).
From $\sum b_i(F_{46}^{\mathrm{int}})^i = 0$, multiply by $p^{2m}$ and use
$p^2F_{46}^{\mathrm{int}} = E$ to get $\sum_{i=0}^m b_iE^ip^{2(m-i)} = 0$ in $C_5$,
with no fractions. Mod $p$, all terms but $i = m$ vanish:
$\bar b_m\bar E^{\,m} = 0$, so $p \mid b_m$. Write $b_m = pc_m$,
divide by $p$ in the domain, reduce mod $p$ again: $\bar c_m\bar E^{\,m} = 0$,
so $p \mid c_m$ and $b_m = p^2d_m$. Then
$f(T) - d_mT^{m-1}r$ lies in $\ker\psi$ of strictly smaller $T$-degree.
Induction gives $\ker\psi = (r)$, whence $\ker\Phi = (S, r)$.
\end{proof}

\begin{corollary}[Complete-intersection presentation]
\label{cor:R-presentation}
\[
\boxed{R \cong C_5[T]/(p^2T-E) \cong k[p,q,J_1,J_2,F_0,H,T]/(S,\ p^2T-E),}
\]
a codimension-two complete intersection ($S$ prime; $p^2T-E \ne 0$ in
the domain $C_5[T]$): a domain, as it is the known subring
$R=C_5[F_{46}^{\mathrm{int}}] \subset A$. The seven displayed generators are minimal (the first six for
$C_5$; $F_{46}^{\mathrm{int}} \notin C_5$). Weights
$(1,1,6,4,14,24,46)$ with relations in degrees $28$ and $48$.
\end{corollary}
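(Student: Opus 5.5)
The statement to prove is Corollary~\ref{cor:R-presentation}. The plan is to read the two isomorphisms off Theorem~\ref{thm:relation-ideal} and then check the complete-intersection and minimality claims separately.

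\emph{Isomorphisms.} Let $\Phi\colon k[p,q,J_1,J_2,F_0,H,T]\to R$ be the surjection sending the six letters to the generators of $C_5$ and $T\mapsto F_{46}^{\mathrm{int}}$. It factors through $k[P,Q,J_1,J_2,F,H]/(S)\cong C_5$ by Theorem~\ref{thm:global-hypersurface}. Theorem~\ref{thm:relation-ideal} says the kernel of the induced map $C_5[T]\to R$ is generated by $r=p^2T-E$. Hence $R\cong C_5[T]/(r)$. Lifting back to the polynomial ring, the kernel of $\Phi$ is $(S)+(\tilde r)$, where $\tilde r=p^2T-\tilde E$ and $\tilde E$ is the explicit polynomial of Theorem~\ref{thm:quotient-4648}. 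This gives the second isomorphism.

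\emph{Complete intersection.} The ambient ring $k[p,\dots,H,T]$ has dimension $7$, and $R$ is a domain of transcendence degree $5$, being a subring of $A^{D_5}$ that contains five algebraically independent elements. So the kernel $(S,\tilde r)$ is a prime of height $2$ generated by two elements, which is the definition of a codimension-two complete intersection. As an independent check that $S,\tilde r$ form a regular sequence, I would use two facts. First, $S$ is prime, as shown in Theorem~\ref{thm:global-hypersurface}. Second, $\tilde r$ is nonzero modulo $S$: it has $T$-coefficient $p^2\neq0$ in the domain $C_5[T]$, so it is a nonzerodivisor there.

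\emph{Gradings.} Weights are homogeneous: $S$ has weight $28$, and $r$ has weight $48$ because $\operatorname{wt}(p^2T)=2+46$ and $E$ is homogeneous of weight $48$.

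\emph{Minimality.} I would apply graded Nakayama to the irrelevant ideal $\mathfrak m$ of $R$. Both relations lie in $\mathfrak m^2$ of the polynomial ring. For $S$ this is already noted in Remark~\ref{rem:singlocus}. For $p^2T-E$, the term $p^2T$ is a product of positive-weight elements, and every monomial of $E$ in Theorem~\ref{thm:quotient-4648} has at least two factors. Since no relation has a linear term, $\dim_k\mathfrak m/\mathfrak m^2=7$, and the seven generators are minimal. Equivalently, the first six are minimal for $C_5$ by Remark~\ref{rem:singlocus}, and $F_{46}^{\mathrm{int}}$ is not in the weight-$46$ part of the subalgebra generated by the others, since $F_{46}^{\mathrm{int}}\notin C_5$ (Theorem~\ref{thm:F46}).

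The main obstacle would ordinarily be the kernel computation. Here it is fully supplied by Theorem~\ref{thm:relation-ideal}, through its $p$-primality and $\bar E\neq0$ argument. The only care needed is confirming that every monomial of $E$ is decomposable, which can be read off the displayed expression.
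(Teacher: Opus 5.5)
Your proposal is correct and follows essentially the paper's own route: the presentation is read off Theorem~\ref{thm:relation-ideal} together with the hypersurface presentation of $C_5$, the complete-intersection property comes from $S$ being prime and $p^2T-E\neq 0$ in the domain $C_5[T]$, and minimality comes from the six $C_5$-generators being minimal together with $F_{46}^{\mathrm{int}}\notin C_5$. Your height count ($7-\operatorname{trdeg}R=2$) and your observation that both relations lie in $\mathfrak m^2$ (graded Nakayama) are valid, slightly more explicit restatements of those same points rather than a different argument.
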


\begin{theorem}[Exact Hilbert series]
\label{thm:R-hilbert}
\[
\boxed{
H_R(t) =
\frac{(1-t^{28})(1-t^{48})}
{(1-t)^2(1-t^4)(1-t^6)(1-t^{14})(1-t^{24})(1-t^{46})},
}
\]
by the complete-intersection presentation. Since $A^{\Dmap{5}} = R$
(Theorem~\ref{thm:closure}), $[t^w]H_R = \dim_k\ker(\Dmap{5}|_{V_w})$
for every $w$. In particular the three-family $F$-module pattern
fails at $w = 84$ (Theorem~\ref{thm:fourth-family}): $[t^{84}]H_R = 11888
\ne 11887$; further families ($F_{46}\!\cdot\! H\!\cdot\! F_0$ from $84$,
$F_{46}^2$ from $92$) are required, and the exact series supersedes the
pattern. Coefficient agreement with all exactly-established kernel
dimensions ($w \le 48$ spanning/quotients, $54$--$61$, $65$, $69$) holds.
\end{theorem}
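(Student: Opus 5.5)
The Hilbert series will follow from the graded complete-intersection presentation of Corollary~\ref{cor:R-presentation}, together with the closure Theorem~\ref{thm:closure} to identify $R$ with the invariant ring. I would proceed in three steps.

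\textbf{Step 1: grade the presentation.} Give $k[p,q,J_1,J_2,F_0,H,T]$ the Fibonacci grading with weights $(1,1,6,4,14,24,46)$. This polynomial ring has Hilbert series $\prod_i(1-t^{d_i})^{-1}$, with $d_i$ running over these weights. Both relations are homogeneous. $S$ has weight $28$: each of $q^4H$, $F_0^2$, $q^2J_2^5J_1$, $J_2^7$ weighs $28$. The relation $p^2T-E$ has weight $48$, since $E=p^2F_{46}^{\mathrm{int}}$ is the explicit weight-$48$ combination of Theorem~\ref{thm:quotient-4648}. Hence the presentation map is graded.

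\textbf{Step 2: use the regular sequence.} I need $(S,\,p^2T-E)$ to be a homogeneous regular sequence. $S\neq0$ in a domain, so $S$ is a nonzerodivisor. Next, $p^2T-E$ is a nonzero element of the domain $C_5[T]=k[\dots]/(S)[T]$. It is nonzero because it has $T$-coefficient $p^2\ne0$ in $C_5$, which is a domain by Theorem~\ref{thm:global-hypersurface}. So it is a nonzerodivisor there. From the two exact sequences $0\to P(-28)\xrightarrow{S}P\to P/(S)\to0$ and the analogous one over $C_5[T]$ with shift $48$, additivity of Hilbert series gives
\[
H_R(t)=\frac{(1-t^{28})(1-t^{48})}{(1-t)^2(1-t^4)(1-t^6)(1-t^{14})(1-t^{24})(1-t^{46})}.
\]
The relation ideal being exactly $(S,p^2T-E)$ is Theorem~\ref{thm:relation-ideal}, so no further relations alter the count.

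\textbf{Step 3: transfer to the kernel.} By Theorem~\ref{thm:closure}, $A_5^{D_5}=R$ as graded subrings of $A_5$. Since $D_5$ preserves Fibonacci weight ($\operatorname{wt}(a_{n+1}a_{n+2})=\operatorname{wt}(a_n)$), the kernel is graded and $(A_5^{D_5})_w=\ker(D_5|_{V_w})$. Thus $[t^w]H_R=\dim_k\ker(D_5|_{V_w})$ for every $w$.

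\textbf{Remaining claims.} The statement about $w=84$ is then a coefficient extraction: expand the series to $t^{84}$ to get $11888$. Compare this with the three-family pattern count $11887$ of Theorem~\ref{thm:fourth-family}. The agreement with the cited exact dimensions is the same extraction checked at those weights, for instance $1519$ at $w=46$ against Theorem~\ref{thm:allweights-refuted}.

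The only genuine content beyond bookkeeping is Step 2's use of the relation ideal and the closure. Both are already established, so the main obstacle is merely the finite coefficient expansion at $t^{84}$, which I would carry out by exact series multiplication.
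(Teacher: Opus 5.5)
Your proposal is correct and follows essentially the same route as the paper. You grade the complete-intersection presentation of Corollary~\ref{cor:R-presentation} (relation ideal from Theorem~\ref{thm:relation-ideal}) by the weights $(1,1,6,4,14,24,46)$, read off the series from the homogeneous regular sequence $(S,\,p^2T-E)$ of weights $28$ and $48$, and transfer to kernel dimensions via Theorem~\ref{thm:closure}; the numerical claims at $w=84$ and at the cited weights are then, as in the paper, coefficient extractions compared against Theorem~\ref{thm:fourth-family} and the exact rank computations.
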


\begin{theorem}[Fourth family seed]
\label{thm:fourth-family}
$F_{46}\!\cdot\! H\!\cdot\! F_0$ ($398$ terms, weight $84$) lies outside
the prior span $C_5 + F_{46}K + F_{46}F_0K + F_{46}HK$ at weight $84$
(single exact $\mathbb{Q}$ solve: rank $11887$ versus $11888$ augmented). With
$p^2(F_{46}HF_0) = EHF_0 \in C_5$, its $K/(p^2)$-multiples account for
the Hilbert excess exactly through $91$; $F_{46}^2$ (not in
$C_5 + F_{46}C_5$) seeds the next layer at $92$. Higher $T$-degree
families and the full $C_5$-module structure of $R$ remain open.
\end{theorem}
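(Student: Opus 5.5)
The proof has two parts: show that $F_{46}\!\cdot\! H\!\cdot\! F_0$ is genuinely new at weight $84$, and then account for the Hilbert excess between $84$ and $91$.

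\textbf{New at weight $84$.} First note that $F_{46}HF_0$ is a $\Dmap{5}$-invariant of weight $46+24+14=84$ by Leibniz. Its $a_0$-degree is $4+2+1=7$, since tops multiply over the domain $B[a_0]$. I would like a conceptual non-membership argument, but the $a_0$-top alone will not separate it. Its top is proportional to $p^6q^8\cdot p^4q^4\cdot p^2q^4=p^{12}q^{16}$, with $v_p=12<14$, so the P-top necessity of Lemma~\ref{lem:J4} shows it is outside $C_5$. The targets $F_{46}K$, $F_{46}F_0K$ and $F_{46}HK$, however, contain elements of $a_0$-degree $4,5,6$ only, and $C_5$ elements have tops with $v_p\ge 14$. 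So an element of the prior span of $a_0$-degree $7$ has its degree-$7$ coefficient coming only from $C_5$, because the $F_{46}$-families cannot reach degree $7$. Hence the top of any such element has $v_p\ge 14$. This top comparison already gives non-membership, provided the degree-$7$ part is not cancelled. I would make this precise by taking the $a_0$-degree-$7$ component of a putative identity $F_{46}HF_0=G+F_{46}(k_1+F_0k_2+Hk_3)$. The $F_{46}$-terms have $a_0$-degree at most $4+2=6$, so the top of $G$ equals the top of $F_{46}HF_0$, which contradicts Lemma~\ref{lem:J4}. This is the expected conceptual replacement for the single exact $\mathbb{Q}$ solve (rank $11887$ versus $11888$), and I would record the solve as corroboration.

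\textbf{Counting the excess through $91$.} Next, $p^2(F_{46}HF_0)=EHF_0\in C_5$ follows immediately from Theorem~\ref{thm:quotient-4648}. So the new class is $p^2$-torsion modulo the prior span, and its multiples by $K$ contribute a copy of $K/(p^2)$ shifted to weight $84$. To show these account for the Hilbert excess exactly through $91$, I would compare dimensions. The excess is $[t^w]H_R$ (Theorem~\ref{thm:R-hilbert}) minus the dimension of the prior span, which is the three-family count from Theorem~\ref{thm:torsion-table} and its extension. This difference should equal $\dim (K/(p^2))_{w-84}$ for $84\le w\le 91$. For the lower bound, the same top argument applied to $\kappa F_{46}HF_0$ with $\kappa\in K\setminus p^2K$ gives non-membership, since the top is multiplied by $\kappa$ and $v_p(\kappa)\le 1$. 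Independence over $K/(p^2)$ then needs a top-level argument modulo $p^2$. The upper bound is the equality with the Hilbert series, which I would check from the series expansion.

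\textbf{Weight $92$.} Finally, $F_{46}^2\notin C_5+F_{46}C_5$ is Theorem~\ref{thm:relations-92}, and it seeds weight $92$.

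\textbf{Main obstacle.} The exactness claim through $91$ is the hardest step: it needs the prior span's dimension at each weight $84$--$91$, which is only known computationally. I would first try to derive it from the presentation Corollary~\ref{cor:R-presentation} via a $T$-degree filtration of $R$. Failing that, I would cite the exact ranks.
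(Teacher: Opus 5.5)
Your non-membership argument is correct, and it takes a genuinely different route from the paper. The paper's support is computational: one exact $\mathbb{Q}$ rank solve in weight $84$ ($11887$ versus $11888$), with the count through $91$ read against the exact series of Theorem~\ref{thm:R-hilbert}. Your $a_0$-top argument needs no computation. The $F_{46}$-families stop at $a_0$-degree $6$, so in a putative identity the $C_5$-component has $a_0$-degree exactly $7$ and a top with $v_p=12<14$, contradicting Lemma~\ref{lem:J4}.

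The argument also proves more than the solve does. For $\kappa\in K$, $\kappa F_{46}HF_0$ lies in the prior span $P$ iff $p^2\mid\kappa$ in $K$; you pass from $v_p(\kappa)\ge 2$ to divisibility in $K$ via $p^2B\cap K=p^2K$. So the class spans exactly a shifted copy of $K/(p^2)$, and no separate ``independence modulo $p^2$'' step remains. Your opening remark that the top alone will not separate $F_{46}HF_0$ is contradicted by your own argument and should be deleted.

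You defer $\dim P_w$ for $84\le w\le 91$ as ``only known computationally,'' but it needs no computation. The torsion table, which stops at weight $52$, would not supply it anyway; your own method closes it. Suppose $F_{46}(k_1+F_0k_2+Hk_3+HF_0k_4)\in C_5$ with $k_i\in K$. Compare $a_0$-tops in degrees $7,6,5,4$ in turn:
\begin{itemize}
\item Lemma~\ref{lem:J4} forces $p^2\mid k_4$.
\item After absorbing $k_4F_{46}HF_0=(k_4/p^2)EHF_0$ into $C_5$, it forces $p^2\mid k_3$.
\item Continuing in the same way gives $p^2\mid k_2$ and $p^2\mid k_1$.
\end{itemize}
Hence $(P+KF_{46}HF_0)/C_5\cong\bigoplus_{s\in\{46,60,70,84\}}(K/(p^2))(-s)$. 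On the other side, Theorem~\ref{thm:R-hilbert} gives
\[
H_R(t)-H_{C_5}(t)=\frac{1-t^2}{(1-t)^2(1-t^4)(1-t^6)}\cdot\frac{t^{46}(1+t^{14})}{(1-t^{24})(1-t^{46})}.
\]
The first factor is the series of $K/(p^2)$. The exponents $46+\{0,14\}+24a+46b$ below $92$ are exactly $46,60,70,84$, each once. So your lower bound already equals $\dim R_w$ for $w\le 91$, which is the exactness claim.

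Equivalently, combine Theorem~\ref{thm:B-module} with the $K$-freeness of $C_5$ on $\{H^r,F_0H^r\}$. That freeness holds because $S$ is, up to the unit $-11025$, monic quadratic in $F_0$ over the polynomial ring $K[H]$. Item~(i) of Theorem~\ref{thm:B-module} then gives $F_{46}^2\notin C_5+F_{46}C_5$ without the rank solve of Theorem~\ref{thm:relations-92}.
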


\begin{theorem}[$C_5$-module structure: the $T$-filtration]
\label{thm:B-module}
With $R \cong C_5[T]/(p^2T-E)$, set
$R_m := C_5 + C_5T + \cdots + C_5T^m$. Then $R = \bigcup_{m\ge0}R_m$ and,
for every $m \ge 1$,
\[
\boxed{R_m/R_{m-1} \cong C_5/(p^2)\cdot[T^m]}
\]
via $b \mapsto [bT^m]$. Indeed the map is surjective by definition
of $R_m$, and $bT^m \in R_{m-1}$ means $bT^m - G \in (p^2T-E)$ for
some $G$ of degree $< m$; comparing the top $T^m$-coefficient forces
$p^2 \mid b$ in $C_5$, while $p^2T^m = ET^{m-1} \in R_{m-1}$ gives the
reverse inclusion. (Note $\operatorname{Ann}_{C_5}(T) = 0$ in $R$ itself
since $R$ is a domain; the $(p^2)$-torsion lives in the successive
layers, i.e.\ $\operatorname{Ann}_{C_5}([T^m]) = (p^2)$ in
$R_m/R_{m-1}$.) Consequences:
\begin{enumerate}[label=(\roman*)]
\item No monic relation: $T^m \notin \sum_{i<m}C_5\,T^i$ for all $m \ge 1$
(a monic relation would give $[T^m] = 0$, i.e.\ $1 \in (p^2)$,
absurd). In particular no monic quadratic;
$F_{46}^2 \notin C_5 + F_{46}C_5$.
\item Torsion ladder: $p^2T^m = ET^{m-1} \in \sum_{i<m}C_5\,T^i$ for all
$m \ge 1$.
\item Associated graded:
$\operatorname{gr}_T(R) \cong C_5 \oplus \bigoplus_{m\ge1}(C_5/(p^2))\tau^m$
as $C_5$-modules, $\tau^m$ the class of $T^m$. The open refinement is
the graded decomposition of each $C_5/(p^2)$-layer: the observed
$84$-weight class $[HF_0T]$ and $92$-weight class $[T^2]$ are the
first visible pieces not generated by the $K$-substructure.
\item The fourth family is genuinely new over $C_5$: $HF_0 \notin
K + F_0K + HK + p^2C_5$ (exact $\mathbb{Q}$-solve, rank $827$ vs augmented
$828$), i.e.\ the
class $[HF_0T]$ is not generated by the $K$-substructure in the $m = 1$ layer; else
Theorem~\ref{thm:fourth-family}'s seed would lie in the prior span,
by substituting into $F_{46}^{\mathrm{int}}HF_0$ and using $p^2F_{46}^{\mathrm{int}} = E$.
\end{enumerate}
\end{theorem}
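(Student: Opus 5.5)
The statement to prove is the last theorem displayed, the $C_5$-module structure given by the $T$-filtration. I will work entirely in the presentation $R \cong C_5[T]/(p^2T-E)$ of Corollary~\ref{cor:R-presentation}, which rests on Theorem~\ref{thm:relation-ideal}. Two facts established in the proof of Theorem~\ref{thm:relation-ideal} are used throughout: $(p)\subset C_5$ is prime, and $E\notin pC_5$.

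First, $R=\bigcup_m R_m$. This is immediate, since every element of $R$ is the image of a polynomial in $T$ of some finite degree $m$.

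For $m\ge1$, consider the map $C_5\to R_m/R_{m-1}$, $b\mapsto[bT^m]$. It is $C_5$-linear and surjective, because any element of $R_m$ differs from $b_mT^m$ by an element of $R_{m-1}$.

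The content lies in identifying its kernel as $(p^2)$.

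\emph{The inclusion $(p^2)\subseteq$ kernel.} We have $p^2T^m=ET^{m-1}$ in $R$. Hence $p^2b\,T^m=bE\,T^{m-1}\in R_{m-1}$.

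\emph{The inclusion kernel $\subseteq(p^2)$.} Suppose $bT^m=G(T)$ in $R$ with $\deg_T G<m$. Then
\[
bT^m-G(T)=(p^2T-E)\,Q(T)\quad\text{in } C_5[T].
\]
Since $C_5$ is a domain, $p^2T-E$ has leading coefficient $p^2\ne0$, so degrees add. This forces $\deg Q=m-1$, and comparing $T^m$-coefficients gives $b=p^2\,\mathrm{lc}(Q)$.

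The step needing care is exactly this coefficient comparison. It relies on $\ker\Phi$ being precisely the principal ideal $(p^2T-E)$ in $C_5[T]$ modulo $S$, not something larger. This is the main input, supplied by Theorem~\ref{thm:relation-ideal}. If one preferred to avoid it, one could rerun the argument of that proof: multiply by $p^{2m}$, reduce mod $p$ twice using $\bar E\ne0$ in the domain $C_5/pC_5$, and conclude $p^2\mid b$ directly. Either route yields the isomorphism $R_m/R_{m-1}\cong C_5/(p^2)$.

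The consequences then follow in order.

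(i) A monic relation would give $[T^m]=0$, i.e.\ $1\in(p^2)$. This is impossible because $p$ is a nonunit of positive weight. The case $m=2$ gives $F_{46}^2\notin C_5+F_{46}C_5$.

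(ii) The torsion ladder is the identity $p^2T^m=ET^{m-1}$ already used.

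(iii) The associated graded is the direct sum of the layers $R_0=C_5$ and $R_m/R_{m-1}\cong C_5/(p^2)\,\tau^m$.

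(iv) Suppose $HF_0\in K+F_0K+HK+p^2C_5$. Multiplying by $F_{46}^{\mathrm{int}}$ and replacing $p^2F_{46}^{\mathrm{int}}$ by $E\in C_5$ would place $F_{46}^{\mathrm{int}}HF_0$ in the span $C_5+F_{46}K+F_{46}F_0K+F_{46}HK$. This contradicts Theorem~\ref{thm:fourth-family}. The non-membership of $HF_0$ itself is the cited exact rank computation, $827$ versus $828$, which I take as given.
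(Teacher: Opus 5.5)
Your proposal is correct and follows the paper's own argument. Surjectivity holds by construction, $(p^2)$ lies in the kernel via $p^2T^m=ET^{m-1}$, and the reverse inclusion comes from comparing $T^m$-coefficients in $bT^m-G=(p^2T-E)Q$ over the domain $C_5$, with (i)--(iv) deduced exactly as in the paper. Your alternative route to $p^2\mid b$ (clearing by $p^{2m}$ and reducing mod $p$ twice, using only that $(p)$ is prime and $E\notin pC_5$) is also valid, and it is the same device used inside the proof of Theorem~\ref{thm:relation-ideal}.
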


\begin{theorem}[Exact singular locus of $X = \operatorname{Spec}R$]
\label{thm:Sing-exact}
\textbf{(A) General-$k$ part.} Over any characteristic-zero field $k$, with $R=C_5[F_{46}^{\mathrm{int}}]$ as above:
$\operatorname{Sing}X\cap D(p)=V(q,F_0,J_2)\cap D(p)$; the closed subset $V_{\mathrm{old}} := V(q,F_0,J_2,p^2T-E)$ is an irreducible $3$-fold meeting $D(p)$ (on $V(q,F_0,J_2)$ one has $E = H^2$, so this is $\{0\} \times V(p^2T-H^2)$ in $(q,F_0,J_2)\times(p,J_1,H,T)$ coordinates; $p^2T-H^2$ is irreducible, linear in $T$ with coprime coefficients); and $\dim\operatorname{Sing}X=3$.

\textbf{(B) $\mathbb{Q}$-specific fiber decomposition.} Over $k=\mathbb{Q}$, $\operatorname{Sing}X$ has exactly six irreducible components, all of dimension $3$: the $V_{\mathrm{old}}$ of (A) plus five fiber components $V(p,P_i) \times \mathbf{A}^1_T$
($i = 0,\dots,4$), where $P_0,\dots,P_4$ are the minimal primes of
$(S,E,\text{$2\times2$ minors})$ in $\mathbb{Q}[q,J_1,J_2,F_0,H]$:
\begin{align*}
P_0 &= (F_0,\ 150q^2J_1^3J_2 + 331J_1^2J_2^3 + H,\
25q^4J_1^2 + 51q^2J_1J_2^2 + 9J_2^4), \\
P_1 &= (100q^2J_1 + 21J_2^2,\ 1484J_1^2J_2^3 + 5H,\
27J_2^7 + 7656250F_0^2), \\
P_2 &= (F_0,\ 4q^2J_1 + J_2^2,\ 276J_1^2J_2^3 + H), \\
P_3 &= (q,\ 385J_1^2J_2^3 + H,\ J_2^7 + 1225F_0^2), \\
P_4 &= (H,\ J_1,\ J_2^7 + 1225F_0^2).
\end{align*}
The $P_i$ are certified prime of dimension $2$ over $\mathbb{Q}$ by exact primary decomposition; over general characteristic-zero $k$ the same closures hold wherever the $P_i$ remain prime, otherwise refined by Galois orbit (no extra embedded component appears over $\mathbb{Q}$).
\emph{Proof of (A).} The Jacobian of the complete intersection $(S,r)$,
$r=p^2T-E$, has $2\times2$ minors $-p^2S_x$ in the $T$-column,
the $(p,T)$-minor $0$ (since $S_p = E_p = 0$: neither $S$ nor the
seven-term $E$ involves $p$), and the ten $5$-variable minors
$S_xE_y - S_yE_x$ otherwise. On $D(p)$, $p$ is a unit so
$\partial r/\partial T=p^2\ne 0$; the Jacobian has rank $<2$ iff all
$S_x$ vanish, i.e.\ $q=F_0=J_2=0$ by Remark~\ref{rem:singlocus}.
Hence $\operatorname{Sing}X \cap D(p) =
V(q,F_0,J_2) \cap D(p) \subset V_{\mathrm{old}}$, all over any characteristic-zero $k$.
On $V(p)$ the bound $\dim \le 3$ follows from the $\mathbb{Q}$-fiber
analysis of part (B) by flat base change: the singular locus is cut
out by Jacobian minors, which commute with base change, and dimension
is stable under field extension.
\emph{Proof of (B).}
On $V(p)$ the fiber is supported over $V(p,E)$ (not all of $V(p)$):
$T$ is free exactly over $V(p,E)$, and the fiber base decomposes as $V(P_0) \cup \cdots \cup
V(P_4)$ with no further minimal primes (exact primary decomposition over $\mathbb{Q}$; each $P_i$
prime of dimension $2$, remaining associated primes
strictly embedded). Irredundancy: $V_{\mathrm{old}}$ meets $D(p)$
while fiber components lie in $V(p)$, and a $3$-fold fiber component
cannot lie in the $2$-fold $V_{\mathrm{old}} \cap V(p)$; distinct
minimal primes give distinct components. In particular
$\dim\operatorname{Sing}X = 3$ purely.
\end{theorem}

\begin{theorem}[Normality of $R$]
\label{thm:R-normal}
$R$ is normal over $k=\mathbb{Q}$ and hence over any characteristic-zero $k$. Indeed $R$ is a complete intersection, hence
Cohen--Macaulay ($S_2$). By part (A) of Theorem~\ref{thm:Sing-exact},
$\dim\operatorname{Sing}X = 3$, codimension $2$ in the $5$-fold $X$; this dimension bound uses the general-$k$ Jacobian computation on $D(p)$ and the $\mathbb{Q}$-specific fiber decomposition of part (B) on $V(p)$, transferred by base change.
Thus $R_1$ holds, and Serre's criterion gives normality. The complete-intersection property and the dimension bound $\dim\mathrm{Sing}\le 3$ are stable under base change from $\mathbb{Q}$ to any $\mathbb{Q}$-algebra (component splitting preserves dimension), so normality transfers although the component count is stated over $\mathbb{Q}$.
\end{theorem}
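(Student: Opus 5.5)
The plan is Serre's criterion: $R$ is normal iff it satisfies $(S_2)$ and $(R_1)$. We work first over $k=\mathbb{Q}$ and then pass to an arbitrary characteristic-zero field.

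\emph{Step 1: $(S_2)$.} Corollary~\ref{cor:R-presentation} gives $R\cong k[p,q,J_1,J_2,F_0,H,T]/(S,\,p^2T-E)$. This is a quotient of a regular $7$-dimensional polynomial ring by an ideal of height $2$, since $\dim R=\operatorname{trdeg}=5$. Because the ambient ring is graded, and hence Cohen--Macaulay with unmixedness, the two generators form a regular sequence. So $R$ is a complete intersection, hence Cohen--Macaulay, and in particular satisfies $(S_2)$. The only thing to check is that height $2$ equals the number of generators, which follows from the dimension count.

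\emph{Step 2: $(R_1)$.} $R$ is a domain of dimension $5$, so $(R_1)$ amounts to $\operatorname{codim}\operatorname{Sing}X\ge 2$, i.e.\ $\dim\operatorname{Sing}X\le 3$. Since $R$ is a complete intersection, the singular locus is cut out by the $2\times 2$ minors of the Jacobian of $(S,r)$ together with $(S,r)$ itself (Jacobian criterion in characteristic zero). We invoke Theorem~\ref{thm:Sing-exact}, treating the two open pieces separately. On $D(p)$, the entry $\partial r/\partial T=p^2$ is a unit, so singularity forces every partial derivative of $S$ to vanish. Remark~\ref{rem:singlocus} then places the point in $V(q,F_0,J_2)$, and intersecting with $V(r)$ gives the irreducible $3$-fold $V_{\mathrm{old}}$. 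On $V(p)$, the fiber over $V(p,E)$ is $T$-free, and the base is the union of the $2$-dimensional $V(P_i)$ from part (B). Each component therefore has dimension $2+1=3$. Together these give $\dim\operatorname{Sing}X=3$, which has codimension $2$ in the $5$-fold $X$, so $(R_1)$ holds and $R_{\mathbb{Q}}$ is normal.

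\emph{Step 3: base change.} Let $R_k=R_{\mathbb{Q}}\otimes_{\mathbb{Q}}k$. The presentation, and hence the complete-intersection property, is stable under flat field extension. The Jacobian ideal also commutes with base change, so $\operatorname{Sing}(X_k)=\operatorname{Sing}(X_{\mathbb{Q}})\times_{\mathbb{Q}} k$, and dimension is preserved by field extension. Consequently $(S_2)$ and $(R_1)$ both hold over $k$, and Serre's criterion applies again. Alternatively, one can use that $R_{\mathbb{Q}}$ is normal and $\mathbb{Q}\subset k$ is separable, so geometric normality holds in characteristic zero.

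\emph{Main obstacle.} The hardest input is the dimension bound on $V(p)$. It rests on the exact primary decomposition in Theorem~\ref{thm:Sing-exact}(B), and in particular on the claim that no minimal prime of $(S,E,\text{minors})$ has dimension $3$ in the five variables $(q,J_1,J_2,F_0,H)$. If I were to verify this independently rather than cite it, I would give a direct argument that $V(p,E,S)$ has dimension at most $3$ and that the minors cut it down by at least one more. Since $C_5/pC_5$ is a domain and $\bar E\neq0$, the set $V(p,E,S)$ is $3$-dimensional. It then suffices to exhibit a single minor that does not vanish identically on any component of $V(\bar E)\subset\operatorname{Spec}C_5/pC_5$.
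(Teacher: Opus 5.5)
Your proposal is correct and follows the paper's own argument. The complete-intersection presentation gives Cohen--Macaulayness and hence $(S_2)$; the Jacobian computation on $D(p)$, together with the fiber decomposition of Theorem~\ref{thm:Sing-exact}(B) on $V(p)$, gives $\dim\operatorname{Sing}X=3$ and hence $(R_1)$; and compatibility of the presentation and the Jacobian ideal with field extension transfers normality from $\mathbb{Q}$ to any characteristic-zero $k$. One small correction to Step~1: the ambient polynomial ring is Cohen--Macaulay because it is regular, not because it is graded, and the regular-sequence property is in any case immediate, since $S\neq 0$ in the polynomial domain and $p^2T-E\neq 0$ in the domain $C_5[T]$.
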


The presentation also fixes the top theory of the closed ring: the next section separates what valuation methods can prove from what the hidden directions refute.

\section{The two-slope top theory}
\label{sec:two-slope}

The closed ring $R = A_5^{\Dmap{5}} = C_5[F_{46}^{\mathrm{int}}]$ has
presentation $R \cong C_5[T]/(p^2T-E)$. Over $K = k[p,q,J_1,J_2]$ the
ring is generated by $F_0, H, F_{46}^{\mathrm{rat}}$ of $a_0$-degrees $(1,2,4)$ with
tops $(t_0,t_H,t_{46}) = (p^2q^4,\,11025p^4q^4,\,\frac{18375}{4}p^6q^8)$
(the rational normalization; scale does not affect degrees or valuations) of $(v_p,v_q)$-bidegrees $(2,4)$, $(4,4)$, $(6,8)$ by direct expansion. Since
$A = B[a_0]$ is a domain, $a_0$-tops multiply, extending the
$F_0,H$ case of Section~\ref{sec:second-witness} to $F_{46}$ via the
identical argument ($t_{46} \ne 0$).

\subsection{The q-envelope}
\begin{theorem}[Q-top-strong, even degree]
\label{thm:q-top}
Let $F \in A_5^{\Dmap{5}}$ have even $a_0$-degree $m = 2k$ with top
$C_m \in K$. Then $v_q(C_m) \ge 4k = 4\lceil m/2\rceil$.
\end{theorem}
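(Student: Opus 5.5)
The plan is to deduce the estimate from the closure theorem. By Theorem~\ref{thm:closure}, $F\in R=K[F_0,H,F_{46}^{\mathrm{rat}}]$, so it suffices to control $a_0$-tops of elements of $R$. The generators have $a_0$-degrees $(1,2,4)$ and tops $t_0=p^2q^4$, $t_H=11025p^4q^4$, $t_{46}=\frac{18375}{4}p^6q^8$. Since $A=B[a_0]$ is a domain, tops multiply. Hence a monomial $\mu F_0^aH^bF_{46}^c$ with $\mu\in K$ has $a_0$-degree $d=a+2b+4c$ and top $\mu\,t_0^at_H^bt_{46}^c$, with
\[
v_q(\mu\,t_0^at_H^bt_{46}^c)\ \ge\ 4a+4b+8c .
\]
For $d=m=2k$ even, this bound already dominates $4k=2a+4b+8c$. (For odd $a$ one even has $4a\ge 2a+2$, which is why the envelope reads $4\lceil m/2\rceil$.) So the whole content of the theorem is to rule out cancellation: the true top of $F$ must come from monomials whose formal degree equals the $a_0$-degree of $F$.

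To arrange this, I would run a subduction (SAGBI-type) argument with respect to the formal degree $\phi(a,b,c)=a+2b+4c$. First, write $F=\sum\mu_{a,b,c}F_0^aH^bF_{46}^c$, and let $d$ be the maximal formal degree occurring. If the $K$-combination $\sum_{\phi=d}\mu_{a,b,c}\,t_0^at_H^bt_{46}^c$ is nonzero, it is the $a_0$-top of $F$, so $d=m$. Every summand then satisfies the monomial bound above. The top of $F$ is therefore a $K$-combination of elements of $q$-valuation at least $4k$, which gives the claim.

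The remaining case is cancellation. I would show that the kernel of the top map
\[
K[X,Y,T]\to K[a_0],\qquad X\mapsto t_0a_0,\ Y\mapsto t_Ha_0^2,\ T\mapsto t_{46}a_0^4,
\]
is generated by the $\phi$-initial forms of the two relations of Corollary~\ref{cor:R-presentation}. These initial forms are $q^4Y-11025X^2$, coming from $S$, and $p^2T-\kappa Y^2$ with $\kappa\in k^\times$, coming from $p^2T-E$, since the $\phi$-top of $E$ is the $H^2$ term of Theorem~\ref{thm:quotient-4648}. Because $K$ is a polynomial ring and the images are monomials in $p,q$ times powers of $a_0$, this kernel is a binomial (toric) ideal over $K$. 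It can be computed directly from the exponent lattice spanned by $(2,4,1)$, $(4,4,2)$, $(6,8,4)$ in $(v_p,v_q,\deg_{a_0})$; its generators should be exactly these two binomials.

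Granting that, any cancelling leading combination equals $\alpha\cdot\operatorname{in}(S)+\beta\cdot\operatorname{in}(p^2T-E)$. Subtracting $\alpha S+\beta(p^2T-E)$, which is zero in $R$, rewrites $F$ with strictly smaller maximal formal degree. Termination follows because $F$ is weight-homogeneous of fixed Fibonacci weight, so only finitely many monomials are possible. The main obstacle is this lifting step: checking that the toric kernel of the top map has no binomial generators beyond these two initial forms, including relations involving $T$ with exponent at least $2$ and mixed $X^aT^c$ terms. I would first verify that lattice computation, and then confirm that no lower-order tails of $S$ or $E$ reintroduce a higher formal degree.
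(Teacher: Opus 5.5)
Your proposal is correct, but it argues differently from the paper. The paper does not use closure here. Since $W=F_0/c$ is monic of $a_0$-degree $1$, the top coefficient $C_m$ is the leading $W$-coefficient $H_m$. The relaxed bound R1 (Theorem~\ref{thm:R1}), a residue-field multiplicity argument resting on $r_q$ having exact degree $2$ over $L_q$, then gives $v_q(H_m)-4m\ge\lambda_q(F)\ge -2m$. Thus $v_q(C_m)\ge 2m$ for every invariant, and $2m=4\lceil m/2\rceil$ exactly when $m$ is even, which is why the parity hypothesis appears. You instead take $F\in R$ from Theorem~\ref{thm:closure}; this is legitimate, since the closure proof never invokes this theorem. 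You then show, by subduction against the presentation of Corollary~\ref{cor:R-presentation}, that $F_0,H,F_{46}$ behave like a SAGBI basis of $R$ over $K$ for the $a_0$-filtration.

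The step you single out as the main obstacle closes at once. Modulo $g_1=q^4Y-11025X^2$ and $g_2=p^2T-\kappa Y^2$, every element of $K[X,Y,T]$ reduces to a $K$-combination of monomials $X^aY^bT^c$ with $a,b\in\{0,1\}$. These map to pairwise distinct powers $a_0^{a+2b+4c}$ with nonzero monomial coefficients, so a reduced element of the kernel vanishes and the kernel is exactly $(g_1,g_2)$. The tails cannot raise the formal degree: $S-\operatorname{in}_\phi(S)$ has formal degree $0$ and $E-H^2$ has formal degree at most $2$. You also do not need weight-homogeneity for termination, and the statement does not assume it: the maximal formal degree drops strictly at each step and is bounded below by $m$.

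The trade-off is this. The paper's argument is intrinsic and independent of the certified elimination inputs behind closure, but it cannot get past $2m$. Yours depends on the full closure theorem, but it yields more. The top of any element of $R$ of $a_0$-degree $m$ is a $K$-multiple of the single product $t_0^at_H^bt_{46}^c$ with $a,b\in\{0,1\}$ and $a+2b+4c=m$. Hence $a\equiv m\pmod 2$ and $v_q(C_m)\ge 2m+2a=4\lceil m/2\rceil$ also for odd $m\ge 3$, which is the case the remark after the theorem leaves open. The same subduction also justifies the normal-form step that the proof of Theorem~\ref{thm:p-envelope} uses without proof.
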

\begin{proof}
$W = F_0/c$ is monic of $a_0$-degree $1$, so $F$ has $W$-degree $m$
with leading $W$-coefficient $C_m$. The relaxed bound R1
(Theorem~\ref{thm:R1}) gives $v_q(C_m) \ge 4m-2m = 2m = 4k$.
\end{proof}
\begin{remark}[Odd degree]
For $m = 1$, Theorem~\ref{thm:a01-closure} puts $F$ in $K[F_0]$,
whence $C_1 \in (c)$ has $v_q \ge 4$. For odd $m \ge 3$ the estimate
$v_q(C_m) \ge 4\lceil m/2\rceil$ is open: it would require
controlling sub-top $a_0$-coefficients of higher-degree monomials,
which is not established here.
\end{remark}
\begin{remark}
The three generators meet the bound with equality exactly when the
gap $4\lfloor a/2\rfloor$ vanishes: $F_0$ ($m = 1$), $H$ ($m = 2$),
$F_{46}$ ($m = 4$). In particular \(H\) attains this top envelope, although
it violates the \(W\)-coset bound of Lemma~\ref{lem:qlemma}.
\end{remark}

\subsection{The p-side: impossibility and sharp envelope}
\begin{proposition}[No bounded-defect P-top correction]
\label{prop:no-bounded-p-defect}
There is no constant $C$ with $v_p(C_m) \ge 2m - C$ for all
homogeneous invariants. Indeed $F_{46}^r$ has $m = 4r$,
$C_{4r} = (p^6q^8)^r$ (tops multiply), so $v_p = 6r$ and the
defect $8r - 6r = 2r \to \infty$.
\end{proposition}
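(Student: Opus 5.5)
The claim is that no uniform bound $v_p(C_m)\ge 2m-C$ can hold, so I will exhibit an explicit family of homogeneous invariants whose $p$-defect $2m-v_p(C_m)$ grows without bound. The family is the powers $F_{46}^r$, $r\ge1$. Each is a homogeneous invariant: $\Dmap{5}$ is a derivation, so $\Dmap{5}(F_{46}^r)=rF_{46}^{r-1}\Dmap{5}(F_{46})=0$ by Theorem~\ref{thm:F46}, and a power of a weight-homogeneous element is weight-homogeneous, of weight $46r$. It suffices to compute the top $a_0$-coefficient and its $p$-valuation exactly. The scaling (integral versus rational normalization) changes tops only by a nonzero constant, so it affects neither $v_p$ nor $a_0$-degrees.

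The key step is multiplicativity of tops. Since $A=B[a_0]$ with $B$ a domain, write $F_{46}=C_4a_0^4+(\text{lower})$, where $C_4=\kappa\,p^6q^8$ and $\kappa\in k^\times$ by Theorem~\ref{thm:F46}. Then $F_{46}^r=C_4^r a_0^{4r}+(\text{lower})$ with $C_4^r=\kappa^rp^{6r}q^{8r}\neq0$. Hence $m=\deg_{a_0}F_{46}^r=4r$ exactly, and $v_p(C_{4r})=6r$, because $v_p$ is a valuation on $\operatorname{Frac}(B)$ and $v_p(\kappa)=0$.

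Now suppose a constant $C$ existed. Then $6r\ge 8r-C$ for every $r$, i.e.\ $2r\le C$ for all $r\ge1$, which is absurd once $r>C/2$. This gives the proposition.

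The only point needing care is that the "top" in the statement means the leading $a_0$-coefficient, as in Lemma~\ref{lem:J4}. Equivalently, by the remark in the notation table that $W$-degree agrees with $a_0$-degree because $W=a_0+\rho$ is monic, it is the leading $W$-coefficient $H_m$. Both readings give the same $C_m$, so the argument applies to either. I expect no real obstacle: the proof reduces to the displayed $p$-valuation $6$ of $C_4$ from Theorem~\ref{thm:F46} together with the domain property.
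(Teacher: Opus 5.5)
Your proposal is correct and follows the paper's argument: take the family $F_{46}^r$, use that $a_0$-tops multiply over the domain $B[a_0]$ to get $m=4r$ and $v_p(C_{4r})=6r$, and conclude that the defect $2r$ is unbounded. Your additional checks (Leibniz for invariance, weight-homogeneity of powers, and that the nonzero scalar $\kappa^r$ leaves $v_p$ unchanged) make explicit what the paper's one-line proof leaves implicit.
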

\begin{theorem}[Sharp p-envelope]
\label{thm:p-envelope}
$v_p(C_m) \ge \lceil 3m/2\rceil$ for every $a_0$-degree-$m$
invariant top, and the constant $3/2$ is best possible:
$v_p = 6r = 3m/2$ on $F_{46}^r$ ($m = 4r$) for all $r \ge 1$.
\end{theorem}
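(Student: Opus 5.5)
The plan is to replace the top-by-top bookkeeping of Lemma~\ref{lem:J4} by a single valuation on $A = B[a_0]$ under which all generators of $R$ are integral, and then to invoke the closure theorem. Define the Gauss-type valuation
\[
\nu\Bigl(\sum_j b_j a_0^j\Bigr) \;=\; \min_j\Bigl(v_p(b_j) - \tfrac32 j\Bigr), \qquad b_j \in B .
\]
This is the monomial extension of $v_p$ to $B[a_0]$ with $\nu(a_0) = -3/2$. It is a genuine valuation on $\operatorname{Frac}(A)$, because $(p)$ is prime in $B$ and $B/pB$ is a domain, so the usual Gauss-lemma argument applies. The target statement is then: $\nu \ge 0$ on $A_5^{\Dmap{5}}$. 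Granting this, let $F$ be an invariant of $a_0$-degree $m$ with top coefficient $C_m$. Then $v_p(C_m) - 3m/2 \ge \nu(F) \ge 0$, and since $v_p(C_m)$ is an integer, $v_p(C_m) \ge \lceil 3m/2\rceil$.

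To show $\nu \ge 0$ on the kernel, I use Theorem~\ref{thm:closure}: $A_5^{\Dmap{5}} = R = K[F_0,H,F_{46}^{\mathrm{int}}]$. Since $\nu$ is a valuation, it suffices to check $\nu \ge 0$ on each generator, and then $\nu \ge 0$ on all sums and products.
\begin{enumerate}[label=(\arabic*)]
\item For $K \subset B$ we have $\nu = v_p \ge 0$.
\item For $F_0 = c a_0 + G_0$: the $a_0$-coefficient gives $2 - 3/2 > 0$, and $v_p(G_0) = 0$.
\item For $H = h_2 a_0^2 + \ell' a_0 + m'$, using the expansion in the proof of Lemma~\ref{lem:J4}: the three coefficients give $4 - 3 = 1$, $2 - 3/2 = 1/2$ and $v_p(m') \ge 0$.
\item For $F_{46}^{\mathrm{int}} = \sum_{j=0}^4 D_j a_0^j$ we need $v_p(D_j) \ge \lceil 3j/2\rceil$, that is $v_p(D_4) \ge 6$, $v_p(D_3) \ge 5$, $v_p(D_2) \ge 3$, $v_p(D_1) \ge 2$ and $v_p(D_0) \ge 0$.
\end{enumerate}
The top case $v_p(D_4) = 6$ is Theorem~\ref{thm:F46}.

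The main obstacle is item (4): the sub-top coefficients of $F_{46}^{\mathrm{int}}$. I would first read them off the $79$-term expansion in the supplementary data, which is a finite exact check of minimal $a_4$-exponents per $a_0$-degree. As a conceptual cross-check I would also use the $W$-expansion of Theorem~\ref{thm:F46}, which has $H_1 = H_3 = 0$ and $v_p(H_0), v_p(H_2), v_p(H_4) = -2, 2, 6$. One has $\nu(W) = \min(-3/2, v_p(\rho))$ with $v_p(\rho) = -2$. A naive estimate here loses too much, since $\nu(H_0) = -2$, so cancellation is essential. This means the direct coefficient check is the route I would actually rely on. If some $D_j$ failed the bound, the valuation approach would collapse, and the statement itself would then be in doubt at $F_{46}$.

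Sharpness follows as stated. Tops multiply in the domain $B[a_0]$, so $F_{46}^r$ has $a_0$-degree $m = 4r$ and top proportional to $(p^6q^8)^r$. This gives $v_p = 6r = 3m/2$, so no constant larger than $3/2$ can work. This is consistent with Proposition~\ref{prop:no-bounded-p-defect}.
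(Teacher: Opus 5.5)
Your reduction to ``$\nu\ge 0$ on $R$'' is sound, and items (1)--(3) are correct. Item (4), however, is false, so the argument cannot be completed. No inspection of the $79$ terms is needed. By Theorem~\ref{thm:F46}, $F_{46}^{\mathrm{int}}=H_4W^4+H_2W^2+H_0$ with $H_3=0$, $H_4=121550625\,p^6q^8$ and $W=a_0+\rho$, so
\[
D_3=[a_0^3]F_{46}^{\mathrm{int}}=4H_4\rho=486202500\,p^4q^4G_0 .
\]
Since $v_p(G_0)=0$, this gives $v_p(D_3)=4<5=\lceil 9/2\rceil$; concretely, $D_3$ contains $a_3^7a_4^4a_5^4$. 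Hence $\nu(F_{46}^{\mathrm{int}})\le 4-\tfrac92=-\tfrac12$, and the slope-$3/2$ Gauss valuation is not nonnegative on $R$. Likewise $D_2=6H_4\rho^2+H_2$ is only guaranteed $v_p\ge 2<3$. Switching to the $W$-coordinate does not help either, since $v_p(H_0)=-2$. Your fallback remark is also off: failure of a sub-top bound does not put the theorem in doubt. The theorem concerns tops only, and sub-top coefficients of $F_{46}^{\mathrm{int}}$ affect a top only when the tops of the highest-degree monomials in an expression cancel.

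The route that works stays with tops of the monomials $F_0^aH^b(F_{46}^{\mathrm{int}})^r$, as the paper's proof does. What must be supplied is that cancellation among the highest-degree tops can always be rewritten away.
\begin{itemize}
\item Let $\phi\colon K[X,Y,Z]\to R$ send $X,Y,Z\mapsto F_0,H,F_{46}^{\mathrm{int}}$, graded by $\deg(X,Y,Z)=(1,2,4)$.
\item Let $\operatorname{in}\phi\colon K[X,Y,Z]\to K[t]$ send $X,Y,Z\mapsto t_0t,\,t_Ht^2,\,t_{46}t^4$, where $t_0=p^2q^4$, $t_H=11025\,p^4q^4$, $t_{46}=121550625\,p^6q^8$.
\item Then $\ker(\operatorname{in}\phi)$ is generated by $g_1=q^4Y-11025X^2$ and $g_2=Y^2-p^2Z$. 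After rescaling $Y,Z$ this is the toric ideal of the top exponents, and $(X^2-q^4Y,\,Y^2-p^2Z)$ is prime of height two.
\item Both generators lift with strictly lower-degree remainders: $\phi(g_1)=105q^2J_2^5J_1+9J_2^7\in K$ by the relation $S$ of Theorem~\ref{thm:global-hypersurface}, and $\phi(g_2)=H^2-E\in K+KH$ by Theorem~\ref{thm:quotient-4648}.
\end{itemize}
Now suppose $F=\phi(G)$ and the top-degree part $G_M$ satisfies $\operatorname{in}\phi(G_M)=0$. Write $G_M=u_1g_1+u_2g_2$ with homogeneous $u_i$, and replace $G_M$ by $u_1g_1'+u_2g_2'$, where $\phi(g_i')=\phi(g_i)$; this lowers $M$. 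The process terminates with $M=m=\deg_{a_0}F$ and
\[
C_m=\operatorname{in}\phi(G_m)\in\sum_{a+2b+4r=m}K\,t_0^at_H^bt_{46}^r ,
\]
where each summand has $v_p=2a+4b+6r\ge\tfrac32 m$. Together with Theorem~\ref{thm:closure} and integrality, this gives the envelope. Your sharpness paragraph is correct as written.
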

\begin{proof}
Same normal form as Theorem~\ref{thm:q-top}: monomial tops have
$v_p = 2a+4b+6r \ge 3m/2$ since
$2a+4b+6r - 3(a+2b+4r)/2 = (a+2b)/2 \ge 0$; integrality gives
$\lceil 3m/2\rceil$. Sums preserve via $v_p \ge \min$.
Sharpness is the $F_{46}^r$ computation above.
\end{proof}

\subsection{p-excess directions}
Write $\varepsilon_p(F) = v_p(C_m) - \lceil 3m/2\rceil \ge 0$ for the
p-excess (nonnegative by Theorem~\ref{thm:p-envelope}). Monomial
excesses are closed-form, $\varepsilon_p = 2a+4b+6r -
\lceil 3(a+2b+4r)/2\rceil$:
\[
\begin{array}{c|c|c|c}
 & m & v_p & \varepsilon_p \\ \hline
F_0 & 1 & 2 & 0 \\
H & 2 & 4 & 1 \\
F_{46} & 4 & 6 & 0 \\
F_{46}^2 & 8 & 12 & 0 \\
F_{46}HF_0 & 7 & 12 & 1 \\
F_0^2 & 2 & 4 & 1
\end{array}
\]
\begin{proposition}[Zero-excess control]
\label{prop:excess-zero}
A monomial $F_0^aH^bF_{46}^r$ has $\varepsilon_p = 0$ iff
$a+2b \le 1$ (since $v_p - 3m/2 = (a+2b)/2$, so equality with the
ceiling holds iff $(a+2b)/2 < 1$). Consequently, if
$\varepsilon_p(F) = 0$ then the top $C_m$ contains a nonzero term
$c\,t_0^at_H^bt_{46}^r$ with $a+2b \le 1$: indeed each normal-form
term is itself a homogeneous invariant, so every term has
$v_p \ge \lceil 3m/2\rceil$, and $v_p(C_m) = \lceil 3m/2\rceil$
forces some term to attain it. In particular the zero-excess
directions are spanned by the $F_{46}$-tower and its $F_0$-translate.
Finer classification of the low-excess strata remains open.
\end{proposition}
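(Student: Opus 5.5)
The proof splits into two directions: the sharp lower bound via the $R$-normal form on tops, and the existence of monomials attaining it. For the "iff", I would use the closed-form identity from the table preceding the statement. With $m=a+2b+4r$, the exponent identity gives $v_p=2a+4b+6r$, so
\[
v_p-\tfrac32 m=\tfrac{a+2b}{2}.
\]
Since $v_p$ is an integer, $\varepsilon_p=v_p-\lceil 3m/2\rceil=0$ holds exactly when $\lceil 3m/2\rceil\ge v_p$. That means $3m/2>v_p-1$, i.e.\ $(a+2b)/2<1$, i.e.\ $a+2b\le 1$. This gives the two cases $(a,b)=(0,0)$ and $(1,0)$: the pure $F_{46}$-tower and its $F_0$-translate. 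The table entries serve as a sanity check, with $H$ and $F_0^2$ at excess $1$ and $F_{46}HF_0$ at excess $1$.

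For the consequence, I would use $A^{D_5}=R=K[F_0,H,F_{46}^{\mathrm{rat}}]$ (Theorem~\ref{thm:closure}) to write a homogeneous invariant $F$ of $a_0$-degree $m$ as $\sum \mu_{a,b,r}F_0^aH^bF_{46}^r$ with $\mu\in K$. Because $a_0$-tops multiply in the domain $B[a_0]$ and the $\mu$ are $a_0$-free, the top coefficient is
\[
C_m=\sum_{a+2b+4r=m}\mu_{a,b,r}\,t_0^at_H^bt_{46}^r ,
\]
provided this sum is nonzero. The terms of larger $a_0$-degree must cancel, so I first normalize the expression to contain no terms of $a_0$-degree exceeding $m$. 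To do this, I would choose a representation minimizing the maximal $a_0$-degree. If the top combination cancels, then that combination of monomials is a lower-degree invariant, and I can re-express it, which is the step needing care.

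Each term has $v_p\ge v_p(\mu)+2a+4b+6r\ge\lceil 3m/2\rceil$. If $v_p(C_m)=\lceil 3m/2\rceil$, the ultrametric inequality forces some term to have $v_p$ exactly $\lceil 3m/2\rceil$. That term then has $v_p(\mu)=0$ and zero monomial excess, hence $a+2b\le 1$ by the first part.

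The main obstacle is the normal-form step: ensuring that $C_m$ really is the degree-$m$ part of the top sum with no cancellation from higher-degree monomials. Tops do not simply add under cancellation, and the relation $p^2F_{46}=E$ means representations are not unique. My first attempt would be to pass to $\mathrm{gr}_{a_0}$, i.e.\ show that the initial algebra $\mathrm{in}_{a_0}(R)$ is spanned over $K$ by $t_0^at_H^bt_{46}^r a_0^m$. Failing that, I would settle for the weaker statement that every top lies in the $K$-span of such monomial tops, which still suffices for the valuation argument. If this initial-algebra claim is unavailable, I would instead state the consequence only for $F$ given with such a representation. That is how Theorem~\ref{thm:p-envelope} is implicitly used.
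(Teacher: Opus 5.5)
Your excess computation and your ultrametric step are the same as the paper's argument. You have also located the one nontrivial point correctly: the paper's proof, like that of Theorem~\ref{thm:p-envelope}, simply asserts that the top $C_m$ of an invariant has a ``normal form'' $\sum\mu_{a,b,r}t_0^at_H^bt_{46}^r$ with $\mu_{a,b,r}\in K$ and $a+2b+4r=m$. Your proposal does not establish this, so as written the consequence is proved only for invariants presented without top cancellation. None of your three routes closes it:
\begin{itemize}
\item Minimizing the maximal $a_0$-degree of a representation gives nothing by itself. Re-expressing the cancelling degree-$M$ combination just produces another representation with degree-$M$ monomials; you would need the very claim you are proving for that combination. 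What is needed is a rewriting that provably decreases something.
\item Your second fallback is not weaker: ``every top lies in the $K$-span of monomial tops'' is exactly the degree-$m$ piece of the initial-algebra claim.
\item The third fallback changes the statement.
\end{itemize}

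The missing idea is that the two relations supply exactly such a rewriting, because cancellation among monomial tops forces divisibility of a leading coefficient.

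\emph{Step in $C_5$.} Write $G=\sum\mu_{e,r}F_0^eH^r$ and let $M$ be the largest $e+2r$ occurring. The degree-$M$ tops are $c^{M-2r}h_2^r=11025^r p^{2M}q^{4M-4r}$. If they cancel, the coefficient at the largest index $r_1$ (necessarily $r_1\ge1$) is divisible by $q^4$ in $K$. Substituting $q^4H=11025F_0^2+105q^2J_2^5J_1+9J_2^7$ then moves that contribution to $H$-index $r_1-1$, plus terms of lower degree. Descent on $(M,r_1)$ terminates, so every top in $C_5$ of degree $m$ lies in $J_m$; this is the content of Theorem~\ref{thm:gr-C5}.

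\emph{Step in $R$.} Write $G=\sum_iG_i(F_{46}^{\mathrm{int}})^i$ with $G_i\in C_5$. For each index $i$ with $\deg G_i+4i=M$, the first step gives $\operatorname{top}(G_i)=\alpha_ip^{2(M-4i)}q^{4\lceil M/2\rceil-8i}$ with $\alpha_i\in K$. The degree-$M$ contribution of index $i$ is therefore $\alpha_i\beta^ip^{2M-2i}q^{4\lceil M/2\rceil}$ with $\beta\in k^\times$. A cancellation forces $p^2\mid\alpha_{i_1}$ for the largest index $i_1\ge1$. Hence $G_{i_1}=p^2Y+(\text{lower degree})$ with $Y\in C_5$, and the identity $p^2Y(F_{46}^{\mathrm{int}})^{i_1}=YE(F_{46}^{\mathrm{int}})^{i_1-1}$ lowers the largest index in degree $M$. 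This again terminates, in a representation without top cancellation whose top is $\sum\mu_{a,b,r}t_0^at_H^bt_{46}^r$. Your valuation argument then goes through verbatim.

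The same computation also shows that every invariant top lies in $(p^{2m-2\lfloor m/4\rfloor}q^{4\lceil m/2\rceil})$. This supplies the odd-degree $q$-bound that the paper leaves open after Theorem~\ref{thm:q-top}.
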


\begin{proposition}[Excess-one control]
\label{prop:excess-one}
A monomial $F_0^aH^bF_{46}^r$ of $a_0$-degree $m$ has
$\varepsilon_p = 1$ iff ($m$ even and $a+2b = 2$) or ($m$ odd and
$a+2b = 3$): with $g = (a+2b)/2 = v_p - 3m/2$ and
$f = \lceil 3m/2\rceil - 3m/2 \in \{0,\frac12\}$,
$\varepsilon_p = g - f = 1$ forces exactly these cases.
Consequently, if $\varepsilon_p(F) = 1$ then the top $C_m$ contains
a nonzero term with $a+2b \le 3$. Thus the possible monomial contributions
at excess \(1\) are restricted to the two strata \(a+2b\in\{2,3\}\): the
observed seed $F_{46}HF_0$ ($m = 7$, $v_p = 12$, $\varepsilon_p = 1$) sits
in the odd stratum alongside $F_0^3F_{46}$, and $F_0^2$ sits in the even
stratum. Finer direction classification within those strata remains open.
\end{proposition}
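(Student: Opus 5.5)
The plan is to reduce the statement to monomial tops by way of the normal form of the closed ring, exactly as in the proofs of Theorem~\ref{thm:q-top} and Theorem~\ref{thm:p-envelope}. Let $F$ be a homogeneous invariant of $a_0$-degree $m$ with $\varepsilon_p(F)=1$. By Theorem~\ref{thm:closure} and Corollary~\ref{cor:R-presentation}, $F$ is a $K$-combination of monomials $F_0^aH^bF_{46}^r$. Since $A=B[a_0]$ is a domain, $a_0$-tops multiply, so each monomial has top $t_0^at_H^bt_{46}^r$ of $a_0$-degree $a+2b+4r$ and $p$-valuation $2a+4b+6r$. The top $C_m$ is then the sum, over those monomials whose $a_0$-degree equals $m$, of their $K$-coefficients times their tops. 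Monomials of higher $a_0$-degree can only cancel among themselves and contribute nothing at degree $m$. For this reason I will work with the homogeneous part of the normal form in $a_0$-degree $m$, choosing a reduced normal form in which cancelling higher-degree blocks are removed; this is the same bookkeeping already used tacitly in Proposition~\ref{prop:excess-zero}.

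The first step is the monomial classification. With $g=(a+2b)/2$ and $f=\lceil 3m/2\rceil-3m/2$, we have $\varepsilon_p=g-f$ for a monomial, taking the $K$-coefficient to have $v_p=0$. For $m$ even, $f=0$, so $\varepsilon_p=1$ iff $a+2b=2$. For $m$ odd, $f=\tfrac12$, so we need $g=\tfrac32$, i.e.\ $a+2b=3$; parity of $m=a+2b+4r$ matches automatically. This is pure arithmetic.

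The second step handles a general $F$. Each term $\mu\,F_0^aH^bF_{46}^r$, with $\mu\in K$, has top valuation $v_p(\mu)+2a+4b+6r$, which is at least $\lceil 3m/2\rceil+\varepsilon_p(\text{monomial})$, since $v_p(\mu)\ge0$. If every contributing term had $a+2b\ge4$, then each term would have $v_p\ge 3m/2+2$, hence $\ge\lceil3m/2\rceil+2$ after integrality. The ultrametric inequality would then give $\varepsilon_p(F)\ge2$, a contradiction. So some term with $a+2b\le3$ survives with nonzero coefficient in $C_m$.

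The step I expect to be delicate is the one just used: nonvanishing of $C_m$ must not rely on terms with $a+2b\ge4$. I would handle this by noting that the claim is only about the existence of a low term. If all low terms vanished, the surviving top would be the sum of high terms alone and so would have $v_p\ge\lceil3m/2\rceil+2$. The contrapositive is therefore clean without any cancellation analysis. Finally, the examples $F_{46}HF_0$ ($m=7$, $a+2b=3$), $F_0^3F_{46}$, and $F_0^2$ ($m=2$, $a+2b=2$) are checked directly from the valuation table.
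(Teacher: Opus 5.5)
You follow the paper's argument: the $g-f$ arithmetic classifies the monomials, and the ``consequently'' clause is the same min-valuation reasoning as in Proposition~\ref{prop:excess-zero}. The contrapositive you flag as delicate is fine. The gap is one step earlier.

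Your claim that monomials of higher $a_0$-degree ``can only cancel among themselves and contribute nothing at degree $m$'' is false. If the $a_0^D$-tops of the degree-$D$ block cancel ($D>m$), that block is a nonzero invariant of lower $a_0$-degree, and its sub-top coefficients feed into $[a_0^m]F$. For instance, $q^4H-11025F_0^2=105q^2J_2^5J_1+9J_2^7$ writes a degree-$0$ invariant with two degree-$2$ monomials. On the $p$-side these contributions cannot be bounded termwise. Since $\bar F_{46}^{\mathrm{int}}=B\theta$ with $\theta=2xa_0-z^2$ ($B$ as in Remark~\ref{rem:residual-images}), one has $[a_0^r]F_{46}^r\equiv(2x)^rB^r\not\equiv0\pmod p$. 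This is a sub-top coefficient with $v_p=0<\lceil 3r/2\rceil$. So ``choosing a reduced normal form'', with every monomial of $a_0$-degree $\le m$, is a genuine assertion and needs proof. It is equivalent to saying that the degree-$m$ tops of $R$ are the $K$-span of the products $t_0^at_H^bt_{46}^r$ with $a+2b+4r=m$. The paper's proofs of Theorem~\ref{thm:p-envelope} and Proposition~\ref{prop:excess-zero} also rely on it without comment.

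It can be supplied by subduction. Use the integral normalization $t_{46}=121550625\,p^6q^8=11025^2p^6q^8$. Let $\pi\colon K[X,Y,Z]\to K[t]$ send $X\mapsto ct$, $Y\mapsto h_2t^2$, $Z\mapsto t_{46}t^4$, graded by $(1,2,4)$. Its kernel is generated by $q^4Y-11025X^2$ and $p^2Z-Y^2$, for three reasons:
\begin{enumerate}
\item the lattice of exponent relations is spanned by the two corresponding vectors;
\item every binomial of that lattice reduces to zero modulo these two after replacing $p^{2\beta}Z^{\beta}$ by $Y^{2\beta}$;
\item adjoining $J_1,J_2$ is a flat base change from $k[p,q]$.
\end{enumerate}
The two generators lift to $S$ and $p^2T-E$. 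Concretely, $q^4H-11025F_0^2=105q^2J_2^5J_1+9J_2^7$, and $p^2F_{46}-H^2=E-H^2$ is a $K$-combination of $1$ and $H$. Both right-hand sides have $a_0$-degree $\le 2$.

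Now suppose a representation of $F$ has maximal monomial degree $D>m$. Its degree-$D$ block $P$ satisfies $\pi(P)=([a_0^D]F)\,t^D=0$. Hence $P=\alpha\,(q^4Y-11025X^2)+\beta\,(p^2Z-Y^2)$ with $\alpha,\beta$ homogeneous of degrees $D-2$ and $D-4$. Substituting the lifts rewrites $P(F_0,H,F_{46})$ using monomials of degree $\le D-2$. Descending on $D$ gives a representation with all monomials of degree $\le m$, in which $C_m=\sum\mu\,t_0^at_H^bt_{46}^r$ over $a+2b+4r=m$. With that lemma in place, your proof goes through as written.
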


\begin{remark}
The sharp \(p\)-envelope is strictly weaker than the \(p\)-component
\(v_p\ge 2m\) required for \(J_m\)-membership; it does not by itself imply
\(C_m\in J_m\).
\end{remark}

\section{Base change of exact rank and elimination statements}
\label{sec:trust}

Exact linear-algebra and elimination identities used in the paper are
computed over \(\mathbb{Q}\). The following lemma records what passes to an
arbitrary characteristic-zero field and what does not.

\begin{lemma}[Transfer of $\mathbb{Q}$-certificates to arbitrary characteristic-zero $k$]
\label{lem:QQ-transfer}
All exact linear-algebra and Gr\"obner-elimination certificates in this paper use input data with coefficients in $\mathbb{Q}$ (indeed in $\mathbb{Z}[1/420]$). Let $k$ be any characteristic-zero field, so $k$ is a field extension of $\mathbb{Q}$, hence a faithfully flat $\mathbb{Q}$-module. Then: (i) \emph{ranks transfer:} a matrix over $\mathbb{Q}$ and its image over $k$ have the same rank, since a nonzero rational minor stays nonzero in $k$; exact $\mathbb{Q}$-rank equalities (spanning, quotient dimensions, torsion counts) therefore hold over $k$. (ii) \emph{elimination ideals commute with this base change:} for $I\subset\mathbb{Q}[X]$ and $J=I\cdot k[X]$, contraction along a coordinate projection satisfies $J\cap k[Y]=(I\cap\mathbb{Q}[Y])\cdot k[Y]$, because $-\otimes_{\mathbb{Q}}k$ is exact; an elimination identity certified over $\mathbb{Q}$ thus yields the same identity over $k$. What does \emph{not} transfer is primality and primary-decomposition component counts: $D\otimes_{\mathbb{Q}}k$ for a $\mathbb{Q}$-domain $D$ need not be a domain, and minimal primes can split. Hence component counts (notably the six singular components) are stated over $k=\mathbb{Q}$, with an explicit geometric-prime note wherever a general-$k$ reading would require one.
\end{lemma}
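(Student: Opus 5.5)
The plan is to treat (i) and (ii) as standard faithful-flatness facts for the field extension $\mathbb{Q}\subset k$, and then check that every certificate in the paper is really one of these two kinds.

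For (i), let $M$ be an $r\times s$ matrix over $\mathbb{Q}$. Its rank is the largest size of a nonvanishing minor. Each minor is a polynomial with integer coefficients in the entries, so its value in $k$ is the image of a rational number under the injective map $\mathbb{Q}\hookrightarrow k$. A minor is therefore nonzero over $\mathbb{Q}$ exactly when it is nonzero over $k$, and the ranks agree. The dimensions used in the paper come from rational matrices: $\dim\ker(\Dmap{5}|_{V_w})$, the ranks of the $C_5$-monomial images $G_w$, the augmented solves, and the torsion counts. The weight pieces $V_w$ have monomial bases defined over $\mathbb{Q}$, and $\Dmap{5}$ and all generators have rational coefficients. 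Hence $V_w^{k}=V_w^{\mathbb{Q}}\otimes_{\mathbb{Q}}k$, and kernels, images and their dimensions base-change compatibly. One could equally say that kernels commute with the exact functor $-\otimes_{\mathbb{Q}}k$.

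For (ii), let $I\subset\mathbb{Q}[X]$, write $X=Y\sqcup Z$, and put $J=I\,k[X]$. The inclusion $(I\cap\mathbb{Q}[Y])k[Y]\subseteq J\cap k[Y]$ is clear. For the reverse inclusion, view $\mathbb{Q}[Y]\to\mathbb{Q}[X]/I$ as a map of $\mathbb{Q}$-vector spaces. Its kernel is $I\cap\mathbb{Q}[Y]$. Tensoring with $k$ is exact, so the kernel of $k[Y]\to k[X]/J$ is $(I\cap\mathbb{Q}[Y])\otimes_{\mathbb{Q}}k$. This uses $\mathbb{Q}[X]\otimes k=k[X]$ and $(\mathbb{Q}[X]/I)\otimes k=k[X]/J$, the latter by right exactness. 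It follows that $J\cap k[Y]=(I\cap\mathbb{Q}[Y])k[Y]$. Applied to the graph ideals $J_q$ and $J_p$ of Lemmas~\ref{lem:q-elimination-cert} and~\ref{lem:p-elimination-cert}, this shows that the $\mathbb{Q}$-generated ideals $(S,p^2T-E,Q)$ and $(S,\bar c_{46})$ remain the elimination ideals over $k$. The graph ideals themselves are generated by the same $\mathbb{Q}$-polynomials. The algebraic consequences, namely the lift-and-cancel arguments in Theorems~\ref{thm:q-saturation} and~\ref{thm:p-saturation}, then run verbatim over $k$.

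The negative clause needs only one example. The domain $\mathbb{Q}[t]/(t^2+1)$ becomes $k\times k$ after tensoring with $k\ni i$, so it is no longer a domain and its minimal primes split.

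The main obstacle is not the algebra but the bookkeeping. One must confirm that each cited certificate really has all of its inputs over $\mathbb{Q}$, including the normalization of $F_{46}^{\mathrm{int}}$, whose coefficients are integers. One must also confirm that no step secretly uses primality of an ideal over $k$. The only such step I see is the prime property of the $P_i$ in Theorem~\ref{thm:Sing-exact}(B), and the lemma already excludes that from the transfer.
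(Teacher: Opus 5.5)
Your proposal is correct and follows essentially the paper's own justification: the nonvanishing-minor argument for (i), and for (ii) exactness of $-\otimes_{\mathbb{Q}}k$ applied to $0\to I\cap\mathbb{Q}[Y]\to\mathbb{Q}[Y]\to\mathbb{Q}[X]/I$. You spell this out more fully than the paper does; the identification of the image of $(I\cap\mathbb{Q}[Y])\otimes_{\mathbb{Q}}k$ with the extended ideal $(I\cap\mathbb{Q}[Y])k[Y]$ is left implicit but is immediate, and your $\mathbb{Q}[t]/(t^2+1)$ example is a valid concrete witness for the non-transfer clause, which the paper only asserts.
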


\section{The multfloor transfer theorem}
\label{sec:transfer}

The theorem below concerns triangular derivations
with a pole element and finite residue data. The \(q\)-side instantiation
recovers Theorem~\ref{thm:R1}; the \(p\)-side instantiation is conditional;
the Daigle--Freudenburg instantiation is unconditional. We work in
characteristic zero throughout.

\subsection{Setup and hypotheses}

Let $D$ be triangular on a domain $A = B[t]$ with $t$ transcendental over
$B$ and $D(t) \in B$. Let $K = B^D$ and assume:
\begin{enumerate}
\item[(H1)] $K$ is factorially closed in $B$ (standard for LND kernels).
\item[(H2)] There is a pole element $u = t + \rho$ with
$0 \ne \rho \in B_f$ for some $0 \ne f \in K$, $D(u) = 0$, and $u$
transcendental over $\operatorname{Frac}(B)$.
\item[(H3)] Every invariant $F \in A^D$ expands uniquely
$F = \sum_{j=0}^m H_j u^j$ with $H_j \in K_f$ ($m$ the $u$-degree).
This holds wherever the localized slice presentation
$A_f^D = K_f[u]$ is established; it is verified per instance below.
\item[(H4)] For each prime $\pi_i \mid (f)$ in $K$ (height-one, with
$\pi_i \in K$), the valuation $v_i$ extends to $\operatorname{Frac}(B)$
with $v_i|_B \ge 0$; the pole orders $e_i = -v_i(\rho) > 0$ are exact;
residues of $K_f$-elements land in $L_i = \operatorname{Frac}(K/\pi_i)$.
This landing is an independent hypothesis: it does not follow from
factorial closure (H1) alone.
\item[(H5)] Each initial form $r_i = \operatorname{in}_i(\pi_i^{e_i}\rho)$
has separable minimal polynomial $m_i$ of exact degree $d_i < \infty$
over $L_i$.
\end{enumerate}

\begin{theorem}[Multfloor transfer]
\label{thm:transfer}
Under {\rm (H1)--(H5)}, every $D$-invariant $F$ of $u$-degree $m$
satisfies, at each $i$,
\[
\boxed{\lambda_i(F) := \min_j (v_i(H_j) - e_ij) \ge -(e_i/d_i)m,}
\]
with deficit $+\infty$ for $H_j = 0$.
\end{theorem}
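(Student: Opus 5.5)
Fix a prime index \(i\) and drop it from the notation: write \(\pi=\pi_i\), \(v=v_i\), \(e=e_i\), \(d=d_i\), \(L=L_i\), \(r=r_i\). The plan is to substitute \(u=t+\rho\) back into the expansion of (H3) and use the fact that \(F\) is polynomial in \(t\) over \(B\). Expanding by the binomial theorem gives
\[
F=\sum_{j}H_j(t+\rho)^j=\sum_{k}\Big(\sum_{j\ge k}\binom{j}{k}H_j\rho^{\,j-k}\Big)t^k .
\]
Since \(t\) is transcendental over \(\operatorname{Frac}(B)\) and \(F\in B[t]\), every \(t^k\)-coefficient \(c_k\) lies in \(B\), so \(v(c_k)\ge 0\) by (H4). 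In particular \(c_0=\sum_j H_j\rho^j=F|_{t=0}\) has \(v\ge 0\). We argue by contradiction: suppose \(\lambda:=\lambda(F)<-(e/d)m\).

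The first step is to normalize. Choose \(h\in\operatorname{Frac}(K)\) with \(v(h)=-\lambda\); a power of \(\pi\) will do. Set \(\tilde H_j=hH_j\pi^{-ej}\), so that \(\min_j v(\tilde H_j)=0\). Then
\[
hc_0=\sum_j\tilde H_j(\pi^e\rho)^j .
\]
Each term has valuation at least \(0\), and \(v(hc_0)\ge -\lambda>0\). Reducing modulo the maximal ideal of the valuation ring (in its residue field, which contains \(L\)) gives
\[
\sum_j \bar{\tilde H}_j\,r^j=0 .
\]
By (H4) the \(\bar{\tilde H}_j\) lie in \(L\), and not all of them vanish. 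So the polynomial \(Q(X)=\sum_j\bar{\tilde H}_jX^j\in L[X]\) is nonzero and has \(r\) as a root. Hence \(m_r\mid Q\), where \(m_r\) is the minimal polynomial of \(r\) from (H5).

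This alone does not contradict anything, and this is the main obstacle. A single congruence only shows that the "initial polynomial" is divisible by \(m_r\); it does not bound the deficit. To get the slope \(e/d\), I would run the same reduction on all the coefficients \(c_k\), not just \(c_0\). Doing so, the initial forms satisfy \(Q^{(k)}(r)=0\), because \(c_k\) is, up to normalization, \(\tfrac{1}{k!}\partial_t^k F|_{t=0}=\sum_j\binom{j}{k}H_j\rho^{j-k}\). For this to work each \(c_k\) has to be lifted separately with its own power of \(\pi\), which is where the argument needs care. Separability in (H5) then gives that \(r\) is a root of \(Q\) of multiplicity \(\ge\) (number of \(k\) used). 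Since \(\deg Q\le m\), I expect the key count to be the following: the deficit \(-\lambda\) forces vanishing of roughly \(-\lambda/e\) successive derivatives to order \(0\), which makes \(m_r^{\lceil -\lambda/e\rceil}\) divide \(Q\). Comparing degrees gives \(d\cdot(-\lambda/e)\le m\), i.e. \(\lambda\ge -(e/d)m\).

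To make the count precise, I would refine the normalization into a Newton-polygon argument. Consider the points \((j,v(H_j)-ej)\) together with the constraint \(v(c_k)\ge 0\) for every \(k\). The steepest admissible edge of the lower hull over \(K_{(\pi)}\) corresponds to a factor of the residual polynomial that is a power of \(m_r\), because the roots are \(\rho\)-translates with initial form \(r\). An edge of horizontal length \(\ell\) and slope \(s\) needs \(d\mid\ell\) and residual degree \(\ell\), so the total drop is \(\le(e/d)\cdot m\) in the normalized units. Verifying that the slope is at most \(e/d\) per unit of \(j\) is exactly where exactness of \(d\) and separability enter: separability lets multiplicities of the root \(r\) be read off through derivatives, which are the \(c_k\). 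I expect this bookkeeping step, linking the \(t^k\)-coefficients to derivatives of the residual polynomial, to be the main difficulty; the rest is routine valuation estimates.
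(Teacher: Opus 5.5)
Your strategy is the paper's: expand $F$ in $t$, require $v(c_k)\ge 0$ for every $t^k$-coefficient and not only $c_0$, read $Q^{(k)}(r)=0$ off the leading terms, use separability to turn vanishing derivatives into a power of $m_r$ dividing $Q$, and compare degrees. What is missing is the execution of the step you flag as the main difficulty. You state the count only as an expectation and then defer to a Newton-polygon refinement. That refinement is unnecessary, and as written it is not an argument. It is unclear which polynomial's lower hull you mean, and the claims that the roots are ``$\rho$-translates'' and that an edge ``needs $d\mid\ell$'' do not follow from (H1)--(H5). Drop that paragraph: the bookkeeping is a direct extension of your own $c_0$ normalization.

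With your $h$ and $\tilde H_j=hH_j\pi^{-ej}$, for each $0\le k\le m$,
\[
h\pi^{-ek}c_k=\sum_{j\ge k}\binom{j}{k}\tilde H_j\,(\pi^e\rho)^{j-k}.
\]
Each term on the right has $v\ge 0$, with equality exactly for $j\in S$, since binomial coefficients are units in characteristic zero. The left side has $v\ge -\lambda-ek$. So whenever $\lambda+ek<0$, reducing to the residue field gives
\[
\sum_{j\ge k}\binom{j}{k}\bar{\tilde H}_j\,r^{j-k}=\frac{Q^{(k)}(r)}{k!}=0 .
\]
Since $\lambda+ek$ increases with $k$, the qualifying $k$ form an initial segment $\{0,\dots,c-1\}$ with $c=\#\{0\le k\le m:\lambda+ek<0\}$. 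Hence $r$ is a root of $Q$ of multiplicity at least $c$.

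Now write $Q=m_r^a g$ in $L[T]$ with $m_r\nmid g$. Then $g(r)\ne 0$, and $r$ is a simple root of the separable $m_r$, so the multiplicity of $r$ in $Q$ is exactly $a$. This gives $m_r^c\mid Q$ and $dc\le\deg Q\le m$. Finally, if $\lambda<-(e/d)m$, every integer $k\in[0,m/d]$ satisfies $\lambda+ek<0$. Then $c\ge\lfloor m/d\rfloor+1>m/d$, contradicting $dc\le m$. With this inserted, your proof coincides with the paper's.
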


\begin{proof}
The minimum is attained ($H_m \ne 0$); if $\lambda := \lambda_i(F)
\ge 0$ there is nothing to prove. Assume $\lambda < 0$ and put
$S = \{j : v_i(H_j) - e_ij = \lambda\} \ne \emptyset$,
$P(T) = \sum_{j \in S} h_jT^j$ with $0 \ne h_j =
\operatorname{in}_i(H_j) \in L_i$ (nonzero by definition of the
initial form); $P \ne 0$ has $\deg P \le m$. The $t$-coefficients
$C_k = \sum_{j \ge k}\binom{j}{k}H_j\rho^{j-k}$ lie in $B$ (pure
algebra: $u$ is monic of $t$-degree $1$), so $v_i(C_k) \ge 0$ for all
$k$. Since $v_i(\rho) = -e_i$ exactly, each $(j,k)$-summand has
$v_i \ge \lambda + e_ik$, with equality iff $j \in S$; its initial
form is then $\binom{j}{k}h_jr_i^{j-k}$. Hence for every $k$ with
$\lambda + e_ik < 0$ the leading coefficient of $C_k$ vanishes:
$P^{(k)}(r_i) = 0$, using $P^{(k)}(T)/k! =
\sum_{j \ge k}\binom{j}{k}h_jT^{j-k}$ in characteristic zero. The
qualifying $k \in [0,m]$ form an initial segment $\{0,\dots,c-1\}$
with $c = \#\{0 \le k \le m : \lambda + e_ik < 0\}$; thus $r_i$ is a
root of $P$ of multiplicity at least $c$, so $m_i^c \mid P$ in
$L_i[T]$ (separable minimal polynomial), giving $d_ic \le \deg P
\le m$. If $\lambda < -(e_i/d_i)m$ then $-\lambda/e_i > m/d_i$, so
every $0 \le k \le m/d_i$ qualifies and $c > m/d_i$, contradicting
$d_ic \le m$. Hence $\lambda \ge -(e_i/d_i)m$.
\end{proof}

\subsection{Instantiations}

\emph{$N=5$, $q$-side (recovers R1).} With $e_q = 4$, $d_q = 2$ the
theorem gives $\lambda_q \ge -2m$: this is Theorem~\ref{thm:R1}, now
seen as the $q$-instantiation of the transfer. All inputs are
established (Lemma~\ref{lem:rq-exact}, descent, contraction).
For (H4): $K=k[p,q,J_1,J_2]$ is polynomial with $(p),(q)$
non-associated primes by Lemma~\ref{lem:contraction}, so
$y=A/p^a\in K_{pq}$ has $v_q(y)=v_q(A)$ with residue
$(A\bmod q)/(p^a\bmod q)\in L_q=\operatorname{Frac}(K/qK)$;
the exact degree $d_q=2$ is Lemma~\ref{lem:rq-exact}.

\emph{$N=5$, $p$-side (conditional).} With $e_p = 2$, $d_p = 4$ the
theorem would give $\lambda_p \ge -m/2$. The exact degree $d_p = 4$
is established (Remark~\ref{rem:p-obstacle}) but the residue landing
is unproved.
Hence $-m/2$ is a conditional floor, not an established bound, and no
unconditional replacement \(p\)-side \(\lambda_p\)-bound is claimed.

\emph{Daigle--Freudenburg $x$-side (unconditional).} For the
derivation of Section~\ref{sec:DF-resonance} below, (H1)--(H5) hold
with a single prime, $e_x = 1$, $d_x = 1$: the $H_j$-descent into
$(S^\Delta)_x$ is proved there, $t = -s \in L_x$ has minpoly $T - t$.
The theorem gives $\lambda_x \ge -m$ for every $u$-degree-$m$
invariant; e.g.\ $\lambda(\beta_1) = 0$ at degree $1$.
The bound coexists
with infinite generation: valuation floors do not control finite
generation.

No finite-generation consequence is drawn from these floors.

\section{Related context}
\label{sec:comparison}

The maps $\Pshift{\lambda}{N}$ are triangular automorphisms with
descending-recursion inverses and unit Jacobian determinant
(Lemma~\ref{lem:inverse}), and each $\Dmap{N}$ is a triangular monomial
locally nilpotent derivation. The construction therefore falls within the
standard framework of triangular automorphisms and locally nilpotent
derivations (Freudenburg~\cite{freudenburg2017}; van den
Essen~\cite{vandenEssen1993,vandenEssen2002}; Dufresne--Kohls~\cite{dufresne2010,elmer2012}).
The Fibonacci-weighted grading is used only as a descriptive grading. This
distinguishes it from the linear `Fibonacci derivations' of
Bedratyuk~\cite{bedratyuk2012,bedratyuk2018}.

Slice and localization arguments, plinth ideals, and saturation belong to
the classical toolkit of triangular invariant theory, in the lineage of the
dimension-five examples of Daigle--Freudenburg~\cite{daigle1999} and the
triangular kernel methods of Maubach~\cite{maubach2000triangular,maubach2000algorithm}
and van den Essen~\cite{vandenEssen1993}.

What is specific here is the adjacent-quadratic family itself and its exact
\(N=5\) invariant ring: the polynomial base kernel, the two explicit extra
invariants, the six-generator hypersurface, and the residue-algebraicity
mechanism underlying the two global obstructions.

\section{Resonance of the Daigle--Freudenburg derivation}
\label{sec:DF-resonance}

The resonance computation (\(\rho=1\)) is proved below. The infinite generation
of the DF kernel is the theorem of
Daigle--Freudenburg~\cite{daigle1999}, cited not proved. We work in
characteristic zero.

Let $R_{\mathrm{DF}} = k[x,s,t,u,v]$ with the triangular derivation
\[
D = x^3\partial_s + s\partial_t + t\partial_u + x^2\partial_v,
\qquad S = k[x,s,t,u], \quad \Delta = D|_S.
\]
(Within this section $R_{\mathrm{DF}}$ denotes the DF ambient ring, distinct from the $N=5$ closed ring $R=C_5[F_{46}^{\mathrm{int}}]$.)
The base kernel contains \(x\) and, by direct calculation,
\[
\gamma_0 = 2x^3t - s^2, \qquad
\delta_0 = 3x^6u - 3x^3st + s^3,
\]
with plinth preimages \(\Delta(s) = x^3\),
\(\Delta(3x^3u - st) = \gamma_0\),
\(\Delta(3x^3su - 4x^3t^2 + s^2t) = \delta_0\).
Put \(\beta_1 = xv - s\); \(D(\beta_1) = x\cdot x^2 - x^3 = 0\).

\begin{lemma}[No monic pole element]
\label{lem:DF-nomonic}
$x^2 \notin \Delta(S)$: there is no $\rho \in S$ with
$\Delta(\rho) = -x^2$, hence no pole element $u = v + \rho$ with
$\rho \in S$. The minimal $v$-coefficient is $x$ (witness $\beta_1$).
\end{lemma}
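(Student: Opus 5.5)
We have to show that $\Delta(\rho)=-x^2$ has no solution $\rho\in S$. We also have to show that $x$ is the smallest possible $v$-coefficient, witnessed by $\beta_1$.

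The tool is a weighted grading. Make $\Delta$ homogeneous by setting $\deg x=1$ and $\deg s=a$. Then $\Delta(s)=x^3$ forces $a-1=3$, i.e.\ $\Delta$ raises degree by $d=a-3$. The condition $\Delta(t)=s$ gives $\deg t=a+d$, and $\Delta(u)=t$ gives $\deg u=a+2d$. One consistent choice is $\deg(x,s,t,u)=(1,3,5,7)$ with $\Delta$ of degree $+2$. Since $\deg x^2=2$, any solution $\rho$ may be replaced by its degree-$0$ component. The only degree-$0$ elements of $S$ are the constants, and $\Delta$ kills constants. So $x^2\notin\Delta(S)$.

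This grading must be checked carefully: $\Delta(s)=x^3$ has degree $3=\deg s$, so the rule is not "degree $+2$". The fix is to grade by $\deg x=1$, $\deg s=3$, $\deg t=3+e$, $\deg u=3+2e$, where $\Delta$ has degree $e$. From $\Delta(s)=x^3$ we need $3=3+e$, so $e=0$. Then $\Delta(t)=s$ gives $\deg t=3$, and $\deg u=3$. With this choice $\Delta$ preserves degree. We still have $\deg x^2=2$, so any $\rho$ with $\Delta(\rho)=-x^2$ can be taken homogeneous of degree $2$. The degree-$2$ part of $S$ is spanned by $x^2$ alone, because $s,t,u$ all have degree $3$. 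Since $\Delta(x^2)=0$, no such $\rho$ exists.

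For the pole-element claim, suppose $u'=v+\rho$ with $\rho\in S$ and $D(u')=0$. Then $D(u')=x^2+\Delta(\rho)=0$, which is exactly the impossible equation. For minimality of the $v$-coefficient, take an invariant $g_0+g_1v$ with $g_1,g_0\in S$. Invariance requires $\Delta(g_1)=0$ and $x^2g_1+\Delta(g_0)=0$. Decompose into homogeneous pieces; $v$ may be given degree $2$ so that $D$ stays homogeneous. If $x\nmid g_1$, set $x=0$. On $S/xS$ the derivation becomes $\bar\Delta=s\partial_t+t\partial_u$, and the relation collapses to $\bar\Delta(\bar g_0)=0$. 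This alone gives no contradiction, which suggests the right statement: the coefficient $g_1$ of $x^2$ must lie in $x\,k[\dots]$. So the real content is that $x^2\cdot(\text{unit})\notin\Delta(S)$. The claim "the minimal coefficient is $x$" then means only that $x$ occurs as a coefficient, namely via $\beta_1$, while the coefficient $1$ does not.

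The main obstacle is simply choosing the grading so that $\Delta$ is homogeneous. The degree-preserving grading $(1,3,3,3)$ does this, and it makes the whole argument a one-line degree count.
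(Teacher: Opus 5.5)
Your final argument is correct. It uses the same idea as the paper, a weighted grading that makes $\Delta$ homogeneous followed by a degree count, but with a different and somewhat cleaner grading.

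The paper grades by $w(x,s,t,u)=(0,1,2,3)$, so that $\Delta$ lowers $w$ by exactly $1$. The graded pieces are then $k[x]$-modules, so one still has to identify the $w$-degree-$1$ part as $s\,k[x]$ and observe that $x^3p(x)=x^2$ would force $xp(x)=1$.

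Your grading $\deg(x,s,t,u)=(1,3,3,3)$ is positive, has finite-dimensional pieces, and makes $\Delta$ degree-preserving: each of $x^3\partial_s$, $s\partial_t$, $t\partial_u$ trades a degree-$3$ generator for something of degree $3$. The proof then collapses to $S_2=k\,x^2\subseteq\ker\Delta$, hence $\Delta(S)_2=0$. The pole-element consequence is immediate, as in the paper. For the witness you should actually state the check $D(\beta_1)=x\cdot x^2-x^3=0$.

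Two passages should be removed from the write-up.

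\begin{enumerate}
\item The opening grading $(1,3,5,7)$ is wrong: $\Delta(s)=x^3$ has degree $0$ while $\Delta(t)=s$ has degree $-2$. You abandon it yourself; present only the $(1,3,3,3)$ grading.
\item Your aside that the $v$-coefficient $g_1$ of a $v$-linear invariant ``must lie in $x\,k[\dots]$'' is false. Since $\Delta(3x^3u-st)=\gamma_0$ and $\Delta(\gamma_0)=0$, the element $\gamma_0v-x^2(3x^3u-st)$ is $D$-invariant, yet $\gamma_0\equiv -s^2\pmod x$.
\end{enumerate}

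So ``minimal'' cannot mean minimal for divisibility. The reading you finally adopt is the right one and matches what the paper proves: a nonzero constant coefficient is excluded by $x^2\notin\Delta(S)$, and $x$ is realized by $\beta_1$. If you want ``minimal'' to carry slightly more content, note that $\Delta(ax+bs+ct+du)=bx^3+cs+dt$. Hence the only kernel elements of degree at most $1$ are constants plus multiples of $x$, so $x$ is, up to scalars, the coefficient of least degree.
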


\begin{proof}
Grade $K[s,t,u]$ by $w(s,t,u) = (1,2,3)$ ($x$ weight $0$). Each term
of $\Delta$ lowers $w$-degree by exactly $1$ ($x^3\partial_s$: $-1$;
$s\partial_t$: $+1-2 = -1$; $t\partial_u$: $+2-3 = -1$). If
$\Delta(F) = x^2$ (w-degree $0$), only the w-degree-$1$ part of $F$
can contribute; it equals $s\cdot p(x)$, and
$\Delta(s\cdot p(x)) = x^3p(x) = x^2$ forces $x\cdot p(x) = 1$,
impossible in $k[x]$.
\end{proof}

The canonical pole element therefore lives over $S_x$:
$u = v - s/x$ with $D(u) = x^2 - x^3/x = 0$, pole order $e_x = 1$
($v_x(s) = 0$), and initial form $t = \operatorname{in}_x(xu) = -s$.

\begin{theorem}[Resonance of DF1999]
\label{thm:DF-rho}
$\rho(\mathrm{DF1999}) = \{1\}$: with
$\mathfrak{p} = (x) \cap S^\Delta$ (prime, by contraction) and
$L_x = \operatorname{Frac}(S^\Delta/\mathfrak{p})$, the initial form
$t = -s$ lies in $L_x$, with separable minpoly $T - t$ of exact
degree $1$.
\end{theorem}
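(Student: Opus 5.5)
To prove Theorem~\ref{thm:DF-rho}, I would first settle the arithmetic of the prime $\mathfrak{p}$ and then read off the initial form directly. The base kernel $S^\Delta$ is factorially closed in $S$, because $\Delta$ is locally nilpotent on the domain $S$. The element $x$ is prime in $S$, so $(x)S$ is a prime ideal and its contraction $\mathfrak{p}=(x)S\cap S^\Delta$ is prime; this is the "by contraction" clause of the statement. Factorial closedness also gives $\mathfrak{p}=xS^\Delta$: if $f\in S^\Delta$ and $f=xg$ with $g\in S$, then $g\in S^\Delta$. So $\mathfrak{p}$ is a height-one prime of $S^\Delta$ generated by $x$, and the valuation $v_x$ on $\operatorname{Frac}(S)$ restricts to the $\mathfrak{p}$-adic valuation on $\operatorname{Frac}(S^\Delta)$.

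Next I compute the initial form. The pole element is $u=v-s/x$, so $xu=xv-s=\beta_1$. Since $v_x(s)=0$, we get $v_x(xu)=0$ and $v_x(u)=-1$, which gives $e_x=1$. The $x$-adic initial form of $xu$ is its reduction modulo $x$, namely $\operatorname{in}_x(xu)=\overline{xv-s}=-s$, the class of $-s$ in $S/xS=k[s,t,u]$; this is $t$ in the statement's notation.

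The key step is to show $-s$ lies in $L_x$, meaning $\bar s$ lies in the image of $S^\Delta/\mathfrak{p}$ inside $\operatorname{Frac}(S/xS)$. I would use the explicit invariant $\gamma_0=2x^3t-s^2\in S^\Delta$, which reduces to $-s^2$ modulo $x$. That only puts $s^2$ in $L_x$. To get $s$ itself I need a kernel element, or a quotient of two, that reduces to a multiple of $s$. For instance, $\delta_0\equiv s^3$, so $\bar\delta_0/\bar\gamma_0=-s$ lies in $L_x$, because both $\gamma_0$ and $\delta_0$ lie in $S^\Delta\setminus\mathfrak{p}$ (their reductions are nonzero). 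This is the main point, and it is where the quadratic obstruction of the $N=5$ case disappears. In the $N=5$ case only $r_q^2$ was reachable; here the cubic invariant $\delta_0$ supplies the odd power, and the ratio is a single element of $L_x$.

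Finally, since $t=-s\in L_x$, its minimal polynomial over $L_x$ is $T-t$, which is linear and hence separable with exact degree $1$. This gives resonance $\rho=\{1\}$. I expect the only delicate point to be checking that the residue field is correctly identified with $\operatorname{Frac}(S^\Delta/\mathfrak{p})$ embedded in $\operatorname{Frac}(S/xS)$. That identification needs $\mathfrak{p}=xS^\Delta$, which is exactly the factorial-closedness argument above, so that reduction modulo $\mathfrak{p}$ agrees with reduction modulo $x$ on $S^\Delta$.
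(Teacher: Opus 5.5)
Your proposal is correct and follows the paper's proof: the paper also reduces $\gamma_0\equiv -s^2$ and $\delta_0\equiv s^3$ modulo $x$, takes their ratio to put $s$ (hence $t=-s$) in $L_x$, and reads off the linear minimal polynomial $T-t$. Your factorial-closedness step giving $\mathfrak{p}=xS^\Delta$ is a harmless extra; the embedding $S^\Delta/\mathfrak{p}\hookrightarrow S/xS$ already holds because $\mathfrak{p}$ is defined as a contraction.
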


\begin{proof}
Modulo $x$: $\bar\gamma_0 = -s^2$, $\bar\delta_0 = s^3$ (direct
substitution). Both lie in the image of $S^\Delta/\mathfrak{p}$ since
$\gamma_0, \delta_0 \in S^\Delta$ (verified above), and $-\bar\gamma_0
= s^2 \ne 0$, so $s = [\delta_0]/[-\gamma_0] \in L_x$. Thus
$t = -s \in L_x$ with minpoly $T - t$ (derivative $1 \ne 0$). No
finite-generation knowledge of $S^\Delta$ is used: only
$\gamma_0, \delta_0 \in S^\Delta$ enter.
\end{proof}

Proved here: \(\rho=1\) and the \(H_j\)-descent
\(H_j \in (S^\Delta)_x\) used in Section~\ref{sec:transfer} (via
factorial-closedness of LND kernels: with \(H_j = P/x^k\),
\(P \in S \cap \operatorname{Frac}(S^\Delta) = S^\Delta\)).
Cited: infinite generation (Daigle--Freudenburg). Hence resonance degree
alone does not determine finite generation: here the degree-one residue gives
no nontrivial algebraic relation, while finite generation nevertheless fails,
whereas at \(N=5\) algebraic residue relations permit pole cancellation.

\section{Discussion: residue degree and finite generation}
\label{sec:mechanisms}

The \(N=5\) and Daigle--Freudenburg examples illustrate two different
interactions between localization and global invariants.

At \(N=5\), algebraic residue relations permit cancellation of leading poles
and thereby allow invariant directions outside the naive localized
extension: the \(q\)-side quadratic relation admits \(H\), and the distinct
\(p\)-side degree-\(4\) algebraicity admits \(F_{46}^{\mathrm{int}}\)
(Sections~\ref{sec:global} and~\ref{sec:second-witness}).

In the Daigle--Freudenburg example, the residue degree is \(1\)
(Theorem~\ref{thm:DF-rho}), so no nontrivial residue relation exists, while
the kernel is nevertheless infinitely generated~\cite{daigle1999}. Thus
residue algebraicity is a mechanism relevant to the \(N=5\) construction,
not a finite-generation criterion.

The proved \(N=5\) closure is obtained by the second chart and saturation
(Theorem~\ref{thm:closure}), not by any general degree-based classification.

\section{Open problems}
\label{sec:conjectures}

The \(N=5\) invariant ring is now determined:
\(\Afring{5}^{\Dmap{5}} = C_5[F_{46}^{\mathrm{int}}]\)
(Theorem~\ref{thm:closure}). The remaining questions concern
generalization of the residue mechanism and the next member of the tower.

\begin{definition}[Per-prime resonance]
\label{def:resonance}
Let $D$ be triangular with $\mathfrak{p} = A^D \cap D(A)$ the plinth
ideal, $s$ a local slice with $0 \ne D(s) = f \in \mathfrak{p}$, and
$\mathfrak{p}_1,\dots,\mathfrak{p}_r$ the height-one primes of $A^D$
dividing $(f)$. At $\mathfrak{p}_i$ use the $\mathfrak{p}_i$-adic
valuation $v_i$, filtration $F_n = \{g : v_i(g) \ge n\}$, associated
graded $\mathrm{gr}$, and initial forms $\mathrm{in}_i$; put
$L_i = \mathrm{Frac}(A^D/\mathfrak{p}_i)$. For a stated pole element
$u \in B_f$ with pole order $e_i = -v_i(u) > 0$ and initial form
$t_i = \mathrm{in}_i(u)$, define $\rho_i(D,u) = 0$ if $t_i$ is
transcendental over $L_i$, else the minimal degree of an algebraic
relation of $t_i$ over $L_i$; record $\{\rho_i\}$ and aggregate
$\rho(D,u) = \max_i \rho_i(D,u)$ (the worst prime governs denominator
clearing). Canonical pole element: $u = W = F_0/c$ with $e_p = 2$,
$e_q = 4$. $W$ is $D$-invariant ($D(F_0) = D(c) = 0$), not a Kuroda
degree-$1$ slice --- the $N=5$ instantiation applies the per-prime
initial-form machinery to the saturation pole element $W$, not to a
classical slice. Slice/pole-element-independence of $\rho$ is open.
$N=5$ instances: $\rho_q = 2$ exactly
($r_q^2 = (x^2)^7/(1225p^4) \in L_q$, $r_q \notin L_q$,
Remark~\ref{rem:q-obstacle}, Lemma~\ref{lem:rq-exact}); $\rho_p = 4$ exactly
($r_p$ primitive for the full tower (exact rational computation);
Remark~\ref{rem:p-obstacle}). Hence $\rho(D,W) = \max(4,2) = 4$,
exact. The p-side resonance is dominant: \(H\) respects the p-bound,
and the p-bound is refuted by \(F_{46}\) (Section~\ref{sec:second-witness}).
Algebraicity of the initial form is necessary for this residue-cancellation
mechanism, but resonance degree alone does not imply the production of a
hidden generator.
\end{definition}

\subsection{Generalization and the next tower member}
\label{subsec:structural-observations}

The statements below concern generalization beyond \(N=5\). None is claimed as a result.

\begin{remark}[WARNING: no $\mathrm{codim} = b_1$ bound for general triangular derivations]
\label{rem:df-warning}
The bound above is monomial-only and does not extend to general
triangular derivations. Daigle--Freudenburg
\cite{daigle2001note,daigle2001triangular} construct, for every
$n \ge 3$, a triangular derivation of $k[W,X,Y,Z]$ with
$n \le \#(\ker\Delta) \le n+1$ (e.g.\ parameters $p=2,q=5,h=4$ give
$6 \le \#(\ker\Delta) \le 7$); dimension-four triangular kernels are
always finitely generated in characteristic zero
\cite{daigle2001triangular}, while their minimal generator number is
unbounded. Every such graph is a DAG
($b_1 = 0$), so no bound of the form
$\mathrm{codim}(A^D) = b_1(G)$ can hold in general; Maubach's $\le 4$
bound is monomial-triangular in dimension $4$ (covering e.g.\ our
$N=3$ member on $k[a_0,\dots,a_3]$), while the unbounded
family is general triangular. In short: monomial triangular in
dimension $4$ needs at most $4$ generators; arbitrary triangular in
dimension $4$ is finitely generated with unbounded generator number;
in dimension $5$ finite generation itself can fail. Squarefree monomiality alone does not
imply any such bound above dimension $4$.
\end{remark}

\begin{conjecture}[Saturation-obstruction principle]
\label{conj:satpredictor}
At a prime divisor \(\pi\) of the pole-clearing denominator, a transcendent
residue initial form implies no pole cancellation by the constant
coefficient, while an algebraic residue initial form means the
constant-coefficient test can fail, necessarily at \(W\)-degrees at least
the algebraic degree. The converse --- that algebraicity necessarily
produces a new invariant --- is not claimed. Instances: \(N=5\) gives
\(r_q^2 \in L_q\) (Remark~\ref{rem:q-obstacle}); the \(N=6\) \(B\)-level
gives degree \(\le 4\) at \(a_5\) and \(\le 2\) at \(a_6\)
(Remarks~\ref{rem:n6-sat5-obstacle}--\ref{rem:n6-sat6-obstacle}).
\end{conjecture}

\paragraph{Recursion across $N$.}
\label{subsec:recursion}

\begin{theorem}[Recursive shift isomorphism]
\label{thm:recursion}
For every $N \ge 6$, with $B_N = k[a_1,\dots,a_N]$
and $\Dmap{N}(a_n) = a_{n+1}a_{n+2}$
($a_{N+1} = a_{N+2} = 0$), the shift
$\varphi_N\colon \Afring{N-1} \to B_N$, $a_i \mapsto a_{i+1}$,
satisfies $\Dmap{N}(\varphi_N(a_i)) = \varphi_N(\Dmap{N-1}(a_i))$
for all $i$: for $i \le N-3$ both sides equal
$a_{i+2}a_{i+3}$, and at the boundary both sides vanish
($\Dmap{N-1}$ kills $a_{N-2},a_{N-1}$ since $a_N = 0$ in
$\Afring{N-1}$; $\Dmap{N}$ kills $a_{N-1},a_N$ since
$a_{N+1} = 0$). Fibonacci weights transfer verbatim, so
$\varphi_N$ restricts to a graded isomorphism
$\Afring{N-1}^{\Dmap{N-1}} \xrightarrow{\sim} B_N^{\Dmap{N}}$.
The $N=6$ section proves its own $B_5 \to C_6$ instance separately
(\eqref{eq:n6-Ckernel}; domain $k[a_1,\dots,a_5]$ with $\Dmap{5}$).
\end{theorem}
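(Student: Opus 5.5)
The plan has two parts: check the intertwining relation on generators, then upgrade it to an isomorphism of kernels with the graded structure.

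\emph{Intertwining.} Since $\varphi_N$ is a $k$-algebra homomorphism and both $\Dmap{N}\circ\varphi_N$ and $\varphi_N\circ\Dmap{N-1}$ are $\varphi_N$-derivations $\Afring{N-1}\to B_N$, they coincide as soon as they agree on the generators $a_0,\dots,a_{N-1}$. For $0\le i\le N-3$ we compute
\[
\Dmap{N}(a_{i+1})=a_{i+2}a_{i+3}=\varphi_N(a_{i+1}a_{i+2})=\varphi_N(\Dmap{N-1}(a_i)).
\]
For $i=N-2$ and $i=N-1$, the left side is $\Dmap{N}(a_{N-1})=a_Na_{N+1}=0$ and $\Dmap{N}(a_N)=0$. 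The right side is $\varphi_N(a_{N-1}a_N)=0$ and $\varphi_N(0)=0$, using the boundary convention $a_N=a_{N+1}=0$ in $\Afring{N-1}$. We also need $\Dmap{N}(B_N)\subseteq B_N$. This holds because $\Dmap{N}(a_n)=a_{n+1}a_{n+2}$ involves only indices $\ge 2$ for $n\ge1$, so $B_N$ is $\Dmap{N}$-stable and the restriction is exactly the derivation named in the statement.

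\emph{Isomorphism of kernels.} The map $\varphi_N$ sends the variables $a_0,\dots,a_{N-1}$ bijectively onto the variables $a_1,\dots,a_N$, so it is a $k$-algebra isomorphism $\Afring{N-1}\to B_N$ with inverse $a_j\mapsto a_{j-1}$. Since $\Dmap{N}\circ\varphi_N=\varphi_N\circ\Dmap{N-1}$ with $\varphi_N$ bijective, we get $f\in\ker\Dmap{N-1}$ if and only if $\varphi_N(f)\in\ker\Dmap{N}|_{B_N}$. Hence $\varphi_N$ restricts to a bijection of kernels.

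\emph{Gradedness.} I will interpret ``weights transfer verbatim'' as follows: the Fibonacci weight on $B_N$ restricted from $A_N$ assigns to $a_{i+1}$ the weight that $A_{N-1}$ assigns to $a_i$. The Fibonacci weights are indexed from the top ($a_N$ and $a_{N-1}$ have weight $1$, and recursively upward), so index shifting preserves weights. With this reading, $\varphi_N$ preserves weight on monomials and so is graded.

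The only delicate point is the boundary bookkeeping, together with the graded convention: one must check that the Fibonacci weights are defined relative to the top index so that the shift preserves them. Everything else is formal.
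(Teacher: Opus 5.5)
Your proposal is correct and takes the same route as the paper: you verify $D_N\circ\varphi_N=\varphi_N\circ D_{N-1}$ on the variables (generic indices $i\le N-3$ plus the two boundary indices), then restrict to kernels using that $\varphi_N$ is a weight-preserving ring isomorphism for the top-indexed Fibonacci grading. You also make explicit two points the paper leaves implicit---that agreement on generators suffices because both composites are $\varphi_N$-derivations, and that bijectivity of $\varphi_N$ gives the kernel correspondence in both directions---which tightens the paper's argument without changing it.
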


\begin{conjecture}[Birth spectrum under the shift]
\label{conj:birth-spectrum-frontier}
Each primitive \(N\)-level generator of \(a_0\)-degree \(m\) and Fibonacci
weight \(w\) transports along the shift isomorphism \(\varphi_N\) of
Theorem~\ref{thm:recursion} to a primitive \((N+1)\)-level \(B\)-generator
of matching degree and weight. Instances: \(H\mapsto H_B\) (weight \(24\))
and \(F_{46}\mapsto\varphi(F_{46})\) (weight \(46\), top outside the
transported top ideal). What transport gives is invariance plus the top
pattern; what stays open is primitivity at the next level.
\end{conjecture}

\paragraph{The \(A_6\) problem.}
What is \(A_6^{D_6}\)? By Theorem~\ref{thm:recursion} the \(B_6\)-level is
the exact transport of the \(N=5\) tower (Section~\ref{sec:n6-kernel}), but
no \(A\)-level hypersurface or finite-generation statement is known.
Exact computations determine the \(A_6\) denominator module through total
degree \(13\) (Theorems~\ref{thm:n6-denmod12}--\ref{thm:n6-closure13}).

\subsection{Where the phenomena begin and break}
\label{subsec:boundary}

\begin{theorem}[First proved occurrences of the phenomena]
\label{thm:N-boundary}
(i) At $N \le 4$ the established theory gives the polynomial kernel
$k[a_1,a_2]$ at $N = 2$ (Proposition~\ref{prop:N2-kernel}) and explicit
polynomial invariants with open-set normal forms at $N = 3, 4$
(Theorems~\ref{thm:N3}, \ref{thm:N4}); no hidden direction is detected,
and full invariant-ring presentations at $N = 3, 4$ are not attempted
here (open).
(ii) $N = 5$ is the first member with a proved hidden generator: $H$
at weight $24$ is the first invariant outside $K[F_0]$, exactly
($\ker_w = K[F_0]_w$ for $w \le 23$), and $F_{46}$ at weight $46$ is
the first invariant outside $C_5$, exactly (spanning through $45$);
the residue mechanism $r_q^2 \in L_q$ is identified, (P-top) is
refuted, (even-degree Q-top-strong) is established (Theorem~\ref{thm:q-top}).
(iii) At $N = 6$ the principal-denominator pattern already fails: the
$A_6$ denominator module is not principal (Lemma~\ref{lem:n6-H20},
Lemma~\ref{lem:n6-D2}), while the $B$-level transports the $N=5$
tower exactly, including its refutation by transported $F_{46}$
(Theorem~\ref{thm:n6-hypersurface}). The $A_6$ ring is open.
What this does not say: it is not a classification (the $N \le 4$
rings are not closed here, and nothing is claimed at $N \ge 7$); it
locates the first proved occurrence of each phenomenon.
\end{theorem}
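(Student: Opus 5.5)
The statement is a collation of three parts, so I will prove each by pointing to the results it gathers, and add an argument only where the cited results do not literally give the claim.

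For (i), I cite Proposition~\ref{prop:N2-kernel} for the polynomial kernel $k[a_1,a_2]$ at $N=2$. I cite Theorems~\ref{thm:N3} and~\ref{thm:N4} for the explicit invariants at $N=3,4$; their open-set normal forms come from Lemma~\ref{lem:bridge}. The clause ``no hidden direction is detected'' is read as a statement about what the paper contains. I would stress that it is not a theorem about the $N=3,4$ kernels, since those rings are not closed here, so nothing beyond citation is needed.

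For (ii), minimality of $H$ is Proposition~\ref{prop:H2} together with Proposition~\ref{prop:H1}. Together they give $\ker_w=K[F_0]_w$ for $w\le 23$ and $H\notin K[F_0]$ at weight $24$. Minimality of $F_{46}$ is Theorem~\ref{thm:minimal-45}, which gives spanning through weight $45$. Non-membership at weight $46$ comes from Theorem~\ref{thm:allweights-refuted} and the corollary to Lemma~\ref{lem:J4}. The residue mechanism $r_q^2\in L_q$ with $r_q\notin L_q$ is the computation of Subsection~\ref{subsec:residue-transcendence} together with Lemma~\ref{lem:rq-exact}. The refutation of the P-top bound is Theorem~\ref{thm:F46}, where $v_p(H_4)=6<8$. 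The even-degree Q-top-strong estimate is Theorem~\ref{thm:q-top}, which is R1 specialized.

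The only genuinely mathematical point is the word ``first'' in (ii): no member $N\le 4$ may have a \emph{proved} hidden generator. This is immediate, because (i) proves none. I would also note that the transport Theorem~\ref{thm:recursion} runs upward from $N-1$ to $N$, so it creates no earlier occurrence.

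For (iii), non-principality of the $A_6$ denominator module is cited from Lemmas~\ref{lem:n6-H20} and~\ref{lem:n6-D2}. The exact $B$-level transport, including the transported $F_{46}$ refutation, is Theorem~\ref{thm:n6-hypersurface}, which rests on the graded isomorphism of Theorem~\ref{thm:recursion}. Because that isomorphism is graded, it preserves weights and the top pattern, so the non-membership of $F_{46}$ transfers verbatim. The main obstacle is in this step. I must check that the $N=6$ lemmas, which lie later in the paper, use only $\mathbb{Q}$-certificates, and that these pass to general $k$ by Lemma~\ref{lem:QQ-transfer}. I must also check that ``not principal'' is a rank or elimination statement rather than a primality statement, since primality is exactly what does not transfer. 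Finally, the disclaimers (no classification, nothing at $N\ge7$, $A_6$ open) assert nothing and need no proof.
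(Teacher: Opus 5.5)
Your treatment of (i), (ii) and the transport half of (iii) is the same collation of citations the paper relies on, and it is fine. One small misattribution: the open-set normal forms $H_3$, $I_0$, $I_1$ are checked by direct expansion in Theorems~\ref{thm:N3} and~\ref{thm:N4}, while Lemma~\ref{lem:bridge} only converts discrete invariance into $D_N$-invariance of the $\lambda\to0$ limit. The one inference that does not follow from what you cite is the non-principality of the $A_6$ denominator module. Lemmas~\ref{lem:n6-H20} and~\ref{lem:n6-D2} give two directions, $a_5^4a_6^3$ and $a_5^4a_6G_4$, of total degree $7$, neither a multiple of the other. But both lie in the principal ideal $a_5^4a_6K_6$, so two non-proportional elements do not rule out $I=dK_6$ for a generator $d$ of lower degree.

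The missing reduction is short.
\begin{itemize}
\item The module $I=\{M\in K_6:\ a_1a_2M+D_6(N)=0\text{ for some }N\in B_6\}$ is an ideal of $K_6$, since $(gM,gN)$ is again a solution for $g\in K_6$.
\item If $I=dK_6$, then $d$ divides both directions in the UFD $B_6$. Hence $d=\lambda a_5^ia_6^j$ with $\lambda\in k^\times$ and $i\le4$. Moreover $j\le1$, because $a_6\nmid G_4$ (its $a_6$-free term is $-a_4^2$).
\item It follows that $a_5^4a_6=\lambda^{-1}a_5^{4-i}a_6^{1-j}d\in I$, so what you must show is $a_5^4a_6\notin I$.
\end{itemize}
The paper supplies this. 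Since $D_6$ preserves Fibonacci weight, a solution can be taken homogeneous, which would give an invariant $a_0a_5^4a_6+N$ of weight $13+5=18$ with nonzero $a_0$-part. But the exact probe in Lemma~\ref{lem:n6-H20} shows that below weight $20$ the $A_6$ kernel has no excess over the span of monomials in $G_1,\dots,G_5$, and these are $a_0$-free. The monomial sweep of Remark~\ref{rem:n6-denomsq} (solvable monomial denominators exactly for $a\ge4$, $b\ge3$) gives the same conclusion.

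This also relocates the obstacle you flag. The two cited lemmas are explicit identities with rational coefficients, valid verbatim over any $k$. The only extra input is the unsolvability of one linear system over $\mathbb{Q}$, which is a rank statement and passes to $k$ by Lemma~\ref{lem:QQ-transfer}(i). Primality never enters.
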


\paragraph{Resonance lemmas: what the constant-coefficient step proves.}
\label{subsec:resonance-lemmas}
Let $D$ be triangular locally nilpotent on $A = B[a]$ over a
characteristic-zero field, with $a$ transcendental over the domain
$B$; let $K_{f}[W]$, $W = a + \rho$ monic of $a$-degree $1$, be a
localized invariant presentation; let $v = \operatorname{ord}_\pi$
along a prime divisor $\pi$ with pole order $e = -v(\rho) > 0$,
residue fields $L \hookrightarrow \kappa$, and initial form
$r = \operatorname{in}_\pi(\pi^e\rho) \in \kappa$. Write a localized
invariant $F = \sum_{j=0}^m H_jW^j$ and put $\lambda = \min_j
(v(H_j) - ej)$.

\begin{lemma}[Transcendence closes the step]
\label{lem:resonance-transcendent}
If $r$ is transcendental over $L$, then $\lambda \ge 0$, i.e.\
$v(H_j) \ge ej$ for every $j$.
\end{lemma}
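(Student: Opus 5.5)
We argue by contradiction, following the same initial-form bookkeeping as in the proof of Theorem~\ref{thm:transfer}, but with transcendence replacing the separable minimal polynomial. Suppose $\lambda<0$. Put
\[
S=\{j : v(H_j)-ej=\lambda\}\neq\emptyset,
\]
and for $j\in S$ let $h_j=\operatorname{in}_\pi(\pi^{-\lambda-ej}H_j)$. This is a nonzero element of the residue field $L$. Here we use the standing hypothesis that residues of the localized invariant coefficients land in $L$, exactly as in (H4). Form the nonzero polynomial
\[
P(T)=\sum_{j\in S}h_jT^j\in L[T].
\]

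Next we expand $F$ in powers of $a$. Since $W=a+\rho$ is monic of $a$-degree $1$, the $a^0$-coefficient of $F$ is
\[
C_0=\sum_j H_j\rho^j .
\]
Because $F\in A=B[a]$, we have $C_0\in B$, so $v(C_0)\ge 0$. Each summand satisfies $v(H_j\rho^j)=v(H_j)-ej\ge\lambda$, with equality exactly when $j\in S$. Multiplying by $\pi^{-\lambda}$ and taking initial forms, the degree-$\lambda$ part of $C_0$ has initial form $\sum_{j\in S}h_jr^j=P(r)$. Here we use $\operatorname{in}_\pi(\pi^{e}\rho)=r$ and the multiplicativity of initial forms in the residue field $\kappa$. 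Since $\lambda<0\le v(C_0)$, this leading part must cancel, so $P(r)=0$ in $\kappa$.

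This contradicts the hypothesis that $r$ is transcendental over $L$, because $P$ is a nonzero polynomial with coefficients in $L$. Hence $\lambda\ge 0$. By the definition of $\lambda$, this gives $v(H_j)\ge ej$ for every $j$, with $H_j=0$ counted as $+\infty$. Only the $k=0$ coefficient is needed; the higher coefficients $C_k$ used in the transfer theorem are unnecessary here.

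The only delicate step is the initial-form computation. We must check that the terms with $j\notin S$ have valuation strictly greater than $\lambda$, so that they do not contribute. We also need the product $\operatorname{in}(\pi^{-\lambda-ej}H_j)\cdot\operatorname{in}(\pi^{ej}\rho^{j})$ to equal $h_jr^j$ in $\kappa$. Both follow because $v$ is a discrete valuation, whose associated graded ring is a domain, and because $L\hookrightarrow\kappa$ is compatible with taking residues. The step that genuinely needs the hypothesis is the residue landing $h_j\in L$, not merely $h_j\in\kappa$; we take it from the setup, as in (H4).
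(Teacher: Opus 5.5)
Your proof is correct and is essentially the paper's argument: assume $\lambda<0$, normalize the constant $a$-coefficient $C_0=\sum_j H_j\rho^j\in B$ by $\pi^{-\lambda}$, and pass to residues to get the nonzero relation $\sum_{j\in S}h_jr^j=0$ over $L$, which contradicts transcendence. Your explicit remark that the step needs $h_j\in L$ (not merely $h_j\in\kappa$), i.e.\ a residue-landing hypothesis, is a fair point, since the paper's proof uses it tacitly from the setup $L\hookrightarrow\kappa$.
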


\begin{proof}
Otherwise $\lambda < 0$: dividing the constant $a$-coefficient
$C_0 = \sum_j H_j\rho^j \in B$ by $\pi^{\lambda}$ and passing to
initial forms gives $\sum_{j \in S} h_jr^j = 0$ with $0 \ne h_j \in
L$ on the nonempty set $S = \{j : v(H_j) - ej = \lambda\}$,
contradicting transcendence.
\end{proof}

\begin{lemma}[Algebraicity breaks the step at high degree]
\label{lem:resonance-algebraic}
If $r$ is algebraic over $L$ of degree $d$, the constant
$a$-coefficient alone cannot rule out $\lambda < 0$ at $W$-degrees
$m \ge d$: the minimal polynomial of $r$ provides a consistent
nontrivial residue pattern, so the inference stops. Below degree
$d$ the step may still function.
\end{lemma}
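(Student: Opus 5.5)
The statement to prove is Lemma~\ref{lem:resonance-algebraic}. It is a limitation statement: the constant-coefficient test cannot by itself force $\lambda\ge 0$ once $m\ge d$. I will prove it by exhibiting a consistent residue pattern. I will show that the only information the constant $a$-coefficient $C_0=\sum_j H_j\rho^j\in B$ carries at $\pi$ is one initial-form equation. I will then show that this equation has a nonzero solution as soon as $m\ge d$.

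First I rerun the computation in the proof of Lemma~\ref{lem:resonance-transcendent}. Suppose $\lambda<0$ and put $S=\{j: v(H_j)-ej=\lambda\}$. Each summand $H_j\rho^j$ has valuation at least $\lambda$, with equality exactly for $j\in S$. The leading part of $C_0$ at valuation $\lambda$ is therefore $P(r)$, where
\[
P(T)=\sum_{j\in S}h_jT^j\in L[T],\qquad h_j=\operatorname{in}_\pi(H_j).
\]
Since $v(C_0)\ge 0>\lambda$, the condition imposed at this order is $P(r)=0$. Nothing else is imposed at order $\lambda$. Contributions of higher valuation only enter the next filtration steps, and those steps do not feed back into the leading form.

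Second, I take $P=m_r$, the minimal polynomial of $r$ over $L$, of degree $d$. Scaling, if needed, I may take $P=g\,m_r$ for any nonzero $g\in L[T]$ with $\deg P\le m$. Such a choice exists exactly when $m\ge d$. Now let $S$ be the support of $P$, and choose coefficients $H_j\in K_f$ with $\operatorname{in}_\pi(H_j)=h_j$ and $v(H_j)=\lambda+ej$ for some fixed $\lambda<0$. Residues of $K_f$ surject onto $L$ up to units, so such lifts exist. With these $H_j$, the order-$\lambda$ leading term of $C_0$ vanishes. This shows that the constant-coefficient step gives no contradiction. The $q$-side realization at $N=5$ confirms the pattern is not vacuous: there $d=2$, $m=2$, and $P(T)\propto T^2-r_q^2$, which is realized by $H$ (Section~\ref{subsec:residue-transcendence}). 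Conversely, if $m<d$, then any nonzero $P$ of degree $\le m$ satisfies $P(r)\ne 0$, because $d$ is minimal. So the step still closes there, which gives the final sentence of the lemma.

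The main obstacle is making precise that ``the inference stops''. I am not claiming that a genuine invariant with $\lambda<0$ exists. The claim is only that the constant-coefficient valuation test is satisfiable. I would state the lemma's conclusion in that form: a formal $W$-expansion whose $C_0$ passes the order-$\lambda$ test.
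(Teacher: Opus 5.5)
Your proposal is correct and follows the paper's own proof: the coefficients of $m_r$ (padded by a factor $g$ up to degree $m$) give a nonzero pattern $\sum_{j\in S}h_jr^j=0$ that the order-$\lambda$ test on $C_0$ cannot exclude, your check $P(T)=11025p^4(T^2-r_q^2)$ for $H$ is right, and your converse for $m<d$ is just the argument of Lemma~\ref{lem:resonance-transcendent} with transcendence replaced by $\deg m_r=d>m$; that converse is valid under the same standing assumption that the residues $h_j$ land in $L$, which is presumably why the paper only says ``may''. For the lifting step, read ``surject onto $L$ up to units'' as ``after clearing a common denominator of $P$ in $L$'': residues of normalized $K_f$-elements need not exhaust $L$, but rescaling $P$ by an element of $L^\times$ does not change its roots.
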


\begin{proof}
With $\deg m_r = d \le m$, the coefficients of $m_r$ give $0 \ne h_j
\in L$ with $\sum_{j=0}^d h_jr^j = 0$, a consistent negative-$\lambda$
initial-form equation the constant coefficient cannot exclude.
\end{proof}

What is not proved here: algebraicity does not by itself produce a
hidden generator (no claim is made that it must); the residue degree
does not in general equal a denominator-growth slope (the slope-$2$
versus degree-$2$ match at $r_q$ is conjectural pattern, attested at
$H$ and $F_{46}$); prime interactions are open. The $N=5$ instances
are $r_q$ (degree $2$, step fails, $H$ exists) and $r_p$ (degree $4$,
step fails, (P-top) false via $F_{46}$).

\needspace{6\baselineskip}
\section{Conclusion and outlook}
\label{sec:changes}

Localization gives the correct one-variable model
\(A_{pq}^{D_5}=K_{pq}[W]\), but naive global denominator control fails.
Residue algebraicity explains why: the \(q\)-adic and \(p\)-side initial forms
have degrees \(2\) and \(4\), respectively, in the \(N=5\) residue
computations, so the poles forbidden by the naive bounds cancel. The invariants \(H\) at weight
\(24\) and \(F_{46}^{\mathrm{int}}\) at weight \(46\) identify the missing
global directions outside \(K[F_0]\) and \(C_5\) respectively, with
\(p^2F_{46}^{\mathrm{int}}\in C_5\). The second slice chart attached to
\(f_{13}\), together with exact \(q\)- and \(p\)-saturation, contracts the
\(p\)-localized kernel to the candidate ring, giving
\[
\boxed{A_5^{D_5}=C_5[F_{46}^{\mathrm{int}}]\text{, finitely generated,}}
\]
by Theorem~\ref{thm:closure}. The closed ring is a codimension-two complete
intersection with explicit Hilbert series.

The example sits in the same broad triangular setting as the
Daigle--Freudenburg derivation, where finite generation can
fail~\cite{daigle1999}; the comparison concerns the residue mechanism,
not an equivalence of the two derivations.

What remains open is generalization. The residue-algebraicity principle
(Conjecture~\ref{conj:satpredictor}) is open beyond the proved \(N=5\)
instances, and the conditional \(p\)-side transfer yields no unconditional
replacement denominator bound. The \(A_6\) invariant ring is open, although
its \(B\)-level is the exact transport of the \(N=5\) tower. Computational
certificates and the Lean formalization are supplied as supplementary
material.

\clearpage
\appendix

\section{\texorpdfstring{The $N=6$ base kernel}{The N=6 base kernel}}
\label{sec:n6-kernel}

We work over the characteristic-zero field $k$ with
$\Afring{6} = k[a_0,\dots,a_6]$ and
$\Dmap{6}(a_n) = a_{n+1}a_{n+2}$ of the adjacent-quadratic shift family.
This section treats the six-variable subproblem obtained by deleting
$a_0$. Put
\[
B_6 = k[a_1,\dots,a_6], \qquad K_6 = B_6^{\Dmap{6}},
\]
where, throughout this appendix, $B_6$ denotes the six-variable ring
$k[a_1,\dots,a_6]$, not the five-variable $B=k[a_1,\dots,a_5]$ of the
$N=5$ sections. Likewise, in this section only, $R$ denotes the $N=6$ candidate $R_6=k[G_1,\dots,G_5]$ defined below, distinct from the $N=5$ closed ring $R=C_5[F_{46}^{\mathrm{int}}]$.
with
\[
\Dmap{6}(a_1) = a_2a_3, \quad \Dmap{6}(a_2) = a_3a_4, \quad
\Dmap{6}(a_3) = a_4a_5, \quad \Dmap{6}(a_4) = a_5a_6, \quad
\Dmap{6}(a_5) = \Dmap{6}(a_6) = 0.
\]
The Hilbert-series probes that suggested the generator degrees below are
evidence, not proof; every exact identity in this section is proved by
the hand computation shown plus
exact computation (see Supplementary Material).

Write $C = k[a_2,\dots,a_6]$, which is $\Dmap{6}$-stable.
The shift isomorphism
$\varphi \colon (k[a_1,\dots,a_5], \Dmap{5}) \to (C, \Dmap{6})$,
$a_i \mapsto a_{i+1}$, intertwines the two derivations including the
boundary rule ($\Dmap{5}(a_4) = 0$ since $a_6 = 0$ in $\Afring{5}$,
matching $\Dmap{6}(a_5) = 0$).
Hence the $N=5$ descent (Theorem~\ref{thm:descent}) transfers verbatim:
\begin{equation}
\label{eq:n6-Ckernel}
C^{\Dmap{6}} = k[a_5, a_6, G_3, G_4], \qquad
G_3 = 2a_2a_5 - a_3^2, \quad G_4 = 2a_3a_6 - a_4^2,
\end{equation}
a polynomial ring in four variables. This identification is proved above.

\begin{remark}[Pattern breakdown]
The $N=5$ generator $J_1 = 2a_1a_4 - a_2^2$ does not survive: since
$\Dmap{6}(a_4) = a_5a_6 \ne 0$,
\[
\Dmap{6}(2a_1a_4 - a_2^2) = 2a_1a_5a_6 \ne 0,
\]
as certified in exact computation (see Supplementary Material). The replacement involving
$a_1$ is not quadratic — it has degree $7$.
\end{remark}

\begin{lemma}[The five $N=6$ generators]
\label{lem:n6-gens}
With $G_1 = a_5$, $G_2 = a_6$ and $G_3, G_4$ as
in~\eqref{eq:n6-Ckernel}, let
\begin{equation}
\label{eq:n6-G5}
G_5 = 105a_1a_5^2a_6^4 - 105a_2a_3a_4a_5a_6^3
+ 35a_2a_4^3a_5a_6^2 + 35a_3^2a_4^3a_6^2
- 21a_3a_4^5a_6 + 3a_4^7.
\end{equation}
Then $\Dmap{6}(G_i) = 0$ for $i = 1,\dots,5$.
Indeed, writing $G_5 = T_0 + \cdots + T_5$ for the six displayed terms in
order, each term kills the defect left by the previous one:
\begin{align*}
\Dmap{6}(T_0) &= 105a_2a_3a_5^2a_6^4, \\
\Dmap{6}(T_1) &= -105a_2a_3a_5^2a_6^4
  - 105a_3^2a_4^2a_5a_6^3 - 105a_2a_4^2a_5^2a_6^3, \\
\Dmap{6}(T_2) &= 35a_3a_4^4a_5a_6^2 + 105a_2a_4^2a_5^2a_6^3, \\
\Dmap{6}(T_3) &= 70a_3a_4^4a_5a_6^2 + 105a_3^2a_4^2a_5a_6^3, \\
\Dmap{6}(T_4) &= -21a_4^6a_5a_6 - 105a_3a_4^4a_5a_6^2, \\
\Dmap{6}(T_5) &= 21a_4^6a_5a_6,
\end{align*}
whose sum is $0$ (defect coefficients
$105 - 105$, $-105 + 105$, $-105 + 105$, $35 + 70 - 105$, $-21 + 21$).
All six rungs are verified by expansion in
exact computation (see Supplementary Material). Hence, with $R = k[G_1,\dots,G_5]$,
$R \subseteq K_6$.
\end{lemma}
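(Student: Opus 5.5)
The statement is Lemma~\ref{lem:n6-gens}: each $G_i$ with $i=1,\dots,5$ lies in $K_6$, so $R\subseteq K_6$.

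\textbf{The first four generators.} For $G_1=a_5$ and $G_2=a_6$ there is nothing to compute: $\Dmap{6}(a_5)=a_6a_7=0$ and $\Dmap{6}(a_6)=0$ by the boundary convention. For $G_3$ and $G_4$ I would use the shift isomorphism $\varphi$ from $(k[a_1,\dots,a_5],\Dmap{5})$ to $(C,\Dmap{6})$. It intertwines the two derivations, including at the boundary. It sends $J_1=2a_1a_4-a_2^2$ to $G_3$ and $J_2=2a_2a_5-a_3^2$ to $G_4$. Lemma~\ref{lem:kernel-gens} gives $\Dmap{5}(J_1)=\Dmap{5}(J_2)=0$, so $\Dmap{6}(G_3)=\Dmap{6}(G_4)=0$. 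As a cross-check one can compute directly, for example $\Dmap{6}(G_4)=2a_4a_5a_6-2a_4\cdot a_5a_6=0$.

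\textbf{The generator $G_5$.} This is the only substantive case. I would apply the Leibniz rule to each term $T_0,\dots,T_5$, using $\Dmap{6}(a_1)=a_2a_3$, $\Dmap{6}(a_2)=a_3a_4$, $\Dmap{6}(a_3)=a_4a_5$, $\Dmap{6}(a_4)=a_5a_6$, and the fact that $a_5,a_6$ are constants.

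\begin{itemize}
\item $T_0=105a_1a_5^2a_6^4$ gives $105a_2a_3a_5^2a_6^4$.
\item $T_1=-105a_2a_3a_4a_5a_6^3$ produces three terms, one for each of $a_2,a_3,a_4$. These are $-105a_3^2a_4^2a_5a_6^3$, $-105a_2a_4^2a_5^2a_6^3$ and $-105a_2a_3a_5^2a_6^4$.
\item $T_2=35a_2a_4^3a_5a_6^2$ gives $35a_3a_4^4a_5a_6^2$ from $a_2$ and $105a_2a_4^2a_5^2a_6^3$ from $a_4^3$.
\item $T_3=35a_3^2a_4^3a_6^2$ gives $70a_3a_4^4a_5a_6^2$ from $a_3^2$ and $105a_3^2a_4^2a_5a_6^3$ from $a_4^3$.
\item $T_4=-21a_3a_4^5a_6$ gives $-21a_4^6a_5a_6$ from $a_3$ and $-105a_3a_4^4a_5a_6^2$ from $a_4^5$.
\item $T_5=3a_4^7$ gives $21a_4^6a_5a_6$.
\end{itemize}

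These are exactly the six rungs listed in the statement.

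\textbf{Cancellation and the main check.} Collecting by monomial, there are five distinct monomials. Their coefficients cancel as $105-105$, $-105+105$, $-105+105$, $35+70-105$ and $-21+21$. Hence $\Dmap{6}(G_5)=0$. Since $K_6$ is a subalgebra, it follows that $R=k[G_1,\dots,G_5]\subseteq K_6$.

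The main obstacle is only bookkeeping: getting the chain-rule multiplicities right, namely the factors $3$ from $a_4^3$, $2$ from $a_3^2$ and $5$ from $a_4^5$, and making sure no monomial is missed in $T_1$. The paper records a machine check of this expansion in its supplementary material.
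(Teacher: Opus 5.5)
Your proposal is correct and follows essentially the same route as the paper. As in the paper, $G_1,G_2$ are invariant trivially, $G_3,G_4$ come from the shift intertwining with $J_1,J_2$ (your direct check is a harmless extra), and $\Dmap{6}(G_5)=0$ follows from the same six Leibniz rungs, whose five monomial coefficients cancel exactly as you list.
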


\begin{lemma}[Independence]
\label{lem:n6-indep}
$G_1,\dots,G_5$ are algebraically independent: the $5 \times 5$ Jacobian
minor in columns $(a_1,a_2,a_3,a_5,a_6)$ equals
\[
420\,a_5^3a_6^5 \ne 0
\]
over $k$ (the scalar $420 = 2^2 \cdot 3 \cdot 5 \cdot 7$ records the
standing characteristic-zero hypothesis), and the full $5 \times 6$
Jacobian has rank $5$ at $(a_1,\dots,a_6) = (1,\dots,6)$, both checked in
exact computation (see Supplementary Material). So $R$ is a polynomial ring in five
variables.
\end{lemma}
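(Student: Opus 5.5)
The plan is to compute the $5\times 5$ minor directly by organizing the Jacobian of $(G_1,\dots,G_5)$ in columns $(a_1,a_2,a_3,a_5,a_6)$ into block-triangular form, as in Proposition~\ref{prop:F0-independence}. Then we apply the classical Jacobian criterion recalled there: one nonvanishing maximal minor gives algebraic independence in characteristic zero.

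Order the rows $G_1=a_5$, $G_2=a_6$, $G_3$, $G_4$, $G_5$, and the columns $a_5,a_6,a_2,a_3,a_1$.

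\begin{enumerate}
\item The rows of $G_1,G_2$ are the unit vectors in the $a_5,a_6$ columns. Expanding along them reduces the determinant, up to sign, to the $3\times3$ minor of $(G_3,G_4,G_5)$ in columns $(a_2,a_3,a_1)$.
\item Since $G_3=2a_2a_5-a_3^2$ and $G_4=2a_3a_6-a_4^2$ are free of $a_1$, while $a_1$ enters $G_5$ only through the term $105a_1a_5^2a_6^4$, the $a_1$ column has a single nonzero entry, $\partial G_5/\partial a_1=105a_5^2a_6^4$.
\item Expanding along that column leaves the $2\times2$ minor of $(G_3,G_4)$ in $(a_2,a_3)$. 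Here $\partial G_4/\partial a_2=0$, so this minor is triangular with value $(2a_5)(2a_6)=4a_5a_6$.
\item Multiplying gives $\pm 420\,a_5^3a_6^5$. The sign is fixed by tracking the column permutation; it is irrelevant for nonvanishing in any case.
\end{enumerate}

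Because $420=2^2\cdot3\cdot5\cdot7$ is a unit in characteristic zero, the minor is a nonzero polynomial. Hence the $5\times6$ Jacobian has generic rank $5$, so $G_1,\dots,G_5$ are algebraically independent. The map $k[X_1,\dots,X_5]\to R$ is then injective, and $R$ is a polynomial ring in five variables.

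For the rank-$5$ statement at the point $(1,\dots,6)$, the minor above evaluates there to $420\cdot 5^3\cdot 6^5\neq0$, which already gives rank $5$. No separate computation is needed.

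There is no real obstacle. The only place care is needed is checking that $a_1$ appears in $G_5$ only in its first term, and that $G_3,G_4$ do not involve $a_1$ at all. Both facts can be read off from \eqref{eq:n6-Ckernel} and \eqref{eq:n6-G5}.
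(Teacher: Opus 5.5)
Your proposal is correct and follows the same route as the paper: you exhibit the single nonvanishing maximal minor in columns $(a_1,a_2,a_3,a_5,a_6)$ and invoke the Jacobian criterion, with your block-triangular expansion replacing the machine check. Tracking the sign in the paper's column order, the expansions along the $G_1$ and $G_2$ rows each contribute $(-1)^{1+4}$ and the expansion along the $a_1$ column contributes $(-1)^{3+1}$, so the minor is exactly $+420\,a_5^3a_6^5$ as stated, and you are right that the rank-$5$ check at $(1,\dots,6)$ then follows by evaluation.
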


\begin{lemma}[Minimality of the degree-$7$ generator]
\label{lem:n6-minimal}
Every element of $K_6$ of total degree at most $6$ lies in
$k[a_5,a_6,G_3,G_4]$, and $G_5$ is the unique new generator in degree
$7$ up to scale and addition of products of the other four.
Indeed, writing a putative kernel element as a polynomial in $a_1$ over
$C$, $G = \sum_{i=0}^t a_1^i N_i$ with $N_t \ne 0$, the top
$a_1$-coefficients satisfy $\Dmap{6}(N_t) = 0$ and
$t\,a_2a_3N_t + \Dmap{6}(N_{t-1}) = 0$; thus $N_t \in C^{\Dmap{6}}$ and
$(tN_t, N_{t-1})$ solve the $a_1$-linear system
$a_2a_3M + \Dmap{6}(N) = 0$ with $M \in k[a_5,a_6,G_3,G_4]$.
The exact computation certifies that for $M$ of degree $m = 1,\dots,5$ this system
has solution dimensions $5, 8, 14, 20, 30$, all with $M = 0$ (hence no
$a_1$-involving kernel element of degree at most $6$), while for $m = 6$
the dimension is $41$ with exactly one new direction, realized by the
$a_1$-coefficient $105a_5^2a_6^4$ of the pasted $G_5$.
Consequently the generator degrees are $1,1,2,2,7$ --- completed to
$1,1,2,2,7,24$ by the transported hidden generator $H_B$
(Remark~\ref{rem:n6-HB}; total degree $10$, so the classification
through total degree $7$ above is unaffected).
\end{lemma}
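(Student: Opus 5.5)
The plan is to reduce Lemma~\ref{lem:n6-minimal} to a graded statement about the $a_1$-linear system, mirroring the proof of Lemma~\ref{lem:dixmier-divisibility}. First I would grade $B_6=C[a_1]$ by total degree; $\Dmap{6}$ raises degree by one and preserves it homogeneously, so it suffices to treat homogeneous $G\in K_6$ of degree $d\le 7$. Expand $G=\sum_{i=0}^t a_1^iN_i$ with $N_i\in C$ and $N_t\ne0$. Because $\Dmap{6}(a_1)=a_2a_3$ and $C$ is $\Dmap{6}$-stable, comparing $a_1^t$ and $a_1^{t-1}$ coefficients gives $\Dmap{6}(N_t)=0$ and $t\,a_2a_3N_t+\Dmap{6}(N_{t-1})=0$. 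By \eqref{eq:n6-Ckernel}, $N_t\in k[a_5,a_6,G_3,G_4]$. If $t\ge1$, then $M=tN_t$ is a nonzero solution of the system $a_2a_3M+\Dmap{6}(N)=0$ with $\deg M=d-t\le d-1$.

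The second step is the finite linear algebra in each degree $m=\deg M\le 6$. The unknowns are $M$, ranging over the degree-$m$ part of $k[a_5,a_6,G_3,G_4]$, and $N\in C_{m+2}$. The solution space always contains the trivial part $\{(0,N):N\in C^{\Dmap{6}}_{m+2}\}$. The claim to establish is that for $m\le5$ every solution has $M=0$, and that for $m=6$ the projection to the $M$-coordinate is one-dimensional, spanned by $105a_5^2a_6^4$. I would set this up as an exact rank computation over $\mathbb{Q}$, using the stated dimensions $5,8,14,20,30,41$, and transfer it to $k$ by Lemma~\ref{lem:QQ-transfer}.

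To make the step less purely computational, I would first bound $M$ by a valuation argument. Localize at $a_5a_6$, where the shifted slice $a_4/(a_5a_6)$ exists. The same odd-power integration used in Lemma~\ref{lem:dixmier-divisibility}, transported along $\varphi$, should force $a_5^2a_6^4\mid M$ in $k[a_5,a_6,G_3,G_4]$. That divisibility gives $\deg M\ge6$ immediately, and at $m=6$ it leaves only $M\in k\cdot a_5^2a_6^4$.

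With this in hand, the conclusions follow. For $d\le6$ we have $t\ge1$ impossible, so $G=N_0\in C^{\Dmap{6}}=k[a_5,a_6,G_3,G_4]$. For $d=7$ with $t\ge1$, the same bound forces $t=1$ and $N_1=c\cdot105a_5^2a_6^4$. Then $G-cG_5$ has $a_1$-degree $0$ and lies in $k[a_5,a_6,G_3,G_4]_7$. This gives the uniqueness of $G_5$ modulo products of the other four generators. The degree list $1,1,2,2,7$ then follows from Lemma~\ref{lem:n6-indep}.

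The main obstacle is the transport of the Dixmier divisibility. In Lemma~\ref{lem:dixmier-divisibility} the coefficient was $a_1a_2$; here it becomes $a_2a_3=\varphi(a_1a_2)$ acting on $C=\varphi(B)$. So the statement should transfer verbatim through $\varphi$, giving $\varphi(c)=a_5^2a_6^4$ dividing $M$. I would check that the shift intertwines the boundary rule exactly as in \eqref{eq:n6-Ckernel}. Failing that, I would fall back on the certified rank data.
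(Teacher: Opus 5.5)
Your proof is correct, but at the decisive step it takes a different route from the paper. Both arguments share the reduction to the $a_1$-linear system $a_2a_3M+\Dmap{6}(N)=0$ with $M=tN_t\in k[a_5,a_6,G_3,G_4]$.

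\emph{The paper's route.} It settles the cases $m\le 6$ by an exact $\mathbb{Q}$-rank certificate, with solution dimensions $5,8,14,20,30,41$.

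\emph{Your route.} You invoke the transported Dixmier divisibility, which the paper itself proves as Lemma~\ref{lem:n6-dixmier} by transporting Lemma~\ref{lem:dixmier-divisibility} along $\varphi$. The boundary rule $\Dmap{5}(a_4)=0\leftrightarrow\Dmap{6}(a_5)=0$ does hold, and there is no circularity, since that lemma does not use the present one.

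\emph{What each buys.} Your argument is computation-free apart from the explicit integrator $T$ in the $N=5$ lemma, and it proves more than is asked. Every homogeneous invariant of $a_1$-degree $t\ge1$ has $c_6\mid N_t$, hence total degree at least $t+6$. Your subtraction of a multiple of $G_5$ at $d=7$ is exactly the mechanism of Corollary~\ref{cor:n6-a1deg1}. Your route also explains the certified numbers. The $M=0$ solutions are the $N\in k[a_5,a_6,G_3,G_4]_{m+1}$, of dimensions $5,8,14,20,30,40$ for $m=1,\dots,6$. The single extra direction $M\in k\,c_6$ at $m=6$, realized by $G_5$, gives $41$. The paper's certificate, in exchange, is a finite check that does not depend on the $N=5$ divisibility proof and its $x$-parity bookkeeping.

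\emph{One slip to fix.} Since $\Dmap{6}$ raises total degree by one, $N$ must range over $C_{m+1}$, not $C_{m+2}$. With $C_{m+2}$ the equation splits by degree into $a_2a_3M=0$ and $\Dmap{6}(N)=0$. Your fallback rank computation would then return $M=0$ even at $m=6$ and miss $G_5$.
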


\begin{remark}[Fibonacci-weighted dimensions through weight $32$]
Exact computation (see Supplementary Material) gives
$\dim\ker(\Dmap{6}|_{V_w})$ over $\mathbb{F}_{1000003}$ for
$0 \le w \le 32$ with the correct $V_w \to V_w$ map ($\Dmap{6}$
preserves Fibonacci weight): gap $+0$ against the free five-generator
count through $w = 23$, then exactly the hypersurface excess
$+1,+2,+3,+4,+5,+6,+8,+10,+12$ over $w = 24$--$32$ ($213$ at $w =
24$, $404$ at $w = 30$), matching the transported Hilbert series of
Theorem~\ref{thm:n6-hypersurface} entry for entry (and the exact
$\mathbb{Q}$ $N=5$ kernel table, by the shift isomorphism).
Exact-$\mathbb{Q}$ confirmation through weight $30$
(exact computation (see Supplementary Material)): intertwining on all six generators,
derivation-matrix coincidence with $A_5$, $H_B \in \ker\Dmap{6}$, and
$\dim\ker(\Dmap{6}|_{V_w}) = s_6(w) - s_6(w-28)$ at every
$0 \le w \le 30$ (47 PASS, zero failures). No weight shift is involved:
$\varphi$ preserves the weights $(8,5,3,2,1,1)$ verbatim, so the $B_6$
dimensions equal the $A_5$ ones entry for entry.
\end{remark}

\begin{lemma}[Local slice and localized kernel]
\label{lem:n6-slice}
On the localization $B_{a_5a_6}$ the element
\[
s = \frac{a_4}{a_5a_6}
\]
satisfies $\Dmap{6}(s) = 1$; it is a local slice. Moreover
\begin{align*}
a_4 &= a_5a_6s, \\
a_3 &= \frac{G_4}{2a_6} + \frac{a_5^2a_6s^2}{2}, \\
a_2 &= \frac{G_3}{2a_5} + \frac{a_3^2}{2a_5}, \\
a_1 &= \frac{G_5 - N_5}{105a_5^2a_6^4},
\end{align*}
where $N_5 = G_5 - 105a_1a_5^2a_6^4$ is the $a_1$-free part
of~\eqref{eq:n6-G5}, all checked by direct expansion in
exact computation (see Supplementary Material); every denominator is a power of $a_5a_6$
(the scalar $105$ is a unit in characteristic zero).
Hence $B_{a_5a_6} = R_{a_5a_6}[s]$, and the local-slice theorem
(Freudenburg~\cite{freudenburg2017}, Corollary~1.26; see also van den
Essen--van Rossum~\cite{vandenEssen2002}, Proposition~2.4: a slice
$s$ with $D(s) = 1$ gives $A = A^D[s]$)
gives
\[
B_{a_5a_6}^{\Dmap{6}} = R_{a_5a_6}.
\]
In particular every $f \in K_6$ has the form $f = P/(a_5^aa_6^b)$ for
some $P \in R$ and $a, b \ge 0$.
\end{lemma}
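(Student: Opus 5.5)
The statement to prove is Lemma~\ref{lem:n6-slice}. I would follow the template of Lemma~\ref{lem:slice} and Theorem~\ref{thm:descent}, adapted to the five generators $G_1,\dots,G_5$ of Lemma~\ref{lem:n6-gens}.

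\emph{Slice.} First I check $\Dmap{6}(s)=1$. Since $\Dmap{6}(a_5)=\Dmap{6}(a_6)=0$ and $\Dmap{6}(a_4)=a_5a_6$, this is immediate:
\[
\Dmap{6}(s)=\Dmap{6}(a_4)/(a_5a_6)=1 .
\]

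\emph{Reconstruction identities.} These are verified in the listed order, each using only the previous ones.
\begin{enumerate}
\item The identity $a_4=a_5a_6s$ is the definition of $s$.
\item From $G_4=2a_3a_6-a_4^2$ I get $a_3=(G_4+a_4^2)/(2a_6)$. Substituting $a_4=a_5a_6s$ gives the displayed formula $a_3=G_4/(2a_6)+a_5^2a_6s^2/2$.
\item From $G_3=2a_2a_5-a_3^2$ I get $a_2=(G_3+a_3^2)/(2a_5)$. Since $a_3$ is already expressed in $R_{a_5a_6}[s]$, so is $a_2$.
\item Finally, $G_5=105a_1a_5^2a_6^4+N_5$, where $N_5$ involves only $a_2,\dots,a_6$. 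Solving gives $a_1=(G_5-N_5)/(105a_5^2a_6^4)$, and $N_5$ lies in $R_{a_5a_6}[s]$ by the previous steps. Division by $105$ is allowed in characteristic zero.
\end{enumerate}
The only denominators that appear are powers of $a_5a_6$. Hence every $a_i$ lies in $R_{a_5a_6}[s]$, so $B_{a_5a_6}=R_{a_5a_6}[s]$; the reverse inclusion is trivial.

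\emph{Localized kernel.} Next I show $s$ is transcendental over $R_{a_5a_6}$. Suppose $\sum r_is^i=0$ is a relation of minimal degree with $r_i\in R_{a_5a_6}$. Since $\Dmap{6}$ extends to the localization, kills $R$, and acts as $d/ds$, applying it gives a nonzero relation of lower degree in characteristic zero, a contradiction. Alternatively, use a transcendence-degree count: Lemma~\ref{lem:n6-indep} makes $\operatorname{trdeg}R=5$, while $B_6$ has $\operatorname{trdeg}6$. The local-slice theorem (Freudenburg, Corollary~1.26) then gives $B_{a_5a_6}^{\Dmap{6}}=R_{a_5a_6}$.

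\emph{Denominator form.} Since $K_6\subseteq B_{a_5a_6}^{\Dmap{6}}=R_{a_5a_6}$, every $f\in K_6$ has the form $P/(a_5a_6)^n$. Splitting the exponents gives $P/(a_5^aa_6^b)$.

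The only real obstacle is bookkeeping in the $a_1$ step. One must make sure $N_5$ is genuinely $a_1$-free, which is clear by inspection of~\eqref{eq:n6-G5}: only $T_0$ contains $a_1$. One must also confirm that substituting the earlier formulas introduces no denominators other than powers of $a_5$ and $a_6$. Both points are direct, so I expect no genuine difficulty beyond the symbolic expansion.
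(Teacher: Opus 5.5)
Your proposal is correct and follows the paper's own argument. You use the quotient rule for $\Dmap{6}(s)=1$, then back-substitute through $G_4$, $G_3$ and $G_5$ (only $a_5$, $a_6$ and the units $2$, $105$ are inverted) to get $B_{a_5a_6}=R_{a_5a_6}[s]$, and then the local-slice theorem yields $B_{a_5a_6}^{\Dmap{6}}=R_{a_5a_6}$ and the denominator form. Your explicit check that $s$ is transcendental over $R_{a_5a_6}$, via $\Dmap{6}=d/ds$ or the transcendence-degree count from Lemma~\ref{lem:n6-indep}, simply makes visible what the paper leaves inside the cited slice theorem.
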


\begin{lemma}[Certified descent obstruction]
\label{lem:n6-obstruction}
With $Z = G_5^2 + 9G_4^7$ one has $Z|_{a_6 = 0} = 0$ in $B_6$, so
$Z = a_6S$ for an explicit polynomial $S \in B_6$ of degree $13$ with
$\Dmap{6}(S) = 0$, both verified in exact computation (see Supplementary Material).
Hence the mod-$a_6$ specialization
$(Y_1,Y_3,Y_4,Y_5) \mapsto (a_5, G_3|_{a_6=0}, G_4|_{a_6=0},
G_5|_{a_6=0})$ has the nonzero relation $Y_5^2 + 9Y_4^7$ in its kernel,
and contraction at the divisor $(a_6)$ does not follow by the one-step
leading-term route used at $N=5$.
\end{lemma}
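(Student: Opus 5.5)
The plan is to verify the two claims directly: first that $Z=G_5^2+9G_4^7$ vanishes identically modulo $a_6$, and then that the quotient $S=Z/a_6$ is a polynomial in $\ker\Dmap{6}$. The consequence about the mod-$a_6$ specialization will then follow formally.

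\emph{Step 1: reduction modulo $a_6$.} Set $a_6=0$ in \eqref{eq:n6-G5}. Every term of $G_5$ except the last carries a positive power of $a_6$, so $G_5|_{a_6=0}=3a_4^7$. Similarly $G_4=2a_3a_6-a_4^2$ reduces to $-a_4^2$. Hence
\[
Z|_{a_6=0}=9a_4^{14}+9(-a_4^2)^7=9a_4^{14}-9a_4^{14}=0 .
\]
Since $B_6$ is a polynomial ring and $a_6$ is a variable, $a_6\mid Z$ in $B_6$. So $S:=Z/a_6\in B_6$ is a genuine polynomial.

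\emph{Step 2: degree and invariance of $S$.} Every term of $G_5$ has total degree $7$ and $G_4$ is homogeneous of degree $2$, so $Z$ is homogeneous of degree $14$ and $S$ is homogeneous of degree $13$. For invariance, $\Dmap{6}(G_4)=\Dmap{6}(G_5)=0$ by \eqref{eq:n6-Ckernel} and Lemma~\ref{lem:n6-gens}, so $\Dmap{6}(Z)=0$. Since $\Dmap{6}(a_6)=0$,
\[
0=\Dmap{6}(Z)=a_6\,\Dmap{6}(S),
\]
and because $B_6$ is a domain this gives $\Dmap{6}(S)=0$. The explicit expansion of $S$ can be recorded from the supplementary computation, but the argument above needs none of its coefficients.

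\emph{Step 3: the consequence for contraction.} The relation $Y_5^2+9Y_4^7$ evaluates under the specialization to $Z|_{a_6=0}=0$, so it lies in the kernel. It is nonzero as an abstract polynomial in $k[Y_1,Y_3,Y_4,Y_5]$. Therefore the mod-$a_6$ map from $R/a_6R\cong k[Y_1,Y_3,Y_4,Y_5]$ is not injective.

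This non-injectivity is what breaks the one-step argument. The analogue at $N=5$ was the injectivity of the maps $\alpha$ and $\beta$ in Lemma~\ref{lem:contraction}. Here that injectivity fails, so the leading-term descent cannot conclude that $a_6\mid P$ in $R$. Indeed $S\in K_6$ with $a_6S=Z\in R$ is exactly the sort of element that step would have to exclude.

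The only real subtlety is stating this last point carefully. The lemma claims only that the route fails, not that $S\notin R$. Proving the stronger statement $S\notin R$ would need a separate degree or top-coefficient argument, and I would not attempt it here.
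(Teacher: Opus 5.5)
Your proof is correct. It reaches the lemma by a different route from the paper, which certifies both $Z|_{a_6=0}=0$ and $\Dmap{6}(S)=0$ only by exact computation. You prove both by hand. The reduction $G_5|_{a_6=0}=3a_4^7$, $G_4|_{a_6=0}=-a_4^2$ is a one-line check. Invariance of $S$ then follows structurally from $a_6\,\Dmap{6}(S)=\Dmap{6}(Z)=0$ in the domain $B_6$, so no coefficient of $S$ is ever needed. What the computation gives the paper is only an explicit $S$. Even that can be read off from the transported relation $S_B$ of Theorem~\ref{thm:n6-hypersurface}, namely $S=a_6^3H_B-105\,a_6G_3G_4^5$, so in fact $a_6^2\mid Z$.

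Two small points. First, for ``degree $13$'' you should record $Z\neq0$. For example, $G_5^2$ has the $a_1^2$-coefficient $11025\,a_5^4a_6^8$, which the $a_1$-free $G_4^7$ cannot cancel. Second, your closing disclaimer undersells Step~3: $S\notin R$ needs no separate degree or top-coefficient argument. Since $R=k[G_1,\dots,G_5]$ is a polynomial ring with $G_2=a_6$ (Lemma~\ref{lem:n6-indep}), writing $S=P(G_1,\dots,G_5)$ would give $Y_2P(Y)=Y_5^2+9Y_4^7$ in $k[Y_1,\dots,Y_5]$, which is absurd at $Y_2=0$. So your argument already exhibits $S\in K_6\setminus R$, of weight $27$, which is exactly the element the one-step descent would have had to rule out.
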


\begin{lemma}[$N=6$ Dixmier divisibility]
\label{lem:n6-dixmier}
Every $M \in C^{\Dmap{6}}$ with $a_2a_3M \in \Dmap{6}(C)$ is
divisible by $c_6 = a_5^2a_6^4$ in $C^{\Dmap{6}}$.
\end{lemma}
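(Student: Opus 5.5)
The plan is to reduce this statement to the $N=5$ Dixmier divisibility, Lemma~\ref{lem:dixmier-divisibility}, by transport along the shift isomorphism $\varphi\colon(k[a_1,\dots,a_5],\Dmap{5})\to(C,\Dmap{6})$, $a_i\mapsto a_{i+1}$. Write $M=\varphi(\mu)$, with $\mu$ in $\varphi^{-1}(C^{\Dmap{6}})=B^{\Dmap{5}}=K$ by~\eqref{eq:n6-Ckernel} and Theorem~\ref{thm:descent}. Suppose $a_2a_3M=\Dmap{6}(\nu')$ with $\nu'\in C$. Then $\nu=\varphi^{-1}(\nu')\in B$ satisfies
\[
a_1a_2\,\mu=\Dmap{5}(\nu),
\]
because $\varphi^{-1}(a_2a_3)=a_1a_2$ and $\varphi$ intertwines $\Dmap{5}$ with $\Dmap{6}$.

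After replacing $\nu$ by $-\nu$, this is exactly the hypothesis $a_1a_2\mu+\Dmap{5}(\nu)=0$ of Lemma~\ref{lem:dixmier-divisibility}. That lemma gives $p^2q^4\mid\mu$ in $K$, where $p=a_4$ and $q=a_5$. Applying $\varphi$, which is a ring isomorphism $K\to C^{\Dmap{6}}$, we get $\varphi(p^2q^4)=a_5^2a_6^4=c_6$ dividing $M$ in $C^{\Dmap{6}}$. This is the statement.

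The ordered steps are:
\begin{enumerate}
\item Check that $\varphi$ intertwines the derivations on all generators, including the boundary rule $\Dmap{5}(a_4)=0$ versus $\Dmap{6}(a_5)=0$. This was already recorded before~\eqref{eq:n6-Ckernel}.
\item Conclude that $\varphi$ maps $\Dmap{5}(B)$ onto $\Dmap{6}(C)$ and $K$ onto $C^{\Dmap{6}}$.
\item Translate the image condition to $N=5$.
\item Invoke Lemma~\ref{lem:dixmier-divisibility} and transport the divisibility back.
\end{enumerate}

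The only point needing care, which I expect to be the main obstacle if anything, is that the image condition is taken inside $C$ and not inside $B_6$. The statement as given has $\Dmap{6}(C)$, so the transport is clean. If one instead wanted $a_2a_3M\in\Dmap{6}(B_6)$, an extra step would be needed. One would expand the preimage in powers of $a_1$ and use $\Dmap{6}(a_1)=a_2a_3$ together with the $a_1$-filtration, as in Lemma~\ref{lem:n6-minimal}, to push the preimage into $C$. That is not required here.

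Divisibility in $K$ versus in $B$ is also harmless. Lemma~\ref{lem:dixmier-divisibility} gives divisibility in $K$ directly, and in any case $K$ is factorially closed in $B$.
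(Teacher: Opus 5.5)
Your proposal is correct and follows the paper's argument: pull $a_2a_3M\in\Dmap{6}(C)$ back along the shift isomorphism $\varphi$, which intertwines $\Dmap{5}$ with $\Dmap{6}$, maps $K$ onto $C^{\Dmap{6}}$, and sends $a_1a_2\mapsto a_2a_3$ and $p^2q^4\mapsto c_6$. Then apply Lemma~\ref{lem:dixmier-divisibility} and push the divisibility forward. Your remark that the hypothesis lives in $\Dmap{6}(C)$ rather than $\Dmap{6}(B_6)$ is the right point of care, and no further step is needed.
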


\begin{proof}
Transport Lemma~\ref{lem:dixmier-divisibility} along the shift
isomorphism $\varphi \colon (k[a_1,\dots,a_5], \Dmap{5}) \to
(C, \Dmap{6})$, $a_i \mapsto a_{i+1}$,
of~\eqref{eq:n6-Ckernel} (characteristic zero throughout, as in
the lemma): $\varphi$ restricts to an isomorphism
$K = k[p,q,J_1,J_2] \to C^{\Dmap{6}}$ sending $a_1a_2 \mapsto
a_2a_3$ and $c = p^2q^4 \mapsto c_6$, and intertwines the
derivations, so $a_2a_3M \in \Dmap{6}(C)$ pulls back to
$a_1a_2\mu \in \Dmap{5}(B_5)$ with $M = \varphi(\mu)$;
$c \mid \mu$ pushes forward to $c_6 \mid M$. The bounded check
exact computation (see Supplementary Material) agrees through total degree $12$.
\end{proof}

\begin{corollary}[$a_1$-degree-one fragment]
\label{cor:n6-a1deg1}
Every $F \in K_6$ of $a_1$-degree at most $1$ lies in $R$.
\end{corollary}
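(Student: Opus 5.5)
The plan is to copy the $N=5$ $a_0$-degree-one closure argument, with the roles of $a_0$ and $C=k[a_2,\dots,a_6]$ played by $a_1$ and $C$. Take $F\in K_6$ with $\deg_{a_1}F\le 1$ and write $F=N_1a_1+N_0$ with $N_0,N_1\in C$. If $N_1=0$, then $F\in C^{\Dmap{6}}=k[a_5,a_6,G_3,G_4]\subseteq R$ by \eqref{eq:n6-Ckernel}, and we are done. So suppose $N_1\ne 0$. Since $C$ is $\Dmap{6}$-stable and $\Dmap{6}(a_1)=a_2a_3$, we compare $a_1$-coefficients in $\Dmap{6}(F)=0$. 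The $a_1^1$ coefficient gives $\Dmap{6}(N_1)=0$, so $N_1\in C^{\Dmap{6}}$. The $a_1^0$ coefficient gives $a_2a_3N_1+\Dmap{6}(N_0)=0$, so $a_2a_3N_1\in\Dmap{6}(C)$.

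Next we apply Lemma~\ref{lem:n6-dixmier}. It shows that $c_6=a_5^2a_6^4$ divides $N_1$ in $C^{\Dmap{6}}$, so we can write $N_1=c_6M$ with $M\in k[a_5,a_6,G_3,G_4]$. Recall that $G_5=105c_6a_1+N_5$, where $N_5\in C$ is the $a_1$-free part. Set
\[
F' \;=\; F-\tfrac{1}{105}MG_5 .
\]
This is the step that uses characteristic zero, to divide by $105$. Then $F'\in K_6$, because $M,G_5\in K_6$. Its $a_1$-coefficient is $c_6M-\tfrac{1}{105}M\cdot 105c_6=0$. Hence $F'\in C\cap K_6=C^{\Dmap{6}}\subseteq R$. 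It follows that $F=F'+\tfrac{1}{105}MG_5\in R$.

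The step that needs the most care is the Dixmier divisibility, which is imported via Lemma~\ref{lem:n6-dixmier}. One point to check there is that the quotient $M=N_1/c_6$ lands in $C^{\Dmap{6}}$ and not merely in its localization. This holds because the transported statement is divisibility inside the polynomial ring $K$, and $\varphi$ maps $K$ isomorphically onto $C^{\Dmap{6}}$. The other point is that $G_5$ really has $a_1$-coefficient exactly $105c_6$, which we read off from \eqref{eq:n6-G5}. Beyond these two checks, the argument is just coefficient comparison in the domain $C[a_1]$, where $a_1$ is transcendental over $C$.
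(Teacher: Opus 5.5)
Your proposal is correct and follows the paper's proof essentially step for step. The paper also writes $F=N_1a_1+N_0$ and uses Lemma~\ref{lem:n6-dixmier} to obtain $N_1=c_6G'$. It then subtracts $G'G_5/105\in R$ and concludes that the $a_1$-free invariant remainder lies in $C^{\Dmap{6}}\subseteq R$. Your separate treatment of the case $N_1=0$ is harmless but not needed.
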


\begin{proof}
Write $F = N_1a_1 + N_0$ with $N_i \in C$: $\Dmap{6}(F) = 0$
gives $\Dmap{6}(N_1) = 0$ and $a_2a_3N_1 + \Dmap{6}(N_0) = 0$,
so $N_1 = c_6G'$ by Lemma~\ref{lem:n6-dixmier}. Then
$E = G'G_5/105 \in R$ has the same $a_1$-top as $F$, and
$F - E$ is invariant of $a_1$-degree $0$, hence in
$C^{\Dmap{6}} \subseteq R$. So $F \in R$.
\end{proof}

\begin{remark}[The proposed five-generation at $N=6$ is false]
\label{cor:n6-fivegen-refuted}
Refuted by the transported weight-24 invariant $H_B$
($\Dmap{6}(H_B) = 0$, $a_1$-degree $2$; Remark~\ref{rem:n6-HB}).
The proposed equality, stated here for the record, ran
$B_6^{\Dmap{6}} = K_6 = k[G_1,\dots,G_5]$. It is false
unconditionally: $H_B \in B_6^{\Dmap{6}}$ while $H_B \notin R$,
since $R_{24}$ has $a_1$-degree at most $1$ (only $G_5$ carries
$a_1$, and $G_5^2$ already weighs $28$). The $W^{\sharp}$-degree
induction skeleton of Remark~\ref{rem:n6-winduction} does not prove
this conclusion.
\end{remark}

\begin{theorem}[Transported hypersurface presentation]
\label{thm:n6-hypersurface}
Full-ring equality $C_{B,6} = B_6^{\Dmap{6}}$ is false by the transported
$F_{46}$ below; the presentation itself is proved as at $N=5$.
From Theorem~\ref{thm:global-hypersurface} (same CAS standard):
the shift $\varphi$ below is exact algebra, and every transported
identity is re-verified in
exact computation (see Supplementary Material). The transported
identities ($H_B$ invariant of weight
$24$ and $a_1$-degree $2$, $S_B = 0$ with uniform weight $28$,
and $H_B$ strictly outside $k[G_1,\dots,G_5]$) are formally verified
in core Lean (see the formal-verification manifest); the transported
presentation $k[G_1,\dots,G_5,H_B]/(S_B)$ follows by the same
primitivity argument as at $N=5$ ($S_B$ linear in $H_B$ with
$Y_2 \nmid Y_5^2 + 9Y_4^7$ in indeterminates, so $S_B$ is primitive irreducible in the
abstract presentation ring). Anchor equality rests
on the transferred CAS certificate. As for
Theorem~\ref{thm:global-hypersurface}, equality at every weight is
Refuted by transport, i.e.\ with $R_6=k[G_1,\dots,G_5]$,
$C_{B,6}=k[G_1,\dots,G_5,H_B]$ and $K_6=B_6^{\Dmap{6}}$,
$R_6\subsetneq C_{B,6}\subsetneq K_6$ (first strictness by $H_B$,
second by $\varphi(F_{46})$; $K_6\cong A_5^{\Dmap{5}}$ is thereby
resolved by transport, while $A_6$ stays open): the graded shift isomorphism $\varphi \colon (k[a_0,\dots,a_5],
\Dmap{5}) \to (B_6, \Dmap{6})$, $a_i \mapsto a_{i+1}$
(weights $(8,5,3,2,1,1)$ on both sides), restricts to a graded
isomorphism $A_5^{\Dmap{5}} \to B_6^{\Dmap{6}}$, and $\varphi(F_{46})$
is a $\Dmap{6}$-invariant of weight $46$ and $a_1$-degree $4$ with top
$\frac{18375}{4}a_1^4a_5^6a_6^8$ outside the transported top ideal
$(a_5^8a_6^8)$ as ABB monomials ($a_5$-exponent $6 < 8$). Hence the transported
candidate is the six-generator hypersurface
\[
\boxed{C_{B,6} \;=\; k[G_1,\dots,G_5,H_B]\,/\,(S_B),}
\]
with full-ring equality $C_{B,6} = B_6^{\Dmap{6}}$ Refuted by the transported $F_{46}$ above,
where $H_B = \varphi(H)$ is the weight-24 invariant of
Remark~\ref{rem:n6-HB}, $\varphi(F_0) = G_5/105$ (so no seventh
generator arises), and $S_B$ is the transported weight-28
relation
\[
\boxed{S_B \;=\; a_6^{4}H_B \;-\; G_5^{2}
  \;-\; 105\,a_6^{2}G_4^{5}G_3 \;-\; 9\,G_4^{7}.}
\]
The six graded generators carry Fibonacci weights
$(1,1,6,4,14,24)$, and with $s_6$ the monomial count therein,
$\dim_k(C_{B,6})_w = s_6(w) - s_6(w-28)$ (exact through
weight $32$ in exact computation (see Supplementary Material) for the
full ring;
deviation from the five-generator count starts at $w = 24$).
\end{theorem}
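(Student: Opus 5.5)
The plan is to transport everything along the shift isomorphism \(\varphi\colon (k[a_0,\dots,a_5],\Dmap{5})\to(B_6,\Dmap{6})\), \(a_i\mapsto a_{i+1}\), and then read off each assertion of Theorem~\ref{thm:n6-hypersurface} from its \(N=5\) counterpart.

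\emph{Step 1: the transport itself.} Theorem~\ref{thm:recursion} with \(N=6\) says that \(\varphi\) intertwines \(\Dmap{5}\) and \(\Dmap{6}\) on generators, including at the boundary. Since both maps are derivations composed with an algebra isomorphism, they intertwine on all of \(A_5\). Hence \(\varphi\) restricts to an isomorphism \(A_5^{\Dmap{5}}\to B_6^{\Dmap{6}}\). The Fibonacci weights \((8,5,3,2,1,1)\) are carried over verbatim, so this is a graded isomorphism.

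\emph{Step 2: the generators.} We check that \(\varphi(p,q,J_1,J_2)=(G_1,G_2,G_3,G_4)\), using~\eqref{eq:n6-Ckernel}. We also check that \(\varphi(F_0)=G_5/105\): comparing with Definition~\ref{def:F0}, multiply by \(105\) and match the six terms of~\eqref{eq:n6-G5} one by one. Put \(H_B=\varphi(H)\). Then \(\varphi(C_5)=k[G_1,\dots,G_5,H_B]=C_{B,6}\).

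\emph{Step 3: the presentation.} The identity \(S=0\) maps to
\[
a_6^4H_B-11025\,(G_5/105)^2-105a_6^2G_4^5G_3-9G_4^7=0,
\]
and since \(11025/105^2=1\) this is exactly \(S_B\). Because \(\varphi\) restricted to \(C_5\) is an isomorphism onto \(C_{B,6}\), the presentation \(C_5\cong k[P,Q,J_1,J_2,F,H]/(S)\) of Theorem~\ref{thm:global-hypersurface} transfers. The only step needing care is the change of variable \(F\mapsto 105F\), which is a unit rescaling and so preserves the fact that the relation is primitive and irreducible. The weights, the Hilbert series, and the dimension formula \(s_6(w)-s_6(w-28)\) then follow at once.

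\emph{Step 4: the strict inclusions.} Since \(\varphi\) is an isomorphism, non-membership transfers.
\begin{itemize}
\item \(H\notin K[F_0]\) gives \(H_B\notin R_6\). This is also the direct \(a_1\)-degree count of Remark~\ref{cor:n6-fivegen-refuted}.
\item \(F_{46}\notin C_5\) (Theorem~\ref{thm:F46}) gives \(\varphi(F_{46})\notin C_{B,6}\). This is an invariant of weight \(46\) and \(a_1\)-degree \(4\), with top \(\frac{18375}{4}a_1^4a_5^6a_6^8\).
\end{itemize}
Together these give \(R_6\subsetneq C_{B,6}\subsetneq K_6\). Moreover Theorem~\ref{thm:closure} transports to \(K_6=\varphi(C_5)[\varphi(F_{46}^{\mathrm{int}})]\).

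The main obstacle is bookkeeping rather than mathematics: making sure the normalization \(\varphi(F_0)=G_5/105\) is exact. An error there would distort the coefficients of \(S_B\). It would also be wrong to invoke anything that depends on \(a_0\) rather than \(B\); this is fine here because \(\varphi\) renames all six variables of \(A_5\).
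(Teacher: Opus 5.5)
Your proposal is correct and follows the paper's route. You transport everything along the shift isomorphism $\varphi$: $\varphi(F_0)=G_5/105$ and $11025=105^2$ turn $S$ into $S_B$, and the two strict inclusions come from $H\notin K[F_0]$ and $F_{46}\notin C_5$. The only difference is that you pull the kernel $(S)$ back through the isomorphism $\varphi|_{C_5}$ and a unit rescaling of the $F$-variable. The paper instead re-runs the $N=5$ primitivity argument for $S_B$ using $Y_2\nmid Y_5^2+9Y_4^7$, a re-check your formal transfer does not need.
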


\subsection{The exact $B_6$-level ring by transport}
\begin{theorem}[Exact $B_6$ ring]
\label{thm:n6-B6ring}
The shift $\varphi: a_i \mapsto a_{i+1}$ restricts to a graded
isomorphism $A_5^{\Dmap{5}} \to B_6^{\Dmap{6}}$ (intertwining checked
at the boundary: $D_5(a_4) = D_5(a_5) = 0$ match
$D_6(a_5) = D_6(a_6) = 0$; the preimage of an invariant is invariant
by injectivity). Hence with $C_{B,6}$ of
Theorem~\ref{thm:n6-hypersurface},
$\boxed{K_6 = B_6^{\Dmap{6}} \cong C_{B,6}[T_B]/((a_5)^2T_B - E_B)}$,
$T_B \leftrightarrow \varphi(F_{46})$, $E_B = \varphi(E)$ the
transported seven-term weight-$48$ element
(\href{https://github.com/AhmedMostafa16/adjquad/blob/25fdeaefc74d63e8a51983399e5fbf64ef2b28d5/scripts/cas/check_n6_transport.py}{scripts/cas/check\_n6\_transport.py}; log \href{https://github.com/AhmedMostafa16/adjquad/blob/25fdeaefc74d63e8a51983399e5fbf64ef2b28d5/data/n6_transport.log}{data/n6\_transport.log}).
The transported invariance and top data are re-verified independently
(\href{https://github.com/AhmedMostafa16/adjquad/blob/25fdeaefc74d63e8a51983399e5fbf64ef2b28d5/scripts/cas/check_N6_transport_F.py}{scripts/cas/check\_N6\_transport\_F.py}; log \href{https://github.com/AhmedMostafa16/adjquad/blob/25fdeaefc74d63e8a51983399e5fbf64ef2b28d5/data/n6/transport_F.log}{data/n6/transport\_F.log}).
\end{theorem}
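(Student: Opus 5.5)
The plan has three parts: show that the shift $\varphi$ intertwines the two derivations, use this to transport the closure theorem for $N=5$ verbatim, and then transport the presentation of Corollary~\ref{cor:R-presentation} through the induced isomorphism.

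\textbf{Step 1: intertwining and invariance.} Let $\varphi\colon A_5=k[a_0,\dots,a_5]\to B_6=k[a_1,\dots,a_6]$ be the $k$-algebra isomorphism $a_i\mapsto a_{i+1}$. I will check $\Dmap{6}\circ\varphi=\varphi\circ\Dmap{5}$ on the generators; since both sides are derivations composed with an algebra map, this suffices. For $i\le 3$, both sides send $a_i$ to $a_{i+2}a_{i+3}$. At the boundary, $\Dmap{5}(a_4)=\Dmap{5}(a_5)=0$ because $a_6=a_7=0$ in $A_5$, and correspondingly $\Dmap{6}(a_5)=\Dmap{6}(a_6)=0$ because $a_7=a_8=0$. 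This is exactly the instance $N=6$ of Theorem~\ref{thm:recursion}. It follows that $\varphi(\ker\Dmap{5})\subseteq\ker\Dmap{6}$. Conversely, if $\Dmap{6}(g)=0$, write $g=\varphi(f)$; then $\varphi(\Dmap{5}f)=0$, and injectivity of $\varphi$ forces $\Dmap{5}f=0$. Hence $\varphi$ restricts to an isomorphism $A_5^{\Dmap{5}}\cong B_6^{\Dmap{6}}$. Weights are preserved because the weight vector $(8,5,3,2,1,1)$ is assigned to $(a_1,\dots,a_6)$ in $B_6$, so the isomorphism is graded.

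\textbf{Step 2: transport the closure and the presentation.} By Theorem~\ref{thm:closure}, $A_5^{\Dmap{5}}=R=C_5[F_{46}^{\mathrm{int}}]$. Corollary~\ref{cor:R-presentation} gives $R\cong C_5[T]/(p^2T-E)$. Applying $\varphi$, we get $B_6^{\Dmap{6}}=\varphi(C_5)[\varphi(F_{46}^{\mathrm{int}})]$. Theorem~\ref{thm:n6-hypersurface} identifies the images of the generators: $\varphi(p,q,J_1,J_2)=(a_5,a_6,G_3,G_4)$, $\varphi(F_0)=G_5/105$, and $\varphi(H)=H_B$. Hence $\varphi(C_5)=C_{B,6}$, and $\varphi(S)$ is the transported relation $S_B$; the factor $105^2$ is absorbed because $11025=105^2$. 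The relation $p^2T-E$ becomes $a_5^2T_B-E_B$ with $E_B=\varphi(E)$. Since an algebra isomorphism carries kernels of presentations to kernels, $\ker(C_{B,6}[T_B]\to K_6)=(a_5^2T_B-E_B)$, which is the boxed statement.

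\textbf{Main obstacle.} The only step that is not formal is the bookkeeping of generator normalizations. One must confirm that $\varphi(F_0)=G_5/105$ exactly, so that $S_B$ as printed in Theorem~\ref{thm:n6-hypersurface} is precisely $\varphi(S)$ up to a unit. One must also confirm that $E_B$ is the image of the seven-term $E$ of Theorem~\ref{thm:quotient-4648}, written in the $G_i$ after this rescaling. I would verify both by direct substitution; $\varphi(F_0)=G_5/105$ requires only comparing six terms against \eqref{eq:n6-G5}. If a different scalar appeared, it would rescale $T_B$ or $E_B$ by a unit and leave the boxed isomorphism unchanged.
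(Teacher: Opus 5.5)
Your proposal is correct and follows the paper's own route. You verify the intertwining $\Dmap{6}\circ\varphi=\varphi\circ\Dmap{5}$ on generators, including the boundary. You then use surjectivity and injectivity of $\varphi$ to obtain a graded isomorphism of invariant rings. Finally you push the $N=5$ closure $A_5^{\Dmap{5}}=C_5[F_{46}^{\mathrm{int}}]$ and the presentation $C_5[T]/(p^2T-E)$ through $\varphi$. Your explicit check that $\varphi(F_0)=G_5/105$, so that $\varphi(S)=S_B$ exactly because $11025=105^2$, is precisely the normalization bookkeeping the paper delegates to its transport script.
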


\begin{theorem}[Transported $B_6$ structure]
\label{thm:n6-B6structure}
$H_{K_6}(t) = (1-t^{28})(1-t^{48})/((1-t)^2(1-t^4)(1-t^6)(1-t^{14})(1-t^{24})(1-t^{46}))$
in Fibonacci weights $(8,5,3,2,1,1)$ on $(a_1,\dots,a_6)$;
$K_6$ is a codim-$2$ complete intersection, normal, with
$R^{(6)}_m/R^{(6)}_{m-1} \cong C_{B,6}/((a_5)^2)$ layers and
exactly six transported singular $3$-fold components.
With $m$ the $a_1$-degree: $v_6(C_m) \ge 4\lceil m/2\rceil$
and $v_5(C_m) \ge \lceil 3m/2\rceil$, the latter sharp on
$\varphi(F_{46})^r$. All by transport of the $N=5$ presentation and two-slope bounds along $\varphi$:
the generator table $(\varphi(F_0) = G_5/105$ of $a_1$-degree $1$,
$H_B$ of $a_1$-degree $2$ with top $11025a_5^4a_6^4$,
$\varphi(F_{46})$ of $a_1$-degree $4$ with top
$\frac{18375}{4}a_1^4a_5^6a_6^8)$ carries $(v_5,v_6)$-bidegrees
$(2,4)$, $(4,4)$, $(6,8)$, identical to the $N=5$ table, so the
gap identities and sharpness transfer verbatim.
\end{theorem}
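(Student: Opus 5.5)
The plan is to transport every structural statement about $A_5^{\Dmap{5}}$ along the shift $\varphi$ of Theorem~\ref{thm:n6-B6ring}, which is a graded isomorphism $A_5^{\Dmap{5}}\to K_6$. The theorem then follows from Theorem~\ref{thm:closure}, Corollary~\ref{cor:R-presentation}, Theorem~\ref{thm:R-hilbert} and the two-slope bounds, with no new computation beyond identifying where each $N=5$ datum goes.

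\textbf{Step 1: transfer of the presentation and its consequences.} Since $\varphi$ is a $k$-algebra isomorphism $k[a_0,\dots,a_5]\to B_6$ preserving Fibonacci weights $(8,5,3,2,1,1)$ verbatim, it carries the presentation $R\cong k[p,q,J_1,J_2,F_0,H,T]/(S,p^2T-E)$ to a presentation of $K_6$ with the same weights and relations. Consequently the following transfer directly, because all of them are intrinsic to the graded abstract algebra:
\begin{itemize}
\item the Hilbert series (Theorem~\ref{thm:R-hilbert});
\item the complete-intersection property (Corollary~\ref{cor:R-presentation});
\item normality (Theorem~\ref{thm:R-normal});
\item the $T$-filtration layers $R_m/R_{m-1}\cong C_5/(p^2)$ of Theorem~\ref{thm:B-module}, with $p\mapsto a_5$;
\item the six singular $3$-fold components of Theorem~\ref{thm:Sing-exact}(B). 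Here the isomorphism of abstract rings $\operatorname{Spec}K_6\cong\operatorname{Spec}R$ transports components over $\mathbb{Q}$, and the count is stated over $\mathbb{Q}$ as before.
\end{itemize}

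\textbf{Step 2: the valuation statements.} These need $\varphi$ to match the relevant filtrations and valuations. The $a_0$-degree on $A_5=B[a_0]$ goes to the $a_1$-degree on $B_6=C[a_1]$, with $C=\varphi(B)$. The valuations $v_p,v_q$ on $\operatorname{Frac}(B)$ go to $v_5,v_6$ on $\operatorname{Frac}(C)$, since $\varphi(p)=a_5$ and $\varphi(q)=a_6$ are the images of primes under a ring isomorphism. So for $F\in K_6$ of $a_1$-degree $m$ with top $C_m\in C$, $\varphi^{-1}(F)$ has $a_0$-degree $m$ and top $\varphi^{-1}(C_m)$ with the same valuations.

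\textbf{Step 3: apply the $N=5$ bounds.} The $q$-side bound is Theorem~\ref{thm:q-top}, and the $p$-side bound is Theorem~\ref{thm:p-envelope}. Sharpness of the $p$-side bound on $\varphi(F_{46})^r$ follows because tops multiply in the domain $C[a_1]$: the top is $(\tfrac{18375}{4}a_5^6a_6^8)^r$, with $v_5=6r=3m/2$. The generator table in the statement is checked by applying $\varphi$ to the tops $(p^2q^4,\,11025p^4q^4,\,\tfrac{18375}{4}p^6q^8)$, together with $\varphi(F_0)=G_5/105$ from the $a_1$-coefficient of~\eqref{eq:n6-G5}.

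\textbf{Main obstacle.} The main point of care is the $q$-side bound for odd $m$. Theorem~\ref{thm:q-top} is proved only for even $a_0$-degree (and for $m=1$ via the remark after it), so the transported statement inherits exactly that scope. I would state the $v_6$ inequality with the same parity restriction, or more precisely with the same hypotheses as at $N=5$, rather than claim more.

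A second check is that Theorem~\ref{thm:p-envelope}'s proof rests on a normal form over $K$ in the generators $F_0,H,F_{46}$. Since this is a statement about $R$ alone, it transports without change.
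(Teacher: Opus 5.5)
Your proposal is correct and is essentially the paper's own argument. Everything comes from transporting the $N=5$ presentation, Hilbert series, $T$-layers, normality, singular locus and two-slope bounds along the weight-preserving isomorphism $\varphi$ of Theorem~\ref{thm:n6-B6ring}, with $p\mapsto a_5$, $q\mapsto a_6$ and $a_0$-degree going to $a_1$-degree.

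Your parity caveat is also right, and it is more careful than the paper's one-line justification. The $N=5$ source gives $v_q(C_m)\ge 4\lceil m/2\rceil$ only for even $m$ (Theorem~\ref{thm:q-top}) and for $m=1$ (via Theorem~\ref{thm:a01-closure}), and it explicitly leaves odd $m\ge 3$ open. So transport yields the $v_6$ inequality only in that range, and the unrestricted form in the statement is not supported by the ``transfer verbatim'' sentence.
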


\begin{conjecture}[birth spectrum]
\label{conj:birth-spectrum}
Each primitive $N$-level generator of $a_0$-degree $m$ and Fibonacci
weight $w$ transports along the shift isomorphism $\varphi_N$ of
Theorem~\ref{thm:recursion} to a primitive $(N+1)$-level $B_6$-generator
of $a_1$-degree $m$ and weight $w$. Instances: $H \mapsto H_B$
(weight $24$, Remark~\ref{rem:n6-HB}) and $F_{46} \mapsto
\varphi(F_{46})$ (weight $46$, $a_1$-degree $4$, top
$\frac{18375}{4}a_1^4a_5^6a_6^8$ outside the transported top ideal
$(a_5^8a_6^8)$ as ABB monomials ($a_5$-exponent $6 < 8$), certified
$\Dmap{6}(\varphi(F_{46})) = 0$ in
exact computation (see Supplementary Material)).
What transport gives is $\Dmap{6}$-invariance plus the top pattern;
what stays open is primitivity at $N+1$ (no earlier $B_6$-level
generator explains the transported invariant).
Falsifier: an $N$ where the transported top falls back inside the
transported $J$-ideal.
\end{conjecture}

\begin{remark}[$H_B$: the transported hidden generator]
\label{rem:n6-HB}
$H_B = \varphi(H)$ is the $17$-term weight-24 invariant with
$a_1$-degree $2$ (leading term $11025\,a_1^2a_5^4a_6^4$),
$\Dmap{6}(H_B) = 0$ by intertwining, re-verified by expansion in
exact computation (see Supplementary Material). Its $W^{\sharp}$-top coefficient
$H_2 = 11025\,a_5^4a_6^4$ has $v_6(H_2) = 4 < 8$, refuting
the $a_6$-adic bound (Remark~\ref{lem:n6-sat6}) with the same numbers as $H$ refutes
Lemma~\ref{lem:qlemma}; $v_5(H_2) = 4 = 2\cdot 2$ leaves
Lemma~\ref{lem:n6-sat5} uncontradicted. $H_B$ is a second exact
boundary realization of Remark~\ref{rem:multfloor} ($a_6$-side,
$m = 2$, $\lambda = -4$). The invariance, weight, $a_1$-degree
bound, relation, and strictness are Lean-checked in
the formal transport module in core Lean without Mathlib by native
evaluation (see the formal-verification manifest).
\end{remark}

\begin{remark}[Why single divisibility does not close the induction]
The $a_1$-top equation gives $N_t = c_6G'$, and
$E = G'G_5a_1^{t-1}/105$ cancels the $a_1$-top of $F$ --- but
for $t \ge 2$, $\Dmap{6}(E) = G'G_5(t-1)a_1^{t-2}a_2a_3/105 \ne
0$ in general, so $F - E$ exits $K_6$ and no induction on
$K_6$ results; invariant subtractors with the same top would
need the full power divisibility $c_6^t \mid N_t$. The
$t = 1$ case is exactly
Corollary~\ref{cor:n6-a1deg1}. The inference that single divisibility yields $K_6 \subseteq R$ by $a_1$-induction is withdrawn (it would need $c_6^t \mid N_t$),
and the bounded machine check is retained as corroboration only.
The generation question is settled instead by transfer
(Theorem~\ref{thm:n6-hypersurface}).
\end{remark}

The base-kernel section proves generator invariance, independence,
degree minimality, the localized kernel, the obstruction identity,
and Dixmier divisibility with its $a_1$-degree-one fragment
(Lemmas~\ref{lem:n6-gens}--\ref{lem:n6-dixmier},
Corollary~\ref{cor:n6-a1deg1}); transfer refutes five-generation
(Remark~\ref{cor:n6-fivegen-refuted}) and gives the true hypersurface
ring (Theorem~\ref{thm:n6-hypersurface}). The $a_5$-saturation
inference (Lemma~\ref{lem:n6-sat5}) is unproved, and with it the full
$A_6$ ring (Section~\ref{sec:n6-F0}).

\section{\texorpdfstring{The $N=6$ slice and localized presentation}{The N=6 slice and localized presentation}}
\label{sec:n6-slice}

This slice section uses the generators $G_1,\dots,G_5$ of
Lemma~\ref{lem:n6-gens} verbatim (no redefinition) and isolates the
localization mechanism over $B_{a_5a_6}$. Every identity below is proved
by the expansion shown plus
exact computation (see Supplementary Material).

\begin{proposition}[Slice certificate and denominator structure]
\label{prop:n6-slice}
With $B_6$, $R$, $\Dmap{6}$ as above, the element
\[
s = \frac{a_4}{a_5a_6}
\]
satisfies $\Dmap{6}(s) = 1$ by the quotient rule from
$\Dmap{6}(a_4) = a_5a_6$ and $\Dmap{6}(a_5) = \Dmap{6}(a_6) = 0$.
Hence $\Dmap{6}(s) = 1$ is a nonzero element of $K_6$ (constants are
invariant), while $s$ itself is not invariant. Writing $s$ for a
transcendental indeterminate over $R_{a_5a_6}$, the right-hand sides
\begin{align*}
A_4 &= G_1G_2s, \\
A_3 &= \frac{G_4 + G_1^2G_2^2s^2}{2G_2}, \\
A_2 &= \frac{G_3 + A_3^2}{2G_1}, \\
A_1 &= \frac{G_5 - N_5(A_2,A_3,A_4,G_1,G_2)}{105\,G_1^2G_2^4},
\end{align*}
where $N_5 = G_5 - 105a_1a_5^2a_6^4$ is the $a_1$-free part
of~\eqref{eq:n6-G5}, all lie in $R_{a_5a_6}[s]$: with $s$ transcendental
each denominator is $c\,a_5^ua_6^v$ with $c \in k^\times$ (the scalars
$2$ and $105$ are units under the standing characteristic-zero
hypothesis), and the substituted numerator
$N_5(A_2,A_3,A_4,G_1,G_2)$ is a polynomial (constant denominator), all
checked in exact computation (see Supplementary Material). Substituting
$s = a_4/(a_5a_6)$ recovers each variable,
$A_i|_{s = a_4/(a_5a_6)} = a_i$ for $i = 1,\dots,4$ with the index
convention $A_4 \leftrightarrow a_4$, by direct expansion. The
denominator exponents $(u,v)$ in $(a_5^ua_6^v)$ are $(0,0)$ for $a_4$,
$(0,1)$ for $a_3$, $(1,2)$ for $a_2$, and $(2,4)$ for $a_1$.
Consequently $B_{a_5a_6} = R_{a_5a_6}[s]$ and, by the standard
local-slice theorem,
$B_{a_5a_6}^{\Dmap{6}} = R_{a_5a_6}$, localizing Lemma~\ref{lem:n6-slice}
with explicit denominators.
\end{proposition}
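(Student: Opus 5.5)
The plan is to verify the four substitution identities one at a time in the order $A_4,A_3,A_2,A_1$, then apply the local-slice theorem exactly as in Lemma~\ref{lem:slice} and Lemma~\ref{lem:n6-slice}.

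\textbf{Step 1 (the slice).} By the quotient rule,
\[
\Dmap{6}(s)=\Dmap{6}(a_4)/(a_5a_6)=1,
\]
since $\Dmap{6}(a_5)=\Dmap{6}(a_6)=0$.

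\textbf{Step 2 (the first three variables).} Each of these follows by solving a kernel generator for one variable.
\begin{enumerate}
\item $A_4=G_1G_2s$ recovers $a_4$ at $s=a_4/(a_5a_6)$ trivially.
\item Solving $G_4=2a_3a_6-a_4^2$ for $a_3$ gives $a_3=(G_4+a_4^2)/(2a_6)$. Substituting $a_4=a_5a_6s$ produces exactly $A_3$, with denominator $2a_6$, so the exponents are $(0,1)$.
\item Solving $G_3=2a_2a_5-a_3^2$ gives $a_2=(G_3+a_3^2)/(2a_5)$, which is $A_2$. Since $A_3^2$ carries $a_6^{-2}$, the denominator is $a_5a_6^2$, so the exponents are $(1,2)$.
\end{enumerate}
Each of these is an identity in $\operatorname{Frac}(B_6)$. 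Because $s$ is only substituted, the rational expressions in the indeterminate $s$ lie in $R_{a_5a_6}[s]$ with scalar-times-monomial denominators.

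\textbf{Step 3 ($A_1$, the main obstacle).} Since $G_5=105a_1a_5^2a_6^4+N_5$ with $N_5$ free of $a_1$, we get $a_1=(G_5-N_5)/(105a_5^2a_6^4)$. The delicate point is the claimed exponent $(2,4)$: substituting $A_2,A_3,A_4$ into $N_5$ could a priori introduce further $a_5^{-1},a_6^{-1}$ powers. The paper's statement says the substituted $N_5$ is a polynomial with constant denominator, so I would prove this by grouping the terms of $N_5$ into $G_3,G_4$-expressions.

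My first attempt would transport from $N=5$ via $\varphi$. Under $\varphi$, $105F_0\mapsto G_5$, and the $N=5$ reconstruction identities of Lemma~\ref{lem:slice} express $N_5$ in the slice coordinate with $\rho$-type structure. There the analogous $G_0$ evaluated in $s$ is $c\cdot(\text{polynomial in }s\text{ over }K)$ minus terms divisible appropriately; this is the content of the identity $W=a_0+\rho$. Pulling back, $N_5(A_2,A_3,A_4)=105c_6\,\varphi(\rho)$ up to the needed form, so the total denominator of $A_1$ is $c_6=a_5^2a_6^4$ times units. Failing a clean transport, I would simply expand by exact computation, as the statement permits.

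\textbf{Step 4 (conclusion).} With each $a_i\in R_{a_5a_6}[s]$ and $R\subseteq B_6$, we have $B_{a_5a_6}=R_{a_5a_6}[s]$. Here $s$ is transcendental over $R_{a_5a_6}$, because applying $\Dmap{6}=d/ds$ to a minimal relation gives a contradiction. The local-slice theorem (Freudenburg, Corollary~1.26) then yields $B_{a_5a_6}^{\Dmap{6}}=R_{a_5a_6}$, which recovers Lemma~\ref{lem:n6-slice} with these explicit denominators.
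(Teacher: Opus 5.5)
Your overall plan is the paper's. You solve $G_4$ and $G_3$ for $a_3$ and $a_2$, certify the $A_1$ claim by expansion (your fallback is exactly the paper's exact-computation check), and conclude with the local-slice theorem, with $s$ transcendental because $\Dmap{6}$ acts as $d/ds$ on $R_{a_5a_6}[s]$.

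The transport sketch you lead with in Step~3, however, is not a valid argument as written. Its key assertion, that $G_0$ written in the $N=5$ slice chart is $c$ times a polynomial in $s$ over $K$, is false. Take $Y,Z$ as in Lemma~\ref{lem:slice}. Only the term $-pq^3xyz$ contributes to the coefficient of $s$, via $x=pqs$ and $y\equiv Y$, $z\equiv Z \pmod{s^2}$. That coefficient is $-\tfrac1{16}pq(4q^2J_1+J_2^2)J_2$, which lies in $K$ but not in $cK$. Moreover, $N_5=105\,c_6\,\varphi(\rho)$ is only $N_5=105\,\varphi(G_0)$ rewritten, and $W=a_0+\rho$ is just the definition of $\rho$. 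Neither gives any control of denominators.

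What you actually need is only that $N_5(A_2,A_3,A_4,G_1,G_2)$ lies in $R[s]$ up to scalars. This is the transport of the true statement $G_0\in K[s]$, not of the false $G_0\in c\,K[s]$, and it follows by inspection. From your Step~2,
\[
G_2A_3=\tfrac12\bigl(G_4+G_1^2G_2^2s^2\bigr),\qquad G_1G_2^2A_2=\tfrac18\bigl(4G_2^2G_3+(G_4+G_1^2G_2^2s^2)^2\bigr),
\]
and both lie in $R[s]$. Regrouping the monomials of $N_5$ gives
\[
N_5=-105\,(a_5a_6^2a_2)(a_6a_3)a_4+35\,(a_5a_6^2a_2)a_4^3+35\,(a_6a_3)^2a_4^3-21\,(a_6a_3)a_4^5+3\,a_4^7 .
\]
After substituting $(A_2,A_3,A_4,G_1,G_2)$ for $(a_2,\dots,a_6)$, every bracket becomes $G_1G_2^2A_2$, $G_2A_3$, or $A_4=G_1G_2s$, all in $R[s]$. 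In other words, the explicit $a_5,a_6$ factors of each monomial exactly absorb the poles of its $a_2,a_3$ factors.

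The substituted $N_5$ is free of $G_5$. Hence the numerator $G_5-N_5(A_2,A_3,A_4,G_1,G_2)$ is divisible by neither $G_1$ nor $G_2$ in the polynomial ring $R[s]$. So the exponent $(2,4)$ for $a_1$ is exact rather than an upper bound, and this observation replaces both the transport and the appeal to computation.
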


proved here are the slice certificate $\Dmap{6}(s) = 1$ with
$\Dmap{6}(s) \in K_6$ nonzero, and the localized presentation
$B_{a_5a_6} = R_{a_5a_6}[s]$ with the denominator exponents above
(Proposition~\ref{prop:n6-slice}). The $B_6$-level candidate is the
hypersurface $C_{B,6}$ of Theorem~\ref{thm:n6-hypersurface} (five-generation
fails, Remark~\ref{cor:n6-fivegen-refuted}). The $a_5$-saturation inference, and hence the global $N=6$ ring with
its Hilbert series, remain open. No generation claim for $\Afring{6}^{\Dmap{6}}$ is
made here.

\section{\texorpdfstring{The $N=6$ denominator and weight-shift}{The N=6 denominator and weight-shift}}
\label{sec:n6-denom}

This denominator section uses the generators $G_1,\dots,G_5$ of
Lemma~\ref{lem:n6-gens} verbatim (no redefinition) and the slice
$s = a_4/(a_5a_6)$ with $\Dmap{6}(s) = 1$ of
Proposition~\ref{prop:n6-slice} (no redefinition), within the
adjacent-quadratic shift family. Every identity below is proved by the
expansion shown plus
exact computation (see Supplementary Material).

\begin{proposition}[Explicit denominator $c_6$]
\label{prop:n6-denom}
Put
\[
c_6 = a_5^2a_6^4 = G_1^2G_2^4.
\]
Then $\Dmap{6}(c_6) = 0$, so $c_6 \in K_6$, and the exact $a_1$-coefficient
of $G_5$ in~\eqref{eq:n6-G5} is $105\,c_6$ (the scalar $105$ is a unit
under the standing characteristic-zero hypothesis); write
$G_5 = 105\,c_6a_1 + N_5$ with $N_5 \in B_6$ free of $a_1$.
With $A_4, A_3, A_2, A_1$ as in Proposition~\ref{prop:n6-slice},
each product $c_6A_i$ lies in $R[s]$: in lowest terms each has constant
denominator (the residual scalars $2$, $8$, $105$ are units), and
$105\,c_6A_1 = G_5 - N_5(A_2,A_3,A_4,G_1,G_2)$ is a polynomial exactly,
all checked in exact computation (see Supplementary Material). Thus $c_6$ is the minimal
monomial clearing the slice denominators $(u,v) = (0,0), (0,1), (1,2),
(2,4)$: $c_6A_4, c_6A_3, c_6A_2, c_6A_1 \in R[s]$.
\end{proposition}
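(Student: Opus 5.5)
The plan has three parts: the invariance of $c_6$, the $a_1$-coefficient of $G_5$, and the clearing of the slice denominators, treated in that order.

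\emph{Invariance.} Since $\Dmap{6}(a_5)=\Dmap{6}(a_6)=0$, the Leibniz rule gives $\Dmap{6}(c_6)=0$ at once. Hence $c_6\in K_6$, and $c_6=G_1^2G_2^4$ by definition of $G_1,G_2$.

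\emph{The $a_1$-coefficient.} Inspecting \eqref{eq:n6-G5}, the only monomial containing $a_1$ is $105a_1a_5^2a_6^4=105c_6a_1$. The remaining five terms involve only $a_2,\dots,a_6$. This gives the decomposition $G_5=105c_6a_1+N_5$ with $N_5$ free of $a_1$.

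\emph{Clearing the slice denominators.} I would go variable by variable, in the order of the recursion of Proposition~\ref{prop:n6-slice}.
\begin{itemize}
\item $A_4=G_1G_2s\in R[s]$ already, so $c_6A_4\in R[s]$ trivially.
\item $A_3=(G_4+G_1^2G_2^2s^2)/(2G_2)$ has denominator $2a_6$. Since $a_6\mid c_6$, the product $c_6A_3=\tfrac12G_1^2G_2^3(G_4+G_1^2G_2^2s^2)$ lies in $R[s]$.
\item $A_2=(G_3+A_3^2)/(2G_1)$. Here $A_3^2$ has denominator $4a_6^2$, so $A_2$ has denominator $8a_5a_6^2$. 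Multiplying by $c_6=a_5^2a_6^4$ leaves $a_5a_6^2$ times a polynomial in $G_i$ and $s$, with scalar $1/8$.
\item $A_1=(G_5-N_5(A_2,A_3,A_4,G_1,G_2))/(105G_1^2G_2^4)$. Here $105c_6A_1=G_5-N_5(\cdots)$ exactly, so membership in $R[s]$ reduces to showing the substituted $N_5$ is a polynomial in $G_1,\dots,G_5,s$.
\end{itemize}

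The main obstacle is this last claim. The raw substitution introduces apparent $a_5,a_6$ denominators from $A_2$ and $A_3$. Proposition~\ref{prop:n6-slice} already asserts that the substituted numerator has constant denominator, and I would invoke that rather than redo the cancellation.

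For a conceptual check, I would argue as follows:
\begin{itemize}
\item $105c_6A_1$ is the image of $105c_6a_1=G_5-N_5$ under the $R$-algebra isomorphism $B_{a_5a_6}\cong R_{a_5a_6}[s]$.
\item Each term of $N_5$ carries at least the factor $a_4$ or $a_3^2a_6^2$ needed to absorb the denominators.
\item Direct expansion, certified by the exact computation cited in the statement, confirms that no $a_5,a_6$ denominator survives.
\end{itemize}

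Minimality of $c_6$ would follow from the listed denominator exponents. The maximal $a_5$-exponent is $2$ and the maximal $a_6$-exponent is $4$, both attained at $A_1$. Since the leading $s^0$-term of $A_1$ genuinely carries $1/(a_5^2a_6^4)$, no smaller monomial clears all four denominators.
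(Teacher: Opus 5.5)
Your proposal is correct and follows essentially the paper's route: Leibniz for $\Dmap{6}(c_6)=0$, inspection of \eqref{eq:n6-G5} for the $a_1$-coefficient, stepwise clearing of $A_4,A_3,A_2$, and the $A_1$ cancellation taken from Proposition~\ref{prop:n6-slice}. Your $s^0$-coefficient argument for minimality is a sound small addition. The substituted $N_5$ vanishes at $s=0$, because every monomial of $N_5$ contains $a_4$ and $A_4=G_1G_2s$; this leaves $G_5/(105G_1^2G_2^4)$, with $G_5$ coprime to $G_1G_2$ in $R$.

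One correction to your heuristic: the denominators are absorbed not by the factor $a_4$ but by each monomial's explicit $a_5^ia_6^j$ factor. That factor exactly matches the $A_2$- and $A_3$-denominators the monomial introduces: $a_2a_3\leftrightarrow a_5a_6^3$, $a_2\leftrightarrow a_5a_6^2$, $a_3^2\leftrightarrow a_6^2$, $a_3\leftrightarrow a_6$. So the cancellation is a short hand check and does not need the computation.
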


\begin{theorem}[Weight-shift holds]
\label{thm:n6-weight-shift}
With the Fibonacci grading
$\operatorname{wt}(a_1,\dots,a_6) = (8,5,3,2,1,1)$,
compatible with $\Dmap{6}$ since
$\operatorname{wt}(a_n) = \operatorname{wt}(a_{n+1}) +
\operatorname{wt}(a_{n+2})$,
\[
\operatorname{wt}(G_1,G_2,G_3,G_4,G_5) = (1,1,6,4,14),
\]
each verified monomial by monomial in exact computation (see Supplementary Material)
($G_5$ has six monomials, all of weight $14$; $N_5$ has five, all of
weight $14$). Moreover $\operatorname{wt}(c_6) = 2 + 4 = 6$, and for
\[
W = \frac{G_5}{105\,c_6} = a_1 + \frac{N_5}{105\,c_6} \in B_{a_5a_6}
\]
each term has weight $8$ (the summand $a_1$ has weight $8$, and each
$N_5$-monomial of weight $14$ drops by $6$ upon division by $c_6$).
Hence, with $F_0$ denoting the $a_1$-top generator $G_5$,
\[
\boxed{\operatorname{wt}(F_0) = \operatorname{wt}(W) + \operatorname{wt}(c_6),
\qquad 14 = 8 + 6,}
\]
so the weight-shift verdict is HOLDS as a theorem (the $N=5$ observation
becomes a proved identity at $N=6$ for the $B_6$-level top generator).
The scalar $105$ carries weight $0$ and does not affect the grading.
\end{theorem}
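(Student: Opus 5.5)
The claim has two parts: the generator weights $(1,1,6,4,14)$ with $\operatorname{wt}(c_6)=6$, and the additive identity $\operatorname{wt}(G_5)=\operatorname{wt}(W)+\operatorname{wt}(c_6)$ with $W$ homogeneous of weight $8$. Both are finite bookkeeping once we know that every generator is Fibonacci-homogeneous.

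\emph{Compatibility of the grading.} I first check that the grading is compatible with $\Dmap{6}$. For $1\le n\le 4$ one has $\Dmap{6}(a_n)=a_{n+1}a_{n+2}$. The weights $(8,5,3,2,1,1)$ satisfy $\operatorname{wt}(a_n)=\operatorname{wt}(a_{n+1})+\operatorname{wt}(a_{n+2})$: $8=5+3$, $5=3+2$, $3=2+1$, $2=1+1$. Hence $\Dmap{6}$ sends $V_w$ to $V_w$, and the kernel is graded. This is not strictly needed for the weight count, but it makes the statement meaningful.

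\emph{Weights of the generators.} I compute these one at a time.
\begin{itemize}
\item $G_1=a_5$ and $G_2=a_6$ have weight $1$.
\item In $G_3=2a_2a_5-a_3^2$, the monomials weigh $5+1=6$ and $3+3=6$.
\item In $G_4=2a_3a_6-a_4^2$, the monomials weigh $3+1=4$ and $2+2=4$.
\item For $G_5$ in~\eqref{eq:n6-G5}, I tabulate the six monomials as in Lemma~\ref{lem:F0-degree-weight}:
  \begin{itemize}
  \item $a_1a_5^2a_6^4$: $8+2+4=14$;
  \item $a_2a_3a_4a_5a_6^3$: $5+3+2+1+3=14$;
  \item $a_2a_4^3a_5a_6^2$: $5+6+1+2=14$;
  \item $a_3^2a_4^3a_6^2$: $6+6+2=14$;
  \item $a_3a_4^5a_6$: $3+10+1=14$;
  \item $a_4^7$: $14$.
  \end{itemize}
\end{itemize}
A faster route is to note that $\varphi(F_0)=G_5/105$ and that $\varphi$ preserves the weights verbatim, so Lemma~\ref{lem:F0-degree-weight} transfers directly. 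I would cite this but keep the explicit table as the check. Consequently $N_5=G_5-105c_6a_1$, which consists of the last five monomials, is homogeneous of weight $14$. Finally $\operatorname{wt}(c_6)=2\cdot1+4\cdot1=6$.

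\emph{Weight of $W$.} In $B_{a_5a_6}$ I extend the grading to the localization by assigning negative weights to $a_5^{-1}$ and $a_6^{-1}$. Then
\[
W=G_5/(105c_6)=a_1+N_5/(105c_6),
\]
using $G_5=105c_6a_1+N_5$ from Proposition~\ref{prop:n6-denom}. The summand $a_1$ has weight $8$. The summand $N_5/(105c_6)$ is a quotient of homogeneous elements of weights $14$ and $6$, so it has weight $8$; the scalar $105$ has weight $0$. Hence $W$ is homogeneous of weight $8$, and $14=8+6$ follows by additivity of weight on products of homogeneous elements in the graded domain $B_{a_5a_6}$.

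\emph{Main obstacle.} Nothing here is deep. The only point needing care is that the Fibonacci grading extends to the localization as a $\mathbb{Z}$-grading, so that speaking of the weight of $W$ is legitimate. This holds because $a_5$ and $a_6$ are homogeneous.
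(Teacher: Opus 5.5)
Your proposal is correct and follows the same route as the paper: a direct monomial-by-monomial tabulation of the Fibonacci weights of $G_1,\dots,G_5$ and $c_6$, with $14=8+6$ read off from $G_5=105\,c_6W$. Your remark that the grading extends to $B_{a_5a_6}$ as a $\mathbb{Z}$-grading (because $a_5,a_6$ are homogeneous) is what justifies speaking of $\operatorname{wt}(W)$. Your cross-check via $\varphi(F_0)=G_5/105$ agrees with the paper's transport statement.
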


proved here are the explicit denominator $c_6 = a_5^2a_6^4$
clearing every slice denominator
(Proposition~\ref{prop:n6-denom}), and the weight-shift identity
$\operatorname{wt}(F_0) = \operatorname{wt}(W) + \operatorname{wt}(c_6)$
(Theorem~\ref{thm:n6-weight-shift}). The $B_6$-level ring
is the hypersurface of Theorem~\ref{thm:n6-hypersurface}. The $a_5$-saturation inference, and hence the global $N=6$ ring with
its Hilbert series, remain open. No generation claim for $\Afring{6}^{\Dmap{6}}$ is
made here.

\section{\texorpdfstring{The $N=6$ saturation: per-divisor residue analysis}{The N=6 saturation: per-divisor residue analysis}}
\label{sec:n6-sat}

Apart from the multiplicity-floor Remark~\ref{rem:multfloor}, the per-divisor statements below are proved conditionally or await completion.
We work over the characteristic-zero field $k$ with
$B_6 = k[a_1,\dots,a_6]$, $C = k[a_2,\dots,a_6]$, and
$\Dmap{6}$ of the adjacent-quadratic shift family, reusing the
generators $G_1,\dots,G_5$ of Lemma~\ref{lem:n6-gens} verbatim
(no redefinition), the slice $s = a_4/(a_5a_6)$ with
$\Dmap{6}(s) = 1$ of Proposition~\ref{prop:n6-slice}
(no redefinition), and the denominator $c_6 = a_5^2a_6^4$ with
$G_5 = 105\,c_6a_1 + N_5$ of Proposition~\ref{prop:n6-denom}
(no redefinition), where $N_5 \in C$ is free of $a_1$.
This section treats the two prime divisors of $c_6$ separately:
the $a_5$-adic half below and the $a_6$-adic half in the companion
lemma. The two residue fields, pole orders, and initial forms are
computed independently on each side, and no symmetry argument is used
in either direction: the exponents ($2$ versus $4$), the initial forms
(trinomial versus monomial), and the algebraicity degrees (tower of
degree at most $4$ versus one step of degree at most $2$) all differ.

Put $C^{\Dmap{6}} = k[a_5,a_6,G_3,G_4]$ as
in~\eqref{eq:n6-Ckernel} and
\[
W^{\sharp} = \frac{G_5}{105\,c_6} = a_1 + \rho^{\sharp},
\qquad \rho^{\sharp} = \frac{N_5}{105\,c_6} \in C_{a_5a_6}.
\]
A general element over the localization is written
\[
F = \sum_{j=0}^m H_j (W^{\sharp})^j,
\qquad H_j \in (C^{\Dmap{6}})_{a_5a_6},
\]
with $a_1$-coefficients
\[
C_k = \sum_{j=k}^m \binom{j}{k}H_j(\rho^{\sharp})^{j-k} \in C
\qquad (0 \le k \le m)
\]
whenever $F \in B_6$. For each $m \ge 0$ the $a_5$-part of the $m$-th
denominator module is $M^5_m = a_5^{2m}C^{\Dmap{6}}$ and the $a_6$-part
is $M^6_m = a_6^{4m}C^{\Dmap{6}}$; the full module
$M_m = c_6^mC^{\Dmap{6}}$ requires both bounds together.
The $W^{\sharp}$-degree induction skeleton of
Remark~\ref{rem:n6-winduction} is Conditional on both lemmas below.
Every specialization identity, exact pole order, Jacobian minor, and
algebraicity witness in this section is certified by
exact computation (see Supplementary Material); the verification
checks setup data only and
never the saturation inference itself.

\begin{remark}[Valuation setup on the $a_5$-side]
Write $C = k[a_2,a_3,a_4,a_6][a_5]$ and let
$v_5 = \operatorname{ord}_{a_5}$ on $\operatorname{Frac}(C)$
(and on $\operatorname{Frac}(B_6)$), with valuation ring
$V_5 = C_{(a_5)}$ and residue field
$\kappa_5 = k(a_2,a_3,a_4,a_6)$.
The height-one prime $(a_5) \subset C^{\Dmap{6}}$ has residue
\[
C^{\Dmap{6}}/a_5C^{\Dmap{6}} \cong k[a_6,-a_3^2,2a_3a_6-a_4^2],
\]
a polynomial ring: the three residue classes are algebraically
independent (Jacobian minor $4a_3a_4 \ne 0$ in variables
$(a_6,a_3,a_4)$), so with $y_2 = a_6$, $y_3 = -a_3^2$,
$y_4 = 2a_3a_6-a_4^2$,
\[
L_5 = \operatorname{Frac}(C^{\Dmap{6}}/a_5C^{\Dmap{6}})
  = k(y_2,y_3,y_4) \hookrightarrow \kappa_5.
\]
Since $N_5$ has $a_5$-adic order $0$ (its $a_5$-free part is the
nonzero trinomial $35a_3^2a_4^3a_6^2 - 21a_3a_4^5a_6 + 3a_4^7$)
while $c_6 = a_5^2a_6^4$ has order $2$, $\rho^{\sharp}$ has exact
pole order $e_5 = 2$ at $a_5$, with trinomial initial form
\[
r_5 = \operatorname{in}_{a_5}(a_5^2\rho^{\sharp})
  = \frac{35a_3^2a_4^3a_6^2 - 21a_3a_4^5a_6 + 3a_4^7}{105\,a_6^4}.
\]
\end{remark}

\begin{lemma}[$a_5$-adic denominator bound, proof incomplete]
\label{lem:n6-sat5}
The proof of the following bound is incomplete.
Let $F = \sum_{j=0}^m H_j(W^{\sharp})^j \in B_6$ with
$H_j \in (C^{\Dmap{6}})_{a_5a_6}$. Then
\[
\boxed{v_5(H_j) \ge 2j \quad \text{for every } j,}
\]
i.e.\ $H_j \in a_5^{2j}(C^{\Dmap{6}})_{a_5a_6}$ for every $j$.
\end{lemma}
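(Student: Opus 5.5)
The plan is to run the constant-coefficient, initial-form argument of Lemma~\ref{lem:resonance-transcendent} and Theorem~\ref{thm:transfer} at the prime $(a_5)$, with the data of the valuation-setup remark: $e_5=2$, residue field $L_5=k(y_2,y_3,y_4)\hookrightarrow\kappa_5$, and trinomial initial form $r_5$. A preliminary check against the transported generator $\varphi(F_{46})$ suggests the argument cannot be completed in general, and I would settle that check first.

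\textbf{Step 1 (setup).} Suppose $\lambda:=\min_j\bigl(v_5(H_j)-2j\bigr)<0$. Let $S=\{j: v_5(H_j)-2j=\lambda\}$, let $h_j=\operatorname{in}_{a_5}(H_j)\in L_5$, and put $P(T)=\sum_{j\in S}h_jT^j$. For the landing $h_j\in L_5$, I would use that $C^{\Dmap{6}}=k[a_5,a_6,G_3,G_4]$ is polynomial with $(a_5)$ prime. This follows by transport of Lemma~\ref{lem:contraction} along $\varphi$: every $H_j=A/(a_5^a a_6^b)$ has residue in $\operatorname{Frac}(C^{\Dmap{6}}/a_5)$.

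\textbf{Step 2 (initial forms of the coefficients).} Each $a_1$-coefficient $C_k=\sum_{j\ge k}\binom jk H_j(\rho^\sharp)^{j-k}$ lies in $C$, so $v_5(C_k)\ge 0$. Every summand has valuation at least $\lambda+2k$. Whenever $\lambda+2k<0$, the initial forms must therefore cancel, which gives $P^{(k)}(r_5)=0$ in $\kappa_5$. So $r_5$ is a root of $P$ of multiplicity at least $c=\#\{k:\lambda+2k<0\}$.

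\textbf{Step 3 (conclusion or obstruction).} If $r_5$ were transcendental over $L_5$, then the case $k=0$ alone would force $P=0$, a contradiction, and the bound would follow. If instead $r_5$ is algebraic of degree $d_5$, then only $d_5c\le m$ follows. That yields $\lambda\ge -2\lfloor m/d_5\rfloor$, which is the claimed bound only for $m<d_5$. The key computation is therefore the exact degree of $r_5$ over $L_5$. I would compute it by eliminating $a_3,a_4$ from $y_3=-a_3^2$, $y_4=2a_3a_6-a_4^2$ together with $105a_6^4r_5=35a_3^2a_4^3a_6^2-21a_3a_4^5a_6+3a_4^7$. The expected answer is a tower of degree at most $4$, matching the $N=5$ value $d_p=4$.

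\textbf{Main obstacle.} The hard step is Step 3, because residue algebraicity, not transcendence, is expected here. Before attempting a finer argument (for instance using the higher coefficients $C_k$ jointly, or sub-leading terms in the valuation), I would test the claimed bound on $\varphi(F_{46})$. That element is invariant, of $a_1$-degree $4$, and has top $\tfrac{18375}{4}a_1^4a_5^6a_6^8$. Since $W^\sharp$ is monic in $a_1$, this gives $H_4=\tfrac{18375}{4}a_5^6a_6^8$ and hence $v_5(H_4)=6<8$.

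If this check holds, the boxed inequality fails at $j=4$, and only the cases $m\le 3$ survive, by Steps 1--3 with $d_5=4$. In that case I would record the failure together with the weaker floor, rather than a proof of the statement as written. So the plan proves the lemma for $m<d_5$ and locates exactly where the argument, and apparently the statement itself, breaks.
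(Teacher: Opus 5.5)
Your analysis is correct, and it reaches a sharper verdict than the paper does. The paper's argument for this lemma is only a reduction with an explicit gap. It uses the constant coefficient $C_0$ alone to reach $\sum_{j\in S}h_jr_5^j=0$, notes that $r_5$ is algebraic of degree at most $4$ over $L_5$ (Remark~\ref{rem:n6-sat5-obstacle}), and stops, leaving the lemma unproved but not refuted.

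You run the same reduction on the whole triangular system $\{C_k\}$, which is the multiplicity floor of Theorem~\ref{thm:transfer} and Remark~\ref{rem:multfloor}. This gives $\lambda\ge-2\lfloor m/d_5\rfloor$, and hence the boxed bound for $W^{\sharp}$-degree $m\le 3$. Your landing argument via the transported Lemma~\ref{lem:contraction} is the one the paper uses on the $q$-side (proof of Theorem~\ref{thm:R1}). It works verbatim at $a_5$, so this floor is unconditional.

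Your test case settles the remaining degrees, and the check does hold. The map $\varphi$ intertwines $\Dmap{5}$ and $\Dmap{6}$ and sends $F_0\mapsto G_5/105$ and $c\mapsto c_6$. Hence it sends $W\mapsto W^{\sharp}$, $K_{pq}\mapsto(C^{\Dmap{6}})_{a_5a_6}$ and $v_p\mapsto v_5$. So the $W^{\sharp}$-expansion of $\varphi(F_{46})$ is the transport of that of $F_{46}$. The table of Theorem~\ref{thm:F46} then gives $v_5(H_j)-2j=-2$ at $j=0,2,4$, in particular $v_5(H_4)=6<8$.

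The lemma is literally the $\varphi$-transport of Lemma~\ref{lem:plemma}, which the paper refutes by $F_{46}$. The paper even records the transported data without drawing the conclusion: Theorem~\ref{thm:n6-B6structure} states $v_5(C_m)\ge\lceil 3m/2\rceil$, sharp on $\varphi(F_{46})^r$. So the correct status is: true for $m\le 3$, false from $m=4$ on, with your floor $-2\lfloor m/4\rfloor$ attained at $m=4$.

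The only input you still owe for the positive half is $d_5=4$ exactly. You can import it by transport from the exact-degree statement for $r_p$ in Remark~\ref{rem:p-obstacle}, since $r_5=\varphi(r_p)$ and $L_5=\varphi(L_p)$. Alternatively, confirm it by a short conjugate computation.
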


\begin{proof}[Reduction and explicit gap]
The constant $a_1$-coefficient is $C_0 = \sum_j H_j(\rho^{\sharp})^j
\in C$, so $v_5(C_0) \ge 0$. Put
$\lambda_5 = \min_j\{v_5(H_j) - 2j\}$ and
$S = \{j : v_5(H_j) - 2j = \lambda_5\}$.
If $\lambda_5 < 0$, dividing by $a_5^{\lambda_5}$ and passing to
initial forms gives the relation
\[
\sum_{j \in S} h_j r_5^j = 0
\]
with $0 \ne h_j \in L_5$ the residues of the normalized
coefficients. The reduction to this initial-form equation is valid.
What is not verified is the final inference: concluding
$h_j = 0$ for all $j \in S$ from this single relation would require
linear independence of arbitrarily high powers of $r_5$ over $L_5$,
which is not proved; Remark~\ref{rem:n6-sat5-obstacle} records why
the direct constant-coefficient route is unavailable. No parity
shortcut is used as proof, no slice-power computation is used as
proof, no coordinate change is used as proof, and deriving generation
from a single divisor is not used as proof. Until the inference is
verified, for instance via a higher-order ($\pi$-adic lifting) refinement
(initial-form analysis of the full triangular system $\{C_k\}$ alone
cannot suffice, Remark~\ref{rem:multfloor}), the lemma stays under
verification.
\end{proof}

\begin{remark}[Observation: why the constant-coefficient route is unavailable on the $a_5$-side]
\label{rem:n6-sat5-obstacle}
The triple $\{y_2,y_3,y_4\}$ is algebraically independent (Jacobian
minor $4a_3a_4 \ne 0$), so $L_5 = k(y_2,y_3,y_4)$ is purely
transcendental of degree $3$. But with $t = a_3/a_6$ one has
$t^2 = -y_3/y_2^2 \in L_5$, and with $u = a_4/a_6$ one has
$u^2 = 2t - y_4/y_2^2 \in L_5(t)$, so
\[
L_5 \subseteq L_5(t) \subseteq L_5(t)(u) = k(a_6,a_3,a_4)
\]
is a tower of degree at most $2$ at each step; hence the initial
form $r_5 \in k(a_3,a_4,a_6)$ is algebraic over $L_5$ (of degree at
most $4$), not transcendental. Consequently no argument needing
$1, r_5, r_5^2, \dots$ to be linearly independent over the full
range of $W^{\sharp}$-degrees can succeed from the constant
coefficient alone. The witnesses $t^2 + y_3/y_2^2 = 0$ and
$u^2 - 2t + y_4/y_2^2 = 0$ are certified in
exact computation (see Supplementary Material).
\end{remark}

\begin{remark}[Valuation setup on the $a_6$-side]
Write $C = k[a_2,a_3,a_4,a_5][a_6]$ and let
$v_6 = \operatorname{ord}_{a_6}$ on $\operatorname{Frac}(C)$
(and on $\operatorname{Frac}(B_6)$), with valuation ring
$V_6 = C_{(a_6)}$ and residue field
$\kappa_6 = k(a_2,a_3,a_4,a_5)$.
The height-one prime $(a_6) \subset C^{\Dmap{6}}$ has residue
\[
C^{\Dmap{6}}/a_6C^{\Dmap{6}} \cong k[a_5,2a_2a_5-a_3^2,-a_4^2],
\]
a polynomial ring: the three residue classes are algebraically
independent (Jacobian minor $-4a_4a_5 \ne 0$ in variables
$(a_5,a_2,a_4)$), so with $z_1 = a_5$,
$z_3 = 2a_2a_5-a_3^2$, $z_4 = -a_4^2$,
\[
L_6 = \operatorname{Frac}(C^{\Dmap{6}}/a_6C^{\Dmap{6}})
  = k(z_1,z_3,z_4) \hookrightarrow \kappa_6.
\]
Note the asymmetry with the $a_5$-side: here the surviving
specialization $G_3 \bmod a_6 = 2a_2a_5-a_3^2$ retains the variable
$a_2$, while $G_4 \bmod a_6 = -a_4^2$ is a pure square; the roles
of the two kernel generators are not interchanged by any symmetry.
Since $N_5$ has $a_6$-adic order $0$ (its $a_6$-free part is the
nonzero monomial $3a_4^7$) while $c_6$ has order $4$,
$\rho^{\sharp}$ has exact pole order $e_6 = 4$ at $a_6$, with
monomial initial form
\[
r_6 = \operatorname{in}_{a_6}(a_6^4\rho^{\sharp})
  = \frac{a_4^7}{35\,a_5^2}.
\]
\end{remark}

\begin{remark}[The proposed $a_6$-adic denominator bound]
\label{lem:n6-sat6}
Refuted by the transported invariant $H_B$
(Remark~\ref{rem:n6-HB}).
The proposed statement is: for
$F = \sum_{j=0}^m H_j(W^{\sharp})^j \in B_6$ with
$H_j \in (C^{\Dmap{6}})_{a_5a_6}$,
\[
\boxed{v_6(H_j) \ge 4j \quad \text{for every } j,}
\]
i.e.\ $H_j \in a_6^{4j}(C^{\Dmap{6}})_{a_5a_6}$ for every $j$.
It is false: the $W^{\sharp}$-top coefficient of $H_B$ is
$H_2 = 11025\,a_5^4a_6^4$ with $v_6(H_2) = 4 < 8 = 4\cdot 2$.
\end{remark}

\begin{proof}[Reduction valid; inference refuted by the transported counterexample]
The constant $a_1$-coefficient is $C_0 = \sum_j\allowbreak{} H_j(\rho^{\sharp})^j
\in C$, so $v_6(C_0) \ge 0$. Put
$\lambda_6 = \min_j\{v_6(H_j) - 4j\}$ and
$S = \{j : v_6(H_j) - 4j = \lambda_6\}$.
If $\lambda_6 < 0$, dividing by $a_6^{\lambda_6}$ and passing to
initial forms gives the relation
\[
\sum_{j \in S} h_j r_6^j = 0
\]
with $0 \ne h_j \in L_6$ the residues of the normalized
coefficients. The reduction to this initial-form equation is valid.
The final inference --- concluding $h_j = 0$ for all $j \in S$ ---
would require the powers $1, r_6, r_6^2, \dots$ to be linearly
independent over $L_6$. That independence is false: $r_6$ is
algebraic of degree at most $2$ over $L_6$
(Remark~\ref{rem:n6-sat6-obstacle}), and the transported invariant
$H_B$ provides the concrete non-trivial relation
(Remark~\ref{rem:n6-HB}). The $a_5$-side argument is not copied by
symmetry: the residue field, the pole order, and the initial form
above are computed independently, and no symmetry with
Lemma~\ref{lem:n6-sat5} is used in either direction.
\end{proof}

\begin{remark}[Observation: why the constant-coefficient route is unavailable on the $a_6$-side]
\label{rem:n6-sat6-obstacle}
The triple $\{z_1,z_3,z_4\}$ is algebraically independent (Jacobian
minor $-4a_4a_5 \ne 0$), so $L_6 = k(z_1,z_3,z_4)$ is purely
transcendental of degree $3$. But with $\bar{s}_6 = a_4/a_5$ one has
$\bar{s}_6^2 = -z_4/z_1^2 \in L_6$, so
\[
L_6 \subseteq L_6(\bar{s}_6) = k(a_5,z_3,a_4)
\]
is a single quadratic step; hence $[k(a_5,z_3,a_4) : L_6] \le 2$.
Since $r_6 = a_4^7/(35a_5^2) \in L_6(\bar{s}_6)$, the monomial
initial form $r_6$ is algebraic over $L_6$ (of degree at most $2$),
not transcendental. Consequently no argument needing
$1, r_6, r_6^2, \dots$ to be linearly independent over the full
range of $W^{\sharp}$-degrees can succeed from the constant
coefficient alone. This $a_6$-side degree bound is independent of
the $a_5$-side tower (there a two-step extension of degree at most
$4$ governs a trinomial initial form; here a one-step extension of
degree at most $2$ governs a monomial initial form), and the
difference in pole orders ($e_6 = 4$ versus $e_5 = 2$) is part of why
the two lemmas cannot share a proof. The witness
$\bar{s}_6^2 + z_4/z_1^2 = 0$ is certified in
exact computation (see Supplementary Material).
\end{remark}

\begin{remark}[Observation: $R$-level specialization is not injective on either side]
The two lemmas above work at $C$-level, where both specialization
maps are injective. At $R$-level the one-step contraction route is
blocked on both sides already at the specialization step. On the
$a_6$-side this is Lemma~\ref{lem:n6-obstruction}
($G_5^2 + 9G_4^7 = a_6S$). On the $a_5$-side, with
$Y_2 = a_6$, $Y_3 = -a_3^2$, $Y_4 = 2a_3a_6-a_4^2$ and
$Y_5 = 35a_3^2a_4^3a_6^2 - 21a_3a_4^5a_6 + 3a_4^7$ the
$R$-residues mod $a_5$, the nonzero polynomial
\begin{align*}
\mathit{REL}_5 =\; &Y_5^4 + 18Y_4^7Y_5^2 + 81Y_4^{14}
  + Y_2^2Y_3\,Q(Y_2,Y_3,Y_4,Y_5), \\
Q =\; &40000Y_2^{12}Y_3^6 + 193200Y_2^{10}Y_3^5Y_4^2
  + 325164Y_2^8Y_3^4Y_4^4 + 236425Y_2^6Y_3^3Y_4^6 \\
 &+ 88410Y_2^4Y_3^2Y_4^8 + 840Y_2^4Y_3^2Y_4Y_5^2
  + 17955Y_2^2Y_3Y_4^{10} + 770Y_2^2Y_3Y_4^3Y_5^2 \\
 &+ 1890Y_4^{12} + 210Y_4^5Y_5^2,
\end{align*}
i.e.\ $(Y_5^2 + 9Y_4^7)^2$ plus terms vanishing to second order in
$(Y_2,Y_3)$, vanishes on the residues
($\mathit{REL}_5 = 0$ in $B_6/(a_5)$, checked by expansion in
exact computation (see Supplementary Material)). Hence neither $R$-level specialization
map is injective. These kernel relations block that route only; the
five-generation conclusion they guarded is refuted independently
(Remark~\ref{cor:n6-fivegen-refuted}).
\end{remark}

\begin{remark}[The void $W^{\sharp}$-degree induction skeleton]
\label{rem:n6-winduction}
The induction below assumed the $a_5$-adic bound of
Lemma~\ref{lem:n6-sat5} together with the companion $a_6$-adic bound
of Remark~\ref{lem:n6-sat6} in full, whence each $H_j$ lies in $C$
(top coefficient $H_m = [a_1^m]F$ in $C$ directly, descending
induction via $C_{m-1} = H_{m-1} + mH_m\rho^{\sharp}$), the
transferred $C$-level contraction gives $H_m = c_6^mG_m$, and
subtracting $G_mG_5^m/105^m \in R$ lowers the $W^{\sharp}$-degree.
The induction proves nothing: the $a_6$-adic bound (Remark~\ref{lem:n6-sat6}) is false by $H_B$, so one of
its two premises is false, and its conclusion
($B_6^{\Dmap{6}} = R$) is false
(Remark~\ref{cor:n6-fivegen-refuted}). The true $B_6$-level candidate structure is
$C_{B,6}$ of Theorem~\ref{thm:n6-hypersurface}.
\end{remark}

\begin{remark}[Saturation at $N=6$]
What this section establishes is the valuation setup on each side: the
rings $V_5, V_6$, the residue embeddings
$L_5 \hookrightarrow \kappa_5$ and $L_6 \hookrightarrow \kappa_6$,
the exact pole orders $e_5 = 2$, $e_6 = 4$, the initial forms $r_5$
(trinomial) and $r_6$ (monomial), the denominator modules $M^5_m$,
$M^6_m$, and the precise gaps, plus the $R$-level kernel witness
$\mathit{REL}_5$ alongside Lemma~\ref{lem:n6-obstruction}. The
$a_6$-side inference fails by $H_B$
(Remark~\ref{lem:n6-sat6}); the $a_5$-side
inference (Lemma~\ref{lem:n6-sat5}) is unproved, hence so is
$B_6$-generation by this route. No generation
claim for $\Afring{6}^{\Dmap{6}}$ is made here.
\end{remark}

\begin{remark}[Multiplicity floor for initial-form methods]
\label{rem:multfloor}
Pure algebra; explicit minpoly-power solutions
verified in exact computation (see Supplementary Material).
For fixed $W^{\sharp}$-degree $m$, the joint initial-form system from
\emph{all} of $\{C_k\}$ is exactly
$P(r) = P'(r) = \cdots = P^{(K)}(r) = 0$ with
$P(T) = \sum_{j \in S} h_jT^j$ (the $C_k$ equation is the $k$-th
formal derivative of the $C_0$ equation), i.e.\ $r$ is a root of
multiplicity at least $K+1$, where $K+1 = \#\{0 \le k \le m : \lambda +
ek < 0\}$ for deficit $\lambda < 0$ and pole order $e$. Since
$\operatorname{minpoly}(r)$ is separable (characteristic zero),
$\operatorname{minpoly}(r)^{K+1} \mid P$, so
$(K+1) \cdot d_r \le m$ with $d_r$ the algebraicity degree. On the
$a_6$-side ($e_6 = 4$, $d_r = 2$) this gives the hard floor
$\lambda \ge -2m$: no initial-form analysis, at any order of the
triangular system, can establish $\lambda \ge 0$. The floor is
sharp: $P_s = \operatorname{minpoly}(r_6)^s$ has exact multiplicity
$s$, realizing every deficit $-4s$ at $W^{\sharp}$-degree $2s$
(verified for $s \le 6$; see Supplementary Material). The $N=5$ counterexample $H$
sits exactly on the corresponding floor ($e_q = 4$, $d_r = 2$,
$m = 2$, $\lambda = -4$). Consequently the ``full triangular system
$\{C_k\}$'' refinement route named in
Lemmas~\ref{lem:n6-sat5}--\ref{lem:n6-sat6} cannot close the
saturation bounds; only genuinely higher-order ($\pi$-adic lifting)
or global input can. The $a_5$-side follows the same pattern over
its degree-$\le 4$ tower.
\end{remark}

\section{\texorpdfstring{The $N=6$ global corollaries}{The N=6 global corollaries}}
\label{sec:n6-global}

The global statements below hold conditionally or fail as stated.
We work over the characteristic-zero field $k$ with
$B_6 = k[a_1,\dots,a_6]$, $K_6 = B_6^{\Dmap{6}}$, and
$R = k[G_1,\dots,G_5]$, reusing the generators $G_1,\dots,G_5$ of
Lemma~\ref{lem:n6-gens} verbatim (no redefinition), the slice
$s = a_4/(a_5a_6)$ with $\Dmap{6}(s) = 1$ of
Proposition~\ref{prop:n6-slice} (no redefinition), and the
denominator $c_6 = a_5^2a_6^4$ with $G_5 = 105\,c_6a_1 + N_5$ of
Proposition~\ref{prop:n6-denom} (no redefinition) within the
adjacent-quadratic shift family. No new valuation is proved here; this
section records the global $B_6$-level consequences, now resolved by
transfer. No generation claim for $\Afring{6}^{\Dmap{6}}$ is made here:
the $a_0$-level generator $F_0^{(6)}$ is constructed in
Section~\ref{sec:n6-F0}, and the full $N=6$ ring stays Open.

\begin{remark}[The proposed global $B_6$-level five-generation]
\label{cor:n6-global-ring}
The proposed equality $\boxed{B_6^{\Dmap{6}} = K_6 =
k[G_1,\dots,G_5]}$ is false; the candidate is the transported
hypersurface of Theorem~\ref{thm:n6-hypersurface} (full-ring equality
likewise fails by transported $F_{46}$, as at $N=5$).
\end{remark}

\begin{proof}
The inclusion $R \subseteq K_6$ is proved in
Lemma~\ref{lem:n6-gens}. The claimed reverse inclusion rested on
the $W^{\sharp}$-degree induction skeleton of
Remark~\ref{rem:n6-winduction}, which proves nothing
(Remark~\ref{cor:n6-fivegen-refuted}).
\end{proof}

\begin{remark}[The proposed polynomiality]
\label{cor:n6-global-polynomial}
The claim that $B_6^{\Dmap{6}}$ is a polynomial ring is false; the candidate
finite model is the six-generator hypersurface
$C_{B,6}$ of Theorem~\ref{thm:n6-hypersurface}, with full-ring equality
$C_{B,6}=B_6^{\Dmap{6}}$ failing by transport, not the full ring.
The five generators remain algebraically independent
(Lemma~\ref{lem:n6-indep}); the failure is precisely the missing
sixth generator $H_B$ modulo the single weight-28 relation $S_B$.
\end{remark}

\begin{remark}[The proposed five-generator Hilbert series]
\label{cor:n6-global-series}
Refuted by $H_B$; the exact series is that of
Theorem~\ref{thm:n6-hypersurface}.
The input side holds: with ordinary degrees
\[
\deg_{\mathrm{ord}}(G_1,\dots,G_5) = (1,1,2,2,7)
\]
(Lemma~\ref{lem:n6-minimal}) and Fibonacci weights
\[
\operatorname{wt}(G_1,\dots,G_5) = (1,1,6,4,14)
\]
(Theorem~\ref{thm:n6-weight-shift}), $k[G_1,\dots,G_5]$ is a
polynomial ring with the stated rational functions as \emph{its}
Hilbert series. Their identification with those of $B_6^{\Dmap{6}}$
is false from weight $24$ ($213 > 212$;
exact computation (see Supplementary Material)). The exact
Fibonacci-weighted Hilbert series of the candidate transported
hypersurface is
\[
\boxed{H_{\mathrm{wt}}(u) \;=\;
  \frac{1-u^{28}}{(1-u)^{2}(1-u^{4})(1-u^{6})(1-u^{14})(1-u^{24})}}
\qquad \text{(Theorem~\ref{thm:n6-hypersurface}).}
\]
\end{remark}

The transported $B_6$-level record is therefore: five-generation, polynomiality, and the five-generator Hilbert series all fail by the transported counterexample $H_B$ (Remarks~\ref{cor:n6-global-ring}--\ref{cor:n6-global-series}), whose $a_6$-saturation lemma also fails (Remark~\ref{lem:n6-sat6}). The corrected $B_6$-level candidate is the six-generator hypersurface $C_{B,6} = k[G_1,\dots,G_5,H_B]/(S_B)$ with its weight-28 relator and exact Hilbert series of $C_{B,6}$ (Theorem~\ref{thm:n6-hypersurface}); full-ring equality $C_{B,6} = B_6^{\Dmap{6}}$ fails by the transported $F_{46}$ (same theorem). The $a_5$-saturation inference (Lemma~\ref{lem:n6-sat5}) is unproved, and with it every claim for $\Afring{6}^{\Dmap{6}}$; in particular the full $N=6$ ring with its Hilbert series remains open.

\section{\texorpdfstring{The $N=6$ $a_0$-top generator}{The N=6 a0-top generator}}
\label{sec:n6-F0}

This section constructs the top generator; the $a_0$-saturation, hence the full ring
$\Afring{6}^{\Dmap{6}}$, remains open. We work over the characteristic-zero
field $k$ with $\Afring{6} = k[a_0,\dots,a_6]$ and $\Dmap{6}$ of the
adjacent-quadratic shift family, reusing $B_6$, $R$, $c_6$ verbatim. The
Fibonacci grading extends by
$\operatorname{wt}(a_0) = \operatorname{wt}(a_1) +
\operatorname{wt}(a_2) = 13$, compatible with
$\Dmap{6}(a_0) = a_1a_2$. Every identity below is proved by the
expansion shown plus
exact computation (see Supplementary Material).

\begin{lemma}[The $a_0$-top generator $F_0^{(6)}$]
\label{lem:n6-F0}
With $c_6 = a_5^2a_6^4$, put $c' = c_6^2 = a_5^4a_6^8$ and
\[
\begin{split}
F_0^{(6)} ={}& c'a_0 - a_1a_2a_4a_5^3a_6^7
  + \tfrac13a_1a_3a_4^3a_5^2a_6^6
  - \tfrac1{15}a_1a_4^5a_5^2a_6^5 \\
  &+ \tfrac12a_2^2a_3a_4^2a_5^2a_6^6
  - \tfrac18a_2^2a_4^4a_5^2a_6^5
  - \tfrac13a_2a_3^2a_4^4a_5a_6^5
  + \tfrac{59}{360}a_2a_3a_4^6a_5a_6^4 \\
  &- \tfrac{59}{2880}a_2a_4^8a_5a_6^3
  + \tfrac1{18}a_3^3a_4^6a_6^4
  - \tfrac{119}{2880}a_3^2a_4^8a_6^3 \\
  &+ \tfrac{33}{3200}a_3a_4^{10}a_6^2
  - \tfrac{11}{12800}a_4^{12}a_6,
\end{split}
\]
so $F_0^{(6)} = c'a_0 + G'$ with $G' \in B_6$ $a_0$-free of $12$
terms. Then $\Dmap{6}(F_0^{(6)}) = 0$ by direct expansion;
$F_0^{(6)}$ has ordinary degree $13$, uniform Fibonacci weight
$25 = 12 + 13$, and $a_0$-coefficient exactly $c'$. The
$a_0$-free part $G'$ was found by elimination
($c_6$, $c_6a_5$, $c_6a_6$ admit no $G'$ with
$\Dmap{6}(G') = -c'a_1a_2$; $c_6^2$ is the first denominator
that does, with a unique direction) and the identity
$\Dmap{6}(G') + c'a_1a_2 = 0$ is verified exactly over
$\mathbb{Q}$ in exact computation (see Supplementary Material).
\end{lemma}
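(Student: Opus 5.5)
The plan is to verify the four assertions in the order they are stated: invariance, the degree and weight data, the $a_0$-coefficient, and then the elimination record. The structural split is $F_0^{(6)} = c'a_0 + G'$. Since $\Dmap{6}(c') = 0$ (only $a_5,a_6$ occur and both are killed), Leibniz gives
\[
\Dmap{6}(F_0^{(6)}) = c'a_1a_2 + \Dmap{6}(G').
\]
So invariance is equivalent to the single identity $\Dmap{6}(G') = -c'a_1a_2$ in $B_6$.

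I would check this identity the way Lemma~\ref{lem:n6-gens} handles $G_5$: as a telescoping ladder over the twelve terms of $G'$. Each term $T_i$ contributes $\Dmap{6}(T_i)$ through the rules $a_1\mapsto a_2a_3$, $a_2\mapsto a_3a_4$, $a_3\mapsto a_4a_5$, $a_4\mapsto a_5a_6$. One then groups the resulting monomials and confirms that every coefficient cancels except $-a_1a_2a_5^4a_6^8$.

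\begin{itemize}
\item The term $-a_1a_2a_4a_5^3a_6^7$ produces, through $a_4\mapsto a_5a_6$, exactly the needed $-a_1a_2a_5^4a_6^8$.
\item Its remaining pieces, $-a_2^2a_3a_4a_5^3a_6^7$ and $-a_1a_3a_4^2a_5^3a_6^7$, must be killed by the next terms. For example, $\tfrac13a_1a_3a_4^3a_5^2a_6^6$ contributes $a_1a_3a_4^2a_5^3a_6^7$ through $a_4$.
\item The process continues down the $a_4$-powers. The rational coefficients $\tfrac{59}{360},\tfrac{59}{2880},\tfrac{119}{2880},\tfrac{33}{3200},\tfrac{11}{12800}$ are fixed successively by these cancellations.
\end{itemize}

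This is finite bookkeeping over $\mathbb{Q}$, carried out as the paper does elsewhere by exact expansion. Lemma~\ref{lem:QQ-transfer} transfers it to any characteristic-zero $k$.

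The degree and weight claims are monomial inspection. Each of the thirteen monomials has ordinary degree $13$; for instance $c'a_0$ has degree $4+8+1$. Each also has weight $25$ under $(13,8,5,3,2,1,1)$; for instance $a_4^{12}a_6$ has weight $24+1$, and $c'a_0$ has weight $12+13$. The $a_0$-coefficient is $c'$ because $G'$ is $a_0$-free. Homogeneity is consistent with the computation because $\Dmap{6}$ preserves weight by the Fibonacci recursion, which makes the one-weight-space check meaningful.

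The main obstacle is the minimality and uniqueness record about the denominators. The claim is that $c_6$, $c_6a_5$ and $c_6a_6$ admit no $a_0$-free $G'$ with $\Dmap{6}(G') = -(\text{coefficient})\,a_1a_2$, while $c_6^2$ admits one, unique up to invariants. I would reduce each case to a finite linear system.

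\begin{itemize}
\item Homogeneity under both the ordinary degree and the weight grading confines $G'$ to a fixed finite-dimensional space $V$ of monomials.
\item Non-existence for $c_6$, $c_6a_5$, $c_6a_6$ means that $-c\,a_1a_2$ lies outside $\Dmap{6}(V)$. This is an exact rank comparison over $\mathbb{Q}$: the rank of the matrix of $\Dmap{6}|_V$ must be smaller than the rank of the matrix augmented by the target column.
\item Uniqueness of direction for $c'$ means the solution set is one particular solution plus $\ker(\Dmap{6}|_V)$. Here $\ker(\Dmap{6}|_V)$ lies in $k[a_1,\dots,a_6]^{\Dmap{6}}$, which is known by transport (Theorem~\ref{thm:n6-B6ring}).
\end{itemize}

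If a conceptual argument is wanted instead, I would first try transporting the obstruction of Lemma~\ref{lem:n6-dixmier} one level up. That is harder, because the $a_1a_2$ coefficient now involves the non-$C$ variable $a_1$. So I would accept the exact rank certificates, transferred by Lemma~\ref{lem:QQ-transfer}, as the proof of this part.
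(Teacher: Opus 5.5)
Your proposal is correct and follows the paper's own route: Leibniz reduces invariance to $D_6(G') = -c'a_1a_2$, and this is checked by the same term-by-term telescoping expansion used for $G_5$. The ladder does close, ending with $T_{12}$ cancelling the residual $\tfrac{33}{3200}a_4^{11}a_5a_6^2$; the degree and weight data are monomial inspection, and the denominator-minimality record is, as in the paper, an exact finite linear-algebra computation over $\mathbb{Q}$ in the relevant bihomogeneous piece of $B_6$, transferred to general $k$.
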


\begin{remark}[Denominator-squared pattern]
\label{rem:n6-denomsq}
The $a_0$-denominator is the square of the $B_6$-level slice
denominator: $c' = c_6^2$. The same squaring holds at $N=5$,
where the $N=5$ slice denominator $a_4a_5^2$ squares to
$c = p^2q^4$. This refines the path-count guess of
Section~\ref{sec:conjectures}: the $a_0$-denominator exponents
are twice the $B_6$-slice-denominator exponents, not raw path
counts. The squaring is not minimal, however: $c_6^2$ is itself
an $H_{20}$-multiple ($c_6^2 = a_5^4a_6^3\cdot a_6^5$), and indeed
$F_0^{(6)} = a_6^5H_{20} + K_0$ with $K_0 \in K_6$ ($a_0$-free,
$\Dmap{6}$-invariant, verified in exact computation (see Supplementary Material)) ---
so $F_0^{(6)}$ adds no new $a_0$-direction beyond the minimal
denominator $a_5^4a_6^3$. A $42$-candidate sink-monomial sweep
finds solvable denominators exactly for $a \ge 4$, $b \ge 3$,
i.e.\ exactly the $H_{20}$-multiples. The $a_0$-level Dixmier game
(Section~\ref{sec:n6-a0sat}) starts from $H_{20}$, not
from $F_0^{(6)}$.
\end{remark}

\begin{remark}[What the top generator gives]
Proved in this section: the $a_0$-top generator $F_0^{(6)}$
with bidegree $(13, 25)$ and denominator $c_6^2$
(Lemma~\ref{lem:n6-F0}). The $a_0$-saturation (denominator
bounds for the $a_0$-expansion coefficients), and hence the full
ring $\Afring{6}^{\Dmap{6}}$ with its Hilbert series, remain open. No generation
claim for $\Afring{6}^{\Dmap{6}}$ is made here.
\end{remark}

\begin{lemma}[Hidden $A_6$-generator $H_{20}$]
\label{lem:n6-H20}
Invariance, weight, split, and strictness are Lean-proven in the formal $H_{20}$ module
(native evaluation in core Lean without Mathlib; see the formal-verification manifest);
the displayed identity is checked in exact computation (see Supplementary Material).
There is an explicit invariant $H_{20} \in \Afring{6}$ of Fibonacci
weight $20$ with $12$ terms and $a_0$-degree $1$ satisfying
$\Dmap{6}(H_{20}) = 0$ with $a_0$-coefficient $a_5^4a_6^3$; the full
record is provided in the Supplementary Material. At weight $20$ the
free span of $k[G_1,\dots,G_5,F_0^{(6)}]$ is $a_0$-free ($F_0^{(6)}$
weighs $25 > 20$; $G_1,\dots,G_5$ are $a_0$-free), while $H_{20}$
has nonzero $a_0$-part --- hence $H_{20} \notin
k[G_1,\dots,G_5,F_0^{(6)}]$ unconditionally (no $B_6$-generation
needed), and $\Afring{6}^{\Dmap{6}} \supsetneq k[G,F_0^{(6)}]$.
The first excess over the free count is exactly at weight $20$
(gap $+1$; exact-$\mathbb{Q}$ $A_6$ probe through weight $20$).
\end{lemma}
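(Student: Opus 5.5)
Since $H_{20}$ is given explicitly in the Supplementary Material, the plan is to verify its properties directly and then get non-membership from a weight-and-$a_0$-degree count, in the style of Remark~\ref{rem:counterexample-H}.

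First I will recover $H_{20}$ (or re-derive it) in the form $H_{20}=a_5^4a_6^3a_0+G''$ with $G''\in B_6$ free of $a_0$. Since $\Dmap{6}(a_0)=a_1a_2$ and $\Dmap{6}$ preserves $B_6$, invariance is equivalent to the single identity
\[
\Dmap{6}(G'')=-a_5^4a_6^3\,a_1a_2
\]
in $B_6$. I would construct $G''$ by the elimination method of Lemma~\ref{lem:n6-F0}, one Fibonacci weight at a time. The grading extended by $\operatorname{wt}(a_0)=13$ makes $\Dmap{6}$ weight-preserving, and $a_5^4a_6^3a_0$ has weight $20$. So we solve the linear system $\Dmap{6}|_{V_{20}\cap B_6}\,G''=-a_5^4a_6^3a_1a_2$ over $\mathbb{Q}$. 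I then check the solution term by term, as in Lemma~\ref{lem:n6-gens}, which also confirms the term count of $12$ and that every monomial has weight $20$. Transfer to arbitrary characteristic-zero $k$ is Lemma~\ref{lem:QQ-transfer}(i).

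Non-membership needs no computation beyond this. Every generator $G_1,\dots,G_5$ lies in $B_6$ and is therefore $a_0$-free. By Lemma~\ref{lem:n6-F0}, $F_0^{(6)}$ is homogeneous of weight $25$. Hence any weight-$20$ element of $k[G_1,\dots,G_5,F_0^{(6)}]$ is a combination of monomials in which $F_0^{(6)}$ has exponent $0$, and such an element lies in $B_6$. On the other hand, $H_{20}$ has $a_0$-coefficient $a_5^4a_6^3\ne0$, and $a_0$-degree is exact in $B_6[a_0]$. So $H_{20}\notin k[G_1,\dots,G_5,F_0^{(6)}]$, which gives the strict containment.

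The hard step is the claim that the first excess occurs exactly at weight $20$, with gap $+1$. This needs, for each $w\le 20$, the rank equality
\[
\dim\ker(\Dmap{6}|_{V_w})=\#\{\text{free monomials of weight } w\}
\]
for $w<20$, and the excess $1$ at $w=20$. The monomials here are those in $G_1,\dots,G_5,F_0^{(6)}$, which are algebraically independent by Lemma~\ref{lem:n6-indep} together with $\partial_{a_0}F_0^{(6)}=c'\ne0$. I would reduce the $a_0$-free part via the graded isomorphism of Theorem~\ref{thm:n6-B6ring}: weight by weight, $B_6^{\Dmap{6}}$ matches $A_5^{\Dmap{5}}$, whose ranks below $24$ equal the five-generator count (Proposition~\ref{prop:H2}). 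The $a_0$-degree-$\ge1$ part below weight $20$ I would control with the $a_0$-top equation, exactly as in Lemma~\ref{lem:n6-minimal}. If the top is $N_t a_0^t$, then $N_t\in K_6$ and $t\,a_1a_2N_t\in\Dmap{6}(B_6)$. Since $\operatorname{wt}(a_0)=13$, below weight $26$ only $t=1$ is possible, and what is needed is that no nonzero $N_1$ of weight $<7$ solves this. The solvability of $N_1=a_5^4a_6^3$ at weight $7$ then accounts for the single new direction at $w=20$. I expect this step to be the exact $\mathbb{Q}$-rank probe rather than a conceptual argument: a linear solve for $a_1a_2M+\Dmap{6}(N)=0$ with $M\in K_6$ of weight $\le7$, whose solution space should be spanned by $a_5^4a_6^3$ alone. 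This reproduces the $42$-candidate sweep of Remark~\ref{rem:n6-denomsq}.
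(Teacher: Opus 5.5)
Your verification of invariance and your weight-count argument for $H_{20}\notin k[G_1,\dots,G_5,F_0^{(6)}]$ are the paper's own. The two steps are: check the explicit identity by expansion; then note that every monomial involving $F_0^{(6)}$ weighs at least $25>20$, the $G_i$ are $a_0$-free, and $H_{20}$ has $a_0$-coefficient $a_5^4a_6^3\ne0$.

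You depart from the paper on the minimality claim. The paper only cites an exact-$\mathbb{Q}$ probe of $\ker(D_6|_{V_w})$ on all of $A_6$ for $w\le 20$, whereas you decompose that probe.

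\begin{itemize}
\item Since $\operatorname{wt}(a_0^2)=26$, every invariant of weight $w\le 20$ has the form $N_1a_0+N_0$. Hence $\dim\ker(D_6|_{V_w})=\dim(K_6)_w+\dim\{M\in(K_6)_{w-13}: a_1a_2M\in D_6(B_6)\}$.
\item For $w\le 20$ the first summand is $s_5(w)$, by the shift isomorphism and Proposition~\ref{prop:H2}, and this equals the free count. So the excess is exactly the dimension of the $a_0^1$-denominator module in weight $w-13$.
\end{itemize}

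This buys a sharper statement with a smaller computational input. The $a_0$-free contribution becomes a transported theorem rather than part of the probe. What remains is the same system $a_1a_2M+D_6(N)=0$ that the paper uses later in Section~\ref{sec:n6-a0sat}, restricted to $\operatorname{wt}(M)\le 7$. Your route also does in advance the disentangling that the paper performs only afterwards in its remark on the $A_6$ gaps, and it identifies the $+1$ as $H_{20}$ itself.

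Your reduction agrees with the paper's data. Weight $\le7$ forces total degree $\le7$, and Theorem~\ref{thm:n6-denmod12} has no directions below total degree $7$ and exactly one total-degree-$7$ direction of Fibonacci weight $7$.

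Two small corrections.
\begin{itemize}
\item The $42$-candidate sweep of Remark~\ref{rem:n6-denomsq} tests only sink monomials $a_5^aa_6^b$, so it does not give what you need. Non-monomial directions exist (e.g.\ $a_5^4a_6G_4$ at weight $9$). Your solve must therefore range over all of $(K_6)_{\le 7}$, including $a_5G_3$, $a_6G_3$ and $G_4$ times cubics in $a_5,a_6$. The system you display does this, so it is a strict superset of the sweep rather than a reproduction of it.
\item If you re-derive $G''$ by a linear solve, it is determined only modulo $(K_6)_{20}$, which has dimension $131$. The $12$-term count is a property of the recorded representative, not something the solve confirms.
\end{itemize}
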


\begin{remark}[Disentangling the $A_6$ gaps]
The $A_6$ weighted probe
(exact computation (see Supplementary Material)) conflates two
excesses: the $B_6$-level hypersurface excess (starting at weight
$24$ with $+1,+2,\dots$, Theorem~\ref{thm:n6-hypersurface}) and
genuine $a_0$-content. At weight $20$ the $B_6$-level matches the
free count exactly (exact $\mathbb{Q}$), so the $+1$ there is
purely the $a_0$-direction $H_{20}$ above. At weights $21$--$23$
the probe is clean (gap $+0$ mod-prime); from $24$ on, the two
excesses mix, and the $a_0$-level analysis
(Section~\ref{sec:n6-a0sat}) must subtract the known
$B_6$-hypersurface contribution first. The
$a_0$-coefficient $a_5^4a_6^3$ of $H_{20}$ is divisible by
neither $c_6$ nor $c_6^2$: no $c_6^2$-type $a_0$-denominator
bound holds in general.
\end{remark}

\section{\texorpdfstring{The $N=6$ $a_0$-denominator module (partial results)}{The N=6 a0-denominator module (partial results)}}
\label{sec:n6-a0sat}

Two denominator directions are proved below; the full module presentation is not. The $a_0^1$-system is
$a_1a_2M + \Dmap{6}(N) = 0$ with $M \in K_6$ of total degree $m$
and $N \in B_6$ of total degree $m+1$ ($\Dmap{6}$ is total-degree
$+1$ homogeneous). Through total degree $12$ the spanning set of
$R$-monomials plus $H_B$-times-$R$-monomials is free (the relation
$S_B$ has total degree $28$), so the computation below is exact.
Exact computation (see Supplementary Material); nullities mod-$1000003$ cross-checked
at a second prime, witnesses exact over $\mathbb{Q}$.

\begin{lemma}[Second $a_0$-direction]
\label{lem:n6-D2}
Exact computation (see Supplementary Material).
With $M = a_5^4a_6G_4$ (total degree $7$) there is an explicit
$10$-term $N$ with $a_1a_2M + \Dmap{6}(N) = 0$ (full record in
the Supplementary Material), i.e.\ $F = a_0M + N$ satisfies $\Dmap{6}(F) = 0$ at
Fibonacci weight $22$. $M$ is not a multiple of $a_5^4a_6^3$
($a_6$-exponents $1,2 < 3$), so this direction is not in the
$H_{20}$-family. That $F$ lies outside $k[G,F_0^{(6)},H_{20}]$ is
Proposition~\ref{prop:n6-D2novel} below.
\end{lemma}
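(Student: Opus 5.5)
The plan is to construct $N$ directly by solving the linear system $a_1a_2M+\Dmap{6}(N)=0$ in the graded piece where it must live, and then to check the two side claims: the weight of $F$ and the non-membership of $M$ in the $H_{20}$-family. Since $\Dmap{6}$ raises total degree by $1$ and preserves Fibonacci weight (with $\operatorname{wt}(a_0)=13$), I first read off the bidegree. $M=a_5^4a_6G_4$ has total degree $7$ and weight $4+1+4=9$. So $a_0M$ has weight $22$, and $N$ must be sought in the space of $a_0$-free monomials of $B_6$ with total degree $8$ and Fibonacci weight $22$. This space is finite and small, so the existence statement becomes a finite linear system over $\mathbb{Q}$. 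The unknowns are the coefficients of $N$ on these monomials, and the equations are the coefficients of the monomials of total degree $9$ and weight $22$ in $a_1a_2M+\Dmap{6}(N)$.

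The construction proceeds in the order of a triangular solve, as in the term-by-term cancellation of Lemma~\ref{lem:n6-gens}. The term $a_1a_2M=a_1a_2a_5^4a_6(2a_3a_6-a_4^2)$ must be cancelled first. I choose the $a_1$-linear part of $N$ so that $\Dmap{6}(a_1)=a_2a_3$ produces exactly this term. Each new term creates lower-order defects through $\Dmap{6}(a_2)=a_3a_4$, $\Dmap{6}(a_3)=a_4a_5$ and $\Dmap{6}(a_4)=a_5a_6$, and I cancel these in turn by descending in $a_1$-, then $a_2$-, then $a_3$-degree. Concretely, I would run the elimination in the Supplementary Material's framework and record the resulting $10$-term solution. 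The identity $\Dmap{6}(a_0M+N)=a_1a_2M+\Dmap{6}(N)=0$ then reduces to a finite expansion that can be displayed rung by rung. Characteristic zero enters only through the rational coefficients, and Lemma~\ref{lem:QQ-transfer} transfers the $\mathbb{Q}$-certificate to arbitrary $k$.

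The main obstacle is solvability: a priori $a_1a_2M$ need not lie in $\Dmap{6}(B_6)$. By Lemma~\ref{lem:n6-dixmier}, at the $C$-level only multiples of $c_6=a_5^2a_6^4$ are guaranteed, and $M$ has $a_6$-exponent only $1$ or $2$. So the existence of $N$ genuinely uses $a_1$-content together with the $N=6$ coupling, and it cannot be deduced from earlier lemmas. I expect to settle it only by exhibiting the explicit solution, which is why the lemma is stated as an exact computation. If the first ansatz fails, I would enlarge it to the full weight-$22$, degree-$8$ monomial space before concluding that no solution exists.

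The remaining assertions are immediate. Weight $22$ follows from homogeneity. For the claim that $M\notin(a_5^4a_6^3)$, expand $M=2a_3a_5^4a_6^2-a_4^2a_5^4a_6$: the two monomials have $a_6$-exponents $2$ and $1$, both below $3$. Since $M\neq 0$ and $(a_5^4a_6^3)$ is a monomial ideal, $M$ is not in it. Hence this direction lies outside the $H_{20}$-family, whose $a_0$-coefficients are the multiples of $a_5^4a_6^3$. Non-membership of $F$ in $k[G,F_0^{(6)},H_{20}]$ is deferred to Proposition~\ref{prop:n6-D2novel}, as the statement indicates.
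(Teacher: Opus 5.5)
Your route is the paper's: the lemma is certified by exactly solving the finite linear system in the weight-$22$, total-degree-$8$ piece of $B_6$. Your bookkeeping is also correct: $\operatorname{wt}(M)=9$, $\operatorname{wt}(a_0)=13$, and the monomial-ideal argument for $M\notin(a_5^4a_6^3)$.

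The cancellation mechanism you describe, however, fails at its first step. Since $\Dmap{6}(a_1N_1)=a_2a_3N_1+a_1\Dmap{6}(N_1)$, the rule $\Dmap{6}(a_1)=a_2a_3$ only produces $a_1$-free terms, so it can never cancel $a_1a_2M$. Write $N=a_1^2N_2+a_1N_1+N_0$ with $N_i\in C$. An $a_1^3$-term would already weigh $24>22$, and weight $=$ degree forces $N_2\in k[a_5,a_6]$. The system is then
\[
\Dmap{6}(N_2)=0,\qquad a_2(M+2a_3N_2)+\Dmap{6}(N_1)=0,\qquad a_2a_3N_1+\Dmap{6}(N_0)=0.
\]
Here $N_2=0$ is impossible, because $a_2M\notin\Dmap{6}(C)$. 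The reason is that $\Dmap{6}$ reverses $a_4$-parity and the localized kernel is $a_4$-even. Hence the $a_4$-odd solution $X\in C_{a_5a_6}$ of $\Dmap{6}(X)=a_2M$ is unique, and any polynomial solution would make it polynomial. But it contains the term $-a_4^7a_5^2/(15a_6^2)$.

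So the middle equation must be solved jointly for $(N_2,N_1)$. Its solutions form a line in the direction $(N_2,N_1)\mapsto(N_2+\tfrac{105}{2}u\,c_6,\ N_1+u\,N_5)$, where $N_5$ is the $a_1$-free part of $G_5$. Only the last equation pins $u=0$, so a top-down solve must carry this parameter rather than fix each level greedily.

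Your fallback of solving the full weight-$22$, degree-$8$ system does work. It yields, for example, the $10$-term polynomial
\[
\begin{aligned}
N={}&-\tfrac76a_1^2a_5^4a_6^2+\tfrac29a_1a_2a_4^3a_5^3+\tfrac13a_1a_2a_3a_4a_5^3a_6-\tfrac19a_1a_3^2a_4^3a_5^2+\tfrac29a_2^3a_3a_5^3a_6\\
&-\tfrac19a_2^3a_4^2a_5^3-\tfrac{10}{27}a_2^2a_3^3a_5^2a_6+\tfrac1{18}a_2^2a_3^2a_4^2a_5^2+\tfrac4{27}a_2a_3^5a_5a_6-\tfrac4{189}a_3^7a_6.
\end{aligned}
\]
A term-by-term expansion gives $\Dmap{6}(N)=-2a_1a_2a_3a_5^4a_6^2+a_1a_2a_4^2a_5^4a_6=-a_1a_2M$. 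Its $a_1^2$-coefficient $-\tfrac76a_5^4a_6^2$ is the same in every solution, confirming that the $a_1^2$-part is not optional.
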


\begin{proposition}[D2-novelty]
\label{prop:n6-D2novel}
The weight-$22$ invariant $F_{22} = a_0M + N$ of
Lemma~\ref{lem:n6-D2} lies outside $k[G_1,\dots,G_5,H_{20}]$: the
weight-$22$ slice of the subalgebra ($F_0^{(6)}$ weighs $25$ and
$H_B$ weighs $24$, both excluded by weight) has exact rank $171$
vs augmented $172$ (\href{https://github.com/AhmedMostafa16/adjquad/blob/25fdeaefc74d63e8a51983399e5fbf64ef2b28d5/scripts/cas/check_n6_D2novel.sage}{scripts/cas/check\_n6\_D2novel.sage}; log
\href{https://github.com/AhmedMostafa16/adjquad/blob/25fdeaefc74d63e8a51983399e5fbf64ef2b28d5/data/n6_D2novel.log}{data/n6\_D2novel.log}). Hence the $D_2$-direction is a new generator
direction over $k[G,H_{20}]$, and the $A_6$ obstruction is not
inherited transport baggage.
\end{proposition}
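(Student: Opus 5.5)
The plan is a weight-$22$ linear-algebra membership test, run exactly over $\mathbb{Q}$, after first pruning which generators can occur. By the Fibonacci grading with $\operatorname{wt}(a_0)=13$, the invariant $F_{22}=a_0M+N$ is homogeneous of weight $22$. Any expression of $F_{22}$ in $k[G_1,\dots,G_5,F_0^{(6)},H_B,H_{20}]$ can therefore be replaced by its weight-$22$ component. Since $\operatorname{wt}(F_0^{(6)})=25$ and $\operatorname{wt}(H_B)=24$ exceed $22$, neither can appear. So it suffices to test membership in the weight-$22$ slice of $k[G_1,\dots,G_5,H_{20}]$.

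Next I would reduce the problem further by the $a_0$-degree, which is exact on $B_6[a_0]$.
\begin{itemize}
\item The generators $G_1,\dots,G_5$ are $a_0$-free.
\item $H_{20}$ has $a_0$-degree $1$ and weight $20$.
\item Hence every monomial in the slice is either $a_0$-free or of the form $H_{20}\cdot\mu$ with $\mu$ a weight-$2$ monomial in $G_1,G_2$, namely $G_1^2$, $G_1G_2$ or $G_2^2$.
\end{itemize}
Comparing $a_0$-coefficients, any relation $F_{22}=\alpha H_{20}+\beta$ with $\alpha\in k[G_1,G_2]_2$ and $\beta\in k[G_1,\dots,G_5]_{22}$ forces
\[
M=a_5^4a_6G_4=\alpha\cdot a_5^4a_6^3 .
\]
Every term on the right has $a_6$-exponent at least $3$. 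On the left, $G_4=2a_3a_6-a_4^2$ contributes terms with $a_6$-exponents $2$ and $1$. This is a contradiction, provided the relation above is really the only possible shape, which the weight reduction gives.

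This top-coefficient comparison is the conceptual core and already proves novelty, independently of the rank computation. Because the cited certificate compares rank $171$ with augmented rank $172$, I would also reproduce that check as corroboration. To do so, enumerate the $G$-monomials of weight $22$ together with $H_{20}G_1^2$, $H_{20}G_1G_2$ and $H_{20}G_2^2$, expand them in the monomial basis of $V_{22}$, and verify that adjoining $F_{22}$ raises the rank. Lemma~\ref{lem:QQ-transfer}(i) transfers this rank statement to arbitrary characteristic-zero $k$.

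The main obstacle is making the $a_0$-degree bookkeeping airtight. One must confirm that no product of two $a_0$-degree-one elements can reach weight $22$; this holds because $\operatorname{wt}(H_{20}^2)=40$. One must also confirm that the $a_0$-coefficient of $H_{20}$ is exactly $a_5^4a_6^3$, as stated in Lemma~\ref{lem:n6-H20}, with no further $a_0$-terms. With those two facts fixed, the divisibility contradiction is immediate.
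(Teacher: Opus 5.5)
Your argument is correct, but it is not the route the paper takes. The paper proves the proposition purely by the exact rank certificate ($171$ versus augmented $172$ at weight $22$). In Lemma~\ref{lem:n6-D2} it records only that $M$ is not a multiple of $a_5^4a_6^3$, and it defers the actual non-membership to that computation.

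You show that this observation already suffices once the weight bookkeeping is done. At weight $22$ the only $a_0$-carrying monomials of $k[G_1,\dots,G_5,H_{20}]$ are $H_{20}G_1^2$, $H_{20}G_1G_2$ and $H_{20}G_2^2$. This holds because $\operatorname{wt}(H_{20}^2)=40$, and the only weight-$2$ monomials in generators of weights $(1,1,6,4,14)$ are those in $G_1,G_2$. Hence the $a_0$-coefficient of any slice element lies in $a_5^4a_6^3\cdot k[a_5,a_6]_2$. That space involves neither $a_3$ nor $a_4$, whereas $M=2a_3a_5^4a_6^2-a_4^2a_5^4a_6$ involves both.

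What your route buys:
\begin{itemize}
\item It is a hand-checkable proof over any characteristic-zero $k$, with no appeal to Lemma~\ref{lem:QQ-transfer}.
\item It uses only the $a_0$-freeness of the $10$-term $N$, not its explicit form, and no script.
\item It explains the certificate's numbers: $171=168+3$, where $168=s_5(22)$ is the transported $K[F_0]$-count. The three $H_{20}$-monomials are independent of the $a_0$-free ones and of each other, since their $a_0$-coefficients $a_5^6a_6^3$, $a_5^5a_6^4$, $a_5^4a_6^5$ are independent.
\end{itemize}

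The paper's computation buys uniformity with its other novelty certificates, where no one-line top comparison is available. It also directly checks the rank of the spanning set. For the non-membership claim itself, your top-coefficient argument makes the rank check redundant, so treating it as corroboration, as you propose, is appropriate.
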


\begin{remark}[The denominator module is not principal]
The solution projection of the $a_0^1$-system strictly exceeds
the $a_5^4a_6^3$-family from total degree $7$: at $m = 7$ three
directions ($a_5^4a_6^3$ exact via $H_{20}$; $a_5^4a_6G_4$ exact
via Lemma~\ref{lem:n6-D2}; the mixed direction lifted exactly in
Theorem~\ref{thm:n6-m7third}), growing to $+35$ excess dimensions at $m = 12$
(two primes agree). The unbounded module presentation (all
generators, all relations among denominator directions) is Open;
it is the input the $a_0$-Dixmier game over $R[H_B]/(S_B)$ needs.
In particular the $a_0$-saturation cannot take the form of a
single-monomial denominator bound.
\end{remark}

\begin{theorem}[No single-monomial $a_0$-denominator bound]
\label{thm:n6-no-monomial-bound}
No monomial $d \in K_6$ bounds all $a_0^1$-denominator directions:
a common monomial multiple $d$ of the $H_{20}$-direction $a_5^4a_6^3$
(Lemma~\ref{lem:n6-H20}) and the
$D_2$-direction $a_5^4a_6G_4$ (Lemma~\ref{lem:n6-D2}) would satisfy
$G_4 \mid d$ in $B_6$ (divisibility in the subring $K_6$
implies divisibility in $B_6$), impossible: $G_4 = 2a_3a_6-a_4^2$ is
irreducible and non-monomial, while a prime divisor of a monomial in
the polynomial ring $B_6$ is a variable. Indeed, in $B_6$: $a_6^2 \nmid G_4$
($G_4$ has the $a_6$-free term $-a_4^2$), and $G_4 \nmid a_6^2$
(exact-division remainder $a_6^2 \ne 0$; $G_4$ is irreducible;
\href{https://github.com/AhmedMostafa16/adjquad/blob/25fdeaefc74d63e8a51983399e5fbf64ef2b28d5/scripts/cas/check_n6_G4.py}{scripts/cas/check\_n6\_G4.py}; log \href{https://github.com/AhmedMostafa16/adjquad/blob/25fdeaefc74d63e8a51983399e5fbf64ef2b28d5/data/n6_G4.log}{data/n6\_G4.log}). In particular
$a_0$-saturation at $N=6$ cannot take the $c_6$-power form of any
one-monomial conjecture.
\end{theorem}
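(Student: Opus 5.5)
The plan is to argue by contradiction through unique factorization in $B_6$. First I fix the meaning of ``bounds'': a monomial $d\in K_6$ bounds all $a_0^1$-denominator directions if every solution $M$ of $a_1a_2M+\Dmap{6}(N)=0$ divides $d$ in $K_6$, i.e.\ $d$ is a common multiple of all such $M$. This matches how $c_6$, $c_6^2$ and $a_5^4a_6^3$ were used as candidate denominators in Lemma~\ref{lem:n6-F0} and Remark~\ref{rem:n6-denomsq}. Under this reading it suffices to exhibit two directions that admit no common monomial multiple. I take the $H_{20}$-direction $M_1=a_5^4a_6^3$ from Lemma~\ref{lem:n6-H20} and the $D_2$-direction $M_2=a_5^4a_6G_4$ from Lemma~\ref{lem:n6-D2}. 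Both are established solutions of the $a_0^1$-system, so no new computation is needed to produce them.

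Suppose $d$ is a monomial in $a_1,\dots,a_6$ lying in $K_6$ with $M_2\mid d$ in $K_6$, say $d=M_2e$ with $e\in K_6$. The first step is to transfer this divisibility to the ambient ring: since $K_6\subset B_6$, the same equation gives $G_4\mid d$ in $B_6$. The divisibility by $M_1$ is not even needed; the contradiction comes from $M_2$ alone, and $M_1$ only records that at least one genuinely monomial direction also occurs.

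The second step is to show $G_4=2a_3a_6-a_4^2$ is irreducible in $B_6$ and not associate to a variable. Viewed as a polynomial in $a_3$ over $k[a_1,a_2,a_4,a_5,a_6]$, it is linear with coefficients $2a_6$ and $-a_4^2$. These coefficients are coprime, so $G_4$ is primitive of degree one and hence irreducible by Gauss's lemma. It has two distinct monomials, so it is not a unit multiple of any variable.

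The third step is the UFD argument. The irreducible factors of a monomial in the polynomial ring $B_6$ are, up to units, the variables themselves. So the prime $G_4$ cannot divide $d$, which is the contradiction. As a sanity check I would record the two non-divisibilities quoted in the statement, namely $a_6^2\nmid G_4$ because of the $a_6$-free term $-a_4^2$, and $G_4\nmid a_6^2$ by degree. These show that the $a_6$-exponent discrepancy between $M_1$ and $M_2$ cannot be absorbed by trading $a_6$-powers against $G_4$.

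The only step needing care is the first one: it must be clear that the quantifier is over directions actually occurring, and that divisibility is tested in $K_6$ and then pulled down to $B_6$. Once that is settled, the remaining steps are one-line UFD facts. The final sentence, that no $c_6$-power bound can hold, follows immediately because every $c_6^n$ is a monomial.
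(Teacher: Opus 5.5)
Your proposal is correct and follows the paper's argument: you pull $M_2=a_5^4a_6G_4\mid d$ down from $K_6$ to $B_6$, then use that the prime $G_4=2a_3a_6-a_4^2$ is not associate to a variable, whereas every prime factor of a monomial in $B_6$ is. Your two additions are minor refinements of the paper's proof rather than a different route: the Gauss-lemma proof of irreducibility ($G_4$ is primitive and linear in $a_3$) replaces the paper's CAS citation, and you observe that the $H_{20}$-direction $a_5^4a_6^3$ plays no role in the contradiction.
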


\begin{remark}[What the denominator module gives]
Proved in this section: three exact $m = 7$ denominator
directions ($a_5^4a_6^3$ via $H_{20}$, $a_5^4a_6G_4$ via
Lemma~\ref{lem:n6-D2}, $M_3$ via Theorem~\ref{thm:n6-m7third}),
one new exact direction at $m = 8$, and the exact denominator
module with closure through total degree $13$
(Theorems~\ref{thm:n6-denmod12}--\ref{thm:n6-closure13}).
Open: the full (unbounded) denominator-module
presentation, and with it the $A_6$ ring (relations, series) and
its saturation; degree $14$ is computationally blocked, not claimed.
\end{remark}

\begin{theorem}[Bounded denominator module]
\label{thm:n6-denmod12}
Let $\mathcal{M}_m$ be the $M$-projection of
$\{(M,N): a_1a_2M + D_6(N) = 0\}$ at total degree $m$ over $\mathbb{Q}$,
and $\mathcal{G}_m$ the cumulative $K_6$-span of exact directions
below $m$. Exact $QQ$ nullspaces give, for $7 \le m \le 12$:
\[
\begin{array}{c|rrrrrr}
m & 7 & 8 & 9 & 10 & 11 & 12 \\ \hline
\dim\mathcal{M}_m & 3 & 7 & 14 & 23 & 35 & 50 \\
\dim\mathcal{G}_m & 0 & 6 & 14 & 23 & 35 & 50 \\
\text{new} & 3 & 1 & 0 & 0 & 0 & 0
\end{array}
\]
The three $m = 7$ directions have Fibonacci weights $7, 9, 11$;
the single new $m = 8$ direction is
$M_8 = a_4^4a_5^4 - 4a_3a_4^2a_5^4a_6 + \frac{17}{3}a_3^2a_5^4a_6^2
- \frac{10}{3}a_2a_5^5a_6^2$ of Fibonacci weight $12$, with exact
$13$-term $N_8$ and $a_1a_2M_8 + D_6(N_8) = 0$ by direct expansion
(\href{https://github.com/AhmedMostafa16/adjquad/blob/25fdeaefc74d63e8a51983399e5fbf64ef2b28d5/scripts/cas/check_n6_m8wit.sage}{scripts/cas/check\_n6\_m8wit.sage}; log \href{https://github.com/AhmedMostafa16/adjquad/blob/25fdeaefc74d63e8a51983399e5fbf64ef2b28d5/data/n6_m8wit.log}{data/n6\_m8wit.log}).
Every direction through $12$ is $K_6$-generated by these four;
the four generator $M$-vectors are jointly independent, so there
are no relations among them within the truncation
(\href{https://github.com/AhmedMostafa16/adjquad/blob/25fdeaefc74d63e8a51983399e5fbf64ef2b28d5/scripts/cas/check_n6_denmod.py}{scripts/cas/check\_n6\_denmod.py}; log \href{https://github.com/AhmedMostafa16/adjquad/blob/25fdeaefc74d63e8a51983399e5fbf64ef2b28d5/data/n6_denmod.log}{data/n6\_denmod.log}).
No claim is made beyond total degree $12$.
\end{theorem}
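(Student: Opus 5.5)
The statement is a finite computation over $\mathbb{Q}$, so the proof is an exact linear-algebra certificate organised degree by degree, followed by a span comparison.

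\textbf{Setting up the system.} For each total degree $7\le m\le 12$ I would build the linear system $a_1a_2M+\Dmap{6}(N)=0$ with unknowns:
\begin{itemize}
\item $M$, ranging over the basis of $(K_6)_m$;
\item $N$, ranging over all monomials of $B_6$ of total degree $m+1$.
\end{itemize}
The basis of $(K_6)_m$ is explicit: it consists of the $R$-monomials in $G_1,\dots,G_5$ together with $H_B$ times $R$-monomials. This list is free through total degree $12$, because $S_B$ has total degree $28$, as noted at the start of the section. Since $\Dmap{6}$ is homogeneous of total degree $+1$ and preserves Fibonacci weight, the system splits further into Fibonacci-weight blocks, which keeps the matrices small.

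\textbf{Computing $\mathcal{M}_m$.} In each block I would compute an exact $\mathbb{Q}$-nullspace and project it to the $M$-coordinates. The dimension of this projection is $\dim\mathcal{M}_m$. It equals the nullity minus the dimension of the pairs $(0,N)$ with $N\in(K_6)_{m+1}$, which is known from the same free basis. Summing over weight blocks should give the row $3,7,14,23,35,50$.

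\textbf{Computing $\mathcal{G}_m$ and the new directions.} Next I would form $\mathcal{G}_m$ as the $\mathbb{Q}$-span of the products $g\cdot M_i$, where:
\begin{itemize}
\item $g$ ranges over basis elements of $K_6$ of degree $m-\deg M_i$;
\item $M_i$ ranges over the exact directions found in degrees $<m$: the three $m=7$ ones ($a_5^4a_6^3$, $a_5^4a_6G_4$, and the third from Theorem~\ref{thm:n6-m7third}) and $M_8$.
\end{itemize}
Closure of the solution set under multiplication by $K_6$ is automatic, since $a_1a_2(gM)+\Dmap{6}(gN)=0$ for $g\in K_6$, so $\mathcal{G}_m\subseteq\mathcal{M}_m$. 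Equality of ranks then gives ``new $=0$'' for $m\ge 9$, and the rank gap gives the counts $3$ at $m=7$ and $1$ at $m=8$. For $M_8$ itself I would check $a_1a_2M_8+\Dmap{6}(N_8)=0$ by direct expansion with the stated $13$-term $N_8$. I would also confirm that $M_8$ lies outside the degree-$8$ span of the $m=7$ directions.

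\textbf{Independence statement.} Here I interpret ``no relations within the truncation'' as follows: the four generators together with their $K_6$-multiples in each degree $\le 12$ have exactly the expected rank. Concretely, $\dim\mathcal{G}_m$ should equal the sum of $\dim(K_6)_{m-\deg M_i}$, so no $K_6$-syzygy appears up to degree $12$. I would verify this by comparing the rank of the multiplied generator matrix with its column count.

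\textbf{Main obstacle.} The real difficulty is size at $m=11,12$. There the $N$-space is large, and an exact rational nullspace is costly. My first approach would be to work per Fibonacci weight and to use modular ranks at two large primes as a screen. I would then extract exact $\mathbb{Q}$ witnesses only for the dimension equalities, via rational reconstruction of a nullspace basis or a single exact rank certificate per block. Rank equalities over $\mathbb{Q}$ then transfer to $k$ by Lemma~\ref{lem:QQ-transfer}.
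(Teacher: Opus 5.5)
You follow the paper's certificate closely for everything except the independence claim, and that is where your proposal breaks. The matching parts are: building the system $a_1a_2M+D_6(N)=0$ over a free basis of $(K_6)_m$, obtaining $\dim\mathcal{M}_m$ as nullity minus $\dim(K_6)_{m+1}$, forming $\mathcal{G}_m$ from $K_6$-multiples of the earlier directions, and checking $M_8$ directly.

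The failing step is your reading of ``no relations within the truncation'' as freeness, namely $\dim\mathcal{G}_m=\sum_i\dim(K_6)_{m-\deg M_i}$. This is false already at $m=9$. With $M_1=a_5^4a_6^3$ and $M_2=a_5^4a_6G_4$ there is the syzygy $G_4\cdot M_1=a_6^2\cdot M_2$, and both $G_4$ and $a_6^2$ lie in $(K_6)_2$. The numbers confirm this. The dimensions of $(K_6)_1,\dots,(K_6)_5$ are $2,5,8,14,20$, so your predicted column counts at $m=9,10,11,12$ are $17,29,50,74$. The table you are certifying gives ranks $14,23,35,50$ at those degrees. Your rank-versus-column-count test would therefore detect $3,6,15,24$ independent syzygies and fail. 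As you state it, the independence step asserts something false, so the proposal cannot be completed as written.

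The only independence statement consistent with the table is irredundancy of the four generators. That means two things: the three degree-$7$ vectors are $\mathbb{Q}$-linearly independent (since $\dim\mathcal{M}_7=3$ and $\mathcal{G}_7=0$), and $M_8\notin\mathcal{G}_8$ (the entry $1$ in the ``new'' row). Your rank gaps already establish both. You should verify exactly this and drop any claim that there are no $K_6$-syzygies among the generators through degree $12$.

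A minor point: $S_B$ has total degree $14$, not $28$; $28$ is its Fibonacci weight. Since $14>12$, your free basis of $(K_6)_{\le 12}$ is still correct.
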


\begin{theorem}[Closure through $m=13$]
\label{thm:n6-closure13}
$\mathcal{Q}_{13} = 0$: at total degree $13$ the exact solution
space has dimension $68$, all $K_6$-generated by the four exact
directions below it (quotient dimension $0$ over $\mathbb{Q}$;
\href{https://github.com/AhmedMostafa16/adjquad/blob/25fdeaefc74d63e8a51983399e5fbf64ef2b28d5/scripts/cas/check_n6_m13.py}{scripts/cas/check\_n6\_m13.py}; log \href{https://github.com/AhmedMostafa16/adjquad/blob/25fdeaefc74d63e8a51983399e5fbf64ef2b28d5/data/n6_m13.log}{data/n6\_m13.log}). The exact
closure window extends through $13$ with no new generator.
No theorem-grade conclusion is drawn beyond total degree $13$; degree $14$ is diagnostic-only (remark below).
\end{theorem}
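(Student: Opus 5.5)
The natural route is the same as for Theorem~\ref{thm:n6-denmod12}: reduce the degree-$13$ statement to a finite exact linear-algebra computation over $\mathbb{Q}$, then lift it to arbitrary characteristic-zero $k$ by Lemma~\ref{lem:QQ-transfer}.

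Since $\Dmap{6}$ raises total degree by exactly one, the $a_0^1$-system $a_1a_2M+\Dmap{6}(N)=0$ at total degree $13$ is a finite linear system. The unknowns are the coefficients of $M\in (K_6)_{13}$ and of $N\in (B_6)_{14}$. I would proceed in three steps.

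First, parametrize $(K_6)_{13}$ explicitly. As noted at the start of this section, through these degrees $K_6$ is spanned freely by the $R$-monomials together with $H_B$ times $R$-monomials. The relation $S_B$ has total degree $28$, and the transported $F_{46}$ has total degree $18>13$, so neither interferes. Hence $(K_6)_{13}$ has an explicit $\mathbb{Q}$-basis $\{\mu_\ell\}$ of products of $G_1,\dots,G_5,H_B$.

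Second, write $M=\sum x_\ell\mu_\ell$ and $N=\sum y_\nu\nu$, with $\nu$ running over the monomials of total degree $14$ in $a_1,\dots,a_6$. Assemble the matrix of $(x,y)\mapsto a_1a_2M+\Dmap{6}(N)$ and compute its exact rational nullspace. Then project the nullspace to the $x$-coordinates; this gives $\mathcal{M}_{13}$, whose dimension should come out as $68$. To keep the computation tractable, split the system by Fibonacci weight. Weight is preserved because $\operatorname{wt}(a_1a_2)=13$ and $\Dmap{6}$ is weight-homogeneous, so the matrix is block-diagonal in weight.

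Third, build $\mathcal{G}_{13}$. Take the four exact directions of Theorem~\ref{thm:n6-denmod12}: the three $m=7$ directions and $M_8$. Multiply them by the basis of $(K_6)_{13-7}$, respectively $(K_6)_{13-8}$, to get a spanning set of the cumulative $K_6$-span in degree $13$. Each product is a genuine solution: if $(M,N)$ solves the system and $\kappa\in K_6$, then $(\kappa M,\kappa N)$ solves it too, because $\Dmap{6}(\kappa)=0$. So automatically $\mathcal{G}_{13}\subseteq\mathcal{M}_{13}$. It remains to show $\operatorname{rank}\mathcal{G}_{13}=68$, for which I would compare ranks of the stacked matrix and the augmented matrix.

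With these steps done, $\mathcal{Q}_{13}=\mathcal{M}_{13}/\mathcal{G}_{13}=0$ over $\mathbb{Q}$. By Lemma~\ref{lem:QQ-transfer}(i), the rank equalities then hold over any characteristic-zero $k$.

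The main obstacle is the size of the nullspace computation at degree $14$ in six variables, which must be done with exact rational arithmetic. I would first run it modulo two large primes to predict the nullity $68$ and the rank of $\mathcal{G}_{13}$, and use the weight decomposition to cut the matrix into small blocks. After that I would certify over $\mathbb{Q}$ only the rank inequality $\operatorname{rank}\mathcal{G}_{13}\ge 68$, via one nonzero $68\times 68$ rational minor, together with the upper bound $\dim\mathcal{M}_{13}\le 68$. The upper bound also follows from the mod-$p$ nullity, since nullity can only drop on passing from $\mathbb{F}_p$ back to $\mathbb{Q}$ when the matrix is integral.
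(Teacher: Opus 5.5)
Your Steps 1--3 follow the paper's route: an exact $\mathbb{Q}$ computation of the $M$-projection $\mathcal{M}_{13}$, compared by rank with the $K_6$-span of $a_5^4a_6^3$, $a_5^4a_6G_4$, $M_3$, $M_8$. One small correction to Step 1: $S_B$ has Fibonacci weight $28$ but ordinary degree $14$. The free monomial basis of $(K_6)_{13}$ is therefore still valid, but degree $13$ is the last degree where it is.

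The step that fails is the shortcut in your last paragraph. $\mathcal{M}_{13}$ is not the nullspace of your integral matrix $A=[\,T\mid D\,]$, where $T(x)=a_1a_2\sum_\ell x_\ell\mu_\ell$ and $D=\Dmap{6}|_{(B_6)_{14}}$. It is the image of that nullspace under projection to the $x$-coordinates, and the kernel of that projection is $\{(0,N):\Dmap{6}(N)=0\}\cong(K_6)_{14}$. Hence
$\dim\mathcal{M}_{13}=\operatorname{null}A-\dim\ker D=\dim(K_6)_{13}+\operatorname{rank}D-\operatorname{rank}A$.
Reduction mod $p$ can lower $\operatorname{rank}D$ as well as $\operatorname{rank}A$, so a mod-$p$ projection dimension need not bound the rational one from above. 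For example, take $T=(1)$ and $D=(p)$. The rational projection has dimension $1$ and the mod-$p$ projection has dimension $0$, although $\operatorname{null}A=1$ in both cases. This is exactly what happens when a rational direction $M$ forces $p$ into the denominators of its $N$. Your phrase ``predict the nullity $68$'' shows the conflation: the nullspace of $A$ has dimension $68+\dim(K_6)_{14}$, not $68$.

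To use mod-$p$ data rigorously you must fix the subtracted term over $\mathbb{Q}$. The value $\dim_{\mathbb{Q}}(K_6)_{14}$ is known exactly from the transported presentation of Theorem~\ref{thm:n6-B6ring}: count the degree-$14$ monomials in $G_1,\dots,G_5,H_B$ and subtract one for $S_B$. This gives $\dim_{\mathbb{Q}}\mathcal{M}_{13}\le\operatorname{null}_{\mathbb{F}_p}\bar A-\dim_{\mathbb{Q}}(K_6)_{14}$, and you must check that this number equals $68$. Given a mod-$p$ projection dimension of $68$, that holds exactly when $\bar D$ has the same rank as $D$. Otherwise, fall back on the exact rational nullspace of your Step~2, which is what the paper's certificate does.

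On the lower-bound side, the $68\times68$ minor is unnecessary. The four directions and all multipliers of degree $\le 6$ lie in the polynomial ring $P=k[a_5,a_6,G_3,G_4]$, with $M_3=a_5^2a_6\bigl(G_4^2+\tfrac{100}{21}a_6^2G_3\bigr)$ and $M_8=a_5^4\bigl(G_4^2-\tfrac53a_6^2G_3\bigr)$. So $\mathcal{G}_{13}$ is the degree-$13$ piece of an ideal of $P$. Moreover $a_5^2M_3=G_4M_2+\tfrac{100}{21}G_3M_1$, and $(a_6^3,\,a_6G_4,\,G_4^2-\tfrac53a_6^2G_3)$ has initial ideal $(a_6^3,a_6G_4,G_4^2)$. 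A short Hilbert-function count then gives dimension $68$ by hand, and it also reproduces the values $3,7,14,23,35,50$ of Theorem~\ref{thm:n6-denmod12}.
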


\begin{remark}[Degree $14$: computationally blocked, not claimed]
The exact denominator-module computation is certified through total
degree $13$: the solution module is generated in the certified
window by four independent characteristic-zero directions of
degrees $7,7,7,8$, with no relations in that range. An exploratory
computation at degree $14$ produced a stable kernel-count signal
across independent checks, but the corresponding
projection-dimension extraction was not reconciled with the exact
nullity calculation. We therefore make no degree-$14$ structural
claim and do not use this computation in any theorem. The
diagnostic logs are retained as unresolved material only.
\end{remark}

\begin{theorem}[Third $m=7$ denominator direction]
\label{thm:n6-m7third}
The mod-prime third $m=7$ direction lifts: with
$M_3 = a_4^4a_5^2a_6 - 4a_3a_4^2a_5^2a_6^2 - \frac{16}{21}a_3^2a_5^2a_6^3
+ \frac{200}{21}a_2a_5^3a_6^3$ and $N_3$ the explicit $14$-term
Fibonacci-$24$ degree-$8$ polynomial of the log,
$a_1a_2M_3 + D_6(N_3) = 0$ over $\mathbb{Q}$ by direct expansion,
and $M_3$ lies outside
$\operatorname{span}\{a_5^4a_6^3, a_5^4a_6G_4\}$ (exact rank test)
(\href{https://github.com/AhmedMostafa16/adjquad/blob/25fdeaefc74d63e8a51983399e5fbf64ef2b28d5/scripts/cas/check_n6_m7lift.sage}{scripts/cas/check\_n6\_m7lift.sage}; log \href{https://github.com/AhmedMostafa16/adjquad/blob/25fdeaefc74d63e8a51983399e5fbf64ef2b28d5/data/n6_m7lift.log}{data/n6\_m7lift.log};
mod-prime baseline $60/57/3$ in \href{https://github.com/AhmedMostafa16/adjquad/blob/25fdeaefc74d63e8a51983399e5fbf64ef2b28d5/data/n6/A0sat.log}{data/n6/A0sat.log} reproduced
exactly over $\mathbb{Q}$). Hence $F_3 = a_0M_3 + N_3 \in A_6^{D_6}$ of
Fibonacci weight $24$ and $a_0$-degree $1$ is a third exact
$a_0$-direction. The degree-12 module program now starts from
three exact $m=7$ generators.
\end{theorem}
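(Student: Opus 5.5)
The statement to prove is Theorem~\ref{thm:n6-m7third}. It asserts three things: an explicit identity $a_1a_2M_3 + \Dmap{6}(N_3) = 0$ over $\mathbb{Q}$, the membership $M_3 \notin \operatorname{span}\{a_5^4a_6^3, a_5^4a_6G_4\}$, and that $F_3 = a_0M_3 + N_3$ is an invariant of $a_0$-degree $1$ and weight $24$. I will verify these in that order.

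\textbf{Invariance of $M_3$.} First I check $\Dmap{6}(M_3) = 0$. I expect $M_3 \in K_6$; because it is $a_1$-free, it should lie in $C^{\Dmap{6}} = k[a_5,a_6,G_3,G_4]$ by \eqref{eq:n6-Ckernel}. I will try to write it as a combination of $a_5^2a_6G_4^2$, $a_5^2a_6^3G_3\cdot a_5$ and similar terms of matching weight, which fixes the rational coefficients $-16/21$ and $200/21$. Concretely, $G_4^2 = a_4^4 - 4a_3a_4^2a_6 + 4a_3^2a_6^2$ accounts for the first two terms. The remainder $(-16/21 - 4)a_3^2a_5^2a_6^3 + (200/21)a_2a_5^3a_6^3$ should then be a multiple of $a_5^2a_6^3G_3 = a_5^2a_6^3(2a_2a_5 - a_3^2)$. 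Its coefficients $-100/21$ and $200/21$ are in exactly ratio $-1:2$, so
\[
M_3 = a_5^2a_6G_4^2 + \tfrac{100}{21}a_5^2a_6^3G_3 \in K_6 .
\]

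\textbf{The identity and invariance of $F_3$.} Next I verify $a_1a_2M_3 + \Dmap{6}(N_3) = 0$ by direct expansion, using the recorded $14$-term $N_3$ and the rule $\Dmap{6}(a_n) = a_{n+1}a_{n+2}$. Given $\Dmap{6}(M_3) = 0$, Leibniz gives
\[
\Dmap{6}(F_3) = a_1a_2M_3 + a_0\Dmap{6}(M_3) + \Dmap{6}(N_3) = 0 .
\]
The $a_0$-degree is exactly $1$ because $M_3 \ne 0$ and $N_3$ is $a_0$-free. For the weight, I check that every monomial of $a_0M_3$ and of $N_3$ has weight $24$, using the extended weights with $a_0 \mapsto 13$; for example $a_0a_4^4a_5^2a_6$ has weight $13 + 8 + 3 = 24$. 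The identity is the only genuinely computational step and the main obstacle. I would certify it by exact rational expansion, as in the cited script, rather than by hand, and check that the number of terms of $N_3$ agrees with the log. Weight homogeneity of $\Dmap{6}$ means only monomials of weight $24$ need comparing.

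\textbf{Independence.} Finally, suppose $M_3 = \alpha a_5^4a_6^3 + \beta a_5^4a_6G_4$. Every monomial on the right carries $a_5^4$, but $M_3$ contains $a_4^4a_5^2a_6$ with $a_5$-exponent $2$, which is a contradiction. So $M_3$ lies outside the span even over $K_6$-multiples of degree $0$. Together with the first two steps, this shows $F_3$ is a third exact $a_0$-direction.
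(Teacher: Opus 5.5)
Your proposal is correct and takes essentially the paper's route: the identity $a_1a_2M_3 + D_6(N_3) = 0$ is certified by exact expansion of the recorded $N_3$, and invariance of $F_3$ then follows by Leibniz. Two of your steps are done by hand where the paper relies on implicit setup or a machine rank test. Your decomposition $M_3 = a_5^2a_6G_4^2 + \tfrac{100}{21}a_5^2a_6^3G_3$ makes $D_6(M_3)=0$ explicit, which the paper leaves implicit in its hypothesis $M \in K_6$. Your $a_5$-exponent argument replaces the exact rank test; a weight count, $11$ against $7$ and $9$, would serve equally well.
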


\subsection{$A_6$ attack scaffolding (investigation, no claims)}
No result is claimed here. With $K_6$ exact
(Theorem~\ref{thm:n6-B6ring}), $A_6 = B_6[a_0]$,
$D_6(a_0) = a_1a_2$: $a_0$-filtration tops land in the known ring
$K_6$, and the denominator module needs $\ge 2$ generators
(Theorem~\ref{thm:n6-no-monomial-bound}). Exit gates, in order:
(i) top-ideal computation in $K_6$ (bounded, exact); (ii) resonance
hunt (second hidden direction or proof of none to stated bounds);
(iii) either obstruction theorem \#2 or a documented census with
data tables. Finite generation of $A_6^{D_6}$ is not assumed at any
gate. Dimension six plus triangular does not determine the outcome
either way: the Freudenburg dimension-six derivation has
non-finitely-generated kernel \cite{freudenburg2000hilbert}, so the
$A_6$ ring stays open on its own evidence. Precommitted non-goals: reviving $W^\sharp$-induction,
single-monomial saturation, any use of \ref{lem:n6-sat5} as premise.

\section{Failed valuation routes and superseded approaches (historical record)}
\label{sec:routes-history}

The valuation programs below fail for the precise algebraic reasons stated; they are retained because each failure explains a constraint the successful Route~E proof had to satisfy. Nothing here is a premise of Theorem~\ref{thm:closure}.

\paragraph{Route A: resonance necessity (OBSTRUCTED).}
Necessity for Conjecture~\ref{conj:resonance-birth} (no primitive at
W-degree $m \notin \{2,4\}$) cannot be proved by valuation: with
$\rho_q = 2$, $\rho_p = 4$ exact
(Definition~\ref{def:resonance}), the constant-coefficient step is
silent at every $m \ge 2$ on the $q$-side and every $m \ge 4$ on the
$p$-side (Lemmas~\ref{lem:resonance-transcendent}--\ref{lem:resonance-algebraic};
below degree $d$ the step only ``may'' function, which is unproved).
At $m = 2,3$ the possibly-functioning $p$-side supplies at most half
of $J_m$-membership ($v_p$ without $v_q$ does not force a top into
$J_m$). Birth windows exclude nothing: $\min C_5(m) =
14,24,38,48,62,72,86,96$ for $m = 1,\dots,8$ (exhaustive check in
exact top-ideal computation), so every gap $[8m,\min
C_5(m)-1]$ is non-empty; only $m = 1$ is closed, by the independent
Theorem~\ref{thm:a01-closure}. The $J$-ideal form of the exclusion
needs both top bounds, of which the $p$-side global form is Refuted
and the $q$-side strong form is Established in even degree
(Theorem~\ref{thm:q-top}).

\paragraph{Route B: associated-graded induction (OBSTRUCTED at the step).}
$\operatorname{gr}_{a_0}(C_5) = \bigoplus J_m t^m$ is Established
(Theorem~\ref{thm:gr-C5}); its analogue for $\Afring{5}^{\Dmap{5}}$ is
open from $m = 4$. The graded pieces are known for $m \le 1$
(descent; Theorem~\ref{thm:a01-closure}), evidenced through weight
$45$ at $m = 2$ and through $85$ at $m = 4$, and open above. The
induction step at arbitrary $M$ is Theorem~\ref{thm:reduction}(i)
generalized to $C_5[F_{46}^{\mathrm{int}}]$, whose hypotheses (even-degree
Q-top-strong plus the conjectured $p$-side Rp, the latter unproved)
remain jointly open. Since $C_5$-generation is false, at least one hypothesis
of the ungeneralized form is false. The induction premise is the
conclusion.

\paragraph{Route C: structural degree bound (OBSTRUCTED).}
No imported bound applies: Maubach's $\le 4$ is dimension-$4$
monomial-triangular; $N = 5$ is dimension $6$; Daigle--Freudenburg
kills universal bounds for general triangular derivations
(Remark~\ref{rem:df-warning}). Growth-rate comparison is circular:
proving the tower Hilbert series equals the kernel series \emph{is}
component (1). The three-module excesses $45,50,54,58,63$ at weights
$70$--$74$ follow from $s_K$-arithmetic (internal consistency of the
conjecture, not evidence for it).

\paragraph{Route D: W-expansion product-reduction (OBSTRUCTED at top
reduction).} W-expansions are unique ($W$ monic of $a_0$-degree $1$),
but top-coefficient reduction is unproved at every $m \ge 2$ as a
general statement: W-degree-$2$ tops generated by $H$ (above weight
$45$), W-degree-$4$ tops generated by $F_{46}$ modulo $J_4$ (past
$85$), and $H_m$ forced into $C_5$ for $m \notin \{2,4\}$ (needs both
bounds) are all open. The descending induction inherits the gap.

\paragraph{Superseded approaches (historical record).}
The following inputs would have overcome the barrier; closure has since
been proved by other means (Theorem~\ref{thm:closure}), and they remain
as proof infrastructure and independent open problems. (i) A $p$-side
replacement bound: R1 ($\lambda_q \ge -2m$)
is now Established (Theorem~\ref{thm:R1}); no replacement $p$-side floor
is established (the multiplicity mechanism would suggest $-m/2$) --- its proof needs
residue landing, which is unavailable (the exact degree $4$ of $r_p$
over $L_p$ is established by exact computation,
Remark~\ref{rem:p-obstacle}).
Without it the $J_m$-membership requirement
($v_p \ge 2m$ AND $v_q \ge 4\lceil m/2\rceil$) cannot be met. (ii) A finiteness or degree-bound
theorem for squarefree-monomial triangular LNDs in dimension $\ge 5$.
(iii) A compute breakthrough pushing full-kernel upper bounds past
weight $70$ (infeasible over $\mathbb{Q}$ and LinBox at $n = 112241$);
this extends the verified range only. A non-resonance source (a
primitive outside the $c$-prime framework) is not covered by any
route: possibility A (W-degree outside $\{2,4\}$) is a concrete
falsifier to hunt in the birth windows of
Theorem~\ref{thm:birth-windows}; possibility B (extended framework)
has no candidate mechanism. Falsifiers: any $w$ with excess $\ne P(w)
- (s_6(w) - s_6(w-28))$; any $w$ with $N_J(w) < N_{\le 3}(w)$ at
$a_0$-degree $3$ (Remark~\ref{rem:deg3-census}).

\paragraph{Degree-$m$ census (evidence, not proof).}
For each $a_0$-degree $m$ with $J_m = (p^{2m}q^{4\lceil m/2\rceil})$
principal, the exact-$\mathbb{Q}$ nullity on $V_w^{a_0 \le m}$ is
compared against the $J_m$-constrained chain nullity (top $C_m$
parametrized in $J_m$); a gap counts $J_m$-violating degree-$m$ tops,
compared against the $F$-module bracket prediction. Completed prior
computations (supplement logs; outside the fresh $15$--$46$ rerun):
exact-$\mathbb{Q}$ for $m = 4$ over $46$--$75$ dense (30 weights, all
match), $m = 5$ over $60$--$72$ ($13$ weights incl.\ the
$F_{46}\!\cdot\!F_0$ seed fingerprint), $m = 6$ over $70$--$74$
($F_{46}\!\cdot\!H$ seed at $70$), $m = 3$ clean ($0$ violations)
through the mid-$70$s, and $m = 7$ at $84$ with exactly one violator
(the $F_{46}\!\cdot\!F_0\!\cdot\!H$ seed); finite-field consistency checks
cover the remaining stated ranges. No excess (no new
primitive) and no deficit (no extra torsion) anywhere checked.
Remaining ranges ($m = 3$ tail, $m = 6$ upper, $m = 7$ upper) are
being computed; any excess escalates to a new primitive.

\begin{remark}[Sharp reformulation via gap-born detection]
A primitive $a_0$-degree-$m$ direction born below the $C_5$ window
($w < \min C_5(m)$) has top of weight $w - 8m < \min C_5(m) - 8m
= 2m + 4\lceil m/2\rceil = \operatorname{wt}(j_m)$ (direct check on
$e + 2r = m$; minimum at maximal $r$), hence top outside $J_m$:
every gap-born primitive is $J$-violating and census-visible.
So closure $\iff$ no $J$-violating new tops at any $(m,w)$ beyond
checked ranges, plus no above-window $J$-clean primitives (the
latter confined above $\min C_5(m)$ per degree).
\end{remark}

\section{\texorpdfstring{Residue-field saturation: the $p$-adic lemma}{Residue-field saturation: the p-adic lemma}}
\label{sec:plemma}

The $p$-adic denominator bound
(Lemma~\ref{lem:plemma} below) is false: the explicit
weight-46 invariant $F_{46}$ of
Section~\ref{sec:second-witness} has top $W$-coefficient $H_4 =
\frac{18375}{4}p^6q^8$ with $v_p = 6 < 8 = 2\cdot 4$. The valuation
setup (residue field, pole order, initial form) stands; the inference
does not.
We work over the characteristic-zero field $k$ of
Definition~\ref{def:family}, with
$\Afring{5} = k[a_0,\dots,a_5]$ and
$\Dmap{5}(a_n) = a_{n+1}a_{n+2}$.
This section treats the $p$-adic half of the residue-transcendence
saturation for the adjacent-quadratic shift at $N = 5$.
Here the residue initial form has degree $4$ over its residue field, and
$F_{46}$ gives the first observed failure of the corresponding naive
bound.
The $q$-adic half is stated separately in the companion $q$-lemma
section: the two residue fields are not symmetric, and no symmetry
argument is used here.

We reuse the five-variable dictionary of the kernel section verbatim,
with no redefinition:
\[
x = a_3, \qquad y = a_2, \qquad z = a_1, \qquad p = a_4, \qquad q = a_5,
\]
\[
B = k[x,y,z,p,q] = k[a_1,\dots,a_5], \qquad
K = B^{\Dmap{5}} = k[p,q,J_1,J_2],
\]
\[
J_1 = 2pz - y^2 = 2a_1a_4 - a_2^2, \qquad
J_2 = 2qy - x^2 = 2a_2a_5 - a_3^2,
\]
\[
\Dmap{5}(x) = pq, \qquad \Dmap{5}(y) = px, \qquad
\Dmap{5}(z) = xy, \qquad \Dmap{5}(p) = \Dmap{5}(q) = 0.
\]
We reuse the invariant of Definition~\ref{def:F0} verbatim:
\[
F_0 = p^2q^4a_0 - pq^3xyz + \tfrac13pq^2x^3z
  + \tfrac13q^2x^3y^2 - \tfrac15qx^5y + \tfrac1{35}x^7,
\]
and write $c = p^2q^4$, $G_0 = F_0 - ca_0 \in B$,
\[
G_0 = -pq^3xyz + \tfrac13pq^2x^3z
  + \tfrac13q^2x^3y^2 - \tfrac15qx^5y + \tfrac1{35}x^7,
\]
$W = F_0/c = a_0 + \rho$ with $\rho = G_0/c \in B_{pq}$.
A general localized invariant is written
\[
F = \sum_{j=0}^m H_jW^j, \qquad H_j \in K_{pq},
\]
with $a_0$-coefficients
\[
C_k = \sum_{j=k}^m \binom{j}{k}H_j\rho^{j-k} \in B
\qquad (0 \le k \le m).
\]

\begin{remark}[Valuation setup and the correct $p$-adic residue field]
Write $B = k[x,y,z,q][p]$ and let $v_p = \operatorname{ord}_p$ on
$\operatorname{Frac}(B)$, with valuation ring $V_p = B_{(p)}$ and
residue field $\kappa_p = k(x,y,z,q)$.
The height-one prime $(p) \subset K$ has residue
\[
K/pK \cong k[q,-y^2,2qy-x^2],
\]
by the injectivity of the specialization map of
Lemma~\ref{lem:contraction}, so with
$w = 2qy - x^2$,
\[
L_p = \operatorname{Frac}(K/pK) = k(q,y^2,w) \hookrightarrow \kappa_p.
\]
In the $a$-variables this is $k(a_5,-a_2^2,2a_2a_5-a_3^2)$.
All residue computations in this section use the dictionary above;
An earlier residue computation contained a variable transposition error; the corrected computation is given below.
Since $G_0$ has $p$-adic order $0$ while $c = p^2q^4$ has order $2$,
$\rho$ has exact pole order $e_p = 2$ at $p$, with initial form
\[
r_p = \operatorname{in}_p(p^2\rho)
  = \frac{1}{q^4}
    \left(\tfrac13q^2x^3y^2 - \tfrac15qx^5y + \tfrac1{35}x^7\right).
\]
The specialization identities, the exact pole order, and the
Jacobian independence of $\{q,y^2,w\}$ (minor $4xy \ne 0$ in columns
$(x,y,q)$) are certified by
exact computation (see Supplementary Material).
\end{remark}

\begin{definition}[Denominator modules, $p$-part]
For each $m \ge 0$ the $p$-part of the $m$-th denominator module is
\[
M^p_m = p^{2m}K.
\]
The full module $M_m = p^{2m}q^{4m}K = c^mK$ additionally requires
the companion $q$-adic bound, stated separately.
\end{definition}

\begin{lemma}[The proposed $p$-adic denominator bound]
\label{lem:plemma}
The following bound is false, refuted by the explicit weight-46 invariant $F_{46}$ of
Section~\ref{sec:second-witness} (Remark~\ref{rem:prefuted-F} below).
The refuted statement is:
Let $F = \sum_{j=0}^m H_jW^j \in A = k[a_0,\dots,a_5]$ with
$H_j \in K_{pq}$. Then
\[
\boxed{v_p(H_j) \ge 2j \quad \text{for every } j,}
\]
i.e.\ $H_j \in p^{2j}K_{pq}$ for every $j$.
\end{lemma}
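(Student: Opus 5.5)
Since the displayed bound is false, the plan is to refute it with a counterexample and then explain where the natural proof attempt breaks. The counterexample is $F=F_{46}^{\mathrm{int}}$ of Theorem~\ref{thm:F46}. It is a $\Dmap{5}$-invariant polynomial in $A$. By Corollary~\ref{cor:A-localized}, $A_{pq}^{\Dmap{5}}=K_{pq}[W]$, so $F$ has a unique expansion $F=\sum_{j=0}^4H_jW^j$ with $H_j\in K_{pq}$. Uniqueness holds because $W=a_0+\rho$ is monic of $a_0$-degree $1$, as in the proof sketch of Theorem~\ref{thm:wdetect}. This places $F$ exactly within the hypotheses of the proposed lemma.

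The key step is to identify the top coefficient without computing the full expansion. Comparing $a_0^4$-coefficients gives $C_4=H_4$, because $W^j$ contributes to $a_0^4$ only for $j=4$, and there with coefficient $1$. By Theorem~\ref{thm:F46}, the top $a_0$-coefficient of $F_{46}^{\mathrm{int}}$ is $121550625\,p^6q^8$, so $H_4=121550625\,p^6q^8$ and $v_p(H_4)=6<8=2\cdot 4$. This contradicts the boxed bound at $j=4$. The tabulated values $v_p(H_0)=-2$ and $v_p(H_2)=2$ are not needed, although they can be recorded as a consistency check. The conclusion does not depend on the normalization: $F_{46}^{\mathrm{rat}}$ has top $\tfrac{18375}{4}p^6q^8$ with the same valuation. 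Corollary~\ref{cor:F46F0-outside-C5} and Proposition~\ref{prop:no-bounded-p-defect} show that the failure propagates to $F_{46}F_0$ and to the powers $F_{46}^r$, so no weakened bound of the form $v_p(H_j)\ge 2j-C$ with a fixed constant $C$ can hold.

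The expected difficulty is locating where the natural proof breaks; the refutation itself is immediate. The reduction step is valid. Put $\lambda_p=\min_j(v_p(H_j)-2j)$ and $S=\{j:v_p(H_j)-2j=\lambda_p\}$. If $\lambda_p<0$, dividing the constant $a_0$-coefficient $C_0\in B$ by $p^{\lambda_p}$ and passing to initial forms gives $\sum_{j\in S}h_jr_p^j=0$ with $h_j\in L_p$, where $r_p$ is the trinomial initial form of the valuation setup above. To conclude $h_j=0$ one would need $r_p$ to be transcendental over $L_p=k(q,y^2,w)$. It is not: $y^2\in L_p$, and $x^2=2qy-w\in L_p(y)$, so $r_p\in k(q,y,x)$ is algebraic over $L_p$, of degree $4$ (Remark~\ref{rem:p-obstacle}). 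Lemma~\ref{lem:resonance-algebraic} then says the constant-coefficient argument is silent once $m\ge 4$, and $F_{46}^{\mathrm{int}}$ has $m=4$ exactly. So the first place the inference can fail is where the counterexample actually sits.
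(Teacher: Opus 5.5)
Your proposal is correct and takes essentially the paper's route: keep the valid reduction to $\sum_{j\in S}h_jr_p^j=0$, locate the failure in the degree-$4$ algebraicity of $r_p$ over $L_p$ (Remark~\ref{rem:p-obstacle}), and refute the bound with $F_{46}$, whose top coefficient $H_4=C_4$ has $v_p=6<8$ (Remark~\ref{rem:prefuted-F}, Lemma~\ref{lem:plemma-leading}). Your use of the top coefficient alone, which requires no knowledge of $H_0$ or $H_2$, is the leading-coefficient form the paper also records; the paper's proof additionally cites $\lambda_p=-2$ from the full $W$-expansion.
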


\begin{proof}[Reduction and explicit gap]
The constant $a_0$-coefficient is $C_0 = \sum_j H_j\rho^j \in B$,
so $v_p(C_0) \ge 0$. Put
$\lambda_p = \min_j\{v_p(H_j) - 2j\}$ and
$S = \{j : v_p(H_j) - 2j = \lambda_p\}$.
If $\lambda_p < 0$, dividing by $p^{\lambda_p}$ and passing to
initial forms gives the relation
\[
\sum_{j \in S} h_j r_p^j = 0
\]
with $0 \ne h_j \in L_p$ the residues of the normalized
coefficients. The reduction to this initial-form equation is valid.
The final inference --- concluding $h_j = 0$ for all $j \in S$ ---
would require linear independence of arbitrarily high powers of $r_p$
over $L_p$, which is false in the needed range: $r_p$ is algebraic of
degree $4$ (Remark~\ref{rem:p-obstacle}), and the explicit invariant
$F_{46}$ realizes $\lambda_p = -2 < 0$
(Remark~\ref{rem:prefuted-F}). Earlier heuristic
routes --- a parity shortcut, a slice-power computation, and the
transposed-naming residue computation --- are retracted as proofs
and retained as heuristics only.
\end{proof}

\begin{remark}[Observation: why the constant-coefficient route is unavailable]
\label{rem:p-obstacle}
The triple $\{q,y^2,w\}$ is algebraically independent (Jacobian
minor $4xy \ne 0$), so $L_p = k(q,y^2,w)$ is purely transcendental
of degree $3$. But $y^2 \in L_p$ and $x^2 = 2qy - w \in L_p(y)$, so
\[
L_p \subseteq L_p(y) \subseteq L_p(y)(x) = k(q,x,y)
\]
is a tower of degree at most $2$ at each step; hence
$[k(q,x,y) : L_p] \le 4$. Since $r_p \in k(q,x,y)$, the initial form
$r_p$ is algebraic over $L_p$ (of degree exactly $4$: a primitive
element of the full tower, by exact-$\mathbb{Q}$ computation (see Supplementary Material), not
transcendental. The same witness check on the $q$-side gives
$x^2 \in L_q$, so $r_q = x^7/(35p^2)$ is algebraic over $L_q$ of
degree at most $2$. Consequently no argument needing
$1, r_p, r_p^2, \dots$ (respectively $1, r_q, r_q^2, \dots$) to be
linearly independent over the full range of $W$-degrees can succeed
from the constant coefficient alone. This precise obstacle is part
of the verification task; the algebraicity witnesses
$y^2 = -(J_1 \bmod p)$, $x^2 = 2qy - (J_2 \bmod p)$,
$x^2 = -(J_2 \bmod q)$ are certified in exact computation (see Supplementary Material).
\end{remark}

\begin{lemma}[The proposed $p$-adic leading-coefficient form]
\label{lem:plemma-leading}
Refuted by the same invariant $F_{46}$: its top $W$-coefficient
$H_4 = \frac{18375}{4}p^6q^8$ has $v_p = 6 < 8 = 2\cdot 4$, so
$H_4 \notin p^{8}K_{pq}$ (Remark~\ref{rem:prefuted-F}).
With $F$, $m$, $H_j$ as in Lemma~\ref{lem:plemma}, the top
coefficient $H_m = [a_0^m]F$ was claimed to lie in $B$ and satisfy
$H_m \in p^{2m}K_{pq}$. It fails by the same invariant.
\end{lemma}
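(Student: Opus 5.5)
The plan is to exhibit $F_{46}$ as an explicit counterexample to the claimed leading-coefficient form. I take the claim in the valuation sense used throughout this section, namely $v_p(H_m)\ge 2m$. Read literally, $p^{2m}K_{pq}=K_{pq}$ because $p$ is a unit there, so the literal form would be vacuous. The intended content is membership in $p^{2m}$ times the $p$-regular part, equivalently $v_p(H_m)\ge 2m$ with $v_p$ the valuation of Lemma~\ref{lem:contraction}. I would state this reading explicitly at the outset, so that the refutation targets the same inequality as Lemma~\ref{lem:plemma}.

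\emph{Step 1: the part of the claim that is true.} For any $F\in A$ of $a_0$-degree $m$, the top $W$-coefficient is the top $a_0$-coefficient. Since $W=a_0+\rho$ is monic of $a_0$-degree $1$ (the proof sketch of Theorem~\ref{thm:wdetect}), expanding $\sum_j H_jW^j$ in $a_0$ gives $[a_0^m]F=H_m$, which is the $k=m$ case of the displayed formula $C_m=H_m$. Hence $H_m=[a_0^m]F\in B$, and this half of the claim holds for every $F$.

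\emph{Step 2: the failing half.} I apply Step~1 to $F=F_{46}^{\mathrm{rat}}$, with $m=4$. By Theorem~\ref{thm:F46}, the top $a_0$-term is $\frac{18375}{4}a_0^4a_4^6a_5^8$, so
\[
H_4=\tfrac{18375}{4}p^6q^8 .
\]
This element lies in $K$, and since $p$ is a variable of the UFD $B$, $v_p(H_4)=6$. The required bound is $v_p(H_4)\ge 2\cdot 4=8$, so $H_4\notin p^{8}K_{(q)\text{-local}}$ and the claimed form fails. Because valuations and top-ideal membership are scale-invariant (Theorem~\ref{thm:F46}), the integral normalization $F_{46}^{\mathrm{int}}$ gives the same conclusion.

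\emph{Step 3: consistency check.} I would record that the same value $v_p=6$ is what makes $F_{46}\notin C_5$ via Lemma~\ref{lem:J4}, and that it attains the sharp envelope $\lceil 3m/2\rceil=6$ of Theorem~\ref{thm:p-envelope}. The counterexample therefore sits exactly on the true $p$-floor, and no weaker constant-defect version of the claim survives, by Proposition~\ref{prop:no-bounded-p-defect}.

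The main obstacle is definitional rather than computational: fixing the valuation reading of ``$H_m\in p^{2m}K_{pq}$'' so that the statement is non-vacuous. Once that is settled, the refutation reduces to the monic-$W$ identification in Step~1 and the explicit top coefficient of Theorem~\ref{thm:F46}.
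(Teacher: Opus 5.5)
Your proposal is correct and follows the paper's refutation. You identify the top $W$-coefficient of $F_{46}$ with its top $a_0$-coefficient $\frac{18375}{4}p^6q^8$ and observe $v_p=6<8$. You also note that literally $p^{2m}K_{pq}=K_{pq}$, so the claim must be read as $v_p(H_m)\ge 2m$; this is a correct and useful clarification of the paper's own phrasing.

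One notational fix: write the $p$-regular ring as $K[q^{-1}]$ (only $q$ inverted) rather than ``$K_{(q)\text{-local}}$''. In the localization at the prime $(q)$, $p$ is again a unit, so that membership statement would be vacuous.
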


\begin{remark}[Refutation by the weight-46 invariant $F_{46}$]
\label{rem:prefuted-F}
The invariant $F_{46}$ of Theorem~\ref{thm:F46} ($a_0$-degree $4$,
$W$-expansion $F_{46} = \sum_{j=0}^4 H_jW^j$ with $H_1 = H_3 = 0$) has
$(v_q(H_j), v_p(H_j)) = (-8,-2), (0,2), (8,6)$ at
$j = 0, 2, 4$. The bound $v_p(H_j) \ge 2j$ fails at every nonzero
$j$: $-2 < 0$, $2 < 4$, $6 < 8$. The $p$-adic story is over: where $H$
spared the $p$-side ($v_p(H_2) = 4 = 2\cdot 2$), $F_{46}$ breaks it at
the top coefficient itself.
\end{remark}

\begin{remark}[The $W$-degree induction skeleton]
\label{rem:winduction}
Assume the $p$-adic bound of Lemma~\ref{lem:plemma} together with
the companion $q$-adic bound $v_q(H_j) \ge 4j$ in full. Then each
$H_j$ lies in $B$: the top coefficient $H_m = [a_0^m]F$ is in $B$
directly, and descending induction via
$C_{m-1} = H_{m-1} + mH_m\rho$ with $H_m\rho \in B$ places every
$H_j$ in $B$. The intersection $B \cap K_{pq} = K$ of
Theorem~\ref{thm:descent} then gives $H_m = c^mG_m$ with
$G_m \in K$, whence $H_mW^m = G_mF_0^m \in K[F_0]$; subtracting
lowers the $W$-degree. This induction assumes both saturation
lemmas, and both are false, so it proves nothing: the global
generation $A^{\Dmap{5}} = K[F_0]$ and its
Hilbert series fail (counterexample $H$, weight 24; see
\ref{rem:counterexample-H}); the all-weights six-generator identity
fails likewise (counterexample $F_{46}$, weight 46; see
Section~\ref{sec:second-witness}).
\end{remark}

The ring $V_p$, the residue embedding $L_p \hookrightarrow \kappa_p$,
the exact pole order $e_p = 2$, the initial form $r_p$, and the
denominator modules $M^p_m$ are as computed above; what fails is only
the final saturation inference, since $r_p$ is algebraic of degree $4$
(Remark~\ref{rem:p-obstacle}) and $F_{46}$ realizes the resulting pole.

\section{\texorpdfstring{Residue-field saturation: the $q$-adic lemma}{Residue-field saturation: the q-adic lemma}}
\label{sec:qlemma}

The $q$-adic denominator bound
(Lemma~\ref{lem:qlemma} below) is false: the explicit
weight-24 invariant $H$ of Remark~\ref{rem:counterexample-H}
(Section~\ref{sec:global}) is $\Dmap{5}$-invariant with
$W$-cosets $H_2 = 11025\,p^4q^4$ ($v_q = 4 < 8 = 4\cdot 2$) and
$H_0 \in q^{-4}K_{pq}$ ($v_q = -4 < 0$), directly violating
$v_q(H_j) \ge 4j$. The reduction is valid; the residue-field
inference is where it fails. Proved here are the exact residue degree (Lemma~\ref{lem:rq-exact}) and the
relaxed $q$-bound R1 (Theorem~\ref{thm:R1}), proved by multiplicity
transfer from all of $\{C_k\}$; the original denominator bound stays
false.
We work over the characteristic-zero field $k$ of
Definition~\ref{def:family}, with
$\Afring{5} = k[a_0,\dots,a_5]$ and
$\Dmap{5}(a_n) = a_{n+1}a_{n+2}$.
This section treats the $q$-adic half of the residue-transcendence
saturation for the adjacent-quadratic shift at $N = 5$.
Here the residue initial form has degree $2$ over its residue field, and
this is exactly the first degree at which the naive saturation argument
can fail; $H$ realizes that failure.
The asymmetry between the two prime divisors requires separate
treatment: the $q$-adic residue field, pole order, and initial form
below are computed independently on the $q$-side, and no symmetry
argument with the companion $p$-adic section is used in either
direction.

We reuse the five-variable dictionary of the kernel section verbatim,
with no redefinition:
\[
x = a_3, \qquad y = a_2, \qquad z = a_1, \qquad p = a_4, \qquad q = a_5,
\]
\[
B = k[x,y,z,p,q] = k[a_1,\dots,a_5], \qquad
K = B^{\Dmap{5}} = k[p,q,J_1,J_2],
\]
\[
J_1 = 2pz - y^2 = 2a_1a_4 - a_2^2, \qquad
J_2 = 2qy - x^2 = 2a_2a_5 - a_3^2,
\]
\[
\Dmap{5}(x) = pq, \qquad \Dmap{5}(y) = px, \qquad
\Dmap{5}(z) = xy, \qquad \Dmap{5}(p) = \Dmap{5}(q) = 0.
\]
We reuse the invariant of Definition~\ref{def:F0} verbatim:
\[
F_0 = p^2q^4a_0 - pq^3xyz + \tfrac13pq^2x^3z
  + \tfrac13q^2x^3y^2 - \tfrac15qx^5y + \tfrac1{35}x^7,
\]
and write $c = p^2q^4$, $G_0 = F_0 - ca_0 \in B$,
\[
G_0 = -pq^3xyz + \tfrac13pq^2x^3z
  + \tfrac13q^2x^3y^2 - \tfrac15qx^5y + \tfrac1{35}x^7,
\]
$W = F_0/c = a_0 + \rho$ with $\rho = G_0/c \in B_{pq}$.
A general localized invariant is written
\[
F = \sum_{j=0}^m H_jW^j, \qquad H_j \in K_{pq},
\]
with $a_0$-coefficients
\[
C_k = \sum_{j=k}^m \binom{j}{k}H_j\rho^{j-k} \in B
\qquad (0 \le k \le m).
\]

\begin{remark}[Valuation setup and the correct $q$-adic residue field]
Write $B = k[x,y,z,p][q]$ and let $v_q = \operatorname{ord}_q$ on
$\operatorname{Frac}(B)$, with valuation ring $V_q = B_{(q)}$ and
residue field $\kappa_q = k(x,y,z,p)$.
The height-one prime $(q) \subset K$ has residue
\[
K/qK \cong k[p,2pz-y^2,-x^2],
\]
by the injectivity of the specialization map of
Lemma~\ref{lem:contraction}, so with
$u = 2pz - y^2$,
\[
L_q = \operatorname{Frac}(K/qK) = k(p,u,x^2) \hookrightarrow \kappa_q.
\]
In the $a$-variables this is $k(a_4,2a_1a_4-a_2^2,-a_3^2)$.
Note the asymmetry with the $p$-side: here the surviving specialization
$J_1 \bmod q = 2pz - y^2$ retains the variable $z$, while
$J_2 \bmod q = -x^2$ is a pure square; the roles of the two kernel
generators are not interchanged by any symmetry.
All residue computations in this section use the dictionary above;
The symmetrized copy of the $p$-side argument is retracted; the $q$-side computation below is independent.
Since $G_0$ has $q$-adic order $0$ while $c = p^2q^4$ has order $4$,
$\rho$ has exact pole order $e_q = 4$ at $q$, with monomial initial form
\[
r_q = \operatorname{in}_q(q^4\rho) = \frac{x^7}{35p^2}.
\]
The specialization identities, the exact pole order, and the
Jacobian independence of $\{p,u,x^2\}$ (minor $4xy \ne 0$ in
variables $(p,x,y)$) are certified by
exact computation (see Supplementary Material),
which checks the $q$-side rows independently of the $p$-side rows.
\end{remark}

\begin{definition}[Denominator modules, $q$-part]
For each $m \ge 0$ the $q$-part of the $m$-th denominator module is
\[
M^q_m = q^{4m}K.
\]
The full module $M_m = p^{2m}q^{4m}K = c^mK$ additionally requires
the companion $p$-adic bound, stated separately.
\end{definition}

\begin{lemma}[The proposed $q$-adic denominator bound]
\label{lem:qlemma}
Refuted by the explicit weight-24 invariant $H$ of
Remark~\ref{rem:counterexample-H} (Remark~\ref{rem:qrefuted-H} below).
The refuted statement is: for
$F = \sum_{j=0}^m H_jW^j \in A = k[a_0,\dots,a_5]$ with
$H_j \in K_{pq}$,
\[
\boxed{v_q(H_j) \ge 4j \quad \text{for every } j,}
\]
i.e.\ $H_j \in q^{4j}K_{pq}$ for every $j$.
\end{lemma}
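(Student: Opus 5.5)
The plan is to treat Lemma~\ref{lem:qlemma} exactly as its $p$-adic companion Lemma~\ref{lem:plemma} was treated. First I would record that the reduction step is valid. Then I would exhibit an explicit invariant that violates the boxed bound. Finally I would locate the failure at the residue-field inference.

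For the reduction, I would use the constant $a_0$-coefficient $C_0=\sum_j H_j\rho^j\in B$, so that $v_q(C_0)\ge0$.
\begin{itemize}
\item Put $\lambda_q=\min_j\{v_q(H_j)-4j\}$ and $S=\{j: v_q(H_j)-4j=\lambda_q\}$.
\item Using $v_q(\rho)=-4$ exactly, with initial form $r_q=x^7/(35p^2)$, the summand $H_j\rho^j$ has valuation exactly $\lambda_q$ for $j\in S$ and larger valuation otherwise.
\item If $\lambda_q<0$, dividing $C_0$ by $q^{\lambda_q}$ and passing to initial forms yields $\sum_{j\in S}h_jr_q^j=0$ with $0\ne h_j\in L_q$.
\end{itemize}
Here I would use that residues of $K_{pq}$-elements land in $L_q=\operatorname{Frac}(K/qK)$. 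This holds because $(p),(q)$ are non-associated primes of the polynomial ring $K$, by Lemma~\ref{lem:contraction}.

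Next I would show that the bound fails. The claimed lemma would follow only if $1,r_q,r_q^2,\dots$ were $L_q$-linearly independent. This breaks already at $r_q^2=(x^2)^7/(1225p^4)\in L_q$, while $r_q\notin L_q$, as in Remark~\ref{rem:p-obstacle} and Lemma~\ref{lem:rq-exact}.

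The concrete refutation is $H$ from Remark~\ref{rem:counterexample-H}, whose $W$-expansion is
\[
H=11025p^4q^4W^2+\frac{105J_2^5J_1}{q^2}+\frac{9J_2^7}{q^4},
\]
with $H_1=0$. I would check the two nonzero coefficients against the bound:
\begin{itemize}
\item $v_q(H_2)=4<8$.
\item $v_q(H_0)=-4<0$, using $q\nmid J_2$, since $J_2|_{q=0}=-x^2$.
\end{itemize}
Hence $\lambda_q=-4$, and $S=\{0,2\}$.

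The initial-form equation is then realized nontrivially by
\[
11025\,G_0^2+9J_2^7\equiv0 \pmod q,
\]
which is a scalar multiple of the minimal polynomial of $r_q$ over $L_q$. The invariance $\Dmap{5}(H)=0$ and the polynomiality of $H$ are Proposition~\ref{prop:H1}.

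The only delicate point is confirming that $H_0$ has the stated exact $q$-adic order. This requires that $105q^2J_2^5J_1+9J_2^7$ is not divisible by $q$ in $K$. That holds because its reduction $9(-x^2)^7$ is nonzero in $K/qK$.
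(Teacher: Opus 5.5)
Your proposal is correct and follows the paper's own route. You make the valid constant-coefficient reduction to $\sum_{j\in S}h_jr_q^j=0$, note that the needed independence of $1,r_q,r_q^2,\dots$ over $L_q$ fails because $r_q^2\in L_q$, and refute the bound with $H$, where $v_q(H_2)=4<8$ and $v_q(H_0)=-4<0$ (Remark~\ref{rem:qrefuted-H}); your extra check that $h_0+h_2T^2=11025p^4\bigl(T^2-(x^2)^7/(1225p^4)\bigr)$ is an $L_q^\times$-multiple of the minimal polynomial of $r_q$ is exactly the quadratic residue relation the paper invokes.
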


\begin{proof}[Reduction valid; inference refuted by the counterexample]
The constant $a_0$-coefficient is $C_0 = \sum_j H_j\rho^j \in B$,
so $v_q(C_0) \ge 0$. Put
$\lambda_q = \min_j\{v_q(H_j) - 4j\}$ and
$S = \{j : v_q(H_j) - 4j = \lambda_q\}$.
If $\lambda_q < 0$, dividing by $q^{\lambda_q}$ and passing to
initial forms gives the single relation in the residue field $L_q$
\[
\sum_{j \in S} h_j r_q^{\,j} = 0,
\qquad 0 \ne h_j \in L_q,
\]
with $r_q$ the initial form of Remark~\ref{rem:q-obstacle}.
The reduction to this one equation is valid. The final inference ---
concluding $h_j = 0$ for all $j \in S$ --- would require the powers
$1, r_q, r_q^2, \dots$ to be linearly independent over $L_q$.
That independence is false: $r_q$ is algebraic of degree at most $2$
over $L_q$ (Remark~\ref{rem:q-obstacle}), and the explicit invariant
$H$ provides the concrete non-trivial relation
(Lemma~\ref{rem:qrefuted-H}). Earlier heuristic routes --- a
symmetrized copy of the $p$-side argument, a slice-power computation,
and the transposed-naming residue computation --- are retracted as
proofs and retained as heuristics only.
\end{proof}

\begin{remark}[Refutation by the weight-24 invariant $H$]
\label{rem:qrefuted-H}
The counterexample is the explicit $\Dmap{5}$-invariant $H$ of
Remark~\ref{rem:counterexample-H} (weight $24$, $a_0$-degree $2$,
$11025\,a_0^2a_4^4a_5^4$ as leading $a_0^2$-term), certified in
exact computation (see Supplementary Material).
Its $W$-basis expansion (obtained from the weight-28 syzygy of
Theorem~\ref{thm:global-hypersurface}) is
\[
H = \underbrace{11025\,p^4q^4}_{H_2}\,W^2
  + \underbrace{\Big(\tfrac{105\,J_2^5J_1}{q^2} + \tfrac{9\,J_2^7}{q^4}\Big)}_{H_0},
\qquad H_1 = 0,
\]
so the $W$-degree is $2$ with $H_2 = 11025\,p^4q^4$ and
$H_0 = 105\,J_2^5J_1\,q^{-2} + 9\,J_2^7\,q^{-4}$ (a genuine element
of $K_{pq}$). The bound $v_q(H_j) \ge 4j$ is violated at both
non-zero $W$-degrees:
\[
v_q(H_2) = \operatorname{ord}_q(11025\,p^4q^4) = 4 < 8 = 4\cdot 2,
\qquad
v_q(H_0) = -4 < 0 = 4\cdot 0
\]
(the two summands of $H_0$ have $q$-orders $-2$ and $-4$, so no
cancellation occurs at the leading $q^{-4}$ pole and the order of
the sum is $\min(-2,-4) = -4$).
Thus $v_q(H_2) < 8$ and $v_q(H_0) < 0$; the lemma fails unconditionally.
The constant $a_0$-coefficient $C_0 = H_0 + H_2\rho^2$ is
nevertheless a genuine polynomial ($v_q(C_0)\ge 0$), because its
$q^{-4}$ pole cancels: the residue-field relation forced by $C_0 \in
B$ is exactly the quadratic one on $r_q$ (cf.\
Remark~\ref{rem:q-obstacle}, and the cancellation certificate
exact computation (see Supplementary Material)). The $a_0$-degree-$2$ term
$11025\,a_0^2a_4^4a_5^4$ is what keeps $H \notin K[F_0]$
(Remark~\ref{rem:counterexample-H}) while the $W$-cosets above
break the $q$-bound --- the two failures are the same invariant.
\end{remark}

\begin{remark}[Observation: why the constant-coefficient route is unavailable on the $q$-side]
\label{rem:q-obstacle}
The triple $\{p,u,x^2\}$ is algebraically independent (Jacobian
minor $4xy \ne 0$ in variables $(p,x,y)$), so
$L_q = k(p,u,x^2)$ is purely transcendental of degree $3$.
But $x^2 \in L_q$, so
\[
L_q \subseteq L_q(x) = k(p,u,x)
\]
is a single quadratic step; hence $[k(p,u,x) : L_q] \le 2$.
Since $r_q = x^7/(35p^2) \in k(p,x) \subseteq L_q(x)$, the
initial form $r_q$ is algebraic over $L_q$ of degree at most $2$,
not transcendental --- indeed it satisfies the explicit quadratic
relation
\[
r_q^{\,2} = \Big(\frac{x^{7}}{35\,p^{2}}\Big)^{2}
= \frac{(x^{2})^{7}}{1225\,p^{4}} \in k(p,u,x^{2}) = L_q .
\]
This is the residue-field shadow of the weight-28 syzygy
(Theorem~\ref{thm:global-hypersurface}): the $q^{-4}$ pole of the
constant term $C_0 = H_0 + H_2\rho^2$ of the counterexample
(Remark~\ref{rem:qrefuted-H}) cancels in $L_q$ \emph{precisely
because} $r_q^2$ already lies in $L_q$, so the two $q^{-4}$ leading
terms $11025\,G_0^{2} \equiv 9x^{14}$ and $9J_2^{7} \equiv -9x^{14}$
sum to zero modulo $q$. Consequently no argument needing
$1, r_q, r_q^2, \dots$ to be linearly independent over the full
range of $W$-degrees can succeed from the constant coefficient
alone; the dependence at $r_q^2$ is the exact obstruction. This $q$-side degree bound is independent of the $p$-side
tower (there a two-step extension of degree at most $4$ governs a
trinomial initial form; here a one-step extension of degree at
most $2$ governs a monomial initial form), and the difference in
pole orders ($e_q = 4$ versus $e_p = 2$) is part of why the two
lemmas cannot share a proof. The algebraicity witness
$x^2 = -(J_2 \bmod q)$ is certified in exact computation (see Supplementary Material).
\end{remark}

\begin{lemma}[$r_q$ has exact degree $2$ over $L_q$]
\label{lem:rq-exact}
$r_q = x^7/(35p^2)$ satisfies $r_q^2 = (x^2)^7/(1225p^4) \in L_q$
(exact computation; see Supplementary Material) and $r_q \notin L_q$. Hence its minimal
polynomial over $L_q$ is $m_q(T) = T^2 - (x^2)^7/(1225p^4)$, of exact
degree $2$ with simple root $r_q$ (separable; characteristic zero).
\end{lemma}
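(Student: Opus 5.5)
The plan has two parts: first verify the quadratic relation $r_q^2\in L_q$, then show $r_q\notin L_q$; minimality and separability of $m_q$ then follow formally. For the first part, I would use that $L_q=k(p,u,x^2)$ contains $p$ and $x^2$ (Remark on the $q$-adic valuation setup, with $x^2=-(J_2\bmod q)$). Squaring gives $r_q^2=x^{14}/(1225p^4)=(x^2)^7/(1225p^4)$, visibly a rational function in $p$ and $x^2$, hence in $L_q$.

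For $r_q\notin L_q$, I would work inside $\kappa_q=k(x,y,z,p)$, where $L_q=k(p,u,x^2)$ embeds. Since $35p^2\in L_q^\times$, it suffices to show $x^7\notin L_q$, and because $x^{14}$ lies in $L_q$, this is equivalent to $x\notin L_q$: indeed $x=x^7/(x^2)^3$. To prove $x\notin L_q$, I would use the $k$-automorphism $\sigma$ of $\kappa_q$ with $x\mapsto -x$ fixing $y,z,p$. It fixes $p$, $u=2pz-y^2$ and $x^2$, so it fixes $L_q$ pointwise, while $\sigma(x)=-x\neq x$ in characteristic zero. Hence $x\notin L_q$, and so $r_q\notin L_q$. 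Equivalently, $\sigma(r_q)=-r_q\ne r_q$.

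It follows that $T^2-(x^2)^7/(1225p^4)\in L_q[T]$ is a monic quadratic vanishing at $r_q$ with no root in $L_q$ (its roots are $\pm r_q$). It is therefore irreducible and equal to the minimal polynomial $m_q$, of exact degree $2$. Separability holds because the two roots $\pm r_q$ are distinct: $r_q\ne0$ and $\operatorname{char}k\neq2$. Equivalently, $m_q'(r_q)=2r_q\neq0$, so $r_q$ is a simple root.

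The only step needing care is that $L_q$ really is the subfield generated by the residues of $K$. This is what justifies that $\sigma$ fixes it, and it is supplied by the injectivity of $\beta$ in Lemma~\ref{lem:contraction} together with the algebraic independence of $p,u,x^2$. Once that is granted, the argument is purely formal.
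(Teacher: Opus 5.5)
Your proof is correct and is essentially the paper's argument. The paper writes $r_q=A/B$ with $A,B\in k[p,u,x^2]$ and compares the odd $x$-degree of $x^7B$ with the even $x$-degree of $35p^2A$; this is your involution $x\mapsto -x$ (elements of $L_q$ are even in $x$, $r_q$ is odd) phrased as a degree count after clearing denominators, and your minimal-polynomial and separability conclusion matches the paper's.
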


\begin{proof}
If $r_q \in L_q = k(p,u,x^2)$, write $r_q = A/B$ with
$A,B \in k[p,u,x^2]$, $B \ne 0$. Then $x^7B = 35p^2A$ in
$k[p,y,z,x]$ (via $u = 2pz-y^2$). As polynomials in $x$, $A$ and $B$
have even degree ($x$ occurs only through $x^2$), so the left side
has odd $x$-degree $7 + \deg_x B$ while the right side has even
$x$-degree $\deg_x A$ --- impossible since $x^7B \ne 0$. Thus
$r_q \notin L_q$; with $r_q^2 \in L_q$ the minimal polynomial is the
stated quadratic, separable with $m_q'(r_q) = 2r_q \ne 0$.
\end{proof}

\begin{theorem}[Relaxed $q$-bound (R1)]
\label{thm:R1}
Let $F \in \Afring{5}^{\Dmap{5}}$ have $W$-degree $m$, with unique
$W$-expansion $F = \sum_{j=0}^m H_jW^j$, $H_j \in K_{pq}$
(Lemma~\ref{lem:slice}). Then
\[
\boxed{\lambda_q(F) := \min_j (v_q(H_j) - 4j) \ge -2m,}
\]
with deficit $+\infty$ for $H_j = 0$. The bound is sharp: $H$
attains $\lambda_q = -4$ at $m = 2$
(Remark~\ref{rem:qrefuted-H}) and $F_{46}$ attains $\lambda_q = -8$
at $m = 4$ (Theorem~\ref{thm:F46}).
\end{theorem}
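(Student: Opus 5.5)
The plan is to obtain R1 as the $q$-instantiation of the multfloor argument (Theorem~\ref{thm:transfer}) with $e_q = 4$ and $d_q = 2$, checking each hypothesis directly in the $N=5$ setting rather than citing the transfer theorem as a black box. Fix $F\in A_5^{\Dmap{5}}$ of $W$-degree $m$. Its expansion $F=\sum_j H_jW^j$ with $H_j\in K_{pq}$ comes from Corollary~\ref{cor:A-localized}, and uniqueness holds because $W=a_0+\rho$ is monic in $a_0$. Put $\lambda=\lambda_q(F)$. If $\lambda\ge 0$ there is nothing to prove, so assume $\lambda<0$. Let $S=\{j: v_q(H_j)-4j=\lambda\}$, and let $h_j=\operatorname{in}_q(H_j)$ for $j\in S$.

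The next step is residue landing. I write $H_j=A/p^a$ with $A\in K$. Then $v_q(H_j)=v_q(A)-0$, and after dividing $A$ by the exact power $q^{v_q(A)}$ inside $K$ (Lemma~\ref{lem:contraction}), the residue is a nonzero element of $L_q=\operatorname{Frac}(K/qK)=k(p,u,x^2)$. Now set $P(T)=\sum_{j\in S}h_jT^j\in L_q[T]$. This polynomial is nonzero and has degree $\le m$.

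The core step uses all the $a_0$-coefficients $C_k=\sum_{j\ge k}\binom{j}{k}H_j\rho^{j-k}$, not only $C_0$. Each $C_k$ lies in $B$, so $v_q(C_k)\ge 0$. Since $v_q(\rho)=-4$ exactly, with $\operatorname{in}_q(q^4\rho)=r_q$, each summand has $v_q\ge \lambda+4k$, and equality holds precisely for $j\in S$. So whenever $\lambda+4k<0$, the coefficient of $q^{\lambda+4k}$ in $C_k$ must vanish. That coefficient is $\sum_{j\in S}\binom{j}{k}h_jr_q^{j-k}=P^{(k)}(r_q)/k!$, computed in the residue field $\kappa_q\supset L_q(r_q)$. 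The indices $k$ with $\lambda+4k<0$ form an initial segment $0,\dots,c-1$ with $c=\lceil -\lambda/4\rceil$, capped at $m+1$. Hence $r_q$ is a root of $P$ of multiplicity $\ge c$.

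By Lemma~\ref{lem:rq-exact}, $m_q(T)=T^2-(x^2)^7/(1225p^4)$ is separable and irreducible over $L_q$. Therefore $m_q^c\mid P$ in $L_q[T]$, which gives $2c\le\deg P\le m$. If $\lambda<-2m$, then $-\lambda/4>m/2$, so $c>m/2$ (using $c\le m+1$ and $m\ge 1$; the case $m=0$ forces $\lambda=v_q(H_0)\ge 0$ since $H_0=F\in K$). That contradicts $2c\le m$.

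I expect the main obstacle to be making the initial-form bookkeeping rigorous. The leading forms of $H_j$ live in $L_q$, but $r_q$ and $C_k$ live in $\kappa_q=k(x,y,z,p)$. I will justify this by working in the associated graded of $V_q$, where $\operatorname{gr}\cong\kappa_q[\bar q]$ is a domain, so initial forms multiply and add exactly when the valuations tie. I will also use that $L_q\hookrightarrow\kappa_q$ is compatible with these initial forms. The irreducibility of $m_q$ over $L_q$ is what lets the divisibility take place in $L_q[T]$; the divisibility would fail over $\kappa_q$, where $r_q$ is rational.

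Sharpness is read off from the recorded data. For $H$, $v_q(H_2)-8=-4$ with $m=2$ (Remark~\ref{rem:qrefuted-H}). For $F_{46}$, $v_q(H_0)=-8$ with $m=4$ (Theorem~\ref{thm:F46}).
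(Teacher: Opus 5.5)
Your proposal is correct and is essentially the paper's own proof. It is the same $q$-instantiation of the multfloor argument: you use all $a_0$-coefficients $C_k$ to force $P^{(k)}(r_q)=0$ on the initial segment $\lambda+4k<0$, deduce $m_q^{c}\mid P$ in $L_q[T]$ from Lemma~\ref{lem:rq-exact} to get $2c\le m$, and read sharpness off $H$ and $F_{46}$. The only slip is cosmetic: an element of $K_{pq}$ has the form $q^{n}A/p^{a}$ with $n\in\mathbb{Z}$, $A\in K$, $q\nmid A$, rather than $A/p^{a}$, and this changes nothing once you normalize by $q^{v_q(H_j)}$.
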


\begin{proof}
The minimum is attained ($H_m \ne 0$); if $\lambda := \lambda_q(F)
\ge 0$ there is nothing to prove. Assume $\lambda < 0$ and put
$S = \{j : v_q(H_j) - 4j = \lambda\} \ne \emptyset$. The
$a_0$-coefficients $C_k = \sum_{j \ge k}\binom{j}{k}H_j\rho^{j-k}$
lie in $B$, so $v_q(C_k) \ge 0$ for all $k$. Since $v_q(\rho) = -4$
exactly with initial form $r_q$, each $(j,k)$-summand has
$v_q \ge \lambda + 4j - 4(j-k) = \lambda + 4k$, with equality iff
$j \in S$; its initial form is then $\binom{j}{k}h_jr_q^{j-k}$ with
$0 \ne h_j = \operatorname{in}_q(H_j) \in L_q$. Residues of
$K_{pq}$-elements land in $L_q = \operatorname{Frac}(K/qK)$:
$q$-adic values on $K$ agree in $B$ (if $q \mid A$ in $B$ with
$A \in K$, descent puts the quotient in $K$ by
Theorem~\ref{thm:descent}, since $\operatorname{Frac}(K) \subset
K_{pq}$), and denominators are $p$-powers with $p \bmod q \ne 0$
since $(p),(q)$ are non-associated primes of $K$
(Lemma~\ref{lem:contraction}). Hence for every $k$ with
$\lambda + 4k < 0$ the leading $q^{\lambda+4k}$-coefficient of $C_k$
must vanish: with $P(T) = \sum_{j \in S} h_jT^j$ (nonzero,
$\deg P \le m$, over $L_q$), $P^{(k)}(r_q) = 0$, using
$P^{(k)}(T)/k! = \sum_{j \ge k}\binom{j}{k}h_jT^{j-k}$ in
characteristic zero. The qualifying $k \in [0,m]$ form an initial
segment $\{0,\dots,c-1\}$ with
$c = \#\{0 \le k \le m : \lambda + 4k < 0\}$; thus $r_q$ is a root
of $P$ of multiplicity at least $c$, so $m_q^c \mid P$ in $L_q[T]$
(separable minimal polynomial, Lemma~\ref{lem:rq-exact}), giving
$2c \le \deg P \le m$. If $\lambda < -2m$ then $-\lambda/4 > m/2$,
so every $0 \le k \le m/2$ qualifies and $c > m/2$, contradicting
$2c \le m$. Hence $\lambda \ge -2m$. The combinatorial implication
is checked exhaustively for $m \le 12$ in
exact computation (see Supplementary Material). Lean status: paper-level proof; the
full $W$-valuation form stays open in Lean on the localized
fraction-field infrastructure (Remark~\ref{rem:wdetect-lean}).
\end{proof}

\begin{remark}[R1 as transfer instance]
\label{rem:R1-transfer}
The proof above is the $q$-instantiation ($e_q = 4$, $d_q = 2$) of the
multfloor transfer theorem (Theorem~\ref{thm:transfer}), which isolates
the argument for general triangular LNDs with a pole element and finite
residue data.
\end{remark}

\begin{lemma}[The proposed $q$-adic leading-coefficient form]
\label{lem:qlemma-leading}
Refuted by the same counterexample $H$: its top $W$-coefficient
$H_2 = 11025\,p^4q^4$ has $v_q = 4 < 8 = 4\cdot 2$, so
$H_2 \notin q^{8}K_{pq}$ (Remark~\ref{rem:qrefuted-H}).
With $F$, $m$, $H_j$ as in Lemma~\ref{lem:qlemma}, the top
coefficient $H_m = [a_0^m]F$ was claimed to lie in $B$ and to
satisfy $H_m \in q^{4m}K_{pq}$. It fails by the same counterexample.
\end{lemma}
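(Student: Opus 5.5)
The statement is a refutation: the claimed leading-coefficient form $H_m\in q^{4m}K_{pq}$ fails for the explicit invariant $H$. The plan is to exhibit $H$ as the counterexample and compute its top $W$-coefficient.

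First I recall the setup. By Remark~\ref{rem:counterexample-H} and Proposition~\ref{prop:H1}, $H\in\Afring{5}$ satisfies $\Dmap{5}(H)=0$. It has $a_0$-degree exactly $2$, with leading $a_0^2$-term $11025\,a_0^2a_4^4a_5^4$. Since $W=a_0+\rho$ is monic of $a_0$-degree $1$, the $W$-expansion of $H$ is unique (proof sketch of Theorem~\ref{thm:wdetect}). Its $W$-degree equals its $a_0$-degree, so $m=2$. Comparing top $a_0$-coefficients then gives $H_2=[a_0^2]H=11025\,p^4q^4$. This element lies in $B$, and even in $K$, so the first half of the proposed claim ($H_m\in B$) is satisfied.

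Next I check the divisibility half. We have $v_q(H_2)=4$, while the claim requires $v_q(H_2)\ge 4m=8$. To confirm that $H_2\notin q^8K_{pq}$ rigorously, I pass through the valuation. Every element of $q^8K_{pq}$ has $v_q\ge 8$, because $v_q(p)=0$ and $v_q$ is nonnegative on $K\subset B$. Hence $H_2$, with $v_q=4$, cannot lie in $q^8K_{pq}$. As a cross-check, the full expansion in Remark~\ref{rem:qrefuted-H}, derived from the syzygy $S$ of Theorem~\ref{thm:global-hypersurface}, confirms $H_1=0$ and the same $H_2$.

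There is no real obstacle. The only point needing care is that the top $W$-coefficient coincides with the top $a_0$-coefficient, and this follows directly from the monicity of $W$ in $a_0$.
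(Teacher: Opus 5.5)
Your identification of $H_2$ is right and matches the paper. Since $W=a_0+\rho$ is monic in $a_0$, the top $W$-coefficient of $H$ is its top $a_0$-coefficient $11025\,p^4q^4$; the paper reads the same element off the explicit expansion $H=H_2W^2+H_0$ of Remark~\ref{rem:qrefuted-H}. The inequality $v_q(H_2)=4<8$ is the entire refutation.

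The paragraph you add to confirm $H_2\notin q^8K_{pq}$ rigorously contains a false step. $K_{pq}=K[(pq)^{-1}]$ inverts $q$ as well as $p$, so $q^8K_{pq}=K_{pq}$. Its elements can have negative $v_q$; for instance $H_0$ itself lies in $K_{pq}$ with $v_q(H_0)=-4$. Your justification (``$v_q(p)=0$ and $v_q\ge 0$ on $K$'') accounts only for the inverted $p$. In fact $H_2=q^8\cdot 11025\,p^4q^{-4}\in q^8K_{pq}$, so the literal membership statement is false and cannot be proved.

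The membership notation, inherited from the ``i.e.'' in Lemma~\ref{lem:qlemma}, must be read as the valuation inequality $v_q(H_m)\ge 4m$. Equivalently, $H_m\in q^{4m}K_p$ (using Lemma~\ref{lem:contraction} for the converse direction), and for $q^8K_p$ your valuation argument is valid verbatim. Drop the $q^8K_{pq}$ membership claim and conclude directly from $v_q(H_2)=4<8=4m$. With that repair your argument is complete and coincides with the paper's.
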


The ring $V_q$, the residue embedding $L_q \hookrightarrow \kappa_q$,
the exact pole order $e_q = 4$, the monomial initial form $r_q$, the
denominator modules $M^q_m$, and the reduction to the residue-field
initial-form equation are as computed above; what fails is only the
final inference, since $r_q$ is algebraic over $L_q$. The companion $p$-adic bound
(Lemma~\ref{lem:plemma}) is \emph{not} contradicted by $H$
($v_p(H_2) = 4 = 2\cdot 2$).

\section{\texorpdfstring{Residue-field saturation: denominator modules and $W$-degree induction}{Residue-field saturation: denominator modules and W-degree induction}}
\label{sec:saturation}

The $q$-side bound fails for $H$ (Section~\ref{sec:qlemma}) and the $p$-side bound fails for $F_{46}$ (Section~\ref{sec:second-witness}), so the joint $W$-degree induction of Theorem~\ref{thm:saturation-induction} establishes no generation claim. The relaxed $q$-bound R1 is proved separately (Theorem~\ref{thm:R1}).

\begin{remark}[Shorthand for the two premises]
\label{rem:status-aliases}
Throughout, $P$ denotes Lemma~\ref{lem:plemma} (the $p$-adic
denominator bound, false by $F_{46}$,
Section~\ref{sec:second-witness}), $Q$ denotes
Lemma~\ref{lem:qlemma} (the $q$-adic bound, false by $H$), and
$E$ denotes their joint $W$-degree induction
(Theorem~\ref{thm:saturation-induction}), which proves nothing since
both premises fail. The surviving
relaxed $q$-bound R1 is Theorem~\ref{thm:R1}, not a conditional
statement of this section.
\end{remark}
We work over the characteristic-zero field $k$ of
Definition~\ref{def:family}, with
$\Afring{5} = k[a_0,\dots,a_5]$ and
$\Dmap{5}(a_n) = a_{n+1}a_{n+2}$.
This section combines the two halves of the residue-transcendence
saturation for the adjacent-quadratic shift at $N = 5$: the $p$-adic
bound of Lemma~\ref{lem:plemma} in Section~\ref{sec:plemma} and the
$q$-adic bound of Lemma~\ref{lem:qlemma} in Section~\ref{sec:qlemma}.
No new valuation is proved here; the wrapper records the joint
denominator modules, the explicit $H_j \in B$ requirement, and the
$W$-degree induction that closes global generation conditionally.

We reuse the five-variable dictionary of the kernel section verbatim,
with no redefinition:
\[
x = a_3, \qquad y = a_2, \qquad z = a_1, \qquad p = a_4, \qquad q = a_5,
\]
\[
B = k[x,y,z,p,q] = k[a_1,\dots,a_5], \qquad
K = B^{\Dmap{5}} = k[p,q,J_1,J_2],
\]
\[
J_1 = 2pz - y^2 = 2a_1a_4 - a_2^2, \qquad
J_2 = 2qy - x^2 = 2a_2a_5 - a_3^2,
\]
\[
\Dmap{5}(x) = pq, \qquad \Dmap{5}(y) = px, \qquad
\Dmap{5}(z) = xy, \qquad \Dmap{5}(p) = \Dmap{5}(q) = 0.
\]
We reuse the invariant of Definition~\ref{def:F0} verbatim:
\[
F_0 = p^2q^4a_0 - pq^3xyz + \tfrac13pq^2x^3z
  + \tfrac13q^2x^3y^2 - \tfrac15qx^5y + \tfrac1{35}x^7,
\]
and write $c = p^2q^4$, $G_0 = F_0 - ca_0 \in B$,
$W = F_0/c = a_0 + \rho$ with $\rho = G_0/c \in B_{pq}$.
A general localized invariant is written
\[
F = \sum_{j=0}^m H_jW^j, \qquad H_j \in K_{pq},
\]
with $a_0$-coefficients
\[
C_k = \sum_{j=k}^m \binom{j}{k}H_j\rho^{j-k} \in B
\qquad (0 \le k \le m).
\]
The localized presentation $A_{pq}^{\Dmap{5}} = K_{pq}[W]$ used here
localizes only, and no global generation claim is
built into the notation.

\begin{definition}[The full denominator modules]
\label{def:Mm-full}
For each $m \ge 0$ the $m$-th (full) denominator module is
\[
\boxed{M_m = p^{2m}q^{4m}K = c^mK.}
\]
It is the intersection of the two one-sided modules from the
companion sections: $M^p_m = p^{2m}K$ of the $p$-adic section and
$M^q_m = q^{4m}K$ of the $q$-adic section. The exponents record the
exact pole orders $e_p = 2$ and $e_q = 4$ of $\rho$ at the two
prime divisors; the asymmetry $2 \ne 4$ is why the two bounds cannot
share a proof.
\end{definition}

\begin{lemma}[Joint denominator bound under Lemmas~P and~Q]
\label{lem:joint-bound}
Assume the $p$-adic bound of Lemma~\ref{lem:plemma},
$v_p(H_j) \ge 2j$ for every $j$, together with the companion
$q$-adic bound of Lemma~\ref{lem:qlemma},
$v_q(H_j) \ge 4j$ for every $j$. Then for every $j$,
\[
\boxed{H_j \in p^{2j}q^{4j}K_{pq} = c^jK_{pq}.}
\]
\end{lemma}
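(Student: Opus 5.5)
The argument is purely a statement about elements of $K_{pq}$ and the UFD $K=k[p,q,J_1,J_2]$; no property of $F$ is used beyond the two hypotheses.

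First, reduce to the denominator structure of $K_{pq}$. By Lemma~\ref{lem:slice} and Theorem~\ref{thm:descent}, every $H_j\in K_{pq}$ can be written as $H_j=P_j/(p^{a}q^{b})$ with $P_j\in K$. Since $K$ is a polynomial ring, hence a UFD, we may take $p$ and $q$ not dividing $P_j$ in $K$ whenever $a,b>0$. By Lemma~\ref{lem:contraction}, $(p)$ and $(q)$ are non-associated primes of $K$ whose contractions from $B$ are exactly $(p)K$ and $(q)K$. Hence $v_p$ and $v_q$, computed in $\operatorname{Frac}(B)$, restrict on $\operatorname{Frac}(K)$ to the $p$-adic and $q$-adic valuations of the UFD $K$. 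This compatibility is the one point needing care. If $p^e\mid P$ in $B$ with $P\in K$, then iterating Lemma~\ref{lem:contraction} gives $p^e\mid P$ in $K$, and likewise for $q$. Consequently $v_p(H_j)=v_p(P_j)-a$ computed in $K$, and similarly for $q$.

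Second, rewrite $H_j$ in normalized form. Write $H_j=p^{\alpha}q^{\beta}U$ with $\alpha=v_p(H_j)$, $\beta=v_q(H_j)$, and $U$ a unit for both valuations. Then $U=P'/(p^{0}q^{0})$ with $P'\in K$ coprime to $pq$, and no other denominators can occur, since $K_{pq}$ inverts only $p$ and $q$. So $U\in K\subseteq K_{pq}$. The hypotheses $\alpha\ge 2j$ and $\beta\ge 4j$ then give
\[
H_j=p^{2j}q^{4j}\cdot p^{\alpha-2j}q^{\beta-4j}U\in c^jK.
\]
In particular $H_j\in c^jK_{pq}$, with $c=p^2q^4$. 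The case $H_j=0$ is trivial.

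There is no real obstacle here. Note that $c^jK_{pq}=K_{pq}$ as sets because $c$ is a unit in $K_{pq}$. The content of the lemma is therefore best read as the sharper statement $H_j\in c^j\cdot(K_{pq}\cap\{v_p\ge0,v_q\ge0\})=c^jK$, and I would state and prove that version. The only step requiring a citation is the valuation compatibility between $B$ and $K$ in the first paragraph.
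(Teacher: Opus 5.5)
Your proof is correct and uses the same mechanism as the paper's: $p$ and $q$ are non-associated primes of the UFD $K$ whose valuations agree with those computed in $B$ by Lemma~\ref{lem:contraction}, so the two bounds combine into divisibility by $c^j$. Your remark that $c^jK_{pq}=K_{pq}$ is apt: the paper's proof only concludes the literally vacuous $H_j/c^j\in K_{pq}$, whereas your sharpened conclusion $H_j\in c^jK$ is what Lemma~\ref{lem:Hj-in-B} actually uses afterwards, since it assumes $H_j=c^jG_j$ with $G_j\in K$.
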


\begin{proof}
The two assumed bounds place $H_j$ in $p^{2j}K_{pq}$ and in
$q^{4j}K_{pq}$ respectively. Since $p$ and $q$ are non-associated
primes of the polynomial ring $B$ (hence of $K$ by the contraction
identities of Lemma~\ref{lem:contraction}), the two divisibility
conditions combine to $H_j/c^j \in K_{pq}$.
\end{proof}

\begin{lemma}[The $H_j \in B$ requirement under Lemmas~P and~Q]
\label{lem:Hj-in-B}
Assume the joint bound of Lemma~\ref{lem:joint-bound}, so each
$H_j = c^j\tilde{G}_j$ with $\tilde{G}_j \in K_{pq}$. Then every
coefficient lies in $B$:
\[
\boxed{H_j \in B \quad \text{for every } j.}
\]
Explicitly, the top coefficient $H_m = [a_0^m]F$ lies in $B$
directly, and descending induction via the triangular system
$\{C_k\}$ places each $H_j$ in $B$.
\end{lemma}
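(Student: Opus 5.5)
The plan is a descending induction on $j$, using the triangular system $C_k=\sum_{j\ge k}\binom{j}{k}H_j\rho^{j-k}\in B$ and the single fact that $c^{j-k}\rho^{j-k}=G_0^{j-k}\in B$.

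First, the top case. Since $W=a_0+\rho$ is monic of $a_0$-degree $1$, the coefficient of $a_0^m$ in $F$ is exactly $C_m=H_m$. As $F\in A=B[a_0]$, this gives $H_m\in B$ with no valuation input at all.

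For the inductive step, suppose $H_{j}\in B$ for all $j>k$. Rewrite the $k$-th coefficient as
\[
H_k \;=\; C_k-\sum_{j>k}\binom{j}{k}H_j\rho^{j-k}.
\]
The term $C_k$ lies in $B$. For each summand, invoke the joint bound of Lemma~\ref{lem:joint-bound} to write $H_j=c^j\tilde G_j$ with $\tilde G_j\in K_{pq}$. Then
\[
H_j\rho^{j-k}=c^{k}\tilde G_j\,(c\rho)^{j-k}=c^k\tilde G_j\,G_0^{j-k}.
\]
The factor $G_0^{j-k}$ lies in $B$. The main obstacle is controlling $c^k\tilde G_j$, which need not lie in $B$ since $\tilde G_j$ has only been placed in $K_{pq}$.

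The route I would try first exploits that the joint bound is stated for every index. Apply it at index $k$ as well, giving $H_k=c^k\tilde G_k$. Both sides of the displayed identity for $H_k$ then sit in $K_{pq}$, and the right side lies in $B_{pq}$. So the only possible poles of $H_k$ are along $p$ and $q$, and it suffices to check $v_p(H_k)\ge 0$ and $v_q(H_k)\ge 0$. These hold because $v_p(H_k)\ge 2k\ge 0$ and $v_q(H_k)\ge 4k\ge 0$ by the assumed bounds. Here $v_p,v_q$ are the prime-divisor valuations on $\operatorname{Frac}(B)$, $B$ is a UFD, and any element of $B_{pq}$ that is nonnegative at $p$ and at $q$ lies in $B$. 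Hence $H_k\in B$.

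This argument uses only that $H_k\in K_{pq}\subset B_{pq}$ together with nonnegative valuations at $p$ and $q$. It therefore works for every $j$ directly, which makes the descending induction a presentation device rather than a logical necessity. I will still phrase it via the triangular system, as the statement requests, noting that $H_m\in B$ is immediate from $C_m$. For general $j$, the step from $H_j\in p^{2j}q^{4j}K_{pq}$ to $H_j\in B$ is the valuation check just described.
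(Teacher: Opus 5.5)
Your argument is correct, but it takes a different route from the paper's.

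\textbf{The paper's route.} The paper runs the descending induction through $C_k=H_k+\sum_{j>k}\binom{j}{k}H_j\rho^{j-k}\in B$. It clears each higher summand via $H_j\rho^{j-k}=c^kG_jG_0^{\,j-k}\in B$. To do this it takes as inductive hypothesis that $H_j=c^jG_j$ with $G_j\in K$. That is stronger than the stated $\tilde G_j\in K_{pq}$, and it is not what the induction itself produces (only $H_k\in B$). This is exactly the obstacle you identify with $c^k\tilde G_j$.

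\textbf{Your route.} You avoid the obstacle entirely. From $H_j\in K_{pq}\subset B_{pq}$ together with $v_p(H_j)\ge 0$ and $v_q(H_j)\ge 0$, you get $H_j\in B$ in the UFD $B$, index by index. So neither the triangular system nor the induction is logically needed.

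\textbf{What each buys.} The paper's mechanism derives integrality of $H_k$ from $C_k\in B$ plus control of the coefficients above $k$. That is what one would need if the bounds were known only at the higher indices. Under the lemma's hypotheses, your direct check is shorter and airtight. It also gives more. Apply the same check to $H_j/c^j$, then use $B\cap K_{pq}=K$ (Theorem~\ref{thm:descent}). This yields $H_j=c^jG_j$ with $G_j\in K$, which is precisely the form the paper's inductive step presupposes. So the power-contraction worry of Remark~\ref{rem:saturation-gap} does not arise once the bounds are used as valuations.

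\textbf{One point to state explicitly.} Since $c$ is a unit in $K_{pq}$, the factorization $H_j=c^j\tilde G_j$ with $\tilde G_j\in K_{pq}$ carries no information. Your step ``apply the joint bound at index $k$, giving $H_k=c^k\tilde G_k$'' is therefore idle. The input actually doing the work is the valuation form $v_p(H_j)\ge 2j$, $v_q(H_j)\ge 4j$ of Lemmas~P and~Q, and only nonnegativity of the two orders is used.
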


\begin{proof}
$F \in A = k[a_0,\dots,a_5]$ is a polynomial, so its $a_0$-coefficient
$H_m = [a_0^m]F$ lies in $B$ directly; this is the leading-coefficient
content of Lemmas~\ref{lem:plemma-leading}
and~\ref{lem:qlemma-leading} in joint form. For the descent, fix
$k < m$ and assume $H_{m}, \dots, H_{k+1}$ already known to be of the
form $c^jG_j$ with $G_j \in K \subset B$. From
$C_k = H_k + \sum_{j=k+1}^m \binom{j}{k}H_j\rho^{j-k} \in B$,
each summand satisfies
\[
H_j\rho^{j-k} = c^jG_j(G_0/c)^{j-k} = c^kG_jG_0^{\,j-k} \in B,
\]
since $c$, $G_j$, and $G_0$ all lie in $B$ and $k \ge 0$. Hence
$H_k = C_k - \sum_{j>k}\binom{j}{k}H_j\rho^{j-k} \in B$.
Descending from $k = m-1$ to $k = 0$ gives $H_j \in B$ for all $j$.
\end{proof}

\begin{theorem}[$W$-degree induction under Lemmas~P and~Q]
\label{thm:saturation-induction}
Assume Lemmas~P and~Q in full (Lemmas~\ref{lem:plemma}
and~\ref{lem:qlemma} with their leading-coefficient forms
Lemmas~\ref{lem:plemma-leading} and~\ref{lem:qlemma-leading}),
together with the descent intersection $B \cap K_{pq} = K$ of
Theorem~\ref{thm:descent}. Let $F \in A^{\Dmap{5}}$ have $W$-degree
$m$. Then $F \in K[F_0]$, by induction on $m$ following the skeleton
of Remark~\ref{rem:winduction}.
\end{theorem}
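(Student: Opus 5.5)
The argument is an induction on the $W$-degree $m$ of $F$, following the skeleton of Remark~\ref{rem:winduction}. Everything inside the induction step is conditional on the assumed Lemmas~P and~Q.

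The base case $m=0$ is immediate. If $F$ has $W$-degree $0$, then $F=H_0\in K_{pq}$. Since $F\in A$ is $a_0$-free, $F\in B\cap K_{pq}$, and Theorem~\ref{thm:descent} gives $F\in K\subseteq K[F_0]$.

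For the inductive step, fix $F\in A^{\Dmap{5}}$ of $W$-degree $m\ge1$ with unique expansion $F=\sum_{j\le m}H_jW^j$. The expansion is unique because $W=a_0+\rho$ is monic in $a_0$, and we have $H_j\in K_{pq}$ by Corollary~\ref{cor:A-localized}. The step has three parts.
\begin{enumerate}
\item Apply Lemma~\ref{lem:joint-bound}, which uses the two assumed bounds, to get $H_m=c^m\tilde G_m$ with $\tilde G_m\in K_{pq}$.
\item Show $H_m\in B$. This holds because $H_m=[a_0^m]F$ is a coefficient of the polynomial $F$; this is also Lemma~\ref{lem:Hj-in-B}.
\item Show $\tilde G_m\in K$.
\end{enumerate}

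For the third part, $\tilde G_m=H_m/c^m$ lies in $K_{pq}$. It also lies in $B$ because $v_p(H_m)\ge 2m$ and $v_q(H_m)\ge 4m$, and $p,q$ are primes in the UFD $B$. Hence $\tilde G_m\in B\cap K_{pq}=K$ by Theorem~\ref{thm:descent}.

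Now set $F' = F-\tilde G_mF_0^m$. Since $F_0^m=c^mW^m$, we have $\tilde G_mF_0^m=H_mW^m$, so $F'$ has $W$-degree $<m$. It is still invariant, since both $F$ and $\tilde G_mF_0^m$ are, and it is still in $A$. By induction $F'\in K[F_0]$, and therefore $F\in K[F_0]$.

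The main obstacle is not the induction but the hypotheses themselves. The mathematically delicate point is the third part: we must pass from the $K_{pq}$-membership of $H_m/c^m$ to genuine $K$-membership. Two facts make this work. First, $B\cap K_{pq}=K$ is exactly the intersection obligation of Theorem~\ref{thm:descent}. Second, the combination of the separate $p$- and $q$-adic divisibilities needs the non-association of $p$ and $q$ from Lemma~\ref{lem:contraction}.

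The theorem is only a conditional implication. Lemma~\ref{lem:plemma} is refuted by $F_{46}$ and Lemma~\ref{lem:qlemma} by $H$, so the proof must present the conclusion purely as a consequence of the premises. It should note explicitly that it yields no unconditional generation statement, consistent with Remark~\ref{rem:status-aliases}.
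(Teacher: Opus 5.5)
Your proof is correct. It uses the same induction on $W$-degree and the same subtraction $F-\tilde G_mF_0^m$ as the paper, but your third step differs from the paper's, and yours is the sound one.

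The paper goes from Lemma~\ref{lem:Hj-in-B}, i.e.\ $H_m\in B\cap c^mK_{pq}$, together with the descent intersection, straight to $H_m=c^mG_m$ with $G_m\in K$. As Remark~\ref{rem:saturation-gap} itself concedes, that inference is invalid as written. Since $c$ is a unit in $K_{pq}$, we have $c^mK_{pq}=K_{pq}$, so that membership carries no divisibility information. The same vacuity makes your step~1, via Lemma~\ref{lem:joint-bound}, dispensable.

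You instead apply the boxed valuative content of Lemmas~P and~Q to $H_m=[a_0^m]F\in B$. The bounds $v_p(H_m)\ge 2m$ and $v_q(H_m)\ge 4m$ give $c^m\mid H_m$ in the UFD $B$. Here $p=a_4$ and $q=a_5$ are distinct variables, so you do not need Lemma~\ref{lem:contraction} for non-association. Then $H_m/c^m\in B\cap K_{pq}=K$. This supplies exactly the missing step.

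It also shows that the power-contraction the paper says must be added, in its meaningful form $c^mB\cap K=c^mK$, already follows from $B\cap K_{pq}=K$. Indeed, if $h=c^mb\in K$ with $b\in B$, then $b=h/c^m\in B\cap K_{pq}=K$.

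Your route also needs only the top coefficient to lie in $B$, not the descending induction of Lemma~\ref{lem:Hj-in-B} over all $j$. What it buys is a valid conditional proof. It does not change the fact that both premises are false (refuted by $H$ and $F_{46}$), so, as you note, no unconditional statement results.
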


\begin{proof}
By Lemma~\ref{lem:Hj-in-B} each $H_j$ lies in
$B \cap c^jK_{pq}$. The descent intersection
$B \cap K_{pq} = K$ of Theorem~\ref{thm:descent} gives
$H_m = c^mG_m$ with $G_m \in K$, whence
\[
H_mW^m = G_m(cW)^m = G_mF_0^m \in K[F_0].
\]
The difference $F - G_mF_0^m$ is $\Dmap{5}$-invariant of $W$-degree
at most $m-1$. Subtracting lowers the $W$-degree; the base case
$m = 0$ is $F = H_0 \in B \cap K_{pq} = K$. Induction on $m$ closes
global generation conditionally: every invariant is a polynomial in
$F_0$ over $K$.
\end{proof}

\begin{remark}[Why the conditional argument fails twice]
\label{rem:saturation-gap}
What this section attempted, under Lemmas~P
and~Q, is the combination step: the full modules $M_m = c^mK$, the
explicit $H_j \in B$ descent through the triangular system $\{C_k\}$,
and the $W$-degree subtraction closing $A^{\Dmap{5}} = K[F_0]$. The
attempt fails first because the premises fail: the $q$-premise
(Lemma~\ref{lem:qlemma}) by the counterexample $H$
(Section~\ref{sec:qlemma}), so the induction
(Theorem~\ref{thm:saturation-induction}) and the generation claim it
was meant to close prove nothing. The $p$-premise
(Lemma~\ref{lem:plemma}) is not contradicted by $H$ but is no longer
needed, since the candidate hypersurface is identified directly
(Theorem~\ref{thm:global-hypersurface}). For the record, the gap is
structural, not merely the falsity of the premises: even the
conditional proof above is invalid as written, since
$H_m \in B \cap c^mK_{pq}$ together with the descent intersection
$B \cap K_{pq} = K$ does not yield $H_m = c^mG_m$ --- that step needs
the strictly stronger power-contraction $B \cap c^mK_{pq} = c^mK$ for
every $m$, which is not among the hypotheses.
\end{remark}

\section{Generation certificates}
\label{sec:generation-cert}

This section records what the machine establishes, in two layers. \emph{Layer~1 (CAS):} the identity
$S = 0$ and $\Dmap{5}(H) = 0$, weight-$28$ uniformity (irreducibility of
$S$ as the full relation ideal is now proved in
Theorem~\ref{thm:global-hypersurface}, part [D], via primitivity of the
$H$-linear $S$; the CAS log is retained as an independent cross-check), the $W$-expansion
$H = H_2W^2+H_1W+H_0$ with exact $p$/$q$-valuations, the residue
cancellation $11025\,G_0^2+9J_2^7\equiv0\pmod q$, and the
kernel-dimension match
$\dim\ker(\Dmap{5}|_{V_w})=s_6(w)-s_6(w-28)$ over
$\mathbb{F}_{1000003}$ at anchor weights $5,15,23,24,25,30,35,40$,
with the first
excess $+1$ at $w=24$, rest on exact computation with rerunnable
verification scripts (see Supplementary Material). The finite-field
anchors at the prime $1000003$ are consistency checks only; they prove
no characteristic-zero rank. The theorem-grade comparison is the
exact-$\mathbf Q$ computation of Appendix~\ref{app:ranks}, and the
identities A--D hold over $\mathbb{Q}$ exactly.
\emph{Layer~2 (Lean, new):} for every weight $w \le 14$, per-weight
generation is now \emph{directly verified} in Lean core (no Mathlib)
against the formal $\Dmap{5}$, by machine-checked integer witness certificates
(Section~\ref{subsec:direct-span14}) --- replacing the dimension
argument in this regime --- including a soundness fix for a latent
CAS truncation bug
(Section~\ref{subsec:denom-soundness}).

\subsection[Direct per-weight generation through Wmax = 14 (Lean-verified)]{Direct per-weight generation through $W_{\max} = 14$
(Lean-verified)}
\label{subsec:direct-span14}

\begin{theorem}[Direct per-weight generation, $w \le 14$]
\label{thm:direct-span14}
Let $0 \le w \le 14$. Every invariant $F \in
\Afring{5}^{\Dmap{5}}$ of exact Fibonacci weight $w$ lies in the
$k$-span of the weight-$w$ monomials in the five weight-$\le 14$
generators $p, q, J_1, J_2, F_0$ (equivalently, in the weight-$w$
monomials in the six generators, since $H$ has weight $24$ and
therefore does not occur in any weight-$w$ monomial with $w \le 14$).
The Lean formalization of this check is recorded in the
formal-verification manifest.
For each weight
$w \in \{0,1,4,6\}$ (Band~1) and $w \in \{2,3,5,7,8,9,10,11,12,13,14\}$
(Band~2), the emitted integer witness bundle is re-verified entry by
entry by machine-checked integer arithmetic against the matrix of
$\Dmap{5}|_{V_w}$ computed in Lean from the formal definition of
$\Dmap{5}$ (the formal weight-data module); the kernel of
$\Dmap{5}|_{V_w}$ is then \emph{proved} to be the span of the
selected generator-monomial columns by a residual argument using the
verified left-inverse of $Q = M[S] \amalg K_g$
(the formal kernel-spanning module), and the coordinate vector is lifted
back to an exact polynomial identity
(the formal generation-spanning and generation-verification modules). The generic machinery
depends only on the standard propositional-extensionality and choice foundation;
the per-weight facts are exact machine-checked computations
on the emitted data.
\end{theorem}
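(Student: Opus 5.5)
The plan is to prove the statement mathematically by the $a_0$-filtration, using the machine certificates only as a cross-check. A monomial of $a_0$-degree $m$ has Fibonacci weight at least $8m$. So for $w\le 14$ any invariant $F$ of weight $w$ has $a_0$-degree at most $1$, and degree $1$ is possible only when $8\le w\le 14$. In particular $a_0^2$ weighs $16>14$, so the $a_0$-degree-$2$ phenomenon of $H$ cannot occur in this range.

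Write $F=\mu a_0+\nu$ with $\mu,\nu\in B$. The $a_0$-coefficient of $\Dmap{5}(F)=0$ gives $\Dmap{5}(\mu)=0$, so $\mu\in K$ by Theorem~\ref{thm:descent}. The constant coefficient gives $a_1a_2\mu+\Dmap{5}(\nu)=0$. Lemma~\ref{lem:dixmier-divisibility} then yields $\mu=c\mu'$ with $\mu'\in K$. Hence $F-\mu'F_0$ is invariant and $a_0$-free, so it lies in $K$ by descent, and $F\in K+KF_0=K[F_0]_{\le 1}$.

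Now use homogeneity. All the generators are Fibonacci-homogeneous, and $\operatorname{wt}(c)=6$, $\operatorname{wt}(F_0)=14$. Thus for $w<14$ the coefficient $\mu'$ would need weight $w-14<0$, forcing $\mu'=0$ and $F\in K_w$. At $w=14$, $\mu'$ is a constant and $F\in kF_0+K_{14}$.

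Finally, $K_w$ is spanned by weight-$w$ monomials in $p,q,J_1,J_2$ because $K=k[p,q,J_1,J_2]$ is a graded polynomial ring (Lemma~\ref{lem:kernel-gens}). This gives the spanning claim for every $w\le 14$. The equivalence with six-generator monomials holds because any monomial involving $H$ has weight at least $24>14$. This argument is exactly the $a_0$-degree-one closure cited in Proposition~\ref{prop:H2}, and it works over any characteristic-zero $k$ without a transfer lemma.

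The only delicate step is the Dixmier divisibility, which is already proved earlier as Lemma~\ref{lem:dixmier-divisibility}. The rest is bookkeeping.

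The Lean layer is independent corroboration rather than part of the proof. For each $w$ in the two bands, the formal matrix of $\Dmap{5}|_{V_w}$ is built, and the integer certificate is checked: a left inverse of $Q=M[S]\amalg K_g$ together with the residual identity. This shows that the selected generator-monomial columns span the kernel, and the coordinate vector is lifted back to a polynomial identity. The main practical obstacle there is faithfulness: one must ensure the emitted monomial basis of $V_w$ is complete and not truncated (the soundness fix mentioned), and that the $1/3,1/5,1/35$ in $F_0$ are cleared consistently so that integer certificates correspond to the rational identity.
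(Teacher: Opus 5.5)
Your proof is correct, but it takes a genuinely different route from the paper's.

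\textbf{The paper's route.} The paper gives no structural argument for this theorem. For each $w\le 14$, a CAS emits an integer witness bundle. Lean rebuilds the matrix of $D_5|_{V_w}$ from the formal derivation and checks every entry of the bundle. It then applies the left-inverse spanning principle (Theorem~\ref{thm:qleftinv-span}) to the stacked matrix $Q=M[S]\amalg K_g$. The denominator-clearing decode invariant guarantees that the columns of $G_{\mathrm{sel}}$ really are the coefficient vectors of the generator monomials. The result is obtained over $\mathbb{Q}$ and passes to $k$ by base change.

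\textbf{Your route.} You observe that $w\le 14$ forces $a_0$-degree at most $1$. You then invoke the $a_0$-degree-one closure, i.e.\ descent plus Lemma~\ref{lem:dixmier-divisibility}, which is Theorem~\ref{thm:a01-closure}. You finish with a weight count: the $a_0$-coefficient $\mu$ has weight $w-8<6=\operatorname{wt}(c)$ unless $w=14$. Hence $c\mid\mu$ forces $\mu=0$ for $w<14$ and $\mu\in kc$ at $w=14$. The homogeneity of $\mu'=\mu/c$ is automatic in the graded domain $B$.

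\textbf{What each buys.} Your argument is uniform in $w$ and explains why nothing beyond $F_0$ can appear. It works over every characteristic-zero $k$ at once, and it covers $w=15$ for free, since $a_0^2$ already weighs $16$. The paper's route buys independence from the slice and localization machinery. Lemma~\ref{lem:dixmier-divisibility} rests on $B_{pq}=K_{0,pq}[s]$ and an explicit expansion of the integrator $T$. The core-Lean layer has no fraction-field infrastructure (cf.\ Remarks~\ref{rem:wdetect-lean} and~\ref{rem:lean-roadmap}), so it cannot formalize that machinery. The certificate instead yields a machine-checked statement together with exact per-weight kernel dimensions. It is also the template for the rank comparisons at weights $15$--$45$.

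\textbf{A small correction.} In your last paragraph, the paper's soundness fix is the clearing of rational coefficients via $L_w$, which guards against the truncation $\tfrac13\mapsto 0$ in $F_0$. It is not about completeness of the monomial basis of $V_w$; that basis is generated in Lean from the formal $D_5$.
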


\paragraph{How the verification works.}
For each weight $w$ exact rational computation (see Supplementary Material)
emits an integer witness bundle: $M$, the
integer matrix of $\Dmap{5}|_{V_w}$ in the weight-$w$ monomial basis;
$S$, the pivot-row index set (row-echelon form) with right inverse
$T$ and row-reconstruction $C$ pinning the row space; $N$, a kernel
basis with left inverse $K$ pinning the kernel dimension from both
sides ($M N = 0$, $K N = d_3 I$); and $G_{\mathrm{sel}}$, the
columns of the selected weight-$w$ generator monomials, with left
inverse $K_g$ pinning them as an independent set spanning the
kernel ($K_g G_{\mathrm{sel}} = d_g I$). \emph{Every entry of the
bundle is re-verified in Lean by machine-checked integer arithmetic against the
matrix computed from the formal $\Dmap{5}$} --- the CAS output is
untrusted input; trust sits in the checker. The kernel of
$\Dmap{5}|_{V_w}$ is then proved to be the span of the columns of
$G_{\mathrm{sel}}$ by the residual argument of
the formal kernel-spanning module; the columns of $G_{\mathrm{sel}}$ are,
by the decode invariant of
Section~\ref{subsec:denom-soundness}, exactly the (denominator-
cleared) rational coefficient vectors of the weight-$w$ generator
monomials; and the lift from the coordinate vector back to a
polynomial is an exact equality of normalized polynomials. Hence
every weight-$w$ invariant is a scaled linear combination of
weight-$w$ monomials, i.e.\ lies in their $k$-span. The verification
is carried over $\mathbb{Q}$; all certificate data and the
derivation have rational coefficients, and the weight-$w$ kernel
over any characteristic-zero field $k$ is the $k$-extension of the
one over $\mathbb{Q}$, so the statement holds over any such $k$.

\subsection{Soundness via denominator clearing}
\label{subsec:denom-soundness}

The certificates are integral, but the \emph{true} coefficient
vectors of the weight-$w$ generator monomials are rational and in
general non-integral --- e.g.\ a coefficient of $F_0$ at weight
$14$ is $\tfrac13$. A naive \texttt{int()} truncation would
\emph{silently break the spanning claim} (turning
$\tfrac13 \mapsto 0$ and dropping the whole $F_0$ column) and emit
a certificate that \emph{falsely} proves the weight-$14$ piece is
spanned without $F_0$ --- a vacuous, unsound ``proof'' at the most
critical weight. All entries would still be integral and all
matrix products would still hold; the certificate would merely be
wrong.

The fix is exact denominator clearing. The producer computes the
denominator-clearing factor
\[
L_w \;=\; \operatorname{lcm}\bigl\{\text{denominators of all
coefficients of the weight-}w\text{ generator monomials}\bigr\}
\]
(called \texttt{D0} in the scripts and Lean modules) and emits
$G_{\mathrm{sel}} = L_w\cdot(\text{true rational
vectors})$, integral by construction, folding $L_w$ into the
scaling of $K_g$ so that $K_g G_{\mathrm{sel}} = d_g I$ stays
integral. Lean then checks the \emph{cross-multiplied exact}
equality
\[
G_{\mathrm{sel}}[i][j]\cdot c_{ij}^{\,den}
\;=\; L_w\cdot c_{ij}^{\,num},
\]
where $c_{ij}$ is the true rational coefficient of the $j$-th
selected generator monomial in the $i$-th basis monomial
($c = c^{num}/c^{den}$, $c^{den} > 0$), together with $L_w \ne 0$,
the row count, and index nodup/in-range
(the formal decode invariant). Any truncated entry
violates this exact equality, so the soundness fix is \emph{enforced
by the checker, not by convention}: the decode invariant asks for
exact rational equality, not mere integrality. The same contract is
what would keep the weight-$24$ $H$ columns sound if the pipeline is
ever extended past $W_{\max}=14$ (Section~\ref{subsec:frontier}).
The general integer-certificate form of this contract (faithfulness
versus spanning) is recorded in the graph-derivation dictionary
(see Supplementary Material).

\paragraph{Methodological point.}
This is a concrete instance of the general principle that a formal
checker, with the trust boundary drawn at \emph{exact} integer
equality rather than at integrality, converts a silent computational
soundness bug into a hard compile-time error. The truncation bug is
real: it is present in the obvious first draft of the producer and
produces a certificate that looks valid (integral entries, all
products holding) but is mathematically wrong. The only reason it is
caught is that the decode invariant asks for exact rational
equality. The lesson is narrower: the latent bug is not that the CAS produced a wrong answer, but that
an unchecked integrality test would have \emph{certified} it.

\subsection{Frontier: what remains, and what is impossible}
\label{subsec:frontier}

\begin{remark}[Frontier: six-generator spanning refuted at $46$]
\label{rem:frontier}
Theorem~\ref{thm:direct-span14} establishes generation
\emph{directly} through $W_{\max} = 14$; exact $\mathbb{Q}$-ranks extend
$C_5$-spanning exactly through weight $45$
(Theorem~\ref{thm:minimal-45}). At weight $46$ the six-generator
claim fails: $\dim\ker = 1519 > 1518$ with explicit witness $F_{46}$
(Section~\ref{sec:second-witness}). Two negative observations
delimit the frontier precisely.
\emph{(i) No universal bound.} No explicit generator-degree bound
$B$ exists for kernels of triangular locally nilpotent derivations
in $\ge 5$ variables: the dimension-$5$ Daigle--Freudenburg kernel
is not finitely generated \cite{daigle1999}, so finite generation --- a fortiori any
degree bound --- fails in general. The proved closure
$A_5^{\Dmap{5}} = C_5[F_{46}^{\mathrm{int}}]$ (Theorem~\ref{thm:closure}) is therefore specific to this
derivation, not an instance of a general theorem; likewise the
refuted $F_{46}\cdot K/(p^2)$ pattern failed as a general tower law at
weight $84$ (Theorem~\ref{thm:fourth-family}).
\emph{(ii) The reduction premise cannot be linear.} A natural way
to close generation by induction on weight would be the premise
``every invariant of weight $> B$ is a scaled linear combination of
strictly lower-weight invariants.'' This premise is
\emph{unrealizable for any finite $B$}: $p^{B+1}$ is an invariant
of weight $B+1$ that is a monomial in the weight-$1$ generator $p$,
hence cannot be a nonzero linear combination of invariants of
weight $< B+1$ (their coefficient at weight $B+1$ is zero). A valid
inductive reduction must allow \emph{products} of lower-weight
invariants --- a genuinely different, and harder, statement than the
linear spanning that the witness pipeline verifies. The formal closure
assembly records this as a conditional implication with unsatisfiable
displayed premise --- not as an unconditional generation theorem.
\end{remark}

\begin{remark}[Why no direct generation proof is attempted]
\label{rem:proof-obligation}
There is no direct generation proof to give: the weight-$46$ refutation
(Section~\ref{sec:second-witness}) settles the six-generator claim, and
generation itself is closed by Theorem~\ref{thm:closure}
($A_5^{\Dmap{5}} = C_5[F_{46}^{\mathrm{int}}]$ via slice atlas plus saturations):
(i) uniqueness of $S$ as the full relation
ideal holds at every exactly-checked weight (prior computations found
no second relation through weight $57$; the fresh exact-$\mathbb{Q}$
rerun reported here covers weights $15$--$46$), now superseded by the
all-weights relation ideal (Theorem~\ref{thm:relation-ideal}); (ii) no all-weights
six-generator argument exists, since the
claim is false. The Hilbert series of $\Afring{5}^{\Dmap{5}}$ is the kernel
dimensions themselves, so no dimension argument can bypass
computing them; the exact content is the $F_{46}$-descent and the
exact presentation and Hilbert series
(Theorems~\ref{thm:relation-ideal}--\ref{thm:R-hilbert}), with the $a_0$-degree-$1$ closure
(Theorem~\ref{thm:a01-closure}) and the bounds-generation
characterization (Theorem~\ref{thm:reduction}(i)) as a further proved
structural theorem.
\emph{In the present accounting, the $w \le 14$ part is a direct Lean-verified
spanning (Theorem~\ref{thm:direct-span14}), and $15$--$45$ an exact
$\mathbb{Q}$-rank spanning (Theorem~\ref{thm:minimal-45}).} What was
``the remainder'' is now a refutation with an exact successor structure,
not a gap in the certificate.
\end{remark}

\begin{remark}[Formalized generation outline]
\label{rem:lean-roadmap}
The Lean core now covers, in addition to the relation, the six
weights, and strictness
(the formal hypersurface, exclusion, and counterexample modules with
native evaluation in core Lean without Mathlib (see the formal-verification manifest)), the full per-weight witness pipeline through
$W_{\max} = 14$: the formal witness module (integer matrix API,
corrected Steinitz exchange), the formal weight-data module (formal
weight-$w$ matrix of $\Dmap{5}$), the formal witness-data modules
(emitted integer certificates, machine-checked), and the assembly
modules (kernel span, generation span, closure, bundle, and
verification), and the second witness (formally verified $F_{46}$
invariant of weight $46$).
What remains for a full formalization is the closure/saturation
machinery (slice charts, $p$-/$q$-saturation, relation ideal), now
proved at paper level: the superseded successor module pattern
(refuted at weight $84$, Theorem~\ref{thm:fourth-family}) is no longer
a formalization target; the open Lean remainder is case-by-case
extension of the per-weight certificates past $14$ (the $w=15$ pilot
data are emitted but no spanning certificate past $14$ is checked; weights
$15\ldots25$ remain open in Lean, though exact $\mathbb{Q}$-ranks
cover $15$--$45$ at paper level) and the \texttt{a0top\_filter\_combine}
step.
\end{remark}

\begin{remark}[Status at $w=24$]
\label{rem:sparse-w24}
The kernel dimension at $24$ is $213$, CAS-verified; Lean
verification reaches only weight $14$. The full $w=24$ bundle
($n=1461$, $k=213$, roughly $150\times$ the $w=14$ cost) is beyond
the current certificate range. Recorded substitutes: $L_w(24) = 105$
stability, $s_6 - s_5 = 1$ at $24$, and the $a_0$-window $[16,27]$
(Corollary~\ref{cor:h-birth}).
\end{remark}
\section{A certification principle for graded kernel spanning}
\label{sec:cert-principle}

The per-weight certificate used for
Theorem~\ref{thm:direct-span14} is stated here independently of $N=5$.
Hypotheses: triangular
derivation (finite-dimensional weight spaces), positive grading, $L_w \ne 0$,
and the integer left-inverse identities below (over $\mathbb{Z}$, not merely
$\mathbb{Q}$).

\begin{theorem}[Q-left-inverse kernel spanning]
\label{thm:qleftinv-span}
Assume $M$ is $n \times n$; $G_{\mathrm{sel}}$ is $n \times k$ with
$k > 0$ and $M G_{\mathrm{sel}} = 0$; $K_g$ is $k \times n$ with
$K_g G_{\mathrm{sel}} = d_g I_k$ for $d_g \ne 0$; and there are $Y$ and
$d_Y \ne 0$ with $Y Q = d_Y I_n$, where $Q$ is the vertical stacking of
the pivot rows $M[S]$ on top of $K_g$ (row-block concatenation, never
matrix addition). Then every integer $v$ with $Mv = 0$ satisfies
$d_g v = G_{\mathrm{sel}}(K_g v)$.
\end{theorem}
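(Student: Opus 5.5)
The argument is a residual computation. Take an integer $v$ with $Mv = 0$ and set
\[
r := d_g v - G_{\mathrm{sel}}(K_g v),
\]
an integer vector of length $n$. The goal is $r = 0$. The hypothesis $YQ = d_Y I_n$ makes $Q$ injective on $\mathbb{Q}^n$ (and hence on $\mathbb{Z}^n$), so it suffices to show $Qr = 0$. Because $Q$ is the vertical stacking of $M[S]$ over $K_g$, this splits into two block equations: $M[S]\,r = 0$ and $K_g r = 0$.

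\emph{First block.} By linearity,
\[
Mr = d_g\,Mv - (MG_{\mathrm{sel}})(K_g v).
\]
Both terms vanish, since $Mv = 0$ by assumption and $MG_{\mathrm{sel}} = 0$ by hypothesis. So $Mr = 0$, and restricting to the pivot rows $S$ gives $M[S]\,r = 0$.

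\emph{Second block.} Here
\[
K_g r = d_g K_g v - (K_g G_{\mathrm{sel}})(K_g v) = d_g K_g v - d_g I_k K_g v = 0,
\]
using the identity $K_g G_{\mathrm{sel}} = d_g I_k$.

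Combining the blocks gives $Qr = 0$. Multiplying on the left by $Y$ yields $d_Y r = YQr = 0$, and since $d_Y \ne 0$ and we are working in the torsion-free module $\mathbb{Z}^n$, it follows that $r = 0$, i.e.\ $d_g v = G_{\mathrm{sel}}(K_g v)$.

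The only step needing care is the bookkeeping in the first block. We only need $M[S]\,r = 0$, not that the pivot rows generate the row space of $M$; that stronger fact would require the reconstruction matrix $C$, which this argument does not use. Otherwise the proof is mechanical: associativity of integer matrix products, and cancellation of the nonzero scalar $d_Y$ in $\mathbb{Z}^n$. The hypotheses $k > 0$ and $d_g \ne 0$ play no role in this identity; they are needed only later, to deduce that $v$ lies in the $\mathbb{Q}$-span of the columns of $G_{\mathrm{sel}}$.
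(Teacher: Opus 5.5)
Your proof is correct and is essentially the paper's own argument: the paper forms the same residual $z = d_g v - G_{\mathrm{sel}}(K_g v)$, checks $Mz = 0$ and $K_g z = 0$ (so $Qz = 0$), and concludes $d_Y z = YQz = 0$, hence $z = 0$. Your added remark is accurate: only the restriction $M[S]\,z = 0$ is needed, not the row-reconstruction matrix $C$.
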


Status: checked in core Lean without Mathlib by native evaluation (see
the formal-verification manifest below); the residual
$z = d_g v - G_{\mathrm{sel}}(K_g v)$ satisfies $Mz = 0$ and $K_g z = 0$,
whence $d_Y z = 0$ and $z = 0$ over the characteristic-zero field.
This follows the certificate-checking pattern (external producer plus proved
checker \cite{hales2015,wuprover2026,guo2026}): Gr\"obner/elimination facts used
by the CAS side are classical \cite{cox2015,singular2007,sagemath}.
Unlike \cite{guo2026}, which formalizes general Gr\"obner-basis theory
in Lean with Mathlib, our formal layer is a small theorem-specific
core-Lean certification of individual identities, not a general
formalization.

\subsection{Formal-verification manifest}
\label{subsec:formal-manifest}
Lean 4.33.1, Lake 5.0.0, core Lean without Mathlib, base commit
\texttt{7da6b63}. Every proposition below is discharged by
\texttt{native\_decide}; kernel reduction by \texttt{decide} was attempted
for the flagship $F_0$ identity and fails on the custom
normalization/rational API, so the native-evaluation trust model is
retained. \texttt{\#print axioms} reports the standard foundation plus
theorem-specific native-evaluation axioms in each case:
{\footnotesize\setlength{\tabcolsep}{2pt}
\begin{center}
\begin{tabular}{>{\raggedright\arraybackslash}p{3.2cm}>{\raggedright\arraybackslash}p{3.6cm}>{\raggedright\arraybackslash}p{3.4cm}>{\raggedright\arraybackslash}p{1.9cm}>{\raggedright\arraybackslash}p{3.4cm}}
\toprule
theorem & module & source file & build & axioms beyond \texttt{propext},
\texttt{Classical.choice}, \texttt{Quot.sound} \\ \midrule
\texttt{D\_F0\_zero} & \texttt{CompLemmas} &
\href{https://github.com/AhmedMostafa16/adjquad/blob/25fdeaefc74d63e8a51983399e5fbf64ef2b28d5/formalize/AdjQuad/CompLemmas.lean}{\nolinkurl{AdjQuad/CompLemmas.lean}} & \texttt{lake build} & one native axiom \\
\texttt{H\_D\_zero} & \texttt{Counterexample} &
\href{https://github.com/AhmedMostafa16/adjquad/blob/25fdeaefc74d63e8a51983399e5fbf64ef2b28d5/formalize/AdjQuad/Counterexample.lean}{\nolinkurl{AdjQuad/Counterexample.lean}} & \texttt{lake build} & one native axiom \\
\texttt{weightedDegree\_H}, \texttt{H\_has\_a0\_sq} &
\texttt{Counterexample} &
\href{https://github.com/AhmedMostafa16/adjquad/blob/25fdeaefc74d63e8a51983399e5fbf64ef2b28d5/formalize/AdjQuad/Counterexample.lean}{\nolinkurl{AdjQuad/Counterexample.lean}} & \texttt{lake build} & one native axiom each \\
\texttt{H\_notin\_K\_F0} & \texttt{Exclusion} &
\href{https://github.com/AhmedMostafa16/adjquad/blob/25fdeaefc74d63e8a51983399e5fbf64ef2b28d5/formalize/AdjQuad/Exclusion.lean}{\nolinkurl{AdjQuad/Exclusion.lean}} & \texttt{lake build} &
native axioms of the listed base facts + two native axioms \\
\texttt{S\_rel} & \texttt{Hypersurface} &
\href{https://github.com/AhmedMostafa16/adjquad/blob/25fdeaefc74d63e8a51983399e5fbf64ef2b28d5/formalize/AdjQuad/Hypersurface.lean}{\nolinkurl{AdjQuad/Hypersurface.lean}} & \texttt{lake build} & one native axiom \\
\texttt{F46\_D\_zero}, \texttt{weightedDegree\_F46},
\texttt{F46\_has\_a0\_fourth} & \texttt{SecondWitness} &
\href{https://github.com/AhmedMostafa16/adjquad/blob/25fdeaefc74d63e8a51983399e5fbf64ef2b28d5/formalize/AdjQuad/SecondWitness.lean}{\nolinkurl{AdjQuad/SecondWitness.lean}} & \texttt{lake build} & one native axiom each \\
per-weight spanning $w\le14$ & \texttt{GenerationVerified} et al. &
\href{https://github.com/AhmedMostafa16/adjquad/blob/25fdeaefc74d63e8a51983399e5fbf64ef2b28d5/formalize/AdjQuad/}{\nolinkurl{AdjQuad/Generation*.lean}} & \texttt{lake build} &
native witness-check axioms per weight \\
\bottomrule
\end{tabular}
\end{center}
}
\texttt{H\_notin\_K\_F0} proves nonmembership in the inductively generated
predicate \texttt{inK\_F0} (the smallest subalgebra: $0$, $1$, the five
generators, addition, multiplication) via the safety invariant
$14\deg_{a_0}\le\mathrm{wt}$ carried by structural induction; its
mathematical reading as $H\notin k[p,q,J_1,J_2,F_0]$ additionally uses
the paper-level API bridge (every mathematical subalgebra expression is
an inductive construction; normalization faithfulness is proved by the
\texttt{normalize\_mem\_exp} family). By contrast $F_{46}\notin C_5$ is
not a Lean claim: Lean covers definition, $D_5(F_{46})=0$, weight, and
top, while nonmembership is the paper-level $J_4$ valuation.
\texttt{AdjQuad.Counterexample} and \texttt{AdjQuad.SecondWitness} were
rebuilt fresh (\texttt{lake build}, success); remaining modules are
reported from present \texttt{.olean} artifacts with unmodified sources.

\begin{remark}[Denominator soundness rejects truncation]
\label{rem:truncation-killer}
Let $E$ be the true rational coefficient matrix of the selected generator
monomials, $L_w$ the l.c.m.\ of all its denominators ($L_w \ne 0$), and
$G_{\mathrm{sel}} = L_w E$ integral. The checker enforces entrywise
$G_{\mathrm{sel}}[i][j] \cdot c^{den} = L_w \cdot c^{num}$ by
machine-checked integer arithmetic (decode-entry and
cross-multiplication checks). The naive truncation $1/3 \mapsto 0$ in $F_0$ at
weight $14$ fails this check ($0 \cdot 3 \ne 1 \cdot 1$); the scaled entry
$35$ passes ($35 \cdot 3 = 105 \cdot 1$). The interface is pinned by
the formal certification-principle module (with three type-checked
interface lines; an upstream rename breaks the build loudly). General form:
faithfulness (cross-multiplied representation) versus spanning
(left-inverse identities) are separated as Theorems~F and~S of the
graph-derivation dictionary (see Supplementary Material);
this remark is the faithfulness half.
\end{remark}

\needspace{4\baselineskip}
\subsection{Filtration certificates}
\label{subsec:filtration-cert}

This subsection fixes a reusable producer/checker split (Theorem~IV of the
filtration tower) for every future
filtration claim of the form ``$F$ is an invariant of weight $w$ and
$a_0$-degree $m$ with top outside $J_m$'' (Definition~\ref{def:Jm},
Corollary~\ref{cor:obstruction}). The split is the same philosophy as
the decode invariant of
Section~\ref{subsec:denom-soundness}: the producer is untrusted and
only the checker's exact integer/rational equalities carry weight.

\paragraph{Trust boundary.}
The producer --- exact Sage solves and SymPy
$W$-expansion and witness-emission computation --- suggests a candidate
$F$. Nothing it outputs is trusted: supports may be truncated,
denominators dropped, tops misread. The checker
(exact SymPy polynomial arithmetic over $\mathbb{Q}$, stdlib only;
see Supplementary Material) re-verifies
four gates from the polynomial itself:
\begin{enumerate}
\item[(i)] $\Dmap{5}(F) = 0$ \emph{exactly}: the triangular image is
accumulated monomial by monomial
($\Dmap{5}(a_0) = a_1a_2$, $\Dmap{5}(a_1) = a_2a_3$,
$\Dmap{5}(a_2) = a_3p$, $\Dmap{5}(a_3) = pq$, $\Dmap{5}(p) =
\Dmap{5}(q) = 0$) and every coefficient of the image must be
identically zero over $\mathbb{Q}$.
\item[(ii)] Uniform Fibonacci weight: every monomial of $F$ weighs
$w$ under $(8,5,3,2,1,1)$.
\item[(iii)] $a_0$-degree plus top: \texttt{Poly}$(F,a_0)$ has degree
exactly $m$ and its top coefficient $C_m = [a_0^m]F$ is reported
verbatim.
\item[(iv)] $J_m$ (non-)membership by exact divisibility: with $J_m =
(p^{2m}q^{4\lceil m/2\rceil})$ principal
(Definition~\ref{def:Jm}; exponent-vector check of
exact top-ideal computation), a top in $k[p,q]$ lies in
$J_m$ iff every monomial's $(p,q)$-exponents dominate the target,
i.e.\ $v_p \ge 2m$ and $v_q \ge 4\lceil m/2\rceil$. Coefficients are
ignored (units in characteristic zero); only exponent dominance
counts. The zero top is vacuously in every $J_m$.
\end{enumerate}
A certificate is the quadruple (D-zero proof, weight uniformity,
degree $+$ top, divisibility verdict); rerunnable with self-test (see
Supplementary Material; the final verdict line reads
\texttt{filtration-cert OK}).

\paragraph{First certified instance: $F_{46}$.}
The witness ($79$ terms, integral
cleared form; see Supplementary Material) passes all four gates at $(w,m) = (46,4)$: $\Dmap{5}(F_{46})
= 0$ exactly, uniform weight $46$, $a_0$-degree exactly $4$ with top
$121550625\,p^6q^8$ (normalized $\frac{18375}{4}p^6q^8$,
Theorem~\ref{thm:F46}), and $v_p = 6 < 8$ so the top lies outside $J_4
= (p^8q^8)$ (Lemma~\ref{lem:J4}). The Lean formalization covers definition,
$\Dmap{5}(F_{46}) = 0$, uniform weight, and top, checked in core Lean
without Mathlib by native evaluation (see the formal-verification
manifest); non-membership is the paper-level $J_4$ divisibility
step recorded here, machine-reproducible through the checker.

\paragraph{Controls (polarity).}
The self-test ships two doctored-input controls with fixed expected
polarity, so a vacuous checker cannot pass. (a) The top altered to
$p^8q^8$ \emph{must be accepted} by \texttt{in\_Jm} at $m = 4$ (it is
the $J_4$ generator itself), at both the top-expression level and the
poly level with the top term swapped. (b) $F_{46}$ plus a
weight-homogeneous $a_0^5p^3q^3$ term ($5\cdot 8+3+3 = 46$, so the
weight gate still passes) \emph{must be rejected} by the degree gate
expecting $m = 4$ (observed degree $5$). Both controls pass with the
specified polarity; inverting either turns the final verdict to
\texttt{filtration-cert FAILED}.

\paragraph{Reuse contract.}
Future tower claims (degree-$3$ census extensions, $F_{46}\cdot
F_0$-family members at $m = 5$ with $J_5 = (p^{10}q^{12})$, any new
primitive direction) certify by instantiating the same four gates at
their $(w,m)$ with the same script: only the expected weight, degree,
and $J_m$ target change. The $J_m$ formula itself stays pinned by
exact top-ideal computation.

\subsection{Proof-strategy classification}
\label{subsec:strategies}
{\footnotesize\setlength{\tabcolsep}{3pt}
\begin{center}
\refstepcounter{table}\label{tab:strategies}
\normalsize\textbf{Table~\thetable. Proof strategies: what each proves here, what it cannot prove, and its failure mode.}\\[1ex]
\footnotesize
\renewcommand{\arraystretch}{1.2}
\begin{tabular}{>{\raggedright\arraybackslash}p{2.2cm}>{\raggedright\arraybackslash}p{2.6cm}>{\raggedright\arraybackslash}p{3.2cm}>{\raggedright\arraybackslash}p{3.2cm}>{\raggedright\arraybackslash}p{2.6cm}}
\toprule
method & hypotheses & proves here & cannot prove & failure mode \\ \midrule
slice/localization & LND + slice element & $B_{pq} = K_{0,pq}[s]$, localized kernels & global generation & pole divisor $\{c=0\}$ \\
divisor contraction & UFD + injective specialization & descent $B\cap K_{0,pq} = K_0$ & missing directions & non-injective specialization \\
residue-field analysis & valuation + initial form & transcendence$\Rightarrow$bound; setup data & bounds past algebraicity degree & $r_q^2$, $\deg$-$4$ $r_p$ minpoly dependence \\
$a_0$-filtration induction & tops in $J_m$ & generation $\iff$ bounds & the bounds themselves & $F_{46}$ top outside $J_4$ \\
denominator modules & explicit $c$, $W$ & $C_5$-side $J_m$ exactly & full $\mathcal{M}_m$ & $H$, $F_{46}$ denominators \\
Gr\"obner elimination & finite ideal data & $S=0$, exact per-weight dims & all-weights statements & $n=112241$ wall at $70$ \\
SAGBI & monomial order + finite basis & not attempted here & --- & unknown termination \\
representation theory & reductive group action & not applicable (unipotent triangular) & --- & --- \\
graded certificates & per-weight matrices & exact spanning/dims per weight & all-weights & exponential growth (wall at $69$) \\
formal proof (Lean core) & list-matrix API & identities, spanning $\le14$ & fraction fields, \texttt{a0top} combine & infrastructure gaps \\
\bottomrule
\end{tabular}
\end{center}
}
The $N=5$ $q$-adic failure is the canonical example of a proof-method
failure: a valid reduction (constant coefficient in $B$) followed by an
inference (transcendence) whose hypothesis is false ($r_q$ algebraic),
producing a bound refuted by $H$ at the first possible degree. The
$p$-adic program fails the same way at degree $4$ (witness $F_{46}$).

\section{Top-ideal machinery and birth-window census (supporting record)}
\label{sec:filtration-record}

This appendix collects the $J_m$ filtration theory and birth-window census supporting the main text. Conjectural items are marked as such; nothing here is a premise of the closure theorem.

\begin{conjecture}[resonance degree equals birth degree]
\label{conj:resonance-birth}
Let $\rho_q = 2$ and $\rho_p = 4$ be the exact per-prime resonance
degrees (Definition~\ref{def:resonance}). The first $W$-degree carrying
an invariant outside the previously generated subalgebra equals a
resonance degree: $m = 2$ on the $q$-side (witness $H$, whose
$W$-expansion realizes the $r_q^2$-dependence) and $m = 4$ on the
$p$-side (witness $F_{46}$, with $v_p = 6 < 8$ at $m = 4$). Consistent
instances: $m = 1$ lies below both resonance degrees and is closed
(Theorem~\ref{thm:a01-closure}); $m = 3$ matches no resonance degree
and shows no $J_3$-violating top through weight $60$
(Remark~\ref{rem:deg3-census}). No production mechanism is claimed:
resonance does not by itself produce the generator. Falsifier: a
primitive at $W$-degree $m$ with $m$ equal to no exact resonance
degree, or an exact resonance degree $d$ with the $W$-degree-$d$
slice fully verified and no new direction.
\end{conjecture}

\begin{theorem}[$F_{46}$ realizes the $\rho_p$-cancellation]
\label{thm:rhop-production}
Let $F_{46} = \sum_{j=0}^4 H_jW^j$ be the $W$-expansion ($H_1 = H_3 = 0$).
Every nonzero $W$-coefficient has $p$-pole order exactly $-2$ in the
constant term: $v_p(H_j) - 2j = -2$ for $j = 4, 2, 0$ (exact:
$6-8$, $2-4$, $-2$), and the three per-term $p$-residues
$E_j = [p^2H_j\rho^j]_{p=0}$ are well defined with $E_4 \ne 0$ and
$\sum_j E_j = 0$ exactly
(exact computation (see Supplementary Material)). Since $\rho_p = 4$ exactly
(by exact rational computation),
$1, r_p, r_p^2, r_p^3$ are independent over $L_p$, so this quartic
relation is an $L_p$-unit multiple of the minimal polynomial $m_p$:
the $p$-side failure has the same shape as the $q$-side, with
parallel data
\[
\renewcommand{\arraystretch}{1.2}\begin{array}{c|cccc}
 & m & e & \text{top} & C_0\text{-order} \\ \hline
H/q & 2 & 4 & v_q = 4 & -4 \\
F_{46}/p & 4 & 2 & v_p = 6 & -2
\end{array}
\]
(constant-term pole cancellation governed by the minimal polynomial:
quadratic at $q$, quartic at $p$). What is not shown: that resonance
produces the generator (the existence direction remains open; $F_{46}$
itself is the existence witness, not its consequence). In particular,
the closure proof does not rely on finiteness of the resonance
inventory: $c = p^2q^4$ has exactly the prime divisors $p$ and $q$, so
the resonance inventory $\{\rho_q, \rho_p\}$ is finite and exhaustive
at the divisors of $c$. Finiteness of this inventory does not by itself
explain termination of the generator process; for $N=5$, termination is
established independently by Theorem~\ref{thm:closure}. Thus any further
primitive would have to occur outside the resonance mechanism considered
here, rather than being forced by finiteness of the present resonance
inventory.
\end{theorem}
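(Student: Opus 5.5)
The plan is to prove the statement in three parts: the valuation count, the residue relation, and the identification of that relation with $m_p$.

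\emph{Step 1: valuations and the common deficit.} Start from the $W$-expansion of $F_{46}^{\mathrm{int}}$ recorded in Theorem~\ref{thm:F46}. There $H_1=H_3=0$ and $(v_p(H_0),v_p(H_2),v_p(H_4))=(-2,2,6)$. Subtracting $2j$ gives $-2$ for each of $j=0,2,4$. So the deficit $\lambda_p=-2$ is attained at every nonzero coefficient, and the set $S$ of the reduction in Lemma~\ref{lem:plemma} is exactly $\{0,2,4\}$.

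Since $v_p(\rho)=-2$ exactly, each summand $H_j\rho^j$ of the constant coefficient $C_0=\sum_j H_j\rho^j$ has $v_p=-2$. Hence $p^2H_j\rho^j$ is $p$-integral for each $j$, and its reduction $E_j$ is well defined. Concretely, $E_j=h_j r_p^j$ with $h_j=\operatorname{in}_p(p^{2-2j}H_j)\in L_p$. Here I use that residues of $K_{pq}$-elements land in $L_p$: the denominators are $q$-powers and $q\not\equiv 0 \bmod p$, by Lemma~\ref{lem:contraction}. The three $E_j$ are then computed exactly from the recorded $79$-term expression, and this computation checks $E_4\neq 0$.

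\emph{Step 2: the residue relation.} Because $F_{46}^{\mathrm{int}}\in A$, we have $C_0\in B$ and so $v_p(C_0)\ge 0$. Multiplying by $p^2$ and reducing mod $p$ gives $\sum_j E_j=0$. This is the relation $P(r_p)=0$ with $P(T)=h_0+h_2T^2+h_4T^4\in L_p[T]$, and $P\neq 0$ because $h_4\neq 0$. The exact identity $\sum E_j=0$ is also certified independently as a cross-check.

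\emph{Step 3: identifying $P$ with the minimal polynomial.} This step is the main obstacle. Remark~\ref{rem:p-obstacle} gives $\deg_{L_p} r_p=4$ exactly, established by exact rational computation that $r_p$ is a primitive element of the tower $L_p\subset L_p(y)\subset k(q,x,y)$. Given this, $1,r_p,r_p^2,r_p^3$ are independent over $L_p$, so $m_p$ divides $P$. Since $\deg P\le 4=\deg m_p$, $P$ is an $L_p^\times$-multiple of $m_p$.

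As a consistency check, $P$ is even in $T$, so $m_p$ must be even in $T$ as well, i.e.\ $m_p(-T)=m_p(T)$. This fits $r_p$ being odd in $x$ under the involution $x\mapsto -x$ fixing $L_p$. It also explains $H_1=H_3=0$, and I would record it as corroboration.

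The difficulty is that the degree-$4$ statement is only computational in the paper. If a conceptual proof is wanted, I would try it first: show $r_p$ generates $k(q,x,y)$ over $L_p$ by checking that its four conjugates under $(x,y)\mapsto(\pm x,\pm y)$, subject to $x^2=2qy-w$, are distinct.

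\emph{Concluding remarks.} The table comparing $H/q$ with $F_{46}/p$ is assembled from Remark~\ref{rem:qrefuted-H} and the data above. The finiteness remark needs no proof beyond noting that the prime divisors of $c=p^2q^4$ in $K$ are exactly $p$ and $q$. Termination itself is delegated to Theorem~\ref{thm:closure}.
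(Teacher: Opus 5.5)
Your proposal is correct and follows the same route as the paper: the valuation table of Theorem~\ref{thm:F46} gives the common deficit $-2$ at $j=0,2,4$, the cancellation is read as $P(r_p)=0$ with $P=h_0+h_2T^2+h_4T^4\in L_p[T]$, and $\deg_{L_p}r_p=4$ (Remark~\ref{rem:p-obstacle}) then forces $P=h_4m_p$. The one difference is that the paper cites exact computation for $\sum_j E_j=0$ and $E_4\neq 0$, whereas you derive them from $C_0\in B$ and the exactness of $v_p(H_4)=6$, and you justify $h_j\in L_p$ via the contraction $pB\cap K=pK$ of Lemma~\ref{lem:contraction}; as a result, the only computational inputs left are the valuation table and the degree-$4$ fact.
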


\subsection{Strongest correct structural theorems}

The refutation leaves two unconditional structural theorems. The first
closes $a_0$-degree $1$ with no saturation input.

\begin{theorem}[$a_0$-degree-$1$ closure]
\label{thm:a01-closure}
Every $\Dmap{5}$-invariant of $a_0$-degree at most $1$ lies in
$K[F_0] \subseteq C_5$.
\end{theorem}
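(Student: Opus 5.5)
The plan is to run the argument already sketched for the $N=6$ fragment (Corollary~\ref{cor:n6-a1deg1}) at $N=5$, with Lemma~\ref{lem:dixmier-divisibility} supplying the key divisibility. Let $F$ be a $\Dmap{5}$-invariant with $\deg_{a_0}F\le 1$, and write $F=\mu a_0+\nu$ with $\mu,\nu\in B=k[a_1,\dots,a_5]$. The coefficients lie in $B$ because $A_5=B[a_0]$ is free over $B$. Since $\Dmap{5}(B)\subseteq B$ and $\Dmap{5}(a_0)=a_1a_2$, expanding $\Dmap{5}(F)=0$ gives
\[
\Dmap{5}(\mu)\,a_0+\bigl(a_1a_2\mu+\Dmap{5}(\nu)\bigr)=0.
\]
Comparing $a_0$-coefficients then yields two conditions: $\Dmap{5}(\mu)=0$ and $a_1a_2\mu+\Dmap{5}(\nu)=0$.

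From the first condition, $\mu\in B^{\Dmap{5}}=K=k[p,q,J_1,J_2]$ by Theorem~\ref{thm:descent}. The second condition is exactly the hypothesis of Lemma~\ref{lem:dixmier-divisibility}, so $c=p^2q^4$ divides $\mu$ in $K$; write $\mu=c\mu'$ with $\mu'\in K$. We then subtract $\mu'F_0$. Since $F_0=ca_0+G_0$ (Definition~\ref{def:F0}), the difference is
\[
F-\mu'F_0=\nu-\mu'G_0\in B.
\]
This difference is $\Dmap{5}$-invariant, because $\mu'\in K$ and $\Dmap{5}(F_0)=0$ (Theorem~\ref{thm:F0-invariant}), and it has $a_0$-degree $0$. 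Hence it lies in $B^{\Dmap{5}}=K$ by Theorem~\ref{thm:descent} again. Therefore $F=\mu'F_0+(\nu-\mu'G_0)\in K[F_0]$, and $K[F_0]\subseteq C_5$ by definition of $C_5$. If $\deg_{a_0}F=0$, only the last step is needed.

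All the substantive work is in Lemma~\ref{lem:dixmier-divisibility}, namely the divisibility $c\mid\mu$. That lemma is stated earlier and may be assumed, so the remaining obstacle is bookkeeping. Two points need checking: that the lemma's hypothesis uses $\nu\in B$ rather than a localized element (it does), and that the divisibility holds in $K$ itself, not merely in $K_{pq}$. The lemma's peeling argument establishes divisibility in $K$, so $\mu'\in K$ and the subtraction stays inside the polynomial ring. No saturation or residue-field input is needed, which is consistent with the text's description of this as an unconditional structural theorem.
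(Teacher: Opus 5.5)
Your proposal is correct and is essentially the paper's proof. The paper likewise splits $F=C_1a_0+C_0$ over the free module $B[a_0]$ and gets $C_1\in K$ together with $a_1a_2C_1+\Dmap{5}(C_0)=0$. It then applies Lemma~\ref{lem:dixmier-divisibility} to write $C_1=ce_1$ with $e_1\in K$, and concludes because $F-e_1F_0=C_0-e_1G_0$ is an $a_0$-free invariant, hence lies in $K$ by Theorem~\ref{thm:descent}.
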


\begin{proof}
Let $F = C_1a_0 + C_0$ with $C_i \in B$. From $\Dmap{5}(F) = 0$,
freeness of $B[a_0]$ gives $\Dmap{5}(C_1) = 0$ (so $C_1 \in K$ by
Theorem~\ref{thm:descent}) and $\Dmap{5}(C_0) + C_1a_1a_2 = 0$. The
Established Dixmier divisibility
(Lemma~\ref{lem:dixmier-divisibility}) gives $c \mid C_1$ in $K$,
$C_1 = ce_1$. Then $F - e_1F_0 = C_0 - e_1G_0 \in B$ is invariant,
hence in $K$. So $F \in K[F_0]$.
\end{proof}

The second turns the failed induction into an exact characterization.
It is stated inside the $W$-expansion setup only: with $c = p^2q^4$,
$W = F_0/c = a_0 + \rho$, every localized invariant writes uniquely
$F = \sum_{j=0}^m H_jW^j$ with $H_j \in K_{pq}$ ($W$ monic of
$a_0$-degree $1$, so $W$-degree equals $a_0$-degree $m$ and the top
$W$-coefficient equals the top $a_0$-coefficient $C_m = [a_0^m]F$,
which lies in $K$ unconditionally by Theorem~\ref{thm:descent}).
Valuations $v_p, v_q$ are the UFD valuations along $(p), (q)$,
restricted to $K$ via Lemma~\ref{lem:contraction} ($K$ a polynomial
ring, $(p), (q)$ non-associated primes). Put $h_2 = 11025p^4q^4 =
\operatorname{top}(H)$ and $c = \operatorname{top}(F_0)$.

\begin{theorem}[Top ideals of $C_5$]
\label{def:Jm}
Let $c=\operatorname{top}(F_0)=p^2q^4$ and
$h_2=\operatorname{top}(H)=11025p^4q^4$, with
$\deg_{a_0}F_0=1$ and $\deg_{a_0}H=2$.
For $m\ge 0$ let $J_m\subset K$ be the $K$-span of the products
$c^eh_2^r$ with $e+2r=m$. Then
\[
J_m=(p^{2m}q^{4\lceil m/2\rceil}),
\]
principal for every $m$. In particular $C\in J_m$ iff
$v_p(C)\ge 2m$ and $v_q(C)\ge 4\lceil m/2\rceil$.
\end{theorem}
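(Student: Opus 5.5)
The statement is really a computation in the polynomial ring $k[p,q]\subset K$: every generator $c^eh_2^r$ of $J_m$ is, up to the nonzero scalar $11025^r$, the monomial $p^{2e+4r}q^{4e+4r}$. The plan is to identify the unique monomial of minimal exponents among these, show it divides all the others, and then read off both the principality claim and the valuation criterion.

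\emph{Step 1 (exponents).} Since $e+2r=m$, the $p$-exponent of $c^eh_2^r$ is $2e+4r=2(e+2r)=2m$, independent of $(e,r)$. The $q$-exponent is $4e+4r=4(m-2r)+4r=4m-4r$, which decreases as $r$ grows. The admissible range is $0\le r\le\lfloor m/2\rfloor$ (with $e=m-2r\ge 0$).

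\emph{Step 2 (minimal generator).} The minimal $q$-exponent is therefore attained at $r=\lfloor m/2\rfloor$ and equals $4m-4\lfloor m/2\rfloor=4\lceil m/2\rceil$. So that generator is a unit multiple of $g_m:=p^{2m}q^{4\lceil m/2\rceil}$, which gives $(g_m)\subseteq J_m$. Conversely, every generator equals $g_m$ times $q^{4(\lfloor m/2\rfloor-r)}\in K$ up to a scalar, so $J_m\subseteq(g_m)$. Hence $J_m=(g_m)$ as ideals of $K$ (the $K$-span of the generators is the ideal they generate). Scalars are units since $k$ has characteristic zero, which handles $11025$.

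\emph{Step 3 (membership criterion).} $K=k[p,q,J_1,J_2]$ is a polynomial ring by Lemma~\ref{lem:kernel-gens}, hence a UFD, and $p,q$ are non-associated primes of $K$. So $C\in(p^{2m}q^{4\lceil m/2\rceil})$ iff $p^{2m}\mid C$ and $q^{4\lceil m/2\rceil}\mid C$ in $K$. By Lemma~\ref{lem:contraction} together with Theorem~\ref{thm:descent}, divisibility by $p$ (respectively $q$) in $K$ agrees with divisibility in $B$, so the $K$-adic valuations coincide with $v_p,v_q$ restricted from $\operatorname{Frac}(B)$. This gives $C\in J_m$ iff $v_p(C)\ge 2m$ and $v_q(C)\ge 4\lceil m/2\rceil$; the case $C=0$ is trivial.

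There is no real obstacle here. The only point needing care is Step 3's identification of valuations on $K$ with those on $B$, which Lemma~\ref{lem:contraction} supplies.
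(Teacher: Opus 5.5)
Your proposal is correct and follows the paper's proof essentially step for step: the $p$-exponent is fixed at $2m$, the $q$-exponent $4m-4r$ is minimized at $r=\lfloor m/2\rfloor$, and every other generator is a $K$-multiple of that extremal one by the factor $q^{4(\lfloor m/2\rfloor-r)}$ (up to a unit scalar). Your Step~3 spells out the valuation criterion, which the paper treats as immediate from principality. It uses the same facts as the paper's setup paragraph: $K$ is a UFD in which $(p),(q)$ are non-associated primes, and by Lemma~\ref{lem:contraction} their valuations agree with $v_p,v_q$ restricted from $B$.
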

\begin{proof}
$B[a_0]$ with $B=k[a_1,\dots,a_5]$ is a domain, and $c,h_2\ne 0$.
Hence for $F_0^eH^r$ the $a_0$-degree adds and tops multiply:
\[
\operatorname{top}(F_0^eH^r)=c^eh_2^r
  =11025^r p^{2e+4r}q^{4e+4r}.
\]
If $e+2r=m$ then $2e+4r=2(e+2r)=2m$ and $4e+4r=4m-4r$, i.e.
\[
\operatorname{top}(F_0^eH^r)=11025^r p^{2m}q^{4m-4r}.
\]
Since $0\le r\le\lfloor m/2\rfloor$, $v_p=2m$ for every
generator. For $v_q=4m-4r$, the minimum over admissible $r$ occurs at
maximal $r=\lfloor m/2\rfloor$, giving
$4m-4\lfloor m/2\rfloor=4\lceil m/2\rceil$.
The monomial $F_0^{m\bmod 2}H^{\lfloor m/2\rfloor}$ satisfies
$e+2r=m$ and attains $(2m,4\lceil m/2\rceil)$:
if $m=2k$, $(e,r)=(0,k)$ gives $p^{4k}q^{4k}$;
if $m=2k+1$, $(e,r)=(1,k)$ gives $p^{4k+2}q^{4k+4}$.
Every other generator differs by $(0,4(r^\star-r))$ in $(p,q)$-exponents
with $r^\star=\lfloor m/2\rfloor$, hence is a $K$-multiple of the
extremal one ($11025$ is a unit in characteristic zero).
Thus the $K$-span is the principal ideal displayed. No division by $q$
and no use of the hypersurface relation is made.
\end{proof}

\begin{theorem}[Bounds-generation characterization]
\label{thm:reduction}
Over the Established base (descent, Dixmier, $\Dmap{5}(F_0) =
\Dmap{5}(H) = 0$, presentation; characteristic zero for Dixmier),
with (P-top) $v_p(C_m) \ge 2m$ and (Q-top-strong) $v_q(C_m) \ge
4\lceil m/2\rceil$ for the top $a_0$-coefficient $C_m \in K$ of every
invariant of $a_0$-degree $m$:
(i) if (P-top) and (Q-top-strong) hold for every invariant, then
$\Afring{5}^{\Dmap{5}} = C_5$; (ii) if $\Afring{5}^{\Dmap{5}} = C_5$,
then (P-top) holds for every invariant and (Q-top-strong) holds in
even degree (odd $m \ge 3$ open; $m = 1$ is Theorem~\ref{thm:a01-closure}).
\end{theorem}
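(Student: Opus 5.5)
For (i), I would induct on the $a_0$-degree $m$ of a homogeneous invariant $F$, with weight as a secondary induction if needed. For $m=0$, $F\in B^{\Dmap{5}}=K\subseteq C_5$ by Theorem~\ref{thm:descent}. For $m\ge1$, the top coefficient $C_m=[a_0^m]F$ is killed by $\Dmap{5}$, because $\Dmap{5}$ maps $B[a_0]$ into itself and lowers $a_0$-degree only through $\Dmap{5}(a_0)=a_1a_2$. Hence $C_m\in K$. The hypotheses (P-top) and (Q-top-strong) then say $v_p(C_m)\ge 2m$ and $v_q(C_m)\ge4\lceil m/2\rceil$, and by Theorem~\ref{def:Jm} this is exactly $C_m\in J_m=(p^{2m}q^{4\lceil m/2\rceil})$. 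So $C_m=g\cdot\operatorname{top}(F_0^{m\bmod 2}H^{\lfloor m/2\rfloor})$ for some $g\in K$, up to the unit $11025^{\lfloor m/2\rfloor}$. The element $F-g'F_0^{m\bmod2}H^{\lfloor m/2\rfloor}$, with $g'$ the rescaled $g$, is an invariant, and since tops multiply in the domain $B[a_0]$ its $a_0$-degree is strictly less than $m$. It also stays weight-homogeneous if $g$ is taken homogeneous, which is possible because $C_m$ and the extremal top are homogeneous and $K$ is graded. The induction then puts $F$ in $C_5$. The key step is that the difference remains invariant, which holds since $F_0,H\in A^{\Dmap{5}}$ and $g\in K$. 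No saturation argument is needed, because the hypotheses are assumed for every invariant.

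For (ii), I would assume $A^{\Dmap{5}}=C_5$ and write an invariant of $a_0$-degree $m$ as $G\in C_5$. (P-top) is then exactly Lemma~\ref{lem:J4}, whose proof expands $G=\sum\mu_{e,r}F_0^eH^r$ and bounds the $p$-valuation of every contribution to $[a_0^m]$ by $2m$ using $v_p(\ell')=2$ and $v_p(h_2)=4$. For even $m=2k$, (Q-top-strong) follows from Theorem~\ref{thm:q-top}, which is stated for arbitrary invariants via R1 and needs no hypothesis. Alternatively one can argue directly as in Lemma~\ref{lem:J4}: the $a_0$-expansions $F_0=ca_0+G_0$ and $H=h_2a_0^2+\ell'a_0+m'$ contribute $q$-valuations $4$ per $a_0$ from $F_0$ and $4$ per $a_0^2$ from $h_2$. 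The case $m=1$ is Theorem~\ref{thm:a01-closure}, which gives $C_1\in(c)$.

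The main obstacle is the odd case $m\ge3$ of (ii). There the $q$-bound from sub-top pieces is insufficient: $\ell'=22050p^2G_0$ contributes one $a_0$ with $v_q(\ell')=v_q(G_0)=0$, so a term like $\mu F_0^{e}H^r$ with $e+2r>m$ can reach $[a_0^m]$ through $\ell'$ with too little $q$-divisibility, and cancellation among such terms would have to be controlled. Consistent with the statement, I would not attempt this case and would leave it open. The only delicate point in (i) is ensuring that the chosen $g$ is homogeneous, so that the weight grading is preserved and the degree induction terminates. This is routine, since the extremal top is a monomial in $p,q$ and $J_m$ is a principal homogeneous ideal of the graded ring $K$.
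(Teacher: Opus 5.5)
Your proposal is correct and is essentially the paper's proof. Part (i) is the same induction on $a_0$-degree: $C_m\in K$ is placed in the principal ideal $J_m$ of Theorem~\ref{def:Jm} and cancelled by a $K$-multiple of a $C_5$-element with the same top; you use the extremal generator $F_0^{m\bmod 2}H^{\lfloor m/2\rfloor}$ where the paper sums over $e+2r=m$, and no homogeneity bookkeeping is needed since the $a_0$-degree strictly drops. Part (ii) cites Lemma~\ref{lem:J4}, Theorem~\ref{thm:q-top} and Theorem~\ref{thm:a01-closure} exactly as the paper does.

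Drop the aside that even-degree (Q-top-strong) follows ``directly as in Lemma~\ref{lem:J4}''. A termwise count gives no $q$-bound on the sub-top contributions of terms with $e+2r>m$ passing through $\ell'$, since $v_q(\ell')=0$. That is the parity-blind obstacle you yourself raise for odd $m$; if your count worked it would settle odd $m\ge 3$ too. In even degree the bound genuinely rests on the multiplicity argument of Theorem~\ref{thm:R1}.
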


\begin{proof}
(i) Induction on $m$. Let $F$ have $a_0$-degree $m$ with top
$C_m \in K$. The bounds give $C_m \in J_m$, so $C_m = \sum_{e+2r=m}
\alpha_{e,r}c^eh_2^r$ with $\alpha_{e,r} \in K$. Then
$G = \sum \alpha_{e,r}F_0^eH^r \in C_5$ has the same top
$a_0^m$-term (tops multiply over the domain $B[a_0]$), so $F - G$ is
invariant of $a_0$-degree $< m$; by induction $F - G \in C_5$, hence
$F \in C_5$. Base $m = 0$ is Theorem~\ref{thm:descent}.
(ii) Let $F \in \Afring{5}^{\Dmap{5}} = C_5$ have $a_0$-degree $m$.
Then (P-top) $v_p(C_m) \ge 2m$ is Lemma~\ref{lem:J4}. For
(Q-top-strong), even $m$ is Theorem~\ref{thm:q-top}; $m = 1$ follows
from Theorem~\ref{thm:a01-closure}; odd $m \ge 3$ is open.
\end{proof}

\begin{corollary}[Obstruction criterion]
\label{cor:obstruction}
If some invariant has top $a_0$-coefficient with $v_p < 2m$ for
some $m$, then $\Afring{5}^{\Dmap{5}} \ne C_5$ (no $C_5$-member has
such a top, Lemma~\ref{lem:J4}). $F_{46}$ realizes this at
$m = 4$: $\frac{18375}{4}p^6q^8 \notin J_4 = (p^8q^8)$. Since generation
is false (Theorem~\ref{thm:allweights-refuted}), at least one bound is
false: (P-top) is, witnessed by $F_{46}$ ($v_p = 6 < 8$ at $m = 4$).
(Q-top-strong) holds in even degree (Theorem~\ref{thm:q-top}), so at
$m = 4$ the obstruction loads entirely onto the $p$-side.
\end{corollary}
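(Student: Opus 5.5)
The statement to prove is the corollary: if some invariant has top $a_0$-coefficient with $v_p<2m$, then $\Afring{5}^{\Dmap{5}}\ne C_5$; $F_{46}$ realizes this at $m=4$; and the failure is attributable to (P-top), with the $m=4$ obstruction resting entirely on the $p$-side. I would argue by contraposition and direct citation.

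First, assume $\Afring{5}^{\Dmap{5}}=C_5$. Let $F$ be an invariant of $a_0$-degree $m$ with top $C_m$. Then $F\in C_5$ has exactly $a_0$-degree $m$, so Lemma~\ref{lem:J4} gives $v_p(C_m)\ge 2m$. This contradicts the hypothesis $v_p<2m$. Equivalently, Theorem~\ref{def:Jm} shows that every top of a $C_5$-element of degree $m$ lies in $J_m=(p^{2m}q^{4\lceil m/2\rceil})$. A top with $v_p<2m$ lies outside $J_m$, hence outside the set of $C_5$-tops.

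Second, for the instance I use Theorem~\ref{thm:F46}. There $F_{46}$ is invariant, has $a_0$-degree exactly $4$, and has top $\tfrac{18375}{4}p^6q^8$ (up to scale, $121550625\,p^6q^8$). Since $v_p=6<8$, this top is not in $J_4=(p^8q^8)$. The membership criterion of Theorem~\ref{def:Jm} requires $v_p\ge 2m$, and $K$ is a UFD with $(p)$ prime, so this valuation test is legitimate.

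Third, for the attribution, Theorem~\ref{thm:reduction}(i) says that if (P-top) and (Q-top-strong) both held, then $\Afring{5}^{\Dmap{5}}=C_5$. Theorem~\ref{thm:allweights-refuted} shows that equality is false, so at least one of the two bounds fails. $F_{46}$ directly violates (P-top). At $m=4$, which is even, Theorem~\ref{thm:q-top} guarantees $v_q(C_4)\ge 8$, and this holds for $F_{46}$ ($8=8$). So no $q$-side violation occurs at this degree, and the obstruction is carried entirely by the $p$-adic valuation.

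No step is a real obstacle, since everything is assembled from earlier results. The only points needing care are these: $\deg_{a_0}$ and tops are computed exactly in the domain $B[a_0]$, so an equality $F=G$ with $G\in C_5$ forces equal tops; and the rational and integral normalizations of $F_{46}$ differ only by a unit, which does not change valuations.
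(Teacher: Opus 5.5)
Your proposal is correct and follows the paper's own reasoning: Lemma~\ref{lem:J4} gives the criterion, the top $\tfrac{18375}{4}p^6q^8$ of $F_{46}$ gives the instance at $m=4$, Theorem~\ref{thm:reduction}(i) combined with Theorem~\ref{thm:allweights-refuted} shows that at least one bound fails, and Theorem~\ref{thm:q-top} rules out a $q$-side failure at $m=4$. One small correction: your ``equivalently'' aside is not what Theorem~\ref{def:Jm} proves, since that theorem only identifies the $K$-span of products of generator tops, not the tops of arbitrary $C_5$-elements, where higher-degree terms may cancel (that claim is Theorem~\ref{thm:gr-C5}); nothing depends on it, because Lemma~\ref{lem:J4} already bounds $v_p$ of the $a_0^m$-coefficient of every monomial $F_0^eH^r$.
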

Lean covers invariance + fingerprint; non-membership is paper-level +
Sage-exact; the blocking formal step is filter-through-normalize
commutation for the $a_0$-top slice (\texttt{a0top\_filter\_combine}).

\begin{theorem}[Associated graded of $C_5$]
\label{thm:gr-C5}
Filter $C_5$ by $a_0$-degree, $F_{\le m} = \{G \in C_5 :
\deg_{a_0} G \le m\}$. Its associated graded $K$-algebra is
\[
\operatorname{gr}_{a_0}(C_5) \;=\; \bigoplus_{m \ge 0} J_m\,t^m
\quad\subseteq\quad K[t],
\]
with the principal ideals $J_m = (p^{2m}q^{4\lceil m/2\rceil})$ of
Definition~\ref{def:Jm} and multiplication induced by tops: the
degree-$m$ piece is exactly the $K$-span of the tops
$c^eh_2^r$, $e + 2r = m$, and tops multiply over the domain
$B[a_0]$, so $J_m\cdot J_n \subseteq J_{m+n}$. A finite presentation
of this graded algebra is not established here. The analogue for
$\Afring{5}^{\Dmap{5}}$ is open: its graded pieces strictly contain
the $J_m$ from $m = 4$ (witness $F_{46}$).
\end{theorem}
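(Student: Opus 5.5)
The statement is Theorem~\ref{thm:gr-C5}. The plan is to compute the degree-$m$ graded piece of the $a_0$-filtration of $C_5$ directly, by describing the top $a_0$-coefficients of elements of $F_{\le m}$. By definition,
\[
\operatorname{gr}_m = F_{\le m}/F_{\le m-1},
\]
and the map sending $G \in F_{\le m}$ to its coefficient $[a_0^m]G \in B$ identifies $\operatorname{gr}_m$ with the set $T_m$ of such coefficients, since this map has kernel exactly $F_{\le m-1}$. The whole argument therefore reduces to proving $T_m = J_m$, with $J_m$ as in Theorem~\ref{def:Jm}.

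\emph{Inclusion $J_m \subseteq T_m$.} Take $e + 2r = m$ and $\alpha \in K$. The element $\alpha F_0^eH^r$ has $a_0$-degree $m$ and top $\alpha c^e h_2^r$, because tops multiply in the domain $B[a_0]$. Summing such elements gives every element of $J_m$ as a top.

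\emph{Inclusion $T_m \subseteq J_m$.} Use $C_5 = K[F_0,H]$ and write $G = \sum \mu_{e,r}F_0^eH^r$ with $\mu_{e,r} \in K$. Monomials with $e + 2r > m$ may occur, and their $a_0^m$-coefficients are not tops, so the naive argument fails. I would instead argue as in Lemma~\ref{lem:J4}: compute $[a_0^m](F_0^eH^r)$ for every pair $(e,r)$ and bound its valuations.
\begin{itemize}
\item \emph{$p$-valuation.} Lemma~\ref{lem:J4} already gives $v_p \ge 2m$.
\item \emph{$q$-valuation.} Using $F_0 = ca_0 + G_0$ and $H = h_2a_0^2 + \ell'a_0 + m'$, one needs the $q$-adic orders of $\ell'$ and $m'$, and must then show $v_q \ge 4\lceil m/2\rceil$.
\end{itemize}
Together these place each coefficient in $J_m$, because $J_m$ is principal and is cut out exactly by these two valuation conditions (Theorem~\ref{def:Jm}). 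The main obstacle is the $q$-side bound, because $v_q(m') = 0$ and so $q$-divisibility can be lost in sub-top contributions. If that fails, the fallback is a leading-form argument: rewrite $G$ using the relation $S$ to eliminate high-degree cancellation, reduce to a normal form where $H$-degree is bounded or $q^4 \nmid$ coefficients, and show the top of the normal form is the top of one monomial (the presentation from Theorem~\ref{thm:global-hypersurface} makes the normal form unique). This uses the same lift-and-cancel idea as Theorem~\ref{thm:relation-ideal}.

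\emph{Multiplicative structure.} Once $T_m = J_m$ is established, the multiplication follows at once from tops multiplying, $\operatorname{top}(GG') = \operatorname{top}(G)\operatorname{top}(G')$. This gives $J_m J_n \subseteq J_{m+n}$, and hence the graded subalgebra $\bigoplus_m J_m t^m \subseteq K[t]$. The statement asserts no finite presentation, so nothing further is needed.
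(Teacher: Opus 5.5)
You have reduced correctly to $T_m=J_m$, and your inclusion $J_m\subseteq T_m$ and the multiplicativity are fine. The gap is the reverse inclusion on the $q$-side, and your main route for it cannot work.

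Unlike the $p$-side of Lemma~\ref{lem:J4}, there is no term-by-term $q$-estimate. Expand $(ca_0+G_0)^e(h_2a_0^2+\ell'a_0+m')^r$. The contributions $G_0^{e}(\ell')^{b}(m')^{d}$, which carry no factor $c$ or $h_2$, have $v_q=0$. For example $[a_0^1]H=\ell'=22050\,p^2G_0$ has $v_q=0<4$, because $G_0$ contains $x^7/35$.

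Moreover, a representation $G=\sum\mu_{e,r}F_0^eH^r$ is only determined modulo $(S)$. So membership of the top in $J_m$ is a cancellation phenomenon, and no bound on individual monomials can detect it.

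Your fallback names the right kind of tool, a normal form via $S$, but not the right normal form. The $H$-degree cannot be bounded, since the $H$-coefficient $q^4$ of $S$ is not a unit in $K$. A normal form avoiding $q^4H$ does exist, but there several monomials share each $a_0$-degree, so ``the top is the top of one monomial'' is not available there.

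The missing idea is to eliminate $F_0$, not $H$. The $F_0^2$-coefficient of $S$ is the unit $-11025$. Hence in $C_5$ we have $F_0^2=(q^4H-105q^2J_2^5J_1-9J_2^7)/11025\in K[H]$, and every $G\in C_5$ can be written $G=\sum_r\alpha_rH^r+F_0\sum_r\beta_rH^r$ with $\alpha_r,\beta_r\in K$.

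Since $K$ is $a_0$-free and tops multiply in $B[a_0]$:
\begin{itemize}
\item a nonzero $\alpha_rH^r$ has $a_0$-degree exactly $2r$ and top $\alpha_rh_2^r$;
\item a nonzero $\beta_rF_0H^r$ has $a_0$-degree exactly $2r+1$ and top $\beta_r c\,h_2^r$.
\end{itemize}
These degrees are pairwise distinct, so nothing can cancel. The top of $G$ is the top of its single highest term, so it lies in $h_2^rK=(p^{4r}q^{4r})=J_{2r}$ or in $c\,h_2^rK=(p^{4r+2}q^{4r+4})=J_{2r+1}$. This proves $T_m=J_m$, and also the freeness $C_5=K[H]\oplus F_0K[H]$, with no valuation estimates at all.

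The paper's own text supports this inclusion only by ``tops multiply''. That alone does not exclude cancellation among monomials of $a_0$-degree $>m$ either; the normal form is what makes the single-monomial reading of tops legitimate.

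You also do not address the last sentence of the statement. It follows because $F_{46}F_0^{m-4}$ is invariant with top $\propto p^{2m-2}q^{4m-8}$, which has $v_p=2m-2<2m$ and so lies outside $J_m$ for every $m\ge 4$.
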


\begin{theorem}[Primitive birth windows]
\label{thm:birth-windows}
A primitive $a_0$-degree-$m$ direction outside $C_5$, if born at
weight $w$, satisfies $8m \le w \le \min C_5(m) - 1$, where
$\min C_5(m) = \min_{e+2r=m}(14e + 24r)$ is the lightest $C_5$-monomial
weight at that $a_0$-degree. The values (exhaustive integer check in
exact top-ideal computation):
\[
\renewcommand{\arraystretch}{1.2}\begin{array}{c|cccccccc}
m & 1 & 2 & 3 & 4 & 5 & 6 & 7 & 8 \\ \hline
\min C_5(m) & 14 & 24 & 38 & 48 & 62 & 72 & 86 & 96 \\
\text{gap} & [8,13] & [16,23] & [24,37] & [32,47] & [40,61] & [48,71] & [56,85] & [64,95]
\end{array}
\]
Known window examples (mostly non-primitive): $m = 1$ is closed by Theorem~\ref{thm:a01-closure} (no
primitive survives the gap); $m = 2$ holds $H$ at $24$
($H \in C_5$ defines the minimum); $m = 3$ holds no $J_3$-violating
top through $60$ (Remark~\ref{rem:deg3-census}); $m = 4$ holds
$F_{46}$ at $46$; $m = 5$ holds $F_{46}\!\cdot\!F_0$ at $60$
(non-primitive); $m = 6$ holds $F_{46}\!\cdot\!H$ at $70$
(non-primitive). A third primitive direction of degree $5$ would be
born in $[40,61]$, of degree $6$ in $[48,71]$, of degree $7$ in
$[56,85]$; no such weights are excluded. This replaces weight-difference
numerology: the windows come from monomial arithmetic, not from
extrapolating $10, 22, \dots$.
\end{theorem}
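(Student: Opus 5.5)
The plan is to fix the meaning of ``born'' so that both inequalities have content. I will say that a primitive $a_0$-degree-$m$ direction is \emph{born at weight $w$} if it is realized by a homogeneous invariant $F$ of Fibonacci weight $w$ and $a_0$-degree exactly $m$, and no element of $(C_5)_w$ has $a_0$-degree $m$. In that situation $F\notin C_5$ is forced by the $a_0$-degree count alone, exactly as for $H$ in Remark~\ref{rem:counterexample-H} and as encoded in Theorem~\ref{thm:a0-obstruction}(3). The proof then splits into the lower bound, the upper bound, and the evaluation of $\min C_5(m)$ for the table.

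\emph{Lower bound $8m\le w$.} The monomial $a_0^m$ appears in $F$ with a nonzero coefficient in $B$. Every variable has positive Fibonacci weight and $\operatorname{wt}(a_0)=8$. Hence any monomial of $a_0$-degree $m$ weighs at least $8m$, and since $F$ is homogeneous, $w\ge 8m$.

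\emph{Upper bound $w\le\min C_5(m)-1$, by contrapositive.} Write every element of $C_5=K[F_0,H]$ as a $K$-combination of the monomials $F_0^eH^r$. These have $a_0$-degree exactly $e+2r$, because tops multiply over the domain $B[a_0]$ (Lemma~\ref{lem:J4}, Theorem~\ref{def:Jm}), and they have weight $14e+24r$.
\begin{itemize}
\item Suppose $w\ge\min C_5(m)$. Choose $(e,r)$ with $e+2r=m$ attaining the minimum, and multiply $F_0^eH^r$ by $p^{\,w-14e-24r}\in K$, which is possible since $\operatorname{wt}(p)=1$. This gives an element of $(C_5)_w$ of $a_0$-degree exactly $m$. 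So $w$ is not a birth weight in the above sense.
\item Conversely, one needs that no element of $(C_5)_w$ reaches degree $m$ when $w<\min C_5(m)$. A sum can only lower the $a_0$-degree. So an element of $a_0$-degree $\ge m$ must involve a monomial with $e+2r=m'\ge m$, which has weight $\ge 14e+24r\ge\min C_5(m')$.
\item It remains to check that $m'\mapsto\min C_5(m')$ is nondecreasing. This follows from the closed form below.
\end{itemize}

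\emph{The table.} With $e=m-2r$ one gets $14e+24r=14m-4r$. This is minimized at the maximal $r=\lfloor m/2\rfloor$, giving
\[
\min C_5(m)=14m-4\lfloor m/2\rfloor ,
\]
which is strictly increasing in $m$. Evaluating at $m=1,\dots,8$ gives $14,24,38,48,62,72,86,96$, and the gaps $[8m,\min C_5(m)-1]$ are as displayed. The example entries are cited rather than reproved: Theorem~\ref{thm:a01-closure} for $m=1$, $H$ for $m=2$, Remark~\ref{rem:deg3-census} for $m=3$, Theorem~\ref{thm:F46} for $m=4$, and Corollaries~\ref{cor:F46F0-outside-C5}--\ref{cor:FH-outside-C5} for the non-primitive $m=5,6$ examples.

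\emph{Main obstacle.} The delicate step is the upper bound. It rests entirely on the notion of birth: it must be tied to the $a_0$-degree of $(C_5)_w$ and not to mere non-membership in $C_5$, since $F_{46}p$ shows that invariants outside $C_5$ exist above the window. I would state this convention explicitly at the start of the proof. The only genuine algebra needed is the non-cancellation of tops and the monotonicity of $\min C_5$.
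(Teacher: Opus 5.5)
Your argument is correct and matches the paper's own monomial-window reasoning (Corollary~\ref{cor:h-birth}, Remark~\ref{rem:c5-window}): the lower bound is $\operatorname{wt}(a_0^m)=8m$, and the upper bound comes from the lightest $C_5$-monomial $F_0^{m\bmod 2}H^{\lfloor m/2\rfloor}$, where your closed form $\min C_5(m)=14m-4\lfloor m/2\rfloor$ replaces the cited exhaustive integer check by the same ``minimum at maximal $r$'' computation used in Theorem~\ref{def:Jm}. One slip in your closing remark: $F_{46}p$ has weight $47$, which lies inside the $m=4$ window $[32,47]$, so the invariants outside $C_5$ above the window that motivate your degree-based definition of birth are instead, e.g., $F_{46}pq$ and $F_{46}q^2$ at weight $48$ (Theorem~\ref{thm:quotient-4648}(ii)).
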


\begin{remark}[Degree-$3$ census (EXACT-$\mathbb{Q}$ through $38$--$60$)]
\label{rem:deg3-census}
For each weight $38 \le w \le 60$, let $N_{\le 3}(w)$ be the exact
$\mathbb{Q}$-nullity of $\Dmap{5}$ on $V_w^{a_0 \le 3}$ (blocked
$B_w, B_{w-8}, B_{w-16}, B_{w-24}$ construction) and $N_J(w)$ the
nullity of the $J_3$-constrained $a_0$-chain (top $C_3$ parametrized
as $p^6q^8u$); a $J_3$-violating $a_0$-degree-$3$ top exists iff
$N_J(w) < N_{\le 3}(w)$
(exact computation (see Supplementary Material); divisibility by the monomial $p^6q^8$
is exact remainder-zero division). Outcome: $N_J(w) = N_{\le 3}(w)$
at every $w$ (no violation). With $\dim T(w) = N_{\le 3}(w) -
N_{\le 2}(w)$ counting the degree-$3$ tops present:
\begin{center}
\refstepcounter{table}\label{tab:deg3-census}
\textbf{Table~\thetable. Degree-$3$ census counts and $J_3$-violation verdicts, weights $38$--$60$.}\\[1ex]
\footnotesize
\renewcommand{\arraystretch}{1.2}
\begin{tabular}{c|cccccccccccc}
$w$ & 38 & 39 & 40 & 41 & 42 & 43 & 44 & 45 & 46 & 47 & 48 & 49 \\ \hline
$\dim T$ & 1 & 2 & 3 & 4 & 6 & 8 & 11 & 14 & 18 & 22 & 27 & 32 \\
viol.\ & no & no & no & no & no & no & no & no & no & no & no & no
\end{tabular}
\\[2pt]
\renewcommand{\arraystretch}{1.2}
\begin{tabular}{c|ccccccccccc}
$w$ & 50 & 51 & 52 & 53 & 54 & 55 & 56 & 57 & 58 & 59 & 60 \\ \hline
$\dim T$ & 39 & 46 & 54 & 62 & 72 & 82 & 94 & 106 & 120 & 134 & 150 \\
viol.\ & no & no & no & no & no & no & no & no & no & no & no
\end{tabular}
\end{center}
i.e.\ every $a_0$-degree-$3$ invariant top through weight $60$ lies in
$J_3 = (p^6q^8)$ (at $w = 46$, $N_{\le 3} = 1518$ reproduces $\dim
(C_5)_{46}$, since every weight-$46$ $C_5$-member has $a_0$-degree
$\le 3$). This is evidence, not proof: no new primitive direction of
$a_0$-degree $3$ occurs through $60$; falsifier: any $w$ with $N_J(w)
< N_{\le 3}(w)$.
\end{remark}

\section{Explicit witnesses $H$ and $F_{46}$}
\label{app:witnesses}

This appendix records the two hidden invariants in auditable form.
The main text proves what they imply; here the reader can check what
they are. Coefficient vectors are in $(a_0,\dots,a_5)$-exponents with
Fibonacci weights $(8,5,3,2,1,1)$.

\subsection{The weight-24 invariant $H$}

$H$ has $17$ terms, content $1$, uniform Fibonacci weight $24$,
ordinary degree $10$, and $a_0$-degree $2$:
\begin{center}
\renewcommand{\arraystretch}{1.15}
\begin{tabular}{r|l}
coefficient & $(a_0,a_1,a_2,a_3,a_4,a_5)$-exponents \\ \hline
$11025$ & $(2,0,0,0,4,4)$ \\
$-22050$ & $(1,1,1,1,3,3)$ \\
$7350$ & $(1,1,0,3,3,2)$ \\
$7350$ & $(1,0,2,3,2,2)$ \\
$-4410$ & $(1,0,1,5,2,1)$ \\
$630$ & $(1,0,0,7,2,0)$ \\
$11025$ & $(0,2,2,2,2,2)$ \\
$-7350$ & $(0,2,1,4,2,1)$ \\
$1225$ & $(0,2,0,6,2,0)$ \\
$6720$ & $(0,1,5,0,1,3)$ \\
$-16800$ & $(0,1,4,2,1,2)$ \\
$9450$ & $(0,1,3,4,1,1)$ \\
$-1540$ & $(0,1,2,6,1,0)$ \\
$-2208$ & $(0,0,7,0,0,3)$ \\
$4368$ & $(0,0,6,2,0,2)$ \\
$-2352$ & $(0,0,5,4,0,1)$ \\
$385$ & $(0,0,4,6,0,0)$
\end{tabular}
\end{center}
Direct expansion gives $\Dmap{5}(H)=0$; every monomial weighs $24$;
the unique $a_0^2$-term is $11025a_0^2a_4^4a_5^4$.
With $c=p^2q^4$, $W=F_0/c$, $H_2=11025p^4q^4$ and
$H_0=105J_2^5J_1/q^2+9J_2^7/q^4$,
$H=H_2W^2+H_0$ with $H_1=0$.
The Lean source is \texttt{def H} in
\texttt{formalize/AdjQuad/Counterexample.lean}.

\subsection{The weight-46 invariant $F_{46}$}

$F_{46}$ has $79$ terms, content $1$, uniform Fibonacci weight $46$,
ordinary degree $18$, and $a_0$-degree exactly $4$.
Direct expansion gives $\Dmap{5}(F_{46})=0$.
Its unique $a_0^4$-term is
\[
121550625\,a_0^4a_4^6a_5^8,
\qquad\text{i.e.}\quad
[a_0^4]F_{46}=\frac{18375}{4}p^6q^8\ \text{normalized},
\]
with $v_p=6<8$, hence outside $J_4=(p^8q^8)$.
The full $79$-term support is machine-readable as \texttt{def F46} in
\texttt{formalize/AdjQuad/SecondWitness.lean}; the four-gate filtration
certificate (derivation image, uniform weight, $a_0$-degree plus top,
$J_4$-divisibility with opposite-polarity controls) reproduces the
verdict \texttt{filtration-cert OK} from the polynomial itself.
No compact closed form is known; the file plus the checksum recorded in
the reproducibility supplement is the canonical reference.

\section{Exact graded comparison, weights $15$--$46$}
\label{app:ranks}

For each Fibonacci weight $w$ let $V_w$ be the weight-$w$ monomial space,
$M_w$ the matrix of $\Dmap{5}|_{V_w}$ over $\mathbf Q$,
$N(w)=\dim\ker M_w$, and $G(w)$ the rank of the $C_5$-monomial images in
$V_w$. The $C_5$-formula is $s_6(w)-s_6(w-28)$ with $s_6$ the monomial
count in generator weights $(1,1,6,4,14,24)$. Ranks are exact rational
sparse ranks (Sage, LinBox). Finite-field runs at $\ell=1000003$ are
independent consistency checks only and prove nothing.

\begin{center}
\renewcommand{\arraystretch}{1.1}
\begin{tabular}{r|r|r|r|r|r}
$w$ & $\dim V_w$ & $\operatorname{rank}M_w$ & $N(w)$ &
$\operatorname{rank}G(w)$ & excess \\ \hline
15 & 293 & 229 & 64 & 64 & 0 \\
16 & 361 & 286 & 75 & 75 & 0 \\
17 & 440 & 354 & 86 & 86 & 0 \\
18 & 533 & 433 & 100 & 100 & 0 \\
19 & 640 & 526 & 114 & 114 & 0 \\
20 & 764 & 633 & 131 & 131 & 0 \\
21 & 906 & 758 & 148 & 148 & 0 \\
22 & 1068 & 900 & 168 & 168 & 0 \\
23 & 1252 & 1064 & 188 & 188 & 0 \\
24 & 1461 & 1248 & 213 & 213 & 0 \\
25 & 1696 & 1458 & 238 & 238 & 0 \\
26 & 1961 & 1694 & 267 & 267 & 0 \\
27 & 2257 & 1961 & 296 & 296 & 0 \\
28 & 2588 & 2258 & 330 & 330 & 0 \\
29 & 2956 & 2592 & 364 & 364 & 0 \\
30 & 3365 & 2961 & 404 & 404 & 0 \\
31 & 3817 & 3373 & 444 & 444 & 0 \\
32 & 4317 & 3827 & 490 & 490 & 0 \\
33 & 4867 & 4331 & 536 & 536 & 0 \\
34 & 5472 & 4884 & 588 & 588 & 0 \\
35 & 6135 & 5495 & 640 & 640 & 0 \\
36 & 6861 & 6161 & 700 & 700 & 0 \\
37 & 7653 & 6893 & 760 & 760 & 0 \\
38 & 8517 & 7689 & 828 & 828 & 0 \\
39 & 9456 & 8560 & 896 & 896 & 0 \\
40 & 10477 & 9505 & 972 & 972 & 0 \\
41 & 11583 & 10535 & 1048 & 1048 & 0 \\
42 & 12781 & 11647 & 1134 & 1134 & 0 \\
43 & 14075 & 12855 & 1220 & 1220 & 0 \\
44 & 15472 & 14156 & 1316 & 1316 & 0 \\
45 & 16977 & 15565 & 1412 & 1412 & 0 \\
46 & 18597 & 17078 & 1519 & 1518 & 1
\end{tabular}
\end{center}
At $w=24$ the five-generator count is $s_5(24)=212<213=N(24)$,
the $H$-excess; at $w=46$ the six-generator formula gives $1518<1519$.
Hence $24$ is the first five-generator failure and $46$ the first
six-generator failure. The consequence used in the main text is the
boundary pair ($45$ equality, $46$ excess one); the interior rows are
retained here so the firstness claim is auditable at every weight.

\section{Bounded ambient-derivation search}
\label{sec:ML-search}

$\operatorname{ML}(\Afring{5}^{\Dmap{5}})$ is Open. Every triangular
derivation is locally nilpotent; among triangular derivations of
$\Afring{5}$ of total degree at most $2$ with $E(a_5) = 0$ ($55$
parameters, as $\Dmap{5}$ itself kills $a_5$), the
subspace preserving $K = k[p,q,J_1,J_2]$ has dimension $32$ (exact
$\mathbb{Q}$ nullspace, exact computation (see Supplementary Material)), and none of its directions kills $F_0$
or $H$. Hence no triangular degree-$\le 2$ locally nilpotent
derivation moves the hypersurface generators: a weak bounded
negative. Whether any preserving direction sends $F_0$ or $H$ to a
nonzero element of the hypersurface ring (which would shrink the
invariant), and everything about non-triangular or higher-degree
derivations, is untested. Restriction of the tested derivations to
$A_5^{\Dmap{5}}$ (or $C_5$) is not established, so this is a bounded
search among ambient derivations, not a computation of
$\operatorname{ML}$. This is a bounded negative result: no
$\operatorname{ML}$ value is computed or claimed.

\bibliographystyle{plain}
\bibliography{references}

@book{freudenburg2017,
  author    = {Gene Freudenburg},
  title     = {Algebraic Theory of Locally Nilpotent Derivations},
  series    = {Encyclopaedia of Mathematical Sciences},
  volume    = {136},
  publisher = {Springer},
  edition   = {Second},
  year      = {2017},
  isbn      = {978-3-662-55348-0}
}

@article{daigle1999,
  author  = {Daniel Daigle and Gene Freudenburg},
  title   = {A Counterexample to {H}ilbert's Fourteenth Problem in Dimension 5},
  journal = {Journal of Algebra},
  volume  = {221},
  pages   = {528--535},
  year    = {1999},
  doi     = {10.1006/jabr.1999.7987}
}

@article{freudenburg2000hilbert,
  author  = {Gene Freudenburg},
  title   = {A Counterexample to {H}ilbert's Fourteenth Problem in Dimension 6},
  journal = {Transformation Groups},
  volume  = {5},
  pages   = {61--71},
  year    = {2000},
  doi     = {10.1007/bf01237178}
}

@book{brunsherzog1998,
  author    = {Winfried Bruns and H. J\"urgen Herzog},
  title     = {Cohen--Macaulay Rings},
  series    = {Cambridge Studies in Advanced Mathematics},
  publisher = {Cambridge University Press},
  edition   = {Second},
  year      = {1998},
  isbn      = {978-0-521-56674-2}
}

@article{daigle2001note,
  author  = {Daniel Daigle and Gene Freudenburg},
  title   = {A note on triangular derivations of $k[X_1,X_2,X_3,X_4]$},
  journal = {Proceedings of the American Mathematical Society},
  volume  = {129},
  pages   = {657--662},
  year    = {2001},
  doi     = {10.1090/S0002-9939-00-05558-1}
}

@article{daigle2001triangular,
  author  = {Daniel Daigle and Gene Freudenburg},
  title   = {Triangular derivations of $k[X_1,X_2,X_3,X_4]$},
  journal = {Journal of Algebra},
  volume  = {241},
  pages   = {328--339},
  year    = {2001},
  doi     = {10.1006/jabr.2001.8763}
}

@article{maubach2000triangular,
  author  = {Stefan Maubach},
  title   = {Triangular monomial derivations on $k[X_1,X_2,X_3,X_4]$ have kernel generated by at most four elements},
  journal = {Journal of Pure and Applied Algebra},
  volume  = {153},
  pages   = {165--170},
  year    = {2000},
  doi     = {10.1016/S0022-4049(99)00084-5}
}

@article{vandenEssen1993,
  author  = {Arno van den Essen},
  title   = {An Algorithm To Compute The Invariant Ring Of A $G_a$-action On An Affine Variety},
  journal = {Journal of Symbolic Computation},
  volume  = {16},
  pages   = {551--555},
  year    = {1993},
  doi     = {10.1006/jsco.1993.1062}
}

@article{maubach2000algorithm,
  author  = {Stefan Maubach},
  title   = {An Algorithm to Compute the Kernel of a Derivation up to a Certain Degree},
  journal = {Journal of Symbolic Computation},
  volume  = {29},
  pages   = {959--970},
  year    = {2000},
  doi     = {10.1006/jsco.1999.0334}
}

@misc{dufresne2010,
  author = {Emilie Dufresne and Martin Kohls},
  title  = {A finite separating set for {D}aigle and {F}reudenburg's counterexample to {H}ilbert's Fourteenth Problem},
  year   = {2010},
  eprint = {0912.0638},
  archivePrefix = {arXiv},
  primaryClass  = {math.AC},
  howpublished = {arXiv:0912.0638}
}

@article{vandenEssen2002,
  author  = {Arno van den Essen and Peter van Rossum},
  title   = {Triangular derivations related to problems on affine $n$-space},
  journal = {Proceedings of the American Mathematical Society},
  volume  = {130},
  pages   = {1311--1316},
  year    = {2002}
}

@misc{bedratyuk2012,
  author = {Leonid Bedratyuk},
  title  = {Derivations and identities for {F}ibonacci and {L}ucas polynomials},
  year   = {2012},
  eprint = {1207.5891},
  archivePrefix = {arXiv},
  primaryClass  = {math.AC},
  howpublished = {arXiv:1207.5891}
}

@misc{shakhmatov2026cylinders,
  author = {Kirill Shakhmatov},
  title  = {Cylinders and the zero locus of the plinth ideal},
  year   = {2026},
  eprint = {2605.00138},
  archivePrefix = {arXiv},
  primaryClass  = {math.AG},
  howpublished = {arXiv:2605.00138}
}

@article{bedratyuk2018,
  author  = {Leonid Bedratyuk and N. B. Lunio},
  title   = {Intertwining maps for the {W}eitzenb\"ock and {C}hebyshev derivations},
  journal = {Matematychni Studii},
  volume  = {49},
  pages   = {3--12},
  year    = {2018}
}

@article{elmer2012,
  author  = {Jonathan Elmer and Martin Kohls},
  title   = {Separating invariants for the basic $\mathbb{G}_a$-actions},
  journal = {Proceedings of the American Mathematical Society},
  volume  = {140},
  pages   = {135--141},
  year    = {2012}
}

@book{cox2015,
  author    = {David A. Cox and John Little and Donal O'Shea},
  title     = {Ideals, Varieties, and Algorithms},
  publisher = {Springer},
  edition   = {Fourth},
  year      = {2015}
}

@book{singular2007,
  author    = {Gert-Martin Greuel and Gerhard Pfister},
  title     = {A Singular Introduction to Commutative Algebra},
  publisher = {Springer},
  edition   = {Second},
  year      = {2007}
}

@misc{sagemath,
  author = {{The Sage Developers}},
  title  = {{S}ageMath, the {S}age Mathematics Software System},
  year   = {2024},
  note   = {https://www.sagemath.org; exact program run under 10.9, see docs/reproducibility.md}
}

@misc{guo2026,
  author = {Junyu Guo and Hao Shen and Junqi Liu and Lihong Zhi},
  title  = {Formalizing {G}r\"obner Basis Theory in {L}ean},
  year   = {2026},
  eprint = {2602.12772},
  archivePrefix = {arXiv},
  primaryClass  = {cs.LO},
  howpublished = {arXiv:2602.12772},
  note   = {Mathlib-based development, unlike the present core-Lean formalization}
}

@misc{wuprover2026,
  author = {Hao Shen and Junyu Guo and Junqi Liu and Lihong Zhi},
  title  = {Automated Tactics for Polynomial Reasoning in {L}ean 4},
  year   = {2026},
  eprint = {2604.13514},
  archivePrefix = {arXiv},
  primaryClass  = {cs.LO},
  howpublished = {arXiv:2604.13514}
}

@misc{hales2015,
  author = {Thomas Hales and Mark Adams and Gertrud Bauer and others},
  title  = {A formal proof of the {K}epler conjecture},
  year   = {2015},
  eprint = {1501.02155},
  archivePrefix = {arXiv},
  primaryClass  = {math.MG},
  howpublished = {arXiv:1501.02155}
}

\end{document}